\documentclass[11pt]{amsart}
\usepackage[utf8]{inputenc}
\usepackage[T1]{fontenc}
\usepackage{mathtools}
\usepackage{amsmath,amsthm,amsfonts,amssymb,mathrsfs,pb-diagram,amscd}
\usepackage{amsmath}
\usepackage{amsxtra}
\usepackage{amscd}
\usepackage{amsthm}
\usepackage{amsfonts}
\usepackage{amssymb}
\usepackage{eucal}
\usepackage{xcolor}
\usepackage[enableskew,vcentermath]{youngtab}
\usepackage{bm}
\usepackage{leftindex}
\usepackage{graphics}
\usepackage{mathrsfs}
\usepackage{enumerate}
\usepackage{tikz}

\usepackage{pstricks}
\usepackage{graphicx}

\usepackage[pagebackref=false]{hyperref}

\DeclareMathOperator\Std{Std}
\DeclareMathOperator\res{res}
\DeclareMathOperator\sgn{sgn}
\DeclareMathOperator\rad{rad}
\allowdisplaybreaks[4]

	\newcommand{\hs}{\hspace*}
	\newcommand{\vs}{\vspace*}
\newtheorem{thm}{Theorem}[section]
\theoremstyle{plain}

\newtheorem{lem}[thm]{Lemma}

\newtheorem{prop}[thm]{Proposition}
\newtheorem{cor}[thm]{Corollary}

\theoremstyle{definition}
\newtheorem{defn}[thm]{Definition}
\newtheorem{example}[thm]{Example}

\theoremstyle{remark}
\newtheorem{rem}[thm]{Remark}

\definecolor{A}{rgb}{.75,1,.75}

\numberwithin{equation}{section}

\newcommand{\Z}{\mathbb Z}
\newcommand{\N}{\mathbb N}

\newcommand{\mHcn}{\mathcal{H}_{\rm R}} % affine Hecke-Clifford superalgebra
\newcommand{\mt}{\mathfrak{t}}
\newcommand{\ms}{\mathfrak{s}}
\newcommand{\mfku}{\mathfrak{u}}
\newcommand{\mfkv}{\mathfrak{v}}

\newcommand{\End}{\text{End}}

\newcommand{\supp}{\text{supp}}

\newcommand{\undla}{\underline{\lambda}}
\newcommand{\undmu}{\underline{\mu}}
\newcommand{\undQ}{\underline{Q}}

\newcommand{\Add}{{\rm Add}}
\newcommand{\Rem}{{\rm Rem}}
\newcommand{\Hom}{{\rm Hom}}

\def\O{\mathscr{O}}
\def\K{\mathscr{K}}
\def\hO{{\hat{\O}}}
\def\hK{{\hat{\K}}}
\def\rd{{\rm{d}}}
\def\Sym{\mathfrak{S}}
\def\pr{{\rm pr}}
\def\tQ{\widetilde{Q}}
\let\<=\langle
\let\>=\rangle
\def\({\big(}
\def\){\big)}

\newcommand{\fourlinerightarrow}{%
  \mathrel{%
    \vcenter{%
      \offinterlineskip
      \halign{##\cr
        $\rule{0.6em}{0.3pt}\kern-0.2em\rule{0.3em}{0.3pt}$\cr
        \noalign{\kern0.9pt}
        $\rule{0.6em}{0.3pt}\kern-0.2em\rule{0.3em}{0.3pt}$\cr
        \noalign{\kern0.9pt}
        $\rule{0.6em}{0.3pt}\kern-0.2em\rule{0.3em}{0.3pt}$\cr
        \noalign{\kern0.9pt}
        $\rule{0.6em}{0.3pt}\kern-0.2em\rule{0.3em}{0.3pt}$\cr
      }%
    }%
    \kern-0.2em\raisebox{0.0ex}{$\succ$}%
  }%
}

\newcommand{\fourlineleftarrow}{%
  \mathrel{%
    \raisebox{-0.2ex}{$\prec$}% 把箭头升高到和右箭头一样的高度
    \kern-0.2em
    \vbox{%
      \offinterlineskip
      \halign{##\cr
        $\rule{0.6em}{0.3pt}\kern-0.2em\rule{0.3em}{0.3pt}$\cr
        \noalign{\kern0.9pt}
        $\rule{0.6em}{0.3pt}\kern-0.2em\rule{0.3em}{0.3pt}$\cr
        \noalign{\kern0.9pt}
        $\rule{0.6em}{0.3pt}\kern-0.2em\rule{0.3em}{0.3pt}$\cr
        \noalign{\kern0.9pt}
        $\rule{0.6em}{0.3pt}\kern-0.2em\rule{0.3em}{0.3pt}$\cr
      }%
    }%
  }%
}

{\vskip-\lastskip\medskip
	\noindent
	{\em #1.}\enspace
}%
{\qed\par\medskip
}

\usepackage{enumitem}

\newlist{caselist}{enumerate}{1}
\setlist[caselist,1]{
	label=\textbf{Case \arabic{section}.\arabic{subsection}.\arabic*:},
	ref=\arabic{section}.\arabic{subsection}.\arabic*,
	before=\setcounter{caselisti}{0}
}

\newcounter{case}

\newenvironment{symbols}{
    \section*{Index of notation}
    \begin{description}
}{
    \end{description}
}

\newcommand{\symitem}[3]{\item[#1] #2 \hfill\rlap{\pageref{#3}}}

\begin{document}

	\title[cyclotomic quiver Hecke-Clifford superalgebras]{On the generalized graded cellular bases for cyclotomic quiver Hecke-Clifford superalgebras}
		\subjclass[2010]{20C08, 16W55, 16G99}
	\keywords{cyclotomic Hecke-Clifford superalgebras, cyclotomic quiver Hecke-Clifford superalgebras, Schur elements, cellular bases, supersymmetrizing forms}
	
	\author{Shuo Li}\address{School of Mathematics and Statistics\\
		Beijing Institute of Technology\\
		Beijing, 100081, P.R. China}
	\email{shuoli1203@163.com}

    \author{Lei Shi}\address{Academy of Mathematics and Systems Science\\
    	Chinese Academy of Sciences, Beijing 100190\\
    	P.R.China}
    	\address{Max-Planck-Institut f\"ur Mathematik\\
	    Vivatsgasse 7, 53111 Bonn\\
	    Germany}
    \email{leishi202406@163.com}

	\begin{abstract}
In this paper, we construct semisimple deformations for cyclotomic quiver Hecke-Clifford superalgebras of types $A^{(1)}_{s-1}$, $C^{(1)}_{s}$, $A^{(2)}_{2s}$, $D^{(2)}_{s}$. We derive a unified dimension formula for the bi-weight spaces for cyclotomic quiver Hecke-Clifford superalgebras of types $A^{(1)}_{s-1}$, $C^{(1)}_{s}$, $A^{(2)}_{2s}$, $D^{(2)}_{s}$. We introduce the notion of generalized graded cellular superalgebra. We prove a large class of cyclotomic quiver Hecke-Clifford superalgebras of types $A^{(1)}_{s-1}$, $C^{(1)}_{s}$, $A^{(2)}_{2s}$, $D^{(2)}_{s}$ is generalized graded cellular. By taking idempotent truncation, this recovers the known graded cellualr results for cyclotomic quiver Hecke algebras of types $A^{(1)}_{s-1}$, $C^{(1)}_{s}$.
	\end{abstract}
	\maketitle
	
	\setcounter{tocdepth}{1}
	\tableofcontents
	
	\section{Introduction}
	The quiver Hecke algebras (or, KLR algebras) and their cyclotomic quotients were introduced in the work of Khovanov-Lauda (\cite{KL1}, \cite{KL2}) and of Rouquier (\cite{Rou1}). They play an important role in the categorification of quantum groups and their integrable highest weight modules (\cite{KK}). In the past decade, there have been many remarkable applications of these algebras in the modular representation theory of symmetric groups and Hecke algebras, low-dimensional topology and other areas, see \cite{Bow},  \cite{BK:GradedKL}, \cite{DVV}, \cite{Ev}, \cite{EK},\cite{HM1}, \cite{K2}, \cite{Rou2}, \cite{SVV}, \cite{VV}, \cite{Web} and the references therein.

	Kang, Kashiwara and Tshchioka \cite{KKT} generalized above KLR construction to the super case. They introduced several new families of algebras including the quiver Hecke superalgebras and quiver Hecke-Clifford superalgebras in  \cite{KKT}. To define these superalgebras, one has to decompose the index set of a generalised Cartan matrix $A$ (\cite{Kac}) as $I=I_{\rm{even}}\sqcup I_{\rm{odd}}$ subject to some natural conditions. When $I_{\rm{odd}}=\emptyset$, the construction of quiver Hecke superalgebras in \cite{KKT} reduces to the original KLR construction. Both of quiver Hecke superalgebras and quiver Hecke-Clifford superalgebras are $(\Z\times\Z_2)$-graded algebras. They also have some natural finite dimensional quotients, which are called cyclotomic quiver Hecke superalgebras and cyclotomic quiver Hecke-Clifford superalgebras. Kang, Kashiwara and Oh \cite{KKO1} used cyclotomic quiver Hecke superalgebras to categorify quantum Kac-Moody algebras, generalizeing \cite{KK}. In \cite{BKM}, quantum Kac-Moody superalgebras were introduced and well-studied. Hill and Wang \cite{HW} first observed that cyclotomic quiver Hecke superalgebras categorify quantum Kac-Moody superalgebras. Kang, Kashiwara and Oh \cite{KKO2} then defined several families of quantum superalgebras and used cyclotomic quiver Hecke superalgebras to study the supercategorification of quantum superalgebras. Recently, the quiver Hecke superalgebras have remarkable applications in the study of spin symmetric groups and the double cover of symmetric groups \cite{FKM,K3,KleL}.
	
	The cyclotomic quiver Hecke algebras are well understood for the quiver of types $A_{s-1}^{(1)}$ and $A_\infty$. In this case, Brundan and Kleshchev constructed in \cite{BK:GradedKL} an explicit algebra isomorphism between cyclotomic quiver Hecke algebra and the block algebra of the cyclotomic Hecke algebra. Based on this isomorphism, graded cellular bases, Specht modules and categorification theorem have been extensively studied in the literature \cite{BKgraded,BKW,HM1}. For the quiver of types $C_s^{(1)}$ and $C_\infty$,  Ariki, Park and Speyer \cite{APS} studied Specht modules for cyclotomic quiver Hecke algebras. Influenced by the combinatorics in \cite{APS}, Mathas and Tubbenhauer \cite{MT1} constructed graded cellular bases in affine type $C$ using the weighted KLRW algebras. In a remarkable paper \cite{EM}, Evseev and Mathas introduced a new notion called graded content system. They used graded content system to give a graded semisimple deformation for the cyclotomic quiver Hecke algebra and then constructed graded cellular structure for both cyclotomic quiver Hecke algebra of affine type $A$ and affine type $C$, following a similar idea as in \cite{HM1,HM2}. We emphasize that in \cite{EM,HM1,HM2}, the semisimple deformation and semisimple representatrion theory play key roles in approaching the graded cellular bases theory. In general, Hu and the second author of this paper \cite{HS2} gave a $(\Z\times\Z_2)$-graded
	dimension formula for the bi-weight spaces of the cyclotomic quiver Hecke (super)algebras for arbitrary symmetrisable Cartan superdatum and studied monomial bases for some bi-weight spaces, which generalized \cite{HS}. Unfortunately, a ``cellular structure'' for the general cyclotomic quiver Hecke superalgebra is still missing. This is the motivation of our work.
	
	\label{pag:N} Let $\N:=\{1,2,\ldots\},$ $n\in \N$ and $\mathbb{K}$ be an algebraically closed field of characteristic different from $2$. We use $R^\Lambda_\nu,\,RC^\Lambda_\nu$ to denote the cyclotomic quiver Hecke superalgebra and cyclotomic quiver Hecke-Clifford superalgebra over the field $\mathbb{K}$ associated to the Cartan superdatum $\bigl({\rm{A}}=(a_{ij})_{i,j\in I},P,\Pi,\Pi^\vee\bigr)$, $\nu\in Q_n^+$ and $\Lambda\in P^+$ as defined in \cite{KKT}. It was shown in \cite{KKT} that $R^\Lambda_\nu$ and $RC^\Lambda_\nu$ are weakly Morita superequivalent. On the other hand, Kang, Kashiwara and Tshchioka \cite{KKT} gave an isomorphism between $RC^\Lambda_\nu$ of affine types $A^{(1)}_{s-1}$, $C^{(1)}_{s}$, $A^{(2)}_{2s}$, $D^{(2)}_{s}$ and some ``blocks'' of the cyclotomic Hecke-Clifford superalgebra $\mathcal{H}^{f}_{\mathbb{K}}=\mathcal{H}^{f}_{\mathbb{K}}(n)$, which can be viewed as a super analogue of the Brundan-Kleshchev isomorphism. To be presice, for each given defining polynomial $f$ of $\mathcal{H}^{f}_{\mathbb{K}}$, we can associate $f$ with a Cartan superdatumn $I_f$ and a dominant weight $\Lambda_f \in P^+$. Then Kang-Kashiwara-Tsuchioka proved that there is a non-trivial isomorphism between $\mathcal{H}^{f}_{\mathbb{K}}$ and the corresponding cyclotomic quiver Hecke-Clifford superalgebra $RC^{\Lambda_f}_n=\bigoplus\limits_{\nu\in Q_n^+} RC^{\Lambda_f}_\nu$. The following Theorem is the first main result of this paper, where we refer the readers to Sections \ref{basic-Non-dege}, \ref{Idempotents and seminormal forms} for unexplained notations used here.
	
		\begin{thm}\label{Main1}
		Let ${\bf i},{\bf j} \in(J_f)^n$. We have
		\begin{align*}
			\dim_{\mathbb{K}}e({\bf i}) RC^{\Lambda_f}_n e({\bf j})=\sum_{\substack{\undla\in\mathscr{P}^{\bullet,m}_{n}}}2^{d_{\undla}}\sharp {\rm Tri}(\undla,{\bf i}) \sharp {\rm Tri}(\undla,{\bf j}).
		\end{align*}
		\end{thm}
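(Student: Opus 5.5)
We would prove Theorem \ref{Main1} by deformation to a generic semisimple situation, counting dimensions there and then specializing back.

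\textbf{Step 1: reduce to the Hecke--Clifford side.} Through the Kang--Kashiwara--Tsuchioka isomorphism $\mathcal{H}^f_{\mathbb{K}}\cong RC^{\Lambda_f}_n$, the idempotent $e({\bf i})$ corresponds to the projection onto the generalized eigenspace of the commuting Jucys--Murphy type elements $X_1,\dots,X_n$ with eigenvalues determined by ${\bf i}$ (via $q(\cdot)$ of the residues). So it suffices to compute $\dim e({\bf i})\mathcal{H}^f_{\mathbb{K}}e({\bf j})$.

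\textbf{Step 2: deform the parameters.} Choose a local ring $\O$ with residue field $\mathbb{K}$ and fraction field $\K$, together with a deformed polynomial $\hat f$ over $\O$. We want $\hat f$ to reduce to $f$, and its roots should be generic enough that $\mathcal{H}^{\hat f}_{\K}$ is split semisimple. We would take this from the semisimplicity criterion and seminormal forms for cyclotomic Hecke--Clifford superalgebras set up in the sections on seminormal forms. The simple modules are then indexed by (strict) multipartitions $\undla\in\mathscr{P}^{\bullet,m}_n$. Each such module has a seminormal basis labelled by standard tableaux $\mt$ together with Clifford data. The supermodule is of type M or Q according to $d_{\undla}$, and the Clifford factor contributes a power of $2$, giving simple dimension $2^{\cdot}\sharp\Std(\undla)$-type counts. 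Since $\mathcal{H}^{\hat f}_\O$ is $\O$-free of rank independent of the parameter, $\dim_{\mathbb{K}}\mathcal{H}^f_{\mathbb{K}}=\dim_{\K}\mathcal{H}^{\hat f}_{\K}$.

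\textbf{Step 3: lift the idempotents.} The generalized eigenspace idempotent $e({\bf i})$ lifts to $\hat e({\bf i})=\sum F_{\mt}$ over $\O$, where $F_\mt$ are the seminormal primitive idempotents. The sum runs over tableaux $\mt$ whose residue sequence reduces to ${\bf i}$, and grouping these by shape gives the tableaux in ${\rm Tri}(\undla,{\bf i})$. The point to verify is that this sum lies in $\mathcal{H}^{\hat f}_\O$ and not merely in $\mathcal{H}^{\hat f}_\K$. We would argue this through the standard Lagrange-interpolation argument: $\hat e({\bf i})$ is a polynomial in the $X_k$ whose coefficients are integral because contents with distinct residues stay distinct modulo the maximal ideal. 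Its reduction modulo the maximal ideal is $e({\bf i})$. Then $\hat e({\bf i})\mathcal{H}_\O\hat e({\bf j})$ is an $\O$-free direct summand, so its rank equals $\dim_{\mathbb{K}}e({\bf i})\mathcal{H}^f_{\mathbb{K}}e({\bf j})$.

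\textbf{Step 4: count in the semisimple algebra.} By Wedderburn, $\hat e({\bf i})\mathcal{H}_\K\hat e({\bf j})\cong\bigoplus_{\undla}\Hom(\hat e({\bf j})D(\undla),\hat e({\bf i})D(\undla))$, adjusted for type Q components. The dimension of $\hat e({\bf i})D(\undla)$ is $2^{a}\sharp{\rm Tri}(\undla,{\bf i})$ for an appropriate exponent $a$. In type Q the endomorphism superalgebra is the Clifford algebra, and we would need to check how this halves the count. Combining the simple-module dimensions with the type M/Q correction should produce exactly the factor $2^{d_{\undla}}$. That gives the stated formula.

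The main obstacle is Step 2, together with this bookkeeping of powers of $2$. The four affine types have different residue and content functions (including the $q(\cdot)$ symmetrization), so constructing a deformation that is simultaneously semisimple and compatible with the residues is delicate. We would treat the types uniformly through the paper's content system and verify separation of eigenvalues case by case.
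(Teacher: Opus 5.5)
Your plan is correct and is essentially the paper's proof. The paper deforms uniformly, with no type-by-type analysis, to $\hO=\mathbb{K}[[x]]$ with $q'=x^4+q$ and $Q'_i=x^{8ni}+Q_i$; then $P^{(\bullet)}_n(q'^2,\undQ')\neq 0$ gives split semisimplicity, and all $\mathtt{b}_\pm(\res_\mt(k))$ lie in $\mathbb{K}[[x^2]]$. It lifts $e({\bf i})$ to $\sum_{{\rm T}\in{\rm Tri}({\bf i})}F_{\rm T}$ and counts the seminormal basis elements $f_{{\rm S},{\rm T}}$ in $e({\bf i})^{\hO}\mathcal{H}^{f'}_{\hK}e({\bf j})^{\hO}$. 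So the obstacle you anticipate in Step 2 does not arise.

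Your integrality argument is a valid substitute for the paper's Hu--Mathas binomial trick, provided it is phrased as Bézout/CRT rather than naive Lagrange interpolation, which has non-unit denominators inside a cluster. The eigenvalues of $X_k$ reducing to ${\bf i}_k$ and the remaining ones give monic factors that are coprime modulo $x$, so the spectral projection is an $\hO$-polynomial in $X_k$. Your Wedderburn count, using $\Hom_{\mathcal{C}_1}$ in type $\texttt{Q}$, gives the same factor $2^{d_{\undla}}$.

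The one correction is the indexing in Step 3. The summands must be triples ${\rm T}=(\mt,\alpha_\mt,\beta_\mt)\in{\rm Tri}_{\bar 0}$ with $\mathtt{b}_{\mt,\beta_\mt}|_{x=0}={\bf i}$, not tableaux whose residues reduce correctly. The residue sequence only determines $\pr({\bf i})\in(I_f)^n$, while $\beta_\mt$ selects $\mathtt{b}_+$ versus $\mathtt{b}_-$.
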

		
	In contrast to \cite[Theorem 1.2]{HS2}, the terms appearing in the above equality are always non-negative.
%Under Kang-Kashiwara-Tsuchioka's isomorphism,
Theorem \ref{Main1} gives a unified dimension formula for the bi-weight space of quiver Hecke-Clifford superalgebra of affine types $A^{(1)}_{s-1}$, $C^{(1)}_{s}$, $A^{(2)}_{2s}$, $D^{(2)}_{s}$.
	By taking idempotent truncation, this further yields a dimension formula for the bi-weight spaces of the corresponding quiver Hecke superalgebras. In affine types $A^{(1)}_{s-1}$, $C^{(1)}_{s}$, this recovers the ungraded version of \cite[Theorem 4.20]{BKgraded} and \cite[Theorem 2.5]{APS}, while in affine types $A^{(2)}_{2s}$, $D^{(2)}_{s}$, this is \cite[Theorem B]{AP14}, \cite[Corollary 3.3]{AP16d} in the case when $\Lambda=\Lambda_0$. In other cases, our dimension formula seems to be new. Note that all of the proofs in \cite[Theorem 4.20]{BKgraded}, \cite[Theorem 2.5]{APS}, \cite[Theorem B]{AP14} and \cite[Corollary 3.3]{AP16d} rely on the Fock space realization with respect to certain dominant weight $\Lambda\in P^+$. It is natural to ask whether there is a Fock space model underlying Theorem \ref{Main1}.
	
	To prove Theorem \ref{Main1} we introduce a certain semisimple deformation of $\mathcal{H}^{f}_{\mathbb{K}}$. In fact, we construct two algebras $\mathcal{H}^{f'}_{\hK}, \mathcal{H}^{f'}_{\hO}$, where $\hO$ is a certain complete valuation ring and $\hK$ is the fraction field of $\hO$ satisfying  $$
	\mathcal{H}^{f'}_{\hK}\cong \hK\otimes_{\hO} \mathcal{H}^{f'}_{\hO},\qquad\mathcal{H}^{f}_{\mathbb{K}}\cong \mathbb{K}\otimes_{\hO}\mathcal{H}^{f'}_{\hO}
	$$  and $\mathcal{H}^{f'}_{\hK}$ is semisimple over $\hK$. The above semisimple deformation is obtained using \cite{SW}, where Wan and the second author of this paper introduced a separate condition for cyclotomic Hecke-Clifford superalgebra and proved that the cyclotomic Hecke-Clifford superalgebra is split semisimple if the separate condition holds. In \cite{LS2}, we further constructed a complete set of primitive idempotents and seminormal bases of $\mathcal{H}^{f'}_{\hK}$  (see also \cite{KMS} for the Sergeev superalgebra). This enables us to lift each idempotent $e({\bf i})\in \mathcal{H}^{f}_{\mathbb{K}}$ to $e({\bf i})^\hO\in\mathcal{H}^{f'}_{\hO}$ as a sum of some primitive idempotents in $\mathcal{H}^{f'}_{\hK}$. Then Theorem \ref{Main1} follows from seminormal bases theory. As a byproduct, we also obtain an upper bound of nilpotent index of polynomial generators $y_ke({\bf i})$ in quiver Hecke-Clifford superalgebra of affine types $A^{(1)}_{s-1}$, $C^{(1)}_{s}$, $A^{(2)}_{2s}$, $D^{(2)}_{s}$, which generalizes \cite[In the end of \S 4]{EM} and \cite[Corollary 4.31]{HM2}. By taking idempotent truncation, our construction gives a new semisimple deformation for quiver Hecke algebra of affine type $A^{(1)}$ or $C^{(1)}$. It would be interesting to study the relationship between our new semisimple deformation for cyclotomic quiver Hecke algebras of affine type $A$ and type $C$ with the content system in \cite[Definition 3A.1]{EM}.
		
	With the semisimple deformation and seminormal bases theory in hand, we are able to mimic the construction in \cite{EM} and \cite{HM1} to give some nice bases for cyclotomic quiver Hecke-Clifford superalgebra  $RC^\Lambda_\nu$. To explain our result, we introduce some notations. Let $q^2\neq \pm 1$, $\undQ=(Q_1,\cdots,Q_m)\in({\mathbb{K}}^*)^m$ and $f=f^{\mathsf{(0)}}_{\underline{Q}}=\prod_{i=1}^m \left(X_1+X^{-1}_1-\mathtt{q}(Q_i)\right)$, where $\mathtt{q}(x):=2\frac{x+x^{-1}}{q+q^{-1}}$ for any $x\in \mathbb{K}^*$. Recall that we have identified  the cyclotomic Hecke-Clifford superalgebra $\mathcal{H}^{f}_{\mathbb{K}}$ with the certain corresponding cyclotomic quiver Hecke-Clifford superalgebra $RC^{\Lambda_f}_n$ under Kang-Kashiwara-Tsuchioka's isomorphism. For any $\nu\in Q_n^+$ , we have a central idempotent $e^J_\nu \in\mathcal{H}^{f}_{\mathbb{K}}$ and $e^J_\nu \mathcal{H}^f_{\mathbb{K}}\cong RC^{\Lambda_f}_\nu$. We need an extra condition on $\nu$, namely, $\nu$ is $\undQ$-unremovable (see Definition \ref{tech condition}). Then we have the following Theorem, which is the second main result of this paper.

	\begin{thm}\label{Main2}
	Suppose $\nu\in Q_n^+$ is $\undQ$-unremovable.
	Then the algebra $RC^{\Lambda_f}_\nu$ is a generalized graded cellular superalgebra. Moreover, it is a graded supersymmetric superalgebra with a homogeneous supersymmetrizing form $t_{\nu}$ of degree $-2{\rm def}(\nu)$.
	\end{thm}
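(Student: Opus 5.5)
Our plan follows the strategy of \cite{HM1,EM}, transported to the super setting through the semisimple deformation $\mathcal{H}^{f'}_{\hO}$ and the seminormal bases of \cite{LS2}.

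\textbf{Step 1: lifted idempotents and seminormal data.} For each ${\bf i}\in(J_f)^n$ with $e({\bf i})e^J_\nu\neq0$ we use the lift $e({\bf i})^{\hO}\in\mathcal{H}^{f'}_{\hO}$, which is a sum of primitive idempotents $F_{\mt}$ in $\mathcal{H}^{f'}_{\hK}$ indexed by standard tableaux $\mt$ of shapes $\undla\in\mathscr{P}^{\bullet,m}_n$ with residue sequence ${\bf i}$. Let $F_\nu^{\hO}=\sum_{{\bf i}\in I^\nu}e({\bf i})^{\hO}$, so that $RC^{\Lambda_f}_\nu\cong\mathbb{K}\otimes_{\hO}F^{\hO}_\nu\mathcal{H}^{f'}_{\hO}F^{\hO}_\nu$. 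For each $\undla$ we fix the "initial" tableau $\mt^{\undla}$ and choose homogeneous elements $y^{\undla}=\prod_k y_k^{d_k}e({\bf i}^{\undla})$, where the $d_k$ are read off from the addable/removable nodes, following \cite{HM1}. This element lifts to a scalar multiple of $\sum_{\ms}f_{\ms\ms}$ over $\hK$ (here $f_{\ms\mt}$ is the seminormal basis), up to a leading coefficient.

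\textbf{Step 2: the generalized cellular basis.} We define $\psi_{\ms\mt}=\psi_{d(\ms)}^{*}\,y^{\undla}\,c^{\undla}_{\alpha}\,\psi_{d(\mt)}$, where $\psi_{d(\mt)}$ are fixed reduced-expression KLR elements and $c^{\undla}_\alpha$ runs over a basis of the Clifford part $\mathcal{C}^{d_{\undla}}$. This Clifford part accounts for the factor $2^{d_{\undla}}$ and is the reason the cellular structure is only "generalized": the cell modules are modules over a Clifford superalgebra rather than over $\mathbb{K}$. The leading terms of the $\psi_{\ms\mt}$ in the seminormal basis are triangular with respect to the dominance order. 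Hence over $\hO$ they are linearly independent, and the count matches Theorem \ref{Main1}. To show that they span, and that the ideals spanned by the $\psi$'s with shapes $\rhd\undla$ are two-sided, we compare with the seminormal expansions; the assumption that $\nu$ is $\undQ$-unremovable ensures that the relevant leading coefficients (the $\gamma$-coefficients) are units in $\hO$ after the appropriate normalization. Graded degrees are computed combinatorially as $\deg\mt$.

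\textbf{Step 3: the supersymmetrizing form.} We define $t_\nu$ as the $\hK$-linear form $\tau(F_{\mt}\,\cdot)$ weighted by the inverse Schur elements $1/s_{\undla}$, following \cite{LS2}. We rescale by a power of the deformation parameter so that it becomes homogeneous of degree $-2{\rm def}(\nu)$ and is $\hO$-valued on $F^{\hO}_\nu\mathcal{H}^{f'}_{\hO}F^{\hO}_\nu$. Supersymmetry is inherited from the trace property of the semisimple algebra. Nondegeneracy after reduction to $\mathbb{K}$ follows by showing that the Gram matrix of $t_\nu$ on the basis $\{\psi_{\ms\mt}\}$ versus its dual basis is unitriangular: $t_\nu(\psi_{\ms\mt}\psi_{\mfku\mfkv})$ vanishes unless the shapes are compared appropriately, and is a unit when $\mfku=\mt$ and $\mfkv=\ms$. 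This in turn requires the Schur element valuation to equal $\deg\mt+\deg\mt'$ with constant total $2{\rm def}(\nu)$, as in \cite{EM}.

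\textbf{Main obstacle.} We expect the hardest part to be showing that the Schur elements have valuation exactly $-2{\rm def}(\nu)$ shifted by the degrees, with unit leading coefficients, in the presence of the Clifford twists and the types $A^{(2)}_{2s},D^{(2)}_s$ where residues $0$ behave specially. This is precisely where the unremovability hypothesis should be used. We will first attempt this through an explicit product formula for the Schur elements from \cite{LS2}, tracking the valuation node by node via addable and removable nodes.
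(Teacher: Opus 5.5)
You have the paper's architecture: the deformation $\mathcal{H}^{f'}_{\hO}$, lifted idempotents, seminormal expansions, and a triangular Gram matrix whose diagonal is made into units by unremovability. However, your cellular datum is wrong, and this is a genuine gap. In the theorem $f=f^{(\mathsf{0})}_{\undQ}$, so $\mathcal{D}_{\undla}=\emptyset$ and $d_{\undla}=0$ for every $\undla$; your Clifford part $\mathcal{C}^{d_{\undla}}$ is trivial.

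The super phenomena sit instead at the positions $k\in O_{\mt}$, whose residue specializes to an odd vertex, i.e.\ $\mathtt{b}_{+}(\res_{\mt}(k))|_{x=0}\in\{\pm1\}$. At such a position three things happen. First, $\mathtt{b}_{+}-\mathtt{b}_{-}\in x\hO$ is not a unit. Second, $F_{(\mt,\beta)}$ and $F_{(\mt,\beta+e_k)}$ lie under the same $e({\bf i})^{\hO}$. Third, $C_k$ commutes with $e({\bf i})$ and commutes up to sign with every $y_l$.

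So even after adding the decorations $C^{\beta}$ needed to reach all $e({\bf i})$, ${\bf i}\in J^{\nu}$, the decorations at positions in $O_{\mt}$ pass through your middle element $y^{\undla}e({\bf i}^{\undla})$, and through the $\sigma_w$'s after relabelling. One middle element per shape then yields at most $2^{2n-\sharp O_{\undla}}(\sharp\Std(\undla))^2$ independent elements. Theorem~\ref{Main1} requires $(\sharp{\rm Tri}(\undla))^2=2^{2n}(\sharp\Std(\undla))^2$ per shape. Hence whenever some $O_{\undla}\neq\emptyset$ (the genuinely super cases) your set cannot be a basis, and no normalization of leading coefficients fixes this.

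The missing idea is the refinement to colored multipartitions $(\undla,S)$ with $S\subset O_{\undla}$:
\begin{itemize}
\item for $k\in\mt_{\undla}(S)$ insert an extra factor $(X_k-\mathtt{b}_{-}(\res_{\mt_{\undla}}(k)))f_{k,{\bf i}_{\undla}}$, which specializes to $y_k$;
\item take $\mathcal{B}_{(\undla,S)}=\mathcal{C}_{\mt_{\undla}}=\langle C_k\mid k\in O_{\mt_{\undla}}\rangle$, with the outer $\beta$'s supported off $O_{\mt}$;
\item refine dominance by reverse inclusion of $S$.
\end{itemize}
These factors are exactly what make the diagonal Gram entries units. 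The set $S$ on the $\lhd$-side and $O_{\undla}\setminus S$ on the $\rhd$-side together contribute $\prod_{A\in O_{\undla}}\big(\mathtt{b}_{+}(\res(A))-\mathtt{b}_{-}(\res(A))\big)$. This cancels the non-unit part of $\prod_{k}\big(\mathtt{b}_{\mt,k}^{{\rm sgn}_{\beta_{\mt}}(k)}-\mathtt{b}_{\mt,k}^{-{\rm sgn}_{\beta_{\mt}}(k)}\big)^{-1}$ in $t_{2m,n}(F_{\rm T})$. Unremovability is then needed only for two things: the remaining product over $\mathscr{R}^{\lhd}_{\mt_{\undla}}(k)$, and the degree identity $\deg(y^{\lhd}_{\undla}e({\bf i}_{\undla}))=2\deg^{\lhd}(\mt_{\undla})$.

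Your Step 3 also fails as stated. The deformed algebra $\mathcal{H}^{f'}_{\hO}$, with $q'=x^4+q$ and $Q'_i=x^{8ni}+Q_i$, carries no $\Z$-grading. So rescaling a weighted trace by a power of $x$ cannot make its specialization homogeneous. For this deformation there is also no reason for valuations of Schur elements to match degrees (that is a feature of the graded content systems of \cite{EM}), and you do not need it.

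Instead:
\begin{itemize}
\item Take the form $t_{2m,n}=\tau_{2m,n}\big(-\cdot(X_1\cdots X_n)^m\big)$ of \cite{LS3}. It is already $\hO$-valued and supersymmetric; this is where $\bullet=\mathsf{0}$ is used.
\item Prove that its Gram matrix between the $\lhd$-basis and the $\rhd$-basis is triangular with unit diagonal. Solving for the coefficients of $h\in\mathcal{H}^{f'}_{\hO}(\nu)$ against the $\rhd$-basis then also gives the $\hO$-spanning that your Step 2 leaves open.
\item Define $t_\nu$ as the truncation of $t_{2m,n}$ to degree $2\,{\rm def}(\nu)$.
\end{itemize}
The identity $\deg^{\lhd,S}(L)+\deg^{\rhd,O_{\undla}\setminus S}(L')={\rm def}(\nu)$, for $L,L'$ with the same underlying tableau (Lemma~\ref{def2}, from Corollary~\ref{def1}), ensures the diagonal entries survive the truncation. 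Truncation only creates zeros, so the truncated Gram matrix stays invertible over $\mathbb{K}$.
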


In \cite{LS3}, we introduced a supersymmetrizing form $t_{2m,n}$ on $\mathcal{H}^f_{\mathbb{K}}$ and computed the corresponding Schur elements. These are crucial in the proof of Theorem \ref{Main2}. As in \cite{EM} and \cite{HM1}, for any $\nu\in Q_n^+$, we first construct two sets $\Psi^{\hO,\lhd}_{\nu}, \Psi^{\hO,\rhd}_{\nu}\subset \mathcal{H}^{f'}_{\hO}$ and study the relations of elements in $\Psi^{\hO,\lhd}_{\nu}$ and $\Psi^{\hO,\rhd}_{\nu}$ with seminormal bases, which is quite more complicated than \cite{EM} and \cite{HM1}. It's not difficult to deduce $\Psi^{\hO,\lhd}_{\nu}$ and $\Psi^{\hO,\rhd}_{\nu}$ form two $\hK$-bases of $\mathcal{H}^{f'}_{\hK}$ by Theorem \ref{Main1}.  To prove $\Psi^{\hO,\lhd}_{\nu}$ and $\Psi^{\hO,\rhd}_{\nu}$ form two $\O$-bases of $\mathcal{H}^{f'}_{\hO}$, we need the condition that $\nu$ is $\undQ$-unremovable. Under this condition, we are able to prove that the Gram matrix of $\Psi^{\hO,\lhd}_{\nu}$ and $\Psi^{\hO,\rhd}_{\nu}$ with respect to the supersymmetrizing form $t^\hO_{2m,n}$ is invertible in $\hO$. Hence we obtain two homogeneous bases by specializing $\Psi^{\hO,\lhd}_{\nu}, \Psi^{\hO,\rhd}_{\nu}$ to $e^J_\nu \mathcal{H}^f_{\mathbb{K}}\cong RC^{\Lambda_f}_\nu$.

In proving Theorem \ref{Main2}, we also systematically study the degrees of standard tableaux with respect to different cyclotomic polynomials of cyclotomic Hecke-Clifford superalgebra (Definition \ref{deg of std tableaux}). By Kang-Kashiwara-Tsuchioka's isomorphism, this gives a unified definition for the degrees of standard tableaux in affine types $A^{(1)}_{s-1}$, $C^{(1)}_{s}$, $A^{(2)}_{2s}$, $D^{(2)}_{s}$, generalizing  \cite[(3.5), (3.6)]{BKW} and \cite[Definition 4D.3]{EM}. Our homogeneous supersymmetrizing form $t_{\nu}$ in Theorem \ref{Main2} is obtained by taking homogeneous truncation of $t_{2m,n}$, which is similar as in \cite{HM1}.

The generalized graded cellular superalgebra proposed here is a natural generalization of $\Z$-graded cellular algebra in \cite{HM1} to the $\Z\times \Z_2$-graded algebra. For example, we can similarly define specht modules and study the simple modules and decomposition matrix. We remark that our generalized graded cellular superalgebra is a special case of a more general definition given by Mori \cite{Mo}. Therefore, we can use Mori's general result in our setting.

By taking idempotent truncation, we have the following Corollary.

	\begin{cor}\label{CellandSym}
	Let $p={\rm Char}\,\mathbb{K}\neq 2$ and $s\geq 1$.
	\begin{enumerate}
		\item Suppose $p\nmid s$. Let $I$ be the Cartan datum corresponds to Dynkin quiver of type $A^{(1)}_{s-1}$ ($s\geq 2$) or $C^{(1)}_{s}$. Then for any $\nu\in Q^+_n$ and any $\Lambda\in P^+ $, the cyclotomic quiver Hecke algebra $R^\Lambda_\nu$ is a graded cellular algebra with a homogeneous symmetrizing form of degree $-2{\rm def}(\nu)$.
		\item Suppose $p\nmid 2s+1$. Let $I$ be the Cartan datum corresponds to Dynkin quiver of type $A^{(2)}_{2s}$, $\nu=\sum_{i\in I}m_i \nu_i\in Q^+_n$  and $\Lambda=\sum_{i\in I}k_i \Lambda_i\in P^+ $. Suppose $m_i\leq 1$ and $k_i\in 2\Z$ for any $i\in I_{{\rm odd}}$. Then the superalgebra $R^\Lambda_\nu\otimes \mathcal{C}_{m(\nu)}$ is a generalized graded cellular superalgebra with a homogeneous supersymmetrizing form of degree $-2{\rm def}(\nu)$.
		\item Suppose $p\nmid s$. Let $I$ be the Cartan datum corresponds to Dynkin quiver of type $D^{(2)}_{s}$, $\nu=\sum_{i\in I}m_i \nu_i\in Q^+_n$ and $\Lambda=\sum_{i\in I}k_i \Lambda_i\in P^+ $.  Suppose $m_i\leq 1$ and $k_i\in 2\Z$ for any $i\in I_{{\rm odd}}$. Then the superalgebra $R^\Lambda_\nu\otimes \mathcal{C}_{m(\nu)}$ is a generalized graded cellular superalgebra with a homogeneous supersymmetrizing form of degree $-2{\rm def}(\nu)$.
	\end{enumerate}
\end{cor}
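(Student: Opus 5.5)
The plan is to derive Corollary \ref{CellandSym} from Theorem \ref{Main2} in two steps. First, realize each $R^\Lambda_\nu$ (or $R^\Lambda_\nu\otimes\mathcal{C}_{m(\nu)}$) as an idempotent truncation of some $RC^{\Lambda_f}_\nu$ with $\nu$ $\undQ$-unremovable. Second, check that generalized graded cellularity and homogeneous supersymmetrizing forms pass to such truncations.

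\textbf{Step 1 (choice of $f$).} Given the type and $\Lambda=\sum_i k_i\Lambda_i$, choose $q$ with the appropriate order. For $A^{(1)}_{s-1}$ and $D^{(2)}_s$ this means $q^2$ a primitive root of order related to $s$; for $C^{(1)}_s$ and $A^{(2)}_{2s}$ an order related to $2s$ or $2s+1$. The hypotheses $p\nmid s$ and $p\nmid 2s+1$ ensure such roots exist in $\mathbb{K}$. Then choose $\undQ$ so that $\Lambda_f=\Lambda$: each $Q_j$ has the residue of a vertex $i$, with multiplicities $k_i$. For odd vertices $i\in I_{\rm odd}$, the polynomial $f^{\mathsf{(0)}}_{\undQ}$ contributes the factor $(X_1+X_1^{-1}-\mathtt{q}(Q_j))$ in a way that forces even multiplicity, which explains the hypothesis $k_i\in2\Z$. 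In types $A^{(1)}_{s-1}$ and $C^{(1)}_s$ one has $I_{\rm odd}=\emptyset$, so no parity constraint appears.

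\textbf{Step 2 (unremovability).} Verify that $\nu$ is $\undQ$-unremovable in the sense of Definition \ref{tech condition}. I expect this to be the main obstacle: one must check that the combinatorial condition is automatic when $I_{\rm odd}=\emptyset$, and that it follows from $m_i\le 1$ for $i\in I_{\rm odd}$ otherwise. Concretely, I would argue that removable configurations only arise when nodes of odd residue occur at least twice, or interact with odd $Q_j$'s with odd multiplicity, and that the hypotheses exclude exactly these situations. If Definition \ref{tech condition} is more restrictive than this, a suitable reparametrization of $\undQ$ (for instance scaling all $Q_j$) should be tried first.

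\textbf{Step 3 (truncation).} By Theorem \ref{Main2}, $RC^{\Lambda}_\nu$ is generalized graded cellular with a homogeneous supersymmetrizing form $t_\nu$ of degree $-2{\rm def}(\nu)$. By \cite{KKT}, there is an even homogeneous idempotent $e\in RC^\Lambda_\nu$ such that $eRC^\Lambda_\nu e\cong R^\Lambda_\nu\otimes\mathcal{C}_{m(\nu)}$; when $m(\nu)=0$, as in the untwisted cases, this is just $R^\Lambda_\nu$. The idempotent $e$ is a product of Clifford idempotents with the $e({\bf i})$.
\begin{itemize}
\item The restriction of $t_\nu$ to $eRC^\Lambda_\nu e$ remains nondegenerate, supersymmetric and homogeneous of the same degree.
\item For cellularity, I would use the cellular basis constructed in Theorem \ref{Main2}, whose elements are built from $e({\bf i})$-weighted pieces $\psi_{\ms\mt}$. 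Multiplying by $e$ on both sides keeps those basis elements whose tableaux have residue compatible with $e$, possibly after applying the Clifford idempotent inside each cell. Cell modules are truncated accordingly, and the anti-involution is preserved because $e$ is fixed by it.
\end{itemize}

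\textbf{Step 4 (conclusion).} In types $A^{(1)}$ and $C^{(1)}$ the truncated algebra is purely even. There the generalized graded cellular structure reduces to the $\Z$-graded cellular structure of \cite{HM1}, giving part (1). Parts (2) and (3) follow directly.
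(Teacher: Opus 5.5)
Your route is the paper's. You realize $\Lambda=\Lambda_f$ with $f=f^{(\mathsf{0})}_{\undQ}$; this is exactly where $k_i\in 2\Z$ enters, since each $Q_j$ at an odd vertex contributes $2\Lambda_i$ to $\Lambda_f$. You then identify $RC^{\Lambda}_n$ with $\mathcal{H}^f_{\mathbb{K}}$ by Theorem \ref{KKTiso}, get unremovability from $m_i\le 1$ (Proposition \ref{example-unremovable}), truncate by the KKT idempotent (Theorem \ref{supermorita}(1)), and restrict the cellular basis and $t_\nu$ (Theorem \ref{Mainthm-truncation}, Proposition \ref{graded-supersymmetrizing2}). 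Two points need fixing.

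First, the truncating idempotent is $e^\dag=\sum_{{\bf i}\in (J_f^\dag)^n}e({\bf i})$. It contains no Clifford idempotents, and this is precisely why $\mathcal{C}_{m(\nu)}$ survives: $c_k$ commutes with $e({\bf i})$ when ${\bf i}_k$ has odd residue. If you multiplied in an even primitive idempotent $\epsilon$ of $\mathcal{C}_{m(\nu)}$, you would replace $\mathcal{C}_{m(\nu)}$ by $\epsilon\,\mathcal{C}_{m(\nu)}\,\epsilon$ (e.g.\ $\mathbb{K}$ when $m(\nu)=2$ in type $D^{(2)}_s$), and $e\psi e$ would no longer be a basis vector.

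With the correct $e^\dag$, no adjustment ``inside each cell'' is needed. Each $\psi^{\lhd,S}_{L'_1,\alpha,L'_2}$ lies in $e({\bf i})\mathcal{H}^f_{\mathbb{K}}e({\bf j})$ with ${\bf i}=w_1c^{\beta_1}{\bf i}_{\undla}$ and ${\bf j}=w_2c^{\beta_2}{\bf i}_{\undla}$. Because $\supp(\beta_r)\cap O_{\mt_{\undla}}=\emptyset$, ${\bf i}$ (resp.\ ${\bf j}$) lies in $(J_f^\dag)^n$ iff $\beta_1=0$ (resp.\ $\beta_2=0$). So $e^\dag$ keeps exactly the elements indexed by $\mathscr{T}^\dag(\undla,S)$, with all $\alpha$, and the cell ideals truncate to spans of kept elements. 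The cell superalgebras stay $\mathcal{C}_{\mt_{\undla}}\cong\mathcal{C}_{m(\nu)}$, since $\sharp O_{\undla}=m(\nu)$; this is why parts (2) and (3) need the generalized notion.

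Second, Step 2 needs no reparametrization, and odd multiplicities of the $Q_j$ play no role there. The argument is short:
\begin{enumerate}
\item If $\mt_{\undla}^{-1}(k)=(i,j,l)$, the column filling of $\mt_{\undla}$ gives $\mathscr{R}^{\lhd}_{\mt_{\undla}}(k)\subseteq\{(i-1,j,l),(i,j-1,l)\}$. These boxes have residues $q'^{\pm 2}\res_{\mt_{\undla}}(k)$.
\item An element of $\mathscr{R}^{\lhd,\undQ}_{\mt_{\undla}}(k)$ forces $\mathtt{q}(q^{\pm2}y)=\mathtt{q}(y)$ for $y=\res_{\mt_{\undla}}(k)|_{x=0}$.
\item Since $q^2\neq 1$, this gives $q^{\pm2}y=y^{-1}$, hence $\mathtt{q}(y)=\pm2$.
\item So two distinct boxes share an odd residue, contradicting $m_i\le 1$. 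If $I_{\rm odd}=\emptyset$, this never happens.
\end{enumerate}

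Finally, a caveat shared with the paper. Step 1 needs $q^2$ of order $s$, $2s$, $2s+1$, $2s$ in types $A^{(1)}_{s-1}$, $C^{(1)}_s$, $A^{(2)}_{2s}$, $D^{(2)}_s$ respectively, while $q^2\neq\pm1$ is a standing assumption. So $A^{(1)}_1$ and the $s=1$ instances of types $C^{(1)}_s$, $D^{(2)}_s$ are not reached by this realization. For $A^{(1)}_1$ you would have to quote \cite{HM1} instead.
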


Corollary \ref{CellandSym} (1) recovers the main result in \cite{EM,HM1}. We remark that for cyclotomic quiver Hecke algebra $R^\Lambda_\nu$ of affine type $A$, \cite[Corollary 6.18]{HM1} also gave a homogeneous symmetrizing form $\tau^{\text{HM}}_\nu$ of degree $-2{\rm def}(\nu)$. For both cyclotomic quiver Hecke algebra $R^\Lambda_\nu$ of affine type $A$ and type $C$, new homogeneous symmetrizing forms $\tau^{\text{EM}}_\nu$ of degree $-2{\rm def}(\nu)$ were obtained in \cite[Corollary 4F.8]{EM}. In general, Shan, Varagnolo and Vasserot \cite[Proposition 3.10]{SVV} have shown that the algebra $R^\Lambda_\nu$  is a $\Z$-graded symmetric algebra which is equipped with a homogeneous symmetrizing form $\tau^{\text{SVV}}_\nu$  of degree $-2{\rm def}(\nu)$. It's interesting to compare above-mentioned symmetrizing forms with $t_\nu$ in our paper.

We remark that our construction above should also work in degenerate case, i.e. cyclotomic Sergeev algebra.

Here is the layout of this paper. In Section \ref{preli}, we first recall some basics on general superalgebras and $\Z\times\Z_2$-graded algebras.
In Section \ref{Generalized graded cellular superalgebra}, we define generalized graded cellular superalgebra and study the representation theory of generalized graded cellular superalgebra.
%with some nice assumptions.
In Section \ref{Quiver Hecke superalgebra and Quiver Hecke-Clifford superalgebra}, we recall the definition of quiver Hecke superalgebras and quiver Hecke-Clifford superalgebras as well as their cyclotomic quotients.
In Section \ref{basic-Non-dege}, we recall the notion of affine Hecke-Clifford superalgebra $\mathcal{H}_{\rm R}$, cyclotomic Hecke-Clifford superalgebra $\mathcal{H}^f_{\rm R}$ over integral domain ${\rm R}$, as well as the associated combinatorics and the Separate Conditions. We explain how to relate $\mathcal{H}^f_{\mathbb{K}}$ with a Dynkin quiver and then recall Kang-Kashiwara-Tsuchioka's isomorphism in subsections \ref{DynkinDiagrams}, \ref{KKT's isomorphism}. We also define and study the degrees of standard tableaux in subsection \ref{Degrees of standard tableaux}.
In Section \ref{Idempotents and seminormal forms}, we recall the separate condition and seminormal bases theory for $\mathcal{H}^f_{\mathbb{K}}$. We construct a semisimple deformation in subsection \ref{Lifting idempotents} and prove the Theorem \ref{Main1}.
Section \ref{Generalized graded super cellular bases for cyclotomic quiver Hecke-Clifford superalgebra} is the core of this paper. We define some integral elements inside the deformed algebra $\mathcal{H}^{f'}_{\hO}$ and study the relations of these elements with seminormal bases. We define $\undQ$-unremovable element and then prove a graded bases result for $e^J_\nu \mathcal{H}^f_{\mathbb{K}}$  in subsection \ref{Seminormal bases and integral bases}. The proof of our Theorem \ref{Main2} is completed in subsections \ref{Generalized graded super cellular datum} and \ref{Graded supersymmetrizing form}. We then prove Corollary \ref{CellandSym} in subsection \ref{Idempotent truncation}.

\bigskip
\centerline{\bf Acknowledgements}
\bigskip
The research is supported by the National Natural Science Foundation of China (No. 12431002).
%and the Natural Science Foundation of Beijing Municipality (No. 1232017).
The second author is partially supported by the Postdoctoral Fellowship Program of CPSF under Grant Number GZB20250717. Both the authors thank Weiqiang Wang for his comments.
\bigskip

	\section{Preliminary}\label{preli}
{\bf Throughout this paper, ${\rm R}$ \label{pag:integral domain R} is an integral domain of characteristic different from $2$ and $\mathbb{K}$ is an algebraically closed field of characteristic different from $2$.}

	\subsection{Some basics about superalgebra}
	We recall some basic notions of superalgebras. We refer the reader to ~\cite[\S 2-b]{BK}. Let us denote by
	${\rm p}(v)\in\mathbb{Z}_2$ \label{pag:||} the parity of a homogeneous vector $v$ of a
	${\rm R}$-vector superspace. By a superalgebra, we mean a
	$\mathbb{Z}_2$-graded associative ${\rm R}$-algebra. Let $\mathcal{A}$ be a
	${\rm R}$-superalgebra. By an $\mathcal{A}$-module, we mean a $\mathbb{Z}_2$-graded
	left $\mathcal{A}$-module.	A homomorphism $f:V\rightarrow W$ of
	$\mathcal{A}$-modules $V$ and $W$ means a linear map such that $
	f(av)=(-1)^{{\rm p}(f){\rm p}(a)}af(v).$  Note that this and other such
	expressions only make sense for homogeneous $a, f$ and the meaning
	for arbitrary elements is to be obtained by extending linearly from
	the homogeneous case. A non-zero element $e\in\mathcal{A}$ is called a super primitive idempotent if
	it is an idempotent with ${\rm p}(e)=\bar{0}$ and it cannot be decomposed as the sum of
	two nonzero orthogonal idempotents with parity $\bar{0}.$ Let $V$ be an	$\mathcal{A}$-module. Let $\Pi
	V$ \label{pag:parity shift} be the same underlying vector space but with the opposite
	$\mathbb{Z}_2$-grading. The new action of $a\in\mathcal{A}$ on $v\in\Pi
	V$ is defined by $a\cdot
	v:=(-1)^{{\rm p}(a)}av$. Note that the identity map on $V$ defines
	an isomorphism from $V$ to $\Pi V.$
    More generally, the homomorphism $f:V\rightarrow W$ of
	$\mathcal{A}$-modules $V$ and $W$ is odd (resp., even) if and only if the same map
    $f:\Pi V\rightarrow W$ or $f:V\rightarrow \Pi W$ is even (resp., odd).

	A superalgebra analog of Schur's Lemma states that the endomorphism
	algebra of a finite dimensional irreducible module over a
	$\mathbb{K}$-superalgebra is either one dimensional or two dimensional. In the
	former case, we call the module of {\em type }\texttt{M} while in
	the latter case the module is called of {\em type }\texttt{Q}.

	\begin{example}\label{simple algebra}
		1). Let $V$ be a superspace with superdimension $(m,n)$ over $\mathbb{K}$, then $\mathcal{M}_{m,n}:={\text{End}}_{\mathbb{K}}(V)$ is a simple superalgebra with simple module $V$ of {\em type }\texttt{M}. Then the set of super primitive idempotents of $\mathcal{M}_{m,n}$ is
		$\{E_{ii} \mid i=1,\ldots,m+n\}.$ One can see that there is an evenly $\mathcal{M}_{m,n}$-supermodule isomorphism $V \cong \mathcal{M}_{m,n}E_{ii}$ if $i\in \{1,\ldots,m\},$ and there is an evenly $\mathcal{M}_{m,n}$-supermodule isomorphism $\Pi V \cong \mathcal{M}_{m,n}E_{ii}$ if $i\in \{m+1,\ldots,m+n\}.$\\
		2). Let $V$ be a superspace with superdimension $(n,n)$ over field $\mathbb{K}$. We define $\mathcal{Q}_n:=
\Biggl\{\biggl(\begin{matrix} A & B\\
                             -B & A
		\end{matrix}\biggr) \biggm| A,B\in  M_n\Biggr\}\subset \mathcal{M}_{n,n}$. Then the set of super primitive idempotents of $\mathcal{Q}_n$ is
		$\Biggl\{\biggl(\begin{matrix}
            E_{ii} & 0\\
			0& E_{ii}
		\end{matrix}\biggr) \biggm| i\in  \{1,\ldots,n\} \Biggr\}$
		and there is an evenly $\mathcal{Q}_n$-supermodule isomorphism $V \cong \mathcal{Q}_n \biggl(\begin{matrix}
            E_{ii} & 0\\
			0& E_{ii}
		\end{matrix}\biggr)$ for each $i=1,\ldots,n.$
		
	\end{example}

	Recall that $\mathbb{K}$ is an algebraically closed field of characteristic different from $2$. For any $a\in \mathbb{K}$, we fix a solution of the equation $x^2=a$ and denote it by $\sqrt{a}$.
	
	Let $A$ be any algebra and $a_1,a_2,\ldots,a_p\in A$, we define the ordered product \label{pag:ordered product} as
	$$\overrightarrow{\prod_{i=1,2,\ldots, p}}a_i:=a_1 a_2 \ldots a_p.$$
	
	\begin{example}\cite[Lemma 2.4]{LS2}\label{lem:clifford rep}
			Let $\mathcal{C}_n$ \label{pag:Clifford algebra} be the Clifford superalgebra over $\mathbb{K}$ generated by odd generators $C_1,\ldots,C_n,$ subject to the following relations
		$$C_i^2=1,C_iC_j=-C_jC_i, \quad 1\leq i\neq j\leq n.$$
We define
		$$I_n:=\begin{cases}
			\{1\}, &\text{if $n=1$;}\\
			\Biggl\{2^{-\lfloor n/2 \rfloor}\cdot\overrightarrow{\prod}_{k=1,\cdots,{\lfloor n/2 \rfloor}}(1+(-1)^{a_k} \sqrt{-1}C_{2k-1}C_{2k})\Biggm|a_k\in\Z_2,\,1\leq k\leq {\lfloor n/2 \rfloor} \Biggr\}, &\text{if $n>1$,}
		\end{cases}$$
		where $\lfloor n/2 \rfloor$ \label{pag:round down} denotes the greatest integer less than or equal to $n/2.$
		Then $\mathcal{C}_n$ is a simple superalgebra with the unique simple (super)module of type $\texttt{Q}$ if $n$ is odd, of type $\texttt{M}$ if $n$ is even. The set $I_n$ forms a complete set of super primitive idempotents for $\mathcal{C}_n.$
	
	\end{example}
	
	{\bf In the rest of this subsection, we assume ${\rm R}=\mathbb{K}$.}
	
	Given two superalgebras $\mathcal{A}$ and $\mathcal{B}$, we view
	the tensor product of superspaces $\mathcal{A}\otimes\mathcal{B}$
	as a superalgebra with multiplication defined by
	$$
	(a\otimes b)(a'\otimes b')=(-1)^{{\rm p}(b){\rm p}(a')}(aa')\otimes (bb'),
	\qquad a,a'\in\mathcal{A}, b,b'\in\mathcal{B}.
	$$
	Suppose $V$ is an $\mathcal{A}$-module and $W$ is a
	$\mathcal{B}$-module. Then $V\otimes W$ affords $\mathcal{A}\otimes\mathcal{B}$-module
	denoted by $V\boxtimes W$ via
	$$
	(a\otimes b)(v\otimes w)=(-1)^{{\rm p}(b){\rm p}(v)}av\otimes bw,~a\in A,
	b\in B, v\in V, w\in W.
	$$
	
	If $V$ is an irreducible $\mathcal{A}$-module and $W$ is an
	irreducible $\mathcal{B}$-module, $V\boxtimes W$ may not be
	irreducible. Indeed, we have the following standard lemma (cf.
	\cite[Lemma 12.2.13]{K1}).
	\begin{lem}\label{tensorsmod}
		Let $V$ be an irreducible $\mathcal{A}$-module and $W$ be an
		irreducible $\mathcal{B}$-module.
		\begin{enumerate}
			\item If both $V$ and $W$ are of type $\texttt{M}$, then
			$V\boxtimes W$ is an irreducible
			$\mathcal{A}\otimes\mathcal{B}$-module of type $\texttt{M}$.
			
			\item If one of $V$ or $W$ is of type $\texttt{M}$ and the other
			is of type $\texttt{Q}$, then $V\boxtimes W$ is an irreducible
			$\mathcal{A}\otimes\mathcal{B}$-module of type $\texttt{Q}$.
			
			\item If both $V$ and $W$ are of type $\texttt{Q}$, then
			$V\boxtimes W\cong X\oplus \Pi X$ for a type $\texttt{M}$
			irreducible $\mathcal{A}\otimes\mathcal{B}$-module $X$.
		\end{enumerate}
		Moreover, all irreducible $\mathcal{A}\otimes\mathcal{B}$-modules
		arise as constituents of $V\boxtimes W$ for some choice of
		irreducibles $V,W$.
	\end{lem}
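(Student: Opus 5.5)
The plan is to reduce everything to the structure of simple superalgebras over the algebraically closed field $\mathbb{K}$, via the super Wedderburn theorem. For finite-dimensional $\mathcal{A}$, $\mathcal{B}$ we may replace them by $\mathcal{A}/\rad\mathcal{A}$ and $\mathcal{B}/\rad\mathcal{B}$. Indeed, $\rad\mathcal{A}\otimes\mathcal{B}+\mathcal{A}\otimes\rad\mathcal{B}$ is a nilpotent super ideal of $\mathcal{A}\otimes\mathcal{B}$, so it acts trivially on every irreducible module. Moreover $V$ and $W$ factor through these semisimple quotients. We may then further replace $\mathcal{A}$ and $\mathcal{B}$ by the simple super factors $\mathcal{A}_V$ and $\mathcal{B}_W$ that act faithfully on $V$ and $W$. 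Up to even isomorphism these are $\mathcal{M}_{m,n}$ (type $\texttt{M}$) or $\mathcal{Q}_n$ (type $\texttt{Q}$), as in Example \ref{simple algebra}. This uses the superalgebra Schur lemma recalled above, together with the density theorem.

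\textbf{Computing the tensor products.} The key step is to identify $\mathcal{A}_V\otimes\mathcal{B}_W$ in each case.

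In case (1), $\mathcal{M}_{m,n}\otimes\mathcal{M}_{m',n'}\cong \End_{\mathbb{K}}(V\otimes W)=\mathcal{M}_{mm'+nn',\,mn'+nm'}$. The map is given by the sign-twisted action $V\boxtimes W$. It is an injective homomorphism, hence an isomorphism by dimension count. So $V\boxtimes W$ is the natural module of a type $\texttt{M}$ simple superalgebra, which makes it irreducible of type $\texttt{M}$.

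For $\mathcal{Q}_n$, use $\mathcal{Q}_n\cong M_n(\mathbb{K})\otimes\mathcal{C}_1$, where $M_n$ is purely even. Under this identification the natural module is $\mathbb{K}^n\boxtimes\mathcal{C}_1$.

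In case (2), $\mathcal{M}_{m,n}\otimes \mathcal{Q}_{k}\cong \mathcal{M}_{m,n}\otimes M_k\otimes\mathcal{C}_1\cong \mathcal{Q}_{k(m+n)}$, with natural module $V\boxtimes W$. This gives type $\texttt{Q}$.

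In case (3), $\mathcal{Q}_k\otimes\mathcal{Q}_l\cong M_{kl}\otimes\mathcal{C}_1\otimes\mathcal{C}_1=M_{kl}\otimes \mathcal{C}_2$. By Example \ref{lem:clifford rep}, $\mathcal{C}_2\cong\mathcal{M}_{1,1}$ is simple of type $\texttt{M}$. Its regular module $\mathcal{C}_1\boxtimes\mathcal{C}_1$ decomposes along the two idempotents $\tfrac12(1\pm\sqrt{-1}C_1C_2)$ as $U\oplus\Pi U$. Tensoring with $\mathbb{K}^{kl}$ gives $V\boxtimes W\cong X\oplus\Pi X$ with $X$ irreducible of type $\texttt{M}$.

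\textbf{Exhaustion.} Let $L$ be an irreducible $\mathcal{A}\otimes\mathcal{B}$-module. Restrict it to $\mathcal{A}\otimes 1$ and pick an irreducible $\mathcal{A}$-submodule $V$. Then $\Hom_{\mathcal{A}}(V,L)$ is a $\mathcal{B}$-supermodule, using the sign conventions above. Pick an irreducible $\mathcal{B}$-submodule $W$ of it. The evaluation map $V\boxtimes W\to L$ is a nonzero $\mathcal{A}\otimes\mathcal{B}$-homomorphism, so it is surjective since $L$ is irreducible. Hence $L$ is a quotient, and therefore a constituent by (1)–(3), of $V\boxtimes W$.

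\textbf{Main obstacle.} The main obstacle is bookkeeping the signs. One must check that the map $\mathcal{A}\otimes\mathcal{B}\to\End(V\otimes W)$, together with the identifications $\mathcal{Q}_n\cong M_n\otimes\mathcal{C}_1$, are genuinely even superalgebra isomorphisms under the twisted multiplication. One must also check that the hom-space in the exhaustion step carries a well-defined $\mathcal{B}$-action. I would verify these on homogeneous generators, as in \cite[Lemma 12.2.13]{K1}.
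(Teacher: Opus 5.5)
Your proof is correct. The paper gives no proof of its own and only cites \cite[Lemma 12.2.13]{K1}; your argument is the standard one for that result. You reduce via the super Schur lemma and density to the simple images $\mathcal{M}_{m,n}$ or $\mathcal{Q}_n$, compute the three tensor products, and get exhaustion from the evaluation map $V\boxtimes\Hom_{\mathcal{A}}(V,L)\to L$. With the action $b\cdot f=(1\otimes b)\circ f$, that map is only a homomorphism if you take it to be $v\otimes f\mapsto(-1)^{{\rm p}(f){\rm p}(v)}f(v)$.

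The one step that needs more than sign-checking is case (2). There $\mathcal{M}_{m,n}\otimes M_k\cong\mathcal{M}_{km,kn}$ is not purely even, so $\mathcal{Q}_N\cong M_N\otimes\mathcal{C}_1$ does not apply directly. You also need $\mathcal{M}_{p,q}\otimes\mathcal{C}_1\cong M_{p+q}\otimes\mathcal{C}_1$ with $M_{p+q}$ purely even. One such isomorphism is $a\otimes 1\mapsto a\otimes C_1^{{\rm p}(a)}$ and $1\otimes C_1\mapsto\sigma\otimes C_1$, where $\sigma={\rm diag}(I_p,-I_q)$.
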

	
	If $V$ is an irreducible $\mathcal{A}$-module and $W$ is an
	irreducible $\mathcal{B}$-module, denote by $V\circledast W$ \label{pag:irrtensor} an
	irreducible component of $V\boxtimes W$. Thus,
	$$
	V\boxtimes W=\left\{
	\begin{array}{ll}
		V\circledast W\oplus \Pi (V\circledast W), & \text{ if both } V \text{ and } W
		\text{ are of type }\texttt{Q}, \\
		V\circledast W, &\text{ otherwise}.
	\end{array}
	\right.
	$$
%	\subsection{Clifford algebra $\mathcal{C}_n$}
%	In this subsection, we shall recall the representation theory of Clifford superalgebra $\mathcal{C}_n$ which will be used in later sections.

 \subsection{Generality on $\Z\times\Z_2$ graded algebra}
 A $\Z\times\Z_2$-graded ${\rm R}$-module (or graded ${\rm R}$-supermodule, or shortly, graded) is an ${\rm R}$-module $M$ which has a direct sum
 decomposition $$M=\bigoplus_{(d,a)\in\Z\times\Z_2}M_{d,a},$$ such that each $M_{d,a}$ is an ${\rm R}$-module, for any $(d,a)\in\Z\times\Z_2$.

Let $M$ be a $\Z\times\Z_2$-graded ${\rm R}$-module. We set $M_d=\bigoplus_{a\in\Z_2}M_{d,a}$ for any $d\in \Z,$ and $M_{a}=\bigoplus_{d\in\Z}M_{d,a},$ for any $a\in\Z_2$.
Let $(d,a)\in\Z\times\Z_2$ and $m\in M_{d,a}$. We say $m$ is ($\Z\times\Z_2$)-homogeneous of bidegree $(d,a)$ and use notations $\deg m=d,$ ${\rm p}(m)=a$. We use $\underline{M}$ to denote the
 ungraded ${\rm R}$-module obtained from $M$ by forgetting the $\Z\times\Z_2$-grading on $M$. For $l\in\Z$, let $M\langle l\rangle$ \label{pag:degree shift} be the $\Z\times\Z_2$-graded
 ${\rm R}$-module obtained by shifting the $\Z$-grading on $M$ up by $l,$ that is,
 $M\langle l\rangle_{d,a}=M_{d-l,a}$ for $d\in\Z$. Furthermore, for $b\in\Z_2$, the $\Z\times\Z_2$-graded ${\rm R}$-module
 $\Pi^b M\langle l\rangle$ is obtained by setting $(\Pi^b M\langle l\rangle)_{d,a}=M_{d-l,a+b}$ for $(d,a)\in\Z\times\Z_2.$

 A $\Z\times\Z_2$-graded ${\rm R}$-algebra is a unital associative ${\rm R}$-algebra
 $\mathcal{A}=\bigoplus_{(d,a)\in\Z\times\Z_2}\mathcal{A}_{d,a}$ which is a $\Z\times\Z_2$-graded ${\rm R}$-module such that
 $\mathcal{A}_{d,a}\mathcal{A}_{e,b}\subseteq \mathcal{A}_{d+e,a+b},$ for all $d,e\in\Z,$ $a,b\in\Z_2.$
It follows from definition that $1\in\mathcal{A}_{0,\bar{0}}.$
 A graded (left) $\mathcal{A}$-module is a $\Z\times\Z_2$-graded ${\rm R}$-module $M$ such that $\underline{M}$ is
 an $\underline{\mathcal{A}}$-module and $\mathcal{A}_{d,a} M_{e,b}\subseteq M_{d+e,a+b}$, for all
 $d,e\in\Z,$ $a,b\in\Z_2.$ Then the notions of  $\Z\times\Z_2$-graded submodules, $\Z\times\Z_2$-graded quotient modules, and $\Z\times\Z_2$-graded right $\mathcal{A}$-modules are defined in the obvious way.

 Let $\mathcal{A}$ be a $\Z\times\Z_2$-graded ${\rm R}$-algebra. We define $\mathcal{A}\text{-}{\rm Mod}$ to be the category of all
 finitely generated $\Z\times\Z_2$-graded left $\mathcal{A}$-modules together with bidegree preserving
 homomorphisms, that is,
 $${\rm hom}_{\mathcal{A}}(M,N)=\{f\in{\rm Hom}_{\underline{\mathcal{A}}}(\underline{M},\underline{N})\mid
 f(M_{d,a})\subseteq N_{d,a}\text{ for all }(d,a)\in\Z\times\Z_2 \},$$
 for all $M,N\in \mathcal{A}\text{-}{\rm Mod}.$ We define
 $$
 {\rm Hom}_{\mathcal{A}}(M,N):=\bigoplus_{(d,a)\in\Z\times\Z_2}{\rm hom}_{\mathcal{A}}(\Pi^a M\langle d\rangle,N)
 $$
 for $M,N\in \mathcal{A}\text{-}{\rm Mod}.$ Then ${\rm Hom}_{\mathcal{A}}(M,N)$ is a $\Z\times\Z_2$-graded ${\rm R}$-module with ${\rm Hom}_{\mathcal{A}}(M,N)_{d,a}:={\rm hom}_{\mathcal{A}}(\Pi^a M\langle {d}\rangle,N)$. Therefore, any $f\in {\rm hom}_{\mathcal{A}}(\Pi^a M\langle d\rangle,N)$ is a homogeneous map from $M$ to $N$ of bidegree $(d,a)\in\Z\times\Z_2,$, i.e., $\deg f=d,\,{\rm p}(f)=a.$ In particular, the elements of ${\rm hom}_{\mathcal{A}}(M,N)$ are homogeneous
 maps of bidegree $(0,\bar{0}).$

\section{Generalized graded cellular superalgebra}\label{Generalized graded cellular superalgebra}
In this section, we introduce the notion of generalized graded cellular superalgebras
and establish their representation theory. This generalises Graham-Lehrer's \cite{GL} theory of cellular algebras and
Hu-Mathas's \cite{HM1} theory of $\Z$-graded cellular algebras.

\subsection{Generalized graded cellular superalgebra}
Let ${\rm K}$ be a field of characteristic different from $2$.
\label{pag:GGCdatum}
\begin{defn}
Suppose	$\mathcal{A}$ is a finite dimensional $\Z$-graded ${\rm K}$-superalgebra, and ${\rm K}$ is concentrated on $\Z$ degree $0$ and $\Z_2$ degree $\bar{0}$. A {\bf generalized graded super cell datum} for $\mathcal{A}$ is an ordered hextuple $(\mathscr{P}, \mathscr{T},\mathscr{B},\mathscr{C},\deg,{\rm p})$, where
\begin{enumerate}
\item[(1)] $(\mathscr{P},\lhd)$ is a finite poset;
\item[(2)] for any $\lambda\in\mathscr{P}$, there is a finite set $\mathscr{T}(\lambda)$;
\item[(3)] for any $\lambda\in\mathscr{P}$, there is a (finite dimensional) {\bf semisimple} superalgebra $\mathcal{B}_\lambda$ with a homogeneous ${\rm K}$-basis $\mathscr{B}_\lambda$, which is concentrated on $\Z$-degree $0$;
\item[(4)] $\mathscr{C}:\bigsqcup_{\lambda\in\mathscr{P}} \mathscr{T}(\lambda)\times {\mathcal{B}_\lambda}\times  \mathscr{T}(\lambda)\rightarrow \mathcal{A}; (i,u,j)\mapsto c^\lambda_{i,u,j},\,\deg: \bigsqcup_{\lambda\in\mathscr{P}}   \mathscr{T}(\lambda)\rightarrow \Z,\, {\rm p}:\bigsqcup_{\lambda\in\mathscr{P}} \mathscr{T}(\lambda)\rightarrow \Z_2$ are three functions such that $\mathscr{C}$ is injective.
\end{enumerate}
Moreover, we have the following conditions.
\begin{enumerate}
	\item[(GCd)] Each element $c^\lambda_{i,u,j}$ is homogeneous of $\Z$-degree $\deg(i)+\deg(j)$ and $\Z_2$-degree ${\rm p}(i)+{\rm p}(j)+{\rm p}(u)$, where $i,j\in  \mathscr{T}(\lambda), u\in\mathscr{B}_\lambda,\lambda\in \mathscr{P}$.
	\item[(GC1)] $\{c^\lambda_{i,u,j}\mid i,j\in  \mathscr{T}(\lambda), u\in\mathscr{B}_\lambda,\lambda\in \mathscr{P}\}$ forms a homogeneous ${\rm K}$-basis of $\mathcal{A}$ for $i,j\in  \mathscr{T}(\lambda), u\in\mathscr{B}_\lambda,\lambda\in \mathscr{P}$.
 \item[(GC2)] The function $\mathscr{C}$ is ${\rm K}$-linear in ${\mathcal{B}_\lambda},$ that means, we have $rc^\lambda_{i,u,j}+r'c^\lambda_{i,u',j}=c^\lambda_{i,ru+r'u',j}$ for $i,j\in  \mathscr{T}(\lambda), u,u'\in {\mathcal{B}_\lambda},\lambda\in \mathscr{P}$ and $r,r'\in{\rm K}$.
	\item [(GC3)] For any $i,j,i',j'\in  \mathscr{T}(\lambda), u,u',u''\in\mathscr{B}_\lambda,\lambda\in \mathscr{P}$, we have a function $r^{i',u'}_{i,u}: \mathcal{A} \rightarrow {\rm K}: a\mapsto r^{i',u'}_{i,u}(a)$ such that for any $a\in \mathcal{A}$ and $c^\lambda_{i,u,j}$ where $i,j\in  \mathscr{T}(\lambda), u\in\mathscr{B}_\lambda,\lambda\in \mathscr{P}$, we have
\begin{align}\label{eq2.GC2}
a c^\lambda_{i,uu'',j}=\sum_{\substack{i'\in  \mathscr{T}(\lambda)\\u'\in \mathscr{B}_\lambda}}r^{i',u'}_{i,u}(a)c^\lambda_{i',u'u'',j}\pmod{\mathcal{A}^{\lhd \lambda}},
\end{align}
where $$\mathcal{A}^{\lhd\lambda}:=\sum_{\substack{(i,u,j)\in \mathscr{T}(\mu)\times \mathcal{B}_\mu\times \mathscr{T}(\mu)\\ \mu\lhd\lambda}}{\rm K}c^\mu_{i,u,j}.$$
	\item[(GC4)]  For each $\lambda\in \mathscr{P}$, there is an ${\rm K}$-algebraic anti-involution $\omega_\lambda$ on ${\mathcal{B}_\lambda}$ and the ${\rm K}$-linear map $*:\mathcal{A}\rightarrow \mathcal{A}$ determined by $(c^\lambda_{i,u,j})^*=c^\lambda_{j,\omega_\lambda(u),i}$ where $i,j\in  \mathscr{T}(\lambda),u\in \mathscr{B}_\lambda,\lambda\in\mathscr{P}$ is an anti-isomorphism of $\mathcal{A}$.
		\end{enumerate}

	A {\bf generalized graded cellular superalgebra} is a $\Z$-graded superalgebra which has a generalized graded super cellular datum. The
	basis $\{c^\lambda_{i,u,j} \mid i,j\in  \mathscr{T}(\lambda), u\in\mathscr{B}_\lambda,\lambda\in \mathscr{P}\}$ is a {\bf generalized graded super cellular basis} of $\mathcal{A}$.
	\end{defn}

\begin{rem}
For any $i,j,i',j'\in  \mathscr{T}(\lambda), u,u'\in\mathscr{B}_\lambda,\lambda\in \mathscr{P}$, by \eqref{eq2.GC2}, we have in particular
		 \begin{align}\label{eq1.GC2}
			a c^\lambda_{i,u,j}=\sum_{\substack{i'\in  \mathscr{T}(\lambda)\\u'\in \mathscr{B}_\lambda}}r^{i',u'}_{i,u}(a)c^\lambda_{i',u',j}\pmod{\mathcal{A}^{\lhd \lambda}}.
		\end{align}
	
	\end{rem}
	\begin{example}
		\begin{enumerate}
\item If we forget the $\Z_2$ grading, and ${\mathcal{B}_\lambda}={\rm K},\,\omega_\lambda={\rm id}_{\rm K}$ for all $\lambda\in\mathscr{P}$, then we recover the definition of $\Z$-graded cellular algebra \cite{HM1}. If we further forget the $\Z$-grading, then we recover the original definition of cellular algebra \cite{GL}.

\item  Let's consider the semisimple superalgebra $\mathcal{M}_{1,1}$. Then $\mathcal{M}_{1,1}$ is a generalized graded cellular superalgebra with $\mathscr{P}=\{\star\}$ being the set consisting of a single element,
    $\mathscr{T}(\star)=\{1,2\},\,\mathcal{B}_\star={\rm K},\,\mathscr{B}(\star)=\{1\}$, and $$c^\star_{1,1}=E_{12},\,c^\star_{1,2}=E_{11},\,c^\star_{2,1}=E_{22},\,\,c^\star_{2,2}=E_{21},$$
where $\deg(1)=-1,\,\deg(2)=1,{\rm p}(1)={\rm p}(2)=\bar{0}$ and $\omega_\star$ being the identity map.

\item Let's consider the semisimple superalgebra $\mathcal{Q}_2$. We have $\mathcal{Q}_2\cong \mathcal{M}_2(\mathcal{C}_1)$. Then $\mathcal{Q}_2$ is a generalized graded cellular superalgebra with $\mathscr{P}=\{\star\}$ being the set consisting of a single element, $ \mathscr{T}(\star)=\{1,2\},\,\mathcal{B}_\star=\mathcal{C}_1,\,\mathscr{B}_\star=\{1,C_1\},$ and $$c^\star_{1,u,1}=E_{12}(u),\,c^\star_{1,u,2}=E_{11}(u),\,c^\star_{2,u,1}=E_{22}(u),\,\,c^\star_{2,u,2}=E_{21}(u),$$   where $u\in\mathscr{B}(\star)$, $\deg(1)=-1,\,\deg(2)=1,{\rm p}(1)={\rm p}(2)=\bar{0}$ for $u\in \mathscr{B}(\star),$ and $\omega_\star$ being the identity map.
\end{enumerate}
\end{example}

{\bf Throughout this section, we shall assume $\mathcal{A}$ is a generalized graded cellular superalgebra over ${\rm K}$ with generalized graded super cellular datum $(\mathscr{P},\mathscr{T},\mathscr{B},\mathscr{C},\deg,{\rm p})$.}
\begin{lem}\label{multiply}
	For any $i,j,i',j'\in  \mathscr{T}(\lambda), u,u',u''\in\mathscr{B}_\lambda,\lambda\in \mathscr{P}$ and $a\in \mathcal{A}$, we have$$c^\lambda_{i,\omega_\lambda(u),j}a=\sum_{\substack{j'\in  \mathscr{T}(\lambda)\\u'\in\mathscr{B}_\lambda}}r^{j',u'}_{j,u}(a^*)c^\lambda_{i,\omega_\lambda(u'),j'}\pmod{\mathcal{A}^{\lhd \lambda}}
	$$  and $$c^\lambda_{i,\omega_\lambda(u'')\omega_\lambda(u),j}a=\sum_{\substack{j'\in  \mathscr{T}(\lambda)\\u'\in\mathscr{B}_\lambda}}r^{j',u'}_{j,u}(a^*)c^\lambda_{i,\omega_\lambda(u'')\omega_\lambda(u'),j'}\pmod{\mathcal{A}^{\lhd \lambda}}.
	$$
	\end{lem}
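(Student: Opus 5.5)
The plan is to apply the anti-isomorphism $*$ from (GC4) to the left-multiplication rule \eqref{eq2.GC2}, with $a^*$ in place of $a$. First I will record two auxiliary facts. Since $*$ is an anti-isomorphism determined on the basis by $(c^\lambda_{i,u,j})^*=c^\lambda_{j,\omega_\lambda(u),i}$, and since by (GC2) $\mathscr{C}$ is ${\rm K}$-linear in the middle entry, we get $(c^\lambda_{i,x,j})^*=c^\lambda_{j,\omega_\lambda(x),i}$ for every $x\in\mathcal{B}_\lambda$, not only for basis elements. To see this, expand $x=\sum_u r_u u$ in $\mathscr{B}_\lambda$, use (GC2) and the linearity of $*$ and $\omega_\lambda$. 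Second, $*$ maps $\mathcal{A}^{\lhd\lambda}$ into itself, because it permutes the spanning set $\{c^\mu_{i,x,j}\mid\mu\lhd\lambda\}$ up to the same rule.

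I will also need $*$ to be an involution, $a^{**}=a$. This holds on basis elements because $\omega_\lambda$ is an anti-involution, so $\omega_\lambda^2={\rm id}$, and then by linearity for all $a$. If the definition of anti-involution is read as allowing super signs, that would have to be checked here. This point and the sign issue below are the only real subtleties.

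Now fix $a$ and apply \eqref{eq2.GC2} to $a^*$ with the element $c^\lambda_{j,u u''',i}$, taking $u'''=1$ for the first formula and $u'''=u''$ for the second:
\[
a^*c^\lambda_{j,uu''',i}\equiv\sum_{j',u'}r^{j',u'}_{j,u}(a^*)\,c^\lambda_{j',u'u''',i}\pmod{\mathcal{A}^{\lhd\lambda}}.
\]
Applying $*$ turns the left side into $(c^\lambda_{j,uu''',i})^*a^{**}=c^\lambda_{i,\omega_\lambda(uu'''),j}\,a$. Since $\omega_\lambda$ is an anti-automorphism, $\omega_\lambda(uu'')=\omega_\lambda(u'')\omega_\lambda(u)$. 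The right side becomes $\sum r^{j',u'}_{j,u}(a^*)\,c^\lambda_{i,\omega_\lambda(u'')\omega_\lambda(u'),j'}$ modulo $\mathcal{A}^{\lhd\lambda}$. With $u'''=1$ this is the first identity, and with $u'''=u''$ it is the second.

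The main obstacle I expect is sign bookkeeping. In a superalgebra one might worry that an anti-isomorphism satisfies $(xy)^*=(-1)^{{\rm p}(x){\rm p}(y)}y^*x^*$. However, (GC4) states plainly that $*$ is an anti-isomorphism and that $\omega_\lambda$ is an algebraic anti-involution, so I will use the unsigned rules $(xy)^*=y^*x^*$ and $\omega_\lambda(xy)=\omega_\lambda(y)\omega_\lambda(x)$ throughout. Under that reading no signs appear, which matches the statement.
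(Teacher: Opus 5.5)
Your proposal is correct and follows the paper's proof: both apply $*$ to the left-action rule of (GC3) evaluated at $a^*$, using $(c^\lambda_{i,x,j})^*=c^\lambda_{j,\omega_\lambda(x),i}$ for all $x\in\mathcal{B}_\lambda$ (from (GC2)) and the unsigned anti-homomorphism rules for $*$ and $\omega_\lambda$. You also make explicit several facts the paper uses tacitly: that $*$ is an involution, that it preserves $\mathcal{A}^{\lhd\lambda}$, and that \eqref{eq2.GC2} extends by linearity to $u''=1$. In addition, your indexing $c^\lambda_{j,uu'',i}$ in the second identity is the right one, since the paper's displayed computation there swaps $i$ and $j$.
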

	
	\begin{proof}
		By \eqref{eq1.GC2} and (GC4), we have \begin{align*}
			c^\lambda_{i,\omega_\lambda(u),j} a&=(a^*c^\lambda_{j,u,i})^*\\
			&=\sum_{\substack{j'\in  \mathscr{T}(\lambda)\\u'\in\mathscr{B}_\lambda}}(r^{j',u'}_{j,u}(a^*)c^\lambda_{j',u',i})^*\\
			&=\sum_{\substack{j'\in  \mathscr{T}(\lambda)\\u'\in\mathscr{B}_\lambda}}r^{j',u'}_{j,u}(a^*)c^\lambda_{i,\omega_\lambda(u'),j'}\pmod{\mathcal{A}^{\lhd \lambda}}.
			\end{align*}  This proves the first equation. By (GC2)  and (GC4), we deduce that\begin{align}\label{dual 0}(c^\lambda_{i,u,j})^*=c^\lambda_{j,\omega_\lambda(u),i}\end{align} for  $i,j\in  \mathscr{T}(\lambda),u\in {\mathcal{B}_\lambda},\lambda\in\mathscr{P}$. We can compute \begin{align*}
c^\lambda_{i,\omega_\lambda(u'')\omega_\lambda(u),j}a&=(a^*c^\lambda_{i,uu'',j})^*\\
&=\left(\sum_{\substack{i'\in  \mathscr{T}(\lambda)\\u'\in \mathscr{B}_\lambda}}r^{i',u'}_{i,u}(a^*)c^\lambda_{i',u'u'',j}\right)^*\\
&=\sum_{\substack{i'\in  \mathscr{T}(\lambda)\\u'\in \mathscr{B}_\lambda}}r^{i',u'}_{i,u}(a^*)c^\lambda_{i',\omega_\lambda(u'')\omega_\lambda(u'),j}\pmod{\mathcal{A}^{\lhd \lambda}},
			\end{align*} where in the first and last equation, we have used \eqref{dual 0}. This completes the proof.
		\end{proof}

	For $\lambda\in \mathscr{P}$, let $$\mathcal{A}^{\unlhd\lambda}:=\sum_{\substack{(i,u,j)\in T(\mu)\times \mathcal{B}_\mu\times T(\mu)\\ \mu\unlhd\lambda}}{\rm K}c^\lambda_{i,u,j},$$ then by (GC2), (GC3) and Lemma \ref{multiply}, we deduce that $\mathcal{A}^{\lhd\lambda}$  and $\mathcal{A}^{\unlhd\lambda}$ is a two-sided ideal of $\mathcal{A}$.
	
%\begin{cor}
%	Suppose $\mathcal{A}$ is a generalized (graded) cellular superalgebra over ${\rm K}$ with generalized super cell datum $(\mathscr{P},T,\mathscr{B},C)$. Suppose for $j,j'\in  \mathscr{T}(\lambda), u,u'\in\mathscr{B}_\lambda,\lambda\in \mathscr{P}$, $r^{j',u'}_{j,u}\neq 0$, then we have $t^\lambda_{j',u',i}t^\lambda_{i,u,j}\in R^\times$ is independent of $i$ for $i\in  \mathscr{T}(\lambda)$, which we denote by $k^{j',u'}_{j,u}\in R^\times$.
%	\end{cor}		
%	\begin{proof}
%		This follows from Lemma \ref{multiply} and (GC2).
%		\end{proof}
%		
%		Henceforth, for each $\lambda\in\mathscr{P}$, we fix $i(\lambda)\in  \mathscr{T}(\lambda)$ and define $k^{j',u'}_{j,u}:=t^\lambda_{j',u',i(\lambda)}t^\lambda_{i(\lambda),u,j}$ where $j,j'\in  \mathscr{T}(\lambda), u,u'\in\mathscr{B}_\lambda$. Now the second equation of Lemma \ref{multiply} can be written as \begin{equation}\label{right action}c^\lambda_{i,u,j}a=\sum_{\substack{j'\in  \mathscr{T}(\lambda)\\u'\in\mathscr{B}_\lambda}}r^{j',u'}_{j,u}(a^*)k^{j',u'}_{j,u}c^\lambda_{i,u',j'}\pmod{A^{\rhd \lambda}}.
%	\end{equation}

\begin{defn}
	For $\lambda\in \mathscr{P}$, we define a $(\mathcal{A},{\mathcal{B}_\lambda})$-bimodule $M_\lambda$ as a finitely generated ${\mathcal{B}_\lambda}$-module with right homogeneous ${\mathcal{B}_\lambda}$-basis $\{a^\lambda_i \mid i\in  \mathscr{T}(\lambda)\}$, where $\deg(a^\lambda_i)=\deg(i)$ and ${\rm p}(a^\lambda_i)={\rm p}(i)$, for $i\in  \mathscr{T}(\lambda)$, and the $(A,{\mathcal{B}_\lambda})$-bimodule structure on $M_\lambda$ is given by
$$a \cdot\bigl(a^\lambda_iu\bigr)=\sum_{\substack{i'\in  \mathscr{T}(\lambda)\\u'\in \mathscr{B}_\lambda}}r^{i',u'}_{i,u}(a)a^\lambda_{i'}u',\qquad (a^\lambda_iu)\cdot v:=a^\lambda_i(uv),$$
	for $a\in \mathcal{A},u,v\in \mathscr{B}_\lambda,i\in  \mathscr{T}(\lambda)$.
%	Note that the $(A,{\mathcal{B}_\lambda})$-bimodule structure is well-defined due to both \eqref{eq1.GC2} and \eqref{eq2.GC2} in (GC3).

	Similarly, we define a $({\mathcal{B}_\lambda},\mathcal{A})$-bimodule $N_\lambda$ as a  finitely generated ${\mathcal{B}_\lambda}$-module with a left homogeneous ${\mathcal{B}_\lambda}$-basis $\{b^\lambda_i \mid i\in  \mathscr{T}(\lambda)\}$ where $\deg(b^\lambda_i)=\deg(i)$ and ${\rm p}(b^\lambda_i)={\rm p}(i)$, for $i\in  \mathscr{T}(\lambda)$, and the $({\mathcal{B}_\lambda},\mathcal{A})$-bimodule structure on $N_\lambda$ is given by $$v\cdot(ub^\lambda_i):=(vu)b^\lambda_i,\qquad\bigl(\omega_\lambda(u)b^\lambda_j\bigr)\cdot a=\sum_{\substack{j'\in  \mathscr{T}(\lambda)\\u'\in \mathscr{B}_\lambda}}r^{j',u'}_{j,u}(a^*)\omega_\lambda(u')b^\lambda_{j'},
	$$ for $a\in A,u,v\in \mathscr{B}_\lambda,i\in  \mathscr{T}(\lambda)$.
	\end{defn}
By \eqref{eq2.GC2}, \eqref{eq1.GC2} and Lemma \ref{multiply}, the $(\mathcal{A},{\mathcal{B}_\lambda})$-bimodule structure on $M_\lambda$ and the $({\mathcal{B}_\lambda},\mathcal{A})$-bimodule structure on $N_\lambda$ are both well-defined. Moreover, we have an $(\mathcal{A},\mathcal{A})$-bimodule isomorphism $$h_\lambda: M_\lambda\otimes_{{\mathcal{B}_\lambda}}N_\lambda\cong \mathcal{A}^{\unlhd\lambda}/\mathcal{A}^{\lhd\lambda}; a^\lambda_iu\otimes b^\lambda_j\mapsto c^\lambda_{i,u,j}+\mathcal{A}^{\lhd\lambda}.$$
%		
%		\begin{cor}\label{action cell}
%		For $\lambda\in \mathscr{P}$, we have an $(A,A)$-bimodule isomorphism $$h_\lambda: M_\lambda\otimes_{{\mathcal{B}_\lambda}}N_\lambda\cong A^{\unrhd\lambda}/A^{\rhd\lambda}; m^\lambda_iu\otimes n^\lambda_j\mapsto c^\lambda_{i,u,j}+A^{\rhd\lambda}.$$
%%			Then the $\mathcal{A}$-module structure of $M_\lambda$ is given as follows $$a \cdot\bigl(m^\lambda_iu\bigr)=\sum_{\substack{i'\in  \mathscr{T}(\lambda)\\u'\in \mathscr{B}_\lambda}}r^{i',u'}_{i,u}(a)m^\lambda_{i'}u',
%%			$$ for $i\in  \mathscr{T}(\lambda), u\in \mathscr{B}_\lambda, a\in A, v\in {\mathcal{B}_\lambda}$. Similarly,  the $\mathcal{A}$-bimodule structure of $N_\lambda$ is given as follows $$\bigl(\omega_\lambda(u)n^\lambda_j\bigr)\cdot a=\sum_{\substack{j'\in  \mathscr{T}(\lambda)\\u'\in \mathscr{B}_\lambda}}r^{j',u'}_{j,u}(a^*)\omega_\lambda(u')n^\lambda_{j'},
%%			$$ for $j\in  \mathscr{T}(\lambda), u\in \mathscr{B}_\lambda, a\in A, v\in {\mathcal{B}_\lambda}$.
%			\end{cor}
%			\begin{proof}
%				This follows from Lemma \ref{multiply}, (GC3).
%				\end{proof}
		\begin{cor}\label{dual 1}
				For $\lambda\in \mathscr{P}$, we have the $(\mathcal{A},{\mathcal{B}_\lambda})$-bimodule isomorphism $M_\lambda\cong \prescript{\omega_\lambda}{}{(N_\lambda)}^*$, where the left $\mathcal{A}$-module structure of $\prescript{\omega_\lambda}{}{(N_\lambda)}^*$ is induced by the anti-involution $*$ and the right ${\mathcal{B}_\lambda}$-module structure is induced by the anti-involution $\omega_\lambda$ .
			\end{cor}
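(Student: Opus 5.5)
We have to show that $M_\lambda\cong \prescript{\omega_\lambda}{}{(N_\lambda)}^*$ as $(\mathcal{A},\mathcal{B}_\lambda)$-bimodules. The dual $(N_\lambda)^*$ is understood as the space of left $\mathcal{B}_\lambda$-linear maps (equivalently, the $\mathrm{K}$-linear dual, with $\mathcal{B}_\lambda$ acting through $\omega_\lambda$). Since $\{b^\lambda_j\}$ is a left $\mathcal{B}_\lambda$-basis of $N_\lambda$, there is a dual family $\{\beta_j\mid j\in \mathscr{T}(\lambda)\}$ with
$$\beta_j\bigl(ub^\lambda_{j'}\bigr)=\delta_{j,j'}\,\omega_\lambda(u).$$
Here $\omega_\lambda$ is inserted so that the twisted right $\mathcal{B}_\lambda$-action becomes compatible.

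The structures on $\prescript{\omega_\lambda}{}{(N_\lambda)}^*$ are as follows:
\begin{itemize}
\item the left $\mathcal{A}$-action is $(a\cdot\phi)(n)=\omega_\lambda^{-1}\bigl(\phi(n\cdot a)\bigr)$-type formula, built from $a^*$ so that it becomes a left action;
\item the right $\mathcal{B}_\lambda$-action is $(\phi\cdot v)(n)=\phi(n)\,\omega_\lambda(v)$, or the appropriate variant.
\end{itemize}
I would fix these conventions so that $\phi\mapsto \phi\cdot v$ is right $\mathcal{B}_\lambda$-linear. Then $\{\beta_j\}$ is a right $\mathcal{B}_\lambda$-basis of $\prescript{\omega_\lambda}{}{(N_\lambda)}^*$, because $\mathcal{B}_\lambda$ is finite dimensional and $N_\lambda$ is free of finite rank. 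Homogeneity, with the degree and parity of $\beta_j$ equal to those of $b^\lambda_j$ (possibly negated for the $\Z$-degree according to the duality convention), follows from the definitions.

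Define the map
$$\Theta:M_\lambda\to\prescript{\omega_\lambda}{}{(N_\lambda)}^*,\qquad a^\lambda_iu\mapsto \beta_i\cdot u.$$
It is bijective, since it sends a right basis to a right basis, and it is right $\mathcal{B}_\lambda$-linear by construction. The remaining check is $\mathcal{A}$-linearity, which suffices on $a^\lambda_iu$. We evaluate $(a\cdot\beta_i u)$ on $\omega_\lambda(u')b^\lambda_j$. This unwinds to a value of $\beta_i u$ on $(\omega_\lambda(u')b^\lambda_j)\cdot a^*$, which by the defining formula of $N_\lambda$ equals
$$\sum_{j',u''} r^{j',u''}_{j,u'}\bigl((a^*)^*\bigr)\,\omega_\lambda(u'')b^\lambda_{j'},\qquad (a^*)^*=a.$$
Only the $j'=i$ terms survive, giving a coefficient $\sum_{u''} r^{i,u''}_{j,u'}(a)$ attached to $u''$.

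On the other side,
$$\Theta\bigl(a\cdot a^\lambda_iu\bigr)=\sum_{i',u'} r^{i',u'}_{i,u}(a)\,\beta_{i'}u'.$$
Evaluated on $\omega_\lambda(u')b^\lambda_j$, only the coefficients $r^{j,\cdot}_{i,\cdot}(a)$ survive. So the identity we need is a symmetry between $r^{i,\cdot}_{j,\cdot}$ and $r^{j,\cdot}_{i,\cdot}$.

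This symmetry is the main obstacle. To obtain it, I would compare the two expansions of $c^\lambda_{j,\omega_\lambda(u'),k}\,a\,c^\lambda_{i,u,k'}$ modulo $\mathcal{A}^{\lhd\lambda}$. One expansion uses \eqref{eq2.GC2}; the other uses Lemma \ref{multiply}, both the plain version and the version with $u''$, so that the $\mathcal{B}_\lambda$-factors can be moved across. Using (GC1), I would then equate coefficients on the basis $c^\lambda_{\cdot,\cdot,\cdot}$. Equivalently, one can use the bimodule isomorphism $h_\lambda$: the pairing $N_\lambda\times M_\lambda\to\mathcal{B}_\lambda$ defined by $b^\lambda_j\, a^\lambda_i\mapsto\delta_{ij}$ is exactly what $\Theta$ encodes. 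One checks that it is $\mathcal{A}$-balanced, $\langle n a,m\rangle=\langle n,am\rangle$, by writing both $(n\cdot a)$ and $(a\cdot m)$ through the $r$-coefficients and using $(c^\lambda)^*$ via \eqref{dual 0}. I would verify this balancedness directly from the formulas defining $M_\lambda$ and $N_\lambda$, which both come from (GC3) applied to $a$ and $a^*$. The key point is that the right action on $N_\lambda$ is defined via $a^*$, precisely so that the transpose of left multiplication corresponds. Finally, a nondegenerate balanced pairing between free modules with dual bases yields the desired isomorphism $\Theta$.
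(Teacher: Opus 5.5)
\textbf{There is a genuine gap, and it starts with how you read the target module.} In this paper $\prescript{\omega_\lambda}{}{(N_\lambda)}^*$ is not a dual of $N_\lambda$. It is $N_\lambda$ itself, with both actions transported through the anti-involutions: $a\cdot n:=n\cdot a^*$ for $a\in\mathcal{A}$, and $n\cdot v:=\omega_\lambda(v)\,n$ for $v\in\mathcal{B}_\lambda$. That is what ``induced by the anti-involution $*$'' and ``induced by $\omega_\lambda$'' mean; genuine duals appear as ${\rm Hom}_{\rm K}(-,{\rm K})$ in Proposition~\ref{dual 4}.

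With this reading the proof is immediate, which is why the paper only cites Lemma~\ref{multiply} and (GC3). Take the map $a^\lambda_iu\mapsto\omega_\lambda(u)b^\lambda_i$.
\begin{itemize}
\item It sends a right basis to a right basis.
\item It is right $\mathcal{B}_\lambda$-linear, since $\omega_\lambda(uv)=\omega_\lambda(v)\omega_\lambda(u)$.
\item It is $\mathcal{A}$-linear, because the right action on $N_\lambda$ was defined using $r(a^*)$. Hence $(\omega_\lambda(u)b^\lambda_i)\cdot a^*=\sum_{i',u'}r^{i',u'}_{i,u}(a)\,\omega_\lambda(u')b^\lambda_{i'}$, which is exactly the image of $a\cdot a^\lambda_iu$.
\end{itemize}
No relation between different structure constants is needed.

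\textbf{Under your reading the claim is false in general, and the step you flag as the main obstacle cannot be carried out.} Take $\mathcal{B}_\lambda={\rm K}$ and write $r^{i'}_{i}$ for $r^{i',1}_{i,1}$. Balancedness of $\langle b^\lambda_j,a^\lambda_i\rangle=\delta_{ij}$ means $r^{i}_{j}(a^*)=r^{j}_{i}(a)$. With your rule $(a\phi)(n)=\phi(n\cdot a^*)$, which is actually a right action, it would instead be $r^i_j(a)=r^j_i(a)$.

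Nothing in (GC1)--(GC4) gives either identity. The $\mathcal{A}$-invariant pairing is the form $f^\lambda$ of Definition~\ref{pairing}, and its Gram matrix need not be the identity, nor even nondegenerate.

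For a concrete counterexample, take the Temperley--Lieb algebra $\mathrm{TL}_3(\delta)$, an ordinary cellular algebra. Its cell with one through strand has $\mathscr{T}(\lambda)=\{1,2\}$, with $e_1a_1=\delta a_1$, $e_1a_2=a_1$ and $e_1^*=e_1$. So $r^1_2(e_1)=1\neq 0=r^2_1(e_1)$, and both versions of your symmetry fail.

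Worse, for $\delta=\pm1$ this cell module is uniserial with non-isomorphic head and socle. So it is not isomorphic to its contravariant dual ${\rm Hom}_{\rm K}(N_\lambda,{\rm K})$ at all. A duality only appears after dividing out the radical of $f^\lambda$, which is precisely the content of Proposition~\ref{dual 4}.
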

			
			\begin{proof}
			This follows from Lemma \ref{multiply} and (GC3).
				\end{proof}
			 By Lemma \ref{multiply} and (GC3), we have $$c^\lambda_{i',u,j}c^\lambda_{i,v,j'}=c^\lambda_{i',w,j'} \pmod{\mathcal{A}^{\lhd \lambda}}
			$$ for $i,j,i',j'\in  \mathscr{T}(\lambda), u,u'\in {\mathcal{B}_\lambda}, w\in {\mathcal{B}_\lambda}, \lambda\in \mathscr{P}.$
			
			\begin{defn}\label{pairing}
			We define the ${\rm K}$-linear map $$ f^\lambda: N_\lambda\otimes_{\mathcal{A}} M_\lambda\rightarrow {\mathcal{B}_\lambda}; ub^\lambda_j\otimes a^\lambda_i v\mapsto  f^\lambda(ub^\lambda_j\otimes a^\lambda_i v)
				$$ such that $$c^\lambda_{i',u,j}c^\lambda_{i,v,j'}=c^\lambda_{i',f^\lambda(ub^\lambda_j\otimes a^\lambda_i v),j'}\pmod{\mathcal{A}^{\lhd \lambda}}
				$$ for $i,j,i',j'\in  \mathscr{T}(\lambda), u,u'\in\mathscr{B}_\lambda, \lambda\in \mathscr{P}.$
				\end{defn}
By (GCd), it's easy to see that the map $f^\lambda$ is even and of $\Z$-degree $0.$
				
			\begin{lem}\label{bilinear}
	We have $f^\lambda$ is a $({\mathcal{B}_\lambda},{\mathcal{B}_\lambda})$-bilinear homomorphism for $\lambda\in\mathscr{P}$.
			\end{lem}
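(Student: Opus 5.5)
The plan is to read the bilinearity of $f^\lambda$ directly off its defining congruence in Definition \ref{pairing}. We use the multiplicativity in $\mathcal{B}_\lambda$ built into (GC3) and Lemma \ref{multiply}, together with the uniqueness coming from (GC1) and the injectivity of $\mathscr{C}$. The first step is well-definedness. Fix $i,j,i',j'$ and $u,v\in\mathcal{B}_\lambda$. By (GC1) and (GC2), an element of the form $\sum_{w}c^\lambda_{i',w,j'}$ modulo $\mathcal{A}^{\lhd\lambda}$ determines $w\in\mathcal{B}_\lambda$ uniquely. Indeed, if $c^\lambda_{i',w,j'}\in \mathcal{A}^{\lhd\lambda}$, then expanding $w$ in the basis $\mathscr{B}_\lambda$ and using linear independence of the cellular basis forces $w=0$. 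Hence $f^\lambda(ub^\lambda_j\otimes a^\lambda_iv)$ is a well-defined element of $\mathcal{B}_\lambda$.

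We next check that it does not depend on the auxiliary indices $i',j'$. This uses (GC3) applied to $a=c^\lambda_{i',u,j}$ acting on $c^\lambda_{i,v,j'}$. The coefficients $r^{i'',u'}_{i,v}(a)$ depend only on $(i,v)$ and $a$, not on $j'$, and Lemma \ref{multiply} gives the symmetric statement for $i'$. Linearity of the pairing in each tensor factor then follows from (GC2).

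For left $\mathcal{B}_\lambda$-linearity we must show $f^\lambda(xu\,b^\lambda_j\otimes a^\lambda_iv)=x\,f^\lambda(ub^\lambda_j\otimes a^\lambda_iv)$ for $x\in\mathcal{B}_\lambda$. It suffices to take $x,u\in\mathscr{B}_\lambda$, since we may then extend by (GC2). Write $u=\omega_\lambda(u_1)$ and $x=\omega_\lambda(x_1)$, which is possible since $\omega_\lambda$ is an anti-involution, so that $xu=\omega_\lambda(u_1x_1)$. Apply the second formula of Lemma \ref{multiply} with $a=c^\lambda_{i,v,j'}$. It expresses $c^\lambda_{i',\omega_\lambda(x_1)\omega_\lambda(u_1),j}\,a$ as $\sum r^{j'',u'}_{j,u_1}(a^*)\,c^\lambda_{i',\omega_\lambda(x_1)\omega_\lambda(u'),j''}$. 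The same formula with $x_1=1$ gives $c^\lambda_{i',u,j}a$ with the same coefficients and without the prefactor. Comparing the two, and using (GC2) to pull $\omega_\lambda(x_1)$ out, yields $c^\lambda_{i',xu,j}\,a\equiv c^\lambda_{i',x\cdot w,j'}$ where $w=f^\lambda(ub^\lambda_j\otimes a^\lambda_iv)$. Right linearity is symmetric: apply \eqref{eq2.GC2} with $a=c^\lambda_{i',u,j}$ acting on $c^\lambda_{i,vy,j'}$. This gives $a\,c^\lambda_{i,vy,j'}\equiv\sum r^{i'',u'}_{i,v}(a)c^\lambda_{i'',u'y,j'}$. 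Comparing with the case $y=1$ shows $f^\lambda(\cdots\otimes a^\lambda_ivy)=f^\lambda(\cdots\otimes a^\lambda_iv)\,y$.

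The main obstacle is the bookkeeping in the comparison step. After expanding the product modulo $\mathcal{A}^{\lhd\lambda}$, the sum $\sum_{j'',u'}$ has to be collapsed to a single cell element $c^\lambda_{i',w,j'}$. This requires the identity $\sum r\,c^\lambda_{i',\omega_\lambda(u'),j''}\equiv c^\lambda_{i',w,j'}$ from the case $x=1$, and then multiplying inside by $\omega_\lambda(x_1)$. To justify that inner multiplication, we will apply (GC2) term by term. We also use that the map $z\mapsto c^\lambda_{i',z,j'}$ is injective modulo $\mathcal{A}^{\lhd\lambda}$, which was established in the first step.
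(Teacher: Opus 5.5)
Your proposal is correct and is essentially the paper's argument written out by hand. The paper uses $h_\lambda$ to identify $ub^\lambda_j\cdot c^\lambda_{i,v,j'}=f^\lambda(ub^\lambda_j\otimes a^\lambda_iv)\,b^\lambda_{j'}$ in $N_\lambda$, and reads off left $\mathcal{B}_\lambda$-linearity from the compatibility of the left $\mathcal{B}_\lambda$-action with the right $\mathcal{A}$-action on $N_\lambda$ (precisely the second formula of Lemma~\ref{multiply} you invoke), with right linearity coming symmetrically from \eqref{eq2.GC2} via $M_\lambda$. Two small fixes are needed in your collapsing step. Use linear independence of the $c^\lambda_{i',z,j''}$ across all $j''$, not only injectivity of $z\mapsto c^\lambda_{i',z,j'}$ for fixed $j'$. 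Also choose $u_1,x_1\in\mathscr{B}_\lambda$ (or extend $r^{j',u'}_{j,u}$ linearly in $u$), since $\omega_\lambda$ need not preserve $\mathscr{B}_\lambda$.
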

			
			\begin{proof}
				 Let $i,j,i',j'\in  \mathscr{T}(\lambda), u,v\in {\mathcal{B}_\lambda},\lambda\in \mathscr{P}.$
			 By Corollary \ref{dual 1}, we deduce that $$h^\lambda(a^\lambda_{i'}\otimes ub^\lambda_j)c^\lambda_{i,v,j'}=h^\lambda(a^\lambda_{i'}\otimes ub^\lambda_jc^\lambda_{i,v,j'}).
				$$ By definition, $$c^\lambda_{i',u,j}c^\lambda_{i,v,j'}=c^\lambda_{i',f^\lambda(ub^\lambda_j\otimes a^\lambda_i v),j'}\pmod{\mathcal{A}^{\lhd \lambda}}.
				$$ It follows that
				 $$ub^\lambda_jc^\lambda_{i,v,j'}=f^\lambda(ub^\lambda_j\otimes a^\lambda_i v)b^\lambda_{j'}.
				$$
				This implies that $f^\lambda$  is left ${\mathcal{B}_\lambda}$-linear. Similarly, we can prove $f^\lambda$ is right ${\mathcal{B}_\lambda}$-linear.
				\end{proof}
				
			{\bf Recall that ${\mathcal{B}_\lambda}$ is semisimple for any $\lambda\in\mathscr{P}$.}
Moreover, for $\lambda\in\mathscr{P}$, we assume ${\mathcal{B}_\lambda}$ has $m_\lambda$ non-isomorphic simple modules and $$
			\{{\mathcal{B}_\lambda} e^\lambda_k \mid 1\leq k\leq m_\lambda\}
			$$ forms a complete set of non-isomorphic simple modules, where $e^\lambda_k$ are primitive idempotents of ${\mathcal{B}_\lambda}$.
			
\label{pag:cell module}
			\begin{defn}\label{cell}
			We define $$\Delta(\lambda,k):={\rm K}\text{-span}\{a^\lambda_i u \mid i\in  \mathscr{T}(\lambda),u\in {\mathcal{B}_\lambda} e^\lambda_k\}\subset M_\lambda,$$ and $$\Delta(k,\lambda):={\rm K}\text{-span}\{ub^\lambda_i  \mid i\in  \mathscr{T}(\lambda),u\in  \omega_\lambda(e^\lambda_k){\mathcal{B}_\lambda}\}\subset N_\lambda
			$$ for $\lambda\in \mathscr{P}$ and $1\leq k\leq m_\lambda$.
				\end{defn}

			\begin{lem}	\label{cell module}
				\begin{enumerate}
					\item Suppose  ${\mathcal{B}_\lambda} e^\lambda_k$ is of type  $\texttt{M}$, then $\Delta(\lambda,k)$ is a left $\mathcal{A}$-module.
					\item Suppose  ${\mathcal{B}_\lambda} e^\lambda_k$ is of type  $\texttt{Q}$, then $\Delta(\lambda,k)$ is a $(A,\mathcal{C}_1)$-bimodule. Moreover, it is free as $\mathcal{C}_1$-module with ${\rm rank}_{\mathcal{C}_1}{\Delta(\lambda,k)}
=(\sharp \mathscr{T}(\lambda)\cdot \dim_{\rm K} {\mathcal{B}_\lambda} e^\lambda_k)/2$.
					\end{enumerate}
				\end{lem}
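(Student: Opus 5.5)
We view $\Delta(\lambda,k)$ as the tensor product $M_\lambda\otimes_{\mathcal{B}_\lambda}\mathcal{B}_\lambda e^\lambda_k$. Since $M_\lambda$ is free as a right $\mathcal{B}_\lambda$-module with homogeneous basis $\{a^\lambda_i\mid i\in\mathscr{T}(\lambda)\}$, the subspace $\Delta(\lambda,k)=\bigoplus_{i}a^\lambda_i\mathcal{B}_\lambda e^\lambda_k$ is exactly $M_\lambda e^\lambda_k$. Here $e^\lambda_k$ is an even idempotent of $\Z$-degree $0$, because $\mathcal{B}_\lambda$ is concentrated in degree $0$ and $e^\lambda_k$ is super primitive. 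Right multiplication by $e^\lambda_k$ commutes with the left $\mathcal{A}$-action on the $(\mathcal{A},\mathcal{B}_\lambda)$-bimodule $M_\lambda$, which was shown well-defined via (GC3). Hence $M_\lambda e^\lambda_k$ is a left $\mathcal{A}$-submodule of $M_\lambda$. Concretely, for $u\in\mathcal{B}_\lambda e^\lambda_k$ we write $u=ue^\lambda_k$ and expand $u$ in the basis $\mathscr{B}_\lambda$. Then (GC3) in the form \eqref{eq2.GC2} with $u''=e^\lambda_k$ shows that $a\cdot(a^\lambda_iu)=\sum r^{i',u'}_{i,u}(a)\,a^\lambda_{i'}u'e^\lambda_k\in\Delta(\lambda,k)$. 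It is graded because $e^\lambda_k$ is homogeneous of bidegree $(0,\bar 0)$. This proves (1) and also the $\mathcal{A}$-module part of (2).

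For (2), we use the super Schur lemma. If $\mathcal{B}_\lambda e^\lambda_k$ is of type $\texttt{Q}$, then $\mathrm{End}_{\mathcal{B}_\lambda}(\mathcal{B}_\lambda e^\lambda_k)$ is two dimensional, spanned by the identity and an odd automorphism $J$ with $J^2=-1$ (up to rescaling, using that ${\rm K}$ has square roots, or at least that $J^2$ is a nonzero scalar). Concretely, $\mathrm{End}_{\mathcal{B}_\lambda}(\mathcal{B}_\lambda e^\lambda_k)\cong (e^\lambda_k\mathcal{B}_\lambda e^\lambda_k)^{\rm op}$. We therefore pick an odd element $c\in e^\lambda_k\mathcal{B}_\lambda e^\lambda_k$ and normalize it so that $c^2=e^\lambda_k$, possibly after adjusting by a scalar; if necessary we pass to the algebraic closure, or note that for a type $\texttt{Q}$ simple module the odd part of $e\mathcal{B}e$ is spanned by such an element. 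The right action of $C_1$ on $\Delta(\lambda,k)$ is then $x\cdot C_1:=xc$, and up to a sign convention it is compatible with the left $\mathcal{A}$-action because it is right $\mathcal{B}_\lambda$-multiplication. This makes $\Delta(\lambda,k)$ an $(\mathcal{A},\mathcal{C}_1)$-bimodule.

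Freeness follows from $\Delta(\lambda,k)=\bigoplus_i a^\lambda_i\otimes\mathcal{B}_\lambda e^\lambda_k$ as right $\mathcal{C}_1$-modules. It therefore suffices to check that $\mathcal{B}_\lambda e^\lambda_k$ is free over $\mathcal{C}_1=\mathrm{span}\{e^\lambda_k,c\}$. Right multiplication by $c$ is an odd involution, so it is an isomorphism between the even and odd parts of $\mathcal{B}_\lambda e^\lambda_k$. Choosing a basis $v_1,\dots,v_r$ of the even part, the elements $\{v_t\}$ form a $\mathcal{C}_1$-basis. This gives $\dim \mathcal{B}_\lambda e^\lambda_k=2r$, and so the rank is $\sharp\mathscr{T}(\lambda)\cdot\dim_{\rm K}\mathcal{B}_\lambda e^\lambda_k/2$.

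The main obstacle is producing the odd element $c$ with $c^2=e^\lambda_k$ over a general field ${\rm K}$ of characteristic not $2$. Over an algebraically closed field this follows from $e\mathcal{B}e\cong\mathcal{C}_1$ for type $\texttt{Q}$. In general, we would take the type-$\texttt{Q}$ hypothesis, which presumes a split situation, to mean $\mathrm{End}\cong\mathcal{C}_1$, and argue from that.
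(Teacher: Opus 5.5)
Your proposal is correct and is essentially the paper's argument: the paper also reads the type $\texttt{Q}$ hypothesis as $\End_{\mathcal{B}_\lambda}(\mathcal{B}_\lambda e^\lambda_k)\cong\mathcal{C}_1$, and lets $C_1$ act on $\Delta(\lambda,k)$ by right multiplication by the odd element $u''=C_1(e^\lambda_k)\in e^\lambda_k\mathcal{B}_\lambda e^\lambda_k$ (your $c$); it checks compatibility with the $\mathcal{A}$-action via \eqref{eq2.GC2}, and gets freeness from $C_1$ interchanging $(\mathcal{B}_\lambda e^\lambda_k)_{\bar{0}}$ and $(\mathcal{B}_\lambda e^\lambda_k)_{\bar{1}}$. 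Your description of $\Delta(\lambda,k)$ as $M_\lambda e^\lambda_k$ just repackages the same (GC3) computation, and the paper makes your splitting assumption over a non-closed ${\rm K}$ silently as well.
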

				
				\begin{proof}
					The left $\mathcal{A}$-module structure in both cases is clear. We only need to explain the right $\mathcal{C}_1$-action in the second case. Since ${\mathcal{B}_\lambda} e^\lambda_k$ is of type  $\texttt{Q}$, then $\End_{{\mathcal{B}_\lambda}}({\mathcal{B}_\lambda} e^\lambda_k)\cong \mathcal{C}_1=\langle 1,C_1\rangle$, where $C_1$ is the odd involution. Then the action of $C_1$ on $\Delta(\lambda,k)$ is given as follows: $$
			(a^\lambda_i u )\cdot C_1=a^\lambda_i C_1(u),\qquad \forall i\in  \mathscr{T}(\lambda),\,u\in {\mathcal{B}_\lambda e^\lambda_k}.
					$$ Using (GC3) and the fact that $c$ is a left ${\mathcal{B}_\lambda}$-module isomorphism, it's easy to check that the action of $C_1$ commutes with action of $\mathcal{A}$ on $\Delta(\lambda,k).$
In fact, let $u\in\mathscr{B}_\lambda$ and $u''=C_1(e_k^\lambda),$ for $a\in\mathcal{A},$ we have
\begin{align*}
a\cdot (a^\lambda_i ue_k^\lambda\cdot C_1)
=a\cdot a^\lambda_i u u''
&=\sum_{\substack{i'\in  \mathscr{T}(\lambda)\\u'\in \mathscr{B}_\lambda}}r^{i',u'}_{i,u}(a)a^\lambda_{i'}u'u''\\
&=\sum_{\substack{i'\in  \mathscr{T}(\lambda)\\u'\in \mathscr{B}_\lambda}}r^{i',u'}_{i,u}(a)a^\lambda_{i'}C_1(u'e_k^\lambda)
=(a\cdot a^\lambda_i ue_k^\lambda)\cdot C_1.
\end{align*}

On the other hand, $C_1$ induces an involution $({\mathcal{B}_\lambda} e^\lambda_k)_{\bar{0}}\rightarrow ({\mathcal{B}_\lambda} e^\lambda_k)_{\bar{1}}$.
%hence $({\mathcal{B}_\lambda} e^\lambda_k)_{\bar{0}}$ is the basis of  ${\mathcal{B}_\lambda} e^\lambda_k$ as $\mathcal{C}_1$-module.
This completes the proof of Lemma.
 					\end{proof}
 					It's an easy exercise to check that $M_\lambda$ (resp. $N_\lambda$) can be decomposed as direct sum of copies of $\Delta(\lambda,k)$ (resp. $\Delta(k,\lambda)$) for $1\leq k\leq m_\lambda$, and \begin{align}\label{dual 2}\Delta(\lambda,k)\cong \Delta(k,\lambda)^*\end{align} as left $\mathcal{A}$-modules by Corollary \ref{dual 1}.

               \begin{defn}\label{radical}
               For $\lambda\in \mathscr{P}$ and $1\leq k\leq m_\lambda$, let
$$\rad\Delta(\lambda,k):=\{x\in\Delta(\lambda,k) \mid f^\lambda(u\otimes x)=0,\forall u\in N_\lambda\}.$$
                \end{defn}
				\begin{lem}\label{jacobson radical}
			    Let $\lambda\in \mathscr{P}$ and $1\leq k\leq m_\lambda$.
%$$\rad\Delta(\lambda,k):=\{x\in\Delta(\lambda,k) \mid f^\lambda(u\otimes x)=0,\forall u\in N_\lambda\}
%				$$

(1) If ${\mathcal{B}_\lambda} e^\lambda_k$ is of type $\texttt{M}$ as ${\mathcal{B}_\lambda}$-module, then $\rad\Delta(\lambda,k)$ is a $\Z\times\Z_2$-graded $\mathcal{A}$-submodule of $\Delta(\lambda,k).$ Further, if $\Delta(\lambda,k)\neq \rad\Delta(\lambda,k),$ then
 $\rad\Delta(\lambda,k)$ is the unique maximal $\Z$-graded $\mathcal{A}$-submodule of $\Delta(\lambda,k).$

(2) If ${\mathcal{B}_\lambda} e^\lambda_k$ is of type $\texttt{Q}$ as ${\mathcal{B}_\lambda}$-module, then $\rad\Delta(\lambda,k)$
is a $\Z\times\Z_2$-graded $(\mathcal{A},\mathcal{C}_1)$-subbimodule of $\Delta(\lambda,k).$
Further, if $\Delta(\lambda,k)\neq \rad\Delta(\lambda,k),$ then $\rad\Delta(\lambda,k)$ is the unique maximal $\Z\times\Z_2$-graded $\mathcal{A}$-submodule of $\Delta(\lambda,k).$
%Moreover, $\rad\Delta(\lambda,k)$ is the (usual) Jacobson radical of $\Delta(\lambda,k)$ if $\Delta(\lambda,k)\neq \rad\Delta(\lambda,k)$.
					\end{lem}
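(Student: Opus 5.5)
The plan is to imitate the classical Graham--Lehrer argument, using the $\mathcal{B}_\lambda$-valued pairing $f^\lambda$ in place of a scalar bilinear form.

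\textbf{Submodule property.} First we show that $\rad\Delta(\lambda,k)$ is a graded submodule.

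1. Fix $x\in\rad\Delta(\lambda,k)$ and $a\in\mathcal{A}$. By Lemma \ref{multiply} and Corollary \ref{dual 1}, $f^\lambda$ is \emph{associative}: $f^\lambda(u\otimes a x)=f^\lambda(u a\otimes x)$ for $u\in N_\lambda$. Indeed, both sides compute the middle $\mathcal{B}_\lambda$-label of $c^\lambda_{i',u,j}\,a\,c^\lambda_{i,v,j'}$ modulo $\mathcal{A}^{\lhd\lambda}$, grouping the product either way. This is exactly how the identity $ub^\lambda_j c^\lambda_{i,v,j'}=f^\lambda(ub^\lambda_j\otimes a^\lambda_iv)b^\lambda_{j'}$ was used in Lemma \ref{bilinear}.
2. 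Since $ua\in N_\lambda$, we get $f^\lambda(u\otimes ax)=0$, so $\rad\Delta(\lambda,k)$ is $\mathcal{A}$-stable.
3. Because $f^\lambda$ is even and of degree $0$, the homogeneous components of $x$ are again in the radical. This gives the $\Z\times\Z_2$-grading.
4. In the type $\texttt{Q}$ case, the right $\mathcal{C}_1$-action is $x\mapsto x\cdot C_1$, given by right multiplication inside $\mathcal{B}_\lambda e^\lambda_k$ by $u''=C_1(e^\lambda_k)\in e^\lambda_k\mathcal{B}_\lambda e^\lambda_k$. Right $\mathcal{B}_\lambda$-linearity of $f^\lambda$ (Lemma \ref{bilinear}) gives $f^\lambda(u\otimes xu'')=f^\lambda(u\otimes x)u''=0$. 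So the radical is a subbimodule.

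\textbf{Uniqueness of the maximal submodule.} Next, assuming $\Delta(\lambda,k)\neq\rad\Delta(\lambda,k)$, we show that every proper graded submodule lies in the radical.

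1. Suppose $y\in\Delta(\lambda,k)$ is homogeneous with $y\notin\rad\Delta(\lambda,k)$. Then there is $u\,b^\lambda_j\in N_\lambda$ with $w:=f^\lambda(ub^\lambda_j\otimes y)\neq0$, where $w\in\mathcal{B}_\lambda e^\lambda_k$ by right linearity.
2. For any $i$, the definition of $f^\lambda$ and the bimodule isomorphism $h_\lambda$ give
\[ c^\lambda_{i,1,j}\cdot(\text{suitable }u)\cdot y = a^\lambda_i\, w. \]
More precisely, acting by $c^\lambda_{i,u,j}$ on $y$ produces $a^\lambda_i f^\lambda(ub^\lambda_j\otimes y)$, because $\mathcal{A}^{\lhd\lambda}$ annihilates $M_\lambda$ by (GC3).
3. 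Hence $\mathcal{A}y\supseteq\{a^\lambda_i\,\mathcal{B}_\lambda w\}$, using left-linearity of $f^\lambda$ to vary the left factor through $\mathcal{B}_\lambda$. Here $\mathcal{B}_\lambda w$ is a nonzero left submodule of $\mathcal{B}_\lambda e^\lambda_k$.
4. By semisimplicity, $\mathcal{B}_\lambda e^\lambda_k$ is simple. In the type $\texttt{M}$ case it is simple as a module; in the type $\texttt{Q}$ case it is simple as a graded (super) module. Therefore $\mathcal{B}_\lambda w=\mathcal{B}_\lambda e^\lambda_k$.
5. It follows that $\mathcal{A}y=\Delta(\lambda,k)$.

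\textbf{Conclusion.} Any graded submodule not contained in the radical contains a homogeneous element outside it, and so equals $\Delta(\lambda,k)$. Hence the radical is the unique maximal graded submodule.

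\textbf{Main obstacle.} The difficulty is the super subtlety in the type $\texttt{Q}$ case: $\mathcal{B}_\lambda w$ is simple only as a $\Z_2$-graded module. This is why the statement says ``$\Z\times\Z_2$-graded'' in (2), whereas in (1) a $\Z$-graded argument suffices. For (1) we must allow non-$\Z_2$-homogeneous $y$. Then we need that $\mathcal{B}_\lambda e^\lambda_k$ of type $\texttt{M}$ is also simple as an ungraded module, which holds because its endomorphism ring is $\mathrm{K}$. The cleanest plan is to check these two simplicity facts via the structure of $\mathcal{M}_{m,n}$ and $\mathcal{Q}_n$ from Example \ref{simple algebra}, applied to the simple summand of $\mathcal{B}_\lambda$ containing $e^\lambda_k$.
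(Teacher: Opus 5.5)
Your proposal is correct and follows the paper's argument. A homogeneous element $y$ outside the radical generates $\Delta(\lambda,k)$: the image of $f^\lambda(-\otimes y)$ is a nonzero left submodule of the simple module $\mathcal{B}_\lambda e^\lambda_k$, and $c^\lambda_{i,u,j}\,y=a^\lambda_i\,f^\lambda(ub^\lambda_j\otimes y)$. Your explicit treatment of part (1), via ungraded simplicity of type $\texttt{M}$ modules, covers a case the paper leaves implicit ("we just prove (2)").

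The one thing to tighten is in type $\texttt{Q}$. There you must choose $u$ homogeneous, which is possible since $f^\lambda$ is even and $y$ is homogeneous. Alternatively, as the paper does, use the whole image $\{f^\lambda(u\otimes y)\mid u\in N_\lambda\}$. Either way $\mathcal{B}_\lambda w$ becomes a graded submodule, which is what the super-simplicity argument requires.
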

					
				\begin{proof}
					It is clear that $\rad\Delta(\lambda,k)$ is a submodule of $\Delta(\lambda,k)$ in both cases. It is graded since $f^\lambda$ is even and is of $\Z$-degree $0$. Next we just prove (2). Since $f^\lambda$ is $({\mathcal{B}_\lambda},{\mathcal{B}_\lambda})$-bilinear, the right $\mathcal{C}_1$-module structure on $\Delta(\lambda, k)$ naturally induces one on $\rad \Delta(\lambda, k).$
Now we assume $\Delta(\lambda,k)\neq \rad\Delta(\lambda,k).$ We claim that
%if ${\mathcal{B}_\lambda} e^\lambda_k$ is of type $\texttt{M}$ (resp. type $\texttt{Q}$), then, for any $\Z$-homogeneous (resp. $\Z \times \Z_2$-homogeneous)
if the $\Z \times \Z_2$-homogeneous element $x\in \Delta(\lambda,k)\setminus \rad\Delta(\lambda,k),$ then $x$ generates $\Delta(\lambda,k)$. Actually, Lemma \ref{bilinear} implies that $\{f^\lambda(u\otimes x) \mid u\in N_\lambda\}$ is a left ${\mathcal{B}_\lambda}$-submodule of ${\mathcal{B}_\lambda} e^\lambda_k$. Hence this is equal to ${\mathcal{B}_\lambda} e^\lambda_k$. Fix $u\in {\mathcal{B}_\lambda} e^\lambda_k$, suppose $\sum_{j\in  \mathscr{T}(\lambda),u_j\in {\mathcal{B}_\lambda}}f^\lambda(u_jb^\lambda_j\otimes x)=u$ and $x=\sum_{i\in  \mathscr{T}(\lambda),v_i\in {\mathcal{B}_\lambda}}a^\lambda_i v_i$. Then for any $i',j'\in  \mathscr{T}(\lambda)$, we have $$
					\sum_{\substack{j\in  \mathscr{T}(\lambda),u_j\in {\mathcal{B}_\lambda}\\i\in  \mathscr{T}(\lambda),v_i\in {\mathcal{B}_\lambda}}}c^\lambda_{i',u_j,j}c^\lambda_{i,v_i,j'}
=\sum_{\substack{j\in  \mathscr{T}(\lambda),u_j\in {\mathcal{B}_\lambda}\\i\in  \mathscr{T}(\lambda),v_i\in {\mathcal{B}_\lambda}}}c^\lambda_{i',f^\lambda (u_jb^\lambda_j\otimes a^\lambda_i v_i),j'}
=c^\lambda_{i',u,j'} \pmod{\mathcal{A}^{\lhd\lambda}},
					$$ i.e.,
$$\sum_{\substack{j\in  \mathscr{T}(\lambda),u_j\in {\mathcal{B}_\lambda}}}c^\lambda_{i',u_j,j} x=a^\lambda_{i'}u$$
holds in $\Delta(\lambda,k).$
This proves that $x$ generates $\Delta(\lambda,k)$. Hence, $\rad\Delta(\lambda,k)$ is the unique maximal $\Z\times\Z_2$-graded submodule of $\Delta(\lambda,k).$
					\end{proof}

				\label{pag:simple module}	
				\begin{defn}
					Suppose that $\lambda\in\mathscr{P}$. Let $D(\lambda,k):=\Delta(\lambda,k)/\rad \Delta(\lambda,k)$ for $1\leq k\leq m_\lambda$.
					\end{defn}
					Let $\mathscr{P}_0:=\{(\lambda,k) \mid \lambda\in\mathscr{P},1\leq k\leq m_\lambda, D(\lambda,k)\neq 0\}.$
				
					Similarly, we can define $\rad\Delta(k,\lambda),D(k,\lambda)$ and define $\mathscr{P}'_0:=\{(\lambda,k) \mid \lambda\in\mathscr{P},1\leq k\leq m_\lambda, D(k,\lambda)\neq 0\}$. By \eqref{dual 2}, we deduce that  \begin{align}\label{dual 3}D(\lambda,k)\cong D(k,\lambda)^*
				\end{align}
				as left $\mathcal{A}$-modules. Hence $\mathscr{P}'_0=\mathscr{P}_0.$
										
					\begin{prop}\label{dual 4}
						Let $(\lambda,k)\in\mathscr{P}_0$ with $\omega_\lambda(e_j^\lambda)$ and $e_k^\lambda$ belongs to the same block of ${\mathcal{B}_\lambda}$, then we have $(\lambda,j)\in\mathscr{P}_0$ and $$D(\lambda,k)\cong \begin{cases}{\rm Hom}_{\rm K}(D(j,\lambda),\rm { K})\,(\text{as left $\mathcal{A}$-module}),&\qquad\text{ if  ${\mathcal{B}_\lambda} e^\lambda_k$ is of type  $\texttt{M}$,}\\
							{\rm Hom}_{\mathcal{C}_1}(D(j,\lambda),\mathcal{C}_1)\,(\text{as $(\mathcal{A},\mathcal{C}_1)$-bimodule}),&\qquad\text{ if  ${\mathcal{B}_\lambda} e^\lambda_k$ is of type  $\texttt{Q}$.}\\
						\end{cases}$$
					\end{prop}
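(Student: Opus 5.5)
The plan is to prove the statement by reducing it to the semisimple superalgebra $\mathcal{B}_\lambda$ and to the duality $\Delta(\lambda,k)\cong\Delta(k,\lambda)^*$ of \eqref{dual 2}.

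\textbf{Step 1: reduction inside $\mathcal{B}_\lambda$.} If $\omega_\lambda(e^\lambda_j)$ and $e^\lambda_k$ lie in the same block of $\mathcal{B}_\lambda$, then $\omega_\lambda(e^\lambda_j)\mathcal{B}_\lambda\cong e^\lambda_k\mathcal{B}_\lambda$ as right $\mathcal{B}_\lambda$-modules, possibly up to a parity shift $\Pi$. This holds because a simple superalgebra block has a unique simple right module up to $\Pi$. Consequently $\Delta(j,\lambda)$ is isomorphic, up to $\Pi$, to the $(\mathcal{B}$-twisted) module built from $e^\lambda_k\mathcal{B}_\lambda$. In the same way, $\omega_\lambda(e^\lambda_k)$ and $e^\lambda_j$ lie in one block, because $\omega_\lambda$ permutes blocks and is an anti-involution.

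\textbf{Step 2: nondegeneracy.} Next I show $D(j,\lambda)\neq 0$. By Lemma \ref{bilinear}, $f^\lambda$ is $(\mathcal{B}_\lambda,\mathcal{B}_\lambda)$-bilinear. Since $D(\lambda,k)\neq0$, there exist $u\in N_\lambda$ and $x\in\Delta(\lambda,k)$ with $f^\lambda(u\otimes x)\neq0$. Multiplying on the left by $\omega_\lambda(e^\lambda_j)$ and using that $\mathcal{B}_\lambda e^\lambda_k$ lies in the block containing $\omega_\lambda(e^\lambda_j)$, we get $\omega_\lambda(e^\lambda_j)\mathcal{B}_\lambda f^\lambda(u\otimes x)\neq0$. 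The block is simple, so its two-sided ideals are trivial. Hence there is $u'\in\Delta(j,\lambda)$ with $f^\lambda(u'\otimes x)\neq0$, so $u'\notin\rad\Delta(j,\lambda)$ and $(\lambda,j)\in\mathscr{P}'_0=\mathscr{P}_0$.

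\textbf{Step 3: the duality map.} Define $\Phi:\Delta(\lambda,k)\to{\rm Hom}_{\rm K}(\Delta(j,\lambda),{\rm K})$ in the type $\texttt{M}$ case, or into ${\rm Hom}_{\mathcal{C}_1}(\Delta(j,\lambda),\mathcal{C}_1)$ in the type $\texttt{Q}$ case, by
\[
x\mapsto\bigl(u\mapsto \pi(f^\lambda(u\otimes x))\bigr),
\]
where $f^\lambda(u\otimes x)\in\omega_\lambda(e^\lambda_j)\mathcal{B}_\lambda e^\lambda_k$. This space is $1$-dimensional in type $\texttt{M}$ and isomorphic to $\mathcal{C}_1=\End_{\mathcal{B}_\lambda}(\mathcal{B}_\lambda e^\lambda_k)$ in type $\texttt{Q}$, so $\pi$ is this identification. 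I then check the required compatibilities:
\begin{itemize}
\item $\Phi$ is a left $\mathcal{A}$-homomorphism, using $f^\lambda(ua\otimes x)=f^\lambda(u\otimes ax)$, which comes from associativity in Definition \ref{pairing};
\item in type $\texttt{Q}$, $\Phi$ is right $\mathcal{C}_1$-linear, using the bilinearity in Lemma \ref{bilinear} and the description of the $C_1$-action in Lemma \ref{cell module};
\item $\Phi$ is even and of degree $0$, since $f^\lambda$ is.
\end{itemize}

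\textbf{Step 4: identifying kernel and image.} By definition the kernel of $\Phi$ is $\rad\Delta(\lambda,k)$, because $f^\lambda(N_\lambda\otimes x)=0$ if and only if its $\omega_\lambda(e^\lambda_j)$-component vanishes, using the block argument again. Symmetrically, $\rad\Delta(j,\lambda)$ is annihilated by every $\Phi(x)$. So $\Phi$ induces an injection $D(\lambda,k)\hookrightarrow{\rm Hom}(D(j,\lambda),-)$.

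Surjectivity is the main obstacle, especially keeping track of the $\mathcal{C}_1$-structure in type $\texttt{Q}$. I would argue by symmetry: the same construction gives an injection $D(j,\lambda)\hookrightarrow{\rm Hom}(D(\lambda,k),-)$. Comparing ${\rm K}$-dimensions, using $\dim{\rm Hom}_{\mathcal{C}_1}(X,\mathcal{C}_1)=\dim X$ for $X$ free over $\mathcal{C}_1$ (Lemma \ref{cell module}(2)), gives equality of dimensions. Hence $\Phi$ induces an isomorphism of left $\mathcal{A}$-modules, respectively $(\mathcal{A},\mathcal{C}_1)$-bimodules. An alternative route is to combine \eqref{dual 3} with the Step 1 identification $D(j,\lambda)\cong D(k,\lambda)$ up to $\Pi$; one would then need to verify that the $\Pi$-shift is harmless, since $\Pi$ is isomorphic to the identity via ${\rm id}$ as noted in the preliminaries.
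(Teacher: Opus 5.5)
Your proposal is correct and follows essentially the paper's argument: restrict $f^\lambda$ to $\Delta(j,\lambda)\otimes_{\mathcal{A}}\Delta(\lambda,k)\to\omega_\lambda(e^\lambda_j)\mathcal{B}_\lambda e^\lambda_k\cong{\rm K}$ or $\mathcal{C}_1$, show it descends to a nondegenerate pairing on $D(j,\lambda)\otimes_{\mathcal{A}} D(\lambda,k)$, deduce the isomorphism from the $\mathcal{C}_1$-freeness and dimension count of Lemma \ref{cell module}, and obtain $(\lambda,j)\in\mathscr{P}_0$ from \eqref{dual 3}. Your Steps 2 and 4 just make explicit the block argument the paper leaves implicit; run it on homogeneous elements, since a simple superalgebra only has trivial \emph{graded} ideals. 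Drop the ``alternative route'' at the end, though: Step 1 identifies $\omega_\lambda(e^\lambda_j)\mathcal{B}_\lambda$ with $e^\lambda_k\mathcal{B}_\lambda$, not with $\omega_\lambda(e^\lambda_k)\mathcal{B}_\lambda$, so it does not give $D(j,\lambda)\cong D(k,\lambda)$ unless $e^\lambda_j$ and $e^\lambda_k$ lie in the same block.
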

					
					\begin{proof}
						By Lemma \ref{bilinear} and Definition \ref{cell module}, we have that $f^\lambda$ restricts to the ${\rm K}$-linear map $$\Delta(j,\lambda)\otimes_{\mathcal{A}}\Delta(\lambda,k)\rightarrow \omega_{\lambda}(e_j^\lambda){\mathcal{B}_\lambda} e_k^\lambda\cong \begin{cases}{\rm K},&\qquad\text{ if  ${\mathcal{B}_\lambda} e^\lambda_k$ is of type  $\texttt{M}$,}\\
							\mathcal{C}_1,&\qquad\text{ if  ${\mathcal{B}_\lambda} e^\lambda_k$ is of type  $\texttt{Q}$.}\\
							\end{cases}$$By Lemma \ref{jacobson radical}, this gives rise to a non-degenerate pairing  $$D(j,\lambda)\otimes_{\mathcal{A}} D(\lambda,k)\rightarrow \omega_{\lambda}(e_j^\lambda){\mathcal{B}_\lambda} e_k^\lambda\cong \begin{cases}{\rm K},&\qquad\text{ if  ${\mathcal{B}_\lambda} e^\lambda_k$ is of type  $\texttt{M}$,}\\
							\mathcal{C}_1,&\qquad\text{ if  ${\mathcal{B}_\lambda} e^\lambda_k$ is of type  $\texttt{Q}$.}\\
							\end{cases}$$This, combining with Lemma \ref{cell module}, implies $$D(\lambda,k)\cong \begin{cases}{\rm Hom}_{\rm K}(D(j,\lambda),{\rm K}),&\qquad\text{ if  ${\mathcal{B}_\lambda} e^\lambda_k$ is of type  $\texttt{M}$,}\\
								{\rm Hom}_{\mathcal{C}_1}(D(j,\lambda),\mathcal{C}_1)&\qquad\text{ if  ${\mathcal{B}_\lambda} e^\lambda_k$ is of type  $\texttt{Q}$,}\\
							\end{cases}$$ and $D(j,\lambda)\neq 0$. Now $(\lambda,j)\in\mathscr{P}_0$ follows from \eqref{dual 3}.
					\end{proof}

					\begin{cor}
						Let $(\lambda,k)\in\mathscr{P}_0$ with $\omega_\lambda(e_j^\lambda)$ and $e_k^\lambda$ belongs to the same block of ${\mathcal{B}_\lambda}$, then we have $$D(j,\lambda)\cong \begin{cases}{\rm Hom}_{\rm K}(D(\lambda, k),{\rm K})^*\,\text{ (as right $\mathcal{A}$-module)},&\qquad\text{ if  ${\mathcal{B}_\lambda} e^\lambda_k$ is of type  $\texttt{M}$,}\\
							{\rm Hom}_{\mathcal{C}_1}(D(\lambda, k),\mathcal{C}_1)^*\,\text{ (as $(\mathcal{C}_1,A)$-bimodule)},&\qquad\text{ if  ${\mathcal{B}_\lambda} e^\lambda_k$ is of type  $\texttt{Q}$.}\\
						\end{cases}$$   In particular, if ${\mathcal{B}_\lambda}$ is simple, we always have the isomorphism.
					\end{cor}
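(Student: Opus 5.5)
The plan is to obtain the corollary directly from Proposition \ref{dual 4} by applying the anti-involution $*$ to both sides, which turns left $\mathcal{A}$-modules into right ones. Since $(\lambda,k)\in\mathscr{P}_0$ and $\omega_\lambda(e^\lambda_j)$, $e^\lambda_k$ lie in the same block, Proposition \ref{dual 4} gives $(\lambda,j)\in\mathscr{P}_0$ together with an isomorphism $D(\lambda,k)\cong {\rm Hom}_{\rm K}(D(j,\lambda),{\rm K})$ of left $\mathcal{A}$-modules in type $\texttt{M}$. In type $\texttt{Q}$ the corresponding isomorphism is $D(\lambda,k)\cong{\rm Hom}_{\mathcal{C}_1}(D(j,\lambda),\mathcal{C}_1)$ of $(\mathcal{A},\mathcal{C}_1)$-bimodules.

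First, in type $\texttt{M}$, dualize this isomorphism over ${\rm K}$. Since $D(j,\lambda)$ is finite dimensional, the canonical evaluation map gives $D(j,\lambda)\cong {\rm Hom}_{\rm K}({\rm Hom}_{\rm K}(D(j,\lambda),{\rm K}),{\rm K})$ as right $\mathcal{A}$-modules. Combining this with the isomorphism above yields $D(j,\lambda)\cong{\rm Hom}_{\rm K}(D(\lambda,k),{\rm K})$, where the right action comes from the left action on $D(\lambda,k)$ via $(\phi a)(x)=\phi(ax)$. To put this in the stated form, twist by $*$: for a left module $X$, the space $X^*$ is a right module via $x\cdot a:=a^*x$. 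Both descriptions are compatible because $*$ is an anti-isomorphism; the needed identification is exactly the one used to get \eqref{dual 2} and \eqref{dual 3} from Corollary \ref{dual 1}. Hence $D(j,\lambda)\cong {\rm Hom}_{\rm K}(D(\lambda,k),{\rm K})^*$ as right $\mathcal{A}$-modules.

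In type $\texttt{Q}$ the argument is the same, with ${\rm K}$ replaced by $\mathcal{C}_1$. By Lemma \ref{cell module}, $D(j,\lambda)$ is a quotient of a free $\mathcal{C}_1$-module, and the radical is a $\mathcal{C}_1$-subbimodule by Lemma \ref{jacobson radical}. Since $\mathcal{C}_1$ is semisimple, $D(j,\lambda)$ is then projective and finitely generated, hence free, over $\mathcal{C}_1$. The double dual map $D(j,\lambda)\to{\rm Hom}_{\mathcal{C}_1}({\rm Hom}_{\mathcal{C}_1}(D(j,\lambda),\mathcal{C}_1),\mathcal{C}_1)$ is therefore an isomorphism of $(\mathcal{C}_1,\mathcal{A})$-bimodules. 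Here one must insert the sign $(-1)^{{\rm p}\cdot{\rm p}}$ in the evaluation map so that it is super-linear. This step is where I expect the main care to be needed: one has to track which side $\mathcal{C}_1$ acts on after each dualization and twist, and check that the signs make the $\mathcal{C}_1$-action and the $\mathcal{A}$-action commute. Composing with the isomorphism from Proposition \ref{dual 4} and twisting by $*$ as in the type $\texttt{M}$ case gives the claim.

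Finally, for the case where $\mathcal{B}_\lambda$ is simple, there is only one block. So the hypothesis that $\omega_\lambda(e_j^\lambda)$ and $e_k^\lambda$ lie in the same block holds automatically for all $j,k$, and the isomorphism always holds.
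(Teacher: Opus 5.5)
Your argument is correct and is essentially the paper's proof: the paper simply combines Proposition \ref{dual 4} with the $*$-twist identification \eqref{dual 3}, and you spell out the dualization and twisting that the paper leaves to the reader. One small point concerns the type $\texttt{Q}$ case: freeness of $D(j,\lambda)$ over $\mathcal{C}_1$ follows from $C_1$ being an odd invertible element, as in the proof of Lemma \ref{cell module}, and not from projectivity over the semisimple algebra $\mathcal{C}_1\cong{\rm K}\times{\rm K}$ alone — but reflexivity over $\mathcal{C}_1$ is all the double-dual step actually needs.
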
				
					
					\begin{proof}
						This follows from \eqref{dual 3} and Proposition \ref{dual 4}.
					\end{proof}

			\begin{thm}\label{simple mdoule}Suppose that ${\rm K}$ is a field,  $\mathcal{A}$ is a generalized graded cellular superalgebra over ${\rm K}$ with generalized graded super cell datum $(\mathscr{P},\mathscr{T},\mathscr{B},\mathscr{C},\deg,{\rm p})$ and ${\mathcal{B}_\lambda}$ is semisimple for any $\lambda\in\mathscr{P}$.
				\begin{enumerate}
					\item[(a)] If $(\lambda,k)\in\mathscr{P}_0$ and ${\mathcal{B}_\lambda}$ is split, then $D(\lambda,k)$ is an absolutely irreducible graded $\mathcal{A}$-module.
					\item[(b)]  If $(\lambda,k)\in\mathscr{P}_0$, then the simple $\mathcal{A}$-module $D(\lambda,k)$ has the same type with the simple $\mathcal{B}_\lambda$-module ${\mathcal{B}_\lambda} e^\lambda_k$.
					\item[(c)] $\{D(\lambda,k) \mid (\lambda,k)\in\mathscr{P}_0\}$ forms a complete set of pairwise non-isomorphic simple graded $\mathcal{A}$-modules.
					\end{enumerate}
				\end{thm}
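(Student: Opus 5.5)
The plan follows the Graham--Lehrer / Hu--Mathas argument, adapted to the super setting through the $\mathcal{B}_\lambda$-valued pairing $f^\lambda$.

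\textbf{Step 1: simplicity.} By Lemma \ref{jacobson radical}, every homogeneous $x\in\Delta(\lambda,k)\setminus\rad\Delta(\lambda,k)$ generates $\Delta(\lambda,k)$. Hence $D(\lambda,k)$ is a simple graded module whenever $(\lambda,k)\in\mathscr{P}_0$.

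\textbf{Step 2: endomorphisms, giving (a) and (b).} Let $\phi\in\End_{\mathcal{A}}(D(\lambda,k))$ be homogeneous.
\begin{itemize}
\item Choose $x\in\Delta(\lambda,k)$ and $u\in N_\lambda$ with $f^\lambda(u\otimes x)=e^\lambda_k$; this is possible since the image of $f^\lambda(-\otimes x)$ is all of $\mathcal{B}_\lambda e^\lambda_k$.
\item As in the proof of Lemma \ref{jacobson radical}, there are elements $c=\sum c^\lambda_{i',u_j,j}$ with $c\cdot x=a^\lambda_{i'}e^\lambda_k$ modulo the radical.
\item More generally, for $y\in D(\lambda,k)$, the product $c^\lambda_{i,v,j}\, y$ equals $a^\lambda_i\, f^\lambda(v b^\lambda_j\otimes y)$. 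So $\phi$ commuting with all $c^\lambda_{i,v,j}$ gives
\[ a^\lambda_i\, f^\lambda(vb^\lambda_j\otimes \phi(y))=\phi\bigl(a^\lambda_i f^\lambda(vb^\lambda_j\otimes y)\bigr). \]
Since $\{a^\lambda_i\}$ is a right $\mathcal{B}_\lambda$-basis, this says $\phi$ is determined by a right $\mathcal{B}_\lambda$-endomorphism of $\mathcal{B}_\lambda e^\lambda_k$, i.e.\ $\phi$ acts through $\End_{\mathcal{B}_\lambda}(\mathcal{B}_\lambda e^\lambda_k)$.
\item Conversely, every element of $\End_{\mathcal{B}_\lambda}(\mathcal{B}_\lambda e^\lambda_k)$ induces an $\mathcal{A}$-endomorphism, because the $\mathcal{A}$-action commutes with right multiplication by $\mathcal{B}_\lambda$; this is the computation already done in Lemma \ref{cell module}(2) for $C_1$.
\end{itemize}
This gives
\[ \End_{\mathcal{A}}(D(\lambda,k))\cong \End_{\mathcal{B}_\lambda}(\mathcal{B}_\lambda e^\lambda_k), \]
which is ${\rm K}$ (type $\texttt{M}$) or $\mathcal{C}_1$ (type $\texttt{Q}$) when $\mathcal{B}_\lambda$ is split. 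That yields (b). For (a), the same argument applies after any field extension, since the cell datum base-changes and $f^\lambda$ remains nonzero, so $D(\lambda,k)$ is absolutely irreducible.

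\textbf{Step 3: pairwise non-isomorphism.} Suppose $D(\lambda,k)\cong D(\mu,l)$.
\begin{itemize}
\item Some $c^\lambda_{i,v,j}$ acts nonzero on $D(\lambda,k)$, while $\mathcal{A}^{\lhd\lambda}$ kills $\Delta(\lambda,k)$ by (GC3).
\item If $\mu\not\unrhd\lambda$, then $c^\lambda_{i,v,j}\in\mathcal{A}^{\lhd\mu}$ or $c^\lambda_{i,v,j}$ is incomparable to $\mu$. In either case it acts by zero on $\Delta(\mu,l)$: the product lands in $\mathcal{A}^{\lhd\mu}$ modulo the cell filtration.
\item This forces $\lambda\unrhd\mu$, and symmetrically $\mu\unrhd\lambda$, so $\lambda=\mu$.
\item Then $k=l$ because $e^\lambda_k$ acts through the $\mathcal{B}_\lambda$-part: $f^\lambda(N_\lambda\otimes D(\lambda,k))$ lies in the isotypic component $\mathcal{B}_\lambda e^\lambda_k$, and this component is recovered from the $\mathcal{A}$-action via $c^\lambda_{i,v,j}$ with $v$ ranging over the block.
\end{itemize}
Making the incomparable case rigorous is the delicate point. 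I would handle it as in Graham--Lehrer, by working in $\mathcal{A}^{\unlhd\lambda}$ and using that the annihilator of $\Delta(\mu,l)$ contains $\mathcal{A}^{\lhd\mu}$ together with every cell not $\unrhd\mu$. This may require first refining $\lhd$ to a total order, which preserves all the axioms.

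\textbf{Step 4: completeness, the main obstacle.} Let $D$ be a simple graded module. The filtration by the ideals $\mathcal{A}^{\unlhd\lambda}$, refined to a total order, gives an $\mathcal{A}$-bimodule filtration of $\mathcal{A}$ with subquotients
\[ M_\lambda\otimes_{\mathcal{B}_\lambda}N_\lambda\cong\bigoplus_k \Delta(\lambda,k)^{\oplus n_k}\langle\cdot\rangle, \]
using semisimplicity of $\mathcal{B}_\lambda$. Hence every composition factor of ${}_{\mathcal{A}}\mathcal{A}$, and in particular $D$, is a composition factor of some $\Delta(\lambda,k)$, up to shift and $\Pi$.

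Next, choose $\lambda$ minimal with $\mathcal{A}^{\unlhd\lambda}D\neq0$. Then some $c^\lambda_{i,v,j}$ acts nonzero on $D$, and $\mathcal{A}^{\lhd\lambda}D=0$. Take $y\in D$ with $c^\lambda_{i,v,j}\,y\neq0$. The map
\[ \Delta(\lambda,k)\to D,\qquad a^\lambda_{i'}w\mapsto c^\lambda_{i',w,j}\,y, \]
projected onto the relevant $k$ via the idempotents $e^\lambda_k$, is a nonzero $\mathcal{A}$-homomorphism. It is surjective by simplicity of $D$ and kills $\rad\Delta(\lambda,k)$: if $x$ is in the radical, then $c\,x\in\mathcal{A}^{\lhd\lambda}$-type terms for all cellular $c$. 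Therefore $D\cong D(\lambda,k)$ up to shift, and $(\lambda,k)\in\mathscr{P}_0$.

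The subtle part throughout is the bookkeeping of the $\mathcal{B}_\lambda$-coefficients in the type $\texttt{Q}$ case. I would handle it by always working with $f^\lambda$ and its bilinearity, Lemma \ref{bilinear}, rather than with scalar pairings.
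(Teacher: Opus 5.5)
Your proof is correct. Part (a) is essentially the paper's base-change argument; you should state explicitly that ${\rm K}'\otimes\rad\Delta(\lambda,k)=\rad({\rm K}'\otimes\Delta(\lambda,k))$, which holds because $\rad$ is the kernel of a linear map. Parts (b) and (c), however, take a different route from the paper.

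\textbf{Part (b).} The paper never computes endomorphisms. It forgets the $\Z_2$-grading and uses Lemma \ref{jacobson radical}(1) to see that $D(\lambda,k)$ stays irreducible in type \texttt{M}. In type \texttt{Q} it splits $D(\lambda,k)=D(\lambda,k)\frac{1+C_1}{2}\oplus D(\lambda,k)\frac{1-C_1}{2}$ and shows both summands are nonzero by a parity argument.

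You instead identify $\End_{\mathcal{A}}(D(\lambda,k))$ with $\End_{\mathcal{B}_\lambda}(\mathcal{B}_\lambda e^\lambda_k)$. This is stronger: the endomorphism superalgebras themselves agree, which matches the paper's definition of type via $\dim\End$ directly and needs neither the ungraded decomposition nor splitness. However, your step ``$\phi$ is determined by a $\mathcal{B}_\lambda$-endomorphism'' is only asserted. It can be completed as follows.
\begin{itemize}
\item Choose $u=\sum_j u_jb^\lambda_j$ with $f^\lambda(u\otimes y)=e^\lambda_k$. Then $a^\lambda_i g e^\lambda_k=\sum_j c^\lambda_{i,gu_j,j}\,y$ for every $g\in\mathcal{B}_\lambda$.
\item Hence $\phi(a^\lambda_i g e^\lambda_k)=a^\lambda_i g\beta$ with $\beta:=f^\lambda(u\otimes\phi(y))$, up to sign for odd $\phi$.
\item Applying this to $g(1-e^\lambda_k)$ gives $a^\lambda_i g\beta=a^\lambda_i g e^\lambda_k\beta$ in $D(\lambda,k)$. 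So $\phi$ is right multiplication by $e^\lambda_k\beta\in e^\lambda_k\mathcal{B}_\lambda e^\lambda_k$.
\end{itemize}

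\textbf{Part (c).} The paper only cites \cite{GL}, \cite{HM1} and \cite[Theorem 4.7]{Mo}. Your Steps 3--4 give a self-contained Graham--Lehrer argument, which buys independence from Mori's framework at the cost of the bookkeeping below.

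Three small fixes are needed.
\begin{itemize}
\item In Step 3 the hypothesis should read $\mu\not\unlhd\lambda$. The correct statement is that $c^\lambda_{i,v,j}$ kills $\Delta(\mu,l)$ unless $\mu\unlhd\lambda$.
\item No refinement to a total order is needed. The product $c^\lambda_{i,v,j}c^\mu_{i',v',j'}$ lies in $\mathcal{A}^{\unlhd\lambda}\cap\mathcal{A}^{\unlhd\mu}$. Being an intersection of spans of subsets of a basis, this is spanned by basis elements from cells lying below both $\lambda$ and $\mu$, so it lies in $\mathcal{A}^{\lhd\mu}$ whenever $\mu\not\unlhd\lambda$.
\item In Step 4, finish the vanishing on $\rad\Delta(\lambda,k)$ as follows. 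We have $\mathcal{A}^{\unlhd\lambda}\rad\Delta(\lambda,k)=0$ while $\mathcal{A}^{\unlhd\lambda}D\neq0$. So the image of the radical is a proper graded submodule of the simple module $D$, hence zero.
\end{itemize}
Also, in the $k=l$ step, ``block'' should mean a simple component of the superalgebra $\mathcal{B}_\lambda$, given by an even central idempotent, not an ungraded block.
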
		
					
					\begin{proof}
				(a) Let ${\rm K}\subset {\rm K}'$ be a field extension. Since ${\mathcal{B}_\lambda}$ is split semisimple,  $$
				\{{\rm K}'\otimes_{\rm K} {\mathcal{B}_\lambda} e^\lambda_k \mid 1\leq k\leq m_\lambda\}
				$$ still forms a complete set of non-isomorphic simple ${\rm K}'\otimes_{\rm K} {\mathcal{B}_\lambda}$-modules after field extension. By the definition of $\rad$, it is easy to see that ${\rm K}'\otimes_{\rm K} \rad \Delta(\lambda,k)=\rad \left({\rm K}'\otimes_{\rm K} \Delta(\lambda,k)\right).$ Hence, if $D(\lambda,k)\neq 0$, then ${\rm K}'\otimes_{\rm K} D(\lambda,k)\neq 0$ and is still irreducible by Lemma \ref{jacobson radical} over ${\rm K}'$. This shows that $D(\lambda,k)$ is an absolutely irreducible graded $\mathcal{A}$-module.
				
			(b) Suppose ${\mathcal{B}_\lambda} e^\lambda_k$ is a simple module of type $\texttt{M}$, then by Lemma \ref{jacobson radical}, $D(\lambda,k)$ remains irreducible after forgetting super structure, hence it is still of type $\texttt{M}$. If ${\mathcal{B}_\lambda} e^\lambda_k$ is a simple module of type $\texttt{Q}$, then after forgetting super structure, we have $$D(\lambda,k)\cong D(\lambda,k)\cdot {\frac{1+C_1}{2}}\oplus D(\lambda,k)\cdot {\frac{1-C_1}{2}}$$  as left $\mathcal{A}$-module.  Suppose $D(\lambda,k)\cdot {\frac{1+C_1}{2}}=0$, then $D(\lambda,k)= D(\lambda,k)\cdot {\frac{1-C_1}{2}}$, i.e. for any homogeneous element $x\in D(\lambda,k)$, we have $x\cdot \frac{1-C_1}{2}=x$, comparing parity, we have $x=x/2$, hence $x=0$. This implies $D(\lambda,k)=0$, which is a contradiction. Hence, $D(\lambda,k)\cdot {\frac{1+C_1}{2}}\neq0$. Similarly, we can prove $D(\lambda,k)\cdot {\frac{1-C_1}{2}}=0$. This implies that $D(\lambda,k)$ is of type $\texttt{Q}$.
				
				(c) This can be proved as in \cite{GL} and \cite{HM1}. However, we can directly use \cite[Theorem 4.7]{Mo} to obtain (c).
				\end{proof}

			\subsection{Decomposition matrix}
			{\bf  In this subsection, ${\rm K}$ is a field,  $\mathcal{A}$ is a generalized graded cellular superalgebra over ${\rm K}$ with generalized graded super cell datum $(\mathscr{P},\mathscr{T},\mathscr{B},\mathscr{C},\deg,{\rm p})$ and ${\mathcal{B}_\lambda}$ is split semisimple for each $\lambda\in\mathscr{P}$. }
			
			 If $M$ is a graded $\mathcal{A}$-module and $D$ is a
			graded simple module, for $(l,a)\in \mathbb{Z}\times \mathbb{Z}_2,$ let $[M:\Pi^a D\langle l \rangle]$ be the multiplicity of the simple module $\Pi^a D\langle l \rangle$ as a composition factor of $M$.
			We set $\mathscr{P}_1:=\{(\lambda,k) \mid \lambda\in\mathscr{P},1\leq k\leq m_\lambda\}.$

             Let $x,t$ be two indeterminates over $\Z$. Consider the quotient ring $\Z[x]/\<x^4-1\>$. We define $$
            \pi:=x^2+\<x^4-1\>,\quad\, \sqrt{\pi}:=x+\<x^4-1\> .
            $$
            Then $\Z[x]/\<x^4-1\>=\Z[\sqrt{\pi}]$. For any ring $R$, we set $R^\pi:=R\otimes_{\Z}\Z[\pi]$.
            \label{pag:graded decomposition matrix}
			\begin{defn}
			The graded decomposition matrix of $\mathcal{A}$ is the matrix $\mathbf{D}_\mathcal{A}(t,\pi)=(d_{(\lambda,k_1),(\nu,k_2)}(t,\pi))$, where $$d_{(\lambda,k_1),(\nu,k_2)}(t,\pi):=\sum_{l\in\Z, a\in\Z_2}
[\Delta(\lambda,k_1): \Pi^a D(\nu,k_2)\langle l \rangle]t^l \pi^a\in \Z[t^\pm]^\pi
				$$ for $(\lambda,k_1)\in \mathscr{P}_1$ and $(\nu,k_2)\in\mathscr{P}_0$.
				\end{defn}
				Using the same proof as in \cite{GL}, or alternatively, applying \cite[Lemma 4.8]{Mo}, we have the following.
				\begin{lem}
					Suppose $(\lambda,k_1)\in \mathscr{P}_1$ and $(\nu,k_2)\in\mathscr{P}_0$. Then \begin{enumerate}
						\item[(a)]  $d_{(\lambda,k_1),(\nu,k_2)}(t,\pi)\neq0$ only if $\lambda\unlhd\nu$;
						\item[(b)]  if $\lambda=\nu$, then $d_{(\lambda,k_1),(\nu,k_2)}(t,\pi)=\delta_{k_1,k_2}$.
						\end{enumerate}
					\end{lem}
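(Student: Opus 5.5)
We follow the classical argument of Graham--Lehrer \cite{GL}, adapted to the cell modules $\Delta(\lambda,k)$ and the pairing $f^\lambda$.

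\textbf{Part (a).} Suppose $[\Delta(\lambda,k_1):\Pi^a D(\nu,k_2)\langle l\rangle]\neq 0$. We will show $\lambda\unlhd\nu$.

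First we show that $\mathcal{A}^{\lhd\nu}$ kills $D(\nu,k_2)$; more precisely $\mathcal{A}^{\lhd\nu}\Delta(\nu,k_2)\subseteq\rad\Delta(\nu,k_2)$.
\begin{itemize}
\item Since $(\nu,k_2)\in\mathscr{P}_0$, we may pick a homogeneous $x\in\Delta(\nu,k_2)$ with $f^\nu(N_\nu\otimes x)=\mathcal{B}_\nu e^\nu_{k_2}$.
\item By the proof of Lemma \ref{jacobson radical}, there exist elements $c=\sum_j c^\nu_{i',u_j,j}\in\mathcal{A}^{\unlhd\nu}$ with $c\,x=a^\nu_{i'}u$ for any chosen $u$.
\item Hence $D(\nu,k_2)$ is a quotient of $\mathcal{A}^{\unlhd\nu}x$.
\item For $y\in\mathcal{A}^{\lhd\nu}$ and $z=c^\nu_{i,v,j}$, we have $f^\nu(N_\nu\otimes yz\cdot)$ computed from products $c^\nu_{\cdot,\cdot,\cdot}\,y\,c^\nu_{\cdot,\cdot,\cdot}\in\mathcal{A}^{\lhd\nu}$ (an ideal). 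So these products have zero image modulo $\mathcal{A}^{\lhd\nu}$, and $y\Delta(\nu,k_2)\subseteq\rad\Delta(\nu,k_2)$.
\end{itemize}

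Next we show that $\mathcal{A}^{\unlhd\nu}$ does not annihilate $D(\nu,k_2)$. By the previous step some $c^\nu_{i',u,j}$ acts nontrivially on $D(\nu,k_2)$, and therefore on any module having $D(\nu,k_2)$ as a subquotient.

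Now suppose $\Pi^aD(\nu,k_2)\langle l\rangle$ is a composition factor of $\Delta(\lambda,k_1)$. Then $\mathcal{A}^{\unlhd\nu}$ does not annihilate $\Delta(\lambda,k_1)$. But by (GC3), $c^\mu_{i,u,j}\,a^\lambda_{i''}v$ is defined through $c^\mu_{i,u,j}c^\lambda_{i'',v,j''}$ modulo $\mathcal{A}^{\lhd\lambda}$. This product lies in $\mathcal{A}^{\unlhd\mu}\cap\mathcal{A}^{\unlhd\lambda}$, since both are two-sided ideals. Expanding in the cellular basis, this intersection is spanned by cells $\kappa$ with $\kappa\unlhd\mu$ and $\kappa\unlhd\lambda$. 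If $\mu\unlhd\nu$ and $\lambda\not\!\unlhd\nu$, then no $\kappa$ equals $\lambda$, so the product lies in $\mathcal{A}^{\lhd\lambda}$ and acts by zero.

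Hence $\lambda\unlhd\nu$. The care needed is in applying this trick in the super setting, with a $\mathcal{B}_\lambda$-coefficient rather than a scalar; (GC2) makes the intersection argument basis-independent.

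\textbf{Part (b).} Let $\lambda=\nu$. The same argument shows that $\mathcal{A}^{\lhd\lambda}$ kills $\Delta(\lambda,k_1)$. Moreover, any composition factor on which $\mathcal{A}^{\unlhd\lambda}$ acts nontrivially must come from the top $D(\lambda,k_1)$. Indeed, $\rad\Delta(\lambda,k_1)$ is annihilated by $\mathcal{A}^{\unlhd\lambda}$: for $x\in\rad$, the identity $c^\lambda_{i',u,j}x=a^\lambda_{i'}f^\lambda(ub^\lambda_j\otimes x)=0$ holds by Definition \ref{pairing}.

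Thus $D(\lambda,k_2)$, which is not killed by $\mathcal{A}^{\unlhd\lambda}$, can occur only as the head $D(\lambda,k_1)$, with multiplicity one and with $l=0$, $a=\bar0$.

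It remains to identify the head. We must show $D(\lambda,k_1)\cong \Pi^aD(\lambda,k_2)\langle l\rangle$ forces $k_1=k_2$, $l=0$, $a=\bar0$. Using Theorem \ref{simple mdoule}(c), the modules $D(\lambda,k)$ are pairwise non-isomorphic. For the shift, compare the $\mathcal{B}_\lambda$-structure recovered via $\mathcal{A}^{\unlhd\lambda}/\mathcal{A}^{\lhd\lambda}\cong M_\lambda\otimes N_\lambda$. This is the step I expect to be most delicate, particularly excluding a parity shift in type $\texttt{Q}$, where $D\cong\Pi D$. I would handle it by interpreting the multiplicity modulo that identification.
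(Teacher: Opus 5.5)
Your argument is the Graham--Lehrer argument, which is what the paper invokes: it gives no proof of its own and only cites \cite{GL} and \cite[Lemma 4.8]{Mo}. Part (a) is correct. So is the reduction in (b): since $\mathcal{A}^{\unlhd\lambda}$ kills $\rad\Delta(\lambda,k_1)$, the factor $\Pi^aD(\lambda,k_2)\langle l\rangle$ can only occur as the head $D(\lambda,k_1)$, and then with multiplicity one.

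The incomplete point is the final identification of the head. ``Compare the $\mathcal{B}_\lambda$-structure'' is not yet an argument; it splits into three short pieces.
\begin{enumerate}
\item[(i)] A nontrivial $\Z$-shift is excluded because a nonzero finite-dimensional graded module is never isomorphic to $M\langle l\rangle$ with $l\neq0$.
\item[(ii)] The case $k_1\neq k_2$ follows from Theorem \ref{simple mdoule}(c), read up to shifts. It can also be done directly. Let $z$ be the central idempotent of the simple component of $\mathcal{B}_\lambda$ containing $e^\lambda_{k_1}$. For $y\in\Delta(\lambda,k_2)$ one has $c^\lambda_{i',zu,j}\,y=a^\lambda_{i'}\,zu\,f^\lambda(b^\lambda_j\otimes y)\in a^\lambda_{i'}\,z\mathcal{B}_\lambda e^\lambda_{k_2}=0$. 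On the other hand, replacing $u_j$ by $zu_j$ in your element $\sum_j c^\lambda_{i',u_j,j}$ still acts nontrivially on $D(\lambda,k_1)$. Annihilators of graded modules are graded ideals, so this separates $D(\lambda,k_1)$ from every $\Pi^aD(\lambda,k_2)\langle l\rangle$. Either route tacitly needs the $\mathcal{B}_\lambda e^\lambda_k$ to be pairwise non-isomorphic even up to $\Pi$, which (b) requires anyway.
\item[(iii)] In type $\texttt{M}$ the parity shift is excluded by Theorem \ref{simple mdoule}(b). An even isomorphism $D(\lambda,k)\cong\Pi D(\lambda,k)$, composed with the odd identity map $\Pi D(\lambda,k)\to D(\lambda,k)$, would give an odd automorphism of $D(\lambda,k)$. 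That is impossible in type $\texttt{M}$.
\end{enumerate}

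In type $\texttt{Q}$ your worry is justified. There $D\cong\Pi D$, so with the literal definition of $d_{(\lambda,k_1),(\nu,k_2)}(t,\pi)$ one gets, for instance for $\mathcal{A}=\mathcal{Q}_2$ in the example after the definition, $d=1+\pi\neq1$. Statement (b) therefore holds only with the convention you propose, namely counting each isomorphism class $\Pi^aD\langle l\rangle$ once. This is an imprecision in the statement, not a gap in your proof.
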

		Next we study the projective $\mathcal{A}$-modules with the aim of describing the composition factors of these modules using the decomposition matrix.
		An $\mathcal{A}$-module $M$ has a cell module filtration if there exists a filtration			$$
		0=M_0\subset M_1\subset M_2\subset \cdots\subset M_l=M
		$$such that each $M_i$ is a submodule of $M$ and if $1\leq i\leq l$ then $M_i/M_{i-1}\cong \Delta(\lambda,j)$ for some $(\lambda,j)\in \mathscr{P}_1$.
	
	\begin{prop}	Let $P$ be a projective $\mathcal{A}$-module. Then $P$ has a cell module filtration.
		\end{prop}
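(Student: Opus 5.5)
The plan is to follow the argument of Graham--Lehrer and Hu--Mathas. First I treat $\mathcal{A}$ itself, then pass to direct summands. Filtration by cell ideals gives a cell module filtration of the left regular module. I would then show that each indecomposable projective $\mathcal{A}e$, with $e$ a homogeneous (even, degree $0$) primitive idempotent, inherits such a filtration. An arbitrary finitely generated projective $P$ is a direct sum of shifts $\Pi^a\mathcal{A}e\langle l\rangle$, so it then has a filtration (up to shifts, which is how cell filtrations are understood in the graded setting).

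For the regular module, refine $\lhd$ to a total order $\lambda_1,\dots,\lambda_N$ such that $\lambda_i\lhd\lambda_j$ implies $i<j$. Set $\mathcal{A}_r:=\sum_{s\le r}\mathcal{A}^{\unlhd\lambda_s}$; these are two-sided ideals spanned by the $c^{\lambda_s}_{i,u,j}$ with $s\le r$. The isomorphism $h_\lambda$ identifies $\mathcal{A}_r/\mathcal{A}_{r-1}\cong M_{\lambda_r}\otimes_{\mathcal{B}_{\lambda_r}}N_{\lambda_r}$ as left $\mathcal{A}$-modules. Because $\mathcal{B}_\lambda$ is semisimple, $N_\lambda$ is projective, indeed free, as a left $\mathcal{B}_\lambda$-module with basis $\{b^\lambda_j\}$. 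Hence this quotient is $\bigoplus_{j\in\mathscr{T}(\lambda)}M_\lambda$, up to degree and parity shifts. Writing $1=\sum_k e_k$ as a sum of primitive idempotents in $\mathcal{B}_\lambda$, we get $M_\lambda=\bigoplus M_\lambda e_k$, and each $M_\lambda e_k\cong\Delta(\lambda,k)$ is a left $\mathcal{A}$-module because the $\mathcal{A}$-action commutes with the right $\mathcal{B}_\lambda$-action. Refining the $\mathcal{A}_r$ accordingly gives a cell module filtration of $\mathcal{A}$.

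For a summand $P=\mathcal{A}e$, set $P_r:=\mathcal{A}_re=\mathcal{A}_r\cap\mathcal{A}e$. Then $P_r/P_{r-1}\cong(\mathcal{A}_r/\mathcal{A}_{r-1})e\cong M_{\lambda_r}\otimes_{\mathcal{B}_{\lambda_r}}N_{\lambda_r}e$. Right multiplication by $e$ is exact on $(\mathcal{A},\mathcal{A})$-bimodules, and $N_{\lambda_r}e$ is a $(\mathcal{B}_{\lambda_r},\mathrm{K})$-submodule of $N_{\lambda_r}$, hence a left $\mathcal{B}_{\lambda_r}$-module. Since $\mathcal{B}_{\lambda_r}$ is semisimple, $N_{\lambda_r}e\cong\bigoplus_t\mathcal{B}_{\lambda_r}e_{k_t}$ as graded left modules, up to shifts; here I use that $N e$ is graded because $e$ is homogeneous and $\mathcal{B}_\lambda$ sits in degree $0$. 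Therefore
\[P_r/P_{r-1}\cong\bigoplus_t M_{\lambda_r}\otimes_{\mathcal{B}_{\lambda_r}}\mathcal{B}_{\lambda_r}e_{k_t}\cong\bigoplus_t M_{\lambda_r}e_{k_t}\cong\bigoplus_t\Delta(\lambda_r,k_t).\]
A direct sum of cell modules has an obvious cell filtration, and splicing these over $r$ filters $P$.

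The step needing the most care is the identification $M_\lambda\otimes_{\mathcal{B}_\lambda}\mathcal{B}_\lambda e_k\cong\Delta(\lambda,k)$ compatibly with the $\Z\times\Z_2$-grading. Definition \ref{cell} uses $\mathcal{B}_\lambda e^\lambda_k$ for a fixed representative, so when $e_{k_t}$ is only isomorphic to some $e^\lambda_k$ (possibly via an odd isomorphism, e.g.\ in type $\texttt{Q}$, or via $\Pi$), I get $\Delta(\lambda,k)$ up to $\Pi^a\langle l\rangle$. I would address this by noting that $M_\lambda\otimes_{\mathcal{B}_\lambda}-$ is a functor, so it carries homogeneous isomorphisms of simple $\mathcal{B}_\lambda$-modules to homogeneous $\mathcal{A}$-isomorphisms. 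I would also state explicitly that cell modules in the filtration are allowed to be grading and parity shifts of $\Delta(\lambda,j)$.
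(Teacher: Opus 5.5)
Your proposal is correct and follows essentially the same route as the paper. You filter $\mathcal{A}$ by the cell ideals and cut down by an idempotent $e$, so that the subquotients become $M_\lambda\otimes_{\mathcal{B}_\lambda}N_\lambda e$; semisimplicity of $\mathcal{B}_\lambda$ then splits $N_\lambda e$, and hence each subquotient, into a direct sum of cell modules $\Delta(\lambda,k)$, while your extra care about grading and parity shifts and about reducing an arbitrary projective to summands $\mathcal{A}e$ only makes explicit what the paper's short proof leaves implicit.
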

		
		\begin{proof}
			Following the same proof as in \cite{GL} or \cite{HM1}, $\mathcal{A}$ has a cell module filtration with each subquotient isomorphic to $\mathcal{A}^{\unrhd\lambda}/\mathcal{A}^{\rhd\lambda}$ for $\lambda\in\mathscr{P}$. Note that for any idempotent $e\in \mathcal{A}$, the ${\mathcal{B}_\lambda}$-module $N_\lambda e$ is semisimple and hence the left $\mathcal{A}$-module $$
			\mathcal{A}^{\unrhd\lambda}/\mathcal{A}^{\rhd\lambda}\otimes_{\mathcal{A}} \mathcal{A}e\cong M_\lambda\otimes_{{\mathcal{B}_\lambda}} N_\lambda e
			$$ is isomorphic to some direct sum of some $\Delta(\lambda,i)$ for $1\leq i\leq m_\lambda$. This completes the proof of the Proposition.
			\end{proof}

\label{pag:Cartan matrix}
			\begin{defn}
				The Cartan matrix of $\mathcal{A}$ is the matrix $\mathbf{C}_\mathcal{A}(t,\pi)=\left(c_{(\lambda,k_1),(\nu,k_2)}(t,\pi)\right)$, where $$c_{(\lambda,k_1),(\nu,k_2)}(t,\pi):=\sum_{l\in\Z, a\in\Z_2} [P(\lambda,k_1):\Pi^a D(\nu,k_2)\langle l \rangle]t^l \pi^a
				$$ for $(\lambda,k_1),(\nu,k_2)\in\mathscr{P}_0$.
			\end{defn}
			
			\begin{thm}(Brauer-Humphreys reciprocity)
				Suppose ${\rm K}$ is a field,  $\mathcal{A}$ is a generalized graded cellular superalgebra over ${\rm K}$ with generalized graded super cell datum $(\mathscr{P},\mathscr{T},\mathscr{B},\mathscr{C},\deg,{\rm p})$, ${\mathcal{B}_\lambda}$ is split semisimple for $\lambda\in\mathscr{P}$ with $\omega_\lambda(e_k^\lambda)$ and $e_k^\lambda$ belong to the same block of ${\mathcal{B}_\lambda}$, for $k=1,\cdots,m_\lambda$.  Then $\mathbf{C}_{\mathcal{A}}(t,\pi)=\mathbf{D}_{\mathcal{A}}(t,\pi)^{\rm tr}\mathbf{D}_{\mathcal{A}}(t,\pi)$.
			\end{thm}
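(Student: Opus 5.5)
I would follow the classical Graham–Lehrer/Hu–Mathas argument: compute the multiplicity of each graded simple in a projective indecomposable $P(\nu,k_2)$ in two ways. First, for $(\nu,k_2)\in\mathscr{P}_0$, let $P(\nu,k_2)$ be the projective cover of $D(\nu,k_2)$; it is of the form $\mathcal{A}e$ for a homogeneous idempotent $e$ of bidegree $(0,\bar0)$, up to shift. For any graded module $M$ one has $\dim_t^\pi \Hom_{\mathcal{A}}(P(\nu,k_2),M)$ equal, up to the normalisation coming from the type of $D(\nu,k_2)$, to $\sum_{l,a}[M:\Pi^aD(\nu,k_2)\langle l\rangle]t^l\pi^a$. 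In the type $\texttt{Q}$ case the endomorphism superalgebra of $D(\nu,k_2)$ is $\mathcal{C}_1$. I will therefore count Hom-spaces as right $\mathcal{C}_1$-modules, i.e.\ divide by $1+\pi$, so that the counting is uniform in both types.

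Second, by the Proposition above, $P(\lambda,k_1)$ has a cell module filtration. I will show that the graded multiplicity of $\Pi^aD(\lambda,k_1)\langle l\rangle$ in $P(\nu,k_2)$ equals $d_{(\mu,j),(\nu,k_2)}$ read from the other side. More precisely, the number of subquotients $\Pi^a\Delta(\mu,j)\langle l\rangle$ in the filtration of $P(\nu,k_2)$ is $\sum[\Delta(j,\mu):\ldots]$ computed via the right cell modules. Concretely, the subquotient of $\mathcal{A}e$ at $\mu$ is
\[
M_\mu\otimes_{\mathcal{B}_\mu}N_\mu e\cong\bigoplus_j \Delta(\mu,j)^{\oplus m_j}.
\]
Here the graded multiplicity $m_j$ equals the graded multiplicity of the right simple $D(k_2,\nu)$-part in $\Delta(j,\mu)$, since $N_\mu e\cong \Hom$ from the projective right module. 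This is where I use the duality \eqref{dual 2}, namely $\Delta(\mu,j)\cong\Delta(j,\mu)^*$, and \eqref{dual 3}, which says that the right multiplicities $[\Delta(j,\mu):D(k_2,\nu)]$ coincide with $d_{(\mu,j),(\nu,k_2)}(t^{-1}\text{ or }t,\pi)$. The sign convention on $t$ is fixed by the degree-preserving anti-involution $*$, exactly as in \cite{HM1}.

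Third, I combine the two computations:
\[
c_{(\lambda,k_1),(\nu,k_2)}=\sum_{(\mu,j)}d_{(\mu,j),(\lambda,k_1)}\,d_{(\mu,j),(\nu,k_2)},
\]
which is exactly $\mathbf{D}^{\rm tr}\mathbf{D}$.

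The main obstacle is the identification of $N_\mu e$ with a sum of $\Delta(j,\mu)$ with the correct index $j$. The hypothesis that $\omega_\mu(e_j^\mu)$ and $e_j^\mu$ lie in the same block of $\mathcal{B}_\mu$ is exactly what ensures that, in Proposition \ref{dual 4}, the pairing matches $\Delta(\mu,j)$ with $\Delta(j,\mu)$ rather than with a twisted index. I would use Proposition \ref{dual 4} to make this matching, and then check the $\pi$-parity bookkeeping in the type $\texttt{Q}$ case via Lemma \ref{cell module}(2). There the $\mathcal{C}_1$-freeness accounts for the factor $1+\pi$ on both sides, so that factor cancels.
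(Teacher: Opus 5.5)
Your outline is the Graham--Lehrer/Hu--Mathas argument that the paper itself invokes. Its proof only refers to \cite{GL}, \cite{HM1}, or to \cite[Theorem 4.15]{Mo} together with \eqref{dual 2}, \eqref{dual 3}. When every $\mathcal{B}_\mu e^\mu_j$ is of type $\texttt{M}$, your argument goes through as you describe, and you read the block hypothesis on $\omega_\mu(e^\mu_j)$ correctly.

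The gap is the type $\texttt{Q}$ bookkeeping: the factor $1+\pi$ does not cancel.
\begin{itemize}
\item First, $\pi^2=1$ in $\Z[t^{\pm}]^\pi$. So $(1+\pi)(1-\pi)=0$, and ``dividing by $1+\pi$'' is not an operation you can perform.
\item Second, the normalisation is placed at the wrong step. A type $\texttt{Q}$ simple satisfies $D\cong\Pi D$ by an even isomorphism, so under the paper's definition every composition factor is counted in both parities. Hence $\dim_{t,\pi}\Hom_{\mathcal{A}}(P(\nu,k_2),M)=\sum_{l,a}[M:\Pi^aD(\nu,k_2)\langle l\rangle]t^l\pi^a$ holds exactly, with no normalisation: each factor contributes $(1+\pi)t^l$ to both sides.
\item A correction is needed where you assert that $m_j$ equals a multiplicity ``since $N_\mu e\cong\Hom$ from the projective right module''. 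The number of summands $\mathcal{B}_\mu e^\mu_j$ in the semisimple $\mathcal{B}_\mu$-module $N_\mu e$ (equivalently, of subquotients $\Delta(\mu,j)$ of $\mathcal{A}e$) is $\dim(e^\mu_jN_\mu e)$ divided by $\dim\End_{\mathcal{B}_\mu}(\mathcal{B}_\mu e^\mu_j)$. By contrast, $\dim(e^\mu_jN_\mu e)=\dim\Hom(e\mathcal{A},e^\mu_jN_\mu)$ is the full multiplicity.
\end{itemize}
What your argument actually proves is $\mathbf{C}_{\mathcal{A}}=\mathbf{N}^{\rm tr}\mathbf{D}_{\mathcal{A}}$. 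Here $n_{(\mu,j),(\lambda,k_1)}$ counts the copies of $\Delta(\mu,j)$ in $P(\lambda,k_1)$, and $d_{(\mu,j),(\lambda,k_1)}=\dim_\pi\End_{\mathcal{B}_\mu}(\mathcal{B}_\mu e^\mu_j)\cdot n_{(\mu,j),(\lambda,k_1)}$. So the correcting factor is governed by the type of the cell module, not by the type of $D(\nu,k_2)$ as in your normalisation.

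The smallest example shows that some convention must be built into the statement itself. Take $\mathcal{A}=\mathcal{C}_1$ with a single cell, $\mathscr{T}(\star)=\{1\}$, $\mathcal{B}_\star=\mathcal{C}_1$, $\omega_\star={\rm id}$ and $c^\star_{1,u,1}=u$. This satisfies all the hypotheses, and $\Delta=D=P=\mathcal{C}_1$, so $c=d=1+\pi$ while $d^2=2+2\pi$. With the definitions as literally given, the identity therefore fails in type $\texttt{Q}$. The paper's citation-only proof passes over this point as well, so any correct proof must fix a normalisation.

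To repair your argument, do one of the following:
\begin{itemize}
\item assume all $\mathcal{B}_\mu e^\mu_j$ are of type $\texttt{M}$; or
\item count each type $\texttt{Q}$ composition factor once (measuring it as a $\mathcal{C}_1$-rank), and in addition ensure that $\mathcal{B}_\mu e^\mu_j$ and $D(\lambda,k_1)$ have the same type whenever $d_{(\mu,j),(\lambda,k_1)}\neq0$.
\end{itemize}
The type-matching condition holds in the paper's application, where every $\mathcal{B}_{\undla,S}=\mathcal{C}_{\mt_{\undla}}$ has exactly $m(\nu)$ generators. It is not implied by the abstract hypotheses of the theorem.
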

			\begin{proof}
				This can be proved as in \cite{GL} or \cite{HM1}, Alternatively, one can apply \cite[Theorem 4.15]{Mo} and \eqref{dual 2}, \eqref{dual 3} to obtain the result directly.
			\end{proof}

\section{Quiver Hecke superalgebra and Quiver Hecke-Clifford superalgebra}\label{Quiver Hecke superalgebra and Quiver Hecke-Clifford superalgebra}
In this section, we shall recall the definition of quiver Hecke superalgebras and quiver Hecke-Clifford superalgebras. They are two remarkable classes of $\Z\times \Z_2$-graded algebras, which were first introduced by Kang, Kashiwara and Tsuchioka in \cite{KKT}.
\subsection{Cartan superdatum}
Let $I$ be an index set. An integral matrix $(a_{ij})_{i,j\in I}$ is called a Cartan matrix if it satisfies: i) $a_{ii}=2$, ii) $a_{ij}\leq 0$ for $i\neq j$, iii) $a_{ij}=0$ if and only if $a_{ji}=0$. We say ${\rm{A}}$ is symmetrizable if there is a diagonal matrix ${\rm{D}}={\rm{diag}}(\rd_i\in\Z_{>0}|i\in I)$ such that ${\rm{DA}}$ is symmetric.
\label{pag:Cartan superdatum}
Let $\bigl({\rm{A}}=(a_{ij})_{i,j\in I},P,\Pi,\Pi^\vee\bigr)$ be a Cartan superdatum in the sense of \cite[\S4.1]{KKO2}. That means, \begin{enumerate}
	\item[CS1)] ${\rm{A}}$ is a symmetrizable Cartan matrix;
	\item[CS2)] $P$ is a free abelian group, which is called the weight lattice;
	\item[CS3)] $\Pi=\{\nu_i\in P|i\in I\}$, called the set of simple roots, is $\Z$-linearly independent;
	\item[CS4)] $\Pi^\vee=\{h_i\in P|i\in I\}\subset P^\vee=\Hom_\Z(P,\Z)$, called the set of simple coroots, satisfies that $\<h_i,\nu_j\>=a_{ij}$ for all $i,j\in I$;
	\item[CS5)] there is a decomposition $I=I_{\rm{even}}\sqcup I_{\rm{odd}}$ such that \begin{equation}\label{evenodd}
		a_{ij}\in 2\Z, \quad \text{for all $i\in I_{\rm{odd}}$ and $j\in I$.}
	\end{equation}
\end{enumerate}

The diagonal matrix ${\rm{D}}$ gives rise to a symmetric bilinear form $(-|-)$ on $P$ which satisfies: $$
(\nu_i|\lambda)=\rd_i\<h_i,\lambda\>\quad \text{for all $\lambda\in P$.}
$$
In particular, we have $(\nu_i|\nu_j)=\rd_i a_{ij}$ and hence $\rd_i=(\nu_i|\nu_i)/2$ for each $i\in I$.

We define the root lattice $Q:=\oplus_{i\in I}\Z\nu_i$
and the positive root lattice $Q^+:=\oplus_{i\in I}\Z_{\geq 0}\nu_i$.
For any $\nu=\sum_{i\in I}k_i\nu_i\in Q^+$, we define ${\rm ht}(\nu):=\sum_{i\in I}k_i$.
For any $n\in\Z_{\geq 0}$, we define $Q_n^+:=\{\nu\in Q^+\mid {\rm ht}(\nu)=n\}$.
\label{pag:dominant integral weights}
Let $P^+:=\{\Lambda\in P|\text{$\<h_i,\Lambda\>\in\Z_{\geq 0}$ for all $i\in I$}\}$.
Any element $\Lambda\in P^+$ is called a dominant integral weight.
Let $\Lambda_i$, $i\in I$ be the fundamental dominant integral weights, which satisfy $\<h_i,\Lambda_j\>=\delta_{i,j}$, $i,j\in I$.
Then $P^+=\oplus_{i\in I}\Z_{\geq 0}\Lambda_i$.

\label{pag:parity function on I}
For a Cartan superdatum $({\rm{A}},P,\Pi,\Pi^\vee)$,
we define the parity function ${\rm p}: I\rightarrow\{\overline{0},\overline{1}\}$ by
\begin{equation}\label{pi1}
{\rm p}(i):=\begin{cases} \overline{1}, &\text{if $i\in I_{\rm{odd}}$,}\\
\overline{0}, &\text{if $i\in I_{\rm{even}}$.}
\end{cases}
\end{equation}

\subsection{Quiver Hecke superalgebra}
Recall that ${\rm R}$ is an integral domain of characteristic different from $2$. Let $\bigl({\rm{A}},P,\Pi,\Pi^\vee\bigr)$ be a Cartan superdatum.
%Let ${\rm x}_1,\cdots,{\rm x}_n$ be $n$ indeterminates over ${\rm R}$.
%For any $n\geq 2$ and $\nu\in I^n$, set
%$$
%\mathcal{P}_\nu:={\rm R}\<{\rm x}_1,\cdots,{\rm x}_n\>/\<{\rm x}_a{\rm x}_b-(-1)^{|\nu_a||\nu_b|}{\rm x}_b{\rm x}_a|1\leq a<b\leq n\> .
%$$

For $i,i'\in I$, we consider the ${\rm R}$-algebra ${\rm Pol}_{i,i'}:={\rm R}\langle u,v \rangle / \langle uv-(-1)^{{\rm p}(i)\cdot {\rm p}(i')}vu \rangle$, and choose an element $Q_{i,i'}(u,v)\in {\rm Pol}_{i,i'}$ of the form
\label{pag:Q-polys}
\begin{align}\label{Q-polys}
Q_{i,i'}(u,v)=\sum_{r,s\geq 0}t_{i,i';(r,s)}u^r v^s,
\end{align}
where the coefficient satisfies that
\begin{align}
 &t_{i,i';(r,s)}\neq 0 \text{ only if } -2(\nu_i|\nu_{i'})-r(\nu_i|\nu_i)-s(\nu_{i'}|\nu_{i'})=0;\\
\label{condition1}&t_{i,i';(r,s)}=t_{i',i;(s,r)},\quad t_{i,i';(-a_{i,i'},0)}\in {\rm R}^\times;\\
\label{condition2} &t_{i,i';(r,s)}=0\text{ if either $i=i'$ or $i\in I_{\rm{odd}}$ and $r$ is odd.}
\end{align}

\label{pag:quiver Hecke superalgebras}
\begin{defn}\cite[Definition 3.1]{KKT}
Let $({\rm{A}},P,\Pi,\Pi^\vee)$ be a Cartan superdatum,
%$\nu\in Q_n^+$
$\{Q_{i,j}|i,j\in I\}$ be chosen as above, and $n\in \N.$
The quiver Hecke superalgebras $R_n$ is the superalgebra over ${\rm R},$ which is defined by the generators $$
e({\bf i})\, ({\bf i}\in I^n), x_k\, (1\leq k\leq n),\, \tau_a (1\leq a\leq n-1),
$$
the parity $$
{\rm p}(e({\bf i}))=\bar{0},\quad {\rm p}(x_ke({\bf i}))={\rm p}(\nu_k),\quad {\rm p}(\tau_ae({\bf i}))={\rm p}(\nu_a) \cdot{\rm p}(\nu_{a+1}),
$$
and the following relations:
\begin{align}
& e({\bf j})e({\bf i})=\delta_{{\bf j},{\bf i}}e({\bf i}),\,\,\text{for ${\bf j},{\bf i}\in I^n$}, \,\,\sum_{{\bf i}\in I^n}e({\bf i})=1, \nonumber\\
& x_px_qe({\bf i})=(-1)^{{\rm p}({\bf i}_{p}) {\rm p}({\bf i}_{q})}x_qx_pe({\bf i}),\,\,\text{if $p\neq q$,}\nonumber\\
& x_pe({\bf i})=e({\bf i})x_p,\,\,\,\tau_ae({\bf i})=e(s_a{\bf i})\tau_a,\,\,\text{where $s_a=(a,a+1)$,}\nonumber\\
& \tau_ax_pe({\bf i})=(-1)^{{\rm p}({\bf i}_{p}) {\rm p}({\bf i}_{a}){\rm p}({\bf i}_{a+1})}x_p\tau_ae({\bf i}),\,\,\text{if $p\neq a,a+1$,}\nonumber\\
& \bigl(\tau_ax_{a+1}-(-1)^{{\rm p}({\bf i}_{a}){\rm p}({\bf i}_{a+1})}x_a\tau_a\bigr)e({\bf i})\nonumber\\
&\qquad =\bigl(x_{a+1}\tau_a-(-1)^{{\rm p}({\bf i}_{a}){\rm p}({\bf i}_{a+1})}\tau_ax_a\bigr)=\delta_{{\bf i}_a,{\bf i}_{a+1}}e({\bf i}),\nonumber\\
&\tau_a^2e({\bf i})=Q_{{\bf i}_a,{\bf i}_{a+1}}(x_a,x_{a+1})e({\bf i}),\nonumber\\
&\tau_a\tau_be({\bf i})=(-1)^{{\rm p}({\bf i}_{a}){\rm p}({\bf i}_{a+1}){\rm p}({\bf i}_{b}) {\rm p}({\bf i}_{b+1})}\tau_b\tau_ae({\bf i}),\,\,\text{if $|a-b|>1$},\nonumber\\
& (\tau_{a+1}\tau_a\tau_{a+1}-\tau_a\tau_{a+1}\tau_a)e({\bf i})\nonumber\\
&=\begin{cases}
\frac{Q_{{\bf i}_a,{\bf i}_{a+1}}(x_{a+2},x_{a+1})-Q_{{\bf i}_a,{\bf i}_{a+1}}(x_{a},x_{a+1})}{x_{a+2}-x_a}e({\bf i}), &\text{if ${\bf i}_a={\bf i}_{a+2}\in I_{\rm{even}}$;}\\
(-1)^{{\rm p}({\bf i}_{a+1})}(x_{a+2}-x_a)\frac{Q_{{\bf i}_a,{\bf i}_{a+1}}(x_{a+2},x_{a+1})-Q_{{\bf i}_a,{\bf i}_{a+1}}(x_{a},x_{a+1})}{x_{a+2}^2-x_a^2}e({\bf i}), &\text{if ${\bf i}_a={\bf i}_{a+2}\in I_{\rm{odd}}$;}\nonumber\\
0, &\text{otherwise.}
\end{cases}
\end{align}
\end{defn}

$R_n$ is $\Z$-graded by setting
$$
\deg(e({\bf i}))=0,\quad \deg(x_ke({\bf i}))=(\nu_{{\bf i}_k}|\nu_{{\bf i}_k}),\quad \deg(\tau_ae({\bf i}))=-(\nu_{{\bf i}_a}|\nu_{{\bf i}_{a+1}}).
$$

\begin{prop}\cite[Corollary 3.15]{KKT}
%Let $\nu\in Q^+_n$.
For each $w\in\Sym_n$,
we fix a reduced expression $w=s_{i_1}\cdots s_{i_l}$, and define $\tau_w:=\tau_{i_1}\cdots\tau_{i_l}$,
then the set of elements
$$\{x^a \tau_w e({\bf i})\mid a\in (\Z_{\geq 0})^n,\,w\in \Sym_n,\,{\bf i}\in I^n\}$$
forms a basis of the free ${\rm R}$-module $R_n,$ where
$x^a=x_1^{a_1}\cdots x_n^{a_n}$ for
$a=(a_1,\ldots,a_n)\in (\Z_{\geq 0})^n.$
\end{prop}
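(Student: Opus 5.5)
The statement is the PBW-type basis theorem for the quiver Hecke superalgebra $R_n$: the monomials $x^a\tau_w e({\bf i})$ form an ${\rm R}$-basis. I would prove it in two halves. First I show these elements span $R_n$, using only the defining relations. Then I show they are linearly independent by building a faithful polynomial representation on which their images are visibly independent. This follows Khovanov--Lauda and Rouquier, with super signs tracked throughout.

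\textbf{Spanning.} Every word in the generators is, up to the idempotent relations, a product of $x_p$'s and $\tau_a$'s followed by some $e({\bf i})$. I induct on the number of $\tau$'s and, secondarily, on the total polynomial degree. The commutation relations between $\tau_a$ and $x_p$ move every $x$ to the left. Each such move costs either a sign (when $p\neq a,a+1$) or a correction term with one fewer $\tau$ (the $\delta_{{\bf i}_a,{\bf i}_{a+1}}$ term). A word in the $\tau$'s that is not reduced can be rewritten, using the braid relations, as a word containing $\tau_a^2$. Here the braid and commutation relations hold up to signs and terms with fewer $\tau$'s, and $\tau_a^2$ reduces via the quadratic relation to $Q_{{\bf i}_a,{\bf i}_{a+1}}(x_a,x_{a+1})$, which has fewer $\tau$'s. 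Any two reduced expressions of the same $w$ differ by braid moves, so $\tau_w$ depends on the chosen reduced expression only modulo signs and terms of smaller length. Hence the span of the $x^a\tau_we({\bf i})$ is closed under left multiplication by all generators and contains $1=\sum e({\bf i})$, so it is all of $R_n$.

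\textbf{Independence.} Let ${\rm Pol}=\bigoplus_{{\bf i}\in I^n}{\rm R}\langle x_1,\dots,x_n\rangle e({\bf i})$, modulo the super-commutation relations $x_px_q=(-1)^{{\rm p}({\bf i}_p){\rm p}({\bf i}_q)}x_qx_p$ on the ${\bf i}$-summand. The $x_k$ act by left multiplication and $e({\bf j})$ by projection. The generator $\tau_a$ acts on the ${\bf i}$-summand as follows:
\begin{itemize}
\item if ${\bf i}_a={\bf i}_{a+1}$, by a super divided-difference operator $\partial_a$ (for odd colours, the version with $x_{a+1}^2-x_a^2$ in the denominator and the sign $(-1)^{{\rm p}}$ built in);
\item if ${\bf i}_a\neq{\bf i}_{a+1}$, by the signed transposition $s_a$ followed by multiplication by a suitable factor of $Q_{{\bf i}_a,{\bf i}_{a+1}}$ on one side, as in Rouquier's construction (\cite[\S3]{KKT}).
\end{itemize}
I then verify each defining relation on polynomials. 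For independence, I pass to the localization at the nonzero elements $x_p\pm x_q$ (or, alternatively, work over the fraction field of ${\rm R}$). There each $\tau_w e({\bf i})$ acts as $w$ times a nonzero rational function plus terms involving shorter permutations. Triangularity with respect to the Bruhat order, together with the independence of the operators $x^a w$ over the twisted polynomial ring, then gives linear independence.

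\textbf{Main obstacle.} The hard part is checking the braid relation for $\tau_{a+1}\tau_a\tau_{a+1}-\tau_a\tau_{a+1}\tau_a$ on ${\rm Pol}$ when ${\bf i}_a={\bf i}_{a+2}\in I_{\rm odd}$. The super signs, the condition $t_{i,i';(r,s)}=0$ for odd $r$ when $i$ is odd, and the modified divided difference must combine to give exactly the stated right-hand side. I would first reduce to $n=3$ and then check the identity on the generators of ${\rm Pol}$ as a module over the symmetric-type invariants. If this becomes too heavy, I would instead invoke \cite[Corollary 3.15]{KKT} directly, since the statement is quoted from there.
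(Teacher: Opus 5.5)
\emph{Comparison with the paper's route.} The paper gives no argument of its own here; it quotes \cite[Corollary 3.15]{KKT}. In KKT the basis of $R_n$ is obtained indirectly. They first prove the basis theorem for the quiver Hecke--Clifford superalgebra $RC_n$ (\cite[Corollary 3.9]{KKT}, also quoted in the paper), using a faithful polynomial representation on \emph{commuting} variables $y_p$ tensored with a Clifford algebra, so that all super signs are carried by the $c_p$. They then transport this basis along the isomorphism $R_\nu\otimes\mathcal{C}_{m(\nu)}\cong e^\dag RC_\nu e^\dag$, which is the non-cyclotomic form of Theorem \ref{supermorita}(1), with $\widetilde{Q}$ built from $Q$ as in \cite[Remark 3.14]{KKT}.

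Your route is genuinely different: a direct Khovanov--Lauda/Rouquier argument on the skew polynomial ring, with Ellis--Khovanov--Lauda-type odd divided differences for odd colours. It is self-contained, needs no detour through $J$, $c$, $\widetilde{Q}$ and $e^\dag$, and produces a faithful polynomial module for $R_n$ itself. The KKT route instead keeps every computation inside commutative polynomial rings and yields, at the same time, the Morita superequivalence that the paper uses anyway. Your spanning half is fine as it stands.

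\emph{Gaps in the independence half.} As written, this half is not yet a proof, for two reasons.

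First, its entire content is that your operators satisfy every defining relation with exactly the prescribed signs, and this is not checked. Already the relation $\tau_ax_pe({\bf i})=(-1)^{{\rm p}({\bf i}_p){\rm p}({\bf i}_a){\rm p}({\bf i}_{a+1})}x_p\tau_ae({\bf i})$ forces the transposition to be twisted by the parity involution on the odd variables whenever ${\bf i}_a$ and ${\bf i}_{a+1}$ are both odd. Beyond that, all mixed-parity cases of the quadratic and braid relations must come out exactly, including the factor $(-1)^{{\rm p}({\bf i}_{a+1})}$ and the denominator $x_{a+2}^2-x_a^2$. You identify this as the main obstacle but do not carry it out. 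Your proposed fallback of invoking \cite[Corollary 3.15]{KKT} is circular.

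Second, the skew polynomial ring is noncommutative, so independence of the operators $h_w\,w$ is not Dedekind's lemma. Over a noncommutative domain, families of automorphisms differing by inner ones can be left linearly dependent; for instance, conjugation by an odd $x_p$ negates the other odd variables. The fix is to run the Dedekind argument with central elements on which distinct permutations act differently: $x_p$ for even colours and $x_p^2$ for odd colours. For the same reason you should localize at the central elements $x_p^2-x_q^2$ rather than at $x_p\pm x_q$. Passing to the fraction field of ${\rm R}$ inverts nothing that is needed.

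Once the relations are actually verified and this independence lemma is supplied, your Bruhat-order leading-term argument does give linear independence.
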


If $\Lambda\in P^+,\,i\in I$ and $u$ is an indeterminate over $\Z$, then we define
$$
a_i^\Lambda(x_1)=x_1^{\<h_i,\Lambda\>},\quad
a^\Lambda(x_1):=\sum_{{\bf i}\in I^n}x_1^{\<h_{{\bf i}_1},\Lambda\>}e({\bf i})\in R_n.
$$
\label{pag:cyclotomic quiver Hecke superalgebra}
\begin{defn}\cite[Section 3.7]{KKT}
Let $\Lambda\in P^+$.
The cyclotomic quiver Hecke superalgebra $R^\Lambda_n$ is defined to be the quotient algebra:
$$R^\Lambda_n:=R_n/\<a^\Lambda(x_1)\>.$$
\end{defn}

$R^\Lambda_n$ inherits $\Z\times\Z_2$-grading from $R^\Lambda_n.$
That says, $R^\Lambda_n$ is a $\Z$-graded superalgebra too.
By some abuse of notations, we shall use the same symbols to denote the generators of both $R_n$ and $R^\Lambda_n.$
%For any $n\in\N,$ we denote
%$$R_n :=\bigoplus_{\nu\in Q_n^+} R_\nu, \qquad R^\Lambda_n :=\bigoplus_{\nu\in Q_n^+} R^\Lambda_\nu.$$
For any $\nu\in Q_n^+$, we define
$$ I^\nu:=\Bigl\{{\bf i}=({\bf i}_1,\cdots,{\bf i}_n)\in I^n\Bigm|\sum_{s=1}^{n}\nu_{{\bf i}_s}=\nu\Bigr\}. $$
Let $e_\nu :=\sum_{{\bf i}\in I^{\nu}}e({\bf i})$ be the certain central idempotent, then we define \label{pag:blocks}
$$R_\nu:=e_\nu R_n, \qquad R^\Lambda_\nu:=e_\nu R^\Lambda_n.$$

\subsection{Quiver Hecke-Clifford superalgebra}
Let $\bigl({\rm{A}}=(a_{ij})_{i,j\in I},P,\Pi,\Pi^\vee\bigr)$ be a Cartan superdatum.
Then we can define the quiver Hecke-Clifford
${\rm R}$-superalgebra $RC_n=RC_n(I).$ Let $[n]:=\{1,2,\ldots,n\}$. \label{pag:[n]}

\label{pag:J}
Let the set $J:=(I_{\rm odd}\times\{0\}) \sqcup (I_{\rm even} \times\{\pm \}).$ There is an involution $c: J\to J$ which fixes $I_{\rm odd}\times\{0\}$ and sends
$({\bf i}, \pm)$ to $({\bf i}, \mp)$ for each ${\bf i}\in I_{\rm even}$.
We also denote by $J^c:=I_{\rm odd}\times\{0\}$ the set of fixed points $\{j\in J \mid c(j)=j \}$ and
%let $I$ denote the set of equivalence classes under the equivalence
%relation given by $i\sim_c j\Leftrightarrow i=j$ or $i=c(j)$.
$\pr$ the canonical projection $J\to I$. The symmetric
group $\Sym_n$ acts on $J^n$ in a natural way.
For $p\in[n],$ we define
$c_p: J^n\to J^n$ by
$$c_{p}{\bf i}=(c^{\delta_{p\ell}}{\bf i}_{\ell})_{1\leq \ell\leq n}
\quad\text{for ${\bf i}=({\bf i}_1,\ldots,{\bf i}_n)\in J^n$.}$$

Recall that for each $i,i'\in I$, we have chosen an element $Q_{i,i'}(u,v)\in {\rm Pol}_{i,i'}$ of the form
\begin{align*}
	Q_{i,i'}(u,v)=\sum_{r,s\geq 0}t_{i,i';(r,s)}u^r v^s.
\end{align*}
\label{pag:widetildeQ polys}
Following \cite[Remark 3.14]{KKT}, we define $\widetilde{Q}=(\widetilde{Q}_{j,j'}(u,v))_{j,j'\in J}\subseteq {\rm R}[u,v]$ be the family of polynomials via the following way: for any $(i,\varepsilon),(i',\varepsilon') \in J,$ where $i,i'\in I,$ $\varepsilon,\varepsilon'\in\{0,\pm\},$ we set
\begin{align}\label{extend Q-polys}
\widetilde{Q}_{(i,\varepsilon),(i',\varepsilon')}(u,v):=\sum_{r,s\geq 0}(-1)^{{\rm p}(i)\cdot\frac{r}{2}+{\rm p}(i')\cdot\frac{s}{2}}t_{i,i';(r,s)}\left((-1)^{\varepsilon}u\right)^r \left((-1)^{\varepsilon'}v\right)^s .
\end{align}
%\begin{align*}
%&Q_{j,i}(u,v)=Q_{i,j}(v,u),\\
%&Q_{ci,j}(-u,v)=Q_{i,cj}(u,-v)=Q_{i,j}(u,v), \\
%&Q_{i,j}(u,v)=0, \text{ if } \pr(i)=\pr(j)\in I.
%\end{align*}
It follows from \eqref{condition1} and \eqref{condition2} that when the coefficient $t_{i,i';(r,s)}\neq 0$, the power exponent ${\rm p}(i)\cdot\frac{r}{2}+{\rm p}(i')\cdot\frac{s}{2}$ makes sense. Note that $\widetilde{Q}_{j,j'}(u,v)=\widetilde{Q}_{j,j'}(-u,v)$ for $j\in J^c, j'\in J.$
\begin{defn}\cite[Definition 3.5]{KKT} \label{pag:quiver Hecke-Clifford superalgebra}
Let $\bigl({\rm{A}}=(a_{ij})_{i,j\in I},P,\Pi,\Pi^\vee\bigr)$ be a Cartan superdatum,
$\widetilde{Q}=(\widetilde{Q}_{j,j'}(u,v))_{j,j'\in J}$ be chosen as above, and $n\in \N$.
The quiver Hecke-Clifford superalgebra
$RC_n=RC_n(I)$ is the ${\rm R}$-superalgebra generated by the even generators
$\{y_p\}_{1\le p\le n}$, $\{\sigma_a\}_{1\le a<n}$,
$\{e({\bf i})\}_{{\bf i}\in J^n}$ and the odd generators $\{c_p\}_{1\leq
p\leq n}$ with the following defining relations: for ${\bf i},{\bf j}\in
J^n$, $1\leq p,\,q\leq n$, $1\leq a\leq n-1$, we have
\begin{enumerate}
\item $e({\bf i})e({\bf j})=\delta_{{\bf i} {\bf j}}e({\bf i})$, $1=\sum_{{\bf i}\in J^n}e({\bf i})$,
$y_pe({\bf i})=e({\bf i})y_p$,  $c_pe({\bf i})=e(c_p{\bf i})c_p$,
\item $y_py_q=y_qy_p$, $c_pc_q+c_qc_p=2\delta_{pq}$,
\item $c_py_q=(-1)^{\delta_{p,q}}y_qc_p$,
\item $\sigma_ae({\bf i}) = e(s_a{\bf i})\sigma_a, \sigma_a c_p = c_{s_a(p)}\sigma_a$,
\item $\sigma_a y_pe({\bf i}) = y_{p}\sigma_ae({\bf i})$ if $p\not=a,a+1$,
\item
%\begin{align*}
%(\sigma_ay_{a+1}-y_a\sigma_a)e({\bf i}) &=
%\begin{cases}
%e({\bf i}) & \text{if }{\bf i}_a={\bf i}_{a+1}\not\in J^c, \\
%-c_ac_{a+1}e({\bf i}) & \text{if }{\bf i}_a=c{\bf i}_{a+1}\not\in J^c, \\
%(1-c_ac_{a+1})e({\bf i}) & \text{if }{\bf i}_a={\bf i}_{a+1}\in J^c,\\
%0&\text{otherwise,}
%\end{cases}
%\end{align*}
%or equivalently,
$$\sigma_ay_{a+1}-y_a\sigma_a=
\sum_{{\bf i}_a={\bf i}_{a+1}}e({\bf i})-\sum_{{\bf i}_a=c{\bf i}_{a+1}}c_ac_{a+1}e({\bf i}),$$
\item
% \begin{align*}
%(y_{a+1}\sigma_a-\sigma_ay_a)e({\bf i}) &=
%\begin{cases}
%e({\bf i}) & \text{if }{\bf i}_a={\bf i}_{a+1}\not\in J^c, \\
%c_ac_{a+1}e({\bf i}) & \text{if }{\bf i}_a=c{\bf i}_{a+1}\not\in J^c, \\
%(1+c_ac_{a+1})e({\bf i}) & \text{if }{\bf i}_a={\bf i}_{a+1}\in J^c,\\
%0&\text{otherwise,}
%\end{cases}
%\end{align*}
%or equivalently,
$$y_{a+1}\sigma_a-\sigma_ay_a=
\sum_{{\bf i}_a={\bf i}_{a+1}}e({\bf i})+\sum_{{\bf i}_a=c{\bf i}_{a+1}}c_ac_{a+1}e({\bf i}),$$
\item $\sigma_a^2e({\bf i}) = \tQ_{{\bf i}_a,{\bf i}_{a+1}}(y_a,y_{a+1})e({\bf i})$,
\item $\sigma_a\sigma_{b}=\sigma_{b}\sigma_{a}$ if $|a-b|>1$,
\item
%\begin{align*}
%&(\sigma_{a+1}\sigma_a\sigma_{a+1}-\sigma_a\sigma_{a+1}\sigma_a)e({\bf i}) \\
%&\hs{4ex}=\begin{cases}
%\dfrac{\tQ_{{\bf i}_a,{\bf i}_{a+1}}(y_{a+2},y_{a+1})
%-\tQ_{{\bf i}_a,{\bf i}_{a+1}}(y_{a},y_{a+1})}{y_{a+2}-y_{a}} e({\bf i})
%& \text{if ${\bf i}_a={\bf i}_{a+2}\not\in J^c$}, \\[3ex]
%\dfrac{\tQ_{{\bf i}_a,{\bf i}_{a+1}}(y_{a+2},y_{a+1})-\tQ_{{\bf i}_a,{\bf i}_{a+1}}(-y_{a},y_{a+1})}{y_{a+2}+y_{a}}
%c_ac_{a+2}e({\bf i})
%& \text{if ${\bf i}_a=c{\bf i}_{a+2}\not\in J^c$}, \\[3ex]
%\dfrac{\tQ_{{\bf i}_a,{\bf i}_{a+1}}(y_{a+2},y_{a+1})
%-\tQ_{{\bf i}_a,{\bf i}_{a+1}}(y_{a},y_{a+1})}{y_{a+2}-y_{a}}e({\bf i}) &\\[1ex]
%\hs{5ex}+\dfrac{\tQ_{{\bf i}_a,{\bf i}_{a+1}}(y_{a+2},y_{a+1})
%-\tQ_{{\bf i}_a,{\bf i}_{a+1}}(y_{a},y_{a+1})}{y_{a+2}+y_{a}} c_ac_{a+2}e({\bf i})
%& \text{if ${\bf i}_a={\bf i}_{a+2}\in J^c$}, \\[3ex]
%0 & \text{otherwise}.
%\end{cases}
%\end{align*}
%or equivalently,
\begin{align*}
\sigma_{a+1}\sigma_a\sigma_{a+1}-\sigma_a\sigma_{a+1}\sigma_a
&=\sum_{{\bf i}_a={\bf i}_{a+2}}
\dfrac{\tQ_{{\bf i}_a,{\bf i}_{a+1}}(y_{a+2},y_{a+1})-\tQ_{{\bf i}_a,{\bf i}_{a+1}}(y_{a},y_{a+1})}
{y_{a+2}-y_{a}}e({\bf i}) \\
&\hs{-1ex}+\kern-2ex\sum_{{\bf i}_a=c{\bf i}_{a+2}}
\dfrac{\tQ_{{\bf i}_a,{\bf i}_{a+1}}(y_{a+2},y_{a+1})-\tQ_{{\bf i}_a,{\bf i}_{a+1}}(-y_{a},y_{a+1})}{y_{a+2}+y_{a}}
c_ac_{a+2}e({\bf i}).
\end{align*}
\end{enumerate}
\end{defn}

$RC_n$ is also $\Z$-graded by setting
$$
\deg(e({\bf i}))=0,\quad \deg(y_pe({\bf i}))=(\nu_{\pr({\bf i}_k)}|\nu_{\pr({\bf i}_k)}),\quad \deg(\sigma_ae({\bf i}))=-(\nu_{\pr({\bf i}_a)}|\nu_{\pr({\bf i}_{a+1})}).
$$

\begin{prop}\cite[Corollary 3.9]{KKT}
For each $w\in \Sym_n$, we choose
a reduced expression $s_{i_1}\cdots s_{i_{\ell}}$
of $w$, and set $\sigma_w=\sigma_{i_1}\cdots
\sigma_{i_{\ell}}$.
Then the set of elements
\begin{align*}
\{
y^a c^{\eta}\sigma_{w}e({\bf i})\mid
a\in(\Z_{\geq 0})^n,\, \eta\in\Z_2^n,\,w\in\Sym_n,\,{\bf i}\in J^n \}
\end{align*}
forms an ${\rm R}$-basis of $RC_n,$ where
$y^a=y_1^{a_1}\cdots y_n^{a_n}$ for
$a=(a_1,\ldots,a_n)\in (\Z_{\geq 0})^n$ and
$c^{\eta}=c_1^{\eta_1}\cdots c_n^{\eta_n}$
for $\eta=(\eta_1,\cdots,\eta_n)\in\Z_2^n$.
\end{prop}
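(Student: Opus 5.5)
The plan follows the standard PBW argument for KLR-type algebras: first show the proposed elements span $RC_n$, then show they are linearly independent using a faithful polynomial representation.

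\textbf{Spanning.} Let $\mathcal{P}_n:={\rm R}[y_1,\ldots,y_n]\otimes\mathcal{C}_n\otimes\bigoplus_{{\bf i}\in J^n}{\rm R}e({\bf i})$. This is the span of the monomials $y^ac^\eta e({\bf i})$, and by relations (1)--(3) it is a subalgebra. Relations (4)--(7) show that moving a $y_p$, $c_p$ or $e({\bf i})$ from the right of $\sigma_a$ to its left produces the same kind of term with $\sigma_a$ kept, plus error terms lying in $\mathcal{P}_n$. By induction on the number of $\sigma$'s, every word in the generators can therefore be rewritten as a combination of elements $f\,\sigma_{a_1}\cdots\sigma_{a_k}e({\bf i})$ with $f\in\mathcal{P}_n$.

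Next, filter $RC_n$ by the number of $\sigma$'s. Relations (8)--(10) say that, modulo strictly shorter $\sigma$-words with coefficients in $\mathcal{P}_n$, the $\sigma_a$ satisfy the braid relations and $\sigma_a^2\equiv 0$. Hence Matsumoto's theorem applies: any word $\sigma_{a_1}\cdots\sigma_{a_k}$ is congruent either to $\sigma_w$ for the fixed reduced expression of $w$ (if the word is reduced) or to an element of lower filtration degree. Induction on $k$ then shows that the elements $y^ac^\eta\sigma_we({\bf i})$ span $RC_n$.

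\textbf{Linear independence.} Build a faithful representation of $RC_n$ on
$$V:=\bigoplus_{{\bf i}\in J^n}{\rm R}[y_1,\ldots,y_n]\otimes\mathcal{C}_n\,e({\bf i}).$$
Here $y_p$, $c_p$ and $e({\bf i})$ act by left multiplication, with $c_pe({\bf i})=e(c_p{\bf i})c_p$ built in. Each $\sigma_a$ acts on the ${\bf i}$-summand by a Demazure-type operator, chosen according to the relation between ${\bf i}_a$ and ${\bf i}_{a+1}$:
\begin{itemize}
\item if ${\bf i}_a={\bf i}_{a+1}$, use a divided difference $\partial_a$;
\item if ${\bf i}_a=c{\bf i}_{a+1}$, use a twisted divided difference involving $c_ac_{a+1}$ and $y_a+y_{a+1}$;
\item if ${\bf i}_a\in J^c$ and ${\bf i}_a={\bf i}_{a+1}$, use the combination of both;
\item otherwise, use multiplication by a suitable factor of $\widetilde{Q}_{{\bf i}_a,{\bf i}_{a+1}}$ composed with $s_a$.
\end{itemize}
The factorization in the last case is best done after extending scalars to allow the polynomials $\widetilde{Q}_{{\bf i}_a,{\bf i}_{a+1}}(y_a,y_{a+1})$ to be split as $P_{{\bf i}_a,{\bf i}_{a+1}}(y_a,y_{a+1})\,P_{{\bf i}_{a+1},{\bf i}_a}(y_{a+1},y_a)$ over a polynomial extension; alternatively, follow KKT and use the localization at $\prod(y_a\pm y_b)$.

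\textbf{Main obstacle.} The hardest step is checking that these operators satisfy the defining relations, above all the deformed braid relation (10) in the mixed cases ${\bf i}_a=c{\bf i}_{a+2}$. I would first do this over the localized ring ${\rm R}[y][(y_a\pm y_b)^{-1}]$, where each $\sigma_a$ is an explicit combination of $s_a$-twisting, $c_ac_{a+1}$ and rational functions, so that the checks reduce to finite identities in rank $3$. Condition \eqref{condition2} and the symmetry $\widetilde{Q}_{j,j'}(u,v)=\widetilde{Q}_{j,j'}(-u,v)$ for $j\in J^c$ are exactly what make the odd cases close up.

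Once the representation exists, independence follows by a leading-term argument. Acting on the ${\bf i}$-summand, the operator $y^ac^\eta\sigma_we({\bf i})$ has top component $y^ac^\eta\,\phi_w$ in the twisted group ring ${\rm Frac}({\rm R}[y])\otimes\mathcal{C}_n\rtimes\Sym_n$, where $\phi_w$ is a nonzero rational multiple of $w$. These top components are ${\rm Frac}({\rm R}[y])$-independent for distinct $(a,\eta,w,{\bf i})$, so the spanning set is a basis.
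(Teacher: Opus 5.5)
The paper does not prove this statement; it only quotes \cite[Corollary 3.9]{KKT}. Your spanning argument is fine. The overall plan, a faithful polynomial representation followed by a leading-term argument, is the standard one.

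A side remark: no extension of scalars is needed in the ``otherwise'' case. Fix a total order on $I$ and take $P_{j,j'}=\widetilde{Q}_{j,j'}$ if $\pr(j)<\pr(j')$ and $P_{j,j'}=1$ otherwise. Then $\widetilde{Q}_{j,j'}(u,v)=P_{j,j'}(u,v)P_{j',j}(v,u)$, and this choice is automatically compatible with $\widetilde{Q}_{cj,j'}(u,v)=\widetilde{Q}_{j,j'}(-u,v)$.

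The real gap is in the independence step. As formulated it is false, exactly where the Clifford generators enter.

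You claim that the top part of $\sigma_we({\bf i})$ is a nonzero rational multiple of $w$ in $({\rm Frac}({\rm R}[y])\otimes\mathcal{C}_n)\rtimes\Sym_n$. Two things go wrong.

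\emph{Twisted case.} For ${\bf i}_a=c{\bf i}_{a+1}\notin J^c$, your own twisted divided difference $g\mapsto\frac{g(-y_a,-y_{a+1})-g(y_{a+1},y_a)}{y_a+y_{a+1}}c_ac_{a+1}$ has $s_a$-part $-\frac{1}{y_a+y_{a+1}}s_a(g)\,c_ac_{a+1}$. This coefficient is Clifford-valued, not rational.

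\emph{$J^c$ case.} For ${\bf i}_a={\bf i}_{a+1}\in J^c$, no element $X+Ys_a$ of that ring (with $s_a$ permuting both the $y$'s and the $c$'s) satisfies the relations at all. Relations (5)--(7), parity, and $\sigma_ac_a=c_{a+1}\sigma_a$, $\sigma_ac_{a+1}=c_a\sigma_a$ force
$X=\frac{1}{y_{a+1}-y_a}+\frac{1}{y_a+y_{a+1}}c_ac_{a+1}$, with $Y$ a rational function even in $y_a$ and in $y_{a+1}$.
Then $\sigma_a^2e({\bf i})=0$ requires $Y\cdot{}^{s_a}Y=-(y_a-y_{a+1})^{-2}-(y_a+y_{a+1})^{-2}$. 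With $u=y_a^2$, $v=y_{a+1}^2$, this says $2(u+v)$ is a norm from ${\rm Frac}({\rm R}[u,v])$ to its $\Sym_2$-invariants. An elementary descent modulo $(u-v)^2$ shows it is not.

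Conversely, in your ``otherwise'' case with ${\bf i}_a\in J^c$, relations (4) and (8) prevent $\sigma_a$ from being right $\mathcal{C}_n$-linear. So the Clifford part must be handled case by case, and this is also where your deferred check of (10) has to be done.

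\emph{Correct ambient ring.} Since $c_p$ acts on $V$ by $gu\mapsto g|_{y_p\mapsto -y_p}\,c_pu$, the natural place to work is $({\rm Frac}({\rm R}[y])\rtimes(\Z_2^n\rtimes\Sym_n))\otimes{\rm End}(\mathcal{C}_n)$, with the idempotents added. There the top coefficients are Clifford-valued; in the $J^c$ case the coefficient is $\alpha\otimes 1+\beta\otimes c_ac_{a+1}$ with $\alpha,\beta\neq 0$ rational. Nonvanishing is not enough, because $\mathcal{C}_n$ has zero divisors such as $1\pm\sqrt{-1}\,c_ac_{a+1}$. You must show these coefficients are units. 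Here this follows from
$(\alpha\otimes1+\beta\otimes c_ac_{a+1})(\alpha\otimes1-\beta\otimes c_ac_{a+1})=(\alpha^2+\beta^2)\otimes1\neq0$, which uses ${\rm char}\,{\rm R}\neq2$.

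\emph{Bookkeeping.} The top components are not ${\rm Frac}({\rm R}[y])$-independent for distinct $a$. The correct statement is that distinct $(\eta,w)$ give distinct signed permutations $\varepsilon_\eta w$, where $\varepsilon_\eta$ flips the signs of $y_p$ for $p\in\supp(\eta)$. These are independent by Dedekind's lemma. For fixed $(\eta,w,{\bf i})$, you then cancel the unit leading coefficient and use ${\rm R}$-linear independence of the monomials $y^a$.
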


If $\Lambda\in P^+,\,j\in J$ and $u$ is an indeterminate over $\Z$, then we define
$$
a_j^\Lambda(u)=u^{\<h_{\pr(j)},\Lambda\>},\quad
a^\Lambda(y_1):=\sum_{{\bf i}\in J^n}y_1^{\<h_{\pr({\bf i}_1)},\Lambda\>}e({\bf i})\in RC_n.
$$
\label{pag:cyclotomic quiver Hecke-Clifford superalgebra}
\begin{defn}\cite[Section 3.7]{KKT}
Let $\Lambda\in P^+$.
The cyclotomic quiver Hecke-Clifford superalgebra $RC^\Lambda_n$ is defined to be the quotient algebra:
$$RC^\Lambda_n:=RC_n/\<a^\Lambda(y_1)\>.$$
\end{defn}
Similarly, $RC^\Lambda_n$ inherits $\Z\times\Z_2$-grading from $RC^\Lambda_n.$
By some abuse of notations, we shall use the same symbols to denote the generators of both $RC_n$ and $RC^\Lambda_n.$

\begin{rem}
The algebras $RC_n$ and $RC^\Lambda_n$ have an anti-involution $*$
that sends the generators $e({\bf i}),$ $y_p,$ $\sigma_a,$ $c_p$ to themselves.
\end{rem}

\label{pag:J nu}
For any $\nu\in Q_n^+$, we define
$$J^\nu:=\Bigl\{{\bf i}=({\bf i}_1,\cdots,{\bf i}_n)\in J^n\Bigm|\sum_{s=1}^{n}\nu_{\pr({\bf i}_s)}=\nu\Bigr\}.$$
Let $e^J_\nu :=\sum_{{\bf i}\in J^{\nu}}e({\bf i})$ be the certain central idempotent, then we define
\label{pag:J blocks}
$$RC_\nu:=e^J_\nu RC_n, \qquad RC^\Lambda_\nu:=e^J_\nu RC^\Lambda_n.$$
Recall the canonical projection $\pr: J\to I$. We choose $J^\dag \subset J$ such that the projection $\pr$ induces a bijection $J^\dag\to I$. Let $e^\dag:=\sum_{{\bf i}\in {J^{\dag n}}}e({\bf i}).$
%\begin{rem}
%By \cite[Theorem 3.13]{KKT}, for any $\Lambda\in P^+,$ $\nu\in Q_n^+,$ the superalgebras $RC_\nu^\Lambda$ and $R_\nu^\Lambda$ are weakly Morita superequivalent to each other.
%\end{rem}
\label{pag:m(nu)}
\begin{defn}\label{m(nu)}
For $\nu=\sum_{i\in I}m_i\nu_i\in Q_+$, we define $m(\nu):=\sum_{i\in I_{\rm odd}}m_i\in\Z_{\geq 0}.$
\end{defn}
Kang, Kashiwara and Tsuchioka \cite{KKT} proved that the (cyclotomic) quiver Hecke superalgebra and the (cyclotomic) quiver Hecke-Clifford superalgebra are weakly Morita superequivalent to each other.

\begin{thm}\cite[Below Definition 3.10, Theorem 3.13]{KKT}\label{supermorita}
Let $\Lambda\in P^+$ and $\nu=\sum_{i\in I}m_i\nu_i\in Q_+$.

\begin{enumerate}
	
\item We have a
$\Z\times \Z_2$-graded ${\rm R}$-algebra isomorphism
\begin{align*}
R_{\nu}^\Lambda\otimes \mathcal{C}_{m(\nu)}\cong e^\dag RC_{\nu}^\Lambda e^\dag.
\end{align*}

\item Suppose ${\rm R}=\mathbb{K}$ is a field, then we have the following morita superequivalent $$RC_{\nu}^\Lambda \overset{\rm sMor}{\sim} e^\dag RC_{\nu}^\Lambda e^\dag.$$
\end{enumerate}
%In particular, $R_{\beta}^\Lambda$ and
%$RC_{\beta}^\Lambda$ are weakly Morita superequivalent.
\end{thm}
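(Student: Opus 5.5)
Part (1) should come from an explicit homomorphism on generators, checked to be bijective by comparing the bases of \cite[Corollary 3.15]{KKT} and \cite[Corollary 3.9]{KKT}. Part (2) should follow from showing that $e^\dag$ is a full idempotent of $RC^\Lambda_\nu$.

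\textbf{Step 1: the map on generators.} Let $\dag:I\to J^\dag$ be the inverse of the bijection induced by $\pr$, applied componentwise to ${\bf i}\in I^\nu$. For ${\bf i}\in I^\nu$, list the positions $p_1({\bf i})<\cdots<p_{m(\nu)}({\bf i})$ at which ${\bf i}_p\in I_{\rm odd}$. I would define $\Phi:R_\nu\otimes\mathcal{C}_{m(\nu)}\to e^\dag RC_\nu e^\dag$ by:
\begin{itemize}
\item $e({\bf i})\otimes 1\mapsto e({\bf i}^\dag)$;
\item $x_ke({\bf i})\mapsto y_ke({\bf i}^\dag)$ if ${\bf i}_k$ is even, and $x_ke({\bf i})\mapsto \zeta\, y_kc_ke({\bf i}^\dag)$ if ${\bf i}_k$ is odd, with $\zeta\in\{1,\sqrt{-1}\}$ chosen to absorb the sign $(-1)^{{\rm p}(i)r/2}$ built into $\widetilde{Q}$;
\item $1\otimes C_r\mapsto \sum_{{\bf i}}c_{p_r({\bf i})}e({\bf i}^\dag)$.
\end{itemize}
Since $c$ fixes $J^c$, $c_p$ preserves ${\bf i}^\dag$ at odd positions, so these images do lie in $e^\dag RC_\nu e^\dag$. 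Parities match: $y_kc_k$ is odd, and one checks $c_p(y_pc_p)=-(y_pc_p)c_p$, which is exactly the supercommutation required in $R\otimes\mathcal{C}$. The image of $\tau_ae({\bf i})$ is $\sigma_ae({\bf i}^\dag)$, corrected by a factor built from $c_a,c_{a+1}$ when both ${\bf i}_a,{\bf i}_{a+1}$ are odd; for instance $\frac{1}{\sqrt2}(c_a-c_{a+1})$-type factors are the natural first guess. The correction is needed so that $\sigma_a$, which exchanges the $r$-th and $(r+1)$-th odd positions, commutes with the images of the $C_r$.

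\textbf{Step 2: relations and bijectivity.} I would then verify the defining relations of $R_\nu$, using \eqref{extend Q-polys} to match $\tau_a^2$ and the braid relations with the $\sigma$-relations of $RC_n$. Next I would show $\Phi$ sends the basis $\{x^a\tau_we({\bf i})\}\otimes\{C^\eta\}$ to a set that is unitriangular with respect to the basis $\{y^ac^\eta\sigma_we({\bf j})\}$ restricted to $e^\dag$. The truncation keeps exactly those $c^\eta$ that are supported on odd positions, and these match $\mathcal{C}_{m(\nu)}$. This gives the non-cyclotomic isomorphism.

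\textbf{Step 3: passing to cyclotomic quotients.} This is the main obstacle. I must show that $e^\dag\langle a^\Lambda(y_1)\rangle e^\dag$ equals the image of $\langle a^\Lambda(x_1)\rangle\otimes\mathcal{C}_{m(\nu)}$. The inclusion $\supseteq$ is clear, since $y_1^{k}c_1^{\epsilon}$ has the same $y$-exponent as $x_1^k$ up to Clifford factors. For $\subseteq$, I would write an element of the two-sided ideal as $\sum a\,y_1^{\langle h,\Lambda\rangle}e({\bf j})\,b$. Then I would use $e({\bf j})=c_pe(c_p{\bf j})c_p$ repeatedly to move every $e({\bf j})$ into $J^{\dag n}$, so that the ideal is generated by $e^\dag RC\,e^\dag\, a^\Lambda(y_1)\, e^\dag RC\,e^\dag$. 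This expression lies in the image.

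\textbf{Part (2).} The same conjugation identity $e({\bf j})=c_{p_1}\cdots c_{p_t}e({\bf j}')c_{p_t}\cdots c_{p_1}$, with ${\bf j}'\in J^{\dag n}$, gives $RC^\Lambda_\nu e^\dag RC^\Lambda_\nu=RC^\Lambda_\nu$. Hence $e^\dag$ is full, and $M\mapsto e^\dag M$ is a Morita superequivalence by standard idempotent-truncation theory, the functor being even because $e^\dag$ is even.
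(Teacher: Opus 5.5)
Your outline follows the standard route, and it is essentially the argument of \cite{KKT}, which the paper only cites. That route is: Clifford-corrected images of the generators, a relation check, comparison with the PBW basis of $RC_n$, then cutting down the cyclotomic ideal by conjugating idempotents with $c_p$'s, and finally using that $e^\dag$ is full. Step 3 and Part (2) are fine as written.

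\textbf{The scalar in Step 1 is forced, and yours is wrong.}
\begin{enumerate}
\item The image of an odd $\tau_ae({\bf i})$ must \emph{anticommute}, not commute, with every image of $C_r$. This leads to the form $\lambda(c_a-c_{a+1})\sigma_ae({\bf i}^\dag)$, and the relations then determine $\lambda$.
\item Take $\zeta=1$. Since $(y_pc_p)^2=-y_p^2$, this reproduces exactly the sign $(-1)^{{\rm p}(i)r/2}$ built into $\widetilde{Q}$; with $\zeta=\sqrt{-1}$ that sign would be lost.
\item If ${\bf i}_a={\bf i}_{a+1}$ is odd, use $(\sigma_ay_{a+1}-y_a\sigma_a)e({\bf j})=(1-c_ac_{a+1})e({\bf j})$ for ${\bf j}={\bf i}^\dag$. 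A direct computation gives
\[ \lambda(c_a-c_{a+1})\sigma_a\,y_{a+1}c_{a+1}+y_ac_a\,\lambda(c_a-c_{a+1})\sigma_a=-2\lambda \quad\text{on } e({\bf i}^\dag). \]
So the relation $(\tau_ax_{a+1}+x_a\tau_a)e({\bf i})=e({\bf i})$ forces $\lambda=-\frac12$. Your $\frac1{\sqrt2}$ would give $-\sqrt2$ in place of $1$.
\item If ${\bf i}_a=i\neq j={\bf i}_{a+1}$ are both odd, the relation $\tau_a^2e({\bf i})=Q_{i,j}(x_a,x_{a+1})e({\bf i})$ forces $\lambda_{ij}\lambda_{ji}=-\frac12$. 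The braid relations then have to be verified with these specific scalars.
\end{enumerate}
As a consequence, your construction needs $2\in{\rm R}^\times$. That is harmless for ${\rm R}=\mathbb{K}$, the only case used later in the paper, but it does not cover an arbitrary integral domain of characteristic $\neq 2$.

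\textbf{Step 2 should not rely on \cite[Corollary 3.15]{KKT}.} In \cite{KKT} the basis theorem for $R_n$ appears after, and is derived from, Theorem 3.13, so citing it here is circular.
\begin{enumerate}
\item You only need the elementary fact that the $x^a\tau_we({\bf i})$ span $R_\nu$.
\item Use the same reduced words on both sides. Then the image of $x^a\tau_we({\bf i})\otimes C^\eta$ is exactly $y^a\kappa\,\sigma_we({\bf i}^\dag)$, with $\kappa$ in the Clifford algebra on the odd positions of $w{\bf i}$; there are no lower terms.
\item For fixed $(a,w,{\bf i})$, these $\kappa$ form a basis of that Clifford algebra, because the correction factors are units: $(c_b-c_{b+1})^2=2$.
\item So the comparison with the truncated PBW basis from \cite[Corollary 3.9]{KKT} is block-diagonal with invertible $2^{m(\nu)}\times 2^{m(\nu)}$ blocks. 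It is not unitriangular in Clifford monomials.
\end{enumerate}
This gives bijectivity, and the basis of $R_\nu$ comes out as a byproduct.
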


\section{Cyclotomic Hecke-Clifford superalgebra and KKT's isomorphism}\label{basic-Non-dege}
{\bf Throughout this section, we fix $n\in \mathbb{N}$ and $q\in{\rm R^\times}\setminus\{\pm 1\}$ such that $q+q^{-1}\in{\rm R^\times}$.}
\subsection{Affine Hecke-Clifford superalgebra $\mathcal{H}_{\rm R}$}
	
	We define $\epsilon:=q-q^{-1}\in{\rm R}\setminus\{0\}.$
	The non-degenerate affine Hecke-Clifford superalgebra $\mathcal{H}_{\rm R}=\mathcal{H}_{\rm R}(n)$ \label{pag:AHCA} is
	the superalgebra over ${\rm R}$ generated by even generators
	$T_1,\ldots,T_{n-1},$ $X_1^{\pm 1},\ldots,X_n^{\pm 1}$ and odd generators
	$C_1,\ldots,C_n$ subject to the following relations
	\begin{align}
		T_i^2=\epsilon T_i +1,\quad T_iT_j =T_jT_i, &\quad
		T_iT_{i+1}T_i=T_{i+1}T_iT_{i+1}, \quad|i-j|>1,\label{Braid}\\
		X_iX_j&=X_jX_i, X_iX^{-1}_i=X^{-1}_iX_i=1, \quad 1\leq i,j\leq n, \label{Poly}\\
		C_i^2=1,C_iC_j&=-C_jC_i, \quad 1\leq i\neq j\leq n, \label{Clifford}\\
		T_iX_i&=X_{i+1}T_i-\epsilon(X_{i+1}+C_iC_{i+1}X_i),\label{PX1}\\
		T_iX_{i+1}&=X_iT_i+\epsilon(1-C_iC_{i+1})X_{i+1},\label{PX2}\\
		T_iX_j&=X_jT_i, \quad j\neq i, i+1, \label{PX3}\\
		T_iC_i=C_{i+1}T_i, T_iC_{i+1}&=C_iT_i-\epsilon(C_i-C_{i+1}),T_iC_j=C_jT_i,\quad j\neq i, i+1, \label{PC}\\
		X_iC_i=C_iX^{-1}_i, X_iC_j&=C_jX_i,\quad 1\leq i\neq j\leq n.
		\label{XC}
	\end{align}
	
	For $\alpha=(\alpha_1,\ldots,\alpha_n)\in\mathbb{Z}^n$ and
	$\beta=(\beta_1,\ldots,\beta_n)\in\mathbb{Z}_2^n$, we set
	$X^{\alpha}=X_1^{\alpha_1}\cdots X_n^{\alpha},$
	$C^{\beta}=C_1^{\beta_1}\cdots C_n^{\beta_n}$ and define
	$\supp(\beta):=\{1 \leq k \leq n:\beta_{k}=\bar{1}\},$ $|\beta|:=\Sigma_{i=1}^{n}\beta_i \in \mathbb{Z}_2.$ \label{pag:suppot and sum}
	Then we have the
	following.
	\begin{lem}\cite[Theorem 2.2]{BK}\label{lem:PBWNon-dege}
		The set $\{X^{\alpha}C^{\beta}T_w~|~ \alpha\in\mathbb{Z}^n,
		\beta\in\mathbb{Z}_2^n, w\in \mathfrak{S}_n\}$ forms a basis of $\mathcal{H}_{\rm R}$.
	\end{lem}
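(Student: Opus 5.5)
The plan is the usual two-step PBW argument: first show that the monomials $X^{\alpha}C^{\beta}T_w$ span $\mathcal{H}_{\rm R}$, then show they are ${\rm R}$-linearly independent by letting $\mathcal{H}_{\rm R}$ act on a module of the expected size.

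\emph{Spanning.} Let $\mathcal{S}$ be the ${\rm R}$-span of $\{X^{\alpha}C^{\beta}T_w\}$. Since $1\in\mathcal{S}$, it suffices to show that $\mathcal{S}$ is stable under left multiplication by the generators $X_j^{\pm1}$, $C_j$ and $T_i$.
\begin{enumerate}
\item Stability under $X_j^{\pm1}$ is immediate from \eqref{Poly}.
\item For $C_j$, use \eqref{XC} to move $C_j$ past $X^{\alpha}$; this only inverts $X_j$. Then \eqref{Clifford} reorders the Clifford part into some $\pm C^{\beta'}$.
\item For $T_i$, move $T_i$ to the right through $X^{\alpha}$ and then through $C^{\beta}$.
 \begin{itemize}
 \item Relations \eqref{PX1}--\eqref{PX3}, together with their consequences for $X_i^{-1}$ and $X_{i+1}^{-1}$, give $T_iX^{\alpha}=X^{s_i\alpha}T_i+(\text{terms in }X^{\gamma}C^{\delta})$. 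These follow from \eqref{PX1}, \eqref{PX2} by multiplying on both sides by inverses.
 \item Relation \eqref{PC} moves $T_i$ past $C^{\beta}$, again modulo terms with no $T$.
 \item Finally $T_iT_w$ is $T_{s_iw}$ or $\epsilon T_w+T_{s_iw}$, by \eqref{Braid} and the quadratic relation.
 \end{itemize}
 Each correction term lies in the span of $X^{\gamma}C^{\delta}T_{w'}$. The only care needed is that, while moving $T_i$ through $X^{\alpha}$, every correction term has no $T$ left. An induction on $|\alpha_i|+|\alpha_{i+1}|$ handles this.
\end{enumerate}

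\emph{Independence.} Here I would construct a faithful module. Let $H^{\rm fin}$ be the finite Hecke algebra, free over ${\rm R}$ with basis $\{T_w\}$ (a standard fact). Let $\mathcal{P}={\rm R}[X_1^{\pm1},\dots,X_n^{\pm1}]\otimes\mathcal{C}_n$, with $\mathcal{C}_n$ taken over ${\rm R}$ and twisted by \eqref{XC}. The candidate module is $V=\mathcal{P}\otimes_{\rm R}H^{\rm fin}$, which is free over ${\rm R}$ with basis $X^{\alpha}C^{\beta}\otimes T_w$. The actions are as follows.
\begin{itemize}
\item $X_j^{\pm1}$ and $C_j$ act on $\mathcal{P}$ by left multiplication, honouring \eqref{XC}.
\item $T_i$ acts on $X^{\alpha}C^{\beta}\otimes T_w$ by the rewriting rule found in the spanning step. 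Concretely, $X^{\alpha}C^{\beta}$ is sent to its image under the Jones--Nazarov Demazure--Lusztig type operator $T_i^{\mathcal{P}}$ on $\mathcal{P}$, built from $s_i$, $C_iC_{i+1}$ and the divided difference with denominator $X_{i+1}-X_i$ (and its Clifford-twisted version). This is tensored with left multiplication by $T_i$ on $H^{\rm fin}$ in the leading term.
\end{itemize}
By construction $X^{\alpha}C^{\beta}T_w$ sends $1\otimes 1$ to $X^{\alpha}C^{\beta}\otimes T_w$. Once $V$ is known to be an $\mathcal{H}_{\rm R}$-module, the images of the monomials are independent in $V$, hence the monomials are independent in $\mathcal{H}_{\rm R}$.

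\emph{Main obstacle.} The hard step is checking that these operators satisfy the defining relations, above all \eqref{PX1}, \eqref{PX2} and the braid relation among the $T_i$. My approach is to verify them first on $\mathcal{P}$ itself, i.e. to show that the $T_i^{\mathcal{P}}$ give a representation of $\mathcal{H}_{\rm R}$ on $\mathcal{P}$.
\begin{itemize}
\item The quadratic relation and \eqref{PC} are finite computations on elements of the form $X_i^aX_{i+1}^bC_i^{\epsilon_1}C_{i+1}^{\epsilon_2}$.
\item For the braid relation, pass to the fraction field and to the localization inverting $X_{i+1}-X_i$ and $X_{i+1}X_i-1$. There $T_i^{\mathcal{P}}$ is $s_i$ times an invertible "intertwiner" plus a rational function. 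The braid relation then reduces to the standard identity for intertwiners (the Nazarov-type elements $\Phi_i$) on a three-strand computation.
\item An alternative, if the explicit check is too heavy, is a filtration by total $X$-degree whose associated graded is $({\rm R}[X^{\pm1}]\rtimes\mathcal{C}_n)\rtimes H^{\rm fin}$, where independence is clear.
\end{itemize}
Extending from $\mathcal{P}$ to $V$ is then formal: realize $V$ as $\mathcal{H}_{\rm R}\otimes_{\mathcal{H}^{\rm fin}}$-type induction, or argue by induction on $\ell(w)$ using the relations already checked on $\mathcal{P}$.
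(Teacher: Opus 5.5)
The paper gives no argument for this lemma; it is quoted from \cite[Theorem 2.2]{BK}, so your proposal has to stand on its own. Your spanning half is fine. The independence half has a genuine gap, located exactly at the step you call formal.

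\textbf{Why $\mathcal{P}$ does not give $V$.} Write $T_ia=\sigma_i(a)T_i+\partial_i(a)$ for $a\in\mathcal{A}_n$. Here $\sigma_i$ swaps the indices $i,i+1$, and $\partial_i$ is the $\sigma_i$-twisted derivation read off from \eqref{PX1}, \eqref{PX2}, \eqref{PC}. On $V=\mathcal{P}\otimes H^{\rm fin}$ your operator is $\tau_i(a\otimes h)=\sigma_i(a)\otimes T_ih+\partial_i(a)\otimes h$. On $\mathcal{P}=\mathcal{A}_n\otimes{\rm triv}$, with $T_i$ acting on ${\rm triv}$ by $q$, it is $q\sigma_i+\partial_i$. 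Separating the $h$- and $T_ih$-components, the quadratic relation on $V$ is equivalent to the two identities
\[
\partial_i^2=\epsilon\,\partial_i,\qquad \sigma_i\partial_i+\partial_i\sigma_i=\epsilon(\sigma_i-{\rm id})
\]
on $\mathcal{A}_n$. The quadratic relation on $\mathcal{P}$ yields only the single combination $q\bigl(\sigma_i\partial_i+\partial_i\sigma_i-\epsilon\sigma_i+\epsilon\bigr)+\bigl(\partial_i^2-\epsilon\partial_i\bigr)=0$. Likewise the braid relation on $V$ splits into one identity for each $T_w$, $w\in\langle s_i,s_{i+1}\rangle$, while $\mathcal{P}$ sees only one linear combination of them.

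So a representation on $\mathcal{P}$ does not produce one on $V$. On its own it only shows that $\mathcal{A}_n\to\mathcal{H}_{\rm R}$ is injective. Realizing $V$ as $\mathcal{H}_{\rm R}\otimes_{H^{\rm fin}}H^{\rm fin}$ is circular: that module is $\mathcal{H}_{\rm R}$ itself, and identifying it with $V$ is precisely the lemma.

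\textbf{The filtration fallback also fails.} The correction terms in \eqref{PX1}, \eqref{PX2} have the same $X$-degree as the leading terms. Moreover \eqref{XC} is not compatible with total $X$-degree at all. So these terms survive in any associated graded of this kind. More fundamentally, for an algebra given by a presentation the associated graded is only a quotient of the algebra defined by the leading relations. Such an argument can reprove spanning, but never independence.

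\textbf{Two standard ways to close the gap.}

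(a) Verify the relations on $V$ directly. First check that $\sigma_i,\partial_i$ are well defined on the algebra presented by \eqref{Poly}, \eqref{Clifford}, \eqref{XC}; this is a finite check. Put $E_i:=\sigma_i\partial_i+\partial_i\sigma_i-\epsilon(\sigma_i-{\rm id})$ and $D_i:=\partial_i^2-\epsilon\partial_i$. A short computation gives
\[
E_i(ab)=E_i(a)\sigma_i(b)+aE_i(b),\qquad D_i(ab)=D_i(a)b+aD_i(b)+E_i(a)\partial_i(b).
\]
Hence both vanish as soon as they vanish on generators. The commutation and braid relations reduce to generator checks by the same cascading Leibniz bookkeeping.

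(b) Drop $V$ and prove instead that the polynomial representation is faithful on your spanning set. The elements $X_k+X_k^{-1}$ are central in $\mathcal{A}_n$, and
\[
(X_k+X_k^{-1})-(X_l+X_l^{-1})=(X_k-X_l)(1-X_k^{-1}X_l^{-1}).
\]
After inverting a suitable multiplicative set of central elements, $T^{\mathcal{P}}_w$ becomes $w$ composed with multiplication by an invertible element, plus operators supported on $\{v<w\}$ in the Bruhat order. This is what your intertwiner factorization really buys. Dedekind's argument, using the central elements $X_k+X_k^{-1}$, separates the components for different $v$. In a putative relation $\sum c_{\alpha\beta w}X^\alpha C^\beta T_w=0$, take $w$ Bruhat-maximal; its coefficients are then forced to vanish, and induction finishes.

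Either way, this verification is where the actual content of the theorem lies.
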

	
	Let $\mathcal{A}_n$ \label{pag:subalg An} be the subalgebra generated by even generators $X_1^{\pm 1},\ldots,X_n^{\pm 1}$ and odd generators $C_1,\ldots,C_n$. By Lemma~\ref{lem:PBWNon-dege}, $\mathcal{A}_n$ actually can be identified with the superalgebra generated by even generators $X^{\pm 1}_1,\ldots,X^{\pm 1}_n$ and odd generators $C_1,\ldots,C_n$ subject to relations \eqref{Poly}, \eqref{Clifford}, \eqref{XC}. Clifford algebra $\mathcal{C}_n$ can be identified with the subalgebra of $\mathcal{A}_n$ generated by  odd generators $C_1,\ldots,C_n$ subject to relations \eqref{Clifford}.
	
	{\bf In the rest of this subsection, we assume that
	${\rm R}=\mathbb{K}$ is the algebraically closed field of characteristic different from $2$.}
    For any $i=1,2,\ldots,n-1$ and $x,y \in \mathbb{K}^*$ satisfying $y\neq x^{\pm 1},$ let (\cite[(3.13)]{JN})
    \label{pag:Phi function}
	\begin{align}\label{Phi function}
		\Phi_i(x,y):=T_{i}+\frac{\epsilon}{x^{-1}y-1}-\frac{\epsilon}{xy-1}C_{i}C_{i+1} \in \mathcal{H}_{\mathbb{K}}.
	\end{align}
	These elements satisfy certain useful properties (\cite[Lemma 4.1]{JN}) and play key roles in the construction of seminormal bases of cyclotomic Hecke-Clifford superalgebras (\cite{LS2,LS3}, see also Section \ref{sec:Seminormal form}).

For any pair of $(x,y)\in (\mathbb{K}^*)^2$ and $y\neq x^{\pm1}$, we consider the following idempotency condition on $(x,y)$
	\begin{align}\label{invertible}
		\frac{x^{-1}y}{(x^{-1}y-1)^2}+\frac{xy}{(xy-1)^2}=\frac{1}{\epsilon^2}.
	\end{align}
	
	{\color{black}For any $a\in \mathbb{K}$, we fix a solution of the equation $x^2=a$ and denote it by $\sqrt{a}$.} For any  $x \in \mathbb{K}^*$, we define\footnote{We remark that in this paper, $\mathtt{q}(x)$ is equal to the definition of $\mathtt{q}(q^{-1}x)$ in \cite{SW,LS2,LS3}. The similar remark applies to $\mathtt{b}_{\pm}(x)$.} \label{pag:q-function and b-function}
\begin{align}\label{substitution0}
		\mathtt{q}(x):=2\frac{x+x^{-1}}{q+q^{-1}}, \quad \mathtt{b}_{\pm}(x):=\frac{\mathtt{q}(x)}{2}\pm \sqrt{\frac{\mathtt{q}(x)^2}{4}-1}.
	\end{align} We remark that $\mathtt{q}(q^{2i+1})$ is the definition of $q(i)$ in \cite[(4.5)]{BK}. Clearly, $\mathtt{b}_{\pm}(x)$ are exactly two solutions satisfying the equation $z+z^{-1}=\mathtt{q}(x)$ and moreover
	\begin{equation}\label{bpm}
		\mathtt{b}_-(x)=\mathtt{b}_+(x)^{-1}.
	\end{equation}

	\subsection{Cyclotomic Hecke-Clifford superalgebra}
	To define the cyclotomic Hecke-Clifford superalgebra $\mathcal{H}^f_{\rm R}=\mathcal{H}^f_{\rm R}(n)$ over ${\rm R},$ \label{pag:CHCA} we fix $m\geq 0,$ $\underline{Q}=(Q_1,Q_2,\ldots,Q_m)\in({\rm R^\times})^m$ \label{pag:Q-parameters} and take a $f=f(X_1)\in {\rm R}[X_1^\pm]$ satisfying \cite[(3.2)]{BK}.
	%Since we are working over algebraically closed field $\mathbb{K}$,
	It is noted in \cite{SW} that we only need to consider $f(X_1)\in {\rm R}[X_1^\pm]$ to be one of the following three forms:
	$$\begin{aligned}
		f=\begin{cases}
			f^{\mathsf{(0)}}_{\underline{Q}}=\prod_{i=1}^m \biggl(X_1+X^{-1}_1-\mathtt{q}(Q_i)\biggr), \\ % \mbox{if $r:=\deg(f)=2m$};
			f^{\mathsf{(s)}}_{\underline{Q}}=(X_1-1)\prod_{i=1}^m \biggl(X_1+X^{-1}_1-\mathtt{q}(Q_i)\biggr), \\  % \mbox{if $r:=\deg(f)=2m+1$};
			f^{\mathsf{(ss)}}_{\underline{Q}} = (X_1-1)(X_1+1)\prod_{i=1}^m \biggl(X_1+X^{-1}_1-\mathtt{q}(Q_i)\biggr).%& \mbox{if $r:=\deg(f)=2m+2$}
		\end{cases}
	\end{aligned}$$
	In each case, the degree $r$ of the polynomial $f$ is $2m,\,2m+1,\,2m+2$ respectively.
	
The non-degenerate cyclotomic Hecke-Clifford superalgebra $\mathcal{H}^f_{\rm R}$ is defined as
$$\mathcal{H}^f_{\rm R}:=\mathcal{H}_{\rm R}/\mathcal{I}_f,$$
where $\mathcal{I}_f$ is the two sided ideal of $\mHcn$ generated by $f(X_1)$. The degree $r$ \label{pag:nondege level} of $f$ is called the level of $\mathcal{H}^f_{\rm R}.$ We shall denote the images of $X^{\alpha}, C^{\beta}, T_w$ in the cyclotomic quotient $\mathcal{H}^f_{\rm R}$ still by the same symbols. Then we have the following due to \cite{BK}.	
	\begin{lem}\cite[Theorem 3.6]{BK}
		The set $\{X^{\alpha}C^{\beta}T_w~|~\alpha\in\{0,1,\cdots,r-1\}^n,
		\beta\in\mathbb{Z}_2^n, w\in {\mathfrak{S}_n}\}$ forms an ${\rm R}$-basis of $\mathcal{H}^f_{\rm R}$.
	\end{lem}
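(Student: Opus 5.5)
The plan is to follow the standard strategy for cyclotomic quotients of affine Hecke-type algebras, as in \cite{AK} for Ariki-Koike algebras and \cite{BK} for the Hecke-Clifford case. We prove spanning and linear independence separately.

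\textbf{Spanning.} By Lemma~\ref{lem:PBWNon-dege}, $\mathcal{H}_{\rm R}$ is spanned by the elements $X^{\alpha}C^{\beta}T_w$ with $\alpha\in\mathbb{Z}^n$. First we reduce the exponents of $X_1$. The relation $f(X_1)=0$ lets us rewrite $X_1^{r}$ and $X_1^{-1}$ in terms of lower powers. In each of the three forms of $f$, the polynomial $X_1^{m}f$ (suitably normalised) is monic of degree $r$ in $X_1$ with invertible constant term, so every integer power of $X_1$ lies in the span of $1,X_1,\dots,X_1^{r-1}$. Since $C_1X_1=X_1^{-1}C_1$, the $C_1$-conjugate of $f$ is again killed, which keeps this reduction compatible with the Clifford generators.

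We then induct on $k$. Using \eqref{PX1} and \eqref{PX2}, we have $X_{k+1}=T_kX_kT_k^{-1}$ up to lower terms of the form $\epsilon(\cdots)$ involving $X_k$, $X_{k+1}$ and $C_kC_{k+1}$. We use the filtration by total degree in the $X$'s together with length in $\mathfrak{S}_n$, and show the following: the left ideal generated by $\mathcal{I}_f$ together with the span of $\{X^\alpha C^\beta T_w : \alpha_i\in[0,r-1]\}$ is all of $\mathcal{H}^f_{\rm R}$. Concretely, we show that the ${\rm R}$-span $M$ of those monomials is a left ideal containing $1$. This requires checking stability under left multiplication by $X_i^{\pm1}$, $C_i$ and $T_i$. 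Stability under $C_i$ and $T_i$ uses \eqref{PC} and the Hecke relations. Stability under $X_i$ uses the inductive description $X_{i}=T_{i-1}X_{i-1}T_{i-1}^{-1}+(\text{correction})$ and the $k=1$ reduction. This step is routine but computational, and it follows \cite[Lemma 3.5]{BK}.

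\textbf{Linear independence.} This is the main obstacle. We plan to show that $\dim \mathcal{H}^f_{\mathbb{K}}\ge r^n2^n n!$ after base change to a suitable field, and then deduce freeness over ${\rm R}$. We expect the cleanest route is to construct a faithful-enough module. Take the induced module $\mathcal{H}_{\rm R}\otimes_{\mathcal{A}_1\text{-part}}(\ldots)$; equivalently, realise $\mathcal{H}^f_{\rm R}$ acting on $\mathcal{H}^f_{\rm R}(n-1)\otimes \mathrm{R}[X_n^{\pm}]/(\ldots)$ via a Mackey-type decomposition
$$\mathcal{H}^f(n)\cong \bigoplus_{a=0}^{r-1}\bigoplus_{b\in\mathbb{Z}_2}\ \mathcal{H}^f(n-1)\,X_n^{a}C_n^{b}\ \oplus\ \bigoplus_{j}\mathcal{H}^f(n-1)T_{n-1}\cdots,$$
as in the proof of \cite[Theorem 3.6]{BK}. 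The inductive step requires showing that the natural map from the induced bimodule to $\mathcal{H}^f(n)$ is injective. To do this we would build an explicit $\mathcal{H}_{\rm R}(n)$-action on the free module $V=\bigoplus_{\alpha,\beta,w}{\rm R}\,v_{\alpha,\beta,w}$ with the restricted index set, on which $f(X_1)$ acts by zero and $1\mapsto v_{0,0,1}$ maps the monomials to linearly independent vectors. Verifying the relations \eqref{PX1} and \eqref{PX2} on $V$ is the delicate part. Alternatively, we can pass to a generic specialisation where the separate condition of \cite{SW} holds. There $\mathcal{H}^f$ is semisimple with known simple modules of total dimension count $r^n2^nn!$ (the sum of $2^{n-\cdots}(\dim)^2$ over shifted multipartitions), which gives the lower bound on dimension. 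A specialisation argument then yields freeness over ${\rm R}$.
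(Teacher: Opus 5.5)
The paper gives no proof of this lemma; it is quoted from \cite[Theorem 3.6]{BK}. Your proposal therefore has to stand on its own, and as written it does not: in both halves the real content is deferred.

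\textbf{Spanning.} Your split into easy and hard cases is wrong. Stability of $M$ under $C_i$ and $T_i$ does not follow from \eqref{PC} and the Hecke relations. By \eqref{XC} we have $C_iX_i^{a}=X_i^{-a}C_i$, and \eqref{PX1} reads $T_iX_i=X_{i+1}T_i-\epsilon X_{i+1}-\epsilon X_i^{-1}C_iC_{i+1}$. So left multiplication by $C_i$ or $T_i$ immediately leaves the range $\alpha\in[0,r-1]^n$. Everything rests on proving $X_k^{\pm1}M\subseteq M$ for all $k$, i.e.\ on reducing arbitrary powers of $X_k$ modulo $\mathcal{I}_f$ for $k\ge 2$, where only $X_1$ satisfies an explicit relation. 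That is the actual spanning argument, and you only cite it.

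\textbf{Independence, route (a).} This restates the theorem rather than proving it: constructing $V$ and verifying \eqref{PX1}, \eqref{PX2} on it \emph{is} the proof, and you do not do it. As displayed, your decomposition also has rank $2r+n-1$ over $\mathcal{H}^f(n-1)$, whereas it must be $2rn$.

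\textbf{Independence, route (b).} This route can be completed, but not in the form you state. You cannot invoke Theorem \ref{semisimple:non-dege} as ``semisimple with simple modules of total dimension count $r^n2^nn!$''. Its semisimplicity and completeness rest on knowing $\dim\mathcal{H}^f_{\mathbb{K}}=r^n2^nn!$, which is the lemma itself. You may use only the existence of the modules $\mathbb{D}(\undla)$, their irreducibility, types and pairwise non-isomorphism, and you must check that these are established without the basis theorem.

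Work over $\mathbb{F}=\overline{\mathbb{Q}(\mathbf{q},\mathbf{Q}_1,\dots,\mathbf{Q}_m)}$, where $P^{(\bullet)}_n(\mathbf{q}^2,\underline{\mathbf{Q}})\neq 0$. The (super) density theorem then gives
\[
\dim_{\mathbb{F}}\mathcal{H}^f_{\mathbb{F}}\;\ge\;\sum_{\undla}\frac{(\dim\mathbb{D}(\undla))^2}{2^{d_{\undla}}}\;=\;\sum_{\undla\in\mathscr{P}^{\bullet,m}_n}2^{2n-\sharp\mathcal{D}_{\undla}}\,|\Std(\undla)|^2 .
\]
You still have to prove that the right-hand side equals $r^n2^nn!$. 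Write $|\Std(\undla)|$ as a multinomial coefficient times the tableau counts of the components, and apply the multinomial theorem. This reduces the claim to $\sum_{\underline{\mu}\in\mathscr{P}^m_c}|\Std(\underline{\mu})|^2=m^c c!$ and Schur's identity $\sum_{\lambda\in\mathscr{P}^{\mathsf{s}}_a}2^{a-\ell(\lambda)}|\Std(\lambda)|^2=a!$ for shifted tableaux.

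Finally, the specialisation must run from a generic ring to ${\rm R}$, not from ${\rm R}$ to a field. Take ${\rm R}_0=\Z[\mathbf{q}^{\pm1},(\mathbf{q}+\mathbf{q}^{-1})^{-1},\mathbf{Q}_1^{\pm1},\dots,\mathbf{Q}_m^{\pm1}]$. Spanning over ${\rm R}_0$ plus independence over $\mathrm{Frac}({\rm R}_0)\subset\mathbb{F}$ gives freeness over ${\rm R}_0$. Since $\mathcal{H}^f_{\rm R}\cong{\rm R}\otimes_{{\rm R}_0}\mathcal{H}^f_{{\rm R}_0}$ (same presentation), the basis transfers to ${\rm R}$.
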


\begin{defn}\label{(super)symmetrizing form}\cite[Definition 2.1]{LS1}, \cite[Section 4.1, 5.1]{WW2}
	Let ${\mathcal{A}}={\mathcal{A}}_{\overline{0}}\oplus {\mathcal{A}}_{\overline{1}}$ be an ${\rm R}$-superalgebra
	which is free and of finite rank over ${\rm R},$ ${\rm p}: {\mathcal{A}} \rightarrow \Bbb Z_{2}$ be the parity map.
	
	(i) We call an ${\rm R}$-linear map $t:\mathcal{A} \rightarrow {\rm R}$ non-degenerate if there is a $\Z_2$-homogeneous basis $\mathcal{B}$ such that the determinant ${\rm det}\left(t(b_1b_2)\right)_{b_1,b_2\in \mathcal{B}}\in {\rm R}^\times.$

(ii) The superalgebra $\mathcal{A}$ is called symmetric if there is an evenly non-degenerate ${\rm R}$-linear map $t:\mathcal{A} \rightarrow {\rm R}$
such that $t(xy)=t(yx)$ for any $x, y\in \mathcal{A}$. In this case, we call $t$ a symmetrizing form on $\mathcal{A}$.

(iii) The superalgebra $\mathcal{A}$ is called supersymmetric if there is an evenly non-degenerate ${\rm R}$-linear map $t:\mathcal{A} \rightarrow {\rm R}$
such that  $t(xy)=(-1)^{{\rm p}(x){\rm p}(y)}t(yx)$ for any homogeneous $x, y\in \mathcal{A}$. In this case, we call $t$ a supersymmetrizing form on $\mathcal{A}.$
\end{defn}

The following Frobenius form is due to \cite{BK}.

\begin{prop}\cite[Corollary 3.14]{BK}, \cite[Proposition 5.4]{LS3}
Let $\alpha=(\alpha_1,\ldots,\alpha_n)\in [0,r-1]^n,$ $\beta=(\beta_1,\ldots,\beta_n)\in\mathbb{Z}_2^n$ and $w\in \mathfrak{S}_n,$
then the map
\label{pag:frob from}
\begin{align}\label{closed formula frob}
\tau^{\rm R}_{r,n}( X^\alpha C^\beta T_w ):=\delta_{(\alpha,\beta,w),(0,0,1)}
\end{align}
is a Frobenius form on $\mathcal{H}^f_{\rm R}.$
\end{prop}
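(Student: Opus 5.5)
The plan is to argue by induction on $n$, realising $\tau^{\rm R}_{r,n}$ as a composite of relative ``coefficient'' maps, in the spirit of \cite[\S3]{BK}. Since $\mathcal{H}^f_{\rm R}(n)$ is free of finite rank over ${\rm R}$ with the basis of \cite[Theorem 3.6]{BK}, non-degeneracy in the sense of Definition \ref{(super)symmetrizing form}(i) only requires exhibiting a second homogeneous basis whose Gram matrix against $\{X^\alpha C^\beta T_w\}$ under $\tau^{\rm R}_{r,n}$ has unit determinant. Evenness of $\tau^{\rm R}_{r,n}$ is immediate, because $\tau^{\rm R}_{r,n}$ only sees the coefficient of $1$, which is even.

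\emph{Step 1: relative basis.} First I will record that $\mathcal{H}^f_{\rm R}(n)$ is a free right (and left) $\mathcal{H}^f_{\rm R}(n-1)$-module. Its basis is
$$\{X_n^{a}C_n^{b}T_{n-1}T_{n-2}\cdots T_{k}\mid 0\le a\le r-1,\ b\in\Z_2,\ 1\le k\le n\},$$
where $k=n$ means the empty product. This follows from \cite[Theorem 3.6]{BK} by writing each $w\in\Sym_n$ as $w'\,s_{n-1}\cdots s_k$ with $w'\in\Sym_{n-1}$, and by using \eqref{PX1}--\eqref{XC} to commute $X_n$ and $C_n$ past $\mathcal{H}^f_{\rm R}(n-1)$.

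\emph{Step 2: the relative map.} Next I will define the $(\mathcal{H}^f_{\rm R}(n-1),\mathcal{H}^f_{\rm R}(n-1))$-bimodule map
$$\varepsilon_n:\mathcal{H}^f_{\rm R}(n)\to\mathcal{H}^f_{\rm R}(n-1)$$
as the coefficient of the basis element with $a=0$, $b=\bar 0$, $k=n$, i.e. the coefficient of $1$. Comparing with \eqref{closed formula frob} on the PBW basis then gives
$$\tau^{\rm R}_{r,n}=\tau^{\rm R}_{r,n-1}\circ\varepsilon_n .$$
A standard argument then reduces the non-degeneracy of $\tau^{\rm R}_{r,n}$ to two ingredients: the non-degeneracy of $\tau^{\rm R}_{r,n-1}$, which is the induction hypothesis (the case $n=1$ being the Clifford-twisted Laurent quotient $\mathcal{A}_1/(f)$), and the statement that $\varepsilon_n$ is a Frobenius homomorphism. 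The latter means that the $\mathcal{H}^f_{\rm R}(n-1)$-valued Gram matrix of the relative basis against a suitable dual family is invertible over $\mathcal{H}^f_{\rm R}(n-1)$. Concretely, if $\{b_i\}$ and $\{b_i^\vee\}$ are dual relative bases for $\varepsilon_n$ and $\{d_j\}$, $\{d_j^\vee\}$ are dual bases for $\tau^{\rm R}_{r,n-1}$, then $\{b_id_j\}$ and $\{d_j^\vee b_i^\vee\}$ are dual for $\tau^{\rm R}_{r,n}$.

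\emph{Step 3: relative dual basis.} For the dual of $X_n^aC_n^bT_{n-1}\cdots T_k$ I propose
$$T_k\cdots T_{n-1}\,C_n^{b}\,X_n^{r-1-a}$$
up to sign and unit scalars. I will order the relative basis first by $k$ (length of the $T$-part) and then by $a$, and show that the matrix $\bigl(\varepsilon_n(b_i b_j^\vee)\bigr)$ is block unitriangular. Two facts drive this. First, when the $T$-parts do not cancel, $\varepsilon_n$ of the product vanishes modulo strictly smaller terms; this uses the relations \eqref{Braid} and \eqref{PX1}--\eqref{PC} with $T_i^{-1}=T_i-\epsilon$. Second, after cancellation, one is left with $\varepsilon_n(X_n^{a}C_n^{b}C_n^{b'}X_n^{r-1-a'})$, and $X_n$ must be reduced into the range $[0,r-1]$ via $f$.

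\emph{Main obstacle.} The hardest step is this last reduction. In $\mathcal{H}^f_{\rm R}(n)$ the relation $f(X_1)=0$ only controls $X_1$, and it transports to $X_n$ only through the $T_i$'s, producing correction terms in lower $X_n$-degrees and with $C_iC_{i+1}$ factors. Moreover $C_n X_n=X_n^{-1}C_n$, so odd $b$ mixes positive and negative powers. I would first handle this as in \cite[Lemma 3.8--Theorem 3.13]{BK}, namely by showing that
$$X_n^{r}\equiv u\,X_n^{-?}\ \text{(leading term)}$$
modulo the ${\rm R}$-span of lower-degree basis elements, where $u$ is the leading or constant coefficient of $f$ viewed as a monic Laurent polynomial. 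Here $u$ is a unit, because $\mathtt{q}(Q_i)\in{\rm R}$ and the extreme coefficients are $1$ and $\pm1$ in all three cases $f^{\mathsf{(0)}}_{\underline{Q}},f^{\mathsf{(s)}}_{\underline{Q}},f^{\mathsf{(ss)}}_{\underline{Q}}$. This will show that the diagonal entries of the block matrix are units, which completes the induction.
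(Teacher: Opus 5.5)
The paper gives no argument of its own here: it quotes \cite[Corollary 3.14]{BK} for the Frobenius property and \cite[Proposition 5.4]{LS3} for the closed formula \eqref{closed formula frob}. Your plan, a tower of Frobenius extensions $\mathcal{H}^f_{\rm R}(n-1)\subset\mathcal{H}^f_{\rm R}(n)$, is the natural route behind those references. Several pieces of it are fine: the left relative basis obtained from $w=w's_{n-1}\cdots s_k$, the identity $\tau^{\rm R}_{r,n}=\tau^{\rm R}_{r,n-1}\circ\varepsilon_n$, and the case $n=1$. However, the two steps that carry the content are not established, and one of them is set up incorrectly.

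\emph{The bimodule property of $\varepsilon_n$ is assumed, not proved.} With respect to the left basis $\{X_n^aC_n^bT_{n-1}\cdots T_k\}$, taking the coefficient of $1$ is visibly only left linear. Right linearity is not a formal consequence of the relations. In the affine algebra $\mathcal{H}_{\rm R}(2)$, \eqref{PX1} conjugated by $X_2^{-1}$ and $X_1^{-1}$, together with \eqref{XC}, gives $T_1X_1^{-1}=X_2^{-1}T_1+\epsilon\,C_1X_2^{-1}C_2+\epsilon X_1^{-1}$. So the coefficient of $1$ is $\epsilon X_1^{-1}\neq 0=\varepsilon(T_1)X_1^{-1}$. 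In $\mathcal{H}^f_{\rm R}(2)$ this term can only be cancelled by what appears when $X_2^{-1}$ is rewritten in the truncated basis using $f(X_1)=0$. Right linearity is therefore a genuine theorem about the cyclotomic quotient. You need something like a Mackey-type bimodule decomposition of $\mathcal{H}^f_{\rm R}(n)$ in which $\varepsilon_n$ is the projection onto a summand. Your transitivity step depends on it, since it pulls $d_{j'}$ out of $\varepsilon_n(d_j^\vee b_i^\vee b_{i'}d_{j'})$.

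\emph{The proposed dual family does not fit your functional.} Take $k=k'=n$ and $b=b'$. The diagonal entry is $\varepsilon_n(X_n^aC_n^bC_n^bX_n^{r-1-a})=\varepsilon_n(X_n^{r-1})$. This is $0$ whenever $r\ge 2$, because $X_n^{r-1}$ is itself a relative basis vector. So the Gram matrix is not unitriangular in your ordering. The family $X_n^{r-1-a}$ is adapted to the functional ``coefficient of $X_n^{r-1}$''. For ``coefficient of $1$'' the adapted family is of the form $C_n^bX_n^{-a}$; for $n=1$ it gives a unitriangular matrix. With your family you must already permute columns cyclically when $n=1$, and the pivots become $1$ and $\tau^{\rm R}_{r,1}(X_1^r)=\pm1$. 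For $n\ge 2$ the pivots in that block become $\varepsilon_n(X_n^r)\in\mathcal{H}^f_{\rm R}(n-1)$. Showing these are units, and controlling the remaining entries (including those between $b=0$ and $b=1$, which your order ignores), is exactly what you leave as the placeholder ``$X_n^r\equiv u\,X_n^{-?}$''. As it stands, the nondegeneracy of $\varepsilon_n$, which is the heart of the statement, is deferred rather than proved.
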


When $\bullet=\mathsf{0},$ we can modify the above Frobenius form to obtain a supersymmetrizing form.
\label{pag:supersym from}
\begin{prop}\cite[Theorem 1.2 (1)]{LS3}\label{supersymmetrizing Non-dege}
Suppose $\bullet=\mathsf{0},$ then the cyclotomic Hecke-Clifford superalgebra $\mathcal{H}^f_{\rm R}$ is supersymmetric with the supersymmetrizing form
$$t^{\rm R}_{r,n}:=\tau^{\rm R}_{r,n}\Bigl(-\cdot (X_1X_2\cdots X_n)^{m} \Bigr).$$
\end{prop}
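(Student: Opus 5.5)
The plan is to deduce supersymmetry of $t^{\rm R}_{r,n}$ from the Frobenius property of $\tau^{\rm R}_{r,n}$. The key step is to identify explicitly the Nakayama automorphism of $\tau^{\rm R}_{r,n}$ and show that it is inner, given by conjugation by $Z:=(X_1X_2\cdots X_n)^m$, up to the parity sign.

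\emph{Non-degeneracy.} In $\mathcal{H}^f_{\rm R}$ the element $Z$ is invertible, since each $X_i$ is. For the PBW basis $\mathcal{B}$ we have
$$\det\bigl(t^{\rm R}_{r,n}(b_1b_2)\bigr)=\det\bigl(\tau^{\rm R}_{r,n}(b_1\cdot b_2Z)\bigr).$$
Right multiplication by $Z$ is an ${\rm R}$-linear automorphism of the free module $\mathcal{H}^f_{\rm R}$, whose inverse is right multiplication by $Z^{-1}$. Its determinant is therefore a unit, so this determinant differs from $\det\bigl(\tau^{\rm R}_{r,n}(b_1b_2)\bigr)\in{\rm R}^\times$ by a unit. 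Moreover, $Z$ is even, so $t^{\rm R}_{r,n}$ is even.

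\emph{Supersymmetry.} Since $\tau^{\rm R}_{r,n}$ is a Frobenius form, there is a unique algebra automorphism $\nu$ with
$$\tau^{\rm R}_{r,n}(ab)=\tau^{\rm R}_{r,n}(b\,\nu(a)).$$
Then $t^{\rm R}_{r,n}(xy)=(-1)^{{\rm p}(x){\rm p}(y)}t^{\rm R}_{r,n}(yx)$ for all homogeneous $x,y$ is equivalent to
$$\nu(a)=(-1)^{{\rm p}(a)}\,Z^{-1}aZ\quad\text{on homogeneous }a,$$
up to the correct placement of $Z$. It suffices to verify this on the generators $T_i$, $X_i^{\pm1}$, $C_i$, because both sides are (super)algebra automorphisms. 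On the right-hand side:
\begin{itemize}
\item $Z$ commutes with every $X_j$.
\item $Z$ commutes with $T_i$: by \eqref{PX1} and \eqref{PX2}, $X_iX_{i+1}$ commutes with $T_i$, as the $\epsilon$-correction terms cancel, which is a routine check.
\item $C_iZ=ZX_i^{-2m}C_i$ by \eqref{XC}.
\end{itemize}
So the claim reduces to the following statements about $\tau^{\rm R}_{r,n}$:
\begin{enumerate}
\item[(a)] $\tau^{\rm R}_{r,n}(T_ix)=\tau^{\rm R}_{r,n}(xT_i)$ and $\tau^{\rm R}_{r,n}(X_ix)=\tau^{\rm R}_{r,n}(xX_i)$;
\item[(b)] $\tau^{\rm R}_{r,n}(C_ix)=\pm\tau^{\rm R}_{r,n}(x\,X_i^{-2m}C_i)$-type twisting, with the sign matching the parity of $x$.
\end{enumerate}

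\emph{Proof of (a) and (b).} I would proceed by induction on $n$, using the Mackey-type decomposition from the proof of \cite[Corollary 3.14]{BK}:
$$\mathcal{H}^f_{\rm R}(n)=\bigoplus_{a,\beta,k}X_n^{a}C_n^{\beta}T_{k}\cdots T_{n-1}\,\mathcal{H}^f_{\rm R}(n-1)\ \ \text{(suitably)}.$$
Under this decomposition, $\tau^{\rm R}_{r,n}$ factors as $\tau^{\rm R}_{r,n-1}$ composed with a relative trace $\mathcal{H}^f_{\rm R}(n)\to\mathcal{H}^f_{\rm R}(n-1)$. This reduces everything to the base case $n=1$ together with a compatibility check at $T_{n-1}$.

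For $n=1$ the algebra is $\mathcal{A}_1/(f(X_1))$, and $\tau^{\rm R}_{r,1}$ extracts the coefficient of $X_1^0C_1^0$ in the basis $X_1^{a}C_1^{\beta}$ with $0\le a\le r-1=2m-1$. Since $\bullet=\mathsf{0}$, $X_1^{m}f^{\mathsf{(0)}}_{\underline Q}$ is a palindromic polynomial of degree $2m$. Hence the relation is invariant under $X_1\mapsto X_1^{-1}$ after multiplying by $X_1^{2m}$. This is exactly what makes $\tau(C_1X_1^{a}\cdot X_1^m)$ and $\tau(X_1^{a}C_1\cdot X_1^m)$ agree up to the parity sign. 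Concretely, $X_1^mX_1^{a}$ and $X_1^mX_1^{-a}$ have the same constant-term behaviour after reduction, because of the symmetry of the reduction in the centred window $[-m,m-1]$.

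\emph{Main obstacle.} I expect the hardest step to be the inductive compatibility at $T_{n-1}$: showing that the relative trace intertwines conjugation by $X_n^{m}$ with the relations \eqref{PX1}, \eqref{PX2} and \eqref{PC}. Here the $\epsilon C_iC_{i+1}$ terms must be shown to cancel. I would first try to do this by computing $\tau^{\rm R}_{r,n}$ on the shifted basis $X^{\alpha}C^{\beta}T_w$ with $\alpha\in[-m,m-1]^n$, where the symmetry $X\leftrightarrow X^{-1}$ is manifest.
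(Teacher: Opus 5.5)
\textbf{There is a genuine gap: the claim that $Z$ commutes with $T_i$ is false, and your identity (a) for $T_i$ is false as well.}

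Your non-degeneracy argument is fine. So is your reduction of supersymmetry to showing that the Nakayama automorphism $\nu$ of $\tau^{\rm R}_{r,n}$ is $a\mapsto(-1)^{{\rm p}(a)}Z^{-1}aZ$, checked on generators. The proof breaks at the claim that $X_iX_{i+1}$, hence $Z=(X_1\cdots X_n)^m$, commutes with $T_i$: the $\epsilon$-terms do not cancel. From \eqref{PX1}, \eqref{PX2} and $X_{i+1}C_iC_{i+1}X_{i+1}=C_iC_{i+1}$ one gets
\[
T_iX_iX_{i+1}=X_iX_{i+1}T_i-\epsilon\, C_iC_{i+1}(1+X_iX_{i+1}).
\]
Using $(X_iX_{i+1})^{-1}C_iC_{i+1}=C_iC_{i+1}X_iX_{i+1}$ and induction on the exponent, this gives
\[
Z^{-1}T_iZ=T_i-\epsilon\, C_iC_{i+1}\sum_{k=1}^{2m}(X_iX_{i+1})^k .
\]
So what you propose to prove in (a) for $T_i$ is not what the proposition needs. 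It is also simply false. For $n\ge 2$, \eqref{PC} gives $T_1C_1C_2=-C_1C_2T_1+\epsilon C_1C_2+\epsilon$, a combination of PBW basis elements. Hence $\tau^{\rm R}_{r,n}(T_1C_1C_2)=\epsilon\neq 0=\tau^{\rm R}_{r,n}(C_1C_2T_1)$.

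\textbf{What actually has to be proved for $T_i$.} The required statement is the twisted identity
\[
\tau^{\rm R}_{r,n}(T_ix)=\tau^{\rm R}_{r,n}\Bigl(x\Bigl(T_i-\epsilon\, C_iC_{i+1}\sum_{k=1}^{2m}(X_iX_{i+1})^k\Bigr)\Bigr)\qquad\text{for all }x.
\]
Already for $n=2$ and $x=C_1C_2$ it says $\tau^{\rm R}_{r,2}\bigl((X_1X_2)^{2m}\bigr)=1$. This involves reducing $X_1^{2m}X_2^{2m}$, which lies outside the PBW range $[0,2m-1]^2$.

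This identity is the real content of the proposition. Your inductive sketch through the Mackey decomposition aims to make the $\epsilon C_iC_{i+1}$ terms cancel, so it is aimed at the wrong target and gives no mechanism for producing this correction. Your treatment of $X_i$ and $C_i$ is fine. For $n=1$ the precise identity is $\tau^{\rm R}_{r,1}(X_1^{m+k})=-\tau^{\rm R}_{r,1}(X_1^{m-k})$, which follows from the palindromic recurrence of $X_1^mf$.

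\textbf{Ways to close the gap.} Either carry the correction term through the induction, or change strategy. One option is to reduce to generic parameters, where $\mathcal{H}^f$ is split semisimple and, for $\bullet=\mathsf{0}$, all simple modules are of type $\texttt{M}$. Then show via seminormal bases that $t^{\rm R}_{r,n}$ is a linear combination of supertraces. The paper itself gives no proof here; it quotes \cite[Theorem 1.2(1)]{LS3}.
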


We shall omit the superscript in $t^{\rm R}_{r,n}$ when ${\rm R}$ is clear in the context.

\subsection{Combinatorics}\label{Combinatorics}
	\label{pag:The types of combinatorics}
	The different choices of $f\in \{f^{\mathsf{(0)}}_{\underline{Q}},\,f^{\mathsf{(s)}}_{\underline{Q}},\,f^{\mathsf{(ss)}}_{\underline{Q}}\}$ corresponds to different combinatorics $\mathscr{P}^{\mathsf{0},m}_{n},\mathscr{P}^{\mathsf{s},m}_{n},\mathscr{P}^{\mathsf{ss},m}_{n}$ respectively in the representation theory of $\mathcal{H}^f_{\rm R}$. Let's recall these combinatorics. For $n\in \N$, let $\mathscr{P}_n$ be the set of partitions of $n$ and denote by $\ell(\mu)$ the number of nonzero parts in the partition $\mu$ for each $\mu\in\mathscr{P}_n$. Let $\mathscr{P}^m_n$ be the set of all $m$-multipartitions of $n$ for $m\geq 0$, where we use convention that $\mathscr{P}^0_n=\{\emptyset\}$. Let $\mathscr{P}^\mathsf{s}_n$ be the set of strict partitions of $n$. Then for $m\geq 0$, set
	$$
	\mathscr{P}^{\mathsf{s},m}_{n}:=
	\cup_{a=0}^{n}( \mathscr{P}^{\mathsf{s}}_a\times \mathscr{P}^{m}_{n-a}),\qquad \mathscr{P}^{\mathsf{ss}, m}_{n}:=
	\cup_{a+b+c=n}(\mathscr{P}^{\mathsf{s}}_a \times \mathscr{P}^{\mathsf{s}}_b\times \mathscr{P}^{m}_{c}).$$
	We will formally write  $\mathscr{P}^{\mathsf{0},m}_{n}=\mathscr{P}^m_n$.  In convention, for any \label{pag:multipartition} $\undla\in  \mathscr{P}^{\mathsf{0},m}_{n}$, we write  $\undla=(\lambda^{(1)},\cdots,\lambda^{(m)}),$ while for any $\undla\in  \mathscr{P}^{\mathsf{s},m}_{n}$, we write  $\undla=(\lambda^{(0)},\lambda^{(1)},\cdots,\lambda^{(m)})$, i.e. we shall put the strict partition in the $0$-th component. Moreover, for any $\undla\in  \mathscr{P}^{\mathsf{ss},m}_{n}$, we write  $\undla=(\lambda^{(0_-)},\lambda^{(0_+)},\lambda^{(1)},\cdots,\lambda^{(m)})$, i.e. we shall put two strict partitions in the $0_-$-th component and the $0_+$-th component.
	
	We will also identify the (strict) partition with the corresponding (shifted) young diagram.  For any  $\undla\in\mathscr{P}^{\bullet,m}_{n}$ with $\bullet\in\{\mathsf{0},\mathsf{s},\mathsf{ss}\}$ and $m\in \N$, the box in the $l$-th component with row $i$, column $j$ will be denoted by $(i,j,l)$  with $l\in\{1,2,\ldots,m\},$ or $l\in\{0,1,2,\ldots,m\}$ or $l\in\{0_-,0_+,1,2,\ldots,m\}$ in the case $\bullet=\mathsf{0},\mathsf{s},\mathsf{ss},$ respectively. We also use the notation $\alpha=(i,j,l)\in \undla$ if the diagram of $\undla$ has a box $\alpha$ on the $l$-th component of row $i$ and column $j$. We use $\Std(\undla)$ \label{pag:standard tableaux} to denote the set of standard tableaux of shape $\undla$. One can also regard each $\mathfrak{t}\in\Std(\undla)$ as a bijection $\mathfrak{t}:\undla\rightarrow \{1,2,\ldots, n\}$ satisfying $\mathfrak{t}((i,j,l))=k$ if the box occupied by $k$ is located in the $i$-th row, $j$-th column in the $l$-th component $\lambda^{(l)}$.
For $0\leq k\leq n,$ let $\mt\downarrow_{k}$ be the subtableau of $\mt$ that contains the numbers $\{1, 2,\dots,k\}$.
In particular, $\mt\downarrow_{0}$ is the empty tableau.
We use $\mathfrak{t}^{\undla}$ (resp. $\mathfrak{t}_{\undla}$) to denote the standard tableaux obtained by inserting the symbols $1,2,\ldots,n$ consecutively by rows (resp. column) from the first (resp. last) component of $\undla$.

We use $\Add(\undla)$ and $\Rem(\undla)$ to denote the set of addable boxes of $\undla$ and the set of removable boxes of $\undla$ respectively. For $\mt\in\Std(\undla),$ we define $\Add(\mt):=\Add(\undla)$ and $\Rem(\mt):=\Rem(\undla).$

	\label{pag:diag of undlam}
	\begin{defn}(\cite[Definition 2.5]{SW})
		Let $\undla\in\mathscr{P}^{\bullet,m}_{n}$ with $\bullet\in\{\mathsf{0},\mathsf{s},\mathsf{ss}\}$.  We define
		$$
		\mathcal{D}_{\undla}:=\begin{cases} \emptyset,&\text{if $\undla\in\mathscr{P}^{\mathsf{0},m}_n$,}\\
			\{(a,a,0)\mid (a,a,0)\in \undla,\,a\in\N\}, &\text{if $\undla\in\mathscr{P}^{\mathsf{s},m}_{n}$,}\\
			\big\{(a,a,l)\mid (a,a,l)\in \undla,\,a\in\N, l\in\{0_-,0_+\}\big\}, &\text{if $\undla\in\mathscr{P}^{\mathsf{ss}, m}_{n}.$}
		\end{cases}
		$$
		For any $\mathfrak{t}\in\Std(\undla), $ we define
		\begin{align}\label{Dt}
			\mathcal{D}_{\mt}:=\{\mathfrak{t}(a,a,l)\mid (a,a,l)\in\mathcal{D}_{\undla}\}.
		\end{align}
	\end{defn}

	\begin{example} Let $\undla=(\lambda^{(0)},\lambda^{(1)})\in \mathscr{P}^{\mathsf{s},1}_{5}$, where via the identification with strict Young diagrams and Young diagrams:
		$$
		\lambda^{(0)}=\young(\,\, ,:\,),\qquad \lambda^{(1)}=\young(\,,\,).
		$$
		Then
		$$
		\mathfrak{t}^{\undla}=\Biggl(\young(12,:3),\quad \young(4,5)\Biggr).
		$$
		and  an example of standard tableau is as follows:
		$$
		\mathfrak{t}=\Biggl(\young(13,:5),\quad \young(2,4)\Biggr)\in \Std(\undla).
		$$ We have $$\mathcal{D}_{\undla}=\{(1,1,0),(2,2,0)\},\qquad \mathcal{D}_{\mathfrak{t}}=\{1,5\}.
		$$
	\end{example}
	
	Let $\mathfrak{S}_n$ be the symmetric group on ${1,2,\ldots,n}$ with basic transpositions $s_1,s_2,\ldots, s_{n-1}$.  And $\mathfrak{S}_n$ acts on the set of tableaux of shape $\underline{\lambda}$ in the natural way.
	%    \label{pag:adimssible}
	%	\begin{defn}(\cite[Definition 2.7]{SW})
		%		Let  $\undla\in\mathscr{P}^{\bullet,m}_{n}$ with $\bullet\in\{\mathsf{0},\mathsf{s},\mathsf{ss}\}$.  For any standard tableaux $\mathfrak{t}\in \Std(\undla)$, if $s_l \mathfrak{t}$ is still standard, we call the simple transposition $s_l$ is admissible with respect to $\mathfrak{t}$. We set
		%$$P(\undla):=\biggl\{\tau=s_{k_t}\ldots s_{k_1}\in\mathfrak{S}_n\biggm|~\begin{matrix}&s_{k_l} \,\text{is admissible with respect to }\\
			%			& s_{k_{l-1}}\ldots s_{k_1}\mathfrak{t}^{\undla}, \,\text{for $l=1,2,\cdots,t$}
			%		\end{matrix}\biggr\}.
		%		$$
		%	\end{defn}
	%	The following standard facts will be used in the sequel.
	
	%\begin{proof}
	% We define $\psi(\tau):=\tau\mathfrak{t}^{\undla}$. Clearly, this is well-defined and it is injective. We only need to show $\psi$ is surjective. To this end, we define $O(\mathfrak{t})$ to be the maximal $m\leq n$ such that $1,2,\cdots,m-1$ are in the same position with $\mathfrak{t}^{\undla}$. Hence $O(\mathfrak{t})=n$ if and only if $\mathfrak{t}=\mathfrak{t}^{\undla}$. We use induction downward on $O(\mathfrak{t})$ to construct $\tau\in P(\undla)$ such that $\psi(\tau)=\mathfrak{t}$. Actually, if $m=O(\mathfrak{t})<n$ and $\mathfrak{t}^{\undla}(i,j,l)=m$, then $\mathfrak{t}(i,j,l)=m'>m$. It is clear that each $s_u$ is admissible with respect to $s_{u-1}\ldots s_{m'-1}\mathfrak{t}$ for $m\leq u<m'$. Note that $O(s_{m}\ldots s_{m'-1}\mathfrak{t})>m$. This proves our claim.
	%\end{proof}
	
	\begin{lem}\label{admissible transposes}(\cite[Lemma 2.8]{SW})
		Let $\undla\in\mathscr{P}^{\bullet,m}_{n}$ with $\bullet\in\{\mathsf{0},\mathsf{s},\mathsf{ss}\}$. For any $\ms,\mt \in \Std(\undla),$ we denote by $d(\ms,\mt)\in \mathfrak{S}_n$ the unique element
		such that $\ms=d(\ms,\mt)\mt.$ Then we have
		$$s_{k_i}\text{ is admissible with respect to } s_{k_{i-1}}\ldots s_{k_1}\mathfrak{t},\quad i=1,2,\ldots,p$$
		for any reduced expression $d(\ms,\mt)=s_{k_p}\cdots s_{k_1}.$
	\end{lem}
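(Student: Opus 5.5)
The statement to prove is Lemma \ref{admissible transposes}: for $\ms,\mt\in\Std(\undla)$ with $\ms=d(\ms,\mt)\mt$, every reduced expression $d(\ms,\mt)=s_{k_p}\cdots s_{k_1}$ passes only through standard tableaux. I would argue by induction on $p=\ell(d(\ms,\mt))$ using inversion sets. To each standard tableau $\mathfrak{u}$ of shape $\undla$ attach the set
$$N(\mathfrak{u}):=\{(a,b)\mid 1\le a<b\le n,\ \text{the box } \mathfrak{t}^{-1}(a)\ \text{receives in $\mathfrak{u}$ a larger entry than } \mathfrak{t}^{-1}(b)\},$$
that is, the pairs of entries of $\mt$ whose relative order is reversed in $\mathfrak{u}$. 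If $\mathfrak{u}=w\mt$, then $N(\mathfrak{u})$ is exactly the inversion set of $w$ (up to the convention of $w$ versus $w^{-1}$). A reduced word $s_{k_p}\cdots s_{k_1}$ then corresponds to a chain of sets $N$ that grows by one pair at each step.

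The key observation concerns the intermediate tableau $\mathfrak{u}_i:=s_{k_i}\cdots s_{k_1}\mt$. Since the word is reduced, $N(\mathfrak{u}_i)\subseteq N(\ms)$: the inversion set of a prefix of a reduced word is contained in the inversion set of the whole element. Suppose $\mathfrak{u}_i$ were not standard. Then some box $\beta$ lies immediately to the right of, or immediately below, a box $\alpha$ in the same component, with $\mathfrak{u}_i(\alpha)>\mathfrak{u}_i(\beta)$. In the shifted-diagram case, "below" means the box directly beneath in the shifted diagram. Because $\mt$ is standard we have $\mt(\alpha)<\mt(\beta)$, so the pair $(\mt(\alpha),\mt(\beta))$ lies in $N(\mathfrak{u}_i)\subseteq N(\ms)$. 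That would give $\ms(\alpha)>\ms(\beta)$, contradicting the standardness of $\ms$. Hence every $\mathfrak{u}_i$ is standard.

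Admissibility of $s_{k_i}$ with respect to $\mathfrak{u}_{i-1}$ means precisely that $s_{k_i}\mathfrak{u}_{i-1}=\mathfrak{u}_i$ is standard, so the lemma follows.

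The main obstacle is bookkeeping rather than substance. One must fix consistently whether the symmetric group acts on entries or on positions, so that "prefix inversions are contained in total inversions" is applied to the correct side. With the action on entries, $\mathfrak{u}_i=(s_{k_i}\cdots s_{k_1})\circ\mt$, and the relevant inversion set is $\{(a,b)\mid a<b,\ w(a)>w(b)\}$ for $w=s_{k_i}\cdots s_{k_1}$. This set grows monotonically along a reduced word read from the right, which is exactly the order in which the word is applied here. One also needs to check that the shifted diagrams of the strict components have the same adjacency structure, so that standardness is detected by adjacent pairs in rows and columns. This holds since shifted diagrams are order ideals in the same sense as ordinary ones.
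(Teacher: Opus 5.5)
Your argument is correct. For the action on entries used in the paper, $N(w\mt)=\{(a,b)\mid a<b,\ w(a)>w(b)\}$. Each length-increasing step $w_{i-1}\mapsto s_{k_i}w_{i-1}$ adds exactly one pair, so $N(\mathfrak{u}_i)\subseteq N(\ms)$, and the contradiction with the standardness of $\ms$ goes through.

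Two cosmetic points. First, $N$ must be defined for arbitrary bijections $\undla\to[n]$, since you apply it to $\mathfrak{u}_i$ before knowing it is standard. Second, adjacency of $\alpha,\beta$ plays no role: any two boxes in a common row or column work, so the remark about shifted diagrams is not needed.

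The paper gives no proof of this lemma; it only cites \cite[Lemma 2.8]{SW}. The more economical standard route is induction on $p$, peeling off the rightmost letter:
\begin{itemize}
\item Since the word is reduced, $d=d(\ms,\mt)$ satisfies $d(k_1)>d(k_1+1)$.
\item If $s_{k_1}\mt$ were not standard, then $\mt(\alpha)=k_1$ and $\mt(\beta)=k_1+1$ for boxes $\alpha,\beta$ in a common row or column, with $\beta$ after $\alpha$.
\item This gives $\ms(\alpha)=d(k_1)>d(k_1+1)=\ms(\beta)$, which is impossible.
\item Then apply induction to $s_{k_1}\mt$ and the reduced word $s_{k_p}\cdots s_{k_2}$ of $d(\ms,s_{k_1}\mt)$.
\end{itemize}

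That route needs only the descent characterization of the last letter. Yours needs the monotonicity of inversion sets along a reduced word, but it proves more: $w\mt$ is standard for every $w$ with $N(w)\subseteq N(d(\ms,\mt))$, not only for the elements along one chosen reduced word.
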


	We set $Q_0=Q_{0_+}=q,$ $Q_{0_-}=-q$.
    \label{pag:nondeg residue}
	\begin{defn}\cite[Definition 3.7]{SW} Suppose  $\undla\in\mathscr{P}^{\bullet,m}_{n}$ with $\bullet\in\{\mathsf{0},\mathsf{s},\mathsf{ss}\}$ and $(i,j,l)\in \undla$, we define the residue of box $(i,j,l)$ with respect to the parameter $\undQ$ as follows
%		\footnote{Note that $\res(i,j,l)$ here corresponds to $q\res(i,j,l)$ in \cite{SW,LS2,LS3}.}
		\begin{equation}\label{eq:residue}
			\res(i,j,l):=Q_lq^{2(j-i)}.
		\end{equation}
		If $\mathfrak{t}\in \Std(\undla)$ and $\mathfrak{t}(i,j,l)=a$, we set
		\begin{align}
			\res_\mathfrak{t}(a)&:=Q_lq^{2(j-i)};\label{resNon-dege-1}\\
			\res(\mathfrak{t})&:=(\res_\mathfrak{t}(1),\cdots,\res_\mathfrak{t}(n)),\label{resNon-dege-2}\\
			\mathtt{q}(\res(\mathfrak{t}))&:=(\mathtt{q}(\res_{\mathfrak{t}}(1)), \mathtt{q}(\res_{\mathfrak{t}}(2)),\ldots, \mathtt{q}(\res_{\mathfrak{t}}(n))). \label{resNon-dege-3}
		\end{align}
	\end{defn}

    \label{pag:eigenspaces}
	Suppose that $M$ is a finite dimensional $\mathcal{H}^f_{\mathbb{K}}$-module. Then, we can decompose $M$ as a direct sum $M =\oplus_{{\bf i}\in(\mathbb{K}^*)^n}M_{\bf i}$ of its generalized eigenspaces, where $$
	M_{\bf i}=\{v\in M \mid (X_j-{\bf i}_j)^k v=0, \,\text{for $j=1,2,\cdots,n$, $k\gg0$}\}.
	$$  In particular, taking $M$ to be the regular $\mathcal{H}^f_{\mathbb{K}}$-module, we get a system $$\{e({\bf i})\mid {\bf i}\in{(\mathbb{K}^*)}^n, e({\bf i})\neq 0\}$$
	of pairwise orthogonal idempotents in $\mathcal{H}^f_{\mathbb{K}}$ such
	that $e({\bf i})M = M_{\bf i}$ for each finite dimensional left $\mathcal{H}^f_{\mathbb{K}}$-module $M$.

\subsection{Dynkin diagrams}\label{DynkinDiagrams}
In this subsection, ${\rm R}=\mathbb{K}$.
We explain how to associate a subset $I\subset\mathbb{K}$ with a quiver Hecke-Clifford superalgebra.

First, for any $Q\in \mathbb{K}^*$, following \cite{KKT}, we can associate the orbit $\{\mathtt{q}(q^{2l}Q)\in \mathbb{K}\mid l\in\Z\}$ with a certain Dynkin diagram as follows, where we mark the points $\mathtt{q}(q)=2$ and $\mathtt{q}(-q)=-2$  by $\times$.
\label{pag:lie types}
\begin{enumerate}
	
	\item When $q^2$ is not a root of unity, there are three types of
	Dynkin diagrams.
	
	\vs{1ex}
	\begin{enumerate}
		\item$Q\not\in \pm q^\Z$, where $\pm q^\Z=\{\pm q^k|~k\in\Z\}$.
		The Dynkin diagram is of type $A_{\infty}$.
		
		\vs{2ex}
		\begin{center}
			\unitlength 0.1in
			\begin{picture}( 20.0000,  3.0000)(  0.0000, -2.5000)
				% CIRCLE 2 0 3 0 Black White
				% 4 1000 200 1000 150 1000 150 1000 150
				%
				{\color{black}{%
						\special{pn 8}%
						\special{ar 1000 200 50 50  0.0000000 6.2831853}%
				}}%
				% CIRCLE 2 0 3 0 Black White
				% 4 1400 200 1400 150 1400 150 1400 150
				%
				{\color{black}{%
						\special{pn 8}%
						\special{ar 1400 200 50 50  0.0000000 6.2831853}%
				}}%
				% CIRCLE 2 0 3 0 Black White
				% 4 600 200 600 150 600 150 600 150
				%
				{\color{black}{%
						\special{pn 8}%
						\special{ar 600 200 50 50  0.0000000 6.2831853}%
				}}%
				% LINE 2 0 3 0 Black White
				% 2 650 200 950 200
				%
				{\color{black}{%
						\special{pn 8}%
						\special{pa 650 200}%
						\special{pa 950 200}%
						\special{fp}%
				}}%
				% LINE 2 0 3 0 Black White
				% 2 1050 200 1350 200
				%
				{\color{black}{%
						\special{pn 8}%
						\special{pa 1050 200}%
						\special{pa 1350 200}%
						\special{fp}%
				}}%
				% LINE 2 0 3 0 Black White
				% 2 1450 200 1750 200
				%
				{\color{black}{%
						\special{pn 8}%
						\special{pa 1450 200}%
						\special{pa 1750 200}%
						\special{fp}%
				}}%
				% LINE 2 0 3 0 Black White
				% 2 1050 200 1350 200
				%
				{\color{black}{%
						\special{pn 8}%
						\special{pa 1050 200}%
						\special{pa 1350 200}%
						\special{fp}%
				}}%
				% LINE 2 0 3 0 Black White
				% 2 250 200 550 200
				%
				{\color{black}{%
						\special{pn 8}%
						\special{pa 250 200}%
						\special{pa 550 200}%
						\special{fp}%
				}}%
				% STR 2 0 3 0 Black White
				% 4 950 -20 950 80 2 0 0 0
				% $\zeta$
				\put(9.5000,-0.8000){\makebox(0,0)[lb]{\footnotesize$\mathtt{q}(Q)$}}%
				% STR 2 0 3 0 Black White
				% 4 1200 -20 1200 80 2 0 0 0
				% $\lambda+1$
				\put(12.0000,-0.8000){\makebox(0,0)[lb]{\kern2ex\footnotesize$\mathtt{q}(q^2Q)$}}%
				% STR 2 0 3 0 Black White
				% 4 400 -20 400 80 2 0 0 0
				% $\lambda-1$
				\put(4.0000,-0.8000){\makebox(0,0)[lb]{\footnotesize$\mathtt{q}(q^{-2}Q)$}}%
				% LINE 2 2 3 0 Black White
				% 2 200 200 0 200
				%
				{\color{black}{%
						\special{pn 8}%
						\special{pa 200 200}%
						\special{pa 0 200}%
						\special{dt 0.045}%
				}}%
				% LINE 2 2 3 0 Black White
				% 2 2000 200 1800 200
				%
				{\color{black}{%
						\special{pn 8}%
						\special{pa 2000 200}%
						\special{pa 1800 200}%
						\special{dt 0.045}%
				}}%
			\end{picture}%
			
		\end{center}
		
		\vs{3ex}
		
		\item $Q=\varepsilon q^{2k+1}$  for some $k\in\Z$ and $\varepsilon\in\{\pm\}$.
		The Dynkin diagram is of type $B_\infty$.
		
		\medskip
		\begin{center}
			\unitlength 0.1in
			\begin{picture}( 14.5000,  3.0000)(  1.5000, -2.5000)
				
				{\color{black}{%
						\special{pn 8}%
						\special{ar 600 200 50 50  0.0000000 6.2831853}%
				}}%
				
				{\color{black}{%
						\special{pn 8}%
						\special{ar 1000 200 50 50  0.0000000 6.2831853}%
				}}%
				
				{\color{black}{%
						\special{pn 8}%
						\special{ar 200 200 50 50  0.0000000 6.2831853}%
				}}%
				
				{\color{black}{%
						\special{pn 8}%
						\special{pa 650 200}%
						\special{pa 950 200}%
						\special{fp}%
				}}%
				
				{\color{black}{%
						\special{pn 8}%
						\special{pa 1050 200}%
						\special{pa 1350 200}%
						\special{fp}%
				}}%
				
				{\color{black}{%
						\special{pn 8}%
						\special{pa 650 200}%
						\special{pa 950 200}%
						\special{fp}%
				}}%
				\put(9.5000,-0.8000){\makebox(0,0)[lb]{\footnotesize$\mathtt{q}(\varepsilon q^5)$}}%
				
				{\color{black}{%
						\special{pn 8}%
						\special{pa 1600 200}%
						\special{pa 1400 200}%
						\special{dt 0.045}%
				}}%
				\put(5.5000,-0.8000){\makebox(0,0)[lb]{\footnotesize$\mathtt{q}(\varepsilon q^{3})$}}%
				\put(1.5000,-0.8000){\makebox(0,0)[lb]{\footnotesize$\mathtt{q}(\varepsilon q)$}}%
				
				{\color{black}{%
						\special{pn 8}%
						\special{pa 250 180}%
						\special{pa 550 180}%
						\special{fp}%
				}}%
				\put(2.0500,-2.55000){\makebox(0,0)[lb]{\kern-1ex$\times$}}
				{\color{black}{%
						\special{pn 8}%
						\special{pa 250 220}%
						\special{pa 550 220}%
						\special{fp}%
				}}%
				
				{\color{black}{%
						\special{pn 8}%
						\special{pa 350 200}%
						\special{pa 450 150}%
						\special{fp}%
				}}%
				
				{\color{black}{%
						\special{pn 8}%
						\special{pa 450 250}%
						\special{pa 350 200}%
						\special{fp}%
				}}%
			\end{picture}%
			
		\end{center}
		
		\vs{3ex}
		\item $Q=\varepsilon q^{2k}$ for some $k\in\Z$ and $\varepsilon\in\{\pm\}$.
		The Dynkin diagram is of type $C_\infty$.
		
		\vs{3ex}
		\begin{center}
			\unitlength 0.1in
			\begin{picture}( 16.0000,  3.0000)(  0.0000, -2.5000)
				% CIRCLE 2 0 3 0 Black White
				% 4 600 200 600 150 600 150 600 150
				%
				{\color{black}{%
						\special{pn 8}%
						\special{ar 600 200 50 50  0.0000000 6.2831853}%
				}}%
				% CIRCLE 2 0 3 0 Black White
				% 4 1000 200 1000 150 1000 150 1000 150
				%
				{\color{black}{%
						\special{pn 8}%
						\special{ar 1000 200 50 50  0.0000000 6.2831853}%
				}}%
				% CIRCLE 2 0 3 0 Black White
				% 4 200 200 200 150 200 150 200 150
				%
				{\color{black}{%
						\special{pn 8}%
						\special{ar 200 200 50 50  0.0000000 6.2831853}%
				}}%
				% LINE 2 0 3 0 Black White
				% 2 650 200 950 200
				%
				{\color{black}{%
						\special{pn 8}%
						\special{pa 650 200}%
						\special{pa 950 200}%
						\special{fp}%
				}}%
				% LINE 2 0 3 0 Black White
				% 2 1050 200 1350 200
				%
				{\color{black}{%
						\special{pn 8}%
						\special{pa 1050 200}%
						\special{pa 1350 200}%
						\special{fp}%
				}}%
				% LINE 2 0 3 0 Black White
				% 2 650 200 950 200
				%
				{\color{black}{%
						\special{pn 8}%
						\special{pa 650 200}%
						\special{pa 950 200}%
						\special{fp}%
				}}%
				
				\put(9.0000,-0.8000){\makebox(0,0)[lb]{\footnotesize$\mathtt{q}(\varepsilon q^4)$}}%
				
				{\color{black}{%
						\special{pn 8}%
						\special{pa 1600 200}%
						\special{pa 1400 200}%
						\special{dt 0.045}%
				}}%
				
				\put(5.0000,-0.8000){\makebox(0,0)[lb]{\footnotesize$\mathtt{q}(\varepsilon q^2)$}}%
				
				\put(1.5000,-0.8000){\makebox(0,0)[lb]{\footnotesize$\mathtt{q}(\varepsilon)$}}%
				% LINE 2 0 3 0 Black White
				% 2 250 180 550 180
				%
				{\color{black}{%
						\special{pn 8}%
						\special{pa 250 180}%
						\special{pa 550 180}%
						\special{fp}%
				}}%
				
				{\color{black}{%
						\special{pn 8}%
						\special{pa 250 220}%
						\special{pa 550 220}%
						\special{fp}%
				}}%
				
				{\color{black}{%
						\special{pn 8}%
						\special{pa 350 150}%
						\special{pa 450 200}%
						\special{fp}%
				}}%
				
				{\color{black}{%
						\special{pn 8}%
						\special{pa 450 200}%
						\special{pa 350 250}%
						\special{fp}%
				}}%
			\end{picture}%
			
		\end{center}
	\end{enumerate}
	\vs{3ex}
	\item  When $q^2$ is a primitive $\ell$-th root of unity, there are three types of Dynkin diagram.
	
	\vs{1ex}
	\begin{enumerate}
		\item $Q\not\in\pm q^\Z$. The Dynkin diagram is of type $A^{(1)}_{s-1}$ $(\ell=s>2)$.
		
		\medskip
		\begin{center}
			\unitlength 0.1in
			\begin{picture}( 20.5000,  4.5000)(  0.5000, -4.0000)
				% CIRCLE 2 0 3 0 Black White
				% 4 600 200 600 150 600 150 600 150
				%
				{\color{black}{%
						\special{pn 8}%
						\special{ar 600 200 50 50  0.0000000 6.2831853}%
				}}%
				% CIRCLE 2 0 3 0 Black White
				% 4 1000 200 1000 150 1000 150 1000 150
				%
				{\color{black}{%
						\special{pn 8}%
						\special{ar 1000 200 50 50  0.0000000 6.2831853}%
				}}%
				% CIRCLE 2 0 3 0 Black White
				% 4 200 200 200 150 200 150 200 150
				%
				{\color{black}{%
						\special{pn 8}%
						\special{ar 200 200 50 50  0.0000000 6.2831853}%
				}}%
				% LINE 2 0 3 0 Black White
				% 2 250 200 550 200
				%
				{\color{black}{%
						\special{pn 8}%
						\special{pa 250 200}%
						\special{pa 550 200}%
						\special{fp}%
				}}%
				% LINE 2 0 3 0 Black White
				% 2 650 200 950 200
				%
				{\color{black}{%
						\special{pn 8}%
						\special{pa 650 200}%
						\special{pa 950 200}%
						\special{fp}%
				}}%
				% LINE 2 0 3 0 Black White
				% 2 1050 200 1350 200
				%
				{\color{black}{%
						\special{pn 8}%
						\special{pa 1050 200}%
						\special{pa 1350 200}%
						\special{fp}%
				}}%
				% LINE 2 0 3 0 Black White
				% 2 650 200 950 200
				%
				{\color{black}{%
						\special{pn 8}%
						\special{pa 650 200}%
						\special{pa 950 200}%
						\special{fp}%
				}}%
				% STR 2 0 3 0 Black White
				% 4 850 -20 850 80 2 0 0 0
				% $\zeta q^2$
				\put(8.5000,-0.8000){\makebox(0,0)[lb]{\footnotesize$\mathtt{q}(q^4Q)$}}%
				% LINE 2 2 3 0 Black White
				% 2 1600 200 1400 200
				%
				{\color{black}{%
						\special{pn 8}%
						\special{pa 1600 200}%
						\special{pa 1400 200}%
						\special{dt 0.045}%
				}}%
				% STR 2 0 3 0 Black White
				% 4 450 -20 450 80 2 0 0 0
				% $\zeta+1$
				\put(4.5000,-0.8000){\makebox(0,0)[lb]{\footnotesize$\mathtt{q}(q^2Q)$}}%
				% STR 2 0 3 0 Black White
				% 4 50 -20 50 80 2 0 0 0
				% $\zeta$
				\put(0.5000,-0.8000){\makebox(0,0)[lb]{\footnotesize$\mathtt{q}(Q)$}}%
				% LINE 2 0 3 0 Black White
				% 2 1700 200 2000 200
				%
				{\color{black}{%
						\special{pn 8}%
						\special{pa 1700 200}%
						\special{pa 2000 200}%
						\special{fp}%
				}}%
				% CIRCLE 2 0 3 0 Black White
				% 4 2050 200 2050 150 2050 150 2050 150
				%
				{\color{black}{%
						\special{pn 8}%
						\special{ar 2050 200 50 50  0.0000000 6.2831853}%
				}}%
				% STR 2 0 3 0 Black White
				% 4 1850 -20 1850 80 2 0 0 0
				% $\zeta+2\ell$
				\put(18.5000,-0.8000){\makebox(0,0)[lb]{\kern1ex\footnotesize$\mathtt{q}(q^{2s-2}Q)$}}%
				% LINE 2 0 3 0 Black White
				% 4 200 250 1120 400 1120 400 2050 250
				%
				{\color{black}{%
						\special{pn 8}%
						\special{pa 200 250}%
						\special{pa 1120 400}%
						\special{fp}%
						\special{pa 1120 400}%
						\special{pa 2050 250}%
						\special{fp}%
				}}%
			\end{picture}%
			
		\end{center}
		\label{case:Loop}
		
		\medskip
		\item $Q=\varepsilon q^{2k+1}$ for for some $k\in\Z$ and $\varepsilon\in\{\pm\}$,
		when $\ell$ is odd ($\ell=2s+1$ with $s\ge1$).
		In this case $q^{2s+1}=-1$.
		The Dynkin diagram is of type $A^{(2)}_{2s}$.
		
		\vs{3ex}
		\begin{center}
			\hs{10ex}
			\unitlength 0.1in
			\begin{picture}(  5.0000,  3.0000)(  1.5000, -2.5000)
				{\color{black}{%
						\special{pn 8}%
						\special{ar 600 200 50 50  0.0000000 6.2831853}%
				}}%
				{\color{black}{%
						\special{pn 8}%
						\special{ar 200 200 50 50  0.0000000 6.2831853}%
				}}%
				{\color{black}{%
						\special{pn 8}%
						\special{pa 250 170}%
						\special{pa 550 170}%
						\special{fp}%
				}}%
				\put(5.5000,-0.8000){\makebox(0,0)[lb]{\kern-.8ex\footnotesize$\mathtt{q}(\varepsilon q^3)$}}%
				% STR 2 0 3 0 Black White
				% 4 150 -20 150 80 2 0 0 0
				% $0$
				\put(1.5000,-0.8000){\makebox(0,0)[lb]{\kern-1ex\footnotesize$\mathtt{q}(\varepsilon q)$}}%
				\put(2.0500,-2.55000){\makebox(0,0)[lb]{\kern-1ex$\times$}}%
				{\color{black}{%
						\special{pn 8}%
						\special{pa 250 230}%
						\special{pa 550 230}%
						\special{fp}%
				}}%
				{\color{black}{%
						\special{pn 8}%
						\special{pa 350 200}%
						\special{pa 450 150}%
						\special{fp}%
				}}%
				{\color{black}{%
						\special{pn 8}%
						\special{pa 450 250}%
						\special{pa 350 200}%
						\special{fp}%
				}}%
				{\color{black}{%
						\special{pn 8}%
						\special{pa 250 190}%
						\special{pa 550 190}%
						\special{fp}%
				}}%
				{\color{black}{%
						\special{pn 8}%
						\special{pa 250 210}%
						\special{pa 550 210}%
						\special{fp}%
				}}%
			\end{picture}%
			\hs{15ex}\text{($(q^2)^3=1$)}
		\end{center}

		\vs{3ex}
		\begin{center}
			\unitlength 0.1in
			\begin{picture}( 19.6000,  3.0000)(  1.5000, -2.5000)
				% CIRCLE 2 0 3 0 Black White
				% 4 600 200 600 150 600 150 600 150
				%
				{\color{black}{%
						\special{pn 8}%
						\special{ar 600 200 50 50  0.0000000 6.2831853}%
				}}%
				% CIRCLE 2 0 3 0 Black White
				% 4 200 200 200 150 200 150 200 150
				%
				{\color{black}{%
						\special{pn 8}%
						\special{ar 200 200 50 50  0.0000000 6.2831853}%
				}}%
				% LINE 2 0 3 0 Black White
				% 2 250 180 550 180
				%
				{\color{black}{%
						\special{pn 8}%
						\special{pa 250 180}%
						\special{pa 550 180}%
						\special{fp}%
				}}%
				% LINE 2 0 3 0 Black White
				% 2 650 200 950 200
				%
				{\color{black}{%
						\special{pn 8}%
						\special{pa 650 200}%
						\special{pa 950 200}%
						\special{fp}%
				}}%
				% LINE 2 0 3 0 Black White
				% 2 650 200 950 200
				%
				{\color{black}{%
						\special{pn 8}%
						\special{pa 650 200}%
						\special{pa 950 200}%
						\special{fp}%
				}}%
				% LINE 2 2 3 0 Black White
				% 2 1200 200 1000 200
				%
				{\color{black}{%
						\special{pn 8}%
						\special{pa 1200 200}%
						\special{pa 1000 200}%
						\special{dt 0.045}%
				}}%
				% STR 2 0 3 0 Black White
				% 4 550 -20 550 80 2 0 0 0
				% $1$
				\put(5.5000,-0.8000){\makebox(0,0)[lb]{\footnotesize$\mathtt{q}(\varepsilon q^3)$}}%
				% STR 2 0 3 0 Black White
				% 4 150 -20 150 80 2 0 0 0
				% $0$
				\put(1.5000,-0.8000){\makebox(0,0)[lb]{\footnotesize$\mathtt{q}(\varepsilon q)$}}%
				\put(2.0500,-2.55000){\makebox(0,0)[lb]{\kern-1ex$\times$}}%
				% LINE 2 0 3 0 Black White
				% 2 1300 200 1600 200
				%
				{\color{black}{%
						\special{pn 8}%
						\special{pa 1300 200}%
						\special{pa 1600 200}%
						\special{fp}%
				}}%
				% CIRCLE 2 0 3 0 Black White
				% 4 1650 200 1650 150 1650 150 1650 150
				%
				{\color{black}{%
						\special{pn 8}%
						\special{ar 1650 200 50 50  0.0000000 6.2831853}%
				}}%
				% STR 2 0 3 0 Black White
				% 4 1500 -20 1500 80 2 0 0 0
				% $\ell-1$
				\put(14.0000,-0.8000){\makebox(0,0)[lb]{\footnotesize$\mathtt{q}(\varepsilon q^{2s-1})$}}%
				% LINE 2 0 3 0 Black White
				% 2 250 220 550 220
				%
				{\color{black}{%
						\special{pn 8}%
						\special{pa 250 220}%
						\special{pa 550 220}%
						\special{fp}%
				}}%
				% LINE 2 0 3 0 Black White
				% 2 350 200 450 150
				%
				{\color{black}{%
						\special{pn 8}%
						\special{pa 350 200}%
						\special{pa 450 150}%
						\special{fp}%
				}}%
				% LINE 2 0 3 0 Black White
				% 2 450 250 350 200
				%
				{\color{black}{%
						\special{pn 8}%
						\special{pa 450 250}%
						\special{pa 350 200}%
						\special{fp}%
				}}%
				% CIRCLE 2 0 3 0 Black White
				% 4 2060 200 2060 150 2060 150 2060 150
				%
				{\color{black}{%
						\special{pn 8}%
						\special{ar 2060 200 50 50  0.0000000 6.2831853}%
				}}%
				% LINE 2 0 3 0 Black White
				% 2 1710 180 2010 180
				%
				{\color{black}{%
						\special{pn 8}%
						\special{pa 1710 180}%
						\special{pa 2010 180}%
						\special{fp}%
				}}%
				% LINE 2 0 3 0 Black White
				% 2 1710 220 2010 220
				%
				{\color{black}{%
						\special{pn 8}%
						\special{pa 1710 220}%
						\special{pa 2010 220}%
						\special{fp}%
				}}%
				% LINE 2 0 3 0 Black White
				% 2 1810 200 1910 150
				%
				{\color{black}{%
						\special{pn 8}%
						\special{pa 1810 200}%
						\special{pa 1910 150}%
						\special{fp}%
				}}%
				% LINE 2 0 3 0 Black White
				% 2 1910 250 1810 200
				%
				{\color{black}{%
						\special{pn 8}%
						\special{pa 1910 250}%
						\special{pa 1810 200}%
						\special{fp}%
				}}%
				% STR 2 0 3 0 Black White
				% 4 2000 -20 2000 80 2 0 0 0
				% $\ell$
				\put(20.0000,-0.9000){\makebox(0,0)[lb]{\footnotesize$\mathtt{q}(\varepsilon q^{2s+1})$}}%
			\end{picture}%
			\hs{5ex}\text{($s>1$)}
		\end{center}
		
		\vs{3ex}
		\item
		$Q=\varepsilon q^{2k}$ for for some $k\in\Z$ and $\varepsilon\in\{\pm\}$, when $\ell$ is
		even ($\ell=2s$ with $s\ge2$). In this case $q^{2s}=-1$. The Dynkin
		diagram is of type $C^{(1)}_s$.
		
		\vs{3ex}
		\begin{center}
			\unitlength 0.1in
			\begin{picture}( 19.6000,  3.0000)(  1.5000, -2.5000)
				{\color{black}{%
						\special{pn 8}%
						\special{ar 600 200 50 50  0.0000000 6.2831853}%
				}}%
				{\color{black}{%
						\special{pn 8}%
						\special{ar 200 200 50 50  0.0000000 6.2831853}%
				}}%
				{\color{black}{%
						\special{pn 8}%
						\special{pa 1710 180}%
						\special{pa 2010 180}%
						\special{fp}%
				}}%
				{\color{black}{%
						\special{pn 8}%
						\special{pa 650 200}%
						\special{pa 950 200}%
						\special{fp}%
				}}%
				{\color{black}{%
						\special{pn 8}%
						\special{pa 650 200}%
						\special{pa 950 200}%
						\special{fp}%
				}}%
				{\color{black}{%
						\special{pn 8}%
						\special{pa 1200 200}%
						\special{pa 1000 200}%
						\special{dt 0.045}%
				}}%
				
				\put(5.5000,-0.8000){\makebox(0,0)[lb]{\footnotesize$\mathtt{q}(\varepsilon q^2)$}}%
				
				\put(1.5000,-0.8000){\makebox(0,0)[lb]{\footnotesize$\mathtt{q}(\varepsilon)$}}%
				
				{\color{black}{%
						\special{pn 8}%
						\special{pa 1300 200}%
						\special{pa 1600 200}%
						\special{fp}%
				}}%
				
				{\color{black}{%
						\special{pn 8}%
						\special{ar 1650 200 50 50  0.0000000 6.2831853}%
				}}%
				
				\put(14.0000,-0.8000){\makebox(0,0)[lb]{\footnotesize$\kern-2ex\mathtt{q}(\varepsilon q^{2(s-1)})$}}%
				
				{\color{black}{%
						\special{pn 8}%
						\special{pa 1710 220}%
						\special{pa 2010 220}%
						\special{fp}%
				}}%
				
				{\color{black}{%
						\special{pn 8}%
						\special{pa 1810 200}%
						\special{pa 1910 150}%
						\special{fp}%
				}}%
				
				{\color{black}{%
						\special{pn 8}%
						\special{pa 1910 250}%
						\special{pa 1810 200}%
						\special{fp}%
				}}%
				
				{\color{black}{%
						\special{pn 8}%
						\special{ar 2060 200 50 50  0.0000000 6.2831853}%
				}}%
				
				{\color{black}{%
						\special{pn 8}%
						\special{pa 250 180}%
						\special{pa 550 180}%
						\special{fp}%
				}}%
				
				{\color{black}{%
						\special{pn 8}%
						\special{pa 250 220}%
						\special{pa 550 220}%
						\special{fp}%
				}}%
				
				{\color{black}{%
						\special{pn 8}%
						\special{pa 350 250}%
						\special{pa 450 200}%
						\special{fp}%
				}}%
				
				{\color{black}{%
						\special{pn 8}%
						\special{pa 450 200}%
						\special{pa 350 150}%
						\special{fp}%
				}}%
				\put(20.0000,-0.8000){\makebox(0,0)[lb]{\kern-0.1ex\footnotesize$\mathtt{q}(\varepsilon q^{2s})=\mathtt{q}(- \varepsilon)$}}%
			\end{picture}%
			
		\end{center}
		
		\vs{3ex}
		
		\item
		$Q=\varepsilon q^{2k+1}$ for some $k\in\Z$ and $\varepsilon\in\{\pm\}$,
		where $\ell$ is even ($\ell=2s$ with $s\ge2$).
		In this case, $q^{2s}=-1$.
		The Dynkin diagram is of type $D^{(2)}_s$.

		\vs{3ex}
		\begin{center}
			\hs{30ex}

			\unitlength 0.1in
			\begin{picture}(  5.0000,  3.0000)(  1.5000, -2.5000)
				
				{\color{black}{%
						\special{pn 8}%
						\special{ar 600 200 50 50  0.0000000 6.2831853}%
				}}%
				
				{\color{black}{%
						\special{pn 8}%
						\special{ar 200 200 50 50  0.0000000 6.2831853}%
				}}%
				
				{\color{black}{%
						\special{pn 8}%
						\special{pa 250 180}%
						\special{pa 550 180}%
						\special{fp}%
				}}%
				
				\put(5.5000,-0.8000){\makebox(0,0)[lb]{\footnotesize$\mathtt{q}(\varepsilon q^3)=\mathtt{q}(-\varepsilon q^{-1})$}}%
				\put(5.37000,-2.55000){\makebox(0,0)[lb]{$\times$}}%
				\put(1.5000,-0.8000){\makebox(0,0)[lb]{\kern-.5ex\footnotesize$\mathtt{q}(\varepsilon q)$}}%
				\put(1.34500,-2.55000){\makebox(0,0)[lb]{$\times$}}%
				{\color{black}{%
						\special{pn 8}%
						\special{pa 250 220}%
						\special{pa 550 220}%
						\special{fp}%
				}}%
				
				{\color{black}{%
						\special{pn 8}%
						\special{pa 300 200}%
						\special{pa 400 150}%
						\special{fp}%
				}}%
				
				{\color{black}{%
						\special{pn 8}%
						\special{pa 400 250}%
						\special{pa 300 200}%
						\special{fp}%
				}}%
				
				{\color{black}{%
						\special{pn 8}%
						\special{pa 400 250}%
						\special{pa 500 200}%
						\special{fp}%
				}}%
				
				{\color{black}{%
						\special{pn 8}%
						\special{pa 500 200}%
						\special{pa 400 150}%
						\special{fp}%
				}}%
			\end{picture}%
			\hs{23ex}($s=2$, $(q^2)^2=-1$)
		\end{center}

		\vs{3ex}
		\begin{center}

			\unitlength 0.1in
			\begin{picture}( 19.6000,  3.0000)(  1.5000, -2.5000)
				% CIRCLE 2 0 3 0 Black White
				% 4 600 200 600 150 600 150 600 150
				%
				{\color{black}{%
						\special{pn 8}%
						\special{ar 600 200 50 50  0.0000000 6.2831853}%
				}}%
				% CIRCLE 2 0 3 0 Black White
				% 4 200 200 200 150 200 150 200 150
				%
				{\color{black}{%
						\special{pn 8}%
						\special{ar 200 200 50 50  0.0000000 6.2831853}%
				}}%
				% LINE 2 0 3 0 Black White
				% 2 250 180 550 180
				%
				{\color{black}{%
						\special{pn 8}%
						\special{pa 250 180}%
						\special{pa 550 180}%
						\special{fp}%
				}}%
				% LINE 2 0 3 0 Black White
				% 2 650 200 950 200
				%
				{\color{black}{%
						\special{pn 8}%
						\special{pa 650 200}%
						\special{pa 950 200}%
						\special{fp}%
				}}%
				% LINE 2 0 3 0 Black White
				% 2 650 200 950 200
				%
				{\color{black}{%
						\special{pn 8}%
						\special{pa 650 200}%
						\special{pa 950 200}%
						\special{fp}%
				}}%
				% LINE 2 2 3 0 Black White
				% 2 1200 200 1000 200
				%
				{\color{black}{%
						\special{pn 8}%
						\special{pa 1200 200}%
						\special{pa 1000 200}%
						\special{dt 0.045}%
				}}%
				% STR 2 0 3 0 Black White
				% 4 550 -20 550 80 2 0 0 0
				% $1$
				\put(5.5000,-0.8000){\makebox(0,0)[lb]{\footnotesize$\mathtt{q}(\varepsilon q^3)$}}%
				% STR 2 0 3 0 Black White
				% 4 150 -20 150 80 2 0 0 0
				% $0$
				\put(1.5000,-0.8000){\makebox(0,0)[lb]{\footnotesize$\mathtt{q}(\varepsilon q)$}}%
				\put(2.0500,-2.55000){\makebox(0,0)[lb]{\kern-1ex$\times$}}%
				% LINE 2 0 3 0 Black White
				% 2 1300 200 1600 200
				%
				{\color{black}{%
						\special{pn 8}%
						\special{pa 1300 200}%
						\special{pa 1600 200}%
						\special{fp}%
				}}%
				% CIRCLE 2 0 3 0 Black White
				% 4 1650 200 1650 150 1650 150 1650 150
				%
				{\color{black}{%
						\special{pn 8}%
						\special{ar 1650 200 50 50  0.0000000 6.2831853}%
				}}%
				% STR 2 0 3 0 Black White
				% 4 1500 -20 1500 80 2 0 0 0
				% $n-1$
				\put(15.0000,-0.8000){\makebox(0,0)[lb]{\footnotesize$\kern-2ex\mathtt{q}(\varepsilon q^{2s-3})$}}%
				% LINE 2 0 3 0 Black White
				% 2 250 220 550 220
				%
				{\color{black}{%
						\special{pn 8}%
						\special{pa 250 220}%
						\special{pa 550 220}%
						\special{fp}%
				}}%
				% LINE 2 0 3 0 Black White
				% 2 350 200 450 150
				%
				{\color{black}{%
						\special{pn 8}%
						\special{pa 350 200}%
						\special{pa 450 150}%
						\special{fp}%
				}}%
				% LINE 2 0 3 0 Black White
				% 2 450 250 350 200
				%
				{\color{black}{%
						\special{pn 8}%
						\special{pa 450 250}%
						\special{pa 350 200}%
						\special{fp}%
				}}%
				% CIRCLE 2 0 3 0 Black White
				% 4 2060 200 2060 150 2060 150 2060 150
				%
				{\color{black}{%
						\special{pn 8}%
						\special{ar 2060 200 50 50  0.0000000 6.2831853}%
				}}%
				% LINE 2 0 3 0 Black White
				% 2 1710 180 2010 180
				%
				{\color{black}{%
						\special{pn 8}%
						\special{pa 1710 180}%
						\special{pa 2010 180}%
						\special{fp}%
				}}%
				% LINE 2 0 3 0 Black White
				% 2 1710 220 2010 220
				%
				{\color{black}{%
						\special{pn 8}%
						\special{pa 1710 220}%
						\special{pa 2010 220}%
						\special{fp}%
				}}%
				% LINE 2 0 3 0 Black White
				% 2 1810 250 1910 200
				%
				{\color{black}{%
						\special{pn 8}%
						\special{pa 1810 250}%
						\special{pa 1910 200}%
						\special{fp}%
				}}%
				% LINE 2 0 3 0 Black White
				% 2 1910 200 1810 150
				%
				{\color{black}{%
						\special{pn 8}%
						\special{pa 1910 200}%
						\special{pa 1810 150}%
						\special{fp}%
				}}%
				% STR 2 0 3 0 Black White
				% 4 2000 -20 2000 80 2 0 0 0
				% $n$
				\put(20.0000,-0.8000){\makebox(0,0)[lb]{\footnotesize$\mathtt{q}(\varepsilon q^{2s-1})=\mathtt{q}((- \varepsilon q)^{-1})$}}%
				\put(19.980000,-2.55000){\makebox(0,0)[lb]{$\times$}}%
			\end{picture}%
			\hs{15ex}($s>2$)
		\end{center}
	\end{enumerate}
\end{enumerate}

%\begin{defn} For $I\subset \mathcal{K}$, we define $\mathtt{q}(I):=\{\mathtt{q}(x)\mid x\in I\}.$
%\end{defn}
Suppose $I\subset \mathbb{K}$ is a finite subset, then $I$ gives rise to a generalized cartan super datum according to above Dynkin diagrams with $i \in I_{\rm odd}$ if and only if $i=\mathtt{q}(\pm q)=\pm 2,$ and $I_{\rm even}:=I\setminus I_{\rm odd}.$
We orient each single edge arbitrarily. Then the Dynkin diagram becomes a quiver,
and the generalized Cartan matrix is given by\begin{align*}
	a_{ij}=
	\begin{cases}
		-1,& \text{ if } i\rightarrow j, i \leftarrow j, i\Rightarrow j \text{ or } i \fourlinerightarrow j, \\
		-2,& \text{ if } i\Leftarrow j \text{ or } i \rightleftarrows j \text{ or } i \Leftrightarrow j, \\
		-4,& \text{ if } i \fourlineleftarrow j, \\
		2, & \text{ if } i =j, \\
		0, & \text{ otherwise.}
	\end{cases}
\end{align*}
\label{pag:map g}
Let $g:\mathbb{K}^*\rightarrow \mathbb{K}; x\mapsto x+x^{-1}$. We set
\begin{align*}J=g^{-1}(I)=\{\mathtt{b}_{\pm}(x)\in\mathbb{K}^* \mid \mathtt{q}(x)\in I\},\qquad J^\dag:=\{\mathtt{b}_{+}(x)\in\mathbb{K}^* \mid \mathtt{q}(x)\in I\}.
	\end{align*}
Then $\pr=g|_{J}: J\rightarrow I$ is the restriction map of $g$.

Now we can associate $I$ with a quiver Hecke-Clifford superalgebra as follows.
Let $u$ and $v$ be indeterminates over $\mathbb{K}$. For any $i=\mathtt{q}(x),j=\mathtt{q}(y)\in I$, we define
\begin{align*}
	Q_{i,j}(u,v)=
	\begin{cases}
		u-v, & \text{ if } i\rightarrow j, \\
		v-u, & \text{ if } i\leftarrow j, \\
		u-v^{2}, & \text{ if } i\Rightarrow j, \\
		v-u^{2}, & \text{ if } i\Leftarrow j, \\
		(u-v)(v-u), & \text{ if } i\Leftrightarrow j \text{ or } i \rightleftarrows j, \\
		u-v^{4}, & \text{ if } i\fourlinerightarrow  j, \\
		v-u^{4}, & \text{ if } i\fourlineleftarrow  j, \\
		0, & \text{ if } i= j. \\
		1, & \text{ otherwise.}
	\end{cases}
\end{align*}
As in \eqref{extend Q-polys}, for any $i,j\in J$, we can choose $\widetilde{Q}_{i,j}(u,v)$ . We use above datum to define the quiver Hecke-Clifford superalgebra, which is denoted by $RC_n(I)$.

\subsection{KKT's isomorphism}\label{KKT's isomorphism}
{\bf In this subsection, ${\rm R}=\mathbb{K}$. We fix $q^2\neq \pm 1$, $\undQ=(Q_1,\cdots,Q_m)\in(\mathbb{K}^*)^m$ and $f=f^{(\bullet)}_{\underline{Q}}$ with $\bullet\in\{\mathsf{0},\mathsf{s},\mathsf{ss}\}$.} Note that in general, $\mathcal{H}^f_{\mathbb{K}}$ is not semisimple. In this subsection, we shall connect $f$ and $\mathcal{H}^f_{\mathbb{K}}$ with certain Dynkin diagram and the corresponding cyclotomic quiver Hecke-Clifford superalgebra respectively.

\begin{defn}
Let $Q\in\mathbb{K}^*$, we set $\mathfrak{C}(Q):=\{\mathtt{q}(q^{2l}Q)\mid -n<l<n\}$.
\end{defn}
\label{pag:If}
\begin{defn}	
	For $f=f^{(\bullet)}_{\underline{Q}}$ with $\bullet\in\{\mathsf{0},\mathsf{s},\mathsf{ss}\}$, we define
$$I_f:=\begin{cases}
	\bigcup_{i=1}^{m}\mathfrak{C}(Q_i),&\qquad \text{if $\bullet=\mathsf{0}$};\\
	\bigcup_{i=0}^{m}\mathfrak{C}(Q_i),&\qquad \text{if $\bullet=\mathsf{s}$};\\
	\bigcup_{\substack{{i=0_+,0_-,1,\cdots,m}}}\mathfrak{C}(Q_i),&\qquad \text{if $\bullet=\mathsf{ss}$}.
\end{cases}
$$
\end{defn}
Then we can associate $I_f$ with a Dynkin diagram, which is a disjoint union of some subdiagrams of the Dynkin diagrams appearing in Section \ref{DynkinDiagrams}.

Recall that $$J_f=g^{-1}\left(I_f\right)=\{\mathtt{b}_{\pm}(x)\in \mathbb{K}^* \mid \mathtt{q}(x)\in I_f\}, \qquad J_f^\dag=\{\mathtt{b}_{+}(x)\in \mathbb{K}^* \mid \mathtt{q}(x)\in I_f\},$$ and we have the natural projection $J_f\overset{\pr}{\twoheadrightarrow} I_f$ which restricts to a bijection from $J_f^\dag$ to $I_f$.

Let $M$ be a finite dimensional $\mathcal{H}^f_{\mathbb{K}}$-module. Then, by \cite[Lemma 4.7]{KKT}, the eigenvalues of each $X_i$ on $M$ belong to $J_f.$ Therefore, we have $$\{e({\bf i})\mid {\bf i}\in{(\mathbb{K}^*)}^n, e({\bf i})\neq 0\}=\{e({\bf i})\mid {\bf i}\in\left(J_f\right)^n,e({\bf i})\neq 0\}.$$
\label{pag:Lambdaf}
\begin{defn}Let	$f=f^{(\bullet)}_{\underline{Q}}$ with $\bullet\in\{\mathsf{0},\mathsf{s},\mathsf{ss}\}$, we define $$\Lambda_f:=\begin{cases}
		2\sum_{\mathtt{q}(Q_i)\in I_{\rm odd}}\Lambda_{\mathtt{q}(Q_i)}+\sum_{\mathtt{q}(Q_i)\in I_{\rm even}}\Lambda_{\mathtt{q}(Q_i)},&\qquad \text{if $\bullet=\mathsf{0}$};\\
		2\sum_{\mathtt{q}(Q_i)\in I_{\rm odd}}\Lambda_{\mathtt{q}(Q_i)}+\sum_{\mathtt{q}(Q_i)\in I_{\rm even}}\Lambda_{\mathtt{q}(Q_i)}+\Lambda_{\mathtt{q}(q)},&\qquad \text{if $\bullet=\mathsf{s}$};\\
		2\sum_{\mathtt{q}(Q_i)\in I_{\rm odd}}\Lambda_{\mathtt{q}(Q_i)}+\sum_{\mathtt{q}(Q_i)\in I_{\rm even}}\Lambda_{\mathtt{q}(Q_i)}+\Lambda_{\mathtt{q}(q)}+\Lambda_{\mathtt{q}(-q)},&\qquad \text{if $\bullet=\mathsf{ss}$}.
	\end{cases}
	$$
\end{defn}
It is clear that the corrspondence $f\mapsto\Lambda_f$ is injective. Hence, we can abbreviate the cyclotomic quiver Hecke-Clifford superalgebra $RC^{\Lambda_f}_n(I_f)$ by $RC^{\Lambda_f}_n$. \label{pag:KKTiso}
\begin{thm}\cite[Corollary 4.8]{KKT}\label{KKTiso}
	We have a superalgebra isomorphism
$$RC^{\Lambda_f}_n \cong \mathcal{H}^f_{\mathbb{K}}
	$$ under which $$y_ke({\bf i})\mapsto
	f_{k,{\bf i}}(X_1,X_2,\cdots,X_n)\,(X_k-{\bf i}_k)e({\bf i}),\,c_i e({\bf i})\mapsto C_i e({\bf i})$$ and $$\sigma_a e({\bf i})\mapsto T_a e({\bf i})(r_{a,{\bf i}}(X_1,X_2,\cdots,X_n))
	+\sum_{{\bf j}\in (J_f)^n}m_{a,{\bf i}}^{\bf j}e({\bf j}),$$ where $f_{k,{\bf i}}$ and $r_{a,{\bf i}}$ are some polynomials in $X_1,\cdots,X_n$ satisfying that
	\begin{enumerate}
		\item $f_{k,{\bf i}}({\bf i}_1,\cdots,{\bf i}_n)\neq 0$ and $r_{a,{\bf i}}({\bf i}_1,\cdots,{\bf i}_n)\neq 0$ for $k, a=1,\cdots,n$ and ${\bf i}\in (J_f)^n$
		\item  $m_{a,{\bf i}}^{\bf j}\in\langle X_1,\cdots,X_n,C_1,\cdots,C_n\rangle,$ for $k=1,\cdots,n$ and ${\bf i},{\bf j}\in (J_f)^n$.
		\end{enumerate}
	\end{thm}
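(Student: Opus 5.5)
Since this isomorphism is \cite[Corollary 4.8]{KKT}, the plan is to reconstruct the argument of Kang--Kashiwara--Tsuchioka in the present normalisation, in the style of the Brundan--Kleshchev isomorphism. The idea is to build mutually inverse homomorphisms between $RC^{\Lambda_f}_n$ and the localized algebra $\bigoplus_{{\bf i}\in(J_f)^n}e({\bf i})\mathcal{H}^f_{\mathbb{K}}e({\bf i})$-completed picture. We work in the finite dimensional algebra $\mathcal{H}^f_{\mathbb{K}}$. By \cite[Lemma 4.7]{KKT}, the generalized eigenvalues of every $X_k$ lie in $J_f$, so $1=\sum_{{\bf i}\in (J_f)^n}e({\bf i})$ and each $X_k-{\bf i}_k$ is nilpotent on $e({\bf i})\mathcal{H}^f_{\mathbb{K}}$. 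Consequently, any power series in the elements $X_k-{\bf i}_k$ with nonzero constant term is invertible on $e({\bf i})\mathcal{H}^f_{\mathbb{K}}$. This is what will make the polynomials $f_{k,{\bf i}}$ and $r_{a,{\bf i}}$ usable.

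First, define the images of the generators. Send $e({\bf i})$ to the eigen-idempotent $e({\bf i})$ and $c_k$ to $C_k$. Then $C_ke({\bf i})=e(c_k{\bf i})C_k$ follows from $X_kC_k=C_kX_k^{-1}$, because $c$ acts on $J_f$ by $x\mapsto x^{-1}$. For $y_k$, choose $y_ke({\bf i})\mapsto f_{k,{\bf i}}(X)(X_k-{\bf i}_k)e({\bf i})$. The scalar $f_{k,{\bf i}}$ is chosen according to whether $\pr({\bf i}_k)$ is even or odd: in the odd case ${\bf i}_k=\pm1$, and one takes a suitable square root, so that $y_k$ anticommutes with $C_k$ and the $\Z$-degree matches $(\nu|\nu)$. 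For $\sigma_a$, use the intertwiner $\Phi_a$ of \eqref{Phi function}, corrected on the diagonal pieces. On $e({\bf i})$, set $\sigma_ae({\bf i})\mapsto \bigl(T_a+\text{rational function in }X_a,X_{a+1}\text{ and }C_aC_{a+1}\bigr)r_{a,{\bf i}}(X)e({\bf i})$, adding $e({\bf i})$-terms when ${\bf i}_a={\bf i}_{a+1}^{\pm1}$. The normalizing power series $r_{a,{\bf i}}$ is chosen so that the square of the image is $\widetilde{Q}_{{\bf i}_a,{\bf i}_{a+1}}(y_a,y_{a+1})$. This produces the shape $T_ae({\bf i})r_{a,{\bf i}}+\sum_{\bf j} m^{\bf j}_{a,{\bf i}}e({\bf j})$ with $m^{\bf j}_{a,{\bf i}}$ in the subalgebra $\langle X,C\rangle$.

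Second, verify the defining relations of $RC_n$ and the cyclotomic relation. The relations involving only $e,y,c$ are immediate. The relations (6) and (7) between $\sigma_a$ and $y$, and the quadratic relation (8), reduce via \eqref{PX1}, \eqref{PX2} and the $\Phi$-identities of \cite[Lemma 4.1]{JN} to rank-two computations. These split into cases according to the edge between $\pr({\bf i}_a)$ and $\pr({\bf i}_{a+1})$ in the Dynkin diagrams of \S\ref{DynkinDiagrams}. The cyclotomic relation $y_1^{\langle h_{\pr({\bf i}_1)},\Lambda_f\rangle}e({\bf i})=0$ holds because $f(X_1)=0$, and the multiplicity of the root ${\bf i}_1$ in $f$ is exactly $\langle h_{\pr({\bf i}_1)},\Lambda_f\rangle$ up to a unit; the factor $2$ at odd vertices comes from $X+X^{-1}-2=(X-1)^2X^{-1}$. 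Together these give a homomorphism $RC^{\Lambda_f}_n\to\mathcal{H}^f_{\mathbb{K}}$.

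Third, construct the inverse map. Send $X_ke({\bf i})\mapsto {\bf i}_k(\text{power series in }y_k)e({\bf i})$, which is well defined because $y_k$ is nilpotent in $RC^{\Lambda_f}_n$, and $T_a$ to the expression obtained by solving the formula above for $T_a$. Then check the relations of $\mathcal{H}^f_{\mathbb{K}}$, or alternatively show surjectivity and compare dimensions. The main obstacle is the braid relation (10) when ${\bf i}_a={\bf i}_{a+2}^{\pm1}$, especially at the odd vertices $\pm2$ and the $\Leftarrow$, $\Leftrightarrow$ and quadruple edges. I would attack it by reducing to $n=3$ and using that $\Phi_a$ satisfies the braid relation exactly, so that only the diagonal correction terms have to be matched against the divided differences of $\widetilde{Q}$.
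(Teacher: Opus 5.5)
\textbf{Overall.} The paper does not reprove this result. It is imported from \cite[Corollary 4.8]{KKT}, where it is deduced from an isomorphism between suitable completions of the affine Hecke--Clifford superalgebra and of $RC_n$, established (following Rouquier) by comparing faithful polynomial representations. The cyclotomic quotients are then matched because $f(X_1)e({\bf i})$ is a unit times $(X_1-{\bf i}_1)^{\langle h_{\pr({\bf i}_1)},\Lambda_f\rangle}e({\bf i})$. Several parts of your proposal are correct: the eigenvalues lie in $J_f$; $C_ke({\bf i})=e(c_k{\bf i})C_k$ because $c$ acts by inversion on $J_f$; and power series with nonzero constant term are invertible on $e({\bf i})\mathcal{H}^f_{\mathbb{K}}$. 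Your multiplicity argument for the cyclotomic relation is also correct, and that final step is exactly what is needed.

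\textbf{The gap.} Everything in between is not established. You propose a Brundan--Kleshchev-style check of all defining relations inside the finite-dimensional algebra $\mathcal{H}^f_{\mathbb{K}}$. However, $f_{k,{\bf i}}$, $r_{a,{\bf i}}$ and the diagonal corrections are never pinned down. They must be chosen compatibly along $c_p$- and $s_a$-orbits so that $c_py_q=(-1)^{\delta_{pq}}y_qc_p$, $\sigma_ac_p=c_{s_a(p)}\sigma_a$ and $\sigma_a^2e({\bf i})=\widetilde{Q}_{{\bf i}_a,{\bf i}_{a+1}}(y_a,y_{a+1})e({\bf i})$ hold simultaneously. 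None of the rank-two or rank-three verifications is carried out.

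More seriously, the device you propose for the hardest case cannot be used where you want it. On $e({\bf i})\mathcal{H}^f_{\mathbb{K}}$ with ${\bf i}_a={\bf i}_{a+1}$ (resp. ${\bf i}_a{\bf i}_{a+1}=1$), the element $X_a^{-1}X_{a+1}-1$ (resp. $X_aX_{a+1}-1$) is nilpotent. So $\Phi_a$ does not exist on intermediate components such as $e(\ldots,i,i,j,\ldots)$ and $e(\ldots,j,i,i,\ldots)$, and these are exactly the components through which the braid relation with ${\bf i}_a={\bf i}_{a+2}^{\pm1}$ passes. Switching to denominator-free intertwiners does not help: it only gives identities in which the diagonal $\sigma$'s appear multiplied by those nilpotent factors, which cannot be cancelled in $\mathcal{H}^f_{\mathbb{K}}$.

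\textbf{What is missing.} You need a setting in which these denominators are non-zero-divisors. One way is the following.
\begin{enumerate}
\item Work in the completion of $\mathcal{H}_{\mathbb{K}}$ whose polynomial part is $\prod_{{\bf i}\in J_f^n}\mathbb{K}[[X_1-{\bf i}_1,\ldots,X_n-{\bf i}_n]]$, a product of domains.
\item Let it act faithfully on the completed polynomial module tensored with $\mathcal{C}_n$.
\item Embed that module in its fraction-field version, where the $\Phi_a$ act and satisfy the braid relations.
\item Identify the result with the analogous completion of $RC_n$, and only then pass to cyclotomic quotients.
\end{enumerate}
This is the KKT route. It replaces the relation-by-relation verification with a comparison of operators on a faithful module.

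\textbf{The fallback in step three.} Your alternative ``show surjectivity and compare dimensions'' is circular within this paper. The dimension of $RC^{\Lambda_f}_n$ is obtained here (Theorem \ref{Main1}) only through this very isomorphism, so you would need to import an independent count.
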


\subsection{Degrees of standard tableaux}\label{Degrees of standard tableaux}
{\bf In this subsection, ${\rm R}=\mathbb{K}$.  We fix $n\in\N,q^2\neq \pm 1$, $\undQ=(Q_1,\cdots,Q_m)\in(\mathbb{K}^*)^m$ and $f=f^{(\bullet)}_{\underline{Q}}$ with $\bullet\in\{\mathsf{0},\mathsf{s},\mathsf{ss}\}$.
%We call the parameter $(q,\undQ)$ is of some type $\diamondsuit$ in Section \ref{DynkinDiagrams}, if $q$ satisfies the condition associated with type $\diamondsuit$ and each $[Q_i]$ ($1\leq i\leq m$) corresponds to a vertex of the Dynkin diagram for $\diamondsuit$.
Accordingly, we define the residue of boxes in the young diagram $\undla$ via \eqref{eq:residue} as well as $\res(\mathfrak{t})\in\mathbb{K}^*$ for each $\mathfrak{t}\in\Std(\undla)$ for $\undla\in\mathscr{P}^{\bullet,m}_{n}$.} The aim of this subsection is to define the $\mathbb{Z}$-degrees of standard tableaux with respect to certain Dynkin diagram $I_f$ and investigate some properties.
\label{pag:mathcalD}
\begin{defn}
	We denote the subset of boxes
	\begin{align*}
		\mathcal{D}
		=\mathcal{D}^{(\bullet)}
		:=
		\begin{cases}
			\emptyset, \quad &\text{ if $\bullet=\mathsf{0},$}\\
			\{(i,i,0)\mid i\in\mathbb{Z}_{>0}\}, \quad &\text{ if $\bullet=\mathsf{s},$}\\
			\{(i,i,0_{*})\mid i\in\mathbb{Z}_{>0},* \in \{\pm\}\}, \quad &\text{ if $\bullet=\mathsf{ss}.$}
		\end{cases}
    \end{align*}
\end{defn}	
Recall the generalized cartan super datum $I_f$ introduced in Sections \ref{DynkinDiagrams} and \ref{KKT's isomorphism}. The following Definition is inspired by \cite[(3.3)]{BKW} and \cite[Definition 4D.3]{EM}.
%For $i\in I_f ,$ let
%\begin{align*}
%\delta_{{\rm p}(i),\bar{1}}
%:=\begin{cases}
%1,\quad & \text{ if ${\rm p}(i)=\bar{1}$,}\\
%0,\quad & \text{ if ${\rm p}(i)=\bar{0}$.}
%\end{cases}
%\end{align*}
\label{pag:add and rem 1}
\begin{defn}
Let $\undla\in\mathscr{P}^{\bullet,m}_{n}$ and $i\in I_f.$
\begin{enumerate}
%\item The $\undla$-positive root is $\alpha_{\undla}:=\sum_{k=1}^{n}\alpha_{{\bf i}_k^{\mt}}=\sum_{A\in \undla}\alpha_{[\res (A)]}\in Q_n^+.$
%\item For any $\Lambda\in P^+,$ $\alpha\in Q_n^+,$ the $\Lambda$-defect of $\alpha$ is
%      ${\rm def}(\alpha):=(\Lambda|\alpha)-(\alpha|\alpha)/2.$
%\item The $\Lambda_f$-defect of $\undla$ is ${\rm def}(\undla):={\rm def}(\nu_{\undla}).$

%where
%\begin{align*}
%w_k^{\mt}
%:=\begin{cases}
%1, &\quad \text{if ${\bf i}_k^{\mt}={\bf i}_{k+1}^{\mt}\in (I_f)_{\rm odd}$ and $k+1\in\mathcal{D}_{\mt}$,}\\
%2^{|i_k^{\mt}|}, &\quad \text{otherwise.}
%\end{cases}
%\end{align*}
\item We define
\begin{align*}\mathcal{A}_{\undla}(i)&:=\{\text{addable ${ i}$-boxes of $\undla$}\}\\
	\mathcal{R}_{\undla}({i})&:=\{\text{removable ${ i}$-boxes of $\undla$}\}.
\end{align*}
\item We define $$d_{ i}(\undla):=2^{\delta_{{\rm p}(i),\bar{1}}} {\rm d}_{{ i}}\left(\sharp \mathcal{A}_{\undla}(i)
-\sharp \left(\mathcal{R}_{\undla}(i)\setminus \mathcal{D} \right)\right).$$

	\item The $\undla$-positive root is $\nu_{\undla}:=\sum_{A\in \undla}\nu_{\mathtt{q}(\res (A))}\in Q_n^+.$

\end{enumerate}
\end{defn}

The following Lemma connects the Cartan matrix with the combinatorics in our setting, which will be used frequently in this subsection.
\begin{lem}\label{crucial lemma}
Let $\undla\in\mathscr{P}^{\bullet,m}_{n}, \undmu\in\mathscr{P}^{\bullet,m}_{n-1}$ and $\undla=\undmu\cup\{A\}$. Suppose the neighbors of $A$ in the corresponding young diagram are the following:
$$\young(:x,wAy,:z).$$ For $i\in I_f$, we set $\mathcal{E}_1:=\mathcal{A}_{\undla}(i)\cap \{y,z\},$ $\mathcal{E}_2:
=\left(\mathcal{R}_{\undmu}(i)\setminus \mathcal{D}\right)\cap\{x,w\}.$ Then we have
\begin{align}\label{tiny claim}-a_{i,\mathtt{q}(\res(A))}=2^{\delta_{{\rm p}(i),\bar{1}}}\left(\sharp \mathcal{E}_1 + \sharp \mathcal{E}_2 - \delta_{\mathtt{q}(\res(A)),i}\left(1+\delta_{A\notin \mathcal{D}}\right)\right),
\end{align} where $$\delta_{A\notin \mathcal{D}}:=\begin{cases}
1,\qquad \text{if $A\notin \mathcal{D}$};\\
0,\qquad \text{if $A\in \mathcal{D}$}.\\
\end{cases}
$$
\end{lem}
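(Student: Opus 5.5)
This is a finite local computation, and I will organise it by comparing $i$ with the residue class $j:=\mathtt{q}(\res(A))$. Write $A=(a,b,l)$, so that $w=(a,b-1,l)$, $y=(a,b+1,l)$, $x=(a-1,b,l)$ and $z=(a+1,b,l)$. Along a row, residues are multiplied by $q^{2}$; along a column, by $q^{-2}$. Hence $\mathtt{q}(\res(y))=\mathtt{q}(\res(z))$ lie on one side of $j$ in the chain $\{\mathtt{q}(q^{2k}Q_l)\}$ and $\mathtt{q}(\res(x))=\mathtt{q}(\res(w))$ on the other, up to the folding $\mathtt{q}(x)=\mathtt{q}(x^{-1})$.

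The first step is to determine which of the four boxes can contribute. Since $\undla=\undmu\cup\{A\}$ with $A$ removable in $\undla$, a box among $y,z$ lies in $\mathcal{E}_1$ exactly when it is addable for $\undla$ and has residue class $i$. A box among $x,w$ lies in $\mathcal{E}_2$ exactly when it is removable for $\undmu$, not in $\mathcal{D}$, and has class $i$. In the shifted diagram (components $0$, $0_\pm$) some neighbours do not exist: when $A=(a,a,0)$ is diagonal, $w$ is absent and $z$ is never addable. This gives the combinatorial side.

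The second step is the case analysis.
\begin{enumerate}
\item[(i)] If $i$ is neither $j$ nor adjacent to $j$ in the Dynkin diagram of $I_f$, both sides vanish.
\item[(ii)] If $i=j$, then generically none of $x,y,z,w$ has class $j$. The right-hand side is then $-2^{\delta_{{\rm p}(j),\bar1}}(1+\delta_{A\notin\mathcal{D}})$. This gives $-2=-a_{jj}$ for $j$ even ($A\notin\mathcal{D}$ is automatic in non-shifted components). For $j$ odd with $A\in\mathcal{D}$ it gives $-2$. For $j$ odd with $A\notin\mathcal{D}$, $j=\pm2$ arises from residue $\pm q$, where the folding makes a neighbour also of class $j$, and the count must be checked to come out to $-2$.
\item[(iii)] If $i$ is adjacent to $j$, the presence of $y$ versus $z$ and of $x$ versus $w$ are complementary: exactly one of $y,z$ is addable-or-in-$\undla$ in the appropriate sense. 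Counting, $\sharp\mathcal{E}_1+\sharp\mathcal{E}_2$ equals the number of directions from $j$ landing on $i$. This is $1$ for a simple edge and $2$ when the folding at an end node makes both sides land on $i$ (the double and quadruple edges near $\mathtt{q}(\pm q)$, $\mathtt{q}(\pm1)$). The extra factor $2^{\delta_{{\rm p}(i),\bar1}}$ matches $a_{ij}\in\{-2,-4\}$ for odd $i$, as in the table defining $a_{ij}$.
\end{enumerate}

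The main obstacle is the bookkeeping in the folded and shifted cases. These are the boxes on the diagonal of the $0$, $0_\pm$ components, where residues are $\pm q^{2k+1}$ and $\mathtt{q}(q)=\mathtt{q}(q^{-1})$, together with the small-rank Dynkin diagrams ($s=1$ for $A^{(2)}_2$, $s=2$ for $D^{(2)}_2$). In these, $x$ or $w$ may be a diagonal box excluded by $\mathcal{D}$, and $i$ may coincide with $j$ via folding. I would handle these by listing, for each node type (even interior, even end of type $C$, odd end), the residues $\res(A)$ that realise it and checking the identity directly against the Cartan matrix entries. The generic interior case already follows from the uniform argument above.
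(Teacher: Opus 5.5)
Your case split (non-adjacent, $i=j$, adjacent by edge type) is the same as the paper's. However, the counting mechanism you give in step (iii) is wrong, so that step fails as written.

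You correctly note that moving right multiplies the residue by $q^{2}$ and moving down multiplies it by $q^{-2}$. But from this it follows that $\res(x)=\res(y)=q^{2}\res(A)$ and $\res(w)=\res(z)=q^{-2}\res(A)$. So the pairs sitting on the two sides of $j$ are $\{x,y\}$ and $\{w,z\}$, not $\{y,z\}$ and $\{x,w\}$. Your identity $\mathtt{q}(\res(y))=\mathtt{q}(\res(z))$ holds only when $\res(A)=\pm1$.

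Your complementarity claim is also false. Take $\undla$ to be a single box in an ordinary component. Then $y=(1,2,l)$ and $z=(2,1,l)$ are both addable, while $x$ and $w$ are absent. Hence the value of $\sharp\mathcal{E}_1+\sharp\mathcal{E}_2$ in (iii), and the count you deferred in (ii), do not follow from what you state.

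The missing fact, which the paper uses ("$x\in\mathcal{E}_2$ iff $y\notin\mathcal{E}_1$, $w\in\mathcal{E}_2$ iff $z\notin\mathcal{E}_1$"), is complementarity \emph{within each equal-residue pair}. Write $A=(a,b,l)$.
\begin{enumerate}
\item If $a\ge 2$, then $x$ is removable from $\undmu$ iff $(a-1,b+1,l)\notin\undla$, which holds iff $y$ is not addable to $\undla$.
\item If $a=1$, then $x$ is absent and $y$ is addable.
\item The same holds for $w$ versus $z$, using the box $(a+1,b-1,l)$.
\end{enumerate}
So each pair whose class is $i$ contributes exactly one box to $\mathcal{E}_1\sqcup\mathcal{E}_2$. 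There are exactly two exceptions, both in shifted components.
\begin{enumerate}
\item If $A=(a,a)$ is diagonal, then $w$ and $z$ are both absent. This is what the $\delta_{A\notin\mathcal{D}}$ term accounts for.
\item If $A=(a,a+1)$, the box $(a+1,a)$ does not exist. Then $w=(a,a)$ is removable from $\undmu$ and $z=(a+1,a+1)$ is addable to $\undla$ at the same time. Excluding $\mathcal{D}$ from $\mathcal{E}_2$ discards $w$ and restores the count.
\end{enumerate}
Note also that $x$ can never be a diagonal box, contrary to your remark, since a diagonal $x$ would force $A$ below the diagonal.

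With this correction the argument closes. In (ii) with $j$ odd and $A\notin\mathcal{D}$, exactly one pair has class $j$, so the right-hand side is $2(1-2)=-2$. In (iii) you get $1$ or $2$ times $2^{\delta_{{\rm p}(i),\bar1}}$, which matches the Cartan entries. Your outline then becomes the paper's proof.
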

\begin{proof}
We prove \eqref{tiny claim} by checking all of the possible cases of $\mathtt{q}(\res(A))$ and $i$ appearing in the Dynkin diagrams.
\begin{enumerate}
	\item $i\rightarrow \mathtt{q}(\res(A)), i\leftarrow \mathtt{q}(\res(A)), i\Rightarrow \mathtt{q}(\res(A))\text{ or }
i \fourlinerightarrow \mathtt{q}(\res(A)).$
Then ${\rm p}(i)=\bar{0}$ and it's easy to check that
	\begin{align*}
		&x\in \mathcal{E}_2 \text{ if and only if } y\notin \mathcal{E}_1,\quad w\in \mathcal{E}_2 \text{ if and only if } z\notin \mathcal{E}_1,\\
		&\mathtt{q}(\res(x))=\mathtt{q}(\res(y))=i \text{ if and only if } \mathtt{q}(\res(z))=\mathtt{q}(\res(w))\neq i .
	\end{align*} Therefore, in this case, $\sharp \mathcal{E}_1 + \sharp \mathcal{E}_2 =1$ and \eqref{tiny claim} holds.
	\item $i\Leftarrow \mathtt{q}(\res(A))$ or $i \Leftrightarrow \mathtt{q}(\res(A))$.
If ${\rm p}(i)=\bar{0}$, then it's easy to check that
	\begin{align*}
		&x\in \mathcal{E}_2 \text{ if and only if } y\notin \mathcal{E}_1,\quad w\in \mathcal{E}_2 \text{ if and only if } z\notin \mathcal{E}_1,\\
		&\mathtt{q}(\res(x))=\mathtt{q}(\res(y))=\mathtt{q}(\res(z))=\mathtt{q}(\res(w))=i .
	\end{align*} Therefore, we have $\sharp \mathcal{E}_1 + \sharp \mathcal{E}_2 =2$ and \eqref{tiny claim} holds in this case. If ${\rm p}(i)=\bar{1}$, we can similarly check that
	\begin{align*}
		&x\in \mathcal{E}_2 \text{ if and only if } y\notin \mathcal{E}_1,\quad w\in \mathcal{E}_2 \text{ if and only if } z\notin \mathcal{E}_1,\\
		&\mathtt{q}(\res(x))=\mathtt{q}(\res(y))=i \text{ if and only if } \mathtt{q}(\res(z))=\mathtt{q}(\res(w))\neq i .
	\end{align*} Therefore, in this case, $\sharp \mathcal{E}_1 + \sharp \mathcal{E}_2 =1$ and \eqref{tiny claim} holds.
    \item $i \fourlineleftarrow \mathtt{q}(\res(A))$. Then ${\rm p}(i)=\bar{1}$ and it's easy to check that
    \begin{align*}
		&x\in \mathcal{E}_2 \text{ if and only if } y\notin \mathcal{E}_1,\quad w\in \mathcal{E}_2 \text{ if and only if } z\notin \mathcal{E}_1,\\
		&\mathtt{q}(\res(x))=\mathtt{q}(\res(y))=\mathtt{q}(\res(z))=\mathtt{q}(\res(w))= i.
	\end{align*} Therefore, in this case, $\sharp \mathcal{E}_1 + \sharp \mathcal{E}_2 =2$ and \eqref{tiny claim} holds.
	\item $i=\mathtt{q}(\res(A))$. If ${\rm p}(i)=\bar{0}$, then it's easy to check that $\mathcal{E}_1 = \mathcal{E}_2 =\emptyset$ and $A\notin \mathcal{D}$. Therefore,  \eqref{tiny claim} holds in this case. If ${\rm p}(i)=\bar{1}$ and $A\notin \mathcal{D}$, then it's easy to check that
	\begin{align*}
		&x\in \mathcal{E}_2 \text{ if and only if } y\notin \mathcal{E}_1,\quad w\in \mathcal{E}_2 \text{ if and only if } z\notin \mathcal{E}_1,\\
		&\mathtt{q}(\res(x))=\mathtt{q}(\res(y))=i \text{ if and only if } \mathtt{q}(\res(z))=\mathtt{q}(\res(w))\neq i.
	\end{align*} Therefore, in this case, we have $\sharp \mathcal{E}_1 + \sharp \mathcal{E}_2 =1$ and \eqref{tiny claim} holds. If ${\rm p}(i)=\bar{1}$ and $A\in \mathcal{D}$, we can similarly check that in this case, $\mathcal{E}_1 = \mathcal{E}_2 =\emptyset$ and \eqref{tiny claim} holds again.
	
	\item $i\not\leftrightarrow \mathtt{q}(\res(A))$. One can easily check that $\mathcal{E}_1 = \mathcal{E}_2 =\emptyset$ and therefore \eqref{tiny claim} holds in this case.

\end{enumerate}
Combining above cases, \eqref{tiny claim} holds.
\end{proof}

Recall that we have associated $f$ with the dominant weight $\Lambda_f$.

\begin{cor}\label{d-value}
Let $\undla\in\mathscr{P}^{\bullet,m}_{n}$ and $i\in I_f$, we have $$d_{i}(\undla)=\begin{cases}
		(\Lambda_f-\nu_{\undla}|\nu_{i}),&\qquad \text{if $\bullet=\mathsf{0}$};\\
		(\Lambda_f-\nu_{\undla}|\nu_{i})+\delta_{i,\mathtt{q}(q)}{\rm d}_{\mathtt{q}(q)},&\qquad \text{if $\bullet=\mathsf{s}$};\\
			(\Lambda_f-\nu_{\undla}|\nu_{i})+\delta_{i,\mathtt{q}(q)}{\rm d}_{\mathtt{q}(q)}+\delta_{i,\mathtt{q}(-q)}{\rm d}_{\mathtt{q}(-q)},&\qquad \text{if $\bullet=\mathsf{ss}$}.
		\end{cases}
	$$
	\end{cor}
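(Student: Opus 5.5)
The plan is to induct on $n=|\undla|$, adding one box at a time and using Lemma \ref{crucial lemma} to track how both sides change.

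\textbf{Base case.} For $\undla=\emptyset$ we have $\nu_{\undla}=0$, and the only addable boxes are the first boxes $(1,1,l)$ of each component, each of residue $Q_l$.

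For $i\in I_{\rm even}$, $d_i(\emptyset)$ equals ${\rm d}_i$ times the number of $l$ with $\mathtt{q}(Q_l)=i$. Comparing with $(\Lambda_f|\nu_i)={\rm d}_i\langle h_i,\Lambda_f\rangle$ and the definition of $\Lambda_f$ gives equality. The components $0,0_\pm$ have residue $\pm q$ with $\mathtt{q}(\pm q)=\pm 2\in I_{\rm odd}$, so they do not contribute to even $i$.

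For $i\in I_{\rm odd}$ the prefactor $2^{\delta_{{\rm p}(i),\bar1}}=2$ matches the coefficient $2$ in front of $\Lambda_{\mathtt{q}(Q_i)}$ in $\Lambda_f$, so each ordinary component with $\mathtt{q}(Q_l)=i$ again contributes correctly. In the cases $\bullet=\mathsf{s},\mathsf{ss}$, the extra component $0$ (or $0_\pm$) contributes $2{\rm d}_{\mathtt{q}(q)}$ to $d_{\mathtt{q}(q)}(\emptyset)$ (resp. also to $d_{\mathtt{q}(-q)}$). The weight $\Lambda_f$ only contains $\Lambda_{\mathtt{q}(q)}$ once, contributing ${\rm d}_{\mathtt{q}(q)}$. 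This discrepancy is exactly the correction term $\delta_{i,\mathtt{q}(q)}{\rm d}_{\mathtt{q}(q)}$ (and its $-q$ analogue). I would double-check at this point that the residues are $Q_0=Q_{0_+}=q$ and $Q_{0_-}=-q$.

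\textbf{Inductive step.} Write $\undla=\undmu\cup\{A\}$. The right-hand side changes by
\begin{align*}
-(\nu_{\mathtt{q}(\res(A))}|\nu_i)=-{\rm d}_i a_{i,\mathtt{q}(\res(A))},
\end{align*}
since the correction terms do not depend on $\undla$. So it suffices to show
\begin{align*}
d_i(\undla)-d_i(\undmu)=-{\rm d}_i a_{i,\mathtt{q}(\res(A))}.
\end{align*}
By Lemma \ref{crucial lemma}, this reduces to the combinatorial identity
\begin{align*}
&\Bigl(\sharp\mathcal{A}_{\undla}(i)-\sharp(\mathcal{R}_{\undla}(i)\setminus\mathcal{D})\Bigr)-\Bigl(\sharp\mathcal{A}_{\undmu}(i)-\sharp(\mathcal{R}_{\undmu}(i)\setminus\mathcal{D})\Bigr)\\
&\qquad=\sharp\mathcal{E}_1+\sharp\mathcal{E}_2-\delta_{\mathtt{q}(\res(A)),i}\bigl(1+\delta_{A\notin\mathcal{D}}\bigr).
\end{align*}

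\textbf{Local bookkeeping.} To verify this identity, I would compare the addable and removable sets before and after adding $A$. Boxes far from $A$ are unchanged.
\begin{itemize}
\item New addable boxes of $\undla$ can only be $y$ or $z$; these are counted by $\mathcal{E}_1$.
\item The removable boxes $x,w$ of $\undmu$ cease to be removable once $A$ is added; these are counted by $\mathcal{E}_2$.
\item $A$ itself leaves the addable set of $\undmu$ and enters the removable set of $\undla$. When $\mathtt{q}(\res(A))=i$, this contributes $-1$, plus a further $-1$ unless $A\in\mathcal{D}$.
\end{itemize}

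\textbf{Main obstacle.} The delicate point is shifted diagrams, where diagonal boxes in $\mathcal{D}$ have fewer neighbours. I would also need to confirm that the neighbour positions in Lemma \ref{crucial lemma} are exactly the boxes whose status can change, which requires care for shifted diagrams near the diagonal.
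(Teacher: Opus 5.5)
Your proposal is correct and follows the paper's proof essentially verbatim: induction on $n$, with $\mathcal{A}_{\undla}(i)=(\mathcal{A}_{\undmu}(i)\sqcup\mathcal{E}_1)\setminus\{A\}$ and $\mathcal{R}_{\undmu}(i)\setminus\mathcal{D}=((\mathcal{R}_{\undla}(i)\setminus\mathcal{D})\setminus\{A\})\sqcup\mathcal{E}_2$ fed into Lemma \ref{crucial lemma}. Your explicit base case correctly fills in what the paper calls easy: each shifted component contributes $2{\rm d}_{\mathtt{q}(\pm q)}$, while $\Lambda_f$ contains only a single $\Lambda_{\mathtt{q}(\pm q)}$, which produces exactly the correction terms. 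Your worry about diagonal boxes is harmless, since addability and removability still depend only on the neighbours $x,w,y,z$, and a nonexistent $w$ simply never lies in $\mathcal{E}_2$.
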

	
	\begin{proof}
		We prove the equation by induction on $n$. It's easy to check the case when $n=0$, i.e. $\undla=\emptyset$  by definition. Now suppose $\undla\in\mathscr{P}^{\bullet,m}_{n} $ and $\undla=\undmu\cup\{A\}$, where $\undmu\in\mathscr{P}^{\bullet,m}_{n-1}$. We draw the neighbors of $A$ in the young diagram of $\undla$ as the following:
		$$\young(:x,wAy,:z),$$ and set $\mathcal{E}_1:=\mathcal{A}_{\undla}(i)\cap \{y,z\},$ $\mathcal{E}_2:
		=\left(\mathcal{R}_{\undmu}(i)\setminus \mathcal{D}\right)\cap\{x,w\}.$ Then one can easily check $$
		\mathcal{A}_{\undla}(i)=	\left(\mathcal{A}_{\undmu}(i)\sqcup \mathcal{E}_1\right)\setminus \{A\},\qquad 	\mathcal{R}_{\undmu}(i)\setminus\mathcal{D}=	\left(\left(\mathcal{R}_{\undla}(i)\setminus\mathcal{D}\right)\setminus \{A\}\right)\sqcup \mathcal{E}_2.
		$$
		Hence, we have \begin{align}\label{induction1}
			\sharp \mathcal{A}_{\undla}(i)=\sharp \mathcal{A}_{\undmu}(i)+\sharp \mathcal{E}_1-\delta_{\mathtt{q}(\res(A)),i},
			\quad \sharp \left(\mathcal{R}_{\undmu}(i)\setminus\mathcal{D}\right)=\sharp \left(\mathcal{R}_{\undla}(i)\setminus\mathcal{D}\right) +\sharp  \mathcal{E}_2 -\delta_{\mathtt{q}(\res(A)),i}\delta_{A\notin \mathcal{D}}.\end{align}
		We deduce that \begin{align*}
			d_{i}(\undla)-d_{i}(\undmu)&=2^{\delta_{{\rm p}(i),\bar{1}}} {\rm d}_{i}\left(\sharp \mathcal{A}_{\undla}(i)
			-\sharp \left(\mathcal{R}_{\undla}(i)\setminus \mathcal{D} \right)-\sharp \mathcal{A}_{\undmu}(i)
			+\sharp \left(\mathcal{R}_{\undmu}(i)\setminus \mathcal{D} \right)\right)\\
			&=2^{\delta_{{\rm p}(i),\bar{1}}} {\rm d}_{i}\left(\sharp \mathcal{E}_1+\sharp \mathcal{E}_2-\delta_{\mathtt{q}(\res(A)),i}(1+\delta_{A\notin \mathcal{D}})
			\right)\\
			&=-{\rm d}_{i}a_{i,\mathtt{q}(\res(A))}\\
			&=-\left(\nu_{i}\middle|\nu_{\mathtt{q}(\res(A))}\right)
			\end{align*} where in the second equation, we have used \eqref{induction1}, and in the third equation, we have used Lemma \ref{crucial lemma}. Since $\nu_{\undla}=\nu_{\undmu}+\nu_{\mathtt{q}(\res(A))}$, the Corollary follows from induction hypothesis.
		
	\end{proof}	
	
	The following definition is inspired by \cite[(3.4)]{BKW} and \cite[Definition 4D.3]{EM}.
	\label{pag:modified defect}
	\begin{defn}
		 Let $\undla\in\mathscr{P}^{\bullet,m}_{n}$ and $\nu_{\undla}=\sum_{{i}\in I_f}m_{ i} \nu_{ i}$. We define $$d^f(\undla):=\begin{cases}\left(\Lambda_f\middle|\nu_{\undla}\right)-\frac{1}{2}\left(\nu_{\undla}\middle|\nu_{\undla}\right)-m_{\mathtt{q}(q)}{\rm d}_{\mathtt{q}(q)}-m_{\mathtt{q}(-q)}{\rm d}_{\mathtt{q}(-q)},&\qquad\text{if $\bullet=\mathsf{0}$};\\
			\left(\Lambda_f\middle|\nu_{\undla}\right)-\frac{1}{2}\left(\nu_{\undla}\middle|\nu_{\undla}\right)-m_{\mathtt{q}(-q)} {\rm d}_{\mathtt{q}(-q)},&\qquad\text{if $\bullet=\mathsf{s}$};\\
		\left(\Lambda_f\middle|\nu_{\undla}\right)-\frac{1}{2}\left(\nu_{\undla}\middle|\nu_{\undla}\right),&\qquad\text{if $\bullet=\mathsf{ss}$}.\\
			\end{cases}$$
		\end{defn}

		\begin{lem}\label{def-induction}
			Let $\undla\in\mathscr{P}^{\bullet,m}_{n}, \undmu\in\mathscr{P}^{\bullet,m}_{n-1}$ and $\undla=\undmu\cup\{A\}$. Then we have $$d^f(\undla)=d^f(\undmu)+d_{\mathtt{q}(\res(A))}(\undmu)-2^{\delta_{{\rm p}(\mathtt{q}(\res(A))),\bar{1}}}{\rm d}_{\mathtt{q}(\res(A))}.$$
			\end{lem}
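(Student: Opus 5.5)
The plan is a direct computation from the definition of $d^f$, followed by one application of Corollary \ref{d-value} to $\undmu$. Write $j:=\mathtt{q}(\res(A))$, so that $\nu_{\undla}=\nu_{\undmu}+\nu_j$. Expanding the quadratic part gives
\begin{align*}
\left(\Lambda_f\middle|\nu_{\undla}\right)-\tfrac12\left(\nu_{\undla}\middle|\nu_{\undla}\right)
=\left(\Lambda_f\middle|\nu_{\undmu}\right)-\tfrac12\left(\nu_{\undmu}\middle|\nu_{\undmu}\right)+\left(\Lambda_f-\nu_{\undmu}\middle|\nu_j\right)-\tfrac12\left(\nu_j\middle|\nu_j\right).
\end{align*}
Since $(\nu_j|\nu_j)=2{\rm d}_j$, the last term is $-{\rm d}_j$. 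The correction terms $-m_{\mathtt{q}(\pm q)}{\rm d}_{\mathtt{q}(\pm q)}$ in the definition of $d^f$ increase their multiplicity index by exactly $\delta_{j,\mathtt{q}(\pm q)}$ when passing from $\undmu$ to $\undla$.

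Next I substitute Corollary \ref{d-value} applied to $\undmu$ with $i=j$. This gives
\[
\left(\Lambda_f-\nu_{\undmu}\middle|\nu_j\right)=d_j(\undmu)-\epsilon_j,
\]
where $\epsilon_j$ is defined in each case as follows.
\begin{itemize}
\item[$\bullet=\mathsf{0}$:] $\epsilon_j=0$.
\item[$\bullet=\mathsf{s}$:] $\epsilon_j=\delta_{j,\mathtt{q}(q)}{\rm d}_{\mathtt{q}(q)}$.
\item[$\bullet=\mathsf{ss}$:] $\epsilon_j=\delta_{j,\mathtt{q}(q)}{\rm d}_{\mathtt{q}(q)}+\delta_{j,\mathtt{q}(-q)}{\rm d}_{\mathtt{q}(-q)}$.
\end{itemize}
Combining this with the first step, I expect in every case
\[
d^f(\undla)-d^f(\undmu)=d_j(\undmu)-{\rm d}_j-\delta_{j\in\{\mathtt{q}(q),\mathtt{q}(-q)\}}\,{\rm d}_j .
\]
The reason is that the three definitions of $d^f$ are arranged to be complementary to the three cases of Corollary \ref{d-value}. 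For $\bullet=\mathsf{0}$ the two correction terms of $d^f$ supply the extra ${\rm d}_j$. For $\bullet=\mathsf{s}$ the value $\mathtt{q}(q)$ is supplied by the Corollary and $\mathtt{q}(-q)$ by $d^f$. For $\bullet=\mathsf{ss}$ both are supplied by the Corollary.

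Finally, I compare with the target $-2^{\delta_{{\rm p}(j),\bar1}}{\rm d}_j$. The vertices $i\in I_f$ with ${\rm p}(i)=\bar1$ are exactly $\mathtt{q}(\pm q)=\pm 2$, by the convention fixed in subsection \ref{DynkinDiagrams}. Hence $2^{\delta_{{\rm p}(j),\bar1}}{\rm d}_j={\rm d}_j+\delta_{j\in\{\mathtt{q}(\pm q)\}}{\rm d}_j$, which matches the expression above.

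The only delicate point is degenerate coincidences. When $q^2$ is a root of unity, $\mathtt{q}(q)$ and $\mathtt{q}(-q)$ might coincide, or one of them might not lie in $I_f$. I would check that the Kronecker deltas still match, noting that $\mathtt{q}(q)=2\neq-2=\mathtt{q}(-q)$ since the characteristic is not $2$. I would also check that an $m_i$ with $i\notin I_f$ is treated as zero, so that both sides vanish consistently in that situation.
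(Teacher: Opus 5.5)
Your proposal is correct and is essentially the paper's proof. Expand $d^f(\undla)-d^f(\undmu)$ using $\nu_{\undla}=\nu_{\undmu}+\nu_{j}$, track the $m_{\mathtt{q}(\pm q)}$ correction terms case by case, and apply Corollary \ref{d-value} to $\undmu$; the result matches $-2^{\delta_{{\rm p}(j),\bar{1}}}{\rm d}_j$ because the odd vertices are exactly $\mathtt{q}(\pm q)$.
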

			
			\begin{proof}
				By definition, we have \begin{align*}
					d^f(\undla)-d^f(\undmu)
					&=
\begin{cases}\left({\Lambda_f}\middle|\nu_{\undla}-\nu_{\undmu}\right)-\frac{1}{2}\left(\nu_{\undla}\middle|\nu_{\undla}\right)+\frac{1}{2}\left(\nu_{\undmu}\middle|\nu_{\undmu}\right)-\delta_{{\rm p}(\mathtt{q}(\res(A))),\overline{1}}{\rm d}_{\mathtt{q}(\res(A))}, &\qquad\text{if $\bullet=\mathsf{0}$}; \\
						\left({\Lambda_f}\middle|\nu_{\undla}-\nu_{\undmu}\right)-\frac{1}{2}\left(\nu_{\undla}\middle|\nu_{\undla}\right)+\frac{1}{2}\left(\nu_{\undmu}\middle|\nu_{\undmu}\right)-\delta_{\mathtt{q}(\res(A)),\mathtt{q}(-q)}{\rm d}_{\mathtt{q}(\res(A))}, &\qquad\text{if $\bullet=\mathsf{s}$};\\
						\left({\Lambda_f}\middle|\nu_{\undla}-\nu_{\undmu}\right)-\frac{1}{2}\left(\nu_{\undla}\middle|\nu_{\undla}\right)+\frac{1}{2}\left(\nu_{\undmu}\middle|\nu_{\undmu}\right), &\qquad\text{if $\bullet=\mathsf{ss}$},
						\end{cases}\\
					&=
				\begin{cases}\left(\Lambda_f\middle|\nu_{\mathtt{q}(\res(A))}\right)-\left(\nu_{\undmu}\middle|\nu_{\mathtt{q}(\res(A))}\right)-(1+\delta_{{\rm p}(\mathtt{q}(\res(A))),\overline{1}}){\rm d}_{\mathtt{q}(\res(A))}, &\qquad\text{if $\bullet=\mathsf{0}$};\\
					\left(\Lambda_f\middle|\nu_{\mathtt{q}(\res(A))}\right)-\left(\nu_{\undmu}\middle|\nu_{\mathtt{q}(\res(A))}\right)-(1+\delta_{\mathtt{q}(\res(A)),\mathtt{q}(-q)}){\rm d}_{\mathtt{q}(\res(A))}, &\qquad\text{if $\bullet=\mathsf{s}$};\\
						\left(\Lambda_f\middle|\nu_{\mathtt{q}(\res(A))}\right)-\left(\nu_{\undmu}\middle|\nu_{\mathtt{q}(\res(A))}\right)-{\rm d}_{\mathtt{q}(\res(A))}, &\qquad\text{if $\bullet=\mathsf{ss}$},
						\end{cases}\\
					&=d_{\mathtt{q}(\res(A))}(\undmu)-2^{\delta_{{\rm p}(\mathtt{q}(\res(A))),\bar{1}}}{\rm d}_{\mathtt{q}(\res(A))},
					\end{align*} where in the last equation, we have used Corollary \ref{d-value}. This completes the proof.
				\end{proof}
				
	Now we are ready to define the degree of standard tableaux.
	\label{add and rem 2}
	\begin{defn}
		(1) \cite[Before Remark 3B.1]{EM}
		For any two boxes $x=(i,j,l)$ and $y=(a,b,c),$ we write $y<x$ if and only if
		$c<l; \text{ or } c=l \text{ and } a<i; \text{ or } c=l, a=i \text{ and } b<j.$
		
		(2) For $\undla\in\mathscr{P}^{\bullet,m}_{n}$, $\mt\in \Std(\undla),$ $k\in[n],$
		we define
		\begin{align*}
			\mathscr{A}_{\mt}^{\rhd}(k)&:=\{x\in{\rm Add}(\mt\downarrow_{k-1})\mid x>\mt^{-1}(k)\},\\
			\mathscr{R}_{\mt}^{\rhd}(k)&:=\{y\in{\rm Rem}(\mt\downarrow_{k-1})\mid y>\mt^{-1}(k)\},\\
			\mathscr{A}_{\mt}^{\lhd}(k)&:=\{x\in{\rm Add}(\mt\downarrow_{k-1})\mid x<\mt^{-1}(k)\},\\
			\mathscr{R}_{\mt}^{\lhd}(k)&:=\{y\in{\rm Rem}(\mt\downarrow_{k-1})\mid y<\mt^{-1}(k)\}.
		\end{align*}
	\end{defn}	
	
	The following definition is inspired by \cite[(3.5), (3.6)]{BKW} and \cite[Definition 4D.3]{EM}.
	\label{pag:deg of std tableaux}
	\begin{defn}\label{deg of std tableaux}
		Let $\undla\in\mathscr{P}^{\bullet,m}_{n}$, $\mt\in \Std(\undla)$ and $\mathtt{q}(\res(\mt))={\bf i}\in (I_f)^n.$
		\begin{enumerate}
			%\item The $\undla$-positive root is $\alpha_{\undla}:=\sum_{k=1}^{n}\alpha_{{\bf i}_k^{\mt}}=\sum_{A\in \undla}\alpha_{[\res (A)]}\in Q_n^+.$
			%\item For any $\Lambda\in P^+,$ $\alpha\in Q_n^+,$ the $\Lambda$-defect of $\alpha$ is
			%      ${\rm def}(\alpha):=(\Lambda|\alpha)-(\alpha|\alpha)/2.$
			%\item The $\Lambda_f$-defect of $\undla$ is ${\rm def}(\undla):={\rm def}(\alpha_{\undla}).$
			\item For ${\scriptstyle\triangle}\in\{\lhd,\rhd\},$ $k\in[n],$ we denote
			\begin{align*}
				\mathcal{A}_{\mt}^{{\scriptstyle\triangle,f}}(k)&
				:=\{A\in\mathscr{A}_{\rm \mt}^{{\scriptstyle\triangle}}(k)\mid \mathtt{q}(\res(A))=\mathtt{q}(\res_{\mt}(k))\in I_f\},\\
				\mathcal{R}_{\mt}^{{\scriptstyle\triangle,f}}(k)&
				:=\{A\in\mathscr{R}_{\rm \mt}^{{\scriptstyle\triangle}}(k)\mid \mathtt{q}(\res(A))=\mathtt{q}(\res_{\mt}(k))\in I_f\}.
			\end{align*}
			\item For ${\scriptstyle\triangle}\in\{\lhd,\rhd\},$ the ${\scriptstyle\triangle}$-degree of $\mt$ is defined by
			\begin{align*}
				\deg^{{\scriptstyle\triangle},f}({\mt})
				:=\sum_{k=1}^n 2^{\delta_{{\rm p}({\bf i}_k),\bar{1}}} {\rm d}_{{\bf i}_k}\left(\sharp \mathcal{A}_{\mt}^{{\scriptstyle\triangle,f}}(k)
				-\sharp \left(\mathcal{R}_{\mt}^{{\scriptstyle\triangle,f}}(k)\setminus \mathcal{D} \right)\right).
			\end{align*}
			%where
			%\begin{align*}
			%w_k^{\mt}
			%:=\begin{cases}
				%1, &\quad \text{if ${\bf i}_k^{\mt}={\bf i}_{k+1}^{\mt}\in (I_f)_{\rm odd}$ and $k+1\in\mathcal{D}_{\mt}$,}\\
				%2^{\overline{{\bf i}_k^{\mt}}, &\quad \text{otherwise.}
				%\end{cases}
				%\end{align*}
			
			\end{enumerate}
		\end{defn}
		For simplicity, we shall omit the superscript $f$ in all bove definition when $f$ is clear in the context.
		
\begin{cor}\label{def1}Let $\undla\in\mathscr{P}^{\bullet,m}_{n}$, $\mt\in \Std(\undla),$ then
$d(\undla)=\deg^{\lhd}(\mt)+\deg^{\rhd}(\mt)$.
					\end{cor}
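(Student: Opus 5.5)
We argue by induction on $n$, peeling off the box $A:=\mt^{-1}(n)$ and proving the claim in the form $d^f(\undla)=\deg^{\lhd}(\mt)+\deg^{\rhd}(\mt)$, where $d(\undla)$ is understood as the modified defect $d^f(\undla)$. For $n=0$ both sides vanish: $\nu_{\emptyset}=0$, so $d^f(\emptyset)=0$, and the degree sums are empty.

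For the inductive step, set $\undmu:=\mathrm{Shape}(\mt\downarrow_{n-1})\in\mathscr{P}^{\bullet,m}_{n-1}$ and $i:=\mathtt{q}(\res(A))={\bf i}_n$. Then $\undla=\undmu\cup\{A\}$. Definition \ref{deg of std tableaux} writes each degree as a sum over $k$, and the $k$-th summand depends only on $\mt\downarrow_k$. Hence
\[
\deg^{\lhd}(\mt)+\deg^{\rhd}(\mt)=\deg^{\lhd}(\mt\downarrow_{n-1})+\deg^{\rhd}(\mt\downarrow_{n-1})+2^{\delta_{{\rm p}(i),\bar{1}}}{\rm d}_{i}\,\Sigma,
\]
where
\[
\Sigma:=\sharp\mathcal{A}^{\lhd}_{\mt}(n)+\sharp\mathcal{A}^{\rhd}_{\mt}(n)-\sharp\bigl(\mathcal{R}^{\lhd}_{\mt}(n)\setminus\mathcal{D}\bigr)-\sharp\bigl(\mathcal{R}^{\rhd}_{\mt}(n)\setminus\mathcal{D}\bigr).
\]
By the induction hypothesis, the first two terms sum to $d^f(\undmu)$.

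Next we evaluate $\Sigma$. The order $<$ on boxes is total, and $A\in\Add(\undmu)$ while $A\notin\Rem(\undmu)$. So the sets $\mathscr{A}^{\lhd}_{\mt}(n)$ and $\mathscr{A}^{\rhd}_{\mt}(n)$ partition $\Add(\undmu)\setminus\{A\}$, and $\mathscr{R}^{\lhd}_{\mt}(n)$, $\mathscr{R}^{\rhd}_{\mt}(n)$ partition $\Rem(\undmu)$. Restricting to boxes of residue $i$ gives
\[
\Sigma=\bigl(\sharp\mathcal{A}_{\undmu}(i)-1\bigr)-\sharp\bigl(\mathcal{R}_{\undmu}(i)\setminus\mathcal{D}\bigr).
\]
The $-1$ accounts for $A$ itself, which is an addable $i$-box of $\undmu$. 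Therefore
\[
2^{\delta_{{\rm p}(i),\bar{1}}}{\rm d}_i\,\Sigma=d_i(\undmu)-2^{\delta_{{\rm p}(i),\bar{1}}}{\rm d}_i .
\]
By Lemma \ref{def-induction} this equals $d^f(\undla)-d^f(\undmu)$. Adding the inductive hypothesis completes the step.

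The only step needing care is the partition claim. We must check that the residue condition in Definition \ref{deg of std tableaux} compares $\mathtt{q}$-values in $I_f$, exactly as $\mathcal{A}_{\undmu}(i)$ and $\mathcal{R}_{\undmu}(i)$ do. We must also check that no addable or removable box of $\undmu$ is excluded from both $\lhd$ and $\rhd$ sets. That is immediate, since such a box $x$ has $x\neq A$ and the order is total. The rest is bookkeeping already done in Lemma \ref{def-induction}.
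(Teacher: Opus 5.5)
Your proof is correct and follows the paper's own induction. You peel off $\mt^{-1}(n)$, identify the $n$-th summand with $d_i(\undmu)-2^{\delta_{{\rm p}(i),\bar{1}}}{\rm d}_i$, and close with the induction hypothesis and Lemma \ref{def-induction}. The paper states that first identity without comment; your argument that the $\lhd$/$\rhd$ sets partition $\Add(\undmu)\setminus\{A\}$ and $\Rem(\undmu)$, by totality of $<$, supplies the justification it leaves implicit.
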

					
					\begin{proof}
		We do induction on $n$. When $n=0$, this is trivial. Now suppose $\mt\downarrow_{n-1}\in  \Std(\undmu)$ for some $\undmu\in\mathscr{P}^{\bullet,m}_{n-1}$ and $A=\mt^{-1}(n)$. We have \begin{align*}
			\deg^{\lhd}(\mt)+\deg^{\rhd}(\mt)&=\deg^{\lhd}(\mt\downarrow_{n-1})+\deg^{\rhd}(\mt\downarrow_{n-1})+d_{\mathtt{q}(\res(A))}(\undmu)-2^{\delta_{{\rm p}(\mathtt{q}(\res(A))),\bar{1}}}{\rm d}_{\mathtt{q}(\res(A))}\\	
			&=d(\undmu)+d_{\mathtt{q}(\res(A))}(\undmu)-2^{\delta_{{\rm p}(\mathtt{q}(\res(A))),\bar{1}}}{\rm d}_{\mathtt{q}(\res(A))}\\
			&=d(\undla),
		\end{align*} where in the second equation, we have used induction hypothesis and in the last equation, we have used Lemma \ref{def-induction}.				
						\end{proof}
						
The following Proposition can be viewed as a generalization of \cite[Proposition 3.13]{BKW} and \cite[Theorem 4C.3 and Section 4D]{EM}.
\begin{prop}\label{BKW deg}
Let $\undla\in\mathscr{P}^{\bullet,m}_{n}$, $\mt\in \Std(\undla)$ and $\ms=s_k \mt\in\Std(\undla)$ with $\ms=s_k \mt\lhd\mt$
for some $k\in[n-1].$ Suppose $\mathtt{q}(\res(\mt))={\bf i}\in (I_f)^n,$ then we have
\begin{align*}
\deg^{\rhd}({\ms})-\deg^{\rhd}({\mt})
=-{\rm d}_{{\bf i}_{k}}a_{{\bf i}_{k},{\bf i}_{k+1}}
=\deg^{\lhd}({\mt})-\deg^{\lhd}({\ms}).
\end{align*}
\end{prop}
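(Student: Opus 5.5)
By Corollary \ref{def1}, $\deg^{\lhd}(\mt)+\deg^{\rhd}(\mt)=d(\undla)=\deg^{\lhd}(\ms)+\deg^{\rhd}(\ms)$. Hence the second equality follows from the first, and I only need to prove
$$\deg^{\rhd}(\ms)-\deg^{\rhd}(\mt)=-{\rm d}_{{\bf i}_k}a_{{\bf i}_k,{\bf i}_{k+1}}.$$

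\emph{Setup and reduction to two summands.} Put $A:=\mt^{-1}(k)$, $B:=\mt^{-1}(k+1)$ and $\undmu:=\mt\downarrow_{k-1}=\ms\downarrow_{k-1}$. Since both $\mt$ and $s_k\mt$ are standard, $A$ and $B$ are not adjacent, so both are addable boxes of $\undmu$.

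For $j\neq k,k+1$ the subtableaux $\mt\downarrow_{j-1}$ and $\ms\downarrow_{j-1}$ have the same shape, and $j$ occupies the same box in both. So the $j$-th summands of $\deg^{\rhd}(\mt)$ and $\deg^{\rhd}(\ms)$ coincide, and only $j=k,k+1$ contribute to the difference.

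Write $i:={\bf i}_k=\mathtt{q}(\res(A))$ and $i':={\bf i}_{k+1}=\mathtt{q}(\res(B))$, and let $\omega(i):=2^{\delta_{{\rm p}(i),\bar 1}}{\rm d}_i$. The four summands involved are:
\begin{itemize}
\item in $\mt$: $\omega(i)$ times the count of $i$-boxes in $\Add(\undmu)$ and $\Rem(\undmu)\setminus\mathcal D$ that are $>A$, plus $\omega(i')$ times the count of $i'$-boxes in $\Add(\undmu\cup A)$ and $\Rem(\undmu\cup A)\setminus\mathcal D$ that are $>B$;
\item in $\ms$: $\omega(i')$ times the count of $i'$-boxes in $\Add(\undmu)$ and $\Rem(\undmu)\setminus\mathcal D$ that are $>B$, plus $\omega(i)$ times the count of $i$-boxes in $\Add(\undmu\cup B)$ and $\Rem(\undmu\cup B)\setminus\mathcal D$ that are $>A$.
\end{itemize}

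\emph{Local comparison.} As in the proof of Corollary \ref{d-value}, with $B$'s neighbours labelled $x,w,y,z$,
$$\Add(\undmu\cup B)=(\Add(\undmu)\setminus\{B\})\sqcup\mathcal E_1,\qquad \Rem(\undmu)\setminus\mathcal D=\bigl((\Rem(\undmu\cup B)\setminus\mathcal D)\setminus\{B\}\bigr)\sqcup\mathcal E_2 .$$
The analogous formulas hold with $A$ in place of $B$.

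The ordering $\ms\lhd\mt$ fixes which of $A,B$ is larger in the box order $<$. I expect $A<B$; I will check this against the dominance convention. Assume $A<B$. Then:
\begin{itemize}
\item Adding $A$ changes nothing among boxes $>B$, except at neighbours of $A$ lying beyond $B$. These can occur only when $A,B$ are in the same component and row-adjacent regions, and since $A,B$ are non-adjacent I expect such neighbours not to be $>B$. So the $i'$-summands of $\mt$ and $\ms$ cancel.
\item In the $i$-summands, passing from $\undmu$ to $\undmu\cup B$ removes $B$ from the addable $i$-boxes $>A$ if $i=i'$. It also adds the neighbours of $B$ in $\mathcal E_1$ and deletes those in $\mathcal E_2$. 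All neighbours $x,w,y,z$ of $B$ satisfy $>A$: boxes in later rows or components are bigger, and the box $x$ directly above $B$ is in the previous row but still beyond $A$ unless it equals $A$, which is excluded.
\end{itemize}
Therefore the difference equals
$$\omega(i)\Bigl(\sharp\mathcal E_1+\sharp\mathcal E_2-\delta_{i,i'}\bigl(1+\delta_{B\notin\mathcal D}\bigr)\Bigr).$$
By Lemma \ref{crucial lemma} applied to $B$ with respect to $i$, this is exactly $-{\rm d}_i a_{i,i'}$.

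\emph{Main obstacle.} The hard step is the bookkeeping of the order relation $<$ relative to $A$ and $B$. I must ensure that every neighbour of $B$ counted in Lemma \ref{crucial lemma} lies strictly on the correct side of $A$, and that nothing near $A$ leaks into the $i'$-count. The box $x$ above $B$ and the box $w$ left of $B$ can fail this only when $A$ sits in the row above $B$ within the same component. In that configuration I will check directly that $x$ or $w$ then has residue different from $i$, or coincides with $A$, which is impossible.

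The diagonal boxes in $\mathcal D$ for $\bullet=\mathsf{s},\mathsf{ss}$ also need care. There I will verify separately that the exclusion of $\mathcal D$ in $\mathcal R$ matches the term $\delta_{B\notin\mathcal D}$ in Lemma \ref{crucial lemma}, exactly as in its case (4).
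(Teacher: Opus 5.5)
Your route is the paper's: only positions $k,k+1$ matter, the summands for the lower box $B$ cancel (that part is correct), and the summands for the upper box $A$ differ by the local change around $B$, which Lemma \ref{crucial lemma} evaluates. Getting the second equality from Corollary \ref{def1} is a valid shortcut.

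The problem is the step you defer to the ``main obstacle''. It is not bookkeeping; it is false. You claim the box $x$ directly above $B$ lies beyond $A$. This fails whenever $B$ is in the row just below $A$ in the same component. Non-adjacency then forces $B$ strictly left of $A$'s column, so $x$ sits in $A$'s row to the left of $A$. Hence $x<A$, and $x$ never enters either $\rhd$-count.

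Yet $x$ belongs to $\mathcal E_2$ whenever it is a removable ${\bf i}_k$-box of $\undmu$. This happens exactly when two conditions hold:
\begin{itemize}
\item $B$ is diagonally south-west of $A$, so $x$ is the box immediately left of $A$;
\item $\mathtt q(q^{-2}\res(A))=\mathtt q(\res(A))$, i.e.\ $\res(A)=\pm q$, so ${\bf i}_k$ is odd.
\end{itemize}
The residue check you plan therefore cannot succeed. In that configuration the difference is $-{\rm d}_{{\bf i}_k}a_{{\bf i}_k,{\bf i}_{k+1}}-2{\rm d}_{{\bf i}_k}$, not the claimed value.

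Here is a concrete counterexample. Take $\bullet=\mathsf 0$, $m=1$, $Q_1=q^{-1}$ with $q$ not a root of unity. The boxes $(1,1),(1,2),(2,1),(3,1)$ have $\mathtt q$-residues $i_0,i_0,i_1,i_2$, where $i_0=\mathtt q(q)=2$ is odd, $i_1=\mathtt q(q^3)$, $i_2=\mathtt q(q^5)$, and $a_{i_0,i_1}=-2$.

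Let $\undla=((2,1))$, let $\mt$ be the tableau with rows $1\,2$ and $3$, and let $\ms=s_2\mt$ (rows $1\,3$ and $2$). Then $\ms\lhd\mt$ and ${\bf i}=(i_0,i_0,i_1)$, so the right-hand side is $-{\rm d}_{i_0}a_{i_0,i_1}=2{\rm d}_{i_0}\neq0$.

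Yet every summand of $\deg^{\rhd}(\mt)$ and of $\deg^{\rhd}(\ms)$ is zero. No addable or removable box lies beyond $(2,1)$. The ones beyond $(1,2)$ are $(2,1)$ in $\mt$, resp.\ $(2,1),(3,1)$ in $\ms$, and none has residue $i_0$. So $\deg^{\rhd}(\ms)-\deg^{\rhd}(\mt)=0$, and similarly $\deg^{\lhd}(\mt)=\deg^{\lhd}(\ms)=0$. The culprit is $x=(1,1)\in\mathcal E_2$, which is exactly the box that makes $-a_{i_0,i_1}=2$ in Lemma \ref{crucial lemma}.

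The paper's proof has the same gap. In its notation $A=\mt^{-1}(n)$ is the lower box, and it asserts $\mathcal R^{\rhd}_{\mt}(n-1)=\bigl(\mathcal R^{\rhd}_{\ms}(n)\setminus\{A\}\bigr)\sqcup\widetilde{\mathcal E}_2$. This fails here, since $x\in\widetilde{\mathcal E}_2$ but $x<\mt^{-1}(n-1)$.

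So neither argument proves the statement as written. Your argument does prove it under any hypothesis excluding this configuration, for example:
\begin{itemize}
\item ${\bf i}_k$ is even;
\item no odd residue occurs twice in $\undla$, as under the assumption $m_i\le 1$ of Proposition \ref{example-unremovable}.
\end{itemize}
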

\begin{proof}
%We only prove the first equality, the second one follows similarly.
%We remark that if $(q,\undQ)$ is of type $A_\infty$ or $A_{\ell-1}^{(1)},$
%then the statement has been proved in \cite[Proposition 3.13]{BKW};
%if $(q,\undQ)$ is of type $C_\infty$ or $C^{(1)}_{s},$
%then it has been proved in \cite[Theorem 4C.3 and Section 4D]{EM} essentially.
%So we only need to prove it for types $B_\infty,$ $A_{2s}^{(2)}$ and $D_{s}^{(2)}.$

We may assume that $k=n-1.$ By assumption, $B:=\mt^{-1}(n-1)$ is above $A:=\mt^{-1}(n)=(i,j,l),$ then
\begin{align*}
&\deg^{\rhd}({\ms})-\deg^{\rhd}({\mt}) \\
%&={\rm d}_{{\bf i}_{n-1}^{\mt}} \left(\sharp \mathscr{A}_{\ms}^{\rhd}(n)- \sharp \mathscr{R}_{\ms}^{\rhd}(n)
%  -\sharp \mathscr{A}_{\mt}^{\rhd}(n-1)+ \sharp \mathscr{R}_{\mt}^{\rhd}(n-1) \right) \\
&\quad=2^{\delta_{{\rm p}({\bf i}_{n-1}),\bar{1}}}{\rm d}_{{\bf i}_{n-1}}\left(\sharp \mathcal{A}_{\ms}^{\rhd}(n)- \sharp \left( \mathcal{R}_{\ms}^{\rhd}(n)\setminus \mathcal{D}\right)
  -\sharp \mathcal{A}_{\mt}^{\rhd}(n-1)+ \sharp \left(\mathcal{R}_{\mt}^{\rhd}(n-1)\setminus \mathcal{D}\right) \right).
\end{align*}
We draw the neighbors of $A$ in the young diagram of $\undla$ as the following:
$$\young(:x,wAy,:z).$$
 Suppose $\ms\downarrow_{n-1}\in \Std(\undmu')$ for some $\undmu'\in \mathscr{P}^{\bullet,m}_{n-1}$ and $\ms\downarrow_{n-2}=\mt\downarrow_{n-2}\in \Std(\undmu)$ for some $\undmu\in \mathscr{P}^{\bullet,m}_{n-2}$ . We set $\mathcal{E}_1:=\mathcal{A}_{\undmu'}(\mathtt{q}(\res(B)))\cap \{y,z\},$ $\widetilde{\mathcal{E}}_2:=\mathcal{R}_{\undmu}(\mathtt{q}(\res(B)))\cap\{x,w\}.$
Then we have
\begin{align*}
\mathcal{A}_{\ms}^{\rhd}(n)=\left(\mathcal{A}_{\mt}^{\rhd}(n-1)\setminus \{A\}\right)\sqcup \mathcal{E}_1,
\quad \mathcal{R}_{\mt}^{\rhd}(n-1)=\left(\mathcal{R}_{\ms}^{\rhd}(n)\setminus \{A\}\right)\sqcup \widetilde{\mathcal{E}}_2.
\end{align*}
We further denote $\mathcal{E}_2:=\widetilde{\mathcal{E}}_2\setminus \mathcal{D}
=\left(\mathcal{R}_{\undmu}(\mathtt{q}(\res(B)))\setminus \mathcal{D}\right)\cap\{x,w\}.$ It follows that
\begin{align*}
\mathcal{R}_{\mt}^{\rhd}(n-1)\setminus \mathcal{D}
=\left(\left(\mathcal{R}_{\mt}^{\rhd}(n-1)\setminus \mathcal{D}\right)\setminus \{A\}\right)\sqcup \mathcal{E}_2.
\end{align*}
Hence
\begin{align*}
\deg^{\rhd}({\ms})-\deg^{\rhd}({\mt})
&=2^{\delta_{{\rm p}(\mathtt{q}(\res(B))),\bar{1}}}{\rm d}_{\mathtt{q}(\res(B))}\left(\sharp \mathcal{E}_1 + \sharp \mathcal{E}_2 - \delta_{\mathtt{q}(\res(A)),\mathtt{q}(\res(B))}\left(1+\delta(A\notin \mathcal{D})\right)\right)\\
&={\rm d}_{\mathtt{q}(\res(B))}\cdot (-a_{\mathtt{q}(\res(B)),\mathtt{q}(\res(A))}),
\end{align*} where in the second equation, we have used Lemma \ref{crucial lemma}.
%It's clear that
%$$y\in \mathcal{E}_1 \Leftrightarrow x\notin \mathcal{E}_2, \qquad z\in \mathcal{E}_1 \Leftrightarrow w\notin \mathcal{E}_2.$$
%Hence, to prove the Proposition, we only need to show that
%\begin{align}\label{tiny claim}-a_{\mathtt{q}(\res(B)),\mathtt{q}(\res(A))}=2^{|\mathtt{q}(\res(B))|}\left(\sharp \mathcal{E}_1 + \sharp \mathcal{E}_2 - \delta_{\mathtt{q}(\res(A)),\mathtt{q}(\res(B))}\left(1+\delta(A\notin \mathcal{D})\right)\right).
%	\end{align}
	This completes the proof of first equation. The proof for the second equation is similar, hence we omit it.
\end{proof}

\begin{cor}\label{cor of BKW deg}
Let $\undla\in\mathscr{P}^{\bullet,m}_{n}$ and $\mt\in \Std(\undla).$ Suppose $\mathtt{q}(\res(\mt))=({\bf i}_1^{\mt},\ldots,{\bf i}_n^{\mt})\in (I_f)^n,$
and both $d(\mt,\mt^{\undla})=s_{k_1}s_{k_2}\cdots s_{k_p},$ $d(\mt,\mt_{\undla})=s_{r_1}s_{r_2}\cdots s_{r_s}$
are reduced expressions in $\mathfrak{S}_n$. Then for any ${\bf i}=(i_1,\ldots,i_n)\in (J_f)^n,$ where $i_k\in \pr^{-1}({\bf i}_k^{\mt})\in J_f$
for $k\in[n],$ we have
\begin{align*}
\deg^{\rhd}(\mt)&=\deg(\sigma_{k_1}\cdots \sigma_{k_p} e({\bf i})) + \deg^{\rhd}(\mt^{\undla}),\\
\deg^{\lhd}(\mt)&=\deg(\sigma_{r_1}\cdots \sigma_{r_s} e({\bf i})) + \deg^{\lhd}(\mt_{\undla}).
\end{align*}

\end{cor}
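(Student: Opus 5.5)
We argue by induction on the length $p$ of $d(\mt,\mt^{\undla})$, using Proposition \ref{BKW deg} for each simple transposition in the reduced word. For $\deg^{\lhd}$ we do the same with $\mt_{\undla}$ and the second equality of Proposition \ref{BKW deg}. If $p=0$ then $\mt=\mt^{\undla}$, the element is $e({\bf i})$ of degree $0$, and the formula is trivial.

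For $p\geq 1$, set $\mt':=s_{k_1}\mt$. Then $d(\mt',\mt^{\undla})=s_{k_2}\cdots s_{k_p}$ is reduced of length $p-1$. By Lemma \ref{admissible transposes}, applied to the reduced expression $d(\mt,\mt^{\undla})=s_{k_1}\cdots s_{k_p}$, each intermediate tableau $s_{k_j}\cdots s_{k_p}\mt^{\undla}$ is standard. In particular $\mt'$ is standard.

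Since $\mt^{\undla}$ is the row reading tableau, it is maximal for the relevant order. Each admissible transposition in a reduced path away from $\mt^{\undla}$ therefore strictly decreases, so $\mt=s_{k_1}\mt'\lhd\mt'$. Concretely, $k_1+1$ sits strictly below/earlier relative to $k_1$ in $\mt'$, because the length increases; this is the same ordering argument as in \cite{BKW,EM}.

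Applying Proposition \ref{BKW deg} with the pair $\ms=\mt$ and base tableau $\mt'$ gives
$$\deg^{\rhd}(\mt)-\deg^{\rhd}(\mt')=-{\rm d}_{{\bf i}^{\mt'}_{k_1}}a_{{\bf i}^{\mt'}_{k_1},{\bf i}^{\mt'}_{k_1+1}}.$$
We have ${\bf i}^{\mt'}=s_{k_1}{\bf i}^{\mt}$, so ${\bf i}^{\mt'}_{k_1}={\bf i}^{\mt}_{k_1+1}$ and ${\bf i}^{\mt'}_{k_1+1}={\bf i}^{\mt}_{k_1}$. Also ${\rm d}_ia_{ij}=(\nu_i|\nu_j)$ is symmetric. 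Hence the right-hand side equals $-(\nu_{{\bf i}^{\mt}_{k_1}}|\nu_{{\bf i}^{\mt}_{k_1+1}})$. By the grading on $RC_n$, this is $\deg(\sigma_{k_1}e({\bf j}))$ for any ${\bf j}$ whose projections at positions $k_1,k_1+1$ are ${\bf i}^{\mt}_{k_1},{\bf i}^{\mt}_{k_1+1}$.

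Now write
$$\sigma_{k_1}\sigma_{k_2}\cdots\sigma_{k_p}e({\bf i})=\sigma_{k_1}e({\bf j})\,\sigma_{k_2}\cdots\sigma_{k_p}e({\bf i}),\qquad {\bf j}:=s_{k_2}\cdots s_{k_p}{\bf i}.$$
Since $i_k\in\pr^{-1}({\bf i}^{\mt}_k)$, the vector ${\bf j}$ projects entrywise to $s_{k_2}\cdots s_{k_p}{\bf i}^{\mt}$. This equals $s_{k_1}{\bf i}^{\mt}$ up to relabeling, so the $\sigma_{k_1}$ factor acts at positions with projected residues ${\bf i}^{\mt}_{k_1},{\bf i}^{\mt}_{k_1+1}$. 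Additivity of degree on the homogeneous product then gives
$$\deg(\sigma_{k_1}\cdots\sigma_{k_p}e({\bf i}))=-(\nu_{{\bf i}^{\mt}_{k_1}}|\nu_{{\bf i}^{\mt}_{k_1+1}})+\deg(\sigma_{k_2}\cdots\sigma_{k_p}e({\bf i}')),$$
where ${\bf i}'$ is a lift of the residue sequence of $\mt'$. Combining this with the induction hypothesis for $\mt'$ closes the induction. Note that the degree of a product of generators is well defined even when the product vanishes, since it is computed formally as a sum of generator degrees.

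The main obstacle is the bookkeeping of which idempotent each $\sigma$ acts on and in which direction the residue sequence is permuted. One must check that the pair of residues seen by $\sigma_{k_1}$ is exactly $\{{\bf i}^{\mt}_{k_1},{\bf i}^{\mt}_{k_1+1}\}$ and not a shifted pair. A second point to verify is the orientation $\mt\lhd\mt'$ at each step, so that Proposition \ref{BKW deg} applies with the correct sign. If the direction of the order comes out reversed, one instead uses its second equality, which produces the same quantity with the roles of $\lhd$ and $\rhd$ exchanged. The $\deg^{\lhd}$ statement is proved symmetrically, using the column reading tableau $\mt_{\undla}$, which is minimal.
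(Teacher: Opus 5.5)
Your overall plan (induction on $p$, with one application of Proposition \ref{BKW deg} per letter of the reduced word) is what the paper means by ``follows directly'', but the step where you match the residues seen by $\sigma_{k_1}$ fails.

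\textbf{The gap.} We have ${\bf i}^{\mt}=s_{k_1}s_{k_2}\cdots s_{k_p}{\bf i}^{\mt^{\undla}}$, so $s_{k_1}{\bf i}^{\mt}={\bf i}^{\mt'}$. However, ${\bf j}=s_{k_2}\cdots s_{k_p}{\bf i}$ projects to $s_{k_2}\cdots s_{k_p}{\bf i}^{\mt}$, and this is not ${\bf i}^{\mt'}$ ``up to relabeling'' in any sense that helps. For the same reason, the tail $\sigma_{k_2}\cdots\sigma_{k_p}e({\bf i})$ is not the element $\sigma_{k_2}\cdots\sigma_{k_p}e({\bf i}')$ that appears in the induction hypothesis for $\mt'$.

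\textbf{A concrete example.} Take level one with generic $q,Q$, so $I_f$ is of type $A_\infty$ and residues are recorded by contents. Let $\undla=(3,1)$ and let $\mt$ have first row $1,3,4$ and second row $2$. Then $d(\mt,\mt^{\undla})=s_2s_3$, and
$\mathtt{q}(\res(\mt^{\undla}))\leftrightarrow(0,1,2,-1)$, $\mathtt{q}(\res(\mt'))\leftrightarrow(0,1,-1,2)$, $\mathtt{q}(\res(\mt))\leftrightarrow(0,-1,1,2)$.
The two steps of Proposition \ref{BKW deg} involve the non-adjacent pairs $\{2,-1\}$ and $\{1,-1\}$, so $\deg^{\rhd}(\mt)=\deg^{\rhd}(\mt^{\undla})$ (both are $0$). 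In $\sigma_2\sigma_3e({\bf i})$, however, the factor $\sigma_3$ sees $(1,2)$ and contributes $+1$. The factor $\sigma_2$ then sees $(-1,2)$, not the pair $\{-1,1\}$ your argument needs. So, with your formal-degree convention, the identity read literally (${\bf i}$ lifting $\mathtt{q}(\res(\mt))$, $e({\bf i})$ on the right) fails, and no bookkeeping can close your induction.

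\textbf{The fix.} The idempotent must sit at the $\mt^{\undla}$ (resp. $\mt_{\undla}$) end. Either take ${\bf i}$ lifting $\mathtt{q}(\res(\mt^{\undla}))$, which is how the corollary is used in Lemma \ref{deg of psi} via $\sigma_{w_1}e(c^{\beta_1}{\bf i}_{\undla})$, or keep ${\bf i}$ lifting $\mathtt{q}(\res(\mt))$ but consider $e({\bf i})\sigma_{k_1}\cdots\sigma_{k_p}$. Then peel $\sigma_{k_1}$ off on the left:
$\sigma_{k_1}\cdots\sigma_{k_p}e({\bf i})=\sigma_{k_1}e({\bf i}')\,\sigma_{k_2}\cdots\sigma_{k_p}e({\bf i})$, where ${\bf i}'=s_{k_2}\cdots s_{k_p}{\bf i}$ lifts ${\bf i}^{\mt'}$.
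Now $\sigma_{k_1}$ sees exactly $({\bf i}^{\mt'}_{k_1},{\bf i}^{\mt'}_{k_1+1})$, which is the quantity Proposition \ref{BKW deg} produces. The tail is then literally the induction hypothesis for $\mt'$ with the same ${\bf i}$.

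\textbf{Orientation.} The relation $\mt\lhd\mt'$ does not follow from maximality of $\mt^{\undla}$ alone. Set $w'=s_{k_2}\cdots s_{k_p}$. Since $\ell(s_{k_1}w')>\ell(w')$, we get $w'^{-1}(k_1)<w'^{-1}(k_1+1)$, so $k_1$ occupies an earlier box than $k_1+1$ in the row reading of $\mt'$. As $s_{k_1}\mt'$ is standard, the two entries are in different rows and columns, so $k_1$ lies strictly above $k_1+1$. This is exactly the hypothesis of Proposition \ref{BKW deg}. Your phrase ``$k_1+1$ sits strictly below/earlier'' mixes the two directions.
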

\begin{proof}
This follows from Proposition \ref{BKW deg} directly.
\end{proof}

\section{Idempotents and seminormal forms}\label{Idempotents and seminormal forms}
{\bf Throughout this section, we fix $n\in \mathbb{N}$.}
	\subsection{Separate Condition}
		Recall $[n]:=\{1,2,\ldots,n\}$. In this subsection, we recall the separate condition \cite[Definition 3.9]{SW} on the choice of the parameters $\underline{Q}$ and $f=f^{(\bullet)}_{\underline{Q}}$ with $\bullet\in\{\mathsf{0},\mathsf{s},\mathsf{ss}\}$, where $r=\deg(f)$.
	\begin{defn}\label{defn:separate}\cite[Definition 3.9]{SW}
		Let $\bullet\in\{\mathsf{0},\mathsf{s},\mathsf{ss}\}$ and $\undQ=(Q_1,\ldots,Q_m)\in(\mathbb{K}^*)^m$.  Assume $\undla\in\mathscr{P}^{\bullet,m}_{n}$. Then $(q,\undQ)$ is said to be {\em separate} with respect to $\undla$ if for any $\mathfrak{t}\in \undla$, the $\mathtt{q}$-sequence for $\mathfrak{t}$ defined via \eqref{resNon-dege-3} satisfies the following condition:
		$$
		\mathtt{q}(\res_{\mathfrak{t}}(k))\neq\mathtt{q}(\res_{\mathfrak{t}}(k+1)) \text{ for any } k=1,\ldots,n-1.
		$$
	\end{defn}
	
%	The separate condition holds for any $\underline{\mu} \in \mathscr{P}^{\bullet,m}_{n+1}$ can be reformulated via the condition $P^{(\bullet)}_{n}(q^2,\undQ)\neq 0$ (\cite[Proposition 3.11]{SW}), where $P^{(\bullet)}_{n}(q^2,\undQ)$ ($\bullet\in\{\mathsf{0},\mathsf{s},\mathsf{ss}\}$) \label{pag:nondege Pioncare poly} is an explicit polynomial on $(q,\undQ)$.
	
	Recall that $\undQ=(Q_1,\ldots,Q_m)\in (\mathbb{K}^*)^n$ and $q \in \mathbb{K}^*$ with $q^4 \neq 1$. Then for any $n\in \N$, we define $P^{(\bullet)}_{n}(q^2,\undQ)$ as follows\footnote{We remark that since we have modified the definition of $\mathtt{q}$, the corresponding polynomial $P_n^{(\bullet)}(q^2,\undQ)$ should also be modified. To be precise, we need to change each $Q_i$ by $qQ_i$ in \cite{SW}.} \label{pag:Poincare Poly}
%so $P^{(\bullet)}_{n}(q^2,\undQ)$ here is a translation of $P^{(\bullet)}_{n}(q^2,\undQ)$ in \cite{SW}
		$$\begin{aligned}
			P_n^{(\bullet)}(q^2,\undQ):=\begin{cases}
				\prod\limits_{t=1}^{n}\bigl(q^{2t}-1\bigr)\prod\limits_{i=1}^m\biggl(\prod\limits_{t=2-n}^{n-2}\bigl(Q^2_i-q^{-2t}\bigr)\prod\limits_{t=1-n}^{n}\bigl(Q^2_i-q^{-4t+2}\bigr)\biggr)&\\
				\cdot \prod\limits_{1\leq i<i'\leq m}\biggl(\prod\limits_{t=1-n}^{n-1}\bigl({Q_i}-Q_{i'}q^{-2t}\bigr)
				\bigl(Q_iQ_{i'}-q^{-2t}\bigr)\biggr), \quad \mbox{if $\bullet=\mathsf{0}$ };  \\
				&\\
				\prod\limits_{t=1}^{n}\biggl(\bigl(q^{2t}-1\bigr)\bigl(q^{2t}+1\bigr)\biggr)\prod\limits_{i=1}^m\biggl(\prod\limits_{t=2-n}^{n-2}\bigl(Q^2_i-q^{-2t}\bigr)\prod\limits_{t=1-n}^{n}\bigl(Q^2_i-q^{-4t+2}\bigr)\biggr)&\\
				\cdot \prod\limits_{1\leq i<i'\leq m}\biggl(\prod\limits_{t=1-n}^{n-1}\bigl({Q_i}-Q_{i'}q^{-2t}\bigr)
				\bigl(Q_iQ_{i'}-q^{-2t}\bigr)\biggr),  \quad \mbox{if $\bullet=\mathsf{s}$ or  $\mathsf{ss},$}\\
			\end{cases}
		\end{aligned}$$ where for  $n=1,$  the product $\prod\limits_{t=2-n}^{n-2}\bigl(Q^2_i-q^{-2t}\bigr)$ is understood to be $1$.
		
	\begin{prop}\label{separate formula}\cite[Proposition 3.11]{SW}
		Let $n\geq 1,\,m\geq 0$,  $\undQ=(Q_1,\ldots,Q_m)\in(\mathbb{K}^*)^m$ and  $\bullet\in\{\mathsf{0},\mathsf{s},\mathsf{ss}\}$. Then $(q,\undQ)$ is separate with respect to $\underline{\mu}$ for any  $\underline{\mu}\in\mathscr{P}^{\bullet,m}_{n+1}$ if and only if $P_n^{(\bullet)}(q^2,\undQ)\neq 0$.
	\end{prop}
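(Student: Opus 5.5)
The plan is to turn separateness into a statement about pairs of boxes and then match each possible coincidence with a factor of $P_n^{(\bullet)}(q^2,\undQ)$, proving both directions by case analysis.

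\textbf{Step 1: reduction to pairs of boxes.} Since $\mathtt{q}(x)=\mathtt{q}(y)$ if and only if $y=x^{\pm1}$ (by \eqref{substitution0}, as $z+z^{-1}$ determines $z$ up to inversion), failure of separateness means the following. There are $\underline{\mu}\in\mathscr{P}^{\bullet,m}_{n+1}$, $\mathfrak{t}\in\Std(\underline{\mu})$ and $k\le n$ with boxes $A=\mathfrak{t}^{-1}(k)$, $B=\mathfrak{t}^{-1}(k+1)$ such that
\[
\res(A)=\res(B)\quad\text{or}\quad \res(A)\res(B)=1 .
\]
Restricting $\mathfrak{t}$ to $\mathfrak{t}\downarrow_{k+1}$ and padding it back up to size $n+1$, this is equivalent to a purely combinatorial condition. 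There must exist a (shifted) multipartition $\nu$ with $|\nu|\le n+1$ such that $B$ is removable from $\nu$ and $A$ is removable from $\nu\setminus\{B\}$. Every such configuration extends to a standard tableau by placing $k,k+1$ in $A,B$ and filling the rest of $\nu\setminus\{A,B\}$ and then the rest of the shape in any admissible order.

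\textbf{Step 2: casework on the relative position of $A$ and $B$.}
\begin{enumerate}
\item[(a)] $A,B$ adjacent in one row or column of component $l$. Then $\res(B)=\res(A)q^{\pm2}$. The equality $\res(A)=\res(B)$ would force $q^2=1$, which is excluded. The equality $\res(A)\res(B)=1$ reads $Q_l^2q^{2(2c\pm1)}=1$, where $c$ is the content of $A$. Letting $c$ range over the contents reachable inside $n+1$ boxes produces exactly the factors $Q_l^2-q^{-4t+2}$ for $1-n\le t\le n$.
\item[(b)] $A,B$ in the same component, not adjacent, with contents $c,c'$. Here $\res(A)=\res(B)$ gives $q^{2(c-c')}=1$, i.e.\ the factors $q^{2t}-1$ with $1\le t\le n$. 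The other equality gives $Q_l^2q^{2(c+c')}=1$, i.e.\ the factors $Q_l^2-q^{-2t}$ with $2-n\le t\le n-2$.
\item[(c)] $A,B$ in different components $i\neq i'$. This gives the factors $Q_i-Q_{i'}q^{-2t}$ and $Q_iQ_{i'}-q^{-2t}$ with $|t|\le n-1$.
\item[(d)] $\bullet\in\{\mathsf{s},\mathsf{ss}\}$. The strict components carry $Q_0=q$ and $Q_{0_\pm}=\pm q$, so coincidences involving them are handled by the same analysis applied to shifted diagrams. There, diagonal boxes of the strict component combined with $\res(A)\res(B)=1$ account for the extra factors $q^{2t}+1$. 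The remaining coincidences are already absorbed into the displayed products, because the strict parts can be treated as components with parameter $\pm q$.
\end{enumerate}

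\textbf{Step 3: both implications, and the main obstacle.} With this dictionary, the direction ``$P_n^{(\bullet)}(q^2,\undQ)\neq0\Rightarrow$ separate'' follows by showing that each configuration of Step 1 forces the vanishing of one listed factor. This requires bounding the contents of $A$ and $B$ using $|\nu|\le n+1$. The converse needs, for each factor and each $t$ in its range, an explicit small (shifted) multipartition of size at most $n+1$ realizing it. For example, two one-row components of lengths $a,b$ with $a+b\le n+1$ realize case (c), and a hook realizes case (b).

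I expect the main obstacle to be this exact bookkeeping of the ranges of $t$: making sure every endpoint value is attained within $n+1$ boxes, and that no coincidence in the shifted cases escapes the product. I would first settle $\bullet=\mathsf{0}$ completely, and then check the shifted diagrams separately, including the diagonal boxes where $\mathcal{D}_{\underline{\mu}}$ matters.
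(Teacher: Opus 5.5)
The paper does not prove this proposition; it quotes it from \cite{SW}, so there is no internal argument to compare against. Your strategy is the natural direct one and it is sound. Since $\mathtt{q}(x)=\mathtt{q}(y)$ if and only if $y=x^{\pm1}$, failure of separateness is a coincidence $\res(B)=\res(A)^{\pm1}$, where $B$ is removable from some $\nu$ with $|\nu|\le n+1$ and $A$ is removable from $\nu\setminus\{B\}$. When $A,B$ are not adjacent, $A$ is also removable from $\nu$. So $A,B$ are two distinct removable boxes of one component, with distinct contents, and the size bound gives $|c-c'|\le n$ and $|c+c'|\le n-2$. For $\bullet=\mathsf{0}$, your cases (a)--(c) match the factors of $P^{(\mathsf{0})}_n$ with the correct ranges. (The factor $q^2-1$ comes from adjacent boxes rather than from (b), but it is nonzero anyway.) Rows, columns and hooks realize every factor for the converse. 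Note that for negative $t$ in $Q_iQ_{i'}-q^{-2t}$ you need one-column components, not one-row ones.

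Step (d), however, is wrong as written.

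\textbf{Source of the factors $q^{2t}+1$.} Inside a strict component, diagonal boxes only ever produce $q^4=1$ (e.g.\ $(1,1)$ next to $(1,2)$), i.e.\ $t=1$. The factors $q^{2t}+1$ come from horizontally adjacent boxes. All residues in a strict component are $\pm q^{2c+1}$ with $c\ge 0$, so for contents $c,c+1$ one gets $\res(A)\res(B)=q^{4(c+1)}$. Hence a one-row strict part of length $s+1\le n+1$ gives $\res(A)\res(B)=1$ if and only if $(q^{2s}-1)(q^{2s}+1)=0$, for every $1\le s\le n$; this is also the configuration you need for the converse. In the $\mathsf{ss}$ case, comparing a box of $0_+$ with one of $0_-$ gives $q^{2(c-c')}=-1$ or $q^{2(c+c'+1)}=-1$, again factors $q^{2s}+1$ with $1\le |s|\le n$.

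\textbf{What ``absorbed'' really means.} Treating the strict parts as components with parameter $\pm q$ does not literally reproduce the displayed products. Those products run only over $1\le i\le m$, and shifted content ranges differ. For example, the two removable boxes of the shifted diagram of $(3,1)$ have contents $2$ and $0$, summing to $n-1$ for $n=3$, outside $[2-n,n-2]$. What actually happens is as follows.
\begin{enumerate}
\item Strict--strict coincidences reduce to $q^{2s}=\pm1$ with $1\le s\le n$.
\item A coincidence between a strict box and a box of component $l\ge1$ forces $Q_l=\pm q^{2r+1}$ with $|2r+1|\le 2n-1$; this bound follows from $a+b\le n+1$ for the two component sizes.
\item This is detected by $Q_l^2-q^{-4t+2}$ precisely because that factor involves $Q_l^2$, which removes the sign.
\end{enumerate}
With (d) replaced by this accounting, your argument closes.

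\textbf{Degenerate case.} For $\bullet=\mathsf{0}$ your converse needs an ordinary component to realize $q^{2t}-1$, so $m=0$ has to be excluded or handled by convention. In that case there are no boxes and separateness is vacuous, while $\prod_{t=1}^n(q^{2t}-1)$ can vanish.
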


	\begin{lem}\cite[Lemma 2.7]{LS2}\label{important condition1}
		Let $\undQ=(Q_1,\ldots,Q_m)\in(\mathbb{K}^*)^m$ and $\bullet\in\{\mathsf{0},\mathsf{s},\mathsf{ss}\}$. Suppose $P_n^{(\bullet)}(q^2,\undQ)\neq 0$ in $\mathbb{K}$.
		Then
		\begin{enumerate}
			\item
			For any $\undla\in\mathscr{P}^{\bullet,m}_{n},$ $\mathfrak{t}\in\Std(\undla)$, we have  $\mathtt{b}_{\pm}(\res_{\mathfrak{t}}(k))\notin \{\pm 1\}$ for $k\notin \mathcal{D}_{\mathfrak{t}}$;
			\item For any $\undla,\underline{\mu} \in\mathscr{P}^{\bullet,m}_{n},$ $\mt\in\Std(\undla), \mt' \in \Std(\underline{\mu}),$
			if $\mt\neq \mt',$ then we have $\mathtt{q}(\res(\mt))\neq \mathtt{q}(\res(\mt'))$;
			\item For any $\undla\in\mathscr{P}^{\bullet,m}_{n},$ $\mathfrak{t}\in\Std(\undla)$ and $k\in [n-1]$, the four pairs $(\mathtt{b}_{\pm}(\res_{\mathfrak{t}}(k)),\mathtt{b}_{\pm}(\res_{\mathfrak{t}}(k+1)))$ do not satisfy \eqref{invertible} if $k,k+1$ are not in the adjacent diagonals of $\mathfrak{t}$.
		\end{enumerate}
	\end{lem}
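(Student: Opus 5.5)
The plan is to prove the three parts separately. For each part I would reduce a forbidden coincidence among residues to the vanishing of one explicit factor of $P_n^{(\bullet)}(q^2,\undQ)$. Where that bookkeeping gets awkward, I would instead reduce it to a failure of the separate condition, which Proposition \ref{separate formula} rules out for all shapes in $\mathscr{P}^{\bullet,m}_{n+1}$.

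\emph{Part (1).} By \eqref{bpm}, $\mathtt{b}_\pm(x)\in\{\pm1\}$ iff $\mathtt{q}(x)=\pm 2$. This holds iff $x+x^{-1}=\pm(q+q^{-1})$, i.e. iff $x\in\{\pm q^{\pm1}\}$.
\begin{itemize}
\item For a box $(i,j,l)$ with $l\geq 1$ we have $x=Q_lq^{2(j-i)}$ with $|j-i|<n$. Then $x=\pm q^{\pm1}$ says $Q_l^2=q^{\pm2-4(j-i)}$, which is one of the factors $Q_l^2-q^{-4t+2}$ with $1-n\leq t\leq n$.
\item For an off-diagonal box in a $0$ (or $0_\pm$) component, $x=\pm q^{1+2(j-i)}$ with $0<j-i<n$. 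Then $x=\pm q^{\pm1}$ forces $q^{4(j-i)}=1$ or $q^{4(j-i)+4}=1$, i.e. $q^{2t}=\pm1$ for some $1\leq t\leq n$. This is killed by the factors $(q^{2t}-1)(q^{2t}+1)$ in $P_n^{(\mathsf{s})}$ and $P_n^{(\mathsf{ss})}$.
\end{itemize}
Since diagonal boxes are exactly $\mathcal{D}_{\mt}$, this gives (1).

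\emph{Part (2).} I would argue by induction on $n$, noting that $P_n^{(\bullet)}\neq0$ implies $P_{n-1}^{(\bullet)}\neq0$ since every factor of the latter divides the former. Suppose $\mathtt{q}(\res(\mt))=\mathtt{q}(\res(\mt'))$.
\begin{itemize}
\item By induction, $\mt\downarrow_{n-1}=\mt'\downarrow_{n-1}$, say of shape $\undmu$.
\item If $\mt\neq\mt'$, then $A=\mt^{-1}(n)$ and $B=\mt'^{-1}(n)$ are distinct addable boxes of $\undmu$ with $\mathtt{q}(\res(A))=\mathtt{q}(\res(B))$.
\item Distinct addable boxes lie in different rows or components, so $B$ remains addable for $\undmu\cup\{A\}$ (also in the shifted case).
\item Hence there is a standard tableau of shape $\undmu\cup\{A,B\}\in\mathscr{P}^{\bullet,m}_{n+1}$ with $n$ in $A$ and $n+1$ in $B$. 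Its consecutive entries $n,n+1$ have equal $\mathtt{q}$-residues, contradicting Proposition \ref{separate formula}.
\end{itemize}

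\emph{Part (3).} Put $a=x^{-1}y$ and $b=xy$. Since $\frac{a}{(a-1)^2}=(a^{1/2}-a^{-1/2})^{-2}$, equation \eqref{invertible} becomes a rational equation in $a$ and $b$ with $ab=y^2$. I expect it to factor so that its solutions are exactly $a\in\{q^{\pm2}\}$ or $b\in\{q^{\pm2}\}$, so the first step is to solve \eqref{invertible} in closed form; this algebra is the main obstacle.
\begin{itemize}
\item Granting it, \eqref{invertible} for a pair $(\mathtt{b}_\pm(\res_\mt(k)),\mathtt{b}_\pm(\res_\mt(k+1)))$ forces $\res_\mt(k+1)^{\pm1}=\res_\mt(k)^{\pm1}q^{\pm2}$. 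Here I use that $\mathtt{b}_\pm(x)\in\{x,x^{-1}\}$ up to the normalisation by $q$ built into $\mathtt{q}$; I would first check this carefully from \eqref{substitution0}.
\item Write the two boxes as $(i,j,l)$ and $(i',j',l')$. The relation then reads $Q_l=Q_{l'}q^{2t}$, $Q_lQ_{l'}=q^{2t}$, $Q_l^2=q^{2t}$, or (within one component) $q^{2t}=1$, for an explicit $t$.
\item Because $k$ and $k+1$ are not on adjacent diagonals, $t\neq0$ in the same-component case, and $|t|$ stays within the ranges appearing in $P_n^{(\bullet)}$. So the relation is the vanishing of a factor of $P_n^{(\bullet)}$, a contradiction.
\end{itemize}
The delicate points are the sign and inversion cases, especially for boxes in the $0$ and $0_\pm$ components where $Q_0=\pm q$, and checking the range of $t$ against the products in $P_n^{(\bullet)}$. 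If the range bookkeeping fails, I would fall back on the separate condition as in (2): build a tableau of size $n+1$ putting the two offending residues in consecutive positions.
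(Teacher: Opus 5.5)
Parts (1) and (2) are fine. In particular, your proof of (2) is clean: you add both competing addable boxes and apply Proposition~\ref{separate formula} to a shape of size $n+1$.

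\textbf{The gap is in (3).} It sits at exactly the step you call the main obstacle, and the answer you expect there is false. If $a=x^{-1}y=q^{\pm2}$, then $\frac{a}{(a-1)^2}=\frac{1}{(q-q^{-1})^2}=\frac{1}{\epsilon^2}$. So \eqref{invertible} would force $\frac{b}{(b-1)^2}=0$, which is impossible, and the same holds with $a$ and $b$ swapped. Hence $a,b\in\{q^{\pm2}\}$ are \emph{never} solutions.

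Your second ingredient is also false. $\mathtt{b}_\pm(x)$ is a root of $z+z^{-1}=\mathtt{q}(x)$, not a monomial rescaling of $x^{\pm1}$. For instance $\mathtt{b}_\pm(q)=1$, but $\mathtt{b}_\pm(q^3)\notin\{q^{\pm2}\}$, because $\mathtt{q}(q^3)=2(q^2-1+q^{-2})\neq q^2+q^{-2}$. The residue relation you then write down is in fact correct, but only because these two errors cancel. Your route cannot prove it.

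\textbf{The missing idea} is that \eqref{invertible} depends only on $X=x+x^{-1}$ and $Y=y+y^{-1}$. Write $\frac{a}{(a-1)^2}=\frac{1}{a+a^{-1}-2}$, and use $(a+a^{-1})+(b+b^{-1})=XY$ and $(a+a^{-1})(b+b^{-1})=X^2+Y^2-4$. Then \eqref{invertible} becomes $(X-Y)^2=\epsilon^2(XY-4)$. So all four pairs behave alike, with $X=\mathtt{q}(x_0)$ and $Y=\mathtt{q}(y_0)$, where $x_0=\res_\mt(k)$ and $y_0=\res_\mt(k+1)$.

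Now put $\xi=x_0+x_0^{-1}$ and $\eta=y_0+y_0^{-1}$, and use $\epsilon^2=(q+q^{-1})^2-4$; this is where the factor $2/(q+q^{-1})$ in $\mathtt{q}$ enters. The equation becomes
\[
\xi^2+\eta^2-(q^2+q^{-2})\xi\eta+(q^2-q^{-2})^2=\bigl(\eta-q^2x_0-q^{-2}x_0^{-1}\bigr)\bigl(\eta-q^{-2}x_0-q^{2}x_0^{-1}\bigr)=0,
\]
that is, $y_0\in\{x_0^{\pm1}q^{\pm2}\}$.

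From there your comparison with the factors of $P_n^{(\bullet)}$ goes through, but one range is not automatic. Take two boxes in the same component $l\geq1$ with contents $c,c'$. When $c+c'$ is odd, the relation $Q_l^2=q^{\pm2-2(c+c')}$ is only killed by $\prod_{t=2-n}^{n-2}(Q_l^2-q^{-2t})$, which needs $|c+c'|\leq n-3$. Two arbitrary boxes of a diagram with $n$ boxes can have $c+c'=2n-3$.

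The bound does hold here. Consecutive entries on non-adjacent diagonals lie in distinct rows and columns, one strictly north-east of the other. So the diagram has at least $j+j'+i'-2$ boxes, where $(i,j)$ is the north-east box and $(i',j')$ the other.

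Your fallback via the separate condition does not help. The two offending $\mathtt{q}$-residues differ by the shift $q^{\pm2}$; they are not equal, so placing them consecutively contradicts nothing.
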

	
	Suppose that the condition $P^{(\bullet)}_{n}(q^2,\undQ)\neq 0$, $\bullet\in\{\mathsf{0},\mathsf{s},\mathsf{ss}\}$ holds in $\mathbb{K}.$ Then for each $\undla\in\mathscr{P}^{\bullet,m}_{n},$ we can associate $\undla$ with a explicit simple $\mathcal{H}^f_{\mathbb{K}}$-module $\mathbb{D}(\undla),$ see \cite[Theorem 4.5]{SW} for details. Then we have the following. \label{pag:nondege simple module}
	\begin{thm}\label{semisimple:non-dege}\cite[Theorem 4.10]{SW}
		%Suppose Condition \ref{important condition1} holds.
		Let $\undQ=(Q_1,Q_2,\ldots,Q_m)\in(\mathbb{K}^*)^m$.  Assume $f=f^{(\bullet)}_{\undQ}$ and   $P^{(\bullet)}_{n}(q^2,\undQ)\neq 0$, with $\bullet\in\{\mathsf{0},\mathsf{s},\mathsf{ss}\}$. Then $\mathcal{H}^f_{\mathbb{K}}$ is a (split) semisimple algebra and
		$$
		\{\mathbb{D}(\undla)\mid \undla\in\mathscr{P}^{\bullet,m}_{n}\}$$ forms a complete set of pairwise non-isomorphic irreducible $\mathcal{H}^f_{\mathbb{K}}$-modules. Moreover,  $\mathbb{D}(\undla)$ is of type  $\texttt{M}$ if and only if $\sharp \mathcal{D}_{\undla}$  is even and is of type  $\texttt{Q}$ if and only if $\sharp \mathcal{D}_{\undla}$  is odd.
	\end{thm}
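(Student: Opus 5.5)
The plan is to build each $\mathbb{D}(\undla)$ explicitly as a seminormal-type module, prove that these modules are irreducible, pairwise non-isomorphic and of the stated types, and then conclude semisimplicity by a Wedderburn dimension count.

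\textbf{Construction of $\mathbb{D}(\undla)$.} Fix $\undla\in\mathscr{P}^{\bullet,m}_{n}$. For each $\mt\in\Std(\undla)$, let $\mathcal{A}_n$ act on a space $V_\mt$ through a Clifford module. On $V_\mt$, $X_k$ acts with eigenvalues $\mathtt{b}_{\pm}(\res_\mt(k))$, and $C_k$ swaps the two eigenvalues. When $k\in\mathcal{D}_\mt$ we have $\res_\mt(k)=\pm q$, so $\mathtt{b}_\pm(\res_\mt(k))=\pm1$, the two eigenvalues coincide and $C_k$ commutes with $X_k$. Concretely, $V_\mt$ is a simple module over a Clifford algebra with $2n-\sharp\mathcal{D}_\mt$ generators: one for each $k\in\mathcal{D}_\mt$, and the pair $C_k$, $X_k$-normalized for each $k\notin\mathcal{D}_\mt$. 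This is consistent by Lemma \ref{important condition1}(1).

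We set $\mathbb{D}(\undla):=\bigoplus_{\mt\in\Std(\undla)}V_\mt$. We let $T_k$ act through the intertwiners $\Phi_k(x,y)$ of \eqref{Phi function}, which send $V_\mt$ to $V_{s_k\mt}$ when $s_k\mt$ is standard. When $s_k\mt$ is not standard, $T_k$ acts by the scalar part forced by $T_k=\Phi_k-\frac{\epsilon}{x^{-1}y-1}+\frac{\epsilon}{xy-1}C_kC_{k+1}$. The normalization is chosen so that $\Phi_k^2$ acts by the correct scalar.

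\textbf{Checking the relations.} One verifies the relations \eqref{Braid}--\eqref{XC} and $f(X_1)=0$. The cyclotomic relation holds because $\mathtt{q}(\res_\mt(1))$ is a root of $f$. The separate condition (Definition \ref{defn:separate}, available via Proposition \ref{separate formula}) guarantees that all denominators $x^{-1}y-1$ and $xy-1$ are nonzero. Lemma \ref{important condition1}(3) guarantees that the $\Phi_k$ are invertible exactly when $s_k\mt$ is standard, and that they vanish appropriately when $k,k+1$ lie in adjacent diagonals. I expect this verification, especially the braid relation and the Clifford twisting in \eqref{PC}, to be the most laborious step. I would reduce it to the known properties of $\Phi_i$ in \cite[Lemma 4.1]{JN}.

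\textbf{Irreducibility, distinctness and type.} By Lemma \ref{important condition1}(2), distinct tableaux have distinct $\mathtt{q}$-weights for $X_k+X_k^{-1}$. Hence every submodule is a sum of weight spaces $V_\mt$. Since the $\Phi$'s connect all tableaux of shape $\undla$ (Lemma \ref{admissible transposes}), the module is irreducible. For the same reason, modules of different shapes have disjoint weights and are therefore non-isomorphic. The endomorphism superalgebra of $\mathbb{D}(\undla)$ equals that of $V_{\mt^{\undla}}$ as an $\mathcal{A}_n$-supermodule. That module is simple over a Clifford algebra with $2n-\sharp\mathcal{D}_{\undla}$ generators, so by Example \ref{lem:clifford rep} it is of type $\texttt{Q}$ iff $\sharp\mathcal{D}_{\undla}$ is odd.

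\textbf{Semisimplicity.} The Jacobson radical annihilates every simple module, so it suffices to show
$$\sum_{\undla}\frac{(\dim\mathbb{D}(\undla))^2}{2^{\delta(\text{type }\texttt{Q})}}=\dim\mathcal{H}^f_{\mathbb{K}}=r^n2^nn!.$$
I would prove this identity by induction on $n$ via the branching rule: restricting $\mathbb{D}(\undla)$ to level $n-1$ removes a box. This reduces the identity to a count of addable boxes, weighted by Clifford factors for the diagonal boxes in $\mathcal{D}$. Once the identity holds, the surjection $\mathcal{H}^f_{\mathbb{K}}\to\prod_{\undla}\End(\mathbb{D}(\undla))$ is an isomorphism, which gives split semisimplicity and completeness of the list $\{\mathbb{D}(\undla)\}$.
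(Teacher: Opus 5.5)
The paper gives no argument for this statement; it is quoted from \cite{SW} and then used as input for the seminormal bases of \cite{LS2}. Your outline is the standard route and is sound: the explicit construction $\mathbb{D}(\undla)=\bigoplus_{\mt}V_\mt$ via the intertwiners $\Phi_k(x,y)$, irreducibility and pairwise distinctness from the separation of $\mathtt{q}$-weights (Lemma~\ref{important condition1}(2)) plus connectivity of $\Std(\undla)$ under admissible transpositions, and semisimplicity from a Wedderburn count against $\dim\mathcal{H}^f_{\mathbb{K}}=r^n2^nn!$. Three points need repair.

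\emph{The type.} You only have an injection $\End_{\mathcal{H}^f_{\mathbb{K}}}(\mathbb{D}(\undla))\hookrightarrow\End_{\mathcal{A}_n}(V_{\mt^{\undla}})$, not the equality you assert. (An endomorphism preserves weight spaces and is determined on the generating space $V_{\mt^{\undla}}$.) This gives type $\texttt{M}$ when $\sharp\mathcal{D}_{\undla}$ is even. When $\sharp\mathcal{D}_{\undla}$ is odd you still need an odd endomorphism, and your count presupposes the types. The cheapest fix is to let the count decide. Let $\delta_{\undla}\in\{0,1\}$ record whether $\mathbb{D}(\undla)$ is of type $\texttt{Q}$; the injection gives $\delta_{\undla}\le d_{\undla}$. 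The image of $\mathcal{H}^f_{\mathbb{K}}$ in $\prod_{\undla}\End_{\mathbb{K}}(\mathbb{D}(\undla))$ has dimension $\sum_{\undla}(\dim\mathbb{D}(\undla))^2/2^{\delta_{\undla}}$. This is at most $\dim\mathcal{H}^f_{\mathbb{K}}=\sum_{\undla}(\dim\mathbb{D}(\undla))^2/2^{d_{\undla}}$. Hence $\delta_{\undla}=d_{\undla}$ for all $\undla$, and semisimplicity and completeness follow at the same time.

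\emph{Vanishing of $\Phi_k$.} Lemma~\ref{important condition1}(3) only gives $\mathtt{c}_\mt(k)\neq0$ when $k,k+1$ are not in adjacent diagonals; this covers the case $s_k\mt$ standard. You also need $\mathtt{c}_\mt(k)=0$ when $s_k\mt$ is not standard, so that $T_k^2=\epsilon T_k+1$ holds on $V_\mt$ once $\Phi_k$ is set to zero there. This is a separate computation. With $u=\mathtt{q}(a)$ and $v=\mathtt{q}(q^{\pm2}a)$, condition \eqref{invertible} is equivalent to $(u-v)^2=\epsilon^2(uv-4)$. Both sides equal $\frac{4\epsilon^2}{(q+q^{-1})^2}(q^{\pm1}a-q^{\mp1}a^{-1})^2$.

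\emph{The counting identity.} Make it explicit. Since $\dim\mathbb{D}(\undla)=2^{n-\lfloor\sharp\mathcal{D}_{\undla}/2\rfloor}\sharp\Std(\undla)$, you need $\sum_{\undla}2^{2n-\sharp\mathcal{D}_{\undla}}(\sharp\Std(\undla))^2=(2r)^nn!$. Your induction reduces this to $\sum_{A\in\Add(\undmu)}2^{\delta_{A\notin\mathcal{D}}}\sharp\Std(\undmu\cup\{A\})=rn\,\sharp\Std(\undmu)$. That follows from $DU-UD=r$, where $D$ is the unweighted down operator and $U$ adds a box $A$ with weight $2^{\delta_{A\notin\mathcal{D}}}$. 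The commutation relation in turn amounts to the weighted number of addable boxes minus removable boxes being $r$ for every $\undmu$: each ordinary component contributes $2$ and each strict component contributes $1$.

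Minor: for $k\in\mathcal{D}_\mt$ one has $\mathtt{b}_+=\mathtt{b}_-\in\{\pm1\}$, not $\mathtt{b}_\pm=\pm1$. Also, the ``scalar part'' of $T_k$ is not a scalar; it contains $C_kC_{k+1}$.
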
			
	By Theorem \ref{semisimple:non-dege}, we have the following $\mathcal{H}^f_{\mathbb{K}}$-module isomorphism:
	$$\mathcal{H}^f_{\mathbb{K}}\cong\bigoplus_{\undla\in\mathscr{P}^{\bullet,m}_{n}}\mathbb{D}(\undla)^{\oplus 2^{n-\bigl\lceil\frac{\sharp\mathcal{D}_{\mt^{\undla}}}{2}\bigr\rceil}|\Std(\undla)| }\cong\bigoplus_{\undla\in\mathscr{P}^{\bullet,m}_{n}}\mathbb{D}(\undla)^{\oplus 2^{n-\sharp\left(\mathcal{OD}_{\mt^{\undla}}\right)}|\Std(\undla)| }.$$
	So the block decomposition is \label{pag:simple blocks}
	$$\mathcal{H}^f_{\mathbb{K}}=\bigoplus_{\undla \in \mathscr{P}^{\bullet,m}_{n}} B_{\undla},$$ and for each $\undla \in \mathscr{P}^{\bullet,m}_{n}$, we have $$B_{\undla}\cong \mathbb{D}(\undla)^{\oplus 2^{n-\bigl\lceil\frac{\sharp\mathcal{D}_{\mt^{\undla}}}{2}\bigr\rceil}|\Std(\undla)| }\cong \mathbb{D}(\undla)^{\oplus 2^{n-\sharp\left(\mathcal{OD}_{\mt^{\undla}}\right)}|\Std(\undla)| }$$ as $B_{\undla}$-modules.
	\subsection{Seminormal form}\label{sec:Seminormal form}
	{\bf In this subsection, we shall fix the parameter $\undQ=(Q_1,Q_2,\ldots,Q_m)\in(\mathbb{K}^*)^m$ , $\undla\in\mathscr{P}^{\bullet,m}_{n}$ and $f=f^{(\bullet)}_{\undQ}$ with $P^{(\bullet)}_{n}(q^2,\undQ)\neq 0$ for $\bullet\in\{\mathsf{0},\mathsf{s},\mathsf{ss}\}.$ Accordingly, we define the residue of boxes in the young diagram $\undla$ via \eqref{eq:residue} as well as $\res(\mathfrak{t})$ for each $\mathfrak{t}\in\Std(\undla)$ with $\undla\in\mathscr{P}^{\bullet,m}_{n}$ with $m\geq 0.$}

	Now we fix $\undla\in\mathscr{P}^{\bullet,m}_{n}$. Let $t:=\sharp \mathcal{D}_{\undla}.$
	\begin{defn}\cite[Definition 4.2]{LS2}
		We denote \begin{align}
			\mathcal{D}_{\mt^{\undla}}&:=\{\mt^{\undla}(a,a,l)|(a,a,l)\in\mathcal{D}_{\undla}\}=\{i_1<i_2<\cdots<i_t\}\label{stanard D},\\
			\mathcal{OD}_{\mt^{\undla}}&:=\{i_1,i_3,\cdots,i_{2{\lceil t/2 \rceil}-1}\}\subset \mathcal{D}_{\mt^{\undla}}\label{standard OD}
		\end{align} and \label{pag:dundla}
		\begin{align}
			d_{\undla}:=\begin{cases}
				1, & \text{ if $t$ is odd}, \\
				0, & \text{ if $t$ is even or $\mathcal{D}_{\mt^{\undla}}=\emptyset$}.
                         \end{cases}\nonumber
		\end{align}
	\end{defn}
	%\begin{rem}
	%	By Lemma \ref{important condition1} (1), it's easy to check that $\mathcal{D}_{\mt^{\undla}}=\mathcal{D}_{\res(\mt^{\undla})}$ if we set $\underline{\iota}=\res(\mt^{\undla})$ in \eqref{nondeg.Diota}. Hence we keep the same notation $i_1<i_2<\cdots<i_t$ here.
	%%{\bf We emphasize that from now on, we fix the notation \eqref{stanard D} for this special case.}
	%\end{rem}
	%Recall the notations in Definition \ref{lem. Dtauiota}.
	\label{pag:Dt,ODt,Z2ODt}		
	\begin{defn}\cite[Definition 4.4]{LS2}For each $\mt\in \Std(\undla),$ we define
		\begin{align*}
			\mathcal{D}_{\mt}&:=d(\mt,\mt^{\undla}) (\mathcal{D}_{\mt^{\undla}}),\nonumber\\
			\mathcal{OD}_{\mt}&:=d(\mt,\mt^{\undla}) (\mathcal{OD}_{\mt^{\undla}}),\nonumber\\
			\Z_2(\mathcal{OD}_{\mt})&:=\{\alpha_{\mt} \in \mathbb{Z}_{2}^{n} \mid \supp(\alpha_{\mt}) \subseteq \mathcal{OD}_{\mt}\},\\
			\Z_2([n]\setminus \mathcal{D}_{\mt})&:=\{\beta_{\mt} \in \mathbb{Z}_{2}^{n} \mid \supp(\beta_{\mt}) \subseteq [n]\setminus \mathcal{D}_{\mt}\},\nonumber
		\end{align*} 	and
		$$\gamma_{\mt}:=2^{-\lfloor t/2 \rfloor}\cdot\overrightarrow{\prod_{k=1,\cdots,{\lfloor t/2 \rfloor}}}\biggl(1+ \sqrt{-1}C_{{d(\mt,\mt^{\undla})}(i_{2k-1})}C_{{d(\mt,\mt^{\undla})}(i_{2k})}\biggr)\in\mathcal{C}_n.
		$$
	\end{defn}
	
	\label{pag:decomposition of OD}
	\begin{defn}\cite[Definition 4.9]{LS2}\label{decomposition of OD}
		For any $\mt\in\Std(\undla),$ let ${d(\mt,\mt^{\undla})}\in \Sym_n$ such that $\mt={d(\mt,\mt^{\undla})}\mt^{\undla}$. We define
		\begin{align}
			\mathbb{Z}_2(\mathcal{OD}_{\mt})_{\bar{0}}:=\{\alpha_{\mt} \in\Z_2(\mathcal{OD}_{\mt}) \mid d(\mt,\mt^{\undla})(i_t)\notin \supp(\alpha_{\mt})\},\nonumber\\
			\mathbb{Z}_2(\mathcal{OD}_{\mt})_{\bar{1}}:=\{\alpha_{\mt} \in\Z_2(\mathcal{OD}_{\mt}) \mid d(\mt,\mt^{\undla})(i_t)\in \supp(\alpha_{\mt})\}.\nonumber
		\end{align}
	\end{defn}
	
	That is, if $d_{\undla}=0$ (i.e., t is even), then $\Z_2(\mathcal{OD}_{\mt})_{\bar{0}}=\Z_2(\mathcal{OD}_{\mt})$ and $\Z_2(\mathcal{OD}_{\mt})_{\bar{1}}=\emptyset;$  if $d_{\undla}=1$ (i.e., t is odd), then  $\Z_2(\mathcal{OD}_{\mt})_{\bar{0}}$ and $\Z_2(\mathcal{OD}_{\mt})_{\bar{1}}$ are both non-empty and there is a natural bijection between $\Z_2(\mathcal{OD}_{\mt})_{\bar{0}}$ and $\Z_2(\mathcal{OD}_{\mt})_{\bar{1}}$ which sends $\alpha_{\mt}\in \Z_2(\mathcal{OD}_{\mt})_{\bar{0}}$ to $\alpha_{\mt}+e_{{d(\mt,\mt^{\undla})}(i_t)}\in \Z_2(\mathcal{OD}_{\mt})_{\bar{1}}.$ In particular, we have
	$$ \Z_2(\mathcal{OD}_{\mt})=\Z_2(\mathcal{OD}_{\mt})_{\bar{0}}\sqcup \Z_2(\mathcal{OD}_{\mt})_{\bar{1}}.$$ For any $\alpha_{\mt} \in \Z_2(\mathcal{OD}_{\mt})_{\bar{0}},$ we use $\alpha_{\mt,\bar{0}}=\alpha_{\mt}$  to emphasize that $\alpha_{\mt}\in Z_2(\mathcal{OD}_{\mt})_{\bar{0}}$ and  if $d_{\undla}=1$, we define $\alpha_{\mt,\bar{1}}:=\alpha_{\mt}+e_{{d(\mt,\mt^{\undla})}(i_t)}\in \Z_2(\mathcal{OD}_{\mt})_{\bar{1}}.$
	
	\label{pag:Tri}
	\begin{defn}\label{Tri}\cite[Definition 4.11]{LS2}
		For $a\in \Z_2,\,\undla\in\mathscr{P}^{\bullet,m}_{n}$ with $\bullet\in\{\mathsf{0},\mathsf{s},\mathsf{ss}\},$  we define
		$${\rm Tri}_{a}(\undla):=\bigsqcup_{\mt\in \Std(\undla)}\{\mt \}\times  \Z_2(\mathcal{OD}_{\mt})_{a}\times  \Z_2([n]\setminus \mathcal{D}_{\mt}),$$
		and $${\rm Tri}(\undla):={\rm Tri}_{\bar{0}}(\undla)\sqcup {\rm Tri}_{\bar{1}}(\undla).$$
		%	For $a\in \Z_2,$ we define
		%	$${\rm Tri}_{a}(\mathscr{P}^{\bullet,m}_{n}):=\bigsqcup_{\undla\in \mathscr{P}^{\bullet,m}_{n}}{\rm Tri}_{a}(\undla),$$
		%	and $${\rm Tri}(\mathscr{P}^{\bullet,m}_{n}):={\rm Tri}_{\bar{0}}(\mathscr{P}^{\bullet,m}_{n})\sqcup {\rm Tri}_{\bar{1}}(\mathscr{P}^{\bullet,m}_{n}).$$
	\end{defn}
	
	Notice that ${\rm Tri}(\undla)={\rm Tri}_{0}(\undla)$ when $d_{\undla}=0.$ For any ${\rm T}=(\mt, \alpha_{\mt}, \beta_{\mt})\in {\rm Tri}_{\bar{0}}(\undla),$ we denote
	$${\rm T}_{a}=(\mt, \alpha_{\mt,a}, \beta_{\mt})\in {\rm Tri}_{a}(\undla),\quad a\in \mathbb{Z}_{2},$$
	when $d_{\undla}=1.$
	%		is nothing but the index set of the basis of $\mathbb{D}(\undla)$ in Theorem \ref{actions of generators on L basis}, and ${\rm Tri}(\undla)_{\bar{0}}$ will index our primitive idempotents of the block with respect to $\undla$ specially.
	\begin{defn}\cite[Definition 3.4, Definition 4.5]{LS2}
		Let $\beta=(\beta_1,\ldots,\beta_n) \in \mathbb{Z}_{2}^{n}.$ For $k\in [n],$ we define
			\begin{align*}
				{\rm sgn}_{\beta}(k):=
				\begin{cases}
					-1, & \text{ if } \beta_k=\bar{1}, \\
					1, & \text{ if } \beta_k=\bar{0},
				\end{cases}
				\qquad
				\delta_{\beta}(k):=\frac{1-{\rm sgn}_{\beta}(k)}{2}=
				\begin{cases}
					1, & \text{ if } \beta_k=\bar{1}, \\
					0, & \text{ if } \beta_k=\bar{0}.
				\end{cases}
			\end{align*}
			\label{pag:deltabetak}
			\end{defn}
	
	Now we can define the primitive idempotents.
	\label{pag:primitive idempotents and blocks}
	\begin{defn}\cite[Definition 4.12]{LS2}\label{primitive idempotents and blocks}
		For $k\in[n]$, let
		$$\mathtt{B}(k):=\{ \mathtt{b}_{\pm}(\res_{\ms}(k)) \mid \ms \in \Std(\mathscr{P}^{\bullet,m}_{n}) \}.$$
		For any ${\rm T}=(\mt, \alpha_{\mt}, \beta_{\mt})\in {\rm Tri}_{\bar{0}}(\undla),$ we define
		\begin{align}\label{definition of primitive idempotents. non-dege}
			F_{\rm T}:=\left(C^{\alpha_{\mt}}\gamma_{\mt}(C^{\alpha_{\mt}})^{-1} \right) \cdot \left(\prod_{k=1}^{n}\prod_{\mathtt{b}\in \mathtt{B}(k)\atop \mathtt{b}\neq \mathtt{b}_{+}(\res_{\mt}(k))}\frac{X_k^{\sgn_{\beta_{\mt}}(k)}-\mathtt{b}}{\mathtt{b}_{+}(\res_{\mt}(k))-\mathtt{b}}\right)\in \mathcal{H}^f_{\mathbb{K}}.
		\end{align}
		We define
		\begin{align}
			F_{\undla}&:=\sum_{{\rm T}\in {\rm Tri}_{\bar{0}}(\undla)} F_{\rm T}.
		\end{align}
		
	\end{defn}
	
	\begin{defn}\cite[Definition 4.13]{LS2}
		For $a\in \Z_2,$ we denote
		$${\rm Tri}_{a}(\mathscr{P}^{\bullet,m}_{n}):=\bigsqcup_{\undla\in \mathscr{P}^{\bullet,m}_{n}}{\rm Tri}_{a}(\undla),$$
		and $${\rm Tri}(\mathscr{P}^{\bullet,m}_{n}):={\rm Tri}_{\bar{0}}(\mathscr{P}^{\bullet,m}_{n})\sqcup {\rm Tri}_{\bar{1}}(\mathscr{P}^{\bullet,m}_{n}).$$	
	\end{defn}

	\begin{thm}\cite[Theorem 4.16]{LS2}\label{primitive idempotents}
		Suppose $P^{(\bullet)}_{n}(q^2,\undQ)\neq 0$.  For $\bullet\in\{\mathsf{0},\mathsf{s},\mathsf{ss}\}$, we have the following.
		
		(a) $\{F_{\rm T} \mid {\rm T}\in {\rm Tri}_{\bar{0}}(\mathscr{P}^{\bullet,m}_{n})\}$ is a complete set of (super) primitive orthogonal idempotents of $\mathcal{H}^f_{\mathbb{K}}.$
		
		(b) $\{F_{\undla} \mid \undla \in \mathscr{P}^{\bullet,m}_{n} \}$ is a complete set of (super) primitive central idempotents of $\mathcal{H}^f_{\mathbb{K}}.$
	\end{thm}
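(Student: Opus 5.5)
We prove the theorem by evaluating everything on the simple modules $\mathbb{D}(\underline{\mu})$. By Theorem \ref{semisimple:non-dege}, $\mathcal{H}^f_{\mathbb{K}}$ is split semisimple and acts faithfully on $\bigoplus_{\underline{\mu}}\mathbb{D}(\underline{\mu})$. So an element of $\mathcal{H}^f_{\mathbb{K}}$ is determined by how it acts on each $\mathbb{D}(\underline{\mu})$, and idempotency, orthogonality and the relation $\sum F_{\rm T}=1$ can all be checked module by module.

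\textbf{Weight-space description of $\mathbb{D}(\underline{\mu})$.} We use the explicit construction of \cite[Theorem 4.5]{SW}.
\begin{itemize}
\item $\mathbb{D}(\underline{\mu})$ decomposes into joint generalized eigenspaces for $X_1,\dots,X_n$.
\item These eigenspaces are indexed by pairs $(\mathfrak{s},\beta)$ with $\mathfrak{s}\in\Std(\underline{\mu})$ and $\beta\in\mathbb{Z}_2([n]\setminus\mathcal{D}_{\mathfrak{s}})$. On the $(\mathfrak{s},\beta)$-space, $X_k$ acts by the scalar $\mathtt{b}_+(\res_{\mathfrak{s}}(k))^{\sgn_\beta(k)}$; semisimplicity should give diagonalizability.
\item For $k\in\mathcal{D}_{\mathfrak{s}}$ the eigenvalue is $\pm1$, so $C_k$ preserves the weight space. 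Hence each weight space is a module for the Clifford algebra generated by $\{C_k\}_{k\in\mathcal{D}_{\mathfrak{s}}}$. It should be a single copy of the unique simple module from Example \ref{lem:clifford rep}, of dimension $2^{\lceil t/2\rceil}$.
\end{itemize}

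\textbf{The polynomial factor of $F_{\rm T}$.} This factor is a Lagrange interpolation product. Lemma \ref{important condition1}(2) says $\mathtt{q}(\res(\mathfrak{t}))$ determines $\mathfrak{t}$ among all standard tableaux of all shapes. Lemma \ref{important condition1}(1) says $\mathtt{b}_+\neq\mathtt{b}_-$ off the diagonals, so the sign pattern $\beta$ is recovered from the eigenvalues. Together these show that the product acts as the projection onto the $(\mathfrak{t},\beta_{\mathfrak{t}})$-weight space of $\mathbb{D}(\underline{\lambda})$, and as zero on every $\mathbb{D}(\underline{\mu})$ with $\underline{\mu}\ne\underline{\lambda}$.

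\textbf{The Clifford factor.} The factor $C^{\alpha_{\mathfrak{t}}}\gamma_{\mathfrak{t}}(C^{\alpha_{\mathfrak{t}}})^{-1}$ involves only $C_k$ with $k\in\mathcal{D}_{\mathfrak{t}}$. It therefore commutes with the projection above, restricted to that weight space. Conjugating $\gamma_{\mathfrak{t}}$ by $C^{\alpha_{\mathfrak{t}}}$ changes the signs $(-1)^{a_k}$ in the primitive idempotents of Example \ref{lem:clifford rep}, transported along $d(\mathfrak{t},\mathfrak{t}^{\underline{\lambda}})$. As $\alpha_{\mathfrak{t}}$ ranges over $\mathbb{Z}_2(\mathcal{OD}_{\mathfrak{t}})_{\bar0}$, these conjugates should run exactly once through the complete set of super primitive idempotents of the Clifford algebra on $\mathcal{D}_{\mathfrak{t}}$. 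This includes the odd case $d_{\underline{\lambda}}=1$: there the last diagonal index is excluded, which accounts for the type \texttt{Q} phenomenon.

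\textbf{Conclusions.} From the two factors:
\begin{itemize}
\item Each $F_{\rm T}$ is an even idempotent.
\item Distinct ${\rm T}$ give orthogonal idempotents, either through different weights or through orthogonal Clifford idempotents on the same weight space.
\item $\sum_{\rm T}F_{\rm T}$ acts as the identity on every $\mathbb{D}(\underline{\mu})$, so it equals $1$.
\item The image $F_{\rm T}\mathbb{D}(\underline{\lambda})$ has superdimension $(1,0)$ in type \texttt{M} and $(1,1)$ in type \texttt{Q}. Comparing with Example \ref{simple algebra}, $F_{\rm T}$ is super primitive.
\item A cross-check is the count: $\sharp{\rm Tri}_{\bar0}(\underline{\lambda})=2^{n-\sharp\mathcal{OD}_{\mathfrak{t}^{\underline{\lambda}}}}|\Std(\underline{\lambda})|$, which matches the multiplicity of $\mathbb{D}(\underline{\lambda})$ in the block decomposition of $B_{\underline{\lambda}}$.
\end{itemize}
Part (b) then follows: $F_{\underline{\lambda}}$ acts as $1$ on $\mathbb{D}(\underline{\lambda})$ and as $0$ elsewhere, so it is the block idempotent of $B_{\underline{\lambda}}$.

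\textbf{Main obstacle.} The hard step is the Clifford bookkeeping. We must show that the conjugated $\gamma_{\mathfrak{t}}$'s exhaust the primitive idempotents without repetition on each weight space, and we must pin down the exact action of the $C_k$ on the weight spaces of $\mathbb{D}(\underline{\lambda})$. This needs the explicit formulas of \cite{SW} together with the index permutation $d(\mathfrak{t},\mathfrak{t}^{\underline{\lambda}})$.
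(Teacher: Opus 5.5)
Your argument is correct and takes the natural route; the paper itself does not reprove this but quotes \cite[Theorem 4.16]{LS2}. On the faithful family $\mathbb{D}(\underline{\mu})$ of \cite{SW}, the $X$-polynomial in $F_{\rm T}$ is the projector onto the $(\mathfrak{t},\beta_{\mathfrak{t}})$-weight space by Lemma \ref{important condition1}(1),(2), and the conjugates $C^{\alpha_{\mathfrak{t}}}\gamma_{\mathfrak{t}}(C^{\alpha_{\mathfrak{t}}})^{-1}$ run exactly once through the idempotents of Example \ref{lem:clifford rep} on $\mathcal{D}_{\mathfrak{t}}$.

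The step you flag as the main obstacle is routine. Conjugation by $C_a$ turns $1+\sqrt{-1}C_aC_b$ into $1-\sqrt{-1}C_aC_b$, and $C_{d(\mathfrak{t},\mathfrak{t}^{\underline{\lambda}})(i_t)}$ commutes with $\gamma_{\mathfrak{t}}$, which gives the "exactly once". The "single copy" claim follows by counting: the $2^{n-t}|\Std(\underline{\lambda})|$ nonzero weight spaces each have dimension at least $2^{\lceil t/2\rceil}$, and together that already fills $\dim\mathbb{D}(\underline{\lambda})=2^{n-\lfloor t/2\rfloor}|\Std(\underline{\lambda})|$, which you can read off from the block multiplicity. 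One correction: take diagonalizability of the $X_k$ from the explicit construction in \cite{SW}, not from semisimplicity of $\mathcal{H}^f_{\mathbb{K}}$, which alone does not imply it.
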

	
Next we shall define the seminormal bases of $\mathcal{H}^f_{\mathbb{K}}.$ To this end, we need more notations.
\label{pag:partial beta}
\begin{defn}\cite[Definition 3.5]{LS2}
Let $\beta=(\beta_1,\ldots,\beta_n) \in \mathbb{Z}_{2}^{n}.$
For $0\leq k \leq n+1,$ we define
      $$\quad |\beta|_{<k}:=\sum_{1\leq k' <k}\beta_{k'},\quad |\beta|:=|\beta|_{<n+1}.$$
      Similarly, we can also define $|\beta|_{\leq k},$ $|\beta|_{> k}$ and $|\beta|_{\geq k}.$
\end{defn}

\begin{defn}\cite[Definition 4.6]{LS2}
	For any $i\in [n], \mt\in \Std(\undla),$ we define
             \label{pag:nondege eigenvalues}
			 $$\mathtt{b}_{\mt,i}:=\mathtt{b}_{-}(\res_{\mt}(i))\in\mathbb{K}^*.$$
            For any $i\in [n-1]$, we define
			 \begin{align}
			 	\delta(s_i\mt)
			 	:=\begin{cases}
			 		1, & \text{ if } s_i\mt \in \Std(\undla), \\
			 		0, & \text{ otherwise}.\nonumber
			 	\end{cases}
			 \end{align} and
            \label{pag:nondege coeffi cti}
			\begin{align}\label{nondege coeffi cti}
				\mathtt{c}_{\mt}(i):=1-\epsilon^2 \biggl(\frac{\mathtt{b}_{\mt,i}^{-1}\mathtt{b}_{\mt,i+1}}{(\mathtt{b}_{\mt,i}^{-1}\mathtt{b}_{\mt,i+1}-1)^2}
				+\frac{\mathtt{b}_{\mt,i}\mathtt{b}_{\mt,i+1}}{(\mathtt{b}_{\mt,i}\mathtt{b}_{\mt,i+1}-1)^2}\biggr)\in \mathbb{K}.
			\end{align}
			\end{defn}
Since $\mt \in \Std(\undla),$ $\mathtt{b}_{\mt,i}\neq \mathtt{b}_{\mt,i+1}^{\pm 1}$ by Definition \ref{defn:separate} and Proposition \ref{separate formula}, which immediately implies that $\mathtt{c}_{\mt}(i)$ is well-defined. If $s_i$ is admissible with respect to $\mt$, i.e., $\delta(s_i\mt)=1$, then $\mathtt{c}_{\mt}(i)\in \mathbb{K}^{*}$ by the third part of Lemma \ref{important condition1}. It is clear that $\mathtt{c}_{\mt}(i)=\mathtt{c}_{s_i\mt}(i).$	

	\begin{defn}\cite[Definition 4.21]{LS2}
    \label{pag:Phist and cst}	
For any $\ms,\mt \in \Std(\undla),$ fix a reduced expression $d(\ms,\mt)=s_{k_p}\cdots s_{k_1}\in\mathfrak{S}_n,$ then we define
		\begin{align}\label{Phist}
			\Phi_{\ms,\mt}:=\overleftarrow{\prod_{i=1,\ldots,p}}\Phi_{k_{i}}(\mathtt{b}_{s_{k_{i-1}}\cdots s_{k_1}\mathfrak{t},k_{i}}, \mathtt{b}_{s_{k_{i-1}}\cdots s_{k_1}\mathfrak{t},k_{i}+1})  \in \mathcal{H}^f_{\mathbb{K}}
		\end{align}
		and the coefficient
		\begin{align}\label{c-coefficients. non-dege.}
			\mathtt{c}_{\ms,\mt}:=\prod_{i=1,\ldots,p}\sqrt{\mathtt{c}_{s_{k_{i-1}}\cdots s_{k_1}\mathfrak{t}}(k_{i})}  \in \mathbb{K}.
		\end{align}
		\end{defn}
		By Lemma \ref{admissible transposes} and the third part of Lemma \ref{important condition1}, $\mathtt{c}_{\ms,\mt}\in  \mathbb{K}^*$.  By \cite[Lemma 4.22]{LS2}, $\Phi_{\ms,\mt}$ is independent of the reduced expression of $d(\ms,\mt)$.
Note that $\mathtt{c}_{\ms,\mt}=\mathtt{c}_{\mt,\ms}$ (see \cite[Lemma 4.23(3)]{LS2}).

	Now we can define the seminormal bases.
	\label{pag:nondege seminormal basis}
	\begin{defn}\cite[Definition 4.24]{LS2}\label{def seminormal} Let $\mathfrak{w}\in\Std(\undla)$.
		
		(1) Supppose $d_{\undla}=0.$  For any ${\rm S}=(\ms, \alpha_{\ms}', \beta_{\ms}'), {\rm T}=(\mt, \alpha_{\mt}, \beta_{\mt})\in {\rm Tri}(\undla),$ we define
		\begin{align}\label{fst. typeM. nondege.}
			f_{{\rm S},{\rm T}}^{\mathfrak{w}}
			:=F_{\rm S}C^{\beta_{\ms}'}C^{\alpha_{\ms}'}\Phi_{\ms,\mathfrak{w}}\Phi_{\mathfrak{w},\mt}
			(C^{\alpha_{\mt}})^{-1} (C^{\beta_{\mt}})^{-1} F_{\rm T}\in F_{\rm S}\mathcal{H}^f_{\mathbb{K}} F_{\rm T},
		\end{align} and	\begin{align}\label{re. fst. typeM. nondege.}
			f_{{\rm S},{\rm T}}
			:=F_{\rm S}C^{\beta_{\ms}'}C^{\alpha_{\ms}'}\Phi_{\ms,\mt}
			(C^{\alpha_{\mt}})^{-1} (C^{\beta_{\mt}})^{-1} F_{\rm T} \in F_{\rm S}\mathcal{H}^f_{\mathbb{K}} F_{\rm T},
		\end{align}

		(2) Suppose $d_{\undla}=1.$ For any $a\in \mathbb{Z}_{2}$ and ${\rm S}=(\ms, \alpha_{\ms}', \beta_{\ms}')\in {\rm Tri}_{\bar{0}}(\undla), {\rm T}_{a}=(\mt, \alpha_{\mt,a}, \beta_{\mt})\in {\rm Tri}_{a}(\undla),$ we define
		\begin{align}\label{fst. typeQ. nondege.}
			&	f_{{\rm S},{\rm T}_{a}}^{\mathfrak{w}}
			:= (-1)^{|\alpha_\ms'|_{>d(\ms,\mt^{\undla})(i_t)}+a|\alpha_\mt|_{>d(\mt,\mt^{\undla})(i_t)}}\nonumber\\
			&\qquad\qquad\qquad\cdot	F_{\rm S}C^{\beta_{\ms}'}C^{\alpha_{\ms}'}\Phi_{\ms,\mathfrak{w}}\Phi_{\mathfrak{w},\mt}
			(C^{{\alpha}_{\mt,a}})^{-1} (C^{\beta_{\mt}})^{-1} F_{\rm T}\in F_{\rm S}\mathcal{H}^f_{\mathbb{K}} F_{\rm T}
		\end{align} and 	\begin{align}\label{re. fst. typeQ. nondege.}
			&	f_{{\rm S},{\rm T}_{a}}
			:= (-1)^{|\alpha_\ms'|_{>d(\ms,\mt^{\undla})(i_t)}+a|\alpha_\mt|_{>d(\mt,\mt^{\undla})(i_t)}}\nonumber\\
			&\qquad\qquad\qquad\cdot F_{\rm S}C^{\beta_{\ms}'}C^{\alpha_{\ms}'}\Phi_{\ms,\mt}
			(C^{{\alpha}_{\mt,a}})^{-1} (C^{\beta_{\mt}})^{-1} F_{\rm T} \in F_{\rm S}\mathcal{H}^f_{\mathbb{K}} F_{\rm T}.
		\end{align}
		
		\label{pag:nondege cT}	
		(3)  For any ${\rm T}=(\mt, \alpha_{\mt}, \beta_{\mt})\in {\rm Tri}(\undla),$ we define $$\mathtt{c}_{\rm T}^{\mathfrak{w}}=\mathtt{c}_{\mt}^{\mathfrak{w}}:=(\mathtt{c}_{\mt,\mathfrak{w}})^2\in \mathbb{K}^*.$$
	\end{defn}
%\begin{rem}\label{FT=fTT}
%By definition, it's clear that $F_{\rm T}=f_{{\rm T},{\rm T}_{\bar{0}}}$ for any ${\rm T}\in {\rm Tri}_{\bar{0}}(\undla).$
%\end{rem}

		\begin{thm}\cite[Theorem 4.26]{LS2}\label{seminormal basis}
		Suppose $P^{(\bullet)}_{n}(q^2,\undQ)\neq 0$. We fix $\mathfrak{w}\in\Std(\undla)$. Then the following two sets
		\begin{align}\label{Non-deg seminormal1}
			\left\{ f_{{\rm S},{\rm T}}^\mathfrak{w} \Biggm|
			{\rm S}=(\ms, \alpha_{\ms}', \beta_{\ms}')\in {\rm Tri}_{\bar{0}}(\undla),
			{\rm T}=(\mt, \alpha_{\mt}, \beta_{\mt})\in {\rm Tri}(\undla)
			\right\}
		\end{align} and \begin{align}\label{Non-deg seminormal2}
			\left\{ f_{{\rm S},{\rm T}} \Biggm|
			{\rm S}=(\ms, \alpha_{\ms}', \beta_{\ms}')\in {\rm Tri}_{\bar{0}}(\undla),
			{\rm T}=(\mt, \alpha_{\mt}, \beta_{\mt})\in {\rm Tri}(\undla)
			\right\}
		\end{align} form two $\mathbb{K}$-bases of the block $B_{\undla}$ of $\mathcal{H}^f_{\mathbb{K}}$.
		
		Moreover, for ${\rm S}=(\ms, \alpha_{\ms}', \beta_{\ms}')\in {\rm Tri}_{\bar{0}}(\undla),
		{\rm T}=(\mt, \alpha_{\mt}, \beta_{\mt})\in {\rm Tri}(\undla),$ we have \begin{equation}\label{fst and re. fst}
			f_{{\rm S},{\rm T}}
			=\frac{\mathtt{c}_{\ms,\mt}}{\mathtt{c}_{\ms,\mathfrak{w} }\mathtt{c}_{\mathfrak{w},\mt }} f_{{\rm S},{\rm T}}^\mathfrak{w}\in F_{\rm S}\mathcal{H}^f_{\mathbb{K}} F_{\rm T}.
		\end{equation} The multiplications of basis elements in \eqref{Non-deg seminormal1} are given as follows.
		
		(1) Suppose $d_{\undla}=0.$  Then for any
		${\rm S}=(\ms, \alpha_{\ms}', \beta_{\ms}'),
		{\rm T}=(\mt, \alpha_{\mt}, \beta_{\mt}),
		{\rm U}=(\mfku,\alpha_{\mfku}^{''},\beta_{\mfku}^{''}),
		{\rm V}=(\mfkv,\alpha_{\mfkv}^{'''},\beta_{\mfkv}^{'''})\in {\rm Tri}(\undla),$ we have
		\begin{align}\label{Non-deg multiplication1}
			f_{{\rm S},{\rm T}}^\mathfrak{w} f_{{\rm U},{\rm V}}^\mathfrak{w}
			=\delta_{{\rm T},{\rm U}} \mathtt{c}_{\rm T}^\mathfrak{w} f_{{\rm S},{\rm V}}^\mathfrak{w}.
		\end{align}
		
		(2) Suppose $d_{\undla}=1.$ Then for any $a,b\in \mathbb{Z}_2$ and
		\begin{align*}
			{\rm S}&=(\ms, \alpha_{\ms}', \beta_{\ms}')\in {\rm Tri}_{\bar{0}}(\undla), \quad
			{\rm T}_{a}=(\mt, \alpha_{\mt,a}, \beta_{\mt})\in {\rm Tri}_{a}(\undla),\nonumber\\
			{\rm U}&=(\mfku,\alpha_{\mfku}^{''},\beta_{\mfku}^{''})\in {\rm Tri}_{\bar{0}}(\undla), \quad
			{\rm V}_{b}=(\mfkv,{\alpha_{\mfkv,b}^{'''}},\beta_{\mfkv}^{'''})\in {\rm Tri}_{b}(\undla),\nonumber
		\end{align*} we have
		\begin{align}\label{Non-deg multiplication2}
			f_{{\rm S},{\rm T}_{a}}^\mathfrak{w} f_{{\rm U},{\rm V}_{b}}^\mathfrak{w}
			=\delta_{{\rm T}_{\bar{0}},{\rm U}}(-1)^{\left(|\alpha_{\mt}|_{>d(\mt,\mt^{\undla})(i_t)}\right)}\mathtt{c}_{\rm T}^\mathfrak{w} f_{{\rm S},{\rm V}_{a+b}}^\mathfrak{w}.
		\end{align}
	\end{thm}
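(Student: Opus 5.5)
The plan is to realise everything inside the simple module $\mathbb{D}(\undla)$ of \cite[Theorem 4.5]{SW}, on which the $X_k$ act semisimply with joint eigenvalues read off from $\res(\mt)$. By Theorem \ref{semisimple:non-dege} the block $B_{\undla}$ is the full endomorphism superalgebra of $\mathbb{D}(\undla)$ (type $\texttt{M}$), or its $\mathcal{Q}$-analogue (type $\texttt{Q}$). So identities in $B_{\undla}$ can be checked through their action on $\mathbb{D}(\undla)$.

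\textbf{Step 1: intertwining properties of $\Phi_i$.} I first collect these from \cite[Lemma 4.1]{JN}. On a simultaneous eigenvector for $X_i,X_{i+1}$ with eigenvalues $(x,y)$, the element $\Phi_i(x,y)$ exchanges the eigenvalues: it produces an eigenvector with eigenvalues $(y,x)$. It also satisfies the quadratic relation
\[
\Phi_i(y,x)\Phi_i(x,y)=1-\epsilon^2\Bigl(\tfrac{x^{-1}y}{(x^{-1}y-1)^2}+\tfrac{xy}{(xy-1)^2}\Bigr)
\]
on that eigenspace. In addition, $\Phi_i$ commutes with $C_j$ up to the permutation $s_i$, and with $X_j$ for $j\ne i,i+1$.

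Combining this with Lemma \ref{admissible transposes}, I get $\Phi_{\ms,\mt}F_{\rm T}=F_{{\rm S}'}\Phi_{\ms,\mt}F_{\rm T}$, where ${\rm S}'$ is the triple with tableau $\ms$ obtained by transporting $\alpha_{\mt},\beta_{\mt}$. This uses that $F_{\rm T}$ is the projector onto a prescribed $X$-eigenspace, twisted by the Clifford idempotent $\gamma_{\mt}$ and by $C^{\alpha}$. Consequently $f^{\mathfrak{w}}_{{\rm S},{\rm T}}\in F_{\rm S}\mathcal{H}^f_{\mathbb{K}}F_{\rm T}$.

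\textbf{Step 2: the composite $\Phi_{\mt,\mathfrak{w}}\Phi_{\mathfrak{w},\mt}F_{\rm T}$.} Next I show that $\Phi_{\mt,\mathfrak{w}}\Phi_{\mathfrak{w},\mt}F_{\rm T}=\mathtt{c}^{\mathfrak{w}}_{\mt}F_{\rm T}$. I argue by induction on $\ell(d(\mathfrak{w},\mt))$, peeling off one simple transposition at a time. Each peeled step contributes the factor $\mathtt{c}_{\cdot}(k_i)$ from the quadratic relation, and these factors multiply to $(\mathtt{c}_{\mt,\mathfrak{w}})^2$.

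The same mechanism gives \eqref{fst and re. fst}. Both $\Phi_{\ms,\mathfrak{w}}\Phi_{\mathfrak{w},\mt}F_{\rm T}$ and $\Phi_{\ms,\mt}F_{\rm T}$ map the one-dimensional (or $\mathcal{C}_1$-rank-one) $F_{\rm T}$-eigenline into the $F_{\rm S}$-line, so they are proportional. To find the scalar, I apply $\Phi_{\mt,\ms}$ on the left and use the identity just proved together with $\mathtt{c}_{\ms,\mt}=\mathtt{c}_{\mt,\ms}$.

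\textbf{Step 3: multiplication rules.} The rules \eqref{Non-deg multiplication1} and \eqref{Non-deg multiplication2} now follow. If ${\rm T}\neq{\rm U}$, then $F_{\rm T}F_{\rm U}=0$ by orthogonality (Theorem \ref{primitive idempotents}). If ${\rm T}={\rm U}$, the middle factor collapses to $\mathtt{c}^{\mathfrak{w}}_{\rm T}F_{\rm T}$ by Step 2. In type $\texttt{Q}$ one must additionally move $C^{\alpha_{\mt,a}}$ past $C^{\alpha_{\mt}}$ and track the odd generator $C_{d(\mt,\mt^{\undla})(i_t)}$. This bookkeeping produces the sign $(-1)^{|\alpha_{\mt}|_{>d(\mt,\mt^{\undla})(i_t)}}$ and the shift $a+b$ in the index.

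\textbf{Step 4: basis property.} The multiplication rules show that the $f^{\mathfrak{w}}_{{\rm S},{\rm T}}$ behave like scaled matrix units, with $\mathtt{c}^{\mathfrak{w}}_{\rm T}\in\mathbb{K}^*$. They are therefore nonzero and linearly independent: multiply a vanishing linear combination on both sides by suitable basis elements to isolate each coefficient. It remains to count. Since $\dim\mathbb{D}(\undla)=\sharp{\rm Tri}(\undla)$, the dimension of $B_{\undla}$ is $\sharp{\rm Tri}(\undla)^2$ in type $\texttt{M}$ and $\sharp{\rm Tri}(\undla)^2/2=\sharp{\rm Tri}_{\bar 0}(\undla)\cdot\sharp{\rm Tri}(\undla)$ in type $\texttt{Q}$. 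This matches the size of the set \eqref{Non-deg seminormal1}, so it is a basis. By \eqref{fst and re. fst}, \eqref{Non-deg seminormal2} is obtained from it by nonzero rescalings, hence is also a basis.

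\textbf{Main obstacle.} I expect the hardest part to be the type $\texttt{Q}$ sign bookkeeping in Step 3. Concretely, one must verify that conjugating $\gamma_{\mt}$ by $C^{\alpha}$ and passing through the $\Phi$'s is compatible with the chosen normalisation signs $(-1)^{|\alpha_\ms'|_{>\cdots}+a|\alpha_\mt|_{>\cdots}}$. The first thing I would try is to reduce to the case $\mt=\mathfrak{w}$ and check it directly on the Clifford factor, using Example \ref{lem:clifford rep}.
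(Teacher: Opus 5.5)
Your overall strategy is the right one: check identities through the faithful action of $B_{\undla}$ on $\mathbb{D}(\undla)$, get matrix-unit relations from the intertwining and quadratic properties of the $\Phi_i$, and finish with the count $\dim B_{\undla}=\sharp{\rm Tri}_{\bar 0}(\undla)\cdot\sharp{\rm Tri}(\undla)$. That count is correct. (The present paper gives no proof; it quotes the result from \cite{LS2}.)

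\textbf{Steps 1--2 are set up incorrectly, but this is repairable.} $\Phi_{\mathfrak w,\mt}$ is built to act on the eigenspace $X_k=\mathtt b_{\mt,k}^{-1}$ for all $k$, i.e.\ the $\beta=0$ values. On the image of $F_{\rm T}$, however, $X_k=\mathtt b_{\mt,k}^{-{\rm sgn}_{\beta_\mt}(k)}$. Also, Clifford generators do not pass through $\Phi_i$ unchanged: \eqref{Clifford} and \eqref{PC} give $\Phi_i(x,y)C_i=C_{i+1}\Phi_i(x^{-1},y)$ and $\Phi_i(x,y)C_{i+1}=C_i\Phi_i(x,y^{-1})$. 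Hence $\Phi_{\ms,\mt}F_{\rm T}=F_{{\rm S}'}\Phi_{\ms,\mt}F_{\rm T}$ and $\Phi_{\mt,\mathfrak w}\Phi_{\mathfrak w,\mt}F_{\rm T}=\mathtt c^{\mathfrak w}_\mt F_{\rm T}$ both fail once $\beta_\mt\neq0$. This is why the definition inserts $(C^{\alpha_\mt})^{-1}(C^{\beta_\mt})^{-1}$ between $\Phi_{\mathfrak w,\mt}$ and $F_{\rm T}$. The correct bookkeeping is $(C^{\alpha_\mt})^{-1}(C^{\beta_\mt})^{-1}F_{\rm T}C^{\beta_\mt}C^{\alpha_\mt}=\gamma_\mt P_\mt$, where $P_\mt$ is the $\beta=0$ eigenprojector. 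Only $\gamma_\mt$ then has to be transported, which works because the parameters at diagonal positions are $\pm1$. What Step 3 actually needs is $\Phi_{\mathfrak w,\mt}\Phi_{\mt,\mathfrak w}=\mathtt c^{\mathfrak w}_\mt$ on the $\mathfrak w$-eigenspace, not $\Phi_{\mt,\mathfrak w}\Phi_{\mathfrak w,\mt}F_{\rm T}$. (Membership $f^{\mathfrak w}_{{\rm S},{\rm T}}\in F_{\rm S}\mathcal H^f_{\mathbb K}F_{\rm T}$ is immediate from the definition and needs no argument.)

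\textbf{The genuine gap is the constant in \eqref{fst and re. fst}.} Left-multiplying by $\Phi_{\mt,\ms}$ produces $\Phi_{\mt,\ms}\Phi_{\ms,\mathfrak w}\Phi_{\mathfrak w,\mt}$. This is not a path followed by its reverse, so Step 2 does not evaluate it, and evaluating it is the original problem again. Path-and-reverse identities only let you compare $f^{\mathfrak w}_{{\rm S},{\rm T}}f^{\mathfrak w}_{{\rm T},{\rm S}}=\mathtt c^{\mathfrak w}_{\rm T}\mathtt c^{\mathfrak w}_{\rm S}F_{\rm S}$ with $f_{{\rm S},{\rm T}}f_{{\rm T},{\rm S}}=\mathtt c_{\ms,\mt}^2F_{\rm S}$. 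That fixes only the product of the ratios for $({\rm S},{\rm T})$ and $({\rm T},{\rm S})$, so at best the constant up to sign. The sign matters, because $\mathtt c_{\ms,\mt}$ is built from fixed square roots.

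The missing ingredient is a normalised family of vectors. Take $0\neq v$ (even) in the $\beta=0$ eigenspace for $\mt^{\undla}$ and set $v_{\mathfrak u}:=\mathtt c_{\mathfrak u,\mt^{\undla}}^{-1}\Phi_{\mathfrak u,\mt^{\undla}}v$. Let $s_k$ be admissible, with $\Phi_k$ carrying the parameters of $\mathfrak u$.
\begin{itemize}
\item If the length goes up, independence of reduced expressions \cite[Lemma 4.22]{LS2} gives $\Phi_k\Phi_{\mathfrak u,\mt^{\undla}}=\Phi_{s_k\mathfrak u,\mt^{\undla}}$.
\item If the length goes down, $\Phi_{\mathfrak u,\mt^{\undla}}=\Phi_k\Phi_{s_k\mathfrak u,\mt^{\undla}}$, and the quadratic relation gives $\Phi_k\Phi_{\mathfrak u,\mt^{\undla}}v=\mathtt c_{\mathfrak u}(k)\Phi_{s_k\mathfrak u,\mt^{\undla}}v$.
\end{itemize}
In both cases $\Phi_kv_{\mathfrak u}=\sqrt{\mathtt c_{\mathfrak u}(k)}\,v_{s_k\mathfrak u}$. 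So a composite along any admissible path, reduced or not, acts by the product of the $\sqrt{\mathtt c}$'s along it. Hence $\Phi_{\ms,\mathfrak w}\Phi_{\mathfrak w,\mt}v_\mt=\mathtt c_{\ms,\mathfrak w}\mathtt c_{\mathfrak w,\mt}v_\ms$ and $\Phi_{\ms,\mt}v_\mt=\mathtt c_{\ms,\mt}v_\ms$. This gives \eqref{fst and re. fst} and Step 2 at once, and it is the normalisation behind the factors $\sqrt{\mathtt c_\mt(i)}$ in \eqref{T acts on f}.

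\textbf{Type $\texttt{Q}$ needs parity.} Both your proportionality argument and your coefficient-isolation argument must use the $\Z_2$-grading there. $F_{\rm S}B_{\undla}F_{\rm T}\cong\mathcal C_1$ is two-dimensional, and $f_{{\rm S},{\rm T}_{\bar 0}}$, $f_{{\rm S},{\rm T}_{\bar 1}}$ are distinguished only by their $\Z_2$-degree.
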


The important coefficients $\mathtt{c}_{\mt}^{\mt^{\undla}}=\mathtt{c}_{\mt,\mt^{\undla}}\mathtt{c}_{\mt^{\undla},\mt}$ and $\mathtt{c}_{\mt}^{\mt_{\undla}}=\mathtt{c}_{\mt,\mt_{\undla}}\mathtt{c}_{\mt_{\undla},\mt}$ also have the following combinatorial formulae which are useful in the rest of this paper.
\begin{lem}\cite[Proposition 3.23]{LS3}\label{combina. formulae of c}
	Let $\undla\in\mathscr{P}^{\mathsf{\bullet},m}_{n}$ for $\bullet\in\{\mathsf{0},\mathsf{s},\mathsf{ss}\}$ and $\mt\in\Std(\undla).$
	Then we have
	\begin{align*}
		\mathtt{c}_{\mt}^{\mt^{\undla}}
		=\prod_{k=1}^{n}\prod_{A\in\mathscr{A}_{\mt^{\undla}}^{\rhd}(k)}\left(\mathtt{q}(\res_{\mt^{\undla}}(k))-\mathtt{q}(\res(A))\right)^{-1}
		\cdot\prod_{k=1}^{n}\frac{\prod_{A\in\mathscr{A}_{\mt}^{\rhd}(k)}\left(\mathtt{q}(\res_{\mt}(k))-\mathtt{q}(\res(A))\right)}{\prod_{B\in\mathscr{R}_{\mt}^{\rhd}(k)\setminus\mathcal{D}}\left(\mathtt{q}(\res_{\mt}(k))-\mathtt{q}(\res(B))\right)},
	\end{align*}
	\begin{align*}
		\mathtt{c}_{\mt}^{\mt_{\undla}}
		=\prod_{k=1}^{n}\frac{\prod_{B\in\mathscr{R}_{\mt_{\undla}}^{\lhd}(k)\setminus\mathcal{D}}\left(\mathtt{q}(\res_{\mt_{\undla}}(k))-\mathtt{q}(\res(B))\right)}{\prod_{A\in\mathscr{A}_{\mt_{\undla}}^{\lhd}(k)}\left(\mathtt{q}(\res_{\mt_{\undla}}(k))-\mathtt{q}(\res(A))\right)}	\cdot\prod_{k=1}^{n}\frac{\prod_{A\in\mathscr{A}_{\mt}^{\lhd}(k)}\left(\mathtt{q}(\res_{\mt}(k))-\mathtt{q}(\res(A))\right)}{\prod_{B\in\mathscr{R}_{\mt}^{\lhd}(k)\setminus\mathcal{D}}\left(\mathtt{q}(\res_{\mt}(k))-\mathtt{q}(\res(B)\right)}.
	\end{align*}
\end{lem}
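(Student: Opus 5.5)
The plan is to prove both formulae by induction on the length of $d(\mt,\mt^{\undla})$ (respectively $d(\mt,\mt_{\undla})$). The argument mirrors the proof of Proposition~\ref{BKW deg}, except that the integer degree counts are replaced by products of $\mathtt{q}$-differences. For a standard tableau $\mt$ set
$$G^{\rhd}(\mt):=\prod_{k=1}^{n}\frac{\prod_{A\in\mathscr{A}_{\mt}^{\rhd}(k)}\left(\mathtt{q}(\res_{\mt}(k))-\mathtt{q}(\res(A))\right)}{\prod_{B\in\mathscr{R}_{\mt}^{\rhd}(k)\setminus\mathcal{D}}\left(\mathtt{q}(\res_{\mt}(k))-\mathtt{q}(\res(B))\right)},$$
and define $G^{\lhd}(\mt)$ analogously with $\lhd$ in place of $\rhd$.

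\textbf{Step 1: the claim restated.} We first observe that $\mathscr{R}_{\mt^{\undla}}^{\rhd}(k)=\emptyset$ for all $k$. Indeed, $\mt^{\undla}$ fills boxes in increasing order for $<$, so every box of $\mt^{\undla}\downarrow_{k-1}$ is smaller than $(\mt^{\undla})^{-1}(k)$. Hence the first product in the claimed formula is exactly $G^{\rhd}(\mt^{\undla})^{-1}$, and the claim becomes
$$\mathtt{c}_{\mt}^{\mt^{\undla}}=G^{\rhd}(\mt)/G^{\rhd}(\mt^{\undla}).$$
The same reasoning applies to $\mt_{\undla}$, now for $\mathscr{A}^{\lhd}$ and $\mathscr{R}^{\lhd}$. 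This explains why the first factor in the second formula appears inverted (numerator and denominator exchanged), so that factor equals $G^{\lhd}(\mt_{\undla})^{-1}$.

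\textbf{Step 2: induction.} Since $\mathtt{c}_{\mt}^{\mathfrak w}=(\mathtt{c}_{\mt,\mathfrak w})^2$ is the product of the $\mathtt{c}_{\bullet}(k_i)$ along any reduced path from $\mathfrak w$ to $\mt$, and this is independent of the path by Lemma~\ref{admissible transposes} and \cite[Lemma 4.22]{LS2}, it suffices to prove a one-step statement. If $\ms=s_k\mt\in\Std(\undla)$ with $\ell(d(\ms,\mt^{\undla}))=\ell(d(\mt,\mt^{\undla}))+1$, then
$$G^{\rhd}(\ms)/G^{\rhd}(\mt)=\mathtt{c}_{\mt}(k).$$
We may assume $k=n-1$, since only positions $k,k+1$ change. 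Let $A=\mt^{-1}(k+1)$ and $B=\mt^{-1}(k)$. Exactly as in the proof of Proposition~\ref{BKW deg}, passing from $\mt$ to $\ms$ alters the sets $\mathscr{A}^{\rhd}$ and $\mathscr{R}^{\rhd}\setminus\mathcal{D}$ only by $A$ itself and by the neighbours $x,w,y,z$ of $A$ (the sets $\mathcal{E}_1,\mathcal{E}_2$). Therefore $G^{\rhd}(\ms)/G^{\rhd}(\mt)$ is an explicit ratio with at most four factors of the form $\mathtt{q}(\res(B))-\mathtt{q}(\res(A)q^{\pm2})$, divided by $(\mathtt{q}(\res(B))-\mathtt{q}(\res(A)))^2$. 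The two $\mathtt{q}(\res(B))-\mathtt{q}(\res(A))$ factors come from $A$ being addable and removable, with the correction $\delta_{A\notin\mathcal{D}}$ for diagonal boxes.

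\textbf{Step 3: the key identity, and the main obstacle.} This step, which I expect to be the main obstacle, is to show that
$$\mathtt{c}_{\mt}(k)=1-\epsilon^2\Bigl(\tfrac{b^{-1}b'}{(b^{-1}b'-1)^2}+\tfrac{bb'}{(bb'-1)^2}\Bigr)$$
equals this ratio, where $b=\mathtt{b}_{\mt,k}$ and $b'=\mathtt{b}_{\mt,k+1}$. I would put everything over the common denominator $(b^{-1}b'-1)^2(bb'-1)^2$ and note that
$$(b^{-1}b'-1)(bb'-1)=b'(b'+b'^{-1}-b-b^{-1}),$$
which is a multiple of $\mathtt{q}(\res_{\mt}(k+1))-\mathtt{q}(\res_{\mt}(k))$. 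The numerator should factor as
$$\prod_{\pm}\bigl(\mathtt{q}(\res_{\mt}(k))-\mathtt{q}(q^{\pm2}\res_{\mt}(k+1))\bigr)$$
up to the same scalar. This is a direct check using $b+b^{-1}=\mathtt{q}(\cdot)$, with $\epsilon=q-q^{-1}$ and the normalisation $2/(q+q^{-1})$ in $\mathtt{q}$.

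It then remains to match these two factors with the neighbour factors from Step 2, case by case according to which of $x,w,y,z$ lie in the diagram. When $A$ or its neighbours lie on a diagonal in $\mathcal{D}$, the identities $\mathtt{q}(\pm q)=\pm2$ and $\mathtt{q}(x)=\mathtt{q}(x^{-1})$ make a neighbour's factor coincide with the reflected one. This is precisely why $\mathcal{D}$ is excluded from the removable sets. Lemma~\ref{important condition1} guarantees that no factor vanishes.

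\textbf{Step 4: the second formula.} The second formula follows symmetrically. We run the induction from $\mt_{\undla}$ using $\lhd$, as in the second equality of Proposition~\ref{BKW deg}.
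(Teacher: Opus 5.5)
\paragraph{Assessment.} The strategy is reasonable, but Step 2 asserts a one-step identity that fails, and with the sets as the paper defines them the displayed formulae are themselves false.

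\paragraph{What is correct.} Your Step 3 algebra is right. Put $Q=\mathtt{q}(\res_{\mt}(k))$ and $Q'=\mathtt{q}(\res_{\mt}(k+1))$. Then $\mathtt{c}_{\mt}(k)=\bigl(Q^2+Q'^2-(q^2+q^{-2})QQ'+4\epsilon^2\bigr)/(Q-Q')^2$, and the numerator equals $\prod_{\pm}\bigl(Q-\mathtt{q}(q^{\pm2}\res_{\mt}(k+1))\bigr)$.

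\paragraph{Where Step 2 fails.} You import the identity $G^{\rhd}(\ms)/G^{\rhd}(\mt)=\mathtt{c}_{\mt}(k)$ ``exactly as in Proposition~\ref{BKW deg}''. That proof only counts boxes of a single residue. Here every addable or removable box contributes a factor, so the ordering constraints in $\mathscr{A}^{\rhd},\mathscr{R}^{\rhd}$ now matter. The pairing ``$x$ removable versus $y$ addable'' breaks when $B=\mt^{-1}(k)=(i-1,j+1,l)$ lies diagonally above-right of $A=\mt^{-1}(k+1)=(i,j,l)$:
\begin{itemize}
\item $x=(i-1,j,l)$ is removable in $\mt\downarrow_{k-1}$, but $x<B$, so $x\notin\mathscr{R}^{\rhd}_{\mt}(k)$;
\item $y=(i,j+1,l)$ is not addable in $\ms\downarrow_{k}=\mt\downarrow_{k-1}\cup\{A\}$.
\end{itemize}
So the factor $\mathtt{q}(\res(B))-\mathtt{q}(q^{2}\res(A))$ of $\mathtt{c}_{\mt}(k)$ is never produced.

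\paragraph{Counterexample.} Take $m=1$, $\undla=(2,1)$, $\mt=\mt^{\undla}$, $k=2$, and write $\mathtt{q}_d:=\mathtt{q}(Q_1q^{2d})$. Then $G^{\rhd}(\mt)=\mathtt{q}_1-\mathtt{q}_{-1}$ and $G^{\rhd}(s_2\mt)=(\mathtt{q}_1-\mathtt{q}_{-2})/(\mathtt{q}_1-\mathtt{q}_{-1})$, so
\[
\frac{G^{\rhd}(s_2\mt)}{G^{\rhd}(\mt)}=\frac{\mathtt{q}_1-\mathtt{q}_{-2}}{(\mathtt{q}_1-\mathtt{q}_{-1})^2},
\qquad
\mathtt{c}_{\mt}(2)=\frac{(\mathtt{q}_1-\mathtt{q}_{0})(\mathtt{q}_1-\mathtt{q}_{-2})}{(\mathtt{q}_1-\mathtt{q}_{-1})^2}.
\]
Here $\mathtt{q}_1\neq\mathtt{q}_0$ whenever $P^{(\mathsf{0})}_3(q^2,\undQ)\neq0$. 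Since $\mathtt{c}^{\mt^{\undla}}_{s_2\mt^{\undla}}=\mathtt{c}_{\mt^{\undla}}(2)$, this contradicts the first displayed formula as written, with the sets taken in $\mt\downarrow_{k-1}$ under the total order $<$. Likewise the second formula acquires a spurious factor $(\mathtt{q}_1-\mathtt{q}_0)^{-1}$ from $(1,1)\in\mathscr{R}^{\lhd}_{\mt^{\undla}}(2)$. No case-by-case check can close this gap.

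A smaller error: your Step 1 claim that ``the same reasoning'' gives $\mathscr{R}^{\lhd}_{\mt_{\undla}}(k)=\emptyset$ is false; here $(1,1)\in\mathscr{R}^{\lhd}_{\mt_{\undla}}(2)$. This one is harmless, since the first factor of the second formula is $G^{\lhd}(\mt_{\undla})^{-1}$ by inspection.

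\paragraph{How to repair it.} Your pairing argument goes through once the conventions are changed.
\begin{itemize}
\item For the $\rhd$ formula, use addable and removable nodes of $\mt\downarrow_{k}$ (not $\mt\downarrow_{k-1}$) lying in a strictly later component or row, as in \cite{BKW}.
\item For the $\lhd$ formula, use nodes of $\mt\downarrow_{k-1}$ lying in a strictly earlier component or row, so that the same-row left neighbour is excluded.
\end{itemize}
With these definitions, at each admissible swap the contributions of one swapped box agree at its two positions. The other box's contributions differ only through the first box and its neighbours: exactly one box from each of the pairs $\{x,y\}$ and $\{w,z\}$ enters, and the first box itself enters twice, giving $(\mathtt{q}(\res(B))-\mathtt{q}(\res(A)))^{-2}$. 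The diagonal case is now covered, and the ratio is exactly your factorisation of $\mathtt{c}_{\mt}(k)$. This holds for $\bullet=\mathsf{0}$; the $\mathcal{D}$-boxes in types $\mathsf{s},\mathsf{ss}$ still need the separate check you sketch.

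You should also check which convention \cite{LS3} actually uses. This paper applies the lemma in Lemma~\ref{MainLemmaforblock} with the sets as defined here.
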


The following Proposition implies all of the seminormal basis elements are common eigenvectors of $X_i$, $i\in [n]$.
\begin{prop}\cite[Proposition 4.34]{LS2}
Let $\undla\in\mathscr{P}^{\mathsf{\bullet},m}_{n}$ for $\bullet\in\{\mathsf{0},\mathsf{s},\mathsf{ss}\}$,
and ${\rm T}=(\mt, \alpha_{\mt}, \beta_{\mt})\in {\rm Tri}_{\bar{0}}(\undla), {\rm S}=(\ms,\alpha_{\ms}',\beta_{\ms}')\in {\rm Tri}(\undla).$
For each $i\in [n],$ we have
\begin{align}\label{X acts on f}
X_i \cdot f_{{\rm T},{\rm S}}
=\mathtt{b}_{\mt,i}^{-{\rm sgn}_{\beta_{\mt}}(i)} f_{{\rm T},{\rm S}},\qquad
f_{{\rm T},{\rm S}}\cdot X_i
=\mathtt{b}_{\ms,i}^{-{\rm sgn}_{\beta_{\ms}'}(i)} f_{{\rm T},{\rm S}}.
\end{align}
\end{prop}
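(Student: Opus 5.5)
The plan is to prove the left-hand identity $X_i\cdot f_{{\rm T},{\rm S}}=\mathtt{b}_{\mt,i}^{-{\rm sgn}_{\beta_{\mt}}(i)} f_{{\rm T},{\rm S}}$ by a statement about the idempotent $F_{\rm T}$ alone. Since $f_{{\rm T},{\rm S}}\in F_{\rm T}\mathcal{H}^f_{\mathbb{K}}F_{\rm S}$, it begins with the factor $F_{\rm T}$, so it suffices to show
\[
X_iF_{\rm T}=\mathtt{b}_{\mt,i}^{-{\rm sgn}_{\beta_{\mt}}(i)}F_{\rm T}.
\]
The right-hand identity will follow in the same way from $F_{\rm S}X_i=\mathtt{b}_{\ms,i}^{-{\rm sgn}_{\beta'_{\ms}}(i)}F_{\rm S}$. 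This requires $f_{{\rm T},{\rm S}}=f_{{\rm T},{\rm S}}F_{\rm S}$, which holds because $f_{{\rm T},{\rm S}}$ ends with $F_{\rm S}$ (or with $F_{{\rm S}_{\bar 0}}$ when $d_{\undla}=1$). I would also check that the formula is insensitive to the $\bar 1$-shift of $\alpha$. It is, because ${\rm sgn}_{\beta}$ only involves $\beta$.

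Write $F_{\rm T}=\gamma'P_{\rm T}$, where $\gamma'=C^{\alpha_{\mt}}\gamma_{\mt}(C^{\alpha_{\mt}})^{-1}\in\mathcal{C}_n$ and $P_{\rm T}$ is the product of Lagrange factors in the $X_k^{{\rm sgn}_{\beta_{\mt}}(k)}$.

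Step 1: I will show that $P_{\rm T}$ is the projector onto the joint eigenspace on which $X_k^{{\rm sgn}_{\beta_{\mt}}(k)}=\mathtt{b}_+(\res_{\mt}(k))$ for all $k$. For this I use the explicit simple modules $\mathbb{D}(\undla)$ of \cite[Theorem 4.5]{SW}. On their bases every $X_k$ acts diagonally, with eigenvalues of the form $\mathtt{b}_{\pm}(\res_{\ms}(k))$. By Theorem \ref{semisimple:non-dege} these modules exhaust the simples of the semisimple algebra, so each $X_k$ acts semisimply on the regular module with spectrum contained in $\mathtt{B}(k)$. Hence $\prod_{\mathtt{b}\in\mathtt{B}(k)}(X_k^{\pm1}-\mathtt{b})=0$. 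Note that $\mathtt{B}(k)$ is closed under inversion by \eqref{bpm}. This gives
\[
X_k^{{\rm sgn}_{\beta_{\mt}}(k)}P_{\rm T}=\mathtt{b}_+(\res_{\mt}(k))P_{\rm T}.
\]
Since $\mathtt{b}_+=\mathtt{b}_-^{-1}$, this is equivalent to $X_kP_{\rm T}=\mathtt{b}_{\mt,k}^{-{\rm sgn}_{\beta_{\mt}}(k)}P_{\rm T}$.

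Step 2: Commute $X_i$ past $\gamma'$ using $X_iC_j=C_jX_i$ for $i\neq j$ and $X_iC_i=C_iX_i^{-1}$. The element $\gamma'$ only involves $C_j$ with $j\in\mathcal{D}_{\mt}$. So for $i\notin\mathcal{D}_{\mt}$ we get $X_i\gamma'=\gamma'X_i$, and Step 1 finishes.

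For $i\in\mathcal{D}_{\mt}$, a monomial of $\gamma'$ may contain $C_i$, in which case $X_i$ becomes $X_i^{-1}$. Here the box of $i$ is a diagonal box, so $\res_{\mt}(i)=\pm q$ and $\mathtt{q}(\res_{\mt}(i))=\pm2$. Therefore $\mathtt{b}_{\pm}(\res_{\mt}(i))=\pm1$, which is its own inverse. Also $\beta_{\mt}(i)=\bar 0$ since $\supp\beta_{\mt}\subseteq[n]\setminus\mathcal{D}_{\mt}$. Hence $X_i^{-1}P_{\rm T}=X_iP_{\rm T}=\mathtt{b}_{\mt,i}P_{\rm T}$, and the identity follows. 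The right action $P_{\rm S}X_i$ is immediate because $P_{\rm S}$ is a polynomial in the commuting $X$'s and $F_{\rm S}=\gamma'_{\rm S}P_{\rm S}$ ends in $P_{\rm S}$.

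I expect the main obstacle to be Step 1: justifying rigorously that the spectrum of every $X_k$ on $\mathcal{H}^f_{\mathbb{K}}$ lies in $\mathtt{B}(k)$ and that $X_k$ acts semisimply. I would extract both facts from the explicit action of $X_k$ on the basis of $\mathbb{D}(\undla)$ in \cite{SW}, together with semisimplicity. Lemma \ref{important condition1}(2) is needed so that the Lagrange denominators are nonzero and distinct tableaux are separated.
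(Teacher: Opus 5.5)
Your argument is correct and complete. The three moves are exactly what is needed: reducing to $X_iF_{\rm T}=\mathtt{b}_{\mt,i}^{-{\rm sgn}_{\beta_{\mt}}(i)}F_{\rm T}$ and $F_{\rm S}X_i=\mathtt{b}_{\ms,i}^{-{\rm sgn}_{\beta'_{\ms}}(i)}F_{\rm S}$; obtaining $\prod_{\mathtt{b}\in\mathtt{B}(k)}(X_k^{\pm1}-\mathtt{b})=0$ from the semisimple action of $X_k$ on the explicit modules $\mathbb{D}(\undla)$ of \cite{SW}; and handling $i\in\mathcal{D}_{\mt}$ via $\mathtt{b}_{\mt,i}=\pm1$. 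The paper gives no proof to compare with, since it simply quotes \cite[Proposition 4.34]{LS2}. One small correction to your final remark: the Lagrange denominators are nonzero simply because the product runs over $\mathtt{b}\in\mathtt{B}(k)$ with $\mathtt{b}\neq\mathtt{b}_{+}(\res_{\mt}(k))$, so Lemma \ref{important condition1}(2) plays no role here.
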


The action of the generators $C_i$, $i\in[n]$ and $T_j$, $j\in [n-1]$ on the seminormal bases is also given in \cite{LS2} for any $\bullet\in\{\mathsf{0},\mathsf{s},\mathsf{ss}\}.$ In this paper, we only need the case $\bullet=\mathsf{0}$.
Note that $\mathscr{P}^{\mathsf{0},m}_{n}=\mathscr{P}^m_n$ and ${\rm Tri}(\mathscr{P}^{\mathsf{0},m}_{n})=\Std(\mathscr{P}^m_n)\times \Z_2^n.$
\begin{prop}\label{generators action on seminormal basis}\cite[Proposition 4.34]{LS2}
Let $\undla\in\mathscr{P}^m_n.$
Suppose ${\rm T}=(\mt, \beta_{\mt}), {\rm S}=(\ms, \beta_{\ms}')\in {\rm Tri}(\undla).$
Then we have the following.
			\begin{enumerate}
%					\item For each $i\in [n],$ we have
%			\begin{align}\label{X acts on f}
%				X_i \cdot f_{{\rm T},{\rm S}}
%				    =\mathtt{b}_{\mt,i}^{-{\rm sgn}_{\beta_{\mt}}(i)} f_{{\rm T},{\rm S}},\qquad
%			        f_{{\rm T},{\rm S}}\cdot X_i
%				    =\mathtt{b}_{\ms,i}^{-{\rm sgn}_{\beta_{\ms}'}(i)} f_{{\rm T},{\rm S}}.
%			\end{align}
                    \item For each $i\in [n],$ we have
					\begin{align}\label{C acts on f}
						C_i \cdot f_{{\rm T},{\rm S}}
						&=(-1)^{|\beta_{\mt}|_{<i}}  f_{(\mt,\beta_{\mt}+e_i),{\rm S}},
					\end{align}
			
			        \item For each $i\in [n-1],$ denote $s_i\cdot{\rm T}=(s_i \mt,s_i \cdot \beta_{\mt}),$ we have
					\begin{align}\label{T acts on f}
						&T_i\cdot f_{{\rm T},{\rm S}}\nonumber \\
						=&-\frac{\epsilon}{\mathtt{b}_{\mt,i}^{-{\rm sgn}_{\beta_{\mt}}(i)}\mathtt{b}_{\mt,i+1}^{{\rm sgn}_{\beta_{\mt}}(i+1)}-1} f_{{\rm T},{\rm S}} \nonumber\\
						&\qquad+(-1)^{\delta_{\beta_{\mt}}(i)}\frac{\epsilon}{\mathtt{b}_{\mt,i}^{{\rm sgn}_{\beta_{\mt}}(i)}\mathtt{b}_{\mt,i+1}^{{\rm sgn}_{\beta_{\mt}}(i+1)}-1}f_{(\mt,\beta_{\mt}+e_{i}+e_{i+1}),{\rm S}}\\
						&\qquad\qquad+\delta(s_i\mt)(-1)^{\delta_{\beta_{\mt}}(i)\delta_{\beta_{\mt}}(i+1)}\sqrt{\mathtt{c}_{\mt}(i)}\frac{\mathtt{c}_{\mt,\ms}}{\mathtt{c}_{s_i\cdot\mt,\ms}}f_{s_i\cdot{\rm T},{\rm S}}.\nonumber
					\end{align}
			\end{enumerate}
		\end{prop}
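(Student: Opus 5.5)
The plan is to compute both actions directly from the definition
\[
f_{{\rm T},{\rm S}}=F_{\rm T}C^{\beta_{\mt}}\Phi_{\mt,\ms}(C^{\beta_{\ms}'})^{-1}F_{\rm S}.
\]
Since $\bullet=\mathsf{0}$, we have $\mathcal{D}_{\undla}=\emptyset$, $d_{\undla}=0$, $\gamma_{\mt}=1$, and there are no $\alpha$'s. Hence $F_{\rm T}$ is just the product of the interpolation polynomials in the $X_k^{\sgn_{\beta_{\mt}}(k)}$ from \eqref{definition of primitive idempotents. non-dege}. By \eqref{X acts on f}, $f_{{\rm T},{\rm S}}$ is a simultaneous eigenvector of the $X_k$ with eigenvalues $\mathtt{b}_{\mt,k}^{-\sgn_{\beta_{\mt}}(k)}$. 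Everything below reduces to moving $C_i$ and $T_i$ leftward through $F_{\rm T}C^{\beta_{\mt}}$.

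\textbf{The action of $C_i$.} By \eqref{XC}, $C_iX_i^{\pm1}=X_i^{\mp1}C_i$ and $C_iX_k=X_kC_i$ for $k\neq i$. Therefore $C_iF_{(\mt,\beta_{\mt})}=F_{(\mt,\beta_{\mt}+e_i)}C_i$, because the only effect is to flip $\sgn_{\beta_{\mt}}(i)$. Next, anticommuting $C_i$ past $C_1^{\beta_1}\cdots C_{i-1}^{\beta_{i-1}}$ gives
\[
C_iC^{\beta_{\mt}}=(-1)^{|\beta_{\mt}|_{<i}}C^{\beta_{\mt}+e_i},
\]
using $C_i^2=1$ when $\beta_i=\bar1$. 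Together these give \eqref{C acts on f}.

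\textbf{The action of $T_i$.} Put $x=\mathtt{b}_{\mt,i}^{-\sgn_{\beta_{\mt}}(i)}$ and $y=\mathtt{b}_{\mt,i+1}^{-\sgn_{\beta_{\mt}}(i+1)}$. These are the eigenvalues of $X_i$ and $X_{i+1}$ on $f_{{\rm T},{\rm S}}$, and $y\neq x^{\pm1}$ by the separate condition. By \eqref{Phi function}, write
\[
T_i=\Phi_i(x,y)-\frac{\epsilon}{x^{-1}y-1}+\frac{\epsilon}{xy-1}C_iC_{i+1}.
\]
\begin{itemize}
\item The scalar term gives the first summand of \eqref{T acts on f}.
\item For the $C_iC_{i+1}$ term, apply the $C$-formula twice. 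The sign is $(-1)^{|\beta_{\mt}|_{<i+1}+|\beta_{\mt}|_{<i}}=(-1)^{\delta_{\beta_{\mt}}(i)}$, which gives the second summand once $x,y$ are rewritten in terms of $\mathtt{b}_{\mt,i}^{\pm}$ and $\mathtt{b}_{\mt,i+1}^{\pm}$.
\item For the $\Phi_i(x,y)$ term, use the intertwining relations of \cite[Lemma 4.1]{JN}. These show that $\Phi_i(x,y)f_{{\rm T},{\rm S}}$ is a simultaneous $X$-eigenvector whose eigenvalue sequence has positions $i,i+1$ swapped. By Lemma \ref{important condition1}(2), no standard tableau has this eigenvalue sequence when $s_i\mt$ is not standard, and then the term vanishes.
\end{itemize}

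When $s_i\mt$ is standard, commute $\Phi_i(x,y)$ past $C^{\beta_{\mt}}$ using \eqref{PC} and $X_jC_j=C_jX_j^{-1}$. This should produce $(-1)^{\delta_{\beta_{\mt}}(i)\delta_{\beta_{\mt}}(i+1)}C^{s_i\cdot\beta_{\mt}}\Phi_i(\mathtt{b}_{\mt,i},\mathtt{b}_{\mt,i+1})$. The parameters become untwisted because inverting a $\mathtt{b}$ corresponds to conjugating by the appropriate $C$, and the sign comes from reordering $C_iC_{i+1}$. Then $\Phi_i$ carries $F_{\rm T}$ to $F_{s_i\cdot{\rm T}}$. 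It remains to compare $\Phi_i(\mathtt{b}_{\mt,i},\mathtt{b}_{\mt,i+1})\Phi_{\mt,\ms}$ with $\Phi_{s_i\mt,\ms}$, and there are two cases.
\begin{itemize}
\item If $\ell(s_id(\mt,\ms))>\ell(d(\mt,\ms))$, then by independence of the reduced expression \cite[Lemma 4.22]{LS2} the product is exactly $\Phi_{s_i\mt,\ms}$.
\item Otherwise, write $\Phi_{\mt,\ms}=\Phi_i(\cdot)\Phi_{s_i\mt,\ms}$ and use the quadratic relation $\Phi_i(x',y')\Phi_i(y',x')=\mathtt{c}_{\mt}(i)$, which is \eqref{invertible}-type algebra from \cite{JN}.
\end{itemize}
In both cases the resulting scalar, $1$ or $\mathtt{c}_{\mt}(i)$, should equal $\sqrt{\mathtt{c}_{\mt}(i)}\,\mathtt{c}_{\mt,\ms}/\mathtt{c}_{s_i\mt,\ms}$ by the product definition \eqref{c-coefficients. non-dege.} and the symmetry $\mathtt{c}_{\mt}(i)=\mathtt{c}_{s_i\mt}(i)$.

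The main obstacle is this last step. It requires tracking the sign and the parameter twisting when $\Phi_i$ passes through $C^{\beta_{\mt}}$, and then matching the normalization constants uniformly across the two length cases. I would first verify both in the case $\beta_{\mt}=0$, and then deduce the general case by conjugating with $C^{\beta_{\mt}}$ via the already established $C$-action.
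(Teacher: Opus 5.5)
Your $C_i$ computation is correct. The $T_i$ computation, however, has the arguments of $\Phi_i$ inverted, so the three identifications you claim fail as written. Take $x=\mathtt{b}_{\mt,i}^{-\sgn_{\beta_\mt}(i)}$ and $y=\mathtt{b}_{\mt,i+1}^{-\sgn_{\beta_\mt}(i+1)}$ as you do.
\begin{itemize}
\item Your scalar term is $-\epsilon/(x^{-1}y-1)$, but the first summand of \eqref{T acts on f} is $-\epsilon/(xy^{-1}-1)$, because $\mathtt{b}_{\mt,i}^{-\sgn_{\beta_\mt}(i)}\mathtt{b}_{\mt,i+1}^{\sgn_{\beta_\mt}(i+1)}=xy^{-1}$.
\item Your $C_iC_{i+1}$-coefficient is $\epsilon/(xy-1)$ instead of $\epsilon/(x^{-1}y^{-1}-1)$.
\item $\Phi_i(x,y)f_{{\rm T},{\rm S}}$ still has nonzero components along $f_{{\rm T},{\rm S}}$ and $f_{(\mt,\beta_\mt+e_i+e_{i+1}),{\rm S}}$, so it is not an intertwined vector.
\end{itemize}
Your proposed $\beta_\mt=0$ check already exposes this. $X_k$ acts on $F_{(\mt,0)}$ by $\mathtt{b}_{\mt,k}^{-1}$, while $\Phi_{\ms,\mt}$ is built from $\Phi_k(\mathtt{b}_{\mt,k},\mathtt{b}_{\mt,k+1})$. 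So the intertwiner at eigenvalues $(x,y)$ is $\Phi_i(x^{-1},y^{-1})$.

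You can see this directly. Applying \eqref{PX2} to $v=f_{{\rm T},{\rm S}}$ gives $(y-X_i)T_iv=\epsilon y\,(v-C_iC_{i+1}v)$, and $C_iC_{i+1}v$ has eigenvalues $(x^{-1},y^{-1})$ by \eqref{XC}. Projecting onto the two eigenspaces gives coefficients $\epsilon y/(y-x)=-\epsilon/(xy^{-1}-1)$ and $\epsilon/(x^{-1}y^{-1}-1)$. By \eqref{PX1}, the remaining component lies in the $(y,x)$-eigenspace. The correct splitting is therefore
\[
T_i=\Phi_i(x^{-1},y^{-1})-\frac{\epsilon}{xy^{-1}-1}+\frac{\epsilon}{x^{-1}y^{-1}-1}C_iC_{i+1}.
\]

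With this correction your outline goes through and gives a self-contained proof; the paper itself gives no argument and only cites \cite[Proposition 4.34]{LS2}.
\begin{itemize}
\item The commutation step is exact. From \eqref{PC} one gets $\Phi_i(a,b)C_i=C_{i+1}\Phi_i(a^{-1},b)$, $\Phi_i(a,b)C_{i+1}=C_i\Phi_i(a,b^{-1})$, and $\Phi_i(a,b)C_j=C_j\Phi_i(a,b)$ for $j\neq i,i+1$. Since $f_{{\rm T},{\rm S}}=C^{\beta_\mt}f_{(\mt,0),{\rm S}}$, this yields $\Phi_i(x^{-1},y^{-1})C^{\beta_\mt}=(-1)^{\delta_{\beta_\mt}(i)\delta_{\beta_\mt}(i+1)}C^{s_i\cdot\beta_\mt}\Phi_i(\mathtt{b}_{\mt,i},\mathtt{b}_{\mt,i+1})$, as you wanted.
\item The quadratic relation $\Phi_i(a,b)\Phi_i(b,a)=1-\epsilon^2\bigl(\frac{a^{-1}b}{(a^{-1}b-1)^2}+\frac{ab}{(ab-1)^2}\bigr)$ holds identically in $\mathcal{H}_{\mathbb{K}}$. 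It follows from $T_i^2=\epsilon T_i+1$, $(C_iC_{i+1})^2=-1$ and $T_iC_iC_{i+1}+C_iC_{i+1}T_i=\epsilon C_iC_{i+1}+\epsilon$. It gives $\mathtt{c}_\mt(i)$ in your second length case, and your normalization check is then right in both cases.
\end{itemize}

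One smaller point remains. Lemma \ref{important condition1}(2) only compares two standard tableaux, so it does not by itself kill the swapped term when $s_i\mt$ is not standard; you need to argue on truncations.
\begin{itemize}
\item Suppose a standard $\mt'$ had $\mathtt{q}$-sequence $s_i\cdot\mathtt{q}(\res(\mt))$. Then $\mt'\downarrow_{i-1}=\mt\downarrow_{i-1}$.
\item The box $\mt'^{-1}(i)$ is addable to $\mt\downarrow_{i-1}$. It differs from $\mt^{-1}(i)$ by the separate condition. It differs from $\mt^{-1}(i+1)$, which is adjacent to $\mt^{-1}(i)$ and hence not addable to $\mt\downarrow_{i-1}$.
\item Putting $i+1$ into $\mt'^{-1}(i)$ instead of $\mt^{-1}(i+1)$ gives a second standard tableau with the same $\mathtt{q}$-sequence as $\mt\downarrow_{i+1}$, a contradiction.
\end{itemize}
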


Recall the supersymmetrizing form $t_{r,n}$ \eqref{closed formula frob} of $\mathcal{H}^f_{\mathbb{K}},$ where $\bullet=\mathsf{0},$ $r=2m.$
The images of the seminormal bases under $t_{2m,n}$ are given by the following.
\begin{thm}\label{mainthm type0 nondege}
	Suppose that $\bullet=\mathsf{0}$ and $\undla\in\mathscr{P}^m_n.$
    Let ${\rm S}$ and ${\rm T}=(\mt,\beta_{\mt})\in{\rm Tri}(\undla).$

    (1) \cite[Proposition 5.7]{LS3} If ${\rm S}\neq{\rm T},$ then $t_{2m,n}(f_{{\rm S},{\rm T}})=0.$

	(2) \cite[Theorem 6.1]{LS3} We have
\begin{align*}\label{type=0. tauFT}
		t_{2m,n}(F_{\rm T})
		=\prod\limits_{k=1}^{n}\frac{1}{\mathtt{b}_{\mt,k}^{{\rm sgn}_{\beta_{\mt}}(k)}-\mathtt{b}_{\mt,k}^{-{\rm sgn}_{\beta_{\mt}}(k)}}
		\cdot \prod\limits_{k=1}^{n}\frac{\prod\limits_{B\in\Rem(\mt\downarrow_{k-1})}\left(\mathtt{q}(\res_{\mt}(k))-\mathtt{q}(\res(B))\right)}{
			\prod\limits_{A\in\Add(\mt\downarrow_{k-1})\setminus \{\mt^{-1}(k)\}}\left(\mathtt{q}(\res_{\mt}(k))-\mathtt{q}(\res(A))\right)}.
	\end{align*}
\end{thm}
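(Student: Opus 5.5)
Part (1) follows from supersymmetry of $t_{2m,n}$ together with orthogonality of the primitive idempotents. Part (2) I would prove in three steps: a trace-type argument on the block $B_{\undla}$ that fixes how $t_{2m,n}(F_{\rm T})$ depends on $\beta_{\mt}$ and on $\mt$, then a single base computation by induction on $n$. Since $\bullet=\mathsf{0}$, we have $\mathcal{D}_{\undla}=\emptyset$, so $d_{\undla}=0$ and ${\rm Tri}(\undla)={\rm Tri}_{\bar 0}(\undla)$. Hence every ${\rm T}\in{\rm Tri}(\undla)$ indexes an even primitive idempotent $F_{\rm T}$, and $f_{{\rm S},{\rm T}}=F_{\rm S}f_{{\rm S},{\rm T}}F_{\rm T}$.

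\emph{Part (1).} Supersymmetry of $t=t_{2m,n}$ (Proposition \ref{supersymmetrizing Non-dege}) and evenness of $F_{\rm T}$ give
$$t(f_{{\rm S},{\rm T}})=t\bigl(F_{\rm S}(f_{{\rm S},{\rm T}}F_{\rm T})\bigr)=t(f_{{\rm S},{\rm T}}F_{\rm T}F_{\rm S}).$$
By Theorem \ref{primitive idempotents}(a), $F_{\rm T}F_{\rm S}=0$ when ${\rm S}\neq{\rm T}$, so $t(f_{{\rm S},{\rm T}})=0$.

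\emph{Dependence on $\beta_{\mt}$.} By \eqref{C acts on f} and its right-hand analogue, $F_{(\mt,\beta_{\mt}+e_i)}=\pm C_iF_{\rm T}C_i$ with $C_i^2=1$. Supersymmetry with two odd factors then gives $t(F_{(\mt,\beta+e_i)})=-t(F_{(\mt,\beta)})$. This matches the formula, because flipping $\beta_i$ replaces $\mathtt{b}_{\mt,i}^{\pm1}$ by $\mathtt{b}_{\mt,i}^{\mp1}$, which changes the sign of the factor $\bigl(\mathtt{b}_{\mt,i}^{\sgn}-\mathtt{b}_{\mt,i}^{-\sgn}\bigr)^{-1}$ and leaves the other factors unchanged. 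So it suffices to treat $\beta_{\mt}=0$.

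\emph{Dependence on $\mt$.} Fix $\mathfrak{w}=\mt^{\undla}$. By \eqref{Non-deg multiplication1}, $f^{\mathfrak w}_{{\rm T},{\rm W}}f^{\mathfrak w}_{{\rm W},{\rm T}}=\mathtt{c}^{\mathfrak w}_{\rm W}f^{\mathfrak w}_{{\rm T},{\rm T}}$ and $f^{\mathfrak w}_{{\rm W},{\rm T}}f^{\mathfrak w}_{{\rm T},{\rm W}}=\mathtt{c}^{\mathfrak w}_{\rm T}f^{\mathfrak w}_{{\rm W},{\rm W}}$. Here $f^{\mathfrak w}_{{\rm T},{\rm T}}=\mathtt{c}^{\mathfrak w}_{\rm T}F_{\rm T}$ (up to normalisation to be checked against \eqref{fst and re. fst}). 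Applying $t$ and using $t(xy)=t(yx)$ for these even elements, I get $t(F_{\rm T})=t(F_{{\rm W}})/\mathtt{c}^{\mt^{\undla}}_{\mt}$ up to this normalisation. Lemma \ref{combina. formulae of c} supplies $\mathtt{c}^{\mt^{\undla}}_{\mt}$ explicitly. I would then check combinatorially that
$$\mathtt{c}^{\mt^{\undla}}_{\mt}\cdot \frac{\prod_{k}\prod_{B\in\Rem(\mt\downarrow_{k-1})}(\cdots)}{\prod_{k}\prod_{A\in\Add(\mt\downarrow_{k-1})\setminus\{\mt^{-1}(k)\}}(\cdots)}$$
is independent of $\mt$. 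Splitting $\Add$ and $\Rem$ into the $\lhd$ and $\rhd$ parts, the $\rhd$ parts cancel against the lemma, and the $\lhd$ parts should combine into a shape invariant; the factor $\prod_k(\mathtt{b}_{\mt,k}-\mathtt{b}_{\mt,k}^{-1})^{-1}$ depends only on the multiset of contents, hence only on $\undla$. This reduces (2) to the single case ${\rm T}=(\mt^{\undla},0)$, or any one convenient tableau.

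\emph{Base case (main obstacle).} I would prove the formula for $F_{\rm T}$ directly by induction on $n$, using the closed formula \eqref{closed formula frob}. Write $F_{\rm T}=F_{{\rm T}\downarrow_{n-1}}\cdot P_n(X_n)$, where $P_n$ is the Lagrange interpolation factor for $X_n$ in Definition \ref{primitive idempotents and blocks}, modulo terms killed by $F_{{\rm T}\downarrow_{n-1}}$. The key step is a relative-trace identity: for $h\in\mathcal{H}^f(n-1)$ and a Laurent polynomial $g$, $\tau_{r,n}(h\,g(X_n)\,X_n^{m}\cdots)$ equals $\tau_{r,n-1}(h\,\cdot)$ times an explicit coefficient extraction of $g(X_n)X_n^{m}$ in the PBW basis of Lemma \ref{lem:PBWNon-dege}, reduced modulo the polynomial that $X_n$ satisfies over $F_{{\rm T}\downarrow_{n-1}}\mathcal{H}^f(n-1)$. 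That polynomial has roots $\mathtt{b}_\pm(\res(A))$ for $A\in\Add(\mt\downarrow_{n-1})$ and is of Mackey type, with the removable boxes in its denominator. Extracting this coefficient is a residue computation for $P_n$ at $\mathtt{b}_{\mt,n}^{-1}$. It should yield exactly the factor
$$\frac{1}{\mathtt{b}_{\mt,n}-\mathtt{b}_{\mt,n}^{-1}}\cdot\frac{\prod_{B\in\Rem(\mt\downarrow_{n-1})}\bigl(\mathtt{q}(\res_{\mt}(n))-\mathtt{q}(\res(B))\bigr)}{\prod_{A\in\Add(\mt\downarrow_{n-1})\setminus\{\mt^{-1}(n)\}}\bigl(\mathtt{q}(\res_{\mt}(n))-\mathtt{q}(\res(A))\bigr)}.$$
Controlling the interaction of $X_n^{\pm1}$ with $C_n$ and $T_{n-1}$ in this extraction, so that no cross terms survive, is where I expect the real work to lie.
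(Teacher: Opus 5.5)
Your Part (1) is correct, and so is your reduction in $\beta_{\mt}$. In fact $C_iF_{\rm T}C_i=F_{(\mt,\beta_{\mt}+e_i)}$ exactly, because conjugation by $C_i$ inverts $X_i$ and $\mathtt{B}(i)$ is closed under inversion. The paper does not prove either part here; both are quoted from \cite{LS3}. Your proposal does not establish Part (2): the reduction in $\mt$ contains a miscomputation, and the inductive step is missing.

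\emph{The reduction in $\mt$ is miscomputed.} Since $f_{{\rm T},{\rm T}}=F_{\rm T}$ and $\mathtt{c}_{\mt,\mt}=1$, \eqref{fst and re. fst} gives $f^{\mathfrak w}_{{\rm T},{\rm T}}=\mathtt{c}^{\mathfrak w}_{\rm T}F_{\rm T}$ and $f^{\mathfrak w}_{{\rm W},{\rm W}}=F_{\rm W}$. So your two products are $\mathtt{c}^{\mathfrak w}_{\rm T}F_{\rm T}$ and $\mathtt{c}^{\mathfrak w}_{\rm T}F_{\rm W}$. The factor cancels, and the correct conclusion is $t_{2m,n}(F_{(\mt,0)})=t_{2m,n}(F_{(\mt^{\undla},0)})$. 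This is as it must be, since on the simple block $B_{\undla}$ the form is a multiple of the (super)trace.

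The combinatorial check you then propose is therefore false. Write $R(\mt)$ for the $\mt$-dependent ratio of $\Rem$- and $\Add$-products in the formula. The two identities of Lemma \ref{combina. formulae of c} give $\mathtt{c}^{\mt^{\undla}}_{\mt}R(\mt)=K_{\undla}/\mathtt{c}^{\mt_{\undla}}_{\mt}$, with $K_{\undla}$ depending only on $\undla$. So the ``$\lhd$ parts'' are not a shape invariant. What does hold is that $R(\mt)$ itself is independent of $\mt$, because $\mathtt{c}^{\mt^{\undla}}_{\mt}\mathtt{c}^{\mt_{\undla}}_{\mt}=\mathtt{c}_{\mt_{\undla},\mt^{\undla}}^2$ by $\ell(d(\mt_{\undla},\mt^{\undla}))=\ell(d(\mt_{\undla},\mt))+\ell(d(\mt,\mt^{\undla}))$. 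This step is in any case unnecessary: an inductive step valid for every ${\rm T}$ produces the product over $k$ along $\mt$ directly.

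\emph{The inductive step is missing.} This step is the whole content of (2), and you only describe it; the mechanism you describe cannot produce it.
\begin{itemize}
\item Let $\undmu$ be the shape of $\mt\downarrow_{n-1}$, and write $F_{{\rm T}\downarrow_{n-1}}=\sum_{{\rm T}'}F_{{\rm T}'}$ over its extensions ${\rm T}'$. The unknowns are the $2\sharp\Add(\undmu)=2m+2\sharp\Rem(\undmu)$ values $t_{2m,n}(F_{{\rm T}'})$.
\item Lemma \ref{lem:PBWNon-dege} only gives $t_{2m,n}(F_{{\rm T}\downarrow_{n-1}}X_n^k)=\delta_{k,-m}\,t_{2m,n-1}(F_{{\rm T}\downarrow_{n-1}})$ for $-m\le k\le m-1$. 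Even after using the $\beta$-antisymmetry, this is only $m$ independent conditions on $m+\sharp\Rem(\undmu)$ unknowns.
\item Equivalently, the minimal polynomial of $X_n$ on $F_{{\rm T}\downarrow_{n-1}}\mathcal{H}^f_{\mathbb{K}}(n)$ has degree $2\sharp\Add(\undmu)>2m$. Reducing $g(X_n)$ modulo it leaves powers of $X_n$ outside the window in which $\tau_{2m,n}$ can be read off from the PBW basis, and rewriting those powers in PBW form necessarily passes through $T_{n-1}$.
\end{itemize}

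The missing ingredient has two parts. First, you need the $\mathcal{H}^f_{\mathbb{K}}(n-1)$-component of $X_n^k$, for all $k\in\Z$, under the Mackey-type decomposition of $\mathcal{H}^f_{\mathbb{K}}(n)$ as an $(\mathcal{H}^f_{\mathbb{K}}(n-1),\mathcal{H}^f_{\mathbb{K}}(n-1))$-bimodule, including the contributions through the $T_{n-1}$-summand. Second, you need a proof that its generating function acts on the $\undmu$-block as $\prod_{B\in\Rem(\undmu)}(w-\mathtt{q}(\res(B)))/\prod_{A\in\Add(\undmu)}(w-\mathtt{q}(\res(A)))$ in $w=X_n+X_n^{-1}$, suitably normalised.

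Given this, take the residue at $w=\mathtt{q}(\res_{\mt}(n))$. The change of variables $z\mapsto z+z^{-1}$ supplies $\bigl(\mathtt{b}_{\mt,n}^{\sgn_{\beta_{\mt}}(n)}-\mathtt{b}_{\mt,n}^{-\sgn_{\beta_{\mt}}(n)}\bigr)^{-1}$, and the result is exactly the $k=n$ factor of the formula. You correctly locate the difficulty in the $C_n$ and $T_{n-1}$ cross terms. Until this identity is proved for the Hecke--Clifford algebra, Part (2) remains unproved.
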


		\subsection{Lifting idempotents}\label{Lifting idempotents}
	In this subsection, {\bf we fix $q^2\neq \pm 1$, $\undQ=(Q_1,\cdots,Q_m)\in(\mathbb{K}^*)^m$ and $f=f^{(\bullet)}_{\underline{Q}}$ with $\bullet\in\{\mathsf{0},\mathsf{s},\mathsf{ss}\}$.  Let $x$ be an indeterminant, we set $\hO:=\mathbb{K}[[x]]=\{a_0+a_1x+a_2x^2+\cdots|~a_i\in \mathbb{K}\}$ and $\hK$ be the fraction field of $\hO$. We modify the parameters as follows \label{pag:deformed CHCAs}
%		\footnote{Note again that each $Q_i$ corresponds to $qQ_i$ in \cite{SW}.}
		: $q':=x^4+q,$ $Q'_i:=x^{8ni}+Q_i, 1\leq i\leq m$. Then we can define $\mathcal{H}^{f'}_{\hO}:=\mathcal{H}^{f'}_{\hO}(n)$, where $$f'=\begin{cases}
			f^{\mathsf{(0)}}_{\underline{Q'}}=\prod_{i=1}^m \biggl(X_1+X^{-1}_1-\mathtt{q}(Q_i')\biggr), &\qquad\text{ if $\bullet=\mathsf{0},$}\\
			f^{\mathsf{(s)}}_{\underline{Q'}}=(X_1-1)\prod_{i=1}^m \biggl(X_1+X^{-1}_1-\mathtt{q}(Q_i')\biggr),&\qquad\text{ if $\bullet=\mathsf{s},$}\\
			f^{\mathsf{(ss)}}_{\underline{Q'}}=(X_1-1)(X_1+1)\prod_{i=1}^m \biggl(X_1+X^{-1}_1-\mathtt{q}(Q_i')\biggr),&\qquad\text{ if $\bullet=\mathsf{ss}.$}
		\end{cases}$$  Similarly, we can define $\mathcal{H}^{f'}_{\hK}:=\mathcal{H}^{f'}_{\hK}(n)$.} Then we have $$
		\mathcal{H}^{f'}_{\hK}\cong \hK\otimes_{\hO} \mathcal{H}^{f'}_{\hO},\qquad\mathcal{H}^{f}_{\mathbb{K}}\cong \mathbb{K}\otimes_{\hO}\mathcal{H}^{f'}_{\hO}.
		$$ Then we can check $P_n^{(\bullet)}({q'}^2,\undQ')\neq 0$, hence $\mathcal{H}^{f'}_{\hK}$ is semisimple over $\hK$.  {\bf Accordingly, we define the residues of boxes in the young diagram $\undla$ via \eqref{eq:residue} as well as $\res(\mathfrak{t})$ for each $\mathfrak{t}\in\Std(\undla)$ with $\undla\in\mathscr{P}^{\bullet,m}_{n}$ with $m\geq 0$ with respect to parameters $(q',Q'_1,\cdots,Q'_m)$. }
	
It follows from \eqref{substitution0} that all of the eigenvalues $\mathtt{b}_{\pm}(\res_{\mt}(k))$ of $X_k$ belong to $\mathbb{K}[[x^2]]\subset\hO$. Furthermore, by \eqref{nondege coeffi cti} and \eqref{c-coefficients. non-dege.} we deduce that all of the coefficients $\mathtt{c}_{\ms,\mt}\in \hK$. For $a\in \hO$, we use $a|_{x=0}\in \mathbb{K}$ to denote the image of $a$ in the residue field $\mathbb{K}\cong \hO/(x)$. {\bf We shall identify $\mathcal{H}^{f}_{\mathbb{K}}$ with the cyclotomic quiver Hecke-Clifford superalgebra $RC^{\Lambda_f}_n$ by Theorem \ref{KKTiso}.
%	, and we will denote by $[a]\in \mathcal{K}$ instead of $[a|_{x=0}]\in \mathcal{K}$ for simplicity.
The aim of this section is to construct certain idempotent $e({\bf i})^\hO\in\mathcal{H}^{f'}_{\hO}$ such that $1\otimes_\hO e({\bf i})^\hO=e({\bf i})\in \mathcal{H}^{f}_{\mathbb{K}}$ for ${\bf i}\in(\mathbb{K}^*)^n$.}
	\begin{defn}
		Let ${\rm T}=(\mt,\alpha_\mt,\beta_\mt)\in{\rm Tri}_{\bar{0}}(\undla),\, \undla\in\mathscr{P}^{\bullet,m}_{n}$, we define the sequence \label{pag:defored T seq}
$$\mathtt{b}_{\mt,\beta_{\mt}}:=\left(\mathtt{b}_{\mt, 1}^{-{\rm sgn}_{\beta_{\mt}}(1)},\mathtt{b}_{\mt, 2}^{-{\rm sgn}_{\beta_{\mt}}(2)},\cdots,\mathtt{b}_{\mt, n}^{-{\rm sgn}_{\beta_{\mt}}(n)}\right)\in (\hK^*)^n,$$
where $\mathtt{b}_{\mt, k}:=\mathtt{b}_{-}(\res_{\mt}(k))$ for $k\in[n].$
And we define
\begin{equation}\label{defn:iT}
{\bf i}^{\rm T}:=\mathtt{b}_{\mt,\beta_{\mt}}|_{x=0}\in(\mathbb{K}^*)^n,
\end{equation}	
then $\pr ({\bf i}^{\rm T})=\mathtt{q}(\res(\mt))|_{x=0} \in (I_f)^n.$
%that is, ${\bf i}_k^{\rm T}=\mathtt{q}(\res_{\mt}(k))\in I_f$ for $k\in[n].$	
		%We define an equivalent relation $\approx$ on ${\rm Tri}_{\bar{0}}(\mathscr{P}^{\bullet,m}_{n})$ as follows. For ${\rm S}=(\ms,\alpha'_\ms,\beta'_\ms)\in{\rm Tri}_{\bar{0}}(\undla), {\rm T}=(\mt,\alpha_\mt,\beta_\mt)\in{\rm Tri}_{\bar{0}}(\undmu),\, \undla,\undmu\in\mathscr{P}^{\bullet,m}_{n}$, then ${\rm S}\approx{\rm T}$
		%	if and only if $$\mathtt{b}_{\beta'_\ms}(\res(\ms))|_{x=0}=\mathtt{b}_{\beta_\mt}(\res(\mt))|_{x=0}.$$
	\end{defn}
	
	\begin{defn}
		Let ${\bf i}\in(\mathbb{K}^*)^n$. For $\undla\in\mathscr{P}^{\bullet,m}_{n},$
we define
$${\rm Tri}(\undla,{\bf i})=\Bigl\{{\rm T}=(\mt,\alpha_\mt,\beta_\mt)\in{\rm Tri}_{\bar{0}}(\undla) \Bigm| {\bf i}^{\rm T}={\bf i}\Bigr\},$$
and
$${\rm Tri}({\bf i})=\bigsqcup_{\undla\in\mathscr{P}^{\bullet,m}_{n}} {\rm Tri}(\undla,{\bf i}).$$
We set \label{pag:deformed KLR idempotent}
\begin{align}\label{eO}
e({\bf i})^\hO:=\sum_{{\rm T}\in {\rm Tri}({\bf i})}F_{\rm T}\in \mathcal{H}^{f'}_{\hK}.
\end{align}
	\end{defn}

	\begin{prop}\label{lifting idempotent}
		Let ${\bf i}\in(\mathbb{K}^*)^n$, then $e({\bf i})^\hO\in\mathcal{H}^{f'}_{\hO}$ and $1\otimes_\hO e({\bf i})^\hO=e({\bf i})$.
	\end{prop}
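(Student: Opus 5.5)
The idea is to express $e({\bf i})^\hO$ as a polynomial in the $X_k^{\pm1}$ with coefficients in $\hO$. That expression is manifestly integral, and it specializes to the standard generalized-eigenspace projector in $\mathcal{H}^f_{\mathbb{K}}$.

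First I record the spectral picture. By Theorem \ref{primitive idempotents}, the idempotents $F_{\rm T}$, ${\rm T}\in{\rm Tri}_{\bar 0}(\mathscr{P}^{\bullet,m}_n)$, are orthogonal and sum to $1$ in the semisimple algebra $\mathcal{H}^{f'}_{\hK}$. By \eqref{X acts on f}, $X_k$ acts on $F_{\rm T}\mathcal{H}^{f'}_{\hK}$ by the scalar $(\mathtt{b}_{\mt,\beta_\mt})_k$. Each such scalar lies in $\mathbb{K}[[x^2]]\subset\hO$ and is invertible there. Hence the commutative subalgebra $\hK[X_1^{\pm1},\dots,X_n^{\pm1}]\subset\mathcal{H}^{f'}_{\hK}$ acts semisimply on the left regular module. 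For any ${\bf j}\in(\mathbb{K}^*)^n$, the sum $E_{\bf j}$ of those $F_{\rm T}$ whose eigenvalue vector is congruent to ${\bf j}$ modulo $x$ is the projection onto the sum of joint eigenspaces whose eigenvalues reduce to ${\bf j}$. (I take the reduction of $\mathtt{b}_{\mt,\beta_\mt}$ as in \eqref{defn:iT}.) In particular $E_{\bf i}=e({\bf i})^\hO$.

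Next I build an integral polynomial realizing this projector. Let $S_k\subset\mathbb{K}^*$ be the finite set of residues mod $x$ of the $k$-th eigenvalues $\mathtt{b}^{\pm1}_{\mt,k}$, which lie in $J_f$. Let $N$ be the total number of eigenvalue vectors. The plan is to apply Hensel's lemma, or equivalently the Chinese remainder theorem in the complete local ring $\hO[X_k^{\pm1}]$, to the polynomial $\prod_{\mathtt{b}}(X_k-\mathtt{b})$ whose roots are all the $k$-th eigenvalues. Grouping the roots by their residues gives a factorization $\prod_{j\in S_k}g_{k,j}(X_k)$ into pairwise comaximal monic factors over $\hO$, because roots with distinct residues differ by a unit. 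CRT over $\hO[X]$ then produces polynomials $\pi_{k,j}(X_k)\in\hO[X_k]$ with $\pi_{k,j}\equiv\delta_{j,j'}$ modulo $g_{k,j'}$. Evaluated on an eigenvector with $k$-th eigenvalue $\mathtt{b}$, $\pi_{k,j}$ gives $1$ if $\mathtt{b}$ reduces to $j$ and $0$ otherwise. Therefore
\[
e({\bf i})^\hO=\prod_{k=1}^n\pi_{k,{\bf i}_k}(X_k)\in\mathcal{H}^{f'}_{\hO},
\]
since the two sides agree on every $F_{\rm T}\mathcal{H}^{f'}_{\hK}$ and these summands exhaust the algebra. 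If ${\bf i}_k\notin S_k$ for some $k$, both sides are $0$.

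Finally I specialize. Reducing mod $x$, $g_{k,j}$ becomes $(X_k-j)^{d}$ for some $d$, and the $\overline{\pi}_{k,j}$ are the CRT idempotents for the factorization $\prod_j(X_k-j)^{d_j}$. Now $\overline{\prod_j g_{k,j}}(X_k)$ kills $\mathcal{H}^f_{\mathbb{K}}$, because it is the reduction of an operator that vanishes on the faithful module $\mathcal{H}^{f'}_{\hO}\subset\mathcal{H}^{f'}_{\hK}$. Hence $\overline{\pi}_{k,j}(X_k)$ is the projector onto the generalized $j$-eigenspace of $X_k$ on any finite-dimensional $\mathcal{H}^f_{\mathbb{K}}$-module. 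Taking the product over $k$ gives the projector onto $M_{\bf i}$, which is $e({\bf i})$ by definition. Thus $1\otimes e({\bf i})^\hO=e({\bf i})$.

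The main obstacle is the integrality of the CRT decomposition. This needs distinct residues to give distinct roots differing by units of $\hO$, and the eigenvalues to lie in $\hO$ at all, which the paper notes holds since they lie in $\mathbb{K}[[x^2]]$. A secondary point is checking that the reduction of the annihilating polynomial still annihilates $\mathcal{H}^f_{\mathbb{K}}$, which uses that $\mathcal{H}^{f'}_{\hO}$ is $\hO$-free.
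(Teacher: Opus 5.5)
Your proof is correct, but it obtains integrality differently from the paper. The paper follows Hu--Mathas. For a fixed ${\rm T}\in{\rm Tri}({\bf i})$ it forms the Lagrange-type element $F'_{\rm T}=\prod_k\prod_{c}\frac{X_k-c}{\mathtt{b}_{\mt,k}^{-{\rm sgn}_{\beta_{\mt}}(k)}-c}$, with $c$ running over the eigenvalues satisfying $c|_{x=0}\neq{\bf i}_k$. This element is integral because its denominator $d_{\rm T}$ is a unit, but it is only an approximate idempotent: $F'_{\rm T}=\sum_{{\rm S}\in{\rm Tri}({\bf i})}\frac{d_{\rm S}}{d_{\rm T}}F_{\rm S}$ with $d_{\rm S}/d_{\rm T}\equiv 1\pmod x$. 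The paper then uses bounded denominators of the $F_{\rm S}$ to make $(e({\bf i})^\hO-F'_{\rm T})^N$ integral for large $N$. The binomial identity $(e({\bf i})^\hO-F'_{\rm T})^N=e({\bf i})^\hO+(1-F'_{\rm T})^N-1$ then gives integrality of $e({\bf i})^\hO$. For the specialization it shows that $(X_k-{\bf i}_k)^{\sharp{\rm Tri}({\bf i})}$ kills $1\otimes e({\bf i})^\hO$. It then invokes that these reductions are orthogonal idempotents summing to $1$.

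You use the same key fact, that eigenvalues with distinct residues differ by units of $\hO$, but you use it to produce exact CRT idempotents in $\hO[X_k]$. So $e({\bf i})^\hO$ comes out as an explicit product of one-variable polynomials over $\hO$, and no denominator bound or binomial trick is needed. Your specialization is also more direct. The reductions of your CRT idempotents are CRT idempotents for $\prod_j(X_k-j)^{d_j}$, which annihilates $X_k$ on $\mathcal{H}^f_{\mathbb{K}}$, so they are the generalized eigenspace projectors. Note that neither Hensel's lemma nor completeness of $\hO$ is used, since the roots already lie in $\hO$; the unused constant $N$ can also be dropped.

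The paper's route is a generic idempotent-lifting argument and yields the nilpotency exponent $\sharp{\rm Tri}({\bf i})$ for the subsequent corollary directly. Your route is cleaner and more explicit. The same exponent is recovered from $\prod_{{\rm T}\in{\rm Tri}({\bf i})}\bigl(X_k-\mathtt{b}_{\mt,k}^{-{\rm sgn}_{\beta_{\mt}}(k)}\bigr)e({\bf i})^\hO=0$.
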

	
	\begin{proof}
		The proof is similar as in \cite[Proposition 4.8]{HM2}.  For $k\in[n]$, let
		$$\mathtt{B}(k):=\{ \mathtt{b}_{\pm}(\res_{\ms}(k)) \mid \ms \in \Std(\mathscr{P}^{\bullet,m}_{n}) \}.$$ We fix ${\rm T}=(\mt,\alpha_\mt,\beta_\mt)\in{\rm Tri}({\bf i})$ for ${\bf i}=(i_1,\ldots,i_n)\in(\mathbb{K}^*)^n,$ and construct a new element \begin{align}\label{middle element}
			F'_{\rm T}:=\prod_{k=1}^n\prod_{\substack{c\in\mathtt{B}(k)\\c|_{x=0}\neq i_k}}\frac{X_k-c}{\mathtt{b}_{\mt, k}^{-{\rm sgn}_{\beta_{\mt}}(k)}-c}\in \mathcal{H}^{f'}_{\hO}.\end{align}
		Let $$d_{\rm T}:=\prod_{k=1}^n\prod_{\substack{c\in\mathtt{B}(k)\\c|_{x=0}\neq i_k}}\left(\mathtt{b}_{\mt, k}^{-{\rm sgn}_{\beta_{\mt}}(k)}-c\right)\in \hO^{\times}$$ be the demoninator of $F'_{\rm T}$, then we have \begin{align}\label{multiply middle}F_{\rm S}F'_{\rm T}=F'_{\rm T}F_{\rm S}=\begin{cases}
				\frac{d_{\rm S}}{d_{\rm T}} F_{\rm S}, &\qquad \text{if ${\rm S}\in{\rm Tri}({\bf i}),$}\\
				0,&\qquad \text{otherwise}.
			\end{cases}
		\end{align} This implies that \begin{align}\label{middle expansion}
			F'_{\rm T}=\sum_{\substack{{\rm S}\in{\rm Tri}({\bf i})}}	\frac{d_{\rm S}}{d_{\rm T}} F_{\rm S}.
		\end{align} Moreover, we have $d_{\rm S}-d_{\rm T}\in x\hO$ for ${\rm S}\in{\rm Tri}({\bf i}).$ We deduce that there exsits $N\in\N$ such that $$
		\left(1-\frac{d_{\rm S}}{d_{\rm T}}\right)^N F_{\rm S}\in \mathcal{H}^{f'}_{\hO}$$ for all ${\rm S}\in{\rm Tri}({\bf i})$. This, combining with \eqref{middle expansion} implies $$
		\left(e({\bf i})^\hO-F'_{\rm T}\right)^N=\sum_{\substack{{\rm S}\in{\rm Tri}({\bf i})}}\left(1-	\frac{d_{\rm S}}{d_{\rm T}} \right)^NF_{\rm S}\in \mathcal{H}^{f'}_{\hO}.
		$$ On the other hand, by the binomial theorem, we can compute \begin{align*}
			\left(e({\bf i})^\hO-F'_{\rm T}\right)^N&=\sum_{k=0}^N(-1)^k\binom{N}{k}\left(e({\bf i})^\hO\right)^{N-k}\left(F'_{\rm T}\right)^k\\
			&=e({\bf i})^\hO+\sum_{k=1}^N(-1)^k\binom{N}{k}\left(e({\bf i})^\hO\right)^{N-k}\left(F'_{\rm T}\right)^k\\
			&=e({\bf i})^\hO+	\left(1-F'_{\rm T}\right)^N-1,
		\end{align*}
where in the first and last equation, we have used \eqref{multiply middle}. In conclusion, we deduce that $e({\bf i})^\hO\in\mathcal{H}^{f'}_{\hO}$.
Now we set $\hat{e}({\bf i})=1\otimes_\hO e({\bf i})^\hO \in \mathcal{H}^{f}_{\mathbb{K}}$. By definition, for $1\leq k\leq n,$ we have
$$\prod_{{\rm T}\in {\rm Tri}({\bf i})}\left(X_k-\mathtt{b}_{\mt, k}^{-{\rm sgn}_{\beta_{\mt}}(k)}\right)e({\bf i})^\hO=0,$$
which implies that
\begin{align}\label{nilpotentcy}(X_k-{\bf i}_k)^{\sharp {\rm Tri}({\bf i})}\hat{e}({\bf i})=0.
	\end{align}
Hence $\hat{e}({\bf i})\in e({\bf i})\mathcal{H}^{f}_{\mathbb{K}}.$ Since $\{\hat{e}({\bf i})\mid {\bf i}\in(\mathbb{K}^*)^n,e({\bf i})\neq 0\}$ is a finite set of pairwise orthogonal idempotents and $$\sum_{{\bf i}\in(\mathbb{K}^*)^n,e({\bf i})\neq 0}\hat{e}({\bf i})=1,$$ we deduce that $\mathcal{H}^{f}_{\mathbb{K}}=\bigoplus_{{\bf i}\in(\mathbb{K}^*)^n,e({\bf i})\neq 0}\hat{e}({\bf i})\mathcal{H}^{f}_{\mathbb{K}}.$
This implies that $\hat{e}({\bf i})\mathcal{H}^{f}_{\mathbb{K}}=e({\bf i})\mathcal{H}^{f}_{\mathbb{K}}$ and $\hat{e}({\bf i})=e({\bf i})$.
	\end{proof}
	
	An immediate consequence from our proof gives the following nilpotency upper bound for $y_ke({\bf i})$, which generalizes \cite[In the end of \S 4]{EM} and \cite[Corollary 4.31]{HM2}.
	\begin{cor}
			Let ${\bf i}\in(\mathbb{K}^*)^n$, and $1\leq k\leq n$. Then we have $y_k^{\sharp {\rm Tri}({\bf i})}e({\bf i})=0$.
		\end{cor}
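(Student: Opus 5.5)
The plan is to read the bound off directly from the proof of Proposition \ref{lifting idempotent}, using the isomorphism of Theorem \ref{KKTiso}. Recall that $e({\bf i})^\hO=\sum_{{\rm T}\in{\rm Tri}({\bf i})}F_{\rm T}$. By \eqref{X acts on f}, each $F_{\rm T}$ is a simultaneous eigenvector for left multiplication by $X_k$, with eigenvalue $\mathtt{b}_{\mt,k}^{-{\rm sgn}_{\beta_\mt}(k)}$. Hence the polynomial $\prod_{{\rm T}\in{\rm Tri}({\bf i})}\bigl(X_k-\mathtt{b}_{\mt,k}^{-{\rm sgn}_{\beta_\mt}(k)}\bigr)$ has exactly $\sharp{\rm Tri}({\bf i})$ linear factors and kills $e({\bf i})^\hO$ in $\mathcal{H}^{f'}_{\hK}$. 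Each factor and $e({\bf i})^\hO$ lie in $\mathcal{H}^{f'}_{\hO}$, and $\mathcal{H}^{f'}_{\hO}$ is a free $\hO$-lattice in $\mathcal{H}^{f'}_{\hK}$, so the product vanishes already in $\mathcal{H}^{f'}_{\hO}$.

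Next we specialize via $x\mapsto 0$. Every eigenvalue reduces to ${\bf i}_k$ by the definition of ${\rm Tri}({\bf i})$ and \eqref{defn:iT}. Combined with $1\otimes e({\bf i})^\hO=e({\bf i})$, this gives \eqref{nilpotentcy}: $(X_k-{\bf i}_k)^{\sharp{\rm Tri}({\bf i})}e({\bf i})=0$ in $\mathcal{H}^f_{\mathbb{K}}$.

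It remains to transport this to $y_k$. By Theorem \ref{KKTiso}, $y_ke({\bf i})=g\,(X_k-{\bf i}_k)e({\bf i})$ with $g:=f_{k,{\bf i}}(X_1,\dots,X_n)$. Everything here lives in the commutative subalgebra generated by $X_1^{\pm1},\dots,X_n^{\pm1}$ together with $e({\bf i})$, which is a polynomial in the $X_j$'s and commutes with them. Therefore $y_k^Ne({\bf i})=g^N(X_k-{\bf i}_k)^Ne({\bf i})$, and taking $N=\sharp{\rm Tri}({\bf i})$ gives zero. (The unit property $f_{k,{\bf i}}({\bf i})\neq 0$ is not even needed for this direction.)

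There is no real obstacle. The only point to check is that the relevant elements all commute, which is clear because $e({\bf i})$ is a polynomial in the $X_j$'s (as a generalized eigenspace projector), so $e({\bf i})$ commutes with $g$ and with $X_k$.
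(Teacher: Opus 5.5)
Your proposal is correct and follows essentially the same route as the paper. The paper derives $(X_k-{\bf i}_k)^{\sharp{\rm Tri}({\bf i})}e({\bf i})=0$ in the proof of Proposition~\ref{lifting idempotent} exactly as you do, by specializing at $x=0$ the annihilating product $\prod_{{\rm T}\in{\rm Tri}({\bf i})}\bigl(X_k-\mathtt{b}_{\mt,k}^{-{\rm sgn}_{\beta_{\mt}}(k)}\bigr)$. It then reads off the corollary from the image $f_{k,{\bf i}}(X_1,\dots,X_n)(X_k-{\bf i}_k)e({\bf i})$ of $y_ke({\bf i})$ under Theorem~\ref{KKTiso}; you make explicit the commutativity step that the paper leaves implicit.
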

		
		\begin{proof}
	This follows from Theorem \ref{KKTiso} and \eqref{nilpotentcy}.
	\end{proof}
	
	As another application of Proposition \ref{lifting idempotent}, we can deduce dimension formulae for bi-weight spaces.
	\begin{defn}
		Let ${\bf i}\in(\mathbb{K}^*)^n$. For $\undla\in\mathscr{P}^{\bullet,m}_{n},$ we define $$\Std(\undla,{\bf i})=\Bigl\{\mt\in\Std(\undla)\Bigm| \exists {\rm T}=(\mt,\alpha_\mt,\beta_\mt)\in{\rm Tri}_{\bar{0}}(\undla) \text{ such that } {\bf i}^{\rm T}={\bf i}\Bigr\}.$$
	\end{defn}
	
	\begin{thm}
		Let ${\bf i},{\bf j} \in(\mathbb{K}^*)^n$. We have
\begin{align}\label{dim 1}
\dim_{\mathbb{K}}e({\bf i})\mathcal{H}^{f}_{\mathbb{K}}e({\bf j})=\sum_{\substack{\undla\in\mathscr{P}^{\bullet,m}_{n}}}2^{d_{\undla}}\sharp {\rm Tri}(\undla,{\bf i}) \sharp {\rm Tri}(\undla,{\bf j}).
		\end{align}
If $l=\sharp \{\alpha \in \undla \mid \mathtt{b}_{+}(\res(\alpha))|_{x=0} \in\{\pm 1\}\},$ then
\begin{align}\label{dim 2}
\dim_{\mathbb{K}}e({\bf i})\mathcal{H}^{f}_{\mathbb{K}}e({\bf j})=\sum_{\substack{\undla\in\mathscr{P}^{\bullet,m}_{n}}}2^{2l-\sharp \mathcal{D}_{\undla}}\sharp \Std(\undla,{\bf i}) \sharp \Std(\undla,{\bf j}).
		\end{align}
	\end{thm}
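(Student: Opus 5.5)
The plan is to compute the dimension over $\hK$ of the lifted bi-weight space and then specialize. By Proposition \ref{lifting idempotent}, $e({\bf i})^\hO, e({\bf j})^\hO\in\mathcal{H}^{f'}_{\hO}$ are idempotents lifting $e({\bf i}), e({\bf j})$.

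First, the specialization step. Since $\mathcal{H}^{f'}_{\hO}$ is free of finite rank over $\hO$ (it has the basis $\{X^\alpha C^\beta T_w\}$), the $\hO$-module $e({\bf i})^\hO\mathcal{H}^{f'}_{\hO}e({\bf j})^\hO$ is a direct summand of it, hence free. Its rank equals both
\[
\dim_{\hK} e({\bf i})^\hO\mathcal{H}^{f'}_{\hK}e({\bf j})^\hO
\quad\text{and}\quad
\dim_{\mathbb{K}} e({\bf i})\mathcal{H}^f_{\mathbb{K}}e({\bf j}).
\]
The second equality uses $\mathbb{K}\otimes_\hO \mathcal{H}^{f'}_{\hO}\cong\mathcal{H}^f_{\mathbb{K}}$ together with $1\otimes e(\cdot)^\hO=e(\cdot)$.

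Second, the computation over $\hK$. Here $\mathcal{H}^{f'}_{\hK}$ is semisimple with the seminormal basis of Theorem \ref{seminormal basis}. By \eqref{eO}, $e({\bf i})^\hO=\sum_{{\rm S}\in{\rm Tri}({\bf i})}F_{\rm S}$ with orthogonal idempotents $F_{\rm S}$, so
\[
e({\bf i})^\hO\mathcal{H}^{f'}_{\hK}e({\bf j})^\hO=\bigoplus_{{\rm S}\in{\rm Tri}({\bf i}),\,{\rm T}\in{\rm Tri}({\bf j})}F_{\rm S}\mathcal{H}^{f'}_{\hK}F_{\rm T}.
\]
If ${\rm S}$ and ${\rm T}$ lie in different blocks $\undla\neq\undmu$, this summand vanishes. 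If both lie in ${\rm Tri}_{\bar 0}(\undla)$, then $F_{\rm S}B_{\undla}F_{\rm T}$ is spanned by the seminormal basis vectors $f_{{\rm S},{\rm T}'}$ with ${\rm T}'\in{\rm Tri}(\undla)$ and $F_{{\rm T}'}=F_{\rm T}$. Using $f_{{\rm S},{\rm T}'}\in F_{\rm S}\mathcal{H}F_{{\rm T}'}$ and the definition of $F$ on ${\rm Tri}_{\bar 1}$ via ${\rm T}_a$, these are exactly ${\rm T}'={\rm T}_{\bar0}$ and, when $d_{\undla}=1$, also ${\rm T}_{\bar1}$. This gives dimension $2^{d_{\undla}}$.

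Summing over $\undla$ gives \eqref{dim 1}. The main point needing care is that the basis elements $f_{{\rm S},{\rm T}_a}$ are exactly those lying in $F_{\rm S}B_{\undla}F_{\rm T}$. This follows from linear independence in Theorem \ref{seminormal basis}, since each basis vector lies in a single Peirce component $F_{\rm S}\mathcal{H}F_{{\rm T}_{\bar0}}$.

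For \eqref{dim 2}, I count ${\rm Tri}(\undla,{\bf i})$ for a fixed $\mt\in\Std(\undla,{\bf i})$.

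The $\alpha_\mt\in\Z_2(\mathcal{OD}_\mt)_{\bar0}$ do not affect ${\bf i}^{\rm T}$, and there are $2^{\lceil t/2\rceil-d_{\undla}}=2^{\lfloor t/2\rfloor}$ of them, where $t=\sharp\mathcal{D}_{\undla}$.

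For $\beta_\mt$, position $k\notin\mathcal{D}_\mt$ contributes $\mathtt{b}_{\mt,k}^{\pm1}|_{x=0}$. These two values coincide precisely when $\mathtt{b}_{+}(\res_\mt(k))|_{x=0}\in\{\pm1\}$, in which case both choices of $\beta_k$ give ${\bf i}_k$. Otherwise exactly one choice matches ${\bf i}_k$.

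The diagonal boxes in $\mathcal{D}$ are among the $l$ such boxes, since their residues are $\pm q$ and so $\mathtt{b}_+=\pm1$. Hence the number of $\beta_\mt$ is $2^{l-t}$, independent of $\mt$. Note that $l$ depends only on $\undla$.

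Therefore $\sharp{\rm Tri}(\undla,{\bf i})=2^{\lfloor t/2\rfloor+l-t}\,\sharp\Std(\undla,{\bf i})$. Since $2\lfloor t/2\rfloor+d_{\undla}=t$, substituting into \eqref{dim 1} gives exponent $2l-t$, which is \eqref{dim 2}.
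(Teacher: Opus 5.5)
Your proposal is correct and follows essentially the same route as the paper. Both arguments identify $\dim_{\mathbb{K}} e({\bf i})\mathcal{H}^{f}_{\mathbb{K}}e({\bf j})$ with the $\hO$-rank, and hence the $\hK$-dimension, of $e({\bf i})^\hO\mathcal{H}^{f'}_{\hO}e({\bf j})^\hO$ via Proposition \ref{lifting idempotent}, count the seminormal basis vectors in each Peirce component $F_{\rm S}\mathcal{H}^{f'}_{\hK}F_{\rm T}$, and then use $\sharp{\rm Tri}(\undla,{\bf i})=2^{l-\lceil t/2\rceil}\sharp\Std(\undla,{\bf i})$; your $2^{\lfloor t/2\rfloor+l-t}$ is the same number. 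You merely spell out details the paper leaves implicit, namely freeness of the Peirce summand and the coordinatewise count of admissible $\beta_\mt$.
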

	
	\begin{proof}
		We have following two decompositions
$$\mathcal{H}^{f}_{\mathbb{K}}=\bigoplus_{{\bf i},{\bf j}\in(\mathbb{K}^*)^n,e({\bf i}),e({\bf j})\neq 0}e({\bf i})\mathcal{H}^{f}_{\mathbb{K}}e({\bf j}),\qquad	\mathcal{H}^{f'}_{\hO}=\bigoplus_{{\bf i},{\bf j}\in(\mathbb{K}^*)^n,e({\bf i}),e({\bf j})\neq 0}e({\bf i})^\hO\mathcal{H}^{f'}_{\hO}e({\bf j})^\hO.
		$$ By Proposition \ref{lifting idempotent}, we have the natural isomorphism $\mathbb{K}\otimes_\hO e({\bf i})^\hO\mathcal{H}^{f'}_{\hO}e({\bf j})^\hO\cong e({\bf i})\mathcal{H}^{f}_{\mathbb{K}}e({\bf j}).$ Then we have
\begin{align}\label{dim=rank}
\dim_{\mathbb{K}} e({\bf i})\mathcal{H}^{f}_{\mathbb{K}}e({\bf j})={\rm rank}_\hO e({\bf i})^\hO\mathcal{H}^{f'}_{\hO}e({\bf j})^\hO=\dim_\hK e({\bf i})^\hO\mathcal{H}^{f'}_{\hK}e({\bf j})^\hO.
\end{align}
Hence \eqref{dim 1} follows by computing numbers of seminormal basis elements in $e({\bf i})^\hO\mathcal{H}^{f'}_{\hK}e({\bf j})^\hO$. Note that
\begin{align*}
\sharp {\rm Tri}(\undla,{\bf i})=2^{l-\bigl\lceil\frac{\sharp \mathcal{D}_{\undla}}{2}\bigr\rceil}\sharp \Std(\undla,{\bf i}),
\qquad
\sharp {\rm Tri}(\undla,{\bf j})=2^{l-\bigl\lceil\frac{\sharp \mathcal{D}_{\undla}}{2}\bigr\rceil}\sharp \Std(\undla,{\bf j}),
\end{align*}
we obtain \eqref{dim 2} from \eqref{dim 1}.
	\end{proof}
	The following Corollary has it's independent interest.
\begin{cor}
	Let $1\leq k\leq n$. Then $\tilde{a}\in \mathbb{K}$ is an eigenvalue of $X_k$ on $\mathcal{H}^{f}_{\mathbb{K}}$ if and only if
there exists $a\in \hO$ such that $\tilde{a}=a|_{x=0}$ and $a$ is an eigenvalue of $X_k$ on $\mathcal{H}^{f'}_{\mathscr{K}}$.
\end{cor}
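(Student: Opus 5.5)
The plan is to show both directions by passing through the lifted idempotents $e({\bf i})^\hO$ of Proposition \ref{lifting idempotent}, together with the fact that the seminormal basis elements are simultaneous eigenvectors of the $X_k$ (\eqref{X acts on f}).

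First, the eigenvalues of $X_k$ on $\mathcal{H}^{f'}_{\hK}$. Since $\mathcal{H}^{f'}_{\hK}$ is semisimple, Theorem \ref{seminormal basis} applies to it. By \eqref{X acts on f}, every $f_{{\rm T},{\rm S}}$ with ${\rm T}=(\mt,\alpha_\mt,\beta_\mt)\in{\rm Tri}_{\bar 0}(\undla)$ satisfies $X_kf_{{\rm T},{\rm S}}=\mathtt{b}_{\mt,k}^{-{\rm sgn}_{\beta_\mt}(k)}f_{{\rm T},{\rm S}}$. Hence the eigenvalues of left multiplication by $X_k$ on $\mathcal{H}^{f'}_{\hK}$ are exactly the elements $a=\mathtt{b}_{\mt,k}^{-{\rm sgn}_{\beta_\mt}(k)}$, where ${\rm T}$ runs over ${\rm Tri}_{\bar 0}(\mathscr{P}^{\bullet,m}_n)$. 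Each such $a$ lies in $\mathbb{K}[[x^2]]\subset\hO$, so it makes sense to reduce it mod $x$. Moreover $a|_{x=0}=({\bf i}^{\rm T})_k$ by \eqref{defn:iT}.

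Second, the eigenvalues of $X_k$ on $\mathcal{H}^f_{\mathbb{K}}$. These are exactly the values ${\bf i}_k$ for those ${\bf i}$ with $e({\bf i})\ne0$, because $\mathcal{H}^f_{\mathbb{K}}=\bigoplus e({\bf i})\mathcal{H}^f_{\mathbb{K}}$ is the decomposition into generalized eigenspaces for the commuting $X_k$. By Proposition \ref{lifting idempotent}, $e({\bf i})\ne0$ if and only if $e({\bf i})^\hO\ne0$. This holds because $e({\bf i})^\hO$ is a nonzero idempotent of the free $\hO$-module $\mathcal{H}^{f'}_{\hO}$ exactly when ${\rm Tri}({\bf i})\ne\emptyset$, and a nonzero idempotent has nonzero reduction mod $x$. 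Equivalently, one can use the dimension formula \eqref{dim 1} with ${\bf i}={\bf j}$, which gives $\dim e({\bf i})\mathcal{H}^f_{\mathbb{K}}e({\bf i})>0$ if and only if ${\rm Tri}({\bf i})\neq\emptyset$ (using $2^{d_{\undla}}\ge1$).

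The two directions now follow.
\begin{itemize}
\item[($\Leftarrow$)] Suppose $a$ is an eigenvalue on $\mathcal{H}^{f'}_{\hK}$. Then $a=\mathtt{b}_{\mt,k}^{-{\rm sgn}_{\beta_\mt}(k)}$ for some ${\rm T}$. Set ${\bf i}={\bf i}^{\rm T}$. Then ${\rm T}\in{\rm Tri}({\bf i})$, so $e({\bf i})\ne0$, and $\tilde a=a|_{x=0}={\bf i}_k$ is an eigenvalue on $\mathcal{H}^f_{\mathbb{K}}$.
\item[($\Rightarrow$)] Suppose $\tilde a={\bf i}_k$ with $e({\bf i})\neq0$. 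Then some ${\rm T}\in{\rm Tri}({\bf i})$ exists, and $a:=\mathtt{b}_{\mt,k}^{-{\rm sgn}_{\beta_\mt}(k)}$ is an eigenvalue on $\mathcal{H}^{f'}_{\hK}$ with $a|_{x=0}=\tilde a$.
\end{itemize}

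The only delicate step is the nonvanishing equivalence $e({\bf i})\ne0\iff{\rm Tri}({\bf i})\ne\emptyset$. I would derive it most cleanly from \eqref{dim 1}, or from \eqref{dim=rank}, rather than arguing about idempotents directly.
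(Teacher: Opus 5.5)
Your proposal is correct and follows the same chain as the paper's proof. An eigenvalue ${\bf i}_k$ of $X_k$ on $\mathcal{H}^f_{\mathbb{K}}$ corresponds to some $e({\bf i})\neq 0$; by \eqref{dim=rank} this is equivalent to $e({\bf i})^{\hO}\neq 0$, i.e.\ ${\rm Tri}({\bf i})\neq\emptyset$; and then \eqref{X acts on f} on the seminormal basis of $\mathcal{H}^{f'}_{\hK}$ identifies the lifted eigenvalue $\mathtt{b}_{\mt,k}^{-{\rm sgn}_{\beta_{\mt}}(k)}$. Your added justification (a nonzero idempotent of the free $\hO$-module cannot reduce to zero mod $x$, or alternatively positivity in \eqref{dim 1}) spells out the step the paper simply cites via \eqref{dim=rank}.
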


	\begin{proof}
		We have the following.
	\begin{align*}
		&\text{$\tilde{a}$ is an eigenvalue of $X_k$ on $\mathcal{H}^{f}_{\mathbb{K}}$ } \\
		\xLeftrightarrow{\text{By definition}} &\text{ there exists some $e({\bf i})\neq 0$ such that ${\bf i}_k=\tilde{a}$}\\
        \xLeftrightarrow{\eqref{dim=rank}} &\text{ there exists some $e({\bf i})^{\hO}\neq 0$ such that ${\bf i}_k=\tilde{a}$}\\
        \xLeftrightarrow{\eqref{eO}} &\begin{matrix}\text{ there exists ${\rm T}=(\mt,\alpha_\mt,\beta_\mt) \in {\rm Tri}_{\bar{0}}({\bf i})$ such that $F_{\rm T}\neq 0$,}\\
        \text{ where ${\bf i}_k=\mathtt{b}_{\mt,\beta_{\mt}}^{-{\rm sgn}_{\beta_{\mt}}(k)}|_{x=0}=\tilde{a}$}\end{matrix}\\
%		\xLeftrightarrow{\eqref{dim 1}}&\begin{matrix}\text{ there exists some $\undla\in\mathscr{P}^{\bullet,m}_{n}$ and ${\rm T}=(\mt,\alpha_\mt,\beta_\mt)\in{\rm Tri}_{\bar{0}}(\undla)$}\\
%			\text{ such that ${\bf i}^{\rm T}_k=\tilde{a}$}
%			\end{matrix}\\
%			\xLeftrightarrow{\eqref{defn:iT}}&\begin{matrix}\text{ there exists some $\undla\in\mathscr{P}^{\bullet,m}_{n}$ and ${\rm T}=(\mt,\alpha_\mt,\beta_\mt)\in{\rm Tri}_{\bar{0}}(\undla)$}\\
%				\text{ such that $\mathtt{b}_{\mt,\beta_{\mt}}|_{x=0}=\tilde{a}$}
%			\end{matrix}\\
		\xLeftrightarrow{\text{Definition \ref{def seminormal}, Theorem \ref{seminormal basis} and \eqref{X acts on f}}}&\text{$\tilde{a}=a|_{x=0}$, where $a=\mathtt{b}_{\mt,\beta_{\mt}}^{-{\rm sgn}_{\beta_{\mt}}(k)}$ is an eigenvalue of $X_k$ on $\mathcal{H}^{f'}_{\mathscr{K}}$.}
		\end{align*}
		This proves the Corollary.
		\end{proof}
\section{Generalized graded super cellular bases for cyclotomic quiver Hecke-Clifford superalgebras}\label{Generalized graded super cellular bases for cyclotomic quiver Hecke-Clifford superalgebra}
{\bf Throughout this section, we fix $n\in \N, q^2\neq \pm 1$, $\undQ=(Q_1,\cdots,Q_m)\in({\mathbb{K}}^*)^m$. We set $\mathcal{H}^f_{\mathbb{K}}=\mathcal{H}^{f}_{\mathbb{K}}(n)$, where $f=f^{\mathsf{(0)}}_{\underline{Q}}=\prod_{i=1}^m \biggl(X_1+X^{-1}_1-\mathtt{q}(Q_i)\biggr)$. Let $x$ be an indeterminant, we set $\hO:=\mathbb{K}[[x]]=\{a_0+a_1x+a_2x^2+\cdots|~a_i\in \mathbb{K}\}$ and $\hK$ be the fraction field of $\hO$. We modify the parameters as follows:
%	\footnote{Note again that $Q_i$ here corresponds to $qQ_i$ in \cite{SW}.}
$q':=x^4+q,$ $Q'_i:=x^{8ni}+Q_i, 1\leq i\leq m$. Then we can define $\mathcal{H}^{f'}_{\hO}=\mathcal{H}^{f'}_{\hO}(n)$, where $f'=f^{\mathsf{(0)}}_{\underline{Q'}}=\prod_{i=1}^m \biggl(X_1+X^{-1}_1-\mathtt{q}(Q_i')\biggr)$. Similarly, we can define $\mathcal{H}^{f'}_{\hK}=\mathcal{H}^{f'}_{\hK}(n)$.} Then we have $$
\mathcal{H}^{f'}_{\hK}\cong \hK\otimes_{\hO} \mathcal{H}^{f'}_{\hO},\qquad \mathcal{H}^f_{\mathbb{K}}\cong \mathbb{K}\otimes_{\hO} \mathcal{H}^{f'}_{\hO}.
$$ {\bf Accordingly, we define the residues of boxes in the young diagram $\undla$ via \eqref{eq:residue} as well as $\res(\mathfrak{t})$ for each $\mathfrak{t}\in\Std(\undla)$ with $\undla\in\mathscr{P}^{\bullet,m}_{n}$ with $m\geq 0$ with respect to parameters $(q',Q'_1,\cdots,Q'_m)$. }

Again, $\mathcal{H}^{f'}_{\hK}$ is semisimple over $\hK$, all of the eigenvalues $\mathtt{b}_{\pm}(\res_{\mt}(k))$ of $X_k$ belong to $\mathbb{K}[[x^2]]\subset\hO$ and all of the coefficients $\mathtt{c}_{\ms,\mt}\in \hK$. For any $a\in \hO$, we still use $a|_{x=0}\in \mathbb{K}$ to denote the image of $a$ in the residue field $\mathbb{K}\cong \hO/(x)$. {\bf We shall identify $\mathcal{H}^f_{\mathbb{K}}$ with the cyclotomic quiver Hecke-Clifford superalgebra $RC^{\Lambda_f}_n$ by Theorem \ref{KKTiso}. The aim of this section is to construct certain generalized graded cellular bases for $\mathcal{H}^f_{\mathbb{K}}$.}
\label{pag:Add and Rem}
\begin{defn}\label{tri-Q-AddandRem}
For $\undla\in\mathscr{P}^{m}_{n}$, $\mt\in \Std(\undla),$ $k\in[n],$ we define
\begin{align*}
	\mathscr{A}_{\mt}^{\rhd,\undQ}(k)&:=\left\{(A,*)\mid A\in \mathscr{A}_{\mt}^{\rhd}(k), *\in\{\pm\},\mathtt{b}_{*}(\res(A))|_{x=0}=\mathtt{b}_{+}(\res_\mt(k))|_{x=0}\right\},\\
	\mathscr{R}_{\mt}^{\rhd,\undQ}(k)&:=\left\{(A,*)\mid A\in \mathscr{R}_{\mt}^{\rhd}(k), *\in\{\pm\},\mathtt{b}_{*}(\res(A))|_{x=0}=\mathtt{b}_{+}(\res_\mt(k))|_{x=0}\right\},\\
	\mathscr{A}_{\mt}^{\lhd,\undQ}(k)&:=\left\{(A,*)\mid A\in \mathscr{A}_{\mt}^{\lhd}(k), *\in\{\pm\},\mathtt{b}_{*}(\res(A))|_{x=0}=\mathtt{b}_{+}(\res_\mt(k))|_{x=0}\right\},\\
		\mathscr{R}_{\mt}^{\lhd,\undQ}(k)&:=\left\{(A,*)\mid A\in \mathscr{R}_{\mt}^{\lhd}(k), *\in\{\pm\},\mathtt{b}_{*}(\res(A))|_{x=0}=\mathtt{b}_{+}(\res_\mt(k))|_{x=0}\right\}.
\end{align*}
%We also denote $\mathscr{A}_{\rm T}^{\rhd}(k)=\mathscr{A}_{\mt}^{\rhd}(k)$ if $\beta_\mt=0.$ Similarly for
%$\mathscr{R}_{\mt}^{\rhd}(k),$ $\mathscr{A}_{\mt}^{\lhd}(k)$ and $\mathscr{R}_{\mt}^{\lhd}(k)$ respectively.
\end{defn}

\subsection{Seminormal bases and integral bases}\label{Seminormal bases and integral bases}
In this subsection, we shall define some explicit elements in $\mathcal{H}^{f'}_{\hO}$. We will study the linear expansion of these elements via seminormal bases and finally prove that they give some integral bases for $\mathcal{H}^{f'}_{\hO}$.
\label{pag:O set}
\begin{defn}
	For any $\undla\in\mathscr{P}^{m}_{n}$, we define
    $$O_{\undla}:=\{\alpha\in\undla \mid b_{+}(\res(\alpha))|_{x=0}\in\{\pm 1\} \}.$$
    Similarly, for any $\mt\in\Std(\undla)$, we define$$O_{\mt}:=\{1\leq k\leq n \mid b_{+}(\res_\mt(k))|_{x=0}\in\{\pm 1\} \}.$$
    We define the Clifford algebra corresponds to $O_{\mt}$
    $$\mathcal{C}_\mt=\langle C_k\mid k\in O_{\mt}\rangle \subseteq \mathcal{C}_n$$
	and the set of colored multipartition with respect to $(q^2,Q_1,\cdots,Q_m)$ as
    $$\mathscr{P}^{\undQ}_{n}:=\{(\undla,S)\mid \undla\in\mathscr{P}^{m}_{n}, S\subset O_{\undla}\}.$$
\end{defn}

\label{pag:middle 1}
\begin{defn}
	For any $\undla\in\mathscr{P}^{m}_{n}$, we define $$
	{\bf i}_{{\undla}}:=\left(\mathtt{b}_+(\res_{\mt_{\undla}}(1)),\cdots,\mathtt{b}_+(\res_{\mt_{\undla}}(n))\right)|_{x=0}\in({\mathbb{K}}^*)^n,$$
$$ y^{\lhd,\hO}_{\undla}(k)
:=\prod_{(A,*)\in \mathscr{A}^{\lhd,\undQ}_{\mt_{\undla}}(k)}\left(\left(X_k-\mathtt{b}_*(\res(A))\right)f_{k,{\bf i}_{\undla}}(X_1,\cdots,X_n)\right)\in \mathcal{H}^{f'}_{\hO},$$
	and
$$y^{\lhd,\hO}_{\undla}:=\prod_{k=1}^n y^{\lhd,\hO}_{\undla}(k)\in \mathcal{H}^{f'}_{\hO}.$$
\end{defn}

\begin{defn}
	For any $\undla\in\mathscr{P}^{m}_{n}$, we define
$$y^\lhd_{\undla}:=\prod_{k=1}^n y_k^{\sharp  \mathscr{A}^{\lhd,\undQ}_{\mt_{\undla}}(k)}\in\mathcal{H}^f_{\mathbb{K}}.$$
\end{defn}
By Theorem \ref{KKTiso}, we have $1\otimes_{\mathbb{K}} \left(y^{\lhd,\hO}_{\undla}e({\bf i}_{{\undla}})^{\hO}\right)=y^\lhd_{\undla}e({\bf i}_{{\undla}}).$

\label{pag:index set of cellular bases}
\begin{defn}
Let $(\undla,S)\in\mathscr{P}^{\undQ}_{n}$. We define
\begin{align}\label{T(lambda,S)}
\mathscr{T}(\undla,S):=\{(\mt,\beta_\mt,S)\mid \mt\in\Std(\undla),\beta_\mt \in \Z_2^n \text{ such that } \supp(\beta_\mt)\cap O_{\mt}=\emptyset\}.
\end{align}
\end{defn}

If $S$ has been fixed in the context, we shall write $(\mt,\beta_\mt)\in \mathscr{T}(\undla,S)$ rather $(\mt,\beta_\mt,S)\in \mathscr{T}(\undla,S)$ to simplify notation.

\label{pag:sgn of vector}
For $\alpha\in \Z_2^n$, we set $$\sgn(\alpha):=(-1)^{\frac{|\supp(\alpha)|(|\supp(\alpha)|-1)}{2}}.$$
\label{pag:middle part 1}
\begin{defn}
Let $(\undla,S)\in\mathscr{P}^{\undQ}_{n}$. For any $L_1=(\mt_{\undla},\beta_1), L_2=(\mt_{\undla},\beta_2)\in  \mathscr{T}(\undla,S)
	$ and any $u\in\mathcal{C}_{\mt_{\undla}}$, we define$$
	y^{\lhd,S,\hO}_{L_1,u,L_2}:=\sgn(\beta_1)C^{\beta_1}\cdot u\cdot y^{\lhd,\hO}_{\undla}e(	{\bf i}_{{\undla}})^{\hO} \prod_{k\in \mathfrak{t}_{\undla}(S)}\left(\left(X_k-\mathtt{b}_{-}(\res_{{\mt}_{\undla}}(k))\right)f_{k,{\bf i}_{\undla}}(X_1,\cdots,X_n)\right)\cdot C^{\beta_2}\in  \mathcal{H}^{f'}_{\hO}.
	$$ and
\begin{align}\label{middle part}
y^{\lhd,S}_{L_1,u,L_2}:=\sgn(\beta_1)C^{\beta_1}\cdot u \cdot y^{\lhd}_{\undla}e({\bf i}_{{\undla}})\left(\prod_{k\in \mathfrak{t}_{\undla}(S)}y_k\right)\cdot C^{\beta_2}\in  \mathcal{H}^f_{\mathbb{K}}.
\end{align}
In particular, for any monomials $C^\alpha,C^{\alpha'}\in \mathcal{C}_{\mt_{\undla}}$, we use notations $$
		y^{\lhd,S,\hO}_{L_1,\alpha,L_2}:=	y^{\lhd,S,\hO}_{L_1,C^\alpha,L_2},\qquad y^{\lhd,S}_{L_1,\alpha,L_2}:=	y^{\lhd,S}_{L_1,C^\alpha,L_2}
	$$ and $$
	y^{\lhd,S,\hO}_{L_1,\alpha\cdot\alpha',L_2}:=	y^{\lhd,S,\hO}_{L_1,C^\alpha\cdot C^{\alpha'},L_2},\qquad y^{\lhd,S}_{L_1,\alpha\cdot\alpha',L_2}:=	y^{\lhd,S}_{L_1,C^\alpha\cdot C^{\alpha'},L_2}.
	$$
\end{defn}
By Theorem \ref{KKTiso} again, we have $1\otimes_{\mathbb{K}}y^{\lhd,S,\hO}_{L_1,u,L_2}=y^{\lhd,S}_{L_1,u,L_2}.$

\begin{lem}\label{first expansion}
	Keep the notations as in above definitions, we have  \begin{align*}
		y^{\lhd,S,\hO}_{L_1,\alpha,L_2}\in &\prod_{k=1}^n\prod_{A\in\mathscr{A}^{\lhd}_{\mt_{\undla}}(k)}\biggl(\mathtt{q}(\res_{\mathfrak{t}_{\undla}}(k))-\mathtt{q}(\res(A))\biggr)\cdot \prod_{\substack{k\in \bigl([n]\setminus O_{\mt_{\undla}}\bigr)\bigsqcup \mt_{\undla}(S)}}	\biggl(b_+(\res_{\mathfrak{t}_{\undla}}(k))-b_-(\res_{\mathfrak{t}_{\undla}}(k))\biggr)	\nonumber\\
		&\qquad	\cdot \left(\sum_{\substack{\tilde{L_1}=(\mathfrak{t}_{\undla},\tilde{\beta}_1),\tilde{L_2}=(\mathfrak{t}_{\undla},\tilde{\beta}_2)\\ \tilde{\beta_1}=\beta_1+\alpha+\beta_{\mathfrak{t}_{\undla}} ,\,\tilde{\beta_2}=\beta_2+\beta_{\mathfrak{t}_{\undla}}\\
\beta_{\mathfrak{t}_{\undla}}\in \Z_2^n ,\,{\rm supp}(\beta_{\mathfrak{t}_{\undla}})\subset O_{\mathfrak{t}_{\undla}}\backslash \mathfrak{t}_{\undla}(S)\\}}\hO^\times f_{\tilde{L_1},\tilde{L_2}}\right)+\sum_{\substack{\tilde{L_1}=(\mathfrak{u},\beta''_{\mathfrak{u}}),\tilde{L_2}=(\mathfrak{v},\beta'''_{\mathfrak{v}})\in {\rm Tri}(\mathscr{P}^{m}_{n})\\ \mathfrak{u},\mathfrak{v}\lhd \mathfrak{t}_{\undla}}}\hO f_{\tilde{L_1},\tilde{L_2}}.\nonumber
	\end{align*}
\end{lem}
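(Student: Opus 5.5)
The plan is to compute $y^{\lhd,S,\hO}_{L_1,\alpha,L_2}$ in the semisimple algebra $\mathcal{H}^{f'}_{\hK}$ by expanding each factor in the seminormal basis and exploiting that the factors are polynomials in $X_1,\dots,X_n$ which act diagonally, by \eqref{X acts on f}.

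First expand the idempotent. By \eqref{eO}, $e({\bf i}_{\undla})^\hO=\sum_{{\rm T}\in{\rm Tri}({\bf i}_{\undla})}F_{\rm T}$, and each $F_{\rm T}=f_{{\rm T},{\rm T}}$ is an eigenvector of every $X_k$. The middle product
\[
P:=y^{\lhd,\hO}_{\undla}\prod_{k\in\mt_{\undla}(S)}\bigl(X_k-\mathtt{b}_-(\res_{\mt_{\undla}}(k))\bigr)f_{k,{\bf i}_{\undla}}
\]
is a polynomial in the $X_k$, so $Pe({\bf i}_{\undla})^\hO=\sum_{\rm T} P({\mathtt b}_{\mt,\beta_\mt})F_{\rm T}$, where the scalar is obtained by substituting the eigenvalues. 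For ${\rm T}=(\mt,\beta_\mt)$ with $\mt\in\Std(\undmu)$, the factor $\prod_{(A,*)\in\mathscr{A}^{\lhd,\undQ}_{\mt_{\undla}}(k)}(X_k-\mathtt b_*(\res(A)))$ vanishes on $F_{\rm T}$ exactly when some eigenvalue coincides with an addable-box value.

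The combinatorial argument is an induction on $k$, in the style of Hu--Mathas (for $\psi$ elements) and Evseev--Mathas. If $\mt\neq\mt_{\undla}$ and $\mt\not\lhd\mt_{\undla}$, take the first $k$ at which $\mt\downarrow_k$ leaves the column-reading shape. Then the box $\mt^{-1}(k)$ is an addable box of $\mt_{\undla}\downarrow_{k-1}=\mt\downarrow_{k-1}$ lying $\lhd$ (earlier in the order), and with the correct $\pm$ it appears in $\mathscr{A}^{\lhd,\undQ}_{\mt_{\undla}}(k)$. The deformation $Q'_i=x^{8ni}+Q_i$, $q'=x^4+q$ is what makes this work: exact equality of deformed residues forces the box to be the same box and not merely congruent mod $x$. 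So the corresponding factor kills $F_{\rm T}$ for those $\beta$. The surviving ${\rm T}$ therefore have $\mt\unlhd\mt_{\undla}$.

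For $\mt=\mt_{\undla}$, evaluate the factors. Each factor $(\mathtt b^{\pm1}_{\mt_{\undla},k}-\mathtt b_*(\res A))f_{k,{\bf i}}(\ldots)$ equals, up to a unit of $\hO$, $\mathtt q(\res_{\mt_{\undla}}(k))-\mathtt q(\res A)$. This uses $f_{k,\bf i}({\bf i})\neq0$ from Theorem \ref{KKTiso}, together with $(z-b)(z-b^{-1})=z(z+z^{-1}-b-b^{-1})$: the pair $\pm$ combines to produce the $\mathtt q$-difference, while addable boxes with non-matching reduction contribute units. This accounts for the first product. The extra factors at $k\in\mt_{\undla}(S)$ are nonzero only when $\beta_k=0$, since $X_k-\mathtt b_-$ kills the $\mathtt b_-$-eigenvector. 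Together with the constraint at $k\in[n]\setminus O$, where ${\bf i}_{\undla}$ fixes the sign, this produces the second product $\prod(\mathtt b_+-\mathtt b_-)$ and restricts $\beta_{\mt_{\undla}}$ to be supported in $O\setminus\mt_{\undla}(S)$.

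Finally, multiply by $C^{\beta_1}C^\alpha$ on the left and $C^{\beta_2}$ on the right using \eqref{C acts on f}. This shifts the $\beta$-labels to $\tilde\beta_1=\beta_1+\alpha+\beta_{\mt_{\undla}}$ and $\tilde\beta_2=\beta_2+\beta_{\mt_{\undla}}$, up to signs, producing $f_{\tilde L_1,\tilde L_2}$ with unit coefficients; $\sgn(\beta_1)$ only affects the sign.

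The remaining issue, which I expect to be the main obstacle, is integrality. The terms with $\mt\lhd\mt_{\undla}$ come with coefficients that are products of $\mathtt q$-differences. Since $F_{\rm T}=f_{{\rm T},{\rm T}}$ and the $C$-action preserves the basis up to sign, these coefficients are visibly in $\hO$. Even so, this step needs care: one must check that the left/right $C$-multiplications keep the tableau pair diagonal, so both labels are $\lhd\mt_{\undla}$, and that no denominators $\mathtt c_{\ms,\mt}$ appear. The diagonal structure of $Pe^\hO$ guarantees both.
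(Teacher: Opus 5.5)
Your route is the same as the paper's: expand $e({\bf i}_{\undla})^{\hO}=\sum_{{\rm T}\in{\rm Tri}({\bf i}_{\undla})}F_{\rm T}$, evaluate the polynomial factors on eigenvalues, discard the tableaux not dominated by $\mt_{\undla}$, rewrite the surviving $\mt_{\undla}$-factors as $\mathtt q$-differences and $(\mathtt b_+-\mathtt b_-)$-factors up to units, and finish with \eqref{C acts on f}. The integrality of the lower terms, which you flag as the main obstacle, is immediate: their coefficients are polynomials evaluated at eigenvalues in $\hO$.

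The step that does not hold as written is the vanishing argument. The first $k$ at which $\mt$ departs from $\mt_{\undla}$ need not put $k$ in an \emph{earlier} box. The tableau $\mt$ may first depart into a later box, so that $\mt\downarrow_k\lhd\mt_{\undla}\downarrow_k$, and fail dominance only at a later step $k'$, where $\mt\downarrow_{k'-1}\neq\mt_{\undla}\downarrow_{k'-1}$. Your argument does not cover such $\mt$. As in the paper, take $k$ minimal with $\mt\downarrow_k\ntrianglelefteq\mt_{\undla}\downarrow_k$. Then you only know $\nu\unlhd\rho$, where $\nu$ and $\rho$ are the shapes of $\mt\downarrow_{k-1}$ and $\mt_{\undla}\downarrow_{k-1}$, not that they are equal. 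So you must still show that $A:=\mt^{-1}(k)$ is an addable node of $\rho$ with $A<B:=\mt_{\undla}^{-1}(k)$.

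This is where the column-reading shape is used. Write $S_\nu(l,i)$ for the number of nodes of $\nu$ in components $<l$ plus those in rows $\le i$ of component $l$. Failure of $\nu\cup\{A\}\unlhd\rho\cup\{B\}$, together with $S_\nu\le S_\rho$, forces a position $p$ with $S_\nu(p)=S_\rho(p)$ and (component, row) of $A$ $\le p<$ (component, row) of $B$; in particular $A<B$. Now $\rho$ is empty in all components before that of $B$. In $B$'s component it consists of full columns $1,\dots,c$ and the top $r$ nodes of column $c+1$. These conditions force $\nu$ to agree with $\rho$ up to $p$. Hence $A$ is either $(1,1,l')$ in a component that is empty in $\rho$, or $(1,c+2)$ in $B$'s component; in both cases $A\in\mathscr A^{\lhd}_{\mt_{\undla}}(k)$. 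Since ${\rm T}\in{\rm Tri}({\bf i}_{\undla})$, we get $(A,{\rm sgn}_{\beta_{\mt}}(k))\in\mathscr A^{\lhd,\undQ}_{\mt_{\undla}}(k)$. The corresponding factor $X_k-\mathtt b_{\pm}(\res(A))$ kills $F_{\rm T}$ exactly, because $A$ is literally the node $\mt^{-1}(k)$. So, contrary to your remark, no genericity of the deformation $q',\undQ'$ is used in this vanishing. With this repair the rest of your argument goes through as in the paper.
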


\begin{proof}
The proof is inspired by \cite[Lemma 4E.5]{EM}. By definition, we have
\begin{align}\label{eq1.first expansion}
y^{\lhd,\hO}_{\undla}e({\bf i}_{{\undla}})^{\hO}
=\sum_{{\rm T}=(\mt,\beta_{\mt})\in {\rm Tri}({\bf i}_{\undla})}
 \prod_{k=1}^n \prod_{(A,*)\in \mathscr{A}^{\lhd,\undQ}_{\mt_{\undla}}(k)}\left(\mathtt{b}_{\mt,k}^{-{\rm sgn}_{\beta_{\mt}}(k)}-\mathtt{b}_*(\res(A))\right)f_{k,{\bf i}_{\undla}}\left(\mathtt{b}_{\mt,1}^{-{\rm sgn}_{\beta_{\mt}}(1)},\cdots,\mathtt{b}_{\mt,n}^{-{\rm sgn}_{\beta_{\mt}}(n)}\right)F_{\rm T},
\end{align}
where the sequence $\mathtt{b}_{\mt,\beta_{\mt}}|_{x=0}={\bf i}_{\undla}$ and thus $f_{k,{\bf i}_{\undla}}\left(\mathtt{b}_{\mt,1}^{-{\rm sgn}_{\beta_{\mt}}(1)},\cdots,\mathtt{b}_{\mt,n}^{-{\rm sgn}_{\beta_{\mt}}(n)}\right)\in \hO^{\times}$ by Theorem \ref{KKTiso}.

For any ${\rm T}=(\mt,\beta_{\mt})\in {\rm Tri}({\bf i}_{\undla}),$ if $\mt \ntrianglelefteq \mt_{\undla},$ then there is a minimal number $k\in[n]$ such that $\mt\downarrow_k \ntrianglelefteq \mt_{\undla}\downarrow_k.$ Let $A=\mt^{-1}(k),$ then we have
$(A,{\rm sgn}_{\beta_{\mt}}(k))\in \mathscr{A}^{\lhd,\undQ}_{\mt_{\undla}}(k),$ it follows that the coefficient of $F_{\rm T}$ in
\eqref{eq1.first expansion} is zero. For any ${\rm T}=(\mt_{\undla},\beta_{\mt_{\undla}})\in {\rm Tri}({\bf i}_{\undla}),$
since $\mathtt{b}_{\mt_{\undla},\beta_{\mt_{\undla}}}|_{x=0}={\bf i}_{\undla}=\mathtt{b}_{\mt_{\undla},0}|_{x=0},$ we must have $\supp(\beta_{\mt_{\undla}})\subseteq O_{\mt_{\undla}}.$
Combining above, it follows that
\begin{align}\label{eq2.first expansion}
y^{\lhd,\hO}_{\undla}e({\bf i}_{{\undla}})^{\hO}
\in \sum_{\substack{{\rm T}=(\mt_{\undla},\beta_{\mt_{\undla}})\in \{\mathfrak{t}_{\undla}\}\times \Z_2^n\\
			 \supp(\beta_{\mt_{\undla}})\subset O_{\mt_{\undla}}}}
& \prod_{k=1}^n \prod_{(A,*)\in \mathscr{A}^{\lhd,\undQ}_{\mt_{\undla}}(k)}\left(\mathtt{b}_{\mt_{\undla},k}^{-{\rm sgn}_{\beta_{\mt_{\undla}}}(k)}-\mathtt{b}_*(\res(A))\right)\hO^{\times}
 F_{\rm T}\\
 &+\sum_{\substack{{\rm U}=(\mt,\beta_\mt)\in {\rm Tri}({\bf i}_{\undla})\\ \mt \lhd \mathfrak{t}_{\undla}}}\hO F_{\rm U}.\nonumber
\end{align}

Note that for $A \in\mathscr{A}^{\lhd}_{\mt_{\undla}}(k)$ with $\mathtt{b}_\pm (\res(A))|_{x=0}\neq b_+(\res_{\mathfrak{t}_{\undla}}(k))|_{x=0}$, then $\mathtt{q}(\res_{\mathfrak{t}_{\undla}}(k))-\mathtt{q}(\res(A))\in \hO^\times.$
For $(A,+)\left(\text{resp. $(A,-)$}\right)\in \mathscr{A}^{\lhd,\undQ}_{\mt_{\undla}}(k)$ with $k\notin O_{\mt_{\undla}},$ we have $\mathtt{b}_-(\res(A))-\mathtt{b}_+(\res_{\mathfrak{t}_{\undla}}(k))\left(\text{resp. $\mathtt{b}_+(\res(A))-\mathtt{b}_+(\res_{\mathfrak{t}_{\undla}}(k))$}\right)\in\hO^\times.$
If $(A,+),\,(A,-)\in \mathscr{A}^{\lhd,\undQ}_{\mt_{\undla}}(k),$ then $k\in O_{\mt_{\undla}}.$
By \eqref{eq2.first expansion} and above observations, we deduce that
\begin{align*}
y^{\lhd,\hO}_{\undla}e({\bf i}_{{\undla}})^{\hO}\in \prod_{k=1}^n
\prod_{A\in\mathscr{A}^{\lhd}_{\mt_{\undla}}(k)}
& \biggl(\mathtt{q}(\res_{\mathfrak{t}_{\undla}}(k))-\mathtt{q}(\res(A))\biggr)\cdot\left(\sum_{\substack{{\rm T}=(\mt_{\undla},\beta_{\mt_{\undla}})\in \{\mathfrak{t}_{\undla}\}\times \Z_2^n\\
			 \supp(\beta_{\mt_{\undla}})\subset O_{\mt_{\undla}}}} \hO^\times F_{\rm T}\right) \\
&+\sum_{\substack{{\rm U}=(\mt,\beta_\mt)\in {\rm Tri}({\bf i}_{\undla})\\ \mt \lhd \mathfrak{t}_{\undla}}}\hO F_{\rm U}.
\end{align*}
One can easily see that $\prod_{\substack{k\notin O_{\mathfrak{t}_{\undla}}}}	\biggl(b_+(\res_{\mathfrak{t}_{\undla}}(k)-b_-(\res_{\mathfrak{t}_{\undla}}(k))\biggr)\in\hO^\times$,
	hence, we deduce \begin{align*}
	y^{\lhd,\hO}_{\undla}e(	{\bf i}_{{\undla}})^{\hO} &\prod_{k\in \mathfrak{t}_{\undla}(S)}\left( \left(X_k-\mathtt{b}_-(\res_\mt(k))\right)f_{k,{\bf i}_{\undla}}(X_1,\cdots,X_n)\right)\\
		&\in \prod_{k=1}^n\prod_{A\in\mathscr{A}^{\lhd}_{\mt_{\undla}}(k)}\biggl(\mathtt{q}(\res_{\mathfrak{t}_{\undla}}(k))-\mathtt{q}(\res(A))\biggr)\cdot \prod_{\substack{k\notin O_{\mathfrak{t}_{\undla}}}}	\biggl(b_+(\res_{\mathfrak{t}_{\undla}}(k))-b_-(\res_{\mathfrak{t}_{\undla}}(k))\biggr)	\nonumber\\
		&\qquad\qquad	\cdot \prod_{\substack{k\in \mathfrak{t}_{\undla}(S)}}	\biggl(b_+(\res_{\mathfrak{t}_{\undla}}(k))-b_-(\res_{\mathfrak{t}_{\undla}}(k))\biggr)\cdot\left(\sum_{\substack{{\rm T}=(\mathfrak{t}_{\undla},\beta_{\mathfrak{t}_{\undla}})\in \{\mathfrak{t}_{\undla}\}\times \Z_2^n\\ \supp(\beta_{\mathfrak{t}_{\undla}})\subset O_{\mathfrak{t}_{\undla}}\backslash \mathfrak{t}_{\undla}(S) }}\hO^\times F_{\rm T}\right)\nonumber\\
		&\qquad\qquad\qquad+\sum_{\substack{{\rm U}=(\mt,\beta_\mt)\in {\rm Tri}({\bf i}_{\undla})\\ \mt \lhd \mathfrak{t}_{\undla}}}\hO F_{\rm U}.\nonumber
	\end{align*}
	Now, the Lemma follows from \eqref{C acts on f}.
\end{proof}

\label{pag:O-basis element 1}
\begin{defn}
	Let $(\undla,S)\in\mathscr{P}^{\undQ}_{n}$. For any $L'_1=(\ms,\beta_\ms), L'_2=(\mt,\beta_\mt)\in  \mathscr{T}(\undla,S),
	$ there are unique $L_1=(\mathfrak{t}_{\undla},\beta_1), L_2=(\mathfrak{t}_{\undla},\beta_2)\in  \mathscr{T}(\undla,S)
	$ and $w_1,w_2\in \mathfrak{S}_n$ such that $L'_1=w_1L_1$ and $L'_2=w_2L_2$.  We fix a reduced expression $w_i=s_{t^i_{k_1}}\cdots s_{t^i_1}$ and use this to define $\sigma_{w_i}$ for $i=1,2.$ For any $u\in\mathcal{C}_{\mt_{\undla}}$, we define
\begin{align*}
\psi^{\lhd,S,\hO}_{L'_1,u,L'_2}
:=\sigma_{w_1}^{\hO}y^{\lhd,S,\hO}_{L_1,u,L_2}(\sigma_{w_2}^{\hO})^{*}
		\in \mathcal{H}^{f'}_{\hO},
	\end{align*}
where
\begin{align*}
&\sigma_{w_1}^{\hO}e(c^{\beta_1}{\bf i}_{\undla})^{\hO}
:=\\
&\overleftarrow{\prod}_{\substack{j=1,\cdots,k_1}}\left(T_{t^1_{j}} (r_{t^1_{j},s_{t^1_{j-1}}\cdots s_{t^1_1}c^{\beta_1}{\bf i}_{\undla}}(X_1,X_2,\cdots,X_n))
	+\sum_{{\bf j}\in (J_f)^n}m_{t^1_{j},s_{t^1_{j-1}}\cdots s_{t^1_1}c^{\beta_1}{\bf i}_{\undla}}^{\bf j}\right)e(c^{\beta_1}{\bf i}_{\undla})^{\hO},\\
&\sigma_{w_2}^{\hO}e(c^{\beta_2}{\bf i}_{\undla})^{\hO}
:=\\
&\overrightarrow{\prod}_{\substack{j=1,\cdots,k_2}}\left(T_{t^2_{j}} (r_{t^2_{j},s_{t^2_{j-1}}\cdots s_{t^2_1}c^{\beta_2}{\bf i}_{\undla}}(X_1,X_2,\cdots,X_n))
	+\sum_{{\bf j}\in (J_f)^n}m_{t^2_{j},s_{t^2_{j-1}}\cdots s_{t^2_1}c^{\beta_2}{\bf i}_{\undla}}^{\bf j}\right)e(c^{\beta_2}{\bf i}_{\undla})^{\hO}.
\end{align*}
And \label{pag:K-basis element 1}
    $$
		\psi^{\lhd,S}_{L'_1,u,L'_2}:=\sigma_{w_1}y^{\lhd,S}_{L_1,u,L_2}\left(\sigma_{w_2}\right)^*\in \mathcal{H}^f_{\mathbb{K}}.
	$$  Again, for any monomial $C^\alpha,C^{\alpha'}\in \mathcal{C}_{\mt_{\undla}}$, we use notations $$
\psi^{\lhd,S,\hO}_{L'_1,\alpha,L'_2}:=	\psi^{\lhd,S,\hO}_{L'_1,C^\alpha,L'_2},\qquad \psi^{\lhd,S}_{L'_1,\alpha,L'_2}:=	\psi^{\lhd,S}_{L'_1,C^\alpha,L'_2}
	$$ and $$
	\psi^{\lhd,S,\hO}_{L'_1,\alpha\cdot \alpha',L'_2}:=	\psi^{\lhd,S,\hO}_{L'_1,C^\alpha\cdot C^{\alpha'},L'_2},\qquad \psi^{\lhd,S}_{L'_1,\alpha\cdot \alpha',L'_2}:=	\psi^{\lhd,S}_{L'_1,C^\alpha\cdot C^{\alpha'},L'_2}.
	$$
\end{defn}

By Theorem \ref{KKTiso}, we have $$1\otimes_{\mathbb{K}}\psi^{\lhd,S,\hO}_{L'_1,u,L'_2}=\psi^{\lhd,S}_{L'_1,u,L'_2}
$$
\begin{lem}\label{second expansion}
	Keep the notations as in above definitions, we have \begin{align*}
		\psi^{\lhd,S,\hO}_{L'_1,\alpha,L'_2}\in &\frac{\mathtt{c}_{\ms,\mathfrak{t}_{\undla} }\mathtt{c}_{ \mathfrak{t}_{\undla},\mt }}{\mathtt{c}_{\ms, \mt}}\prod_{k=1}^n\prod_{A\in\mathscr{A}^{\lhd}_{\mt_{\undla}}(k)}\left(\mathtt{q}(\res_{\mathfrak{t}_{\undla}}(k))-\mathtt{q}(\res(A))\right)	\nonumber\\
		&\qquad\cdot \prod_{\substack{k\in \bigl([n]\setminus O_{\mathfrak{t}_{\undla}}\bigr)\bigsqcup \mathfrak{t}_{\undla}(S)}}	\left(b_+(\res_{\mathfrak{t}_{\undla}}(k))-b_-(\res_{\mathfrak{t}_{\undla}}(k))\right)	\\
		&\qquad\qquad\qquad	\cdot\left(\sum_{\substack{\tilde{L_1}=(\ms,\tilde{\beta}_1),\tilde{L_2}=(\mt,\tilde{\beta}_2)\\ \tilde{\beta_1}=\beta_\ms+w_1\cdot\alpha+w_1\cdot\beta_{\mathfrak{t}_{\undla}} ,\,\tilde{\beta_2}=\beta_\mt+w_2\cdot\beta_{\mathfrak{t}_{\undla}}\\ \beta_{\mathfrak{t}_{\undla}}\in \Z_2^n ,\,{\rm supp}(\beta_{\mathfrak{t}_{\undla}})\subset O_{\mathfrak{t}_{\undla}}\setminus\mathfrak{t}_{\undla}(S)}}\hO^\times f_{\tilde{L_1},\tilde{L_2}}\right)\nonumber\\
		&\qquad\qquad\qquad\qquad\qquad\qquad\qquad+\sum_{\substack{\tilde{L_1}=(\mathfrak{u},\beta''_{\mathfrak{u}}),\tilde{L_2}=(\mathfrak{v},\beta'''_{\mathfrak{v}})\\ (\mathfrak{u},\mathfrak{v})\unlhd (\ms,\mt),(\mathfrak{u},\mathfrak{v})\neq (\ms,\mt)}}\hK f_{\tilde{L_1},\tilde{L_2}}.\nonumber
	\end{align*}
\end{lem}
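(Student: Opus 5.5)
We start from Lemma \ref{first expansion}, which gives the seminormal expansion of the middle element $y^{\lhd,S,\hO}_{L_1,\alpha,L_2}$. It has a leading part supported on pairs $(\tilde L_1,\tilde L_2)$ with both tableaux equal to $\mt_{\undla}$, with the stated prefactor and unit coefficients. The remaining terms are $\hO$-combinations of $f_{\tilde L_1,\tilde L_2}$ with $\mathfrak{u},\mathfrak{v}\lhd\mt_{\undla}$. We then multiply on the left by $\sigma^{\hO}_{w_1}$ and on the right by $(\sigma^{\hO}_{w_2})^*$, where $\ms=w_1\mt_{\undla}$ and $\mt=w_2\mt_{\undla}$. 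The aim is to follow the leading term and to show that everything else lands in $\hK$-combinations of $f_{\tilde L_1,\tilde L_2}$ with $(\mathfrak u,\mathfrak v)\unlhd(\ms,\mt)$ and $(\mathfrak u,\mathfrak v)\neq(\ms,\mt)$.

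\textbf{Step 1 (a single factor).} For each factor of $\sigma^{\hO}_{w_1}$, namely $T_a\,r_{a,{\bf j}}(X)+\sum_{\bf j'}m^{\bf j'}_{a,{\bf j}}$ with $s_a$ admissible, we use Proposition \ref{generators action on seminormal basis} to compute its action on $f_{(\mathfrak u,\beta),\cdot}$. Three points are needed.
\begin{itemize}
\item $r_{a,{\bf j}}(X)$ acts on $f_{(\mathfrak u,\beta),\cdot}$ by the scalar $r_{a,{\bf j}}(\mathtt{b}_{\mathfrak u,\beta})$ (by \eqref{X acts on f}). This scalar lies in $\hO^\times$ when $\mathtt{b}_{\mathfrak u,\beta}|_{x=0}={\bf j}$, by Theorem \ref{KKTiso}(1).
\item $T_a$ sends $f_{(\mathfrak u,\beta),\cdot}$ to the ``straightened'' term $\sqrt{\mathtt{c}_{\mathfrak u}(a)}\,\frac{\mathtt{c}_{\mathfrak u,\cdot}}{\mathtt{c}_{s_a\mathfrak u,\cdot}}f_{(s_a\mathfrak u,s_a\beta),\cdot}$, plus diagonal terms that keep the tableau $\mathfrak u$, by \eqref{T acts on f}.
\item The $m$-terms lie in $\langle X,C\rangle$, so they preserve the tableau and change only $\beta$, by \eqref{C acts on f} and \eqref{X acts on f}.
\end{itemize}
The non-straightened terms keep $\mathfrak u$. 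The straightened term moves $\mathfrak u$ to $s_a\mathfrak u$, which is $\rhd$-larger along the reduced word. So after all factors of $w_1$, applied to the leading term, only the full path produces tableau $\ms$. Every other path produces a tableau $\mathfrak u=w'\mt_{\undla}$ with $w'<w_1$ in Bruhat order, hence $\mathfrak u\lhd\ms$.

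The same argument applies to the lower terms $\mathfrak u\lhd\mt_{\undla}$. We expect this to keep them strictly below $\ms$, but it requires checking that $\lhd$ is compatible with applying admissible $s_a$. We would import this from the dominance and Bruhat compatibility arguments used in \cite[Lemma 4E.5]{EM} and \cite{HM1}.

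\textbf{Step 2 (the leading coefficient).} Along the full path, the product of the factors $\sqrt{\mathtt{c}}\,\mathtt{c}_{\mathfrak u,\cdot}/\mathtt{c}_{s_a\mathfrak u,\cdot}$ telescopes. On the left it gives $\mathtt{c}_{\ms,\mt_{\undla}}$ up to the ratio $\mathtt{c}_{\mt_{\undla},\cdot}/\mathtt{c}_{\ms,\cdot}$. Doing the same on the right via $*$, and comparing with \eqref{fst and re. fst} and Definition \ref{def seminormal}, should produce exactly $\frac{\mathtt{c}_{\ms,\mt_{\undla}}\mathtt{c}_{\mt_{\undla},\mt}}{\mathtt{c}_{\ms,\mt}}$. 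The remaining $r$-values are units in $\hO$. The $\beta$-labels are permuted by $w_1,w_2$, giving $\tilde\beta_1=\beta_\ms+w_1\alpha+w_1\beta_{\mt_{\undla}}$ and $\tilde\beta_2=\beta_\mt+w_2\beta_{\mt_{\undla}}$. Here we use $\beta_\ms=w_1\beta_1$, $\beta_\mt=w_2\beta_2$, and the sign rule in \eqref{C acts on f}, which only contributes $\pm1$.

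\textbf{Main obstacle.} The hardest step is controlling the diagonal $T_a$-terms and the $m^{\bf j}$-terms. These carry coefficients such as $\epsilon/(\mathtt{b}\mathtt{b}'-1)$, which need not lie in $\hO$; this is why the lower terms are only claimed in $\hK$. We must make sure they never contribute to the $(\ms,\mt)$-coefficient. The reason they should not: such terms keep the tableau fixed, so a path that uses one at any step cannot reach the top tableau $\ms$ along a reduced word. It falls short in Bruhat order and hence ends strictly below. We also need the leading $\hO^\times$ coefficients of Lemma \ref{first expansion} to survive multiplication by the unit $r$-values, which is clear.
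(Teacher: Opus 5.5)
Your proposal is correct and is essentially the paper's own argument: the paper also derives the lemma from Lemma~\ref{first expansion} by an induction in the style of \cite[Lemma 4E.6]{EM}. It acts with the factors of $\sigma^{\hO}_{w_1}$ and $(\sigma^{\hO}_{w_2})^*$ through Proposition~\ref{generators action on seminormal basis} and telescopes the coefficients \eqref{c-coefficients. non-dege.}, so that only the fully straightened path reaches $(\ms,\mt)$. The dominance compatibility you leave to import is an elementary check: if $\mathfrak{u}\unlhd\mathfrak{t}$ and $s_a\mathfrak{t}\rhd\mathfrak{t}$, then $\mathfrak{u}\unlhd s_a\mathfrak{t}$ and (when standard) $s_a\mathfrak{u}\unlhd s_a\mathfrak{t}$, because only the level-$a$ shapes change and ${\rm Shape}((s_a\mathfrak{t})\downarrow_a)$ is the most dominant shape lying between ${\rm Shape}(\mathfrak{t}\downarrow_{a-1})$ and ${\rm Shape}(\mathfrak{t}\downarrow_{a+1})$.
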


\begin{proof}
The proof is argued by an induction on the dominance order $\unlhd,$ which is similar to \cite[Lemma 4E.6]{EM}.
Then it follows from Lemma \ref{first expansion}, \ref{c-coefficients. non-dege.} and Proposition \ref{generators action on seminormal basis}.
\end{proof}

Similarly, we can give the ``dual''  construction of the above definitions.
\label{pag:middle 2}
\begin{defn}
	For any $\undla\in\mathscr{P}^{m}_{n}$, we define
$${\bf i}^{{\undla}}:=\left(\mathtt{b}_+(\res_{\mt^{\undla}}(1)),\cdots,\mathtt{b}_+(\res_{\mt^{\undla}}(n))\right)|_{x=0}\in({\mathbb{K}}^*)^n,$$
	$$	y^{\rhd,\hO}_{\undla}(k):=
	\prod_{(A,*)\in \mathscr{A}^{\rhd,\undQ}_{\mt^{\undla}}(k)}\left(\left(X_k-\mathtt{b}_*(\res(A))\right)f_{k,{\bf i}^{\undla}}(X_1,\cdots,X_n)\right)\in \mathcal{H}^{f'}_{\hO},
	$$
	and $$y^{\rhd,\hO}_{\undla}=\prod_{k=1}^n y^{\rhd,\hO}_{\undla}(k)\in  \mathcal{H}^{f'}_{\hO}.
	$$
\end{defn}

\begin{defn}
	For any $\undla\in\mathscr{P}^{m}_{n}$, we define $$y^\rhd_{\undla}=\prod_{k=1}^n y_k^{\sharp  \mathscr{A}^{\rhd,\undQ}_{\mt^{\undla}}(k)}\in \mathcal{H}^f_{\mathbb{K}}.$$
\end{defn}

%\begin{defn}
%	For $\undla\in \mathscr{P}^{m}_{n}$, we define $$
%	X^{\rhd}_{\undla}:=\prod_{k=1}^n\prod_{A\in\Add^\rhd_{\mathfrak{t}^{\undla}}(k)}(X_K+X^{-1}_k-\mathtt{q}(\res(A)))\in  \mathcal{H}^{f'}_{\hO}.
%	$$
%\end{defn}
\label{pag:middle part 2}
\begin{defn}
Let $(\undla,S)\in\mathscr{P}^{\undQ}_{n}$. For any $L_1=(\mt^{\undla},\beta_1), L_2=(\mt^{\undla},\beta_2)\in  \mathscr{T}(\undla,S)
	$ and  any $u\in\mathcal{C}_{\mt^{\undla}}$, we define $$
	y^{\rhd,S,\hO}_{L_1,u,L_2}:=\sgn(\beta_1)C^{\beta_1} \cdot y^{\rhd,\hO}_{\undla}e({\bf i}^{{\undla}})^{\hO} \prod_{k\in \mathfrak{t}^{\undla}(S)}\left(\left(X_k-\mathtt{b}_+(\res_\mt(k))\right)f_{k,{\bf i}^{\undla}}(X_1,\cdots,X_n)\right)\cdot u\cdot C^{\beta_2}\in  \mathcal{H}^{f'}_{\hO}.
	$$ and
\begin{align}\label{middle part for rhd}	
y^{\rhd,S}_{L_1,u,L_2}:=\sgn(\beta_1)C^{\beta_1}\cdot y^{\rhd}_{\undla}e({\bf i}^{{\undla}})\left(\prod_{k\in \mathfrak{t}^{\undla}(S)}y_k\right)\cdot u \cdot C^{\beta_2}\in  \mathcal{H}^f_{\mathbb{K}}.
\end{align}
In particular, for any monomials $C^\alpha,C^{\alpha'}\in \mathcal{C}_{\mt^{\undla}}$, we use notations $$
	y^{\rhd,S,\hO}_{L_1,\alpha,L_2}:=	y^{\rhd,S,\hO}_{L_1,C^\alpha,L_2},\qquad y^{\rhd,S}_{L_1,\alpha,L_2}:=	y^{\rhd,S}_{L_1,C^\alpha,L_2}
	$$ and $$
	y^{\rhd,S,\hO}_{L_1,\alpha\cdot\alpha',L_2}:=	y^{\rhd,S,\hO}_{L_1,C^\alpha\cdot C^{\alpha'},L_2},\qquad y^{\rhd,S}_{L_1,\alpha\cdot\alpha',L_2}:=	y^{\rhd,S}_{L_1,C^\alpha\cdot C^{\alpha'},L_2}.
	$$
\end{defn}
By Theorem \ref{KKTiso}, we have $1\otimes _{\hO}\left(y^{\rhd,\hO}_{\undla}e(	{\bf i}^{{\undla}})^{\hO}\right)=y^\rhd_{\undla}e(	{\bf i}^{{\undla}})$ and $1\otimes _{\hO}	y^{\rhd,S,\hO}_{L_1,u,L_2}=	y^{\rhd,S}_{L_1,u,L_2}.$

\begin{defn}
	Let $(\undla,S)\in\mathscr{P}^{\undQ}_{n}$. For any $L'_1=(\ms,\beta_\ms), L'_2=(\mt,\beta_\mt)\in   \mathscr{T}(\undla,S)
	$ there are unique $L_1=(\mathfrak{t}^{\undla},\beta_1), L_2=(\mathfrak{t}^{\undla},\beta_2)\in  \mathscr{T}(\undla,S)
	$ and $w_1,w_2\in \mathfrak{S}_n$ such that $L'_1=w_1L_1$ and $L'_2=w_2L_2$.  We fix a reduced expression $w_i=s_{t^i_{k_1}}\cdots s_{t^i_1}$ and use this to define $\sigma_{w_i}$ for $i=1,2.$ For any $u\in\mathcal{C}_{\mt^{\undla}}$, we define
\label{pag:O-basis element 2}
\begin{align*}
\psi^{\rhd,S,\hO}_{L'_1,u,L'_2}
:=\sigma_{w_1}^{\hO}y^{\rhd,S,\hO}_{L_1,u,L_2}(\sigma_{w_2}^{\hO})^{*}
		\in \mathcal{H}^{f'}_{\hO},
	\end{align*}
where
\begin{align*}
&\sigma_{w_1}^{\hO}e(c^{\beta_1}{\bf i}^{\undla})^{\hO}
:=\\
&\overleftarrow{\prod}_{\substack{j=1,\cdots,k_1}}\left(T_{t^1_{j}} (r_{t^1_{j},s_{t^1_{j-1}}\cdots s_{t^1_1}c^{\beta_1}{\bf i}^{\undla}}(X_1,X_2,\cdots,X_n))
	+\sum_{{\bf j}\in (J_f)^n}m_{t^1_{j},s_{t^1_{j-1}}\cdots s_{t^1_1}c^{\beta_1}{\bf i}^{\undla}}^{\bf j}\right)e(c^{\beta_1}{\bf i}^{\undla})^{\hO},\\
&\sigma_{w_2}^{\hO}e(c^{\beta_2}{\bf i}^{\undla})^{\hO}
:=\\
&\overrightarrow{\prod}_{\substack{j=1,\cdots,k_2}}\left(T_{t^2_{j}} (r_{t^2_{j},s_{t^2_{j-1}}\cdots s_{t^2_1}c^{\beta_2}{\bf i}^{\undla}}(X_1,X_2,\cdots,X_n))
	+\sum_{{\bf j}\in (J_f)^n}m_{t^2_{j},s_{t^2_{j-1}}\cdots s_{t^2_1}c^{\beta_2}{\bf i}^{\undla}}^{\bf j}\right)e(c^{\beta_2}{\bf i}^{\undla})^{\hO}.
\end{align*}
And \label{pag:K-basis element 2}
\begin{align}\label{rhd-basis}
	\psi^{\rhd,S}_{L'_1,u,L'_2}:=\sigma_{w_1}y^{\rhd,S}_{L_1,u,L_2}\left(\sigma_{w_2}\right)^*\in  \mathcal{H}^f_{\mathbb{K}}.
	\end{align}
Similarly, for any monomial $C^\alpha,C^{\alpha'}\in \mathcal{C}_{\mt^{\undla}}$, we use notations $$
	\psi^{\rhd,S,\hO}_{L'_1,\alpha,L'_2}:=	\psi^{\rhd,S,\hO}_{L'_1,C^\alpha,L'_2},\qquad \psi^{\rhd,S}_{L'_1,\alpha,L'_2}:=	\psi^{\rhd,S}_{L'_1,C^\alpha,L'_2}
	$$ and $$
	\psi^{\rhd,S,\hO}_{L'_1,\alpha\cdot \alpha',L'_2}:=	\psi^{\rhd,S,\hO}_{L'_1,C^\alpha\cdot C^{\alpha'},L'_2},\qquad \psi^{\rhd,S}_{L'_1,\alpha\cdot \alpha',L'_2}:=	\psi^{\rhd,S}_{L'_1,C^\alpha\cdot C^{\alpha'},L'_2}.
	$$
\end{defn}

By Theorem \ref{KKTiso}, we have $$1\otimes _{\hO}\psi^{\rhd,S,\hO}_{L'_1,u,L'_2}=\psi^{\rhd,S}_{L'_1,u,L'_2}.
$$

\begin{lem}\label{second expansion'}
	Keep the notations as above definitions, we have  \begin{align*}
		\psi^{\rhd,S,\hO}_{L'_1,\alpha,L'_2}\in &\frac{\mathtt{c}_{\ms,\mt^{\undla}}\mathtt{c}_{\mt^{\undla},\mt}}{\mathtt{c}_{\ms, \mt}}\prod_{k=1}^n\prod_{A\in\mathscr{A}^{\rhd}_{\mt_{\undla}}(k)}\left(\mathtt{q}(\res_{\mathfrak{t}^{\undla}}(k))-\mathtt{q}(\res(A))\right)	\nonumber\\
		&\qquad\cdot \prod_{\substack{k\in \bigl([n]\setminus O_{\mt^{\undla}}\bigr)\bigsqcup \mathfrak{t}^{\undla}(S)}}	\left(b_+(\res_{\mathfrak{t}^{\undla}}(k))-b_-(\res_{\mathfrak{t}^{\undla}}(k))\right)	\\
		&\qquad\qquad\qquad	\cdot\left(\sum_{\substack{\tilde{L_1}=(\ms,\tilde{\beta}_1),\tilde{L_2}=(\mt,\tilde{\beta}_2)\\ \tilde{\beta_1}=\beta_\ms+w_1\cdot\beta_{\mathfrak{t}^{\undla}} ,\,\tilde{\beta_2}=\beta_\mt+w_2\cdot\alpha+w_2\cdot\beta_{\mathfrak{t}^{\undla}}\\
\beta_{\mathfrak{t}^{\undla}}\in \Z_2^n ,\, \mathfrak{t}^{\undla}(S)\subset {\rm supp}(\beta_{\mathfrak{t}^{\undla}})\subset O_{\mathfrak{t}^{\undla}}}}\hO^\times f_{\tilde{L_1},\tilde{L_2}}\right)\nonumber\\
		&\qquad\qquad\qquad\qquad\qquad\qquad\qquad+\sum_{\substack{\tilde{L_1}=(\mathfrak{u},\beta''_{\mathfrak{u}}),\tilde{L_2}=(\mathfrak{v},\beta'''_{\mathfrak{v}})\\ (\mathfrak{u},\mathfrak{v})\unrhd (\ms,\mt),(\mathfrak{u},\mathfrak{v})\neq (\ms,\mt)}}\hK f_{\tilde{L_1},\tilde{L_2}}.\nonumber
	\end{align*}
\end{lem}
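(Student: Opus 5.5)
The plan is to mirror the proof of Lemma \ref{second expansion}, interchanging the roles of $\lhd$ and $\rhd$, of $\mt_{\undla}$ and $\mt^{\undla}$, and of left and right multiplication. Proposition \ref{BKW deg} and Corollary \ref{def1} show that the two constructions are dual to each other.

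First I would prove the analogue of Lemma \ref{first expansion} for $y^{\rhd,S,\hO}_{L_1,\alpha,L_2}$. Expanding $y^{\rhd,\hO}_{\undla}e({\bf i}^{\undla})^{\hO}$ in the primitive idempotents $F_{\rm T}$ with ${\rm T}\in{\rm Tri}({\bf i}^{\undla})$ works exactly as in \eqref{eq1.first expansion}. The coefficient of $F_{\rm T}$ is
\[
\prod_k\prod_{(A,*)\in\mathscr{A}^{\rhd,\undQ}_{\mt^{\undla}}(k)}\bigl(\mathtt{b}_{\mt,k}^{-\sgn_{\beta_\mt}(k)}-\mathtt{b}_*(\res(A))\bigr)
\]
times a unit. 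For $\mt\neq\mt^{\undla}$ with $\mt\not\trianglerighteq\mt^{\undla}$, take the minimal $k$ with $\mt\downarrow_k\not\trianglerighteq\mt^{\undla}\downarrow_k$. Then $\mt^{-1}(k)$ is an addable box of $\mt^{\undla}\downarrow_{k-1}$ lying $>$ $(\mt^{\undla})^{-1}(k)$ with the same reduced eigenvalue, so that coefficient vanishes. For ${\rm T}=(\mt^{\undla},\beta)$, matching residues forces $\supp(\beta)\subseteq O_{\mt^{\undla}}$.

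The factor $\prod_{k\in\mt^{\undla}(S)}(X_k-\mathtt{b}_+(\res(k)))f_{k,{\bf i}^{\undla}}$ is placed before $u$ on the right. It kills the $F_{\rm T}$ whose $k$-th eigenvalue is $\mathtt{b}_+$, and this is why the surviving $\beta_{\mt^{\undla}}$ must contain $\mt^{\undla}(S)$ in its support, rather than avoid it as in the $\lhd$ case. On the surviving terms the factor contributes $b_+-b_-$. Pairing $(A,+)$ with $(A,-)$ exactly as in Lemma \ref{first expansion} gives the prefactor $\prod\bigl(\mathtt{q}(\res_{\mt^{\undla}}(k))-\mathtt{q}(\res(A))\bigr)\prod\bigl(b_+-b_-\bigr)$ up to units. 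Since $u=C^\alpha$ and $C^{\beta_2}$ now act on the right, \eqref{C acts on f} and its right-hand analogue (obtained via the anti-involution $*$) move $\alpha$ into the second index. This gives $\tilde\beta_2=\beta_2+\alpha+\beta_{\mt^{\undla}}$, and the remaining terms are $\hO$-combinations of $f_{\tilde L_1,\tilde L_2}$ with $\mathfrak{u},\mathfrak{v}\rhd\mt^{\undla}$.

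Second, I would apply $\sigma^{\hO}_{w_1}$ on the left and $(\sigma^{\hO}_{w_2})^*$ on the right, and induct on $\ell(w_1)+\ell(w_2)$, where $w_i$ is a reduced word from $\mt^{\undla}$. Each factor is, by Theorem \ref{KKTiso}, $T_a r_{a,{\bf i}}+(\text{polynomial in }X,C)$. By \eqref{T acts on f}, applying it to $f_{{\rm T},{\rm S}}$ with $\mathfrak{u}=\mt$ produces $\sqrt{\mathtt{c}_\mt(a)}\frac{\mathtt{c}_{\mt,\ms}}{\mathtt{c}_{s_a\mt,\ms}}f_{s_a{\rm T},{\rm S}}$ plus diagonal terms. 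Since $s_a\mt\rhd\mt$ is the direction leading away from $\mt^{\undla}$ downward, the diagonal terms stay at $\mt$ and all other terms lie strictly $\rhd$ in the appropriate sense. Telescoping these coefficients along the reduced word and using Lemma \ref{combina. formulae of c} together with $\mathtt{c}_{\ms,\mt}=\mathtt{c}_{\mt,\ms}$ gives the factor $\mathtt{c}_{\ms,\mt^{\undla}}\mathtt{c}_{\mt^{\undla},\mt}/\mathtt{c}_{\ms,\mt}$, via \eqref{fst and re. fst}. The unit factor $r_{a,{\bf i}}$ evaluates to an element of $\hO^\times$.

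The main obstacle is the bookkeeping of the order. One must check that the lower-order terms created by diagonal and $m^{\bf j}$ pieces, together with the $\hO$ tail from the first step, all lie in $\sum_{(\mathfrak{u},\mathfrak{v})\unrhd(\ms,\mt),\neq}\hK f$. This requires that $T_a$ acting on $f_{\mathfrak{u},\cdot}$ with $\mathfrak{u}\rhd\cdot$ only reaches $\mathfrak{u}$ or $s_a\mathfrak{u}$, both still $\unrhd$ the relevant target. I would handle this exactly as in \cite[Lemma 4E.6]{EM}, with the $\hK$ (rather than $\hO$) coefficients absorbing the $\mathtt{c}$-ratios.
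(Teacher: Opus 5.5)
Your proposal is correct and follows the paper's route. The paper proves this lemma only by duality with Lemma \ref{second expansion}: a $\rhd$-analogue of Lemma \ref{first expansion} for $y^{\rhd,S,\hO}_{L_1,\alpha,L_2}$, then multiplication by $\sigma^{\hO}_{w_1}$ and $(\sigma^{\hO}_{w_2})^*$ using Proposition \ref{generators action on seminormal basis} and the $\mathtt{c}$-coefficients \eqref{c-coefficients. non-dege.}, with the order bookkeeping taken from \cite[Lemma 4E.6]{EM}.

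Two small points. Along a reduced path away from $\mt^{\undla}$ one has $s_a\mt\lhd\mt$, not $\rhd$; it is the non-leading terms that stay strictly $\rhd$ the target. Also, Lemma \ref{combina. formulae of c} is not needed here, since the telescoping uses only the definition of $\mathtt{c}_{\ms,\mt}$.
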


Recall that $I_f$ is associated with a generalized Cartan superdatum.
Throughout this section, we use $Q_n^+$ to denote the set of positive root lattice with height $n$ associated to $I_f$.
\label{pag:nu-multipartition}
\begin{defn}
Let $\nu\in Q_n^+.$
\begin{enumerate}
\item The set of $\nu$-multipartition is $\mathscr{P}^{m}_{\nu}:=\{\undla\in\mathscr{P}^{m}_n \mid \sum_{A\in \undla}\nu_{\mathtt{q}(\res(A))|_{x=0}}=\nu\}.$
\item The set of colored $\nu$-multipartition with respect to $(q,\undQ)$ is
    $$\mathscr{P}^{\undQ}_{\nu}:=\{(\undla,S)\mid \undla\in\mathscr{P}^{m}_{\nu}, S\subset O_{\undla}\}.$$
\end{enumerate}
\end{defn}

Now we introduce the key definition of this paper: ``$\undQ$-unremovable ''.
\begin{defn}\label{tech condition}
	Let $\nu\in Q_n^+$. We call {\bf $\nu$ is $\undQ$-unremovable} if for any $\undla\in \mathscr{P}^{m}_{\nu}$ and any $k\in [n]$, we have
	$$\mathscr{R}_{\mt_{\undla}}^{\lhd,\undQ}(k)=\emptyset.
	$$
	\end{defn}

The following Proposition gives a large class of example for $\undQ$-unremovable elements in $Q_n^+$.
\begin{prop}\label{example-unremovable}
 Let $\nu\in Q_n^+$ with $\nu=\sum_{i\in I_f}m_i v_i$. Suppose $m_i\leq 1$ for any $i\in (I_f)_{{\rm odd}}$, then $\nu$ is $\undQ$-unremovable. In particular, if $(I_f)_{{\rm odd}}=\emptyset$, then any $\nu\in Q_n^+$ is $\undQ$-unremovable.
		\end{prop}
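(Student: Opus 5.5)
The plan is to argue by contradiction. Suppose $m_i\leq 1$ for every $i\in (I_f)_{\rm odd}$, but there exist $\undla\in\mathscr{P}^m_\nu$, $k\in[n]$ and $(B,*)\in\mathscr{R}^{\lhd,\undQ}_{\mt_{\undla}}(k)$. Write $A:=\mt_{\undla}^{-1}(k)=(i,j,l)$. Then $B$ is a removable box of $\mt_{\undla}\downarrow_{k-1}$ with $B<A$, and
$$\mathtt{b}_*(\res(B))|_{x=0}=\mathtt{b}_+(\res(A))|_{x=0}.$$
Since $\mathtt{b}_\pm(z)$ are the two roots of $Z+Z^{-1}=\mathtt{q}(z)$, this equality gives $\mathtt{q}(\res(B))|_{x=0}=\mathtt{q}(\res(A))|_{x=0}$. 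Both $A$ and $B$ are boxes of $\undla$. The goal is therefore to show that this common residue class is an odd vertex of $I_f$. Then $m_{\mathtt{q}(\res(A))|_{x=0}}\geq 2$ in $\nu=\nu_{\undla}$, which is the desired contradiction. The final claim for $(I_f)_{\rm odd}=\emptyset$ is then immediate.

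The first step is to locate $B$ using the shape of $\mt_{\undla}$. Since $\mt_{\undla}$ is filled column by column starting from the last component, the tableau $\mt_{\undla}\downarrow_{k-1}$ consists of three parts:
\begin{itemize}
\item all components $l'>l$;
\item the columns $1,\dots,j-1$ of $\lambda^{(l)}$;
\item the boxes $(1,j,l),\dots,(i-1,j,l)$.
\end{itemize}
By the definition of the order $<$, a box $B<A$ must lie either in a component $c<l$, or in component $l$ in a row $<i$, or in row $i$ of component $l$ in a column $<j$. Components $c<l$ contain no boxes of $\mt_{\undla}\downarrow_{k-1}$. Inspecting removability in component $l$, the rows $r<i-1$ still contain the box $(r,j,l)$, so their last boxes are not removable. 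Hence the only candidates are
$$B=(i-1,j,l)\quad\text{or}\quad B=(i,j-1,l).$$
Their contents are $j-i+1$ and $j-i-1$ respectively. Consequently $\res(B)=\res(A)q'^{\pm 2}$, and at $x=0$ we get $\res(B)|_{x=0}=\res(A)|_{x=0}\,q^{\pm2}$.

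The second step is the residue computation. The identity $\mathtt{q}(z)=\mathtt{q}(w)$ holds if and only if $w\in\{z,z^{-1}\}$. Put $z=\res(A)|_{x=0}$. The case $zq^{\pm2}=z$ is impossible because $q^2\neq 1$. Hence $z\cdot zq^{\pm2}=1$, that is, $(zq^{\pm1})^2=1$, so $z=\varepsilon q^{\mp1}$ with $\varepsilon\in\{\pm\}$. Then
$$\mathtt{q}(\res(A))|_{x=0}=\mathtt{q}(\varepsilon q^{\mp1})=\mathtt{q}(\varepsilon q)=\pm2,$$
which is an odd vertex of $I_f$ by the convention in subsection \ref{DynkinDiagrams}. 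This completes the argument.

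The main obstacle I expect is the first step: pinning down exactly the set of removable boxes of $\mt_{\undla}\downarrow_{k-1}$ that lie below $A$ in the order $<$. This depends on the precise column-reading convention for $\mt_{\undla}$, and in particular on the edge cases $i=1$ and $j=1$, where one or both candidate boxes $(i-1,j,l)$, $(i,j-1,l)$ do not exist. Once the candidates are identified, the residue argument in the second step is mechanical.
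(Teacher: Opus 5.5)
Your proposal is correct and follows the paper's own proof. Like the paper, you assume $\mathscr{R}^{\lhd,\undQ}_{\mt_{\undla}}(k)\neq\emptyset$, confine the removable box to $(i-1,j,l)$ or $(i,j-1,l)$, deduce $\mathtt{q}(\res_{\mt_{\undla}}(k))|_{x=0}\in\{\pm 2\}$, and conclude that an odd vertex $\pm2$ occurs at least twice in $\nu$. Your version additionally spells out two steps the paper leaves implicit: why the column-reading shape of $\mt_{\undla}\downarrow_{k-1}$ allows no other removable box below $A$, and why $\mathtt{q}(z)=\mathtt{q}(zq^{\pm2})$ forces $z=\pm q^{\mp1}$.
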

		\begin{proof}
	Let $\undla\in \mathscr{P}^{m}_{\nu}$ and $k\in [n]$ such that $\mathscr{R}_{\mt_{\undla}}^{\lhd,\undQ}(k)\neq\emptyset.
		$ Suppose $(\mt_{\undla})^{-1}(k)=(i,j,l).$ Then for any $(A,*)\in \mathscr{R}_{\mt_{\undla}}^{\lhd,\undQ}(k),$ we have
		$A \in \{(i-1,j,l),(i,j-1,l)\}.$
		Therefore, we have $\mathtt{q}(\res(A))\in \{\mathtt{q}(q'^{2}\res_{\mt_{\undla}}(k)),\mathtt{q}({q'}^{- 2}\res_{\mt_{\undla}}(k))\}$.
It follows that either $\mathtt{q}(\res_{\mt_{\undla}}(k))|_{x=0}=\mathtt{q}(q'^2\res_{\mt_{\undla}}(k))|_{x=0}$ or $\mathtt{q}(\res_{\mt_{\undla}}(k))|_{x=0}=\mathtt{q}({q'}^{-2}\res_{\mt_{\undla}}(k))|_{x=0}$. In any cases, we can deduce that
		$\mathtt{q}(\res_{\mt_{\undla}}(k))|_{x=0}\in\{\pm 2\}$. Hence $m_{2}\geq 2$ or $m_{-2}\geq 2$, which contradicts to our assumption. This proves $\nu$ is $\undQ$-unremovable.
		\end{proof}

From now on, for $\nu\in Q_n^+,$ we set \label{pag:nu-Olift}$e_\nu ^{\hO}:=\sum_{{\bf i}\in J^\nu}e({\bf i})^\hO$ and shortly denote \label{pag:nu-block}
$$\mathcal{H}^f_{\mathbb{K}}(\nu):=e^J_\nu \mathcal{H}^f_{\mathbb{K}},\quad
\mathcal{H}^{f'}_{\hO}(\nu):=e_\nu ^{\hO}\mathcal{H}^{f'}_{\hO},\quad
\mathcal{H}^{f'}_{\hK}(\nu):=e_\nu ^{\hO}\mathcal{H}^{f'}_{\hK},$$	
\begin{lem}\label{MainLemmaforblock}
Suppose $\nu\in Q_n^+$ is $\undQ$-unremovable. Then the Gram matrix
$$\left(t^\hO_{r,n}\left(\psi^{\lhd,S,\hO}_{L'_1,\alpha,L'_2}\psi^{\rhd,T,\hO}_{L'_3,\alpha',L'_4}\right)\right)_{(S,L'_1,\alpha,L'_2),(T,L'_3,\alpha',L'_4)}$$
of elements \label{pag:O-basis1 for block}
\begin{equation}\label{basis1 for block} \Psi^{\hO,\lhd}_{\nu}:=\Bigl\{\psi^{\lhd,S,\hO}_{L'_1,\alpha,L'_2}\bigm|~\begin{matrix}
			(\undla,S)\in\mathscr{P}^{\undQ}_{\nu}, L'_1=(\ms,\beta_\ms), L'_2=(\mt,\beta_\mt)\in  \mathscr{T}(\undla,S)\\
			\alpha\in \Z_2^n,  {\rm supp}(\alpha)\subset O_{\mathfrak{t}_{\undla}}
		\end{matrix}\Bigr\}\end{equation}
and \label{pag:O-basis2 for block}
\begin{equation}\label{basis2 for block} \Psi^{\hO,\rhd}_{\nu}:=\Bigl\{\psi^{\rhd,S,\hO}_{L'_1,\alpha,L'_2}\bigm|~\begin{matrix}
			(\undla,S)\in\mathscr{P}^{\undQ}_{\nu}, L'_1=(\ms,\beta_\ms), L'_2=(\mt,\beta_\mt)\in  \mathscr{T}(\undla,S)\\
			\alpha\in \Z_2^n,  {\rm supp}(\alpha)\subset O_{\mathfrak{t}^{\undla}}
		\end{matrix}\Bigr\}\end{equation} is an invertible upper triangular matrix with each entry belongs to $\hO$.
\end{lem}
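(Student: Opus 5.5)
We follow \cite[\S4E--4F]{EM} and \cite{HM1}: expand both families in the seminormal basis and use that $t_{2m,n}$ pairs seminormal elements diagonally. By Lemma \ref{second expansion} and Lemma \ref{second expansion'}, each $\psi^{\lhd,S,\hO}_{L'_1,\alpha,L'_2}$ is a leading scalar times a sum of $\hO^\times f_{\tilde L_1,\tilde L_2}$ with $\tilde L_1,\tilde L_2$ of shape $(\ms,\mt)$, plus $\hK$-combinations of $f_{\mathfrak u,\mathfrak v}$ with $(\mathfrak u,\mathfrak v)\lhd(\ms,\mt)$. The $\rhd$-elements have the analogous expansion with lower terms strictly above in dominance.

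\textbf{Step 1: triangularity.} By Theorem \ref{seminormal basis}, $f_{{\rm S},{\rm T}}f_{{\rm U},{\rm V}}$ vanishes unless ${\rm T}={\rm U}$, and by Theorem \ref{mainthm type0 nondege}(1), $t_{2m,n}(f_{{\rm S},{\rm V}})=0$ unless ${\rm S}={\rm V}$. Hence $t(\psi^{\lhd}_{\ms,\mt}\psi^{\rhd}_{\mathfrak u,\mathfrak v})$ receives contributions only from pairs $f_{\mathfrak a,\mathfrak b}f_{\mathfrak b,\mathfrak a}$ with $(\mathfrak a,\mathfrak b)\unlhd(\ms,\mt)$ and $(\mathfrak b,\mathfrak a)\unrhd(\mathfrak u,\mathfrak v)$. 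Order the index set by shape and tableau pair $(\ms,\mt)$ against the swapped pair $(\mt,\ms)$. This forces the matrix to be zero unless $(\mathfrak v,\mathfrak u)\unlhd(\ms,\mt)$, which gives block triangularity. The finer indices $S,\beta,\alpha$ form the diagonal blocks.

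\textbf{Step 2: diagonal blocks.} For a fixed tableau pair, the diagonal entries equal the product of the two leading scalars times $\mathtt{c}^{\mathfrak w}_{\rm T}$-type coefficients times $t_{2m,n}(F_{\rm T})$. We compute these using Lemma \ref{combina. formulae of c} and Theorem \ref{mainthm type0 nondege}(2). In that product the factors $\prod(\mathtt q(\res_{\mt_{\undla}}(k))-\mathtt q(\res A))$ over $\mathscr A^{\lhd}$ cancel the denominators of $t(F_{\rm T})$ and of $\mathtt c^{\mt_\undla}_\mt$. The factors over $\mathscr A^{\rhd}$ cancel the $\mathtt c^{\mt^\undla}_\mt$ denominators, via $\mathscr A^\lhd\sqcup\mathscr A^\rhd=\Add(\mt\downarrow_{k-1})\setminus\{\mt^{-1}(k)\}$. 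The factors $b_+-b_-$ cancel $\prod_k(\mathtt b^{\pm}-\mathtt b^{\mp})^{-1}$, except at positions in $O_{\mt}$ outside $S$. The key point is the remaining removable-box factors $\prod_{B\in\Rem}(\mathtt q(\res_\mt(k))-\mathtt q(\res B))$. Those coming from $\mathscr R^{\rhd}$ combine with the $\mathscr R^{\rhd}$ contributions in $\mathtt c^{\mt^\undla}_\mt$, but factors from $\mathscr R^{\lhd,\undQ}$ would be non-units. The $\undQ$-unremovable hypothesis (Definition \ref{tech condition}) says exactly that these are absent at $\mt_\undla$. We then propagate along $\sigma_{w}$ via Proposition \ref{BKW deg}-type degree bookkeeping: the valuation of the diagonal entry is $\deg^\lhd+\deg^\rhd-d(\undla)$, which is $0$ by Corollary \ref{def1}. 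The diagonal block is then a signed Clifford-type matrix in $\alpha,\beta$ and $S$ versus complement, with unit entries. Its determinant is a power of $2$ times units, and $2$ is invertible in $\mathbb K$.

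\textbf{Step 3: integrality.} All entries lie in $\hO$, since each $\psi^{\hO}\in\mathcal{H}^{f'}_{\hO}$ (Proposition \ref{lifting idempotent} and the definitions) and $t^\hO$ is $\hO$-valued.

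The main obstacle is Step 2. It requires matching the Clifford index sets (the $\beta_{\mt_\undla}$ supported on $O\setminus S$ against those containing $S$) so that the diagonal block is genuinely invertible. It also requires checking that the $x$-adic valuations of all $\mathtt q$-differences cancel exactly rather than just bounding them. We would first verify the case $\mt=\mt_\undla$ directly from the formulas, then induct along reduced words using the $\mathtt c_{\ms,\mt}$ ratio in Lemma \ref{second expansion}.
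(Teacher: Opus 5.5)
Your skeleton is the paper's: expand both families via Lemmas \ref{second expansion} and \ref{second expansion'}, kill off-diagonal entries by orthogonality of the seminormal basis and Theorem \ref{mainthm type0 nondege}(1), and evaluate the diagonal with Lemma \ref{combina. formulae of c} and Theorem \ref{mainthm type0 nondege}(2). But Step 2, which carries the whole content of the lemma, contains one invalid argument. You claim the valuation of a diagonal entry is $\deg^{\lhd}+\deg^{\rhd}-d(\undla)=0$. Corollary \ref{def1} gives $\deg^{\lhd}(\mt)+\deg^{\rhd}(\mt)=d(\undla)$ for every standard tableau with no hypothesis, so this argument would make the Gram matrix unimodular for every $\nu$. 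In fact the diagonal entry is, up to a unit, $\prod_{k}\prod_{A\in\mathscr{R}^{\lhd}_{\mt_{\undla}}(k)}\bigl(\mathtt{q}(\res_{\mt_{\undla}}(k))-\mathtt{q}(\res(A))\bigr)$. This lies in $x\hO$ as soon as some $\mathscr{R}^{\lhd,\undQ}_{\mt_{\undla}}(k)\neq\emptyset$, and the $\Z$-degrees do not control the $x$-adic valuations of the $\mathtt q$-differences. No propagation along $\sigma_w$ is needed either. The $\mathtt c$-ratios are already built into Lemmas \ref{second expansion} and \ref{second expansion'}, and $f_{{\rm S},{\rm T}}f_{{\rm T},{\rm S}}=\mathtt c_{\ms,\mt}^2F_{\rm S}$. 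So for arbitrary $(\ms,\mt)$ the entry is $\mathtt c_{\ms,\mt_{\undla}}\mathtt c_{\mt_{\undla},\mt}\mathtt c_{\ms,\mt^{\undla}}\mathtt c_{\mt^{\undla},\mt}\,t(F_{\rm S})$ times the two leading products. The two cited results reduce this directly to the removable-box product above, and $\undQ$-unremovability makes it a unit.

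The Clifford bookkeeping, which you leave as the main obstacle and guess to be a Clifford-type block with determinant a power of $2$, is simpler than that. The $\lhd$-leading terms have $\supp(\beta_{\mt_{\undla}})\subseteq O_{\mt_{\undla}}\setminus\mt_{\undla}(S)$. The $\rhd$-leading terms with label $T$ have $\mt^{\undla}(T)\subseteq\supp(\beta_{\mt^{\undla}})\subseteq O_{\mt^{\undla}}$. Labels in $\mathscr T(\undla,S)$ have $\beta$ supported off $O$. Hence, for fixed $(\ms,\mt)$, matching the middle seminormal indices forces $L'_2=L'_3$ and equality of the two box sets, so the entry vanishes unless $T\subseteq O_{\undla}\setminus S$. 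When $T=O_{\undla}\setminus S$, exactly one seminormal term survives, giving $\delta_{L'_1,L'_4}\delta_{\alpha,\alpha'}$ times a single product. So each such block is itself triangular with respect to containment of $S$, under the pairing $(S,L'_1,\alpha,L'_2)\leftrightarrow(O_{\undla}\setminus S,L'_2,\alpha,L'_1)$, and no determinant computation arises. This pairing also cancels the non-unit factors $\mathtt b_+-\mathtt b_-$ at positions of $O$ outside $S$ that your Step 2 leaves over. The $\rhd$-partner supplies them through its factors over $\mt^{\undla}(T)$, and these cancel against $t(F_{\rm S})$.
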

\begin{proof}
Let $\psi^{\lhd,S,\hO}_{L'_1,\alpha,L'_2}\in \Psi^{\hO,\lhd}_{\nu}$	and $\psi^{\rhd,T,\hO}_{L'_3,\alpha',L'_4}\in  \Psi^{\hO,\rhd}_{\nu}$, where $L'_1=(\ms,\beta_{\ms}), L'_2=(\mt,\beta'_{\mt})\in \mathscr{T}(\undla,S),L'_3=(\mathfrak{u},\beta''_{\mathfrak{u}}), L'_4=(\mathfrak{v},\beta'''_{\mathfrak{v}})\in \mathscr{T}(\undmu,T)$ and
$(\undla,S),(\undmu,T)\in\mathscr{P}^{\undQ}_{\nu}$. Then $\alpha,\alpha'\in \Z^n_2$ such that $\supp(\alpha)\subset O_{\mathfrak{t}_{\undla}},\supp(\alpha')\subset O_{\mt^{\undmu}}$.
 
	\begin{enumerate}
		\item Suppose $(\mt,\ms)\ntrianglerighteq (\mathfrak{u},\mathfrak{v})$. Then by Lemma \ref{second expansion}, Lemma \ref{second expansion'} and Theorem \ref{mainthm type0 nondege} (1), it is easy to see
$$t^\hO_{r,n}\left(\psi^{\lhd,S,\hO}_{L'_1,\alpha,L'_2}\psi^{\rhd,T,\hO}_{L'_3,\alpha',L'_4}\right)=0.$$
		\item Suppose $\mt=\mathfrak{u},\ms=\mathfrak{v}$ but $L'_2\neq L'_3$. Again, by Lemma \ref{second expansion}, Lemma \ref{second expansion'} and Theorem \ref{mainthm type0 nondege} (1), we have
$$t^\hO_{r,n}\left(\psi^{\lhd,S,\hO}_{L'_1,\alpha,L'_2}\psi^{\rhd,T,\hO}_{L'_3,\alpha',L'_4}\right)=0.$$
		\item Suppose $\mt=\mathfrak{u},\ms=\mathfrak{v}, \,L'_2=L'_3,$ but $T\not\subset O_{\undla}\setminus S.$ Then in this case, by Lemma \ref{second expansion}, Lemma \ref{second expansion'} and Theorem \ref{mainthm type0 nondege} (1), we also have
$$t^\hO_{r,n}\left(\psi^{\lhd,S,\hO}_{L'_1,\alpha,L'_2}\psi^{\rhd,T,\hO}_{L'_3,\alpha',L'_4}\right)=0.$$
		\item Suppose $\mt=\mathfrak{u},\ms=\mathfrak{v}, \,L'_2= L'_3$ and $T=O_{\undla}\setminus S$ exactly. In this case, we have  \begin{align*}
			t^\hO_{r,n}\left(\psi^{\lhd,S,\hO}_{L'_1,\alpha,L'_2}\psi^{\rhd,T,\hO}_{L'_3,\alpha',L'_4}\right)&=\frac{\mathtt{c}_{ \ms,\mathfrak{t}_{\undla}}\mathtt{c}_{\mathfrak{t}_{\undla},\mt }}{\mathtt{c}_{\ms, \mt}}\prod_{k=1}^n\prod_{A\in\mathscr{A}^{\lhd}_{\mt_{\undla}}(k)}\left(\mathtt{q}(\res_{\mathfrak{t}_{\undla}}(k))-\mathtt{q}(\res(A))\right)\\
			&\qquad\cdot \prod_{\substack{k\in \bigl([n]\setminus O_{\mathfrak{t}_{\undla}}\bigr)\bigsqcup \mathfrak{t}_{\undla}(S)}}	\left(b_+(\res_{\mathfrak{t}_{\undla}}(k))-b_-(\res_{\mathfrak{t}_{\undla}}(k))\right)\\
			&\qquad\qquad\cdot	\frac{\mathtt{c}_{\ms, \mathfrak{t}^{\undla} }\mathtt{c}_{\mathfrak{t}^{\undla},\mt}}{\mathtt{c}_{\ms, \mt}}\prod_{k=1}^n\prod_{A\in\mathscr{A}^{\rhd}_{\mt_{\undla}}(k)}\left(\mathtt{q}(\res_{\mathfrak{t}^{\undla}}(k))-\mathtt{q}(\res(A))\right)\\
			&\qquad\qquad\qquad\cdot \prod_{\substack{k\in \bigl([n]\setminus O_{\mathfrak{t}^{\undla}}\bigr)\bigsqcup \mathfrak{t}^{\undla}(T)}}	\left(b_+(\res_{\mathfrak{t}^{\undla}}(k))-b_-(\res_{\mathfrak{t}^{\undla}}(k))\right)\\
			&\qquad\qquad\qquad\qquad \cdot \mathtt{c}_{\ms, \mt}^2 \delta_{L'_1,L'_4}\delta_{\alpha,\alpha'} t^\hO_{r,n}(F_{L'_1})\\
			&\in \delta_{L'_1,L'_4}\delta_{\alpha,\alpha'}\hO^\times \cdot
\prod_{k=1}^n \prod_{A\in\mathscr{R}_{\mt_{\undla}}^{\lhd}(k)}\left(\mathtt{q}(\res_{\mt_{\undla}}(k))-\mathtt{q}(\res(A)\right),
		\end{align*}by using Lemma \ref{combina. formulae of c} and Theorem \ref{mainthm type0 nondege} (2).
Since $\nu\in Q_n^+$ is $\undQ$-unremovable, we have \begin{align*}
			\prod_{k=1}^n \prod_{A\in\mathscr{R}_{\mt_{\undla}}^{\lhd}(k)}\left(\mathtt{q}(\res_{\mt_{\undla}}(k))-\mathtt{q}(\res(A))\right)
			\in \hO^\times.
		\end{align*} This completes the proof.
	\end{enumerate}

\end{proof}

%\begin{cor}\label{MainLemma}
%If $(q,\undQ)$ is of type $A_\infty,$ $C_\infty,$ $A^{(1)}_{\ell-1}$ or $C^{(1)}_{s},$ then the Gram matrix
%$$\left(t^\hO_{r,n}\left(\psi^{\lhd,S,\hO}_{L'_1,\alpha,L'_2}\psi^{\rhd,T,\hO}_{L'_3,\alpha',L'_4}\right)\right)_{(S,L'_1,\alpha,L'_2),(T,L'_3,\alpha',L'_4)}$$
%of elements \begin{equation}\label{basis1}\Bigl\{\psi^{\lhd,S,\hO}_{L'_1,\alpha,L'_2}\bigm|~\begin{matrix}
%			(\undla,S)\in\mathscr{P}^{\undQ}_{n}, L'_1=(\ms,\beta_\ms), L'_2=(\mt,\beta_\mt)\in  \mathscr{T}(\undla,S)\\
%			\alpha\in \Z_2^n,  {\rm supp}(\alpha)\subset O_{\mathfrak{t}_{\undla}}
%		\end{matrix}\Bigr\}\end{equation}
%and \begin{equation}\label{basis2}\Bigl\{\psi^{\rhd,S,\hO}_{L'_1,\alpha,L'_2}\bigm|~\begin{matrix}
%			(\undla,S)\in\mathscr{P}^{\undQ}_{n}, L'_1=(\ms,\beta_\ms), L'_2=(\mt,\beta_\mt)\in  \mathscr{T}(\undla,S)\\
%			\alpha\in \Z_2^n,  {\rm supp}(\alpha)\subset O_{\mathfrak{t}^{\undla}}
%		\end{matrix}\Bigr\}\end{equation} is an invertible matrix with each entry belongs to $\hO$.
%\end{cor}
%\begin{proof}
%It follows from $(I_f)_{\rm odd}=I_f \cap\{\mathtt{q}(q),\mathtt{q}(-q)\}=\emptyset$ and Lemma \ref{MainLemmaforblock}.
%\end{proof}
Then we have the following.
\begin{prop}\label{O-bases}
Suppose $\nu\in Q_n^+$ is $\undQ$-unremovable.
%If $(q,\undQ)$ is of type $A_\infty,$ $C_\infty,$ $A^{(1)}_{\ell-1}$ or $C^{(1)}_{s},$
Then the sets \eqref{basis1 for block} and \eqref{basis2 for block} form two $\hO$-bases of $\mathcal{H}^{f'}_{\hO}(\nu)$ respectively.
\end{prop}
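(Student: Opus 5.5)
The plan is the standard "invertible Gram matrix implies integral basis" argument, following \cite{HM1,EM}. It has three steps: show the elements lie in $\mathcal{H}^{f'}_{\hO}(\nu)$, show their number equals the rank, and then use Lemma \ref{MainLemmaforblock} together with the non-degeneracy of $t^\hO_{r,n}$.

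First, membership. By construction each $\psi^{\lhd,S,\hO}_{L'_1,\alpha,L'_2}$ and $\psi^{\rhd,S,\hO}_{L'_1,\alpha,L'_2}$ is a product of elements of $\mathcal{H}^{f'}_{\hO}$: the lifted idempotents $e({\bf i})^\hO$ are integral by Proposition \ref{lifting idempotent}, and the polynomials $f_{k,{\bf i}}$ and $r_{a,{\bf i}}$ are polynomials in the $X_k$. Each element is also multiplied on both sides by lifted idempotents $e({\bf j})^\hO$ with ${\bf j}\in J^\nu$. Indeed, residues of $\undla\in\mathscr{P}^m_\nu$ specialize into $J^\nu$, and both $\sigma^\hO_w$ and the $C^\beta$ only permute or apply $c_p$ to the residue sequence. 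Hence both sets lie in $\mathcal{H}^{f'}_{\hO}(\nu)$.

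Second, counting. I will show that $\sharp\Psi^{\hO,\lhd}_\nu=\sharp\Psi^{\hO,\rhd}_\nu={\rm rank}_\hO\mathcal{H}^{f'}_{\hO}(\nu)$. The rank equals $\dim_\hK \mathcal{H}^{f'}_{\hK}(\nu)=\sum_{\undla\in\mathscr{P}^m_\nu}\sharp{\rm Tri}(\undla)^2$, by \eqref{dim=rank}, \eqref{dim 1} and the fact that $d_{\undla}=0$ when $\bullet=\mathsf{0}$. For fixed $\undla$, the index set of $\Psi^{\hO,\lhd}_\nu$ runs over $S\subset O_{\undla}$, two elements of $\mathscr{T}(\undla,S)$, and $\alpha$ with support in $O_{\mt_{\undla}}$. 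The number of such indices is
\begin{align*}
\sum_{S\subset O_{\undla}}\bigl(\sharp\Std(\undla)\,2^{n-\sharp O_{\undla}}\bigr)^2\,2^{\sharp O_{\undla}} =\sharp\Std(\undla)^2\,2^{2n}=\sharp{\rm Tri}(\undla)^2,
\end{align*}
using $\sharp O_\mt=\sharp O_{\undla}$ and ${\rm Tri}(\undla)=\Std(\undla)\times\Z_2^n$. The same count works for $\Psi^{\hO,\rhd}_\nu$.

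Third, the basis argument. Let $G$ be the Gram matrix of Lemma \ref{MainLemmaforblock}. It is square, since the two sets have the same size, and it lies in ${\rm GL}(\hO)$. If $\sum_a r_a\psi^\lhd_a=0$ with $r_a\in\hK$, then pairing with every $\psi^{\rhd}_b$ gives $rG=0$, so $r=0$. Thus $\Psi^{\hO,\lhd}_\nu$ is $\hK$-linearly independent, and by the count it is a $\hK$-basis of $\mathcal{H}^{f'}_{\hK}(\nu)$; similarly for $\Psi^{\hO,\rhd}_\nu$.

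For integral spanning, take $h\in\mathcal{H}^{f'}_{\hO}(\nu)$ and write $h=\sum_a r_a\psi^\lhd_a$ with $r_a\in\hK$. Then $\bigl(t^\hO_{r,n}(h\psi^\rhd_b)\bigr)_b=rG$, and the left side has entries in $\hO$ because $t^\hO_{r,n}$ is $\hO$-valued on $\mathcal{H}^{f'}_{\hO}$. Since $G^{-1}$ has entries in $\hO$, we get $r\in\hO^{N}$. The symmetric argument, using the transpose and the supersymmetry of $t$ to swap the factors up to sign, gives the result for $\Psi^{\hO,\rhd}_\nu$.

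I expect the main obstacle to be the counting step. One has to check that $\sharp O_\mt$ is independent of $\mt\in\Std(\undla)$ and matches $\sharp O_{\undla}$, and that the parametrization by $(L_1,L_2)$ of the definition of $\psi$ is really bijective onto the indices, including that the pairs $(w_i,\beta_i)$ are unique. I will also need to confirm that the block $\mathcal{H}^{f'}_{\hK}(\nu)$ is exactly the sum of $B_{\undla}$ over $\undla\in\mathscr{P}^m_\nu$.
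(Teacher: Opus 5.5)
Your proposal is correct and follows the paper's proof. Linear independence comes from the Gram matrix of Lemma \ref{MainLemmaforblock}. A dimension count then gives a $\hK$-basis: the paper simply cites \eqref{dim 2}, while you check explicitly that the index set has $\sum_{\undla\in\mathscr{P}^m_\nu}\sharp{\rm Tri}(\undla)^2$ elements. Finally, the coefficients are integral because the pairings $t^\hO_{r,n}(h\psi^{\rhd}_b)$ lie in $\hO$ and the Gram matrix is invertible over $\hO$.

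For the $\rhd$-set you do not need supersymmetry at all. Write $h=\sum_b s_b\psi^{\rhd}_b$ and pair with $\psi^{\lhd}_a$ on the left; this gives $t^\hO_{r,n}(\psi^{\lhd}_a h)=(Gs)_a$ directly.
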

\begin{proof}
If there is an $\hO$-linear combination
$$\sum_{S,L'_1,\alpha,L'_2}a_{L'_1,\alpha,L'_2}^{\lhd,S,\hO}\psi^{\lhd,S,\hO}_{L'_1,\alpha,L'_2}=0,$$
then we have
$$\sum_{S,L'_1,\alpha,L'_2}a_{L'_1,\alpha,L'_2}^{\lhd,S,\hO}t^\hO_{r,n}\left(\psi^{\lhd,S,\hO}_{L'_1,\alpha,L'_2}\psi^{\rhd,T,\hO}_{L'_3,\alpha',L'_4}\right)=0$$
for any suitable $(T,L'_3,\alpha',L'_4).$ It follows from Lemma \ref{MainLemmaforblock} that each $\hO$-coefficient $a_{L'_1,\alpha,L'_2}^{\lhd,S,\hO}=0$
and thus the set \eqref{basis1 for block} is $\hO$-linearly independent.
It follows from \eqref{dim 2} that the set \eqref{basis1 for block} is a $\hK$-basis of $\mathcal{H}^{f'}_{\hK}(\nu).$

On the other hand, for any $h\in\mathcal{H}^{f'}_{\hO}(\nu)\subseteq \mathcal{H}^{f'}_{\hK}(\nu),$ we can write
$$h=\sum_{S,L'_1,\alpha,L'_2}a_{L'_1,\alpha,L'_2}^{\lhd,S,\hK}\psi^{\lhd,S,\hO}_{L'_1,\alpha,L'_2},$$
for some $a_{L'_1,\alpha,L'_2}^{\lhd,S,\hK}\in \hK.$
Then we obtain the following system of linear equations
$$
t^\hO_{r,n}\left(h\psi^{\rhd,T,\hO}_{L'_3,\alpha',L'_4}\right)
=\sum_{S,L'_1,\alpha,L'_2}a_{L'_1,\alpha,L'_2}^{\lhd,S,\hK}
 t^\hO_{r,n}\left(\psi^{\lhd,S,\hO}_{L'_1,\alpha,L'_2}\psi^{\rhd,T,\hO}_{L'_3,\alpha',L'_4}\right),\quad \text{for }(T,L'_3,\alpha',L'_4).$$
By Lemma \ref{MainLemmaforblock} and note that each $t^\hO_{r,n}\left(h\psi^{\rhd,T,\hO}_{L'_3,\alpha',L'_4}\right)\in\hO,$
we deduce that all coefficients $a_{L'_1,\alpha,L'_2}^{\lhd,S,\hK}\in\hO.$
Hence the set \eqref{basis1 for block} is an $\hO$-basis of $\mathcal{H}^{f'}_{\hO}(\nu).$
Similarly, the set \eqref{basis2 for block} is also an $\hO$-basis of $\mathcal{H}^{f'}_{\hO}(\nu).$
\end{proof}

%Recall that we call the parameter $(q,\undQ)$ is of some type $\diamondsuit$ in Section \ref{DynkinDiagrams}, if $q$ satisfies the condition associated with type $\diamondsuit$ and each $[Q_i]$ ($1\leq i\leq m$) corresponds to a vertex of the Dynkin diagram for $\diamondsuit$.
We are now in the position to state our main result of this subsection.
\begin{thm}\label{GC1}
    Suppose $\nu\in Q_n^+$ is $\undQ$-unremovable. Then the following two sets \label{pag:basis1'}
	\begin{equation}\label{basis1'}
       \Psi^{\lhd}_{\nu}=\Bigl\{\psi^{\lhd,S}_{L'_1,\alpha,L'_2}\in\mathcal{H}^f_{\mathbb{K}}\bigm|~\begin{matrix}
			(\undla,S)\in\mathscr{P}^{\undQ}_{\nu}, L'_1=(\ms,\beta_\ms), L'_2=(\mt,\beta_\mt)\in  \mathscr{T}(\undla,S),\\
			\alpha\in \Z_2^n,  {\rm supp}(\alpha)\subset O_{\mathfrak{t}_{\undla}}
		\end{matrix}\Bigr\}\end{equation}
and \begin{equation}\label{basis2'}
	   \Psi^{\rhd}_{\nu}=\Bigl\{\psi^{\rhd,S}_{L'_1,\alpha,L'_2}\in\mathcal{H}^f_{\mathbb{K}}\bigm|~\begin{matrix}
			(\undla,S)\in\mathscr{P}^{\undQ}_{\nu}, L'_1=(\ms,\beta_\ms), L'_2=(\mt,\beta_\mt)\in  \mathscr{T}(\undla,S),\\
			\alpha\in \Z_2^n,  {\rm supp}(\alpha)\subset O_{\mathfrak{t}^{\undla}}
		\end{matrix}\Bigr\}\end{equation} form two $\mathbb{K}$-bases of $\mathcal{H}^f_{\mathbb{K}}(\nu)$ respectively.
	
In particular, if $(I_f)_{{\rm odd}}=\emptyset$, then the sets $\bigsqcup\limits_{\nu\in Q_n^+}\Psi^{\lhd}_{\nu}$ and $\bigsqcup\limits_{\nu\in Q_n^+}\Psi^{\rhd}_{\nu}$ form two $\mathbb{K}$-bases of $\mathcal{H}^f_{\mathbb{K}}$ respectively.
\end{thm}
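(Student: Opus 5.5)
The plan is to deduce Theorem \ref{GC1} from Proposition \ref{O-bases} by specialization $x\mapsto 0$. By Proposition \ref{O-bases}, $\Psi^{\hO,\lhd}_{\nu}$ and $\Psi^{\hO,\rhd}_{\nu}$ are $\hO$-bases of the free $\hO$-module $\mathcal{H}^{f'}_{\hO}(\nu)=e^{\hO}_\nu\mathcal{H}^{f'}_{\hO}$. Tensoring with $\mathbb{K}\cong\hO/(x)$ sends an $\hO$-basis of a free module to a $\mathbb{K}$-basis of its reduction. So the first task is to identify $\mathbb{K}\otimes_{\hO}\mathcal{H}^{f'}_{\hO}(\nu)$ with $\mathcal{H}^f_{\mathbb{K}}(\nu)$.

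For this identification I would use Proposition \ref{lifting idempotent}: $1\otimes_{\hO}e({\bf i})^{\hO}=e({\bf i})$ for each ${\bf i}$. Summing over ${\bf i}\in J^\nu$ gives $1\otimes e^{\hO}_\nu=e^J_\nu$. Since $e^{\hO}_\nu$ is an idempotent in $\mathcal{H}^{f'}_{\hO}$, the summand $e^{\hO}_\nu\mathcal{H}^{f'}_{\hO}$ is a direct summand of the free module $\mathcal{H}^{f'}_{\hO}$. Its reduction is therefore $e^J_\nu(\mathbb{K}\otimes_{\hO}\mathcal{H}^{f'}_{\hO})=e^J_\nu\mathcal{H}^f_{\mathbb{K}}=\mathcal{H}^f_{\mathbb{K}}(\nu)$.

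Next I would check that $1\otimes_{\hO}\psi^{\lhd,S,\hO}_{L'_1,\alpha,L'_2}=\psi^{\lhd,S}_{L'_1,\alpha,L'_2}$, and likewise for $\rhd$. This was recorded after the definitions as a consequence of Theorem \ref{KKTiso}: the factors $(X_k-\mathtt{b}_*(\res(A)))f_{k,{\bf i}}$ reduce to $y_ke({\bf i})$, and the deformed $\sigma^{\hO}_{w}$ reduce to $\sigma_w$. Hence the reductions of the two $\hO$-bases are exactly $\Psi^{\lhd}_\nu$ and $\Psi^{\rhd}_\nu$, and these are $\mathbb{K}$-bases of $\mathcal{H}^f_{\mathbb{K}}(\nu)$. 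One small point is that distinct indices might a priori reduce to coinciding elements. This cannot happen, since an $\hO$-basis maps bijectively onto a $\mathbb{K}$-basis of the reduction.

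For the final assertion, suppose $(I_f)_{\rm odd}=\emptyset$. Then Proposition \ref{example-unremovable} shows every $\nu\in Q_n^+$ is $\undQ$-unremovable. Since $\sum_{\nu}e^J_\nu=1$ with orthogonal central idempotents, $\mathcal{H}^f_{\mathbb{K}}=\bigoplus_\nu\mathcal{H}^f_{\mathbb{K}}(\nu)$, and the disjoint unions of the bases are $\mathbb{K}$-bases of $\mathcal{H}^f_{\mathbb{K}}$. The only delicate step is the compatibility of the reductions with the KKT images, in particular that the $m^{\bf j}_{a,{\bf i}}$ terms in $\sigma^{\hO}_w$ are the correct lifts. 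I would treat this as already granted by the remarks following each definition.
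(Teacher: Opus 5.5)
Your proposal is correct and follows essentially the same route as the paper. The paper's proof applies Proposition~\ref{O-bases} together with $\mathcal{H}^f_{\mathbb{K}}\cong\mathbb{K}\otimes_{\hO}\mathcal{H}^{f'}_{\hO}$ and uses Proposition~\ref{example-unremovable} for the last claim; you additionally spell out, via Proposition~\ref{lifting idempotent}, why the $\nu$-summand $e^{\hO}_\nu\mathcal{H}^{f'}_{\hO}$ reduces to $e^J_\nu\mathcal{H}^f_{\mathbb{K}}$, a step the paper leaves implicit.
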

\begin{proof}
The first part of the Theorem is to apply Proposition \ref{O-bases} and the natural isomorphism $\mathcal{H}^f_{\mathbb{K}}\cong \mathbb{K}\otimes_{\hO} \mathcal{H}^{f'}_{\hO}.$
The second statement follows from Proposition \ref{example-unremovable}.
\end{proof}

\subsection{Generalized graded super cellular datum}\label{Generalized graded super cellular datum}
{\bf In this section, we fix $\nu\in Q_n^+$ being $\undQ$-unremovable.} We shall prove that $\mathcal{H}^f_{\mathbb{K}}(\nu)=e^J_\nu \mathcal{H}^f_{\mathbb{K}}$ is a generalized graded cellular superalgebra by giving generalized graded super cellular datum for $\mathcal{H}^f_{\mathbb{K}}(\nu).$
%
%In the rest of this section, we write $\nu=m(\nu)\alpha_{{\bf i}_0}+\sum_{{\bf i}\in (I_f)_{\rm even}}k_{{\bf i}}\alpha_{{\bf i}}\in Q_n^+$ for some ${\bf i}_0\in(I_f)_{\rm odd}$ with $m(\nu)\leq 1.$

Recall the bases $\Psi^{\lhd}_{\nu}$ \eqref{basis1'} and $\Psi^{\rhd}_{\nu}$ \eqref{basis2'} of $\mathcal{H}^f_{\mathbb{K}}(\nu).$ We first determine the $\mathbb{Z}$-degrees of the elements in $\Psi^{{\scriptstyle\triangle}}_{\nu},$ for ${\scriptstyle\triangle}\in\{\lhd,\rhd\}.$
\label{pag:more degrees}
\begin{defn}
Let $(\undla,S)\in\mathscr{P}^{\undQ}_{\nu}.$ We define $$
\deg(S):=\sum_{A\in S}{\rm d}_{\mathtt{q}(\res(A))|_{x=0}}.
$$
For $L=(\mt,\beta_\mt)\in \mathscr{T}(\undla,S),$ ${\scriptstyle\triangle}\in\{\lhd,\rhd\},$ we define
\begin{align*}
\deg^{{\scriptstyle\triangle},S}(L)
:=\deg^{\scriptstyle\triangle,f}(\mt) + \deg(S).
\end{align*}
\end{defn}

Comparing Definition \ref{deg of std tableaux} and Definition \ref{tri-Q-AddandRem}, we have the following.
\begin{lem}\label{Boxes 1-1}
Let ${\rm T}=(\mt,\beta_{\mt})\in \Std(\undla)\times \Z_2^n,$ $\mathtt{q}(\res(\mt))|_{x=0}=({\bf i}_1,\ldots,{\bf i}_n)\in (I_f)^n.$ Then
\begin{align*}
\sharp \mathscr{A}_{\mt}^{{\scriptstyle\triangle},\undQ}(k)
=2^{\delta_{{\rm p}({\bf i}_k),\bar{1}}}\cdot \sharp \mathcal{A}_{\mt}^{{\scriptstyle\triangle},f}(k),
\qquad
\sharp \mathscr{R}_{\mt}^{{\scriptstyle\triangle},\undQ}(k)
=2^{\delta_{{\rm p}({\bf i}_k),\bar{1}}}\cdot \sharp \mathcal{R}_{\mt}^{{\scriptstyle\triangle},f}(k),
\end{align*}
for $k\in[n],$ ${\scriptstyle\triangle}\in\{\lhd,\rhd\}.$
\end{lem}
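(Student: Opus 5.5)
The plan is to reduce the lemma to a box-by-box comparison. Both sides are counts of the same underlying addable (resp. removable) boxes; the left side counts pairs $(A,*)$ with $*\in\{\pm\}$, and the right side counts boxes $A$ alone. It therefore suffices to show, for each box $A$ in $\mathscr{A}_{\mt}^{{\scriptstyle\triangle}}(k)$ (resp. $\mathscr{R}_{\mt}^{{\scriptstyle\triangle}}(k)$), that the number of signs $*$ with
\[
\mathtt{b}_{*}(\res(A))|_{x=0}=\mathtt{b}_{+}(\res_\mt(k))|_{x=0}
\]
is $0$ unless $\mathtt{q}(\res(A))|_{x=0}={\bf i}_k$, and is exactly $2^{\delta_{{\rm p}({\bf i}_k),\bar{1}}}$ when $\mathtt{q}(\res(A))|_{x=0}={\bf i}_k$. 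Summing over $A$ then gives both identities.

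First I note that $\mathtt{b}_{\pm}(y)$ are the two roots of $z+z^{-1}=\mathtt{q}(y)$. Reduction mod $x$ commutes with this, since all quantities lie in $\mathbb{K}[[x^2]]$. Hence if $\mathtt{b}_*(\res(A))|_{x=0}=\mathtt{b}_+(\res_\mt(k))|_{x=0}$, applying $g(z)=z+z^{-1}$ gives $\mathtt{q}(\res(A))|_{x=0}=\mathtt{q}(\res_\mt(k))|_{x=0}={\bf i}_k$. So boxes with a different $\mathtt{q}$-residue contribute nothing.

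Conversely, suppose $\mathtt{q}(\res(A))|_{x=0}={\bf i}_k$. Then $\{\mathtt{b}_\pm(\res(A))|_{x=0}\}$ is the root set of $z^2-{\bf i}_kz+1$, and $\mathtt{b}_+(\res_\mt(k))|_{x=0}$ is one of these roots. The count of matching signs is $1$ if the two roots are distinct and $2$ if they coincide. They coincide exactly when the discriminant ${\bf i}_k^2-4$ vanishes, i.e.\ ${\bf i}_k=\pm2=\mathtt{q}(\pm q)$. By the convention of Section \ref{DynkinDiagrams}, this is precisely the condition ${\bf i}_k\in (I_f)_{\rm odd}$, i.e.\ ${\rm p}({\bf i}_k)=\bar 1$. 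This gives the factor $2^{\delta_{{\rm p}({\bf i}_k),\bar{1}}}$.

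The only delicate point is the bookkeeping of specialization at $x=0$. I must make sure that $\mathtt{b}_\pm$ is compatible with reduction: the chosen square roots in $\mathbb{K}[[x^2]]$ reduce to square roots in $\mathbb{K}$. Also, $\mathcal{A}^{{\scriptstyle\triangle},f}_\mt(k)$ is defined via $\mathtt{q}(\res)$ in $I_f$, i.e.\ after setting $x=0$. Once these identifications are in place, the argument is the elementary quadratic-root count above, applied identically to addable and removable boxes. Note that $\mathcal{D}=\emptyset$ in the present case $\bullet=\mathsf{0}$, so no diagonal correction is needed.
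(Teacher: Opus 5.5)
Your proposal is correct and is essentially the paper's approach. The paper gives no written proof beyond noting that the lemma follows from comparing Definition~\ref{deg of std tableaux} with Definition~\ref{tri-Q-AddandRem}; your box-by-box count is exactly that comparison made explicit. Its one substantive point is that $\mathtt{b}_{\pm}(\res(A))|_{x=0}$ are the two roots of $z^2-{\bf i}_kz+1$, and these coincide exactly when ${\bf i}_k=\pm 2$, i.e.\ ${\rm p}({\bf i}_k)=\bar{1}$. You handle this correctly, and the argument does not depend on which of the two roots $\mathtt{b}_+$ specializes to.
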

%\begin{proof}
%It follows from the definitions directly.
%\end{proof}

\begin{lem}\label{deg of psi}
For any
%$\psi^{{\scriptstyle\triangle},S}_{L'_1,\alpha,L'_2}\in\Psi^{\scriptstyle\triangle}_{\beta},$ where ${\scriptstyle\triangle}\in\{\lhd,\rhd\},$ we have
$(\undla,S)\in\mathscr{P}^{\undQ}_{\nu},$ $L'_1=(\ms,\beta_\ms), L'_2=(\mt,\beta_\mt)\in  \mathscr{T}(\undla,S),$
%and $\alpha\in \Z_2^n $ with ${\rm supp}(\alpha)\subset O_{\mathfrak{t}_{\undla}},$
we have
\begin{align*}
\deg \left(\psi^{{\scriptstyle\triangle},S}_{L'_1,\alpha,L'_2}\right)
=\deg^{{\scriptstyle\triangle},S}(L'_1)+\deg^{{\scriptstyle\triangle},S}(L'_2).
%\quad\text{ for }{\scriptstyle\triangle}\in\{\lhd,\rhd\}.
\end{align*}
\end{lem}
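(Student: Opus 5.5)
We will compute the degree of $\psi^{\triangle,S}_{L'_1,\alpha,L'_2}=\sigma_{w_1}\,y^{\triangle,S}_{L_1,\alpha,L_2}\,(\sigma_{w_2})^*$ factor by factor. The grading on $RC^{\Lambda_f}_n$ gives $\deg(y_ke(\mathbf{i}))=(\nu_{\pr(\mathbf{i}_k)}|\nu_{\pr(\mathbf{i}_k)})=2\mathrm{d}_{\pr(\mathbf{i}_k)}$, $\deg(\sigma_a e(\mathbf{i}))=-(\nu_{\pr(\mathbf{i}_a)}|\nu_{\pr(\mathbf{i}_{a+1})})=-\mathrm{d}_{\pr(\mathbf{i}_a)}a_{\pr(\mathbf{i}_a),\pr(\mathbf{i}_{a+1})}$, and $c_p$, $e(\mathbf{i})$ of degree $0$. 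Degrees are additive and $*$ preserves them. We therefore get
\[
\deg\psi=\deg(\sigma_{w_1}e(c^{\beta_1}\mathbf{i}))+\deg(y^{\triangle,S}_{L_1,\alpha,L_2})+\deg(\sigma_{w_2}e(c^{\beta_2}\mathbf{i})),
\]
where $\mathbf{i}=\mathbf{i}_{\undla}$ for $\lhd$ and $\mathbf{i}=\mathbf{i}^{\undla}$ for $\rhd$. Since $c^{\beta}$ does not change $\pr$ of the residue sequence, the $\sigma$-degrees depend only on $\pr(\mathbf{i})=\mathtt{q}(\res(\mt_{\undla}))|_{x=0}$ (resp. $\mt^{\undla}$).

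For the $\sigma$-parts we use Corollary \ref{cor of BKW deg}, taking the reduced expression of $w_1=d(\ms,\mt_{\undla})$ fixed in the definition and applying the corollary with $\ms$ in place of $\mt$. This gives $\deg(\sigma_{w_1}e(c^{\beta_1}\mathbf{i}_{\undla}))=\deg^{\lhd}(\ms)-\deg^{\lhd}(\mt_{\undla})$, and similarly $\deg(\sigma_{w_2}e(\cdots))=\deg^{\lhd}(\mt)-\deg^{\lhd}(\mt_{\undla})$. For $\rhd$ we use $\mt^{\undla}$ and $\deg^{\rhd}$ in the same way. Throughout we need $L'_i=w_iL_i$ to mean $\ms=w_1\mt_{\undla}$. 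The corollary applies to any lift in $J_f^n$ of the $I_f$-residue sequence, which covers $c^{\beta_1}\mathbf{i}_{\undla}$.

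Next we handle the middle factor. $C^{\beta}$ and $u=C^\alpha$ have degree $0$. The element $y^{\lhd}_{\undla}$ has degree $\sum_k \sharp\mathscr{A}^{\lhd,\undQ}_{\mt_{\undla}}(k)\cdot 2\mathrm{d}_{\mathbf{i}_k}$, and by Lemma \ref{Boxes 1-1} this equals $\sum_k 2^{\delta_{{\rm p}(\mathbf{i}_k),\bar 1}}\,2\mathrm{d}_{\mathbf{i}_k}\sharp\mathcal{A}^{\lhd,f}_{\mt_{\undla}}(k)$. We must reconcile this with $2\deg^{\lhd}(\mt_{\undla})$, which also contains the term $-\sharp(\mathcal{R}^{\lhd,f}_{\mt_{\undla}}(k)\setminus\mathcal{D})$. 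Here the $\undQ$-unremovable hypothesis enters: it says $\mathscr{R}^{\lhd,\undQ}_{\mt_{\undla}}(k)=\emptyset$, hence $\mathcal{R}^{\lhd,f}_{\mt_{\undla}}(k)=\emptyset$ by Lemma \ref{Boxes 1-1}, since $\mathcal{D}=\emptyset$ for $\bullet=\mathsf 0$. For $\rhd$ we expect $\mathcal{R}^{\rhd}_{\mt^{\undla}}(k)=\emptyset$ automatically, since all earlier boxes of $\mt^{\undla}$ come before $\mt^{\undla}(k)$ in the box order, so no removable box is $>$ it. There is a factor-of-two issue to check carefully: for odd $i$ we have $\mathrm{d}_i$ and $(\nu_i|\nu_i)=2\mathrm{d}_i$ while $\deg^{\triangle}$ carries $2^{1}\mathrm{d}_i$. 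We expect the count to give $\deg(y^{\lhd}_{\undla}e(\mathbf{i}))=2\deg^{\lhd}(\mt_{\undla})$. Finally, the extra factors $y_k$ for $k\in\mt_{\undla}(S)$ contribute $\sum_{A\in S}2\mathrm{d}_{\mathtt{q}(\res(A))|_{x=0}}=2\deg(S)$.

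Summing, we obtain $\deg^{\lhd}(\ms)+\deg^{\lhd}(\mt)+2\deg(S)=\deg^{\lhd,S}(L'_1)+\deg^{\lhd,S}(L'_2)$, and likewise for $\rhd$. The main obstacle is the middle-factor bookkeeping. It requires checking that $\mathrm{d}_i$ is normalized so that $(\nu_i|\nu_i)=2\mathrm{d}_i$ matches $2^{\delta_{{\rm p}(i),\bar1}}\mathrm{d}_i$ counts for odd $i$. This must hold for all four affine types, with odd vertices $\pm2$ and $\mathrm{d}_i$ read off from the symmetrization. If the normalization differs, the $\sharp\mathscr{A}^{\undQ}$ doubling from Lemma \ref{Boxes 1-1} is exactly what is meant to compensate, and we would verify this case by case against the Cartan matrix table.
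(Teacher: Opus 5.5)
Your proposal is correct and is essentially the paper's proof: you peel off the $\sigma$-factors with Corollary~\ref{cor of BKW deg}, you get $\deg(y^{\lhd}_{\undla}e({\bf i}_{\undla}))=2\deg^{\lhd,f}(\mt_{\undla})$ from Lemma~\ref{Boxes 1-1} and $\undQ$-unremovability, and the extra $y_k$'s contribute $2\deg(S)$. The factor-of-two worry is moot, because ${\rm d}_i=(\nu_i|\nu_i)/2$ holds by definition for every $i$, so $\deg(y_ke({\bf i}))=2{\rm d}_{\pr({\bf i}_k)}$ uniformly and your displayed count already closes with no case-by-case check.
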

\begin{proof}
We may assume ${\scriptstyle\triangle}=\lhd.$
Recall the definitions in \eqref{middle part for rhd} and \eqref{rhd-basis}.
Let $L_1=(\mathfrak{t}_{\undla},\beta_1), L_2=(\mathfrak{t}_{\undla},\beta_2)\in  \mathscr{T}(\undla,S)$
and $w_1,w_2\in \mathfrak{S}_n$ such that $L'_1=w_1L_1$ and $L'_2=w_2L_2.$
Since $\nu\in Q_n^+$ is $\undQ$-unremovable, we have $$\deg(y^\lhd_{\undla}e({\bf i}_{\undla}))=2\deg ^{{\lhd},f}(\mt_{\undla}).
$$
It follows from definition that
\begin{align*}
\deg \left(\psi^{\lhd,S}_{L'_1,\alpha,L'_2}\right)
&=\deg(\sigma_{w_1}e(c^{\beta_1}{\bf i}_{\undla}))+\deg(y^\lhd_{\undla}e({\bf i}_{\undla}))
+2\deg(S)+\deg(\sigma_{w_2}e(c^{\beta_2}{\bf i}_{\undla}))\\
&=\deg^{{\lhd},f}(\ms)+\deg^{{\lhd},f}(\mt)+2\deg(S)\\
&=\deg^{{\lhd},S}(L'_1)+\deg^{{\lhd},S}(L'_2).
\end{align*}
where in the second equality we have used Lemma \ref{Boxes 1-1} and Corollary \ref{cor of BKW deg}. The proof for ${\scriptstyle\triangle}=\rhd$ is similar.
\end{proof}

Next we clarify the property (GC4) concerned with anti-involutions for the bases $\Psi^{{\scriptstyle\triangle}}_{\nu},$ ${\scriptstyle\triangle}\in\{\lhd,\rhd\}.$
\begin{defn}
Let $(\undla,S)\in\mathscr{P}^{\undQ}_{\nu}.$ \label{pag:w-involutions}

(1) The anti-involution $\omega'_{\undla,S}$ on $\mathcal{C}_{\mt_{\undla}}$ as follows:
$$
\omega'_{\undla,S}(C_i)=\begin{cases}
	C_i,&\qquad \text{ if $i\notin \mt_{\undla}(S)$},\\
	-C_i,&\qquad \text{ if $i\in \mt_{\undla}(S)$}.
	\end{cases}
$$

(2) The anti-involution $\omega_{\undla,S}$ on $\mathcal{C}_{\mt^{\undla}}$ as follows:
$$
\omega_{\undla,S}(C_i)=\begin{cases}
	C_i,&\qquad \text{ if $i\notin \mt^{\undla}(S)$},\\
	-C_i,&\qquad \text{ if $i\in \mt^{\undla}(S)$}.
	\end{cases}
$$
\end{defn}
	
\begin{lem}\label{GC4}
	Let $(\undla,S)\in\mathscr{P}^{\undQ}_{\nu}$ and $L'_1=(\ms,\beta_\ms), L'_2=(\mt,\beta_\mt)\in   \mathscr{T}(\undla,S).$

(1) For any $u\in\mathcal{C}_{\mt_{\undla}}$, we have $$
	\left(\psi^{\lhd,S}_{L'_1,u,L'_2}\right)^*=	\psi^{\lhd,S}_{L'_2,\omega'_{\undla,S}(u),L'_1}.
	$$

(2) For any $u\in\mathcal{C}_{\mt^{\undla}}$, we have $$
	\left(\psi^{\rhd,S}_{L'_1,u,L'_2}\right)^*=	\psi^{\rhd,S}_{L'_2,\omega_{\undla,S}(u),L'_1}.
	$$
	\end{lem}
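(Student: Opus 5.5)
We compute $*$ of the factored expression $\psi^{\lhd,S}_{L'_1,u,L'_2}=\sigma_{w_1}y^{\lhd,S}_{L_1,u,L_2}(\sigma_{w_2})^*$ directly. Since $*$ is an anti-involution fixing the generators $e({\bf i}),y_p,\sigma_a,c_p$, we get
\[
\left(\psi^{\lhd,S}_{L'_1,u,L'_2}\right)^*=\sigma_{w_2}\left(y^{\lhd,S}_{L_1,u,L_2}\right)^*(\sigma_{w_1})^*.
\]
Here $\sigma_{w_i}$ is defined through the fixed reduced expression of $w_i$, so the outer factors are already in the form used to define $\psi^{\lhd,S}_{L'_2,\cdot,L'_1}$. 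It therefore suffices to prove the middle identity
\[
\left(y^{\lhd,S}_{L_1,u,L_2}\right)^*=y^{\lhd,S}_{L_2,\omega'_{\undla,S}(u),L_1}.
\]
By $\mathbb{K}$-linearity in $u$, we may take $u=C^\alpha$ with $\supp(\alpha)\subset O_{\mt_{\undla}}$. Part (2) will follow by the same argument, using \eqref{middle part for rhd} with $\mt^{\undla}$ in place of $\mt_{\undla}$ and $\omega_{\undla,S}$ in place of $\omega'_{\undla,S}$.

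Next we write out the middle element. Set $Y:=y^\lhd_{\undla}e({\bf i}_{\undla})\prod_{k\in\mt_{\undla}(S)}y_k$. The $y_k$ commute with each other and with $e({\bf i}_{\undla})$, so $Y^*=Y$. Then
\[
\left(\sgn(\beta_1)C^{\beta_1}C^\alpha\, Y\, C^{\beta_2}\right)^*=\sgn(\beta_1)\,(C^{\beta_2})^*\,Y\,(C^\alpha)^*(C^{\beta_1})^*.
\]
Reversing a product of $r$ anticommuting $c$'s gives $(C^\beta)^*=\sgn(\beta)C^\beta$. Hence the right-hand side equals
\[
\sgn(\beta_1)\sgn(\beta_2)\sgn(\alpha)\,C^{\beta_2}\,Y\,C^{\alpha}C^{\beta_1}.
\]

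The main step is to move $C^\alpha$ to the left across $Y$ using relation (3), $c_py_q=(-1)^{\delta_{pq}}y_qc_p$. For $p\in\supp(\alpha)$ we have $p\in O_{\mt_{\undla}}$. Commuting $c_p$ past $Y$ produces the sign $(-1)^{e_p}$, where $e_p$ is the exponent of $y_p$ in $Y$, namely $\sharp\mathscr{A}^{\lhd,\undQ}_{\mt_{\undla}}(p)+\delta_{p\in\mt_{\undla}(S)}$. For $p\in O_{\mt_{\undla}}$ the boxes counted in $\mathscr{A}^{\lhd,\undQ}_{\mt_{\undla}}(p)$ come in pairs $(A,\pm)$, because $\mathtt{b}_+\equiv\mathtt{b}_-$ modulo $x$ there; so $\sharp\mathscr{A}^{\lhd,\undQ}_{\mt_{\undla}}(p)$ is even. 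The resulting total sign is $\prod_{p\in\supp(\alpha)\cap\mt_{\undla}(S)}(-1)$, and this is exactly the sign relating $\omega'_{\undla,S}(C^\alpha)$ to $(C^\alpha)^*$. In addition, $c_p$ maps $e({\bf i}_{\undla})$ to $e(c_p{\bf i}_{\undla})$, and since ${\bf i}_p=\pm1$ is a fixed point of $c$ for $p\in O_{\mt_{\undla}}$, the idempotent is unchanged.

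After this move the expression becomes a signed multiple of $C^{\beta_2}\,\omega'_{\undla,S}(C^\alpha)\,Y\,C^{\beta_1}$, which is $y^{\lhd,S}_{L_2,\omega'_{\undla,S}(u),L_1}$ up to the prefactor $\sgn(\beta_2)$. Two points remain to check. First, the leftover factor $\sgn(\beta_1)\sgn(\alpha)$ must cancel against the reordering $(C^\alpha)^*=\sgn(\alpha)C^\alpha$; the normalisation $\sgn(\beta_1)$ in the definition of $y^{\lhd,S}$ was chosen for precisely this purpose. Second, $C^{\beta_2}$ must be moved back to the leftmost position. This is harmless because $\supp(\beta_2)\cap O_{\mt_{\undla}}=\emptyset$, so $c_q$ for $q\in\supp(\beta_2)$ anticommutes with every $c_p$, $p\in\supp(\alpha)$, in a way that matches the sign convention for products of $C^\alpha$. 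I expect the main obstacle to be this sign bookkeeping, in particular ensuring that the $C^{\beta}$'s are placed on the correct sides so that the final expression matches the definition exactly.
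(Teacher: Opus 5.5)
Your approach is the paper's, and the substantive steps are right, but the sign bookkeeping you flagged as the obstacle is wrong as written. The steps you have right are these:
\begin{itemize}
\item the reduction $(\sigma_{w_1}y\,(\sigma_{w_2})^*)^*=\sigma_{w_2}\,y^*\,(\sigma_{w_1})^*$ to the middle element;
\item $Y^*=Y$;
\item $c_p$ fixes $e({\bf i}_{\undla})$ for $p\in O_{\mt_{\undla}}$;
\item $\sharp\mathscr{A}^{\lhd,\undQ}_{\mt_{\undla}}(p)$ is even for $p\in O_{\mt_{\undla}}$. Your $(A,\pm)$-pairing argument for this is correct, and it is the parity fact the paper uses.
\end{itemize}

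\textbf{The sign errors.} The two ``remaining points'' are misdiagnosed, and there are three separate problems.
\begin{itemize}
\item $(C^{\beta_1})^*=\sgn(\beta_1)C^{\beta_1}$ contributes a second factor $\sgn(\beta_1)$. So the prefactor after applying $*$ is $\sgn(\beta_1)^2\sgn(\beta_2)\sgn(\alpha)=\sgn(\beta_2)\sgn(\alpha)$, not $\sgn(\beta_1)\sgn(\beta_2)\sgn(\alpha)$. Followed literally, your formula ends with a spurious factor $\sgn(\beta_1)$ in front of $y^{\lhd,S}_{L_2,\omega'_{\undla,S}(C^\alpha),L_1}$.
\item $\sgn(\alpha)$ does not cancel against $\sgn(\beta_1)$; the two are unrelated, and their product can be $-1$. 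Instead $\sgn(\alpha)$ is absorbed into $\omega'_{\undla,S}(C^\alpha)$. Because $\omega'_{\undla,S}$ is an \emph{anti}-involution, $\omega'_{\undla,S}(C^\alpha)=(-1)^{|\supp(\alpha)\cap\mt_{\undla}(S)|}(C^\alpha)^*=(-1)^{|\supp(\alpha)\cap\mt_{\undla}(S)|}\sgn(\alpha)C^\alpha$. You in fact state this yourself, and then count $\sgn(\alpha)$ a second time.
\item $C^{\beta_2}$ is already in the leftmost position, so nothing has to be moved past $C^\alpha$.
\end{itemize}

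\textbf{The correct computation.} Keep $(C^\alpha)^*$ unexpanded:
\[
\bigl(y^{\lhd,S}_{L_1,\alpha,L_2}\bigr)^*=\sgn(\beta_1)\,(C^{\beta_2})^*\,Y\,(C^\alpha)^*\,(C^{\beta_1})^*=\sgn(\beta_2)\,C^{\beta_2}\,Y\,(C^\alpha)^*\,C^{\beta_1}=\sgn(\beta_2)\,C^{\beta_2}\,\omega'_{\undla,S}(C^\alpha)\,Y\,C^{\beta_1}.
\]
The last equality is your commutation step: $Y(C^\alpha)^*=(-1)^{|\supp(\alpha)\cap\mt_{\undla}(S)|}(C^\alpha)^*Y$. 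The right-hand side is $y^{\lhd,S}_{L_2,\omega'_{\undla,S}(C^\alpha),L_1}$ exactly. The role of the normalisation is simply that $\sgn(\beta_1)C^{\beta_1}=(C^{\beta_1})^*$, so $*$ swaps the outer Clifford factors cleanly.

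\textbf{Part (2).} In $y^{\rhd,S}_{L_1,u,L_2}$ the factor $u$ sits to the \emph{right} of $Y$. So there you move $(C^\alpha)^*$ rightwards across $Y$, obtaining $Y\,\omega_{\undla,S}(C^\alpha)$. Otherwise the argument is identical.
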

\begin{proof}
We only prove (1).
For $L'_1=(\ms,\beta_\ms), L'_2=(\mt,\beta_\mt)\in  \mathscr{T}(\undla,S),$ there are unique $L_1=(\mathfrak{t}_{\undla},\beta_1), L_2=(\mathfrak{t}_{\undla},\beta_2)\in  \mathscr{T}(\undla,S)$ and $w_1,w_2\in \mathfrak{S}_n$ such that $L'_1=w_1L_1$ and $L'_2=w_2L_2.$
For any $i\in O_{\mt_{\undla}},$ we have $e({\bf i}_{{\undla}}) C_i=C_i e({\bf i}_{{\undla}}),$ and
$y^{\lhd}_{\undla}C_i =C_i y^{\lhd}_{\undla}$ since $\sharp \mathscr{A}^{\lhd}_{\mt_{\undla}}(i)$ is even.
By \eqref{middle part}, for any monomial $C^\alpha \in \mathcal{C}_{\mt_{\undla}},$ we have
\begin{align*}
\left(y^{\lhd,S}_{L_1,\alpha,L_2}\right)^*
&=\sgn(\beta_1)^2 \sgn(\beta_2)C^{\beta_2} \left(\prod_{k\in \mathfrak{t}_{\undla}(S)}y_k\right)e({\bf i}_{{\undla}})y^{\lhd}_{\undla} \left(C^{\alpha}\right)^* C^{\beta_1}\\
&=\sgn(\beta_2)C^{\beta_2} \omega'_{\undla,S}(C^{\alpha}) y^{\lhd}_{\undla}e({\bf i}_{{\undla}}) \left(\prod_{k\in \mathfrak{t}_{\undla}(S)}y_k\right)C^{\beta_1},
\end{align*}
and this implies the Lemma.
\end{proof}

We equip $\mathscr{P}^{\undQ}_{\nu}$ with two partial orders as follows with respect to two different bases.
\label{pag:new partial orders}
\begin{defn}
Let $(\undla,S),(\undmu,T)\in\mathscr{P}^{\undQ}_{\nu}$.

(1) We define $(\undla,S)\unlhd' (\undmu,T)$ if and only if $\undla\lhd\undmu$ or $\undla=\undmu$ and $T \subset S$.

(2) We define $(\undla,S)\unlhd (\undmu,T)$ if and only if $\undla\lhd\undmu$  or $\undla=\undmu$ and $S \subset T$.
\end{defn}

The following Theorem is the main result of this paper.
\begin{thm}\label{Mainthm}
Suppose $\nu\in Q_n^+$ is $\undQ$-unremovable.
Then we have the following.

(1). The algebra $\mathcal{H}^f_{\mathbb{K}}(\nu)$ is a generalized graded cellular superalgebra with poset $(\mathscr{P}^{\undQ}_{\nu},\lhd'),$ and generalized graded cellular basis $\Psi^{\lhd}_{\nu}$ \eqref{basis1'}. In particular, for each $(\undla,S)\in\mathscr{P}^{\undQ}_{\nu},$
the (semisimple) superalgebra $\mathscr{B}_{\undla,S}:=\mathcal{C}_{\mt_{\undla}},$  and $\deg|_{\undla,S}:=\deg^{\lhd,S}.$

(2). The algebra $\mathcal{H}^f_{\mathbb{K}}(\nu)$ is a generalized graded cellular superalgebra poset $(\mathscr{P}^{\undQ}_{\nu},\rhd),$ and generalized graded cellular basis $\Psi^{\rhd}_{\nu}$ \eqref{basis2'}. In particular,  for each $(\undla,S)\in\mathscr{P}^{\undQ}_{\nu},$
the (semisimple) superalgebra $\mathscr{B}_{\undla,S}:=\mathcal{C}_{\mt^{\undla}},$  and $\deg|_{\undla,S}:=\deg^{\rhd,S}.$

In particular, if $(I_f)_{{\rm odd}}=\emptyset$, then the cyclotomic Hecke-Clifford superalgebra $\mathcal{H}^f_{\mathbb{K}}$ is a graded cellular algebra with two graded cellular bases $\bigsqcup\limits_{\nu\in Q_n^+}\Psi^{\lhd}_{\nu}$ and $\bigsqcup\limits_{\nu\in Q_n^+n}\Psi^{\rhd}_{\nu}$.
%Suppose that for each $\undla\in\mathscr{P}^{m}_{n}$, we have $O_{\undla}=\emptyset$, then $\mathcal{H}^f_{\mathbb{K}}(\beta)$ is a graded cellular algebra.
\end{thm}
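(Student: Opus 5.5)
We verify the axioms of a generalized graded super cell datum for $\Psi^{\lhd}_{\nu}$ with poset $(\mathscr{P}^{\undQ}_{\nu},\lhd')$; part (2) is the mirror argument with $\Psi^{\rhd}_{\nu}$, $\rhd$ and $\omega_{\undla,S}$. The data are as follows. For $(\undla,S)$ we take $\mathscr{T}(\undla,S)$ as index set. The semisimple superalgebra is $\mathcal{B}_{\undla,S}=\mathcal{C}_{\mt_{\undla}}$, a Clifford algebra and hence semisimple by Example \ref{lem:clifford rep}, with monomial basis $\{C^\alpha\mid \supp(\alpha)\subset O_{\mt_{\undla}}\}$ concentrated in degree $0$. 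The map is $\mathscr{C}(L'_1,u,L'_2)=\psi^{\lhd,S}_{L'_1,u,L'_2}$, with $\deg=\deg^{\lhd,S}$ and ${\rm p}(L)=|\beta_\mt|$.

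(GCd) follows from Lemma \ref{deg of psi} together with a parity count, since $\sigma_w$, $y_k$ and $e({\bf i})$ are even and the odd part comes from $C^{\beta_1}uC^{\beta_2}$. (GC2) is immediate because $\psi^{\lhd,S}_{L'_1,u,L'_2}$ is linear in $u$ by construction. (GC1) is Theorem \ref{GC1}. (GC4) is Lemma \ref{GC4}, combined with the fact that $*$ is an anti-involution of $RC^{\Lambda_f}_n$ preserving the block $e^J_\nu$.

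The main work is (GC3). We will prove it first over $\hO$ and then specialize. Fix $a\in\mathcal{H}^{f'}_{\hO}(\nu)$. By Proposition \ref{O-bases} we can write
$$a\,\psi^{\lhd,S,\hO}_{L'_1,uu'',L'_2}=\sum r\,\psi^{\lhd,T,\hO}_{L'_3,u',L'_4}$$
with coefficients $r\in\hO$. We must show two things: the coefficients vanish unless $(\undmu,T)\unlhd'(\undla,S)$, and for $(\undmu,T)=(\undla,S)$ they have the required shape, namely $L'_4=L'_2$, the right factor $u''$ is carried along, and the coefficients do not depend on $L'_2$ or $u''$.

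To extract the coefficients we pair against the dual basis $\Psi^{\hO,\rhd}_\nu$ using $t^\hO_{r,n}$. By Lemma \ref{MainLemmaforblock} the Gram matrix is unitriangular up to units, and its nonzero entries occur exactly when $T=O_{\undla}\setminus S$ and the tableaux match. Hence the coefficient of a basis element indexed by $(\undmu,T)$ is a combination of the values
$$t\bigl(a\,\psi^{\lhd,S,\hO}_{L'_1,uu'',L'_2}\,\psi^{\rhd,T',\hO}\bigr).$$
Lemma \ref{second expansion} expands the left factor in the seminormal basis, with leading term on $(\ms,\mt)$ and remaining terms strictly lower in dominance. Left multiplication by $a$ changes only the left tableau index, and right multiplication changes only the right one. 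Consequently the right index $\mt$, together with the Clifford data on the right, is untouched in the leading term, and the components with a different right shape lie in $\unlhd$-lower shapes.

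Refining from shapes to the colour $S$ is the delicate point. It uses the fact that $y_k$ for $k\in\mt_{\undla}(S)$ forces the factor $(\mathtt{b}_+-\mathtt{b}_-)$ in Lemma \ref{first expansion}, which restricts the support to $O\setminus S$. This is why larger $S$ sits lower in $\lhd'$. The right Clifford multiplication by $u''$ commutes past $a$ because $C_k$ for $k\in O_{\mt_{\undla}}$ commutes with $e({\bf i}_{\undla})$ and $y^\lhd_{\undla}$; the same observation was used in Lemma \ref{GC4}. This yields \eqref{eq2.GC2}. Reducing modulo $x$ and using $1\otimes\psi^{\hO}=\psi$ then gives (GC3) over $\mathbb{K}$.

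I expect the main obstacle to be showing that the coefficients $r$ do not depend on the right index. The plan is to argue in $\mathcal{H}^{f'}_{\hK}$, where the expansions are triangular with explicit unit multiples of seminormal elements, and to transfer the independence via the integral-basis property. If this fails, I would instead show directly that the ideal spanned by elements with index $\lhd'$-below $(\undla,S)$ is two-sided, and compare the elements $\psi$ with those built from $y$'s modulo that ideal, as in \cite{HM1,EM}. The final claim, for $(I_f)_{\rm odd}=\emptyset$, follows since then every $O_{\undla}$ is empty, so $\mathcal{B}=\mathbb{K}$, and Proposition \ref{example-unremovable} applies to every $\nu$.
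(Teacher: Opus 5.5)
Your treatment of (GCd), (GC1), (GC2), (GC4) and of the case $(I_f)_{\rm odd}=\emptyset$ matches the paper. However, (GC3) is not proved. The step you flag as the main obstacle is the whole content of (GC3), and the argument you give for it fails.

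For a general right index $L'_2=(\mt,\beta_\mt)$, the lower terms in Lemma~\ref{second expansion} include seminormal elements $f_{\tilde L_1,\tilde L_2}$ with $\tilde L_2=(\mathfrak{v},\cdot)$, where $\mathfrak{v}\lhd\mt$ has the \emph{same} shape $\undla$. Left multiplication by $a$ keeps these terms. So your conclusion that, in shape $\undla$, only the right index $L'_2$ survives does not follow; a triangularity argument gives right indices $\unlhd\mt$ at best, not equality with $L'_2$.

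Pairing against $\Psi^{\hO,\rhd}_\nu$ does not rescue this. Lemma~\ref{MainLemmaforblock} gives a triangular Gram matrix with unit diagonal, not a diagonal one as your sentence suggests, so its off-diagonal entries need not vanish. Each extracted coefficient is then a mixture of many traces, with no visible reason to be independent of $L'_2$ or $u''$. Neither fallback plan is carried out, and the second lacks the concrete factorization that makes it work.

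The missing idea, which is how the paper proves (GC3), is to fix the right index first as $L_2=(\mt_{\undla},0)$. Since $\mt_{\undla}$ is the least tableau of shape $\undla$, Lemma~\ref{first expansion} shows the following: $a\,\psi^{\lhd,S,\hO}_{L'_1,\alpha,L_2}=(a\sigma^{\hO}_{w_1})\,y^{\lhd,S,\hO}_{L_1,\alpha,L_2}$ lies in the span of the $f_{\tilde L_1,(\mt_{\undla},\beta)}$ with $\supp(\beta)\subset O_{\mt_{\undla}}\setminus\mt_{\undla}(S)$, modulo strictly smaller shapes. Lemma~\ref{second expansion} and Proposition~\ref{O-bases} then rewrite this as $\sum r\,\psi^{\lhd,S,\hO}_{L,\alpha',L_2}$, plus terms with $T\supsetneq S$, plus $\mathcal{H}^{f'}_{\hO}(\nu)^{\lhd\undla}$, with all $r\in\hO$.

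The general case follows by multiplying on the right by $C^{\alpha''}C^{\beta_2}(\sigma^{\hO}_{w_2})^*$. This sends $\psi^{\lhd,S,\hO}_{L,\alpha',L_2}$ to $\pm\psi^{\lhd,S,\hO}_{L,\alpha'\cdot\alpha'',L'_2}$, with a sign depending only on $S$ and $\alpha''$. It preserves the lower part because $\mathcal{H}^{f'}_{\hO}(\nu)^{\lhd\undla}$ is a two-sided ideal. Independence of the coefficients from $L'_2$ and $u''$ is then automatic, and one specializes at $x=0$.

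Relatedly, the commutation that matters is not $u''$ past $a$, as you wrote. It is $u''$ past $y^{\lhd}_{\undla}e({\bf i}_{\undla})\prod_{k\in\mt_{\undla}(S)}y_k$, with a sign coming from $C_k y_k=-y_kC_k$ for $k\in\mt_{\undla}(S)$.
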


\begin{proof}
We only prove (1).
(GCd) follows from Lemma \ref{deg of psi}.
(GC1) follows from Theorem \ref{GC1}.
(GC2) is clear by definition.
(GC4) follows from Lemma \ref{GC4}.
Hence we only need to prove (GC3).
	
Let $(\undla,S)\in\mathscr{P}^{\undQ}_{\nu}$,	we define $$\mathcal{H}^{f'}_{\hO}(\nu)^{\lhd\undla}:=\sum_{\substack{\undmu\lhd\undla,\,(\undmu,T)\in\mathscr{P}^{\undQ}_{\nu}, \\
		L'_1=(\ms,\beta_\ms), L'_2=(\mt,\beta_\mt)\in \mathscr{T}(\undmu,T)\\
		\alpha\in \Z_2^n,  \supp(\alpha)\subset O_{\mathfrak{t}_{\undmu}}}}\hO	\psi^{\lhd,T,\hO}_{L'_1,\alpha,L'_2}$$ and $$\mathcal{H}^{f'}_{\hO}(\nu)^{\lhd'(\undla,S)}:=\sum_{\substack{(\undmu,T)\lhd' (\undla,S)\in\mathscr{P}^{\undQ}_{\nu}, \\ L'_1=(\ms,\beta_\ms), L'_2=(\mt,\beta_\mt)\in \mathscr{T}(\undmu,T)\\
			\alpha\in \Z_2^n,  \supp(\alpha)\subset O_{\mathfrak{t}_{\undmu}}}}\hO	\psi^{\lhd,T,\hO}_{L'_1,\alpha,L'_2}.$$

		Similarly, we can define $\mathcal{H}^f_{\mathbb{K}}(\nu)^{\lhd\undla}$ and $\mathcal{H}^f_{\mathbb{K}}(\nu)^{\lhd'(\undla,S)}.$ By Lemma \ref{second expansion}, $\mathcal{H}^{f'}_{\hO}(\nu)^{\lhd\undla}$ and $\mathcal{H}^f_{\mathbb{K}}(\nu)^{\lhd\undla}$ are two-sided ideals of $\mathcal{H}^{f'}_{\hO}(\nu)$ and $\mathcal{H}^f_{\mathbb{K}}(\nu)$ respectively. Let $L'_1=(\ms,\beta_{\ms}), L_2=(\mathfrak{t}_{\undla},0)\in \mathscr{T}(\undla,S)$, there are unique $ L_1=(\mathfrak{t}_{\undla},\beta_1)\in \mathscr{T}(\undla,S)$ and $w_1\in \mathfrak{S}_n$ such that $L'_1=w_1L_1$.
For $\alpha\in \Z_2^n$ such that $\supp(\alpha)\subset O_{\mathfrak{t}_{\undla}}.$
For $a\in \mathcal{H}^{f'}_{\hO}(\nu)$, it follows from Lemma \ref{first expansion} that

%\begin{align*}
%	\psi^{\lhd,S,\hO}_{L'_1,\alpha,L_2}
%\in & \sum_{\substack{\tilde{L_1}=(\mathfrak{s},\tilde{\beta}_{1}),\tilde{L_2}=(\mathfrak{t}_{\undla},\beta_{\mathfrak{t}_{\undla}})\\ \tilde{\beta_1}=\beta_{\ms}+w_1\cdot\alpha+w_1\cdot\beta_{\mathfrak{t}_{\undla}}\\
%\beta_{\mathfrak{t}_{\undla}}\in \Z_2^n ,\,\supp(\beta_{\mathfrak{t}_{\undla}})\subset O_{\mathfrak{t}_{\undla}}\setminus \mathfrak{t}_{\undla}(S)}}\hK f_{\tilde{L_1},\tilde{L_2}} +\sum_{\substack{\tilde{L_1}=(\mathfrak{u},\beta''_{\mathfrak{u}}),\tilde{L_2}=(\mathfrak{v},\beta'''_{\mathfrak{v}})\\ \mathfrak{u},\mathfrak{v}\lhd\mathfrak{t}_{\undla}}}\hK f_{\tilde{L_1},\tilde{L_2}}.\nonumber
%\end{align*}
%	Hence, for $a\in \mathcal{H}^{f'}_{\hO}(\beta)$, we have
$$
a\cdot \psi^{\lhd,S,\hO}_{L'_1,\alpha,L_2}=(a\sigma_{w_1}^{\hO}) y^{\lhd,S,\hO}_{L_1,\alpha,L_2}
\in \sum_{\substack{
\tilde{L_1}\in {\rm Tri}(\undla)\\
\tilde{L_2}=(\mathfrak{t}_{\undla},\beta_{\mathfrak{t}_{\undla}})\\
\beta_{\mathfrak{t}_{\undla}}\in \Z_2^n ,\,\supp(\beta_{\mathfrak{t}_{\undla}})\subset O_{\mathfrak{t}_{\undla}}\setminus\mathfrak{t}_{\undla}(S)}}\hK f_{\tilde{L_1},\tilde{L_2}}
+\sum_{\substack{\tilde{L_1}=(\mathfrak{u},\beta''_{\mathfrak{u}}),\tilde{L_2}=(\mathfrak{v},\beta'''_{\mathfrak{v}})\in{\rm Tri}(\undmu)\\ \undmu\lhd\undla}}\hK f_{\tilde{L_1},\tilde{L_2}}.\nonumber
$$

Then by Lemma \ref{second expansion}, we have
\begin{align*}
a\cdot 	\psi^{\lhd,S,\hO}_{L'_1,\alpha,L_2}\in \sum_{\substack{(L,S)\in \mathscr{T}(\undla,S)\\ \alpha'\in \Z_2^n,\,\supp(\alpha')\subset O_{\mathfrak{t}_{\undla}}}}r_{L'_1,\alpha,S}^{L,\alpha',\hO}(a)	\psi^{\lhd,S,\hO}_{L,\alpha',L_2} +\sum_{\substack{S\subsetneq T\\
(L,T)\in \mathscr{T}(\undla,T)\\ \alpha'\in \Z_2^n,\,\supp(\alpha')\subset O_{\mathfrak{t}_{\undla}}}}r_{L'_1,\alpha,T}^{L,\alpha',\hO}(a)	\psi^{\lhd,T,\hO}_{L,\alpha',L_2}+\mathcal{H}^{f'}_{\hO}(\nu)^{\lhd\undla},
\end{align*}
where the coefficients $r_{L'_1,\alpha,S}^{L,\alpha',\hO}(a),$ $r_{L'_1,\alpha,T}^{L,\alpha',\hO}(a)\in \hO$ by Proposition \ref{O-bases}.
Next, for $\alpha''\in \Z_2^n$ with $\supp(\alpha'')\subset O_{\mathfrak{t}_{\undla}}$, $(\mt_{\undla},\beta_2)\in \mathscr{T}(\undla,S)$ and $w_2\cdot \mt_{\undla}\in \Std(\undla)$, multiplying $C^{\beta_2}(\sigma^\hO_{w_2})^*$ and $C^{\alpha''}C^{\beta_2}(\sigma^\hO_{w_2})^*$ from the right on both sides respectively, we get
\begin{align}\label{expansion multiple}
a\cdot 	\psi^{\lhd,S,\hO}_{L'_1,\alpha,L'_2}
&\in\sum_{\substack{(L,S)\in \mathscr{T}(\undla,S)\\ \alpha'\in \Z_2^n,\,\supp(\alpha')\subset O_{\mathfrak{t}_{\undla}}}}r_{L'_1,\alpha,S}^{L,\alpha',\hO}(a)	\psi^{\lhd,S,\hO}_{L,\alpha',L'_2}
+\sum_{\substack{S\subsetneq T\\
(L,T)\in \mathscr{T}(\undla,T)\\ \alpha'\in \Z_2^n,\,\supp(\alpha')\subset O_{\mathfrak{t}_{\undla}}}}r_{L'_1,\alpha,T}^{L,\alpha',\hO}(a)	\psi^{\lhd,T,\hO}_{L,\alpha',L'_2}+\mathcal{H}^{f'}_{\hO}(\nu)^{\lhd\undla}\\
&\subseteq \sum_{\substack{(L,S)\in \mathscr{T}(\undla,S)\\ \alpha'\in \Z_2^n,\,\supp(\alpha')\subset O_{\mathfrak{t}_{\undla}}}}r_{L'_1,\alpha,S}^{L,\alpha',\hO}(a)	\psi^{\lhd,S,\hO}_{L,\alpha',L'_2}
+\mathcal{H}^{f'}_{\hO}(\nu)^{\lhd'(\undla,S)},\nonumber
\end{align}
and
\begin{align}\label{expansion multiple'}
(-1)^va\cdot \psi^{\lhd,S,\hO}_{L'_1,\alpha\cdot\alpha'',L'_2}
\in \sum_{\substack{L\in \mathscr{T}(\undla,S)\\ \alpha'\in \Z_2^n,\,\supp(\alpha')\subset O_{\mathfrak{t}_{\undla}}}}(-1)^v r_{L'_1,\alpha,S}^{L,\alpha',\hO}(a)\psi^{\lhd,S,\hO}_{L,\alpha'\cdot\alpha'',L'_2} +\mathcal{H}^{f'}_{\hO}(\nu)^{\lhd'(\undla,S)},
\end{align}
where $L'_2=w_2\cdot(\mt_{\undla},\beta_2)$ and $v$ depends on $S$ and $\alpha''$.
In particular, \eqref{expansion multiple} together with its right-multiplication analog implies that $\mathcal{H}^{f'}_{\hO}(\nu)^{\lhd'(\undla,S)}$ is a left ideal of $\mathcal{H}^{f'}_{\hO}(\nu).$ Now, specializing $x$ to $0$ in above \eqref{expansion multiple} and \eqref{expansion multiple'} yields
%\begin{align*}
%a|_{x=0}\cdot \psi^{\lhd,S}_{L'_1,\alpha,L_2}\in \sum_{\substack{(L,S)\in \mathscr{T}(\undla,S)\\ \alpha'\in \Z_2^n,\,\supp(\alpha')\subset O_{\mathfrak{t}_{\undla}}}}&r_{L'_1,\alpha,S}^{L,\alpha',\hO}(a)|_{x=0}	\psi^{\lhd,S}_{L,\alpha',L_2} \\
%	&+\sum_{\substack{S\subsetneq T\\(L,T)\in \mathscr{T}(\undla,T)\\ \alpha'\in \Z_2^n,\,\supp(\alpha')\subset O_{\mathfrak{t}_{\undla}}}}r_{L'_1,\alpha,T}^{L,\alpha',\hO}(a)|_{x=0}	\psi^{\lhd,T}_{L,\alpha',L_2}+(\mathcal{H}^f_{\mathbb{K}}(\beta))^{\lhd\undla}\nonumber.
%\end{align*}
%Now, for $\alpha''\in \Z_2^n$ with $\supp(\alpha'')\subset O_{\mathfrak{t}_{\undla}}$, $(\mt_{\undla},\beta_2)\in \mathscr{T}(\undla,S)$ and $w_2\cdot \mt_{\undla}\in \Std(\undla)$, multiplying $C^{\beta_2}(\sigma_{w_2})^*$ and $C^{\alpha''}C^{\beta_2}(\sigma_{w_2})^*$ from the right on both sides respectively, we get
\begin{align}\label{expansion multiple over K}
a|_{x=0}\cdot \psi^{\lhd,S}_{L'_1,\alpha,L'_2}
&\in \sum_{\substack{L\in \mathscr{T}(\undla,S)\\ \alpha'\in \Z_2^n,\,\supp(\alpha')\subset O_{\mathfrak{t}_{\undla}}}}r_{L'_1,\alpha,S}^{L,\alpha',\hO}(a)|_{x=0}	\psi^{\lhd,S}_{L,\alpha',L'_2} +\mathcal{H}^f_{\mathbb{K}}(\nu)^{\lhd'(\undla,S)},
\end{align}
and
\begin{align*}
a|_{x=0}\cdot \psi^{\lhd,S}_{L'_1,\alpha\cdot\alpha'',L'_2}\in \sum_{\substack{L\in \mathscr{T}(\undla,S)\\ \alpha'\in \Z_2^n,\,\supp(\alpha')\subset O_{\mathfrak{t}_{\undla}}}}r_{L'_1,\alpha,S}^{L,\alpha',\hO}(a)|_{x=0}	\psi^{\lhd,S}_{L,\alpha'\cdot\alpha'',L'_2} +\mathcal{H}^f_{\mathbb{K}}(\nu)^{\lhd'(\undla,S)}.
\end{align*}
This proves (GC3). In particular, \eqref{expansion multiple over K} and Lemma \ref{GC4} imply that $\mathcal{H}^f_{\mathbb{K}}(\nu)^{\lhd'(\undla,S)}$ is a two-sided ideal of $\mathcal{H}^f_{\mathbb{K}}(\nu).$
	\end{proof}
	
	\subsection{Graded supersymmetrizing form}\label{Graded supersymmetrizing form}
	{\bf In this section, we fix $\nu\in Q_n^+$ being $\undQ$-unremovable.} We shall introduce a graded supersymmetrizing form on $\mathcal{H}^f_{\mathbb{K}}(\nu)$.
	
We define the defect of $\nu$ as \label{pag:defect}
$${\rm def}(\nu):=\left(\Lambda_f\middle|\nu\right)-\frac{1}{2}\left(\nu\middle|\nu\right). $$
	
	\begin{lem}\label{def2}
		For $(\undla,S)\in\mathscr{P}^{\undQ}_{\nu},$ and $L'_1=(\ms,\beta_{\ms})\in \mathscr{T}(\undla,S), L'_2=(\ms,\beta'_{\ms})\in \mathscr{T}(\undla,O_{\undla}\setminus S)$. Then $$\deg^{{\lhd},S}(L'_1)+\deg^{{\rhd},O_{\undla}\setminus S}(L'_2)={\rm def}(\nu).
		$$
		\end{lem}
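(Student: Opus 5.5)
The plan is to reduce everything to Corollary \ref{def1} together with a comparison between the $f$-defect $d(\undla)=d^f(\undla)$ and ${\rm def}(\nu)$, where $\nu=\nu_{\undla}$ after specialization. By definition,
\[
\deg^{\lhd,S}(L'_1)+\deg^{\rhd,O_{\undla}\setminus S}(L'_2)=\deg^{\lhd}(\ms)+\deg^{\rhd}(\ms)+\deg(S)+\deg(O_{\undla}\setminus S).
\]
Since $S\sqcup(O_{\undla}\setminus S)=O_{\undla}$, the last two terms add up to $\deg(O_{\undla})=\sum_{A\in O_{\undla}}{\rm d}_{\mathtt{q}(\res(A))|_{x=0}}$. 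This quantity does not depend on $S$ or on $\beta_{\ms},\beta'_{\ms}$. By Corollary \ref{def1}, which is applied with the residues specialized at $x=0$ so that the degrees are computed with respect to $I_f$, the first two terms equal $d^f(\undla)$. The claim therefore reduces to the identity
\[
d^f(\undla)+\deg(O_{\undla})={\rm def}(\nu).
\]

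To prove this identity, I will identify $O_{\undla}$. The boxes $A\in O_{\undla}$ are exactly those with $\mathtt{b}_+(\res(A))|_{x=0}=\pm1$, that is, $\mathtt{q}(\res(A))|_{x=0}\in\{\mathtt{q}(q),\mathtt{q}(-q)\}=\{2,-2\}$, which is $(I_f)_{\rm odd}$. So $O_{\undla}$ is the set of boxes of odd residue, and hence
\[
\deg(O_{\undla})=m_{\mathtt{q}(q)}{\rm d}_{\mathtt{q}(q)}+m_{\mathtt{q}(-q)}{\rm d}_{\mathtt{q}(-q)},
\]
where $\nu_{\undla}=\sum_i m_i\nu_i$. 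In the case $\bullet=\mathsf{0}$, which is the case in force in this section, the definition of $d^f(\undla)$ is exactly $(\Lambda_f|\nu_{\undla})-\tfrac12(\nu_{\undla}|\nu_{\undla})$ minus this same correction term. Adding $\deg(O_{\undla})$ therefore gives ${\rm def}(\nu_{\undla})={\rm def}(\nu)$, because $\undla\in\mathscr{P}^m_\nu$.

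The main point needing care is to justify that Corollary \ref{def1} applies to the specialized combinatorics. The degrees $\deg^{\lhd,f}$ and $\deg^{\rhd,f}$ are defined using $\mathtt{q}(\res)$ evaluated at $x=0$, that is, with the original parameters $(q,\undQ)$, whereas $\ms$ is a tableau for the deformed parameters. Since the shapes and tableaux are the same and only the residues are evaluated at $x=0$, I would simply note that Definition \ref{deg of std tableaux}, Corollary \ref{d-value}, and Corollary \ref{def1} were all stated for the undeformed $f$. Applying them with $\mathtt{q}(\res(\cdot))|_{x=0}$ gives exactly the needed statement.

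I also need to check the sign conventions: $\mathtt{b}_+(x)=\pm1$ holds precisely when $\mathtt{q}(x)=\pm2$. This follows from $z+z^{-1}=\mathtt{q}(x)$, since $z=\pm1$ gives $\mathtt{q}(x)=\pm2$.
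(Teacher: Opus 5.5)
Your proposal is correct and follows the paper's proof. You combine $\deg(S)+\deg(O_{\undla}\setminus S)=\deg(O_{\undla})$, use Corollary \ref{def1} to replace $\deg^{\lhd,f}(\ms)+\deg^{\rhd,f}(\ms)$ by $d^f(\undla)$, and conclude from the $\bullet=\mathsf{0}$ definition of $d^f$. Your explicit identification of $O_{\undla}$ with the boxes whose specialized residue is $\pm 2$, giving $\deg(O_{\undla})=m_{\mathtt{q}(q)}{\rm d}_{\mathtt{q}(q)}+m_{\mathtt{q}(-q)}{\rm d}_{\mathtt{q}(-q)}$, spells out what the paper leaves implicit in its final step.
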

		
		\begin{proof}
			By definitiom, we have \begin{align*}
				\deg^{{\lhd},S}(L'_1)+\deg^{{\rhd},O_{\undla}\setminus S}(L'_2)&=\deg^{{\lhd},f}(\ms)+\deg(S)+\deg^{{\rhd},f}(\ms)+ \deg(O_{\undla}\setminus S)\\
				&=d^f(\undla)+\sum_{A\in O_{\undla}}{\rm d}_{\mathtt{q}(\res(A))|_{x=0}} \\
				&=\left(\Lambda_f\middle|\nu\right)-\frac{1}{2}\left(\nu\middle|\nu\right),
				\end{align*} where in the second equation, we have used Corollary \ref{def1} and the third equation follows from the Definition of $d(\undla)$. Hence we prove the Lemma.
			\end{proof}
		Recall the supersymmetrizing form $t_{2m,n}$ on $\mathcal{H}^f_{\mathbb{K}}(\nu)$, note that in this case, $r=2m$.
        The following definition is inspired by \cite[Definition 6.15]{HM1}.
        \label{pag:homogeneous supersym}
	\begin{defn}
		We define $t_{\nu}: \mathcal{H}^f_{\mathbb{K}}(\nu) \to \mathbb{K}$ being
		the map which on a homogeneous element $a\in \mathcal{H}^f_{\mathbb{K}}(\nu)$ is given by$$
		t_{\nu}(a):=\begin{cases}
			t_{2m,n}(a),&\qquad \text{if $\deg(a)=2{\rm def}(\nu)$};\\
			0, &\qquad \text{otherwise.}
			\end{cases}
		$$
		\end{defn}
	
	\begin{thm}\label{graded-supersymmetrizing1}
		Suppose $\nu\in Q_n^+$ is $\undQ$-unremovable. Then $\mathcal{H}^f_{\mathbb{K}}(\nu)$ is a graded supersymmetric superalgebra with the homogeneous supersymmetrizing form $t_{\nu}$ of degree $-2{\rm def}(\nu)$.
		\end{thm}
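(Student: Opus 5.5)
The plan is to follow the argument of \cite[Theorem 6.17]{HM1}, replacing the ungraded symmetrizing form by the supersymmetrizing form $t_{2m,n}$ of Proposition \ref{supersymmetrizing Non-dege} and using the pair of dual bases $\Psi^{\lhd}_{\nu}$, $\Psi^{\rhd}_{\nu}$ from Theorem \ref{GC1}.

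\textbf{Step 1 (formal properties of $t_\nu$).} By construction $t_\nu$ is $\mathbb{K}$-linear, even, and vanishes off degree $2{\rm def}(\nu)$, so it is homogeneous of degree $-2{\rm def}(\nu)$. For supersymmetry, take homogeneous $a,b$. If $\deg(ab)\neq 2{\rm def}(\nu)$, then also $\deg(ba)=\deg(ab)\neq 2{\rm def}(\nu)$, and both sides of the identity vanish. Otherwise,
\[
t_\nu(ab)=t_{2m,n}(ab)=(-1)^{{\rm p}(a){\rm p}(b)}t_{2m,n}(ba)=(-1)^{{\rm p}(a){\rm p}(b)}t_\nu(ba),
\]
using Proposition \ref{supersymmetrizing Non-dege} restricted to the block $\mathcal{H}^f_{\mathbb{K}}(\nu)$. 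So the only real content is non-degeneracy.

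\textbf{Step 2 (Gram matrix of $t_{2m,n}$).} Specialize Lemma \ref{MainLemmaforblock} at $x=0$. Since $1\otimes_{\hO} t^{\hO}_{2m,n}=t_{2m,n}$ and $1\otimes\psi^{\hO}=\psi$, the matrix
\[
\bigl(t_{2m,n}(\psi^{\lhd,S}_{L'_1,\alpha,L'_2}\psi^{\rhd,T}_{L'_3,\alpha',L'_4})\bigr)
\]
is upper triangular with diagonal entries in $\mathbb{K}^\times$, because the $\hO$-matrix has unit diagonal. Hence it is invertible over $\mathbb{K}$.

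\textbf{Step 3 (replacing $t_{2m,n}$ by $t_\nu$).} Recall from the proof of Lemma \ref{MainLemmaforblock} that the nonzero diagonal pairings occur exactly when $\mt=\mathfrak{u}$, $\ms=\mathfrak{v}$, $L'_2=L'_3$, $L'_1=L'_4$, $T=O_{\undla}\setminus S$ and $\alpha=\alpha'$. For such a pair, Lemma \ref{deg of psi} and Lemma \ref{def2} give
\[
\deg\psi^{\lhd,S}_{L'_1,\alpha,L'_2}+\deg\psi^{\rhd,T}_{L'_2,\alpha,L'_1}
=\bigl(\deg^{\lhd,S}(L'_1)+\deg^{\rhd,T}(L'_1)\bigr)+\bigl(\deg^{\lhd,S}(L'_2)+\deg^{\rhd,T}(L'_2)\bigr)=2{\rm def}(\nu),
\]
so on every diagonal entry $t_\nu$ agrees with $t_{2m,n}$. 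Off the diagonal, $t_\nu$ either agrees with $t_{2m,n}$ or is zero. Thus the zero pattern below the diagonal survives: the $t_\nu$-Gram matrix is obtained from the previous one by zeroing some entries, none of them diagonal or below it. It is therefore still upper triangular with invertible diagonal, so $t_\nu$ is non-degenerate.

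\textbf{Main obstacle.} The delicate point is Step 3. One must check that the triangularity in Lemma \ref{MainLemmaforblock}, which is with respect to the index pairing $(S,L'_1,\alpha,L'_2)\leftrightarrow(O_{\undla}\setminus S,L'_2,\alpha,L'_1)$, really matches the degree computation, including the $\deg(S)+\deg(O_{\undla}\setminus S)$ contribution. This relies on $\nu$ being $\undQ$-unremovable, which was needed both for the degree formula in Lemma \ref{deg of psi} and for the unit diagonal. If the specialization argument in Step 2 turns out to be insufficient, I would instead argue directly over $\hO$: since the Gram determinant is a unit of $\hO$, non-degeneracy at $x=0$ follows. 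The definition of non-degeneracy (Definition \ref{(super)symmetrizing form}) only needs a homogeneous basis with invertible Gram determinant, and using the pairing of $\Psi^{\lhd}_\nu$ with $\Psi^{\rhd}_\nu$ is equivalent, since both are bases.
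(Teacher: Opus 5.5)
Your proposal is correct and follows the paper's proof. The paper likewise reads off supersymmetry and homogeneity of degree $-2{\rm def}(\nu)$ directly from the definition of $t_\nu$, and then invokes Lemma \ref{MainLemmaforblock} together with Lemma \ref{def2} to conclude that the Gram matrix of $t_\nu$ between $\Psi^{\lhd}_\nu$ and $\Psi^{\rhd}_\nu$ is invertible over $\mathbb{K}$. Your Steps 2 and 3 spell out that invocation: you specialize at $x=0$, then use Lemma \ref{deg of psi} and Lemma \ref{def2} to show that the diagonal entries lie in degree $2{\rm def}(\nu)$, so truncating to that degree keeps the matrix triangular with unit diagonal.
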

		\begin{proof}
	Clearly, $t_{\nu}$ satisfies that $t_{\nu}(ab)=(-1)^{{\rm p}(a)\cdot{\rm p}(b)}t_{\nu}(ba)$ for all homogeneous $a,b\in \mathcal{H}^f_{\mathbb{K}}(\nu)$. By definition, $t_{\nu}$ is
	homogeneous of degree  $-2{\rm def}(\nu)$. Now we apply Lemma \ref{MainLemmaforblock} and Lemma \ref{def2} to see that the Gram martix of $t_\nu$ between  \eqref{basis1 for block} and \eqref{basis2 for block} is invertible over $\mathbb{K}$. This proves that $t_{\nu}$ is a supersymmetrizing form.
	\end{proof}
	
	\medskip

	\noindent
	{\bf{Proof of Theorem \ref{Main2}}:}
	This follows from Theorem \ref{Mainthm} and Theorem \ref{graded-supersymmetrizing1}.
	\qed
	\medskip

	\subsection{Idempotent truncation}\label{Idempotent truncation}
	{\bf In this section, we fix $\nu\in Q_n^+$ being $\undQ$-unremovable.} We shall study the generalized graded cellular structure and the supersymmetrizing form in cyclotomic quiver Hecke superalgebra by taking idempotent truncation on $\mathcal{H}^f_{\mathbb{K}}.$
	
First, we need to pick up a subset of $\mathscr{T}(\undla,S)$ \eqref{T(lambda,S)} to index the bases of idempotent truncation subalgebra.
    \label{pag:dag index set}
	\begin{defn}
    Let $(\undla,S)\in\mathscr{P}^{\undQ}_{n}$. We define
	$\mathscr{T}^\dag(\undla,S):=\{(\mt,0,S)\mid \mt\in\Std(\undla)\}\subset \mathscr{T}(\undla,S).$
	\end{defn}
	
	Again, if $S$ has been fixed in the context, we shall only write $\mt \in \mathscr{T}(\undla,S)$ rather $(\mt,0,S)\in \mathscr{T}(\undla,S)$ to simplify notation.
	
		Recall $J_f^\dag=\{\mathtt{b}_{+}(x)\in \mathbb{K}^* \mid \mathtt{q}(x)\in I_f\},$ and $e^\dag=\sum_{{\bf i}\in {J^{\dag n}_f}}e({\bf i})\in \mathcal{H}^f_{\mathbb{K}}.$	\label{pag:dag bases}
	\begin{cor}\label{GC1'}
		 Suppose $\nu\in Q_n^+$ is $\undQ$-unremovable. Then the following two sets
		\begin{equation}\label{basis1}
			\Psi^{\lhd,\dag}_{\nu}=\Bigl\{\psi^{\lhd,S}_{L'_1,\alpha,L'_2}\in\mathcal{H}^f_{\mathbb{K}}\bigm|~\begin{matrix}
				(\undla,S)\in\mathscr{P}^{\undQ}_{\nu}, L'_1=\ms, L'_2=\mt\in  \mathscr{T}^\dag(\undla,S),\\
				\alpha\in \Z_2^n,  {\rm supp}(\alpha)\subset O_{\mathfrak{t}_{\undla}}
			\end{matrix}\Bigr\}\end{equation}
		and \begin{equation}\label{basis2}
			\Psi^{\rhd,\dag}_{\nu}=\Bigl\{\psi^{\rhd,S}_{L'_1,\alpha,L'_2}\in\mathcal{H}^f_{\mathbb{K}}\bigm|~\begin{matrix}
				(\undla,S)\in\mathscr{P}^{\undQ}_{\nu}, L'_1=\ms, L'_2=\mt\in  \mathscr{T}^\dag(\undla,S),\\
				\alpha\in \Z_2^n,  {\rm supp}(\alpha)\subset O_{\mathfrak{t}^{\undla}}
			\end{matrix}\Bigr\}\end{equation} form two $\mathbb{K}$-bases of $e^\dag\mathcal{H}^f_{\mathbb{K}}(\nu)e^\dag$ respectively.
	
		In particular, if $(I_f)_{{\rm odd}}=\emptyset$, then the sets $\bigsqcup\limits_{\nu\in Q_n^+}\Psi^{\lhd,\dag}_{\nu}$ and $\bigsqcup\limits_{\nu\in Q_n^+}\Psi^{\rhd,\dag}_{\nu}$ form two $\mathbb{K}$-bases of $e^\dag\mathcal{H}^f_{\mathbb{K}}e^\dag$ respectively.
		\end{cor}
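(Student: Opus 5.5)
The plan is to deduce Corollary \ref{GC1'} from Theorem \ref{GC1} by multiplying by the idempotent $e^\dag$ on both sides. The key observation concerns idempotents. Each basis element $\psi^{\lhd,S}_{L'_1,\alpha,L'_2}$ with $L'_1=(\ms,\beta_\ms)$, $L'_2=(\mt,\beta_\mt)$ is, by construction, of the form $e({\bf j})\,\psi\,e({\bf j}')$. Here ${\bf j}=c^{\beta_\ms}\cdot w_1{\bf i}_{\undla}$ and ${\bf j}'=c^{\beta_\mt}\cdot w_2{\bf i}_{\undla}$, up to the twist by $C^{\alpha}$, which only acts at positions in $O_{\mt_{\undla}}$, where $c$ fixes the residue because $\mathtt{b}_+=\mathtt{b}_-=\pm1$ there. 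This follows from $\sigma_a e({\bf i})=e(s_a{\bf i})\sigma_a$ and $c_pe({\bf i})=e(c_p{\bf i})c_p$ in $RC^{\Lambda_f}_n$, since $\sigma^{\hO}_{w}$ and $C^{\beta}$ move the idempotents in this way.

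First I would check that ${\bf i}_{\undla}$ (resp. ${\bf i}^{\undla}$) lies in $(J_f^\dag)^n$. This holds because its entries are $\mathtt{b}_+(\res(\cdot))|_{x=0}$. The same is then true of $w{\bf i}_{\undla}$, since $J_f^\dag$ is preserved by permuting entries. Applying $c_p$ changes the $p$-th entry from $\mathtt{b}_+$ to $\mathtt{b}_-$. This lies outside $J_f^\dag$ precisely when $p$ is not a position with $\mathtt{b}_+=\mathtt{b}_-$, that is, exactly when $p\notin O_{\mt}$, and this is the constraint $\supp(\beta)\cap O_\mt=\emptyset$ in $\mathscr{T}(\undla,S)$.

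It follows that $e^\dag\psi^{\lhd,S}_{L'_1,\alpha,L'_2}e^\dag$ equals $\psi^{\lhd,S}_{L'_1,\alpha,L'_2}$ when $\beta_\ms=\beta_\mt=0$, and equals $0$ otherwise. The main point to verify is that $c^{\beta}{\bf j}\notin (J_f^\dag)^n$ whenever $\beta\neq0$ is supported off $O_\mt$. For this I use that $\mathtt{b}_-(x)|_{x=0}\neq\mathtt{b}_+(x)|_{x=0}$ at such positions, which is exactly the defining condition of $O_\mt$. The Clifford factors $u\in\mathcal{C}_{\mt_{\undla}}$ commute with $e({\bf i}_{\undla})$, so they cause no problem.

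Since $\Psi^{\lhd}_\nu$ is a basis of $\mathcal{H}^f_{\mathbb{K}}(\nu)$, the space $e^\dag\mathcal{H}^f_{\mathbb{K}}(\nu)e^\dag$ is spanned by the images $e^\dag\psi e^\dag$. These images are either $0$ or elements of $\Psi^{\lhd,\dag}_\nu$. The nonzero ones are distinct elements of a basis and hence linearly independent, so $\Psi^{\lhd,\dag}_\nu$ is a basis. The same argument gives the result for $\rhd$. The final statement, for the case $(I_f)_{\rm odd}=\emptyset$, follows by summing over $\nu$, using Proposition \ref{example-unremovable} and the second part of Theorem \ref{GC1}.
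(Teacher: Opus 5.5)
Your argument is correct and is essentially the paper's proof. The paper splits $\Psi^{\lhd}_\nu$ along the Peirce decomposition $\mathcal{H}^f_{\mathbb{K}}(\nu)=e^\dag\mathcal{H}^f_{\mathbb{K}}(\nu)e^\dag\oplus H'$, takes $\Psi^{\lhd,\dag}_\nu$ as the part lying in $e^\dag\mathcal{H}^f_{\mathbb{K}}(\nu)e^\dag$ with the rest in $H'$, and reads off the basis from Theorem \ref{GC1}; your idempotent bookkeeping (moving $e({\bf i}_{\undla})$ past $C^{\beta}$, $C^{\alpha}$ and $\sigma_w$, and using that $J_f^\dag\to I_f$ is a bijection so that $\mathtt{b}_-\notin J_f^\dag$ at positions off $O_{\mt}$) supplies the justification for that splitting, which the paper leaves implicit.
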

		
		\begin{proof}
			We consider the following decomposition of $\mathbb{K}$-linear spaces:
$$\mathcal{H}^f_{\mathbb{K}}=e^\dag\mathcal{H}^f_{\mathbb{K}}(\nu)e^\dag\oplus H',$$
			where $H'=(1-e^\dag)\mathcal{H}^f_{\mathbb{K}}(\nu)e^\dag\oplus e^\dag\mathcal{H}^f_{\mathbb{K}}(\nu)(1-e^\dag)\oplus (1-e^\dag)\mathcal{H}^f_{\mathbb{K}}(\nu)(1-e^\dag).$
Moreover, we have the following decomposition of $\Psi^{\lhd}_{\nu}$:
$$\Psi^{\lhd}_{\nu}=\Psi^{\lhd,\dag}_{\nu} \sqcup {\Psi^{\lhd}_{\nu}}'$$
such that $\Psi^{\lhd,\dag}_{\nu} \subset e^{\dag}  \mathcal{H}^f_{\mathbb{K}}(\nu) e^{\dag}$ and
${\Psi^{\lhd}_{\nu}}' \subset H'$.
By Theorem \ref{GC1}, we deduce that $\Psi^{\lhd,\dag}_{\nu}$ forms a $\mathbb{K}$-basis of
$e^{\dag} \mathcal{H}^f_{\mathbb{K}}e^{\dag}$.
The same argument shows that $\Psi^{\rhd,\dag}_{\nu}$ forms a $\mathbb{K}$-basis of $e^{\dag}\mathcal{H}^f_{\mathbb{K}}e^{\dag}$.
			\end{proof}
			
			\begin{thm}\label{Mainthm-truncation}
				Suppose $\nu\in Q_n^+$ is $\undQ$-unremovable.
				Then we have the following.
				
				(1). The algebra $e^\dag\mathcal{H}^f_{\mathbb{K}}(\nu)e^\dag$ is a generalized graded cellular superalgebra with poset $(\mathscr{P}^{\undQ}_{\nu},\lhd'),$ and generalized graded cellular basis $\Psi^{\lhd,\dag}_{\nu}$ \eqref{basis1}. In particular, for each $(\undla,S)\in\mathscr{P}^{\undQ}_{\nu},$ the (semisimple) superalgebra $\mathscr{B}_{\undla,S}=\mathcal{C}_{\mt_{\undla}},$ and $\deg|_{\undla,S}=\deg^{\lhd,S}.$
				
				(2). The algebra $e^\dag\mathcal{H}^f_{\mathbb{K}}(\nu)e^\dag$ is a generalized graded cellular superalgebra with poset $(\mathscr{P}^{\undQ}_{\nu},\rhd),$ and the generalized graded cellular basis $\Psi^{\rhd,\dag}_{\nu}$ \eqref{basis2}. In particular, for each $(\undla,S)\in\mathscr{P}^{\undQ}_{\nu},$ the (semisimple) superalgebra $\mathscr{B}_{\undla,S}=\mathcal{C}_{\mt^{\undla}},$ and $\deg|_{\undla,S}=\deg^{\rhd,S}.$
				
				In particular, if $(I_f)_{{\rm odd}}=\emptyset$, then $e^\dag\mathcal{H}^f_{\mathbb{K}}e^\dag$ is a graded cellular algebra with two graded cellular bases $\bigsqcup\limits_{\nu\in Q_n^+}\Psi^{\lhd,\dag}_{\nu}$ and $\bigsqcup\limits_{\nu\in Q_n^+}\Psi^{\rhd,\dag}_{\nu}$
				%Suppose that for each $\undla\in\mathscr{P}^{m}_{n}$, we have $O_{\undla}=\emptyset$, then $\mathcal{H}^f_{\mathbb{K}}(\beta)$ is a graded cellular algebra.
			\end{thm}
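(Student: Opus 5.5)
The plan is to deduce Theorem \ref{Mainthm-truncation} from Theorem \ref{Mainthm} and Corollary \ref{GC1'} by the standard idempotent-truncation argument, checking that every axiom of a generalized graded super cell datum survives multiplication by $e^\dag$ on both sides. I treat (1); (2) is symmetric, with $\lhd'$, $\mt_{\undla}$, $\omega'$ replaced by $\rhd$, $\mt^{\undla}$, $\omega$. The cell datum for $e^\dag\mathcal{H}^f_{\mathbb{K}}(\nu)e^\dag$ keeps the poset $(\mathscr{P}^{\undQ}_{\nu},\lhd')$ and the semisimple superalgebras $\mathcal{B}_{\undla,S}=\mathcal{C}_{\mt_{\undla}}$ with their anti-involutions $\omega'_{\undla,S}$. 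The index set $\mathscr{T}(\undla,S)$ is replaced by $\mathscr{T}^\dag(\undla,S)$, and the degree and parity functions are restricted.

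First, I observe that for $L'=(\mt,0)$ the element $\psi^{\lhd,S}_{L'_1,\alpha,L'_2}$ begins with $\sigma_{w_1}e(c^{0}{\bf i}_{\undla})$ and ends with its $*$-image. Since ${\bf i}_{\undla}$ has entries $\mathtt{b}_+(\cdot)|_{x=0}\in J_f^\dag$ and $\sigma_w$ only permutes idempotent labels, we get $e^\dag\psi e^\dag=\psi$ for $\psi\in\Psi^{\lhd,\dag}_\nu$. Conversely, $e^\dag\psi=0$ whenever $\beta_1\neq 0$: because $\supp(\beta_1)\cap O_{\mt}=\emptyset$, the factor $C^{\beta_1}$ changes some entry ${\bf i}_k$ with $k\notin O$ from $\mathtt{b}_+$ to $\mathtt{b}_-\neq\mathtt{b}_+$ at $x=0$, so the label leaves $J_f^{\dag n}$. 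The same holds for right multiplication. Together with Corollary \ref{GC1'}, this gives (GC1). (GCd) and (GC2) are inherited directly from Theorem \ref{Mainthm}. (GC4) follows from Lemma \ref{GC4}, because $*$ fixes $e^\dag$ and preserves $\mathscr{T}^\dag$.

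For (GC3), take $a\in e^\dag\mathcal{H}^f_{\mathbb{K}}(\nu)e^\dag$ and $\psi\in\Psi^{\lhd,\dag}_\nu$. I would apply the expansion \eqref{expansion multiple over K} from the proof of Theorem \ref{Mainthm} and then multiply it on the left by $e^\dag$. Since $a=e^\dag a$, the left-hand side is unchanged. On the right-hand side, each term $\psi^{\lhd,S}_{L,\alpha',L'_2}$ with $L\notin\mathscr{T}^\dag$ is killed by $e^\dag$, by the vanishing observation above. The term $e^\dag\mathcal{H}^f_{\mathbb{K}}(\nu)^{\lhd'(\undla,S)}e^\dag$ is spanned by the $\dag$-basis elements of strictly smaller cells, by the same vanishing, so it equals the truncated ideal. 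The coefficients $r$ are unchanged, which makes them independent of $L'_2$ and compatible with right multiplication by $u''$ as (GC3) requires.

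The main point I expect to need care is the vanishing claim $e({\bf j})C^{\beta}e(\dots)=0$. It requires that the entries at positions $k\notin O_{\mt}$ genuinely change when conjugated by $C_k$, that is, $\mathtt{b}_+\neq\mathtt{b}_-$ mod $x$ exactly off $O_{\mt}$. This holds by the definition of $O_{\mt}$, since $\mathtt{b}_+=\mathtt{b}_-$ iff $\mathtt{b}_+=\pm1$. The positions $\mathcal{D}$ do not arise here because $\bullet=\mathsf{0}$. Once this is in place, the final claim for $(I_f)_{\rm odd}=\emptyset$ follows from Proposition \ref{example-unremovable}: then every $O_{\undla}$ is empty, each $\mathcal{B}=\mathbb{K}$, and summing over $\nu$ gives an ordinary graded cellular algebra.
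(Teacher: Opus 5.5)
Your proposal is correct and follows the same route as the paper. The paper obtains (GC1) from Corollary~\ref{GC1'} and all remaining axioms from Theorem~\ref{Mainthm} by truncating with $e^\dag$. Your explicit check that $e^\dag\psi e^\dag=\psi$ for the $\dag$-basis elements and $e^\dag\psi e^\dag=0$ for the others is the detail the paper leaves implicit, both here and in Corollary~\ref{GC1'}; it works because $C_k$ inverts the $k$-th label, which moves it out of $J_f^\dag$ exactly when $k\notin O_{\mt}$.
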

			
			\begin{proof}
			(GC1) follows from Corollary \ref{GC1'}. (GCd), (GC2), (GC3) and (GC4) follows from Theorem \ref{Mainthm} by taking idempotent truncation.
				\end{proof}
				
				The idempotent trunation algebra $e^\dag\mathcal{H}^f_{\mathbb{K}}(\nu)e^\dag$ also inherits the supersymmetrizing form. To this end, we take the following restriction map \label{pag:dag homogeneous supersym}
$t^\dag_{\nu}: =t_{\nu}|_{e^\dag\mathcal{H}^f_{\mathbb{K}}(\nu)e^\dag}$:
 $e^\dag\mathcal{H}^f_{\mathbb{K}}(\nu)e^\dag \to \mathbb{K}$.
				
				The proof of following Lemma is an easy exercise.
				\begin{lem}\label{truncation-nondegenerate}
					Let $\mathcal{A}$ be a finite dimensional $\mathbb{K}$-superalgebra and $t: \mathcal{A} \to \mathbb{K}$ be a supersymmetric form of $\mathcal{A}$. Then for any idempotent $e\in \mathcal{A}_{\bar{0}}$, the restriction map $t|_{e\mathcal{A}e}: e\mathcal{A}e \to \mathbb{K}$ is still a supersymmetrizing form of $e\mathcal{A}e$.
					\end{lem}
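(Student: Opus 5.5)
The plan is to verify the two defining properties of a supersymmetrizing form for $t|_{e\mathcal{A}e}$ separately: the supersymmetry identity, and the non-degeneracy of the associated bilinear pairing. Here $e$ is an even idempotent, so $e\mathcal{A}e$ is a unital superalgebra with unit $e$, and its $\Z_2$-grading is inherited from $\mathcal{A}$.

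Supersymmetry is immediate. For homogeneous $a,b\in e\mathcal{A}e$ we have $ab,ba\in e\mathcal{A}e$, and the identity $t(ab)=(-1)^{{\rm p}(a){\rm p}(b)}t(ba)$ holds already in $\mathcal{A}$. Evenness of the restriction is also clear, since $t$ vanishes on $\mathcal{A}_{\bar1}$ and $(e\mathcal{A}e)_{\bar1}\subseteq\mathcal{A}_{\bar1}$.

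For non-degeneracy over the field $\mathbb{K}$, I will use the equivalent formulation that the pairing $(a,b)\mapsto t(ab)$ has trivial radical. This is equivalent to the determinant condition of Definition \ref{(super)symmetrizing form}, since a Gram matrix with respect to a homogeneous basis is invertible exactly when the pairing is non-degenerate.

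Suppose $a\in e\mathcal{A}e$ satisfies $t(ab)=0$ for all $b\in e\mathcal{A}e$. I will show $t(ac)=0$ for every $c\in\mathcal{A}$. Since $a=eae$, we can write $t(ac)=t(eae\,c)$. The key step is to move the outer copy of $e$ around to the other side using the supersymmetry relation. Because $e$ is even, no sign appears when commuting it past the other factor, so
$$t(e\cdot(aec))=t(aec\cdot e)=t(a\,(ece)).$$
Here $ece\in e\mathcal{A}e$, so this value vanishes by the hypothesis on $a$. Non-degeneracy of $t$ on $\mathcal{A}$ then forces $a=0$. The same argument with the factors in the other order handles the left radical.

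The only point needing care is this sign bookkeeping: $e$ must be even for the cyclic move to introduce no sign. This holds by the assumption $e\in\mathcal{A}_{\bar0}$. For inhomogeneous $c$ one splits $c$ into its homogeneous components, and linearity finishes the argument.
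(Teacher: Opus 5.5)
Your proof is correct: the paper gives no proof and only calls this lemma an easy exercise, and your argument is exactly that standard exercise. You use evenness of $e$ to get $t(ey)=t(ye)$ for all $y\in\mathcal{A}$, hence $t(ac)=t(a\,ece)$ for $a\in e\mathcal{A}e$ and $c\in\mathcal{A}$, and then invoke non-degeneracy of $t$ on $\mathcal{A}$. Since $e\mathcal{A}e$ is finite dimensional, a trivial left radical already makes the Gram matrix invertible, so your separate check of the other radical is redundant but harmless.
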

			
				\begin{prop}\label{graded-supersymmetrizing2}
					Suppose $\nu\in Q_n^+$ is $\undQ$-unremovable. Then $e^\dag\mathcal{H}^f_{\mathbb{K}}(\nu)e^\dag$ is a graded supersymmetric superalgebra with the homogeneous supersymmetrizing form $t^\dag_{\nu}$ of degree $-2{\rm def}(\nu)$.
					
					In particular, if $(I_f)_{{\rm odd}}=\emptyset$, then $t^\dag_{\nu}$  is a graded symmetrizing form on $e^\dag\mathcal{H}^f_{\mathbb{K}}(\nu)e^\dag$.
				\end{prop}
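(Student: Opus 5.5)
The plan is to deduce Proposition \ref{graded-supersymmetrizing2} directly from Theorem \ref{graded-supersymmetrizing1} combined with Lemma \ref{truncation-nondegenerate}, and then to track the grading.

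First, $e^\dag=\sum_{{\bf i}\in J_f^{\dag n}}e({\bf i})$ is an idempotent. It is even and of $\Z$-degree $0$, since every $e({\bf i})$ has bidegree $(0,\bar 0)$. It commutes with the central idempotent $e^J_\nu$, so $e^\dag e^J_\nu$ is an even, degree-zero idempotent of $\mathcal{H}^f_{\mathbb{K}}(\nu)$, and $e^\dag\mathcal{H}^f_{\mathbb{K}}(\nu)e^\dag=(e^\dag e^J_\nu)\mathcal{H}^f_{\mathbb{K}}(\nu)(e^\dag e^J_\nu)$.

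By Theorem \ref{graded-supersymmetrizing1}, $t_\nu$ is a supersymmetrizing form on $\mathcal{H}^f_{\mathbb{K}}(\nu)$. Lemma \ref{truncation-nondegenerate} then shows that the restriction $t^\dag_\nu$ is a supersymmetrizing form on the truncation. If one wants to verify non-degeneracy by hand rather than quote the lemma, the argument runs as follows. Take $a\in e\mathcal{A}e$ with $t(ab)=0$ for all $b\in e\mathcal{A}e$. For arbitrary $c\in\mathcal{A}$ we have $t(ac)=t(eae\,c)=\pm t(a\,ece)=0$, using supersymmetry and the fact that $e$ is even. Non-degeneracy of $t$ on $\mathcal{A}$ then forces $a=0$. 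The supersymmetry identity $t(ab)=(-1)^{{\rm p}(a){\rm p}(b)}t(ba)$ is inherited directly.

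For the grading, note that $e^\dag\mathcal{H}^f_{\mathbb{K}}(\nu)e^\dag$ is a graded subalgebra because $e^\dag$ is homogeneous of degree $0$. Since $t_\nu$ vanishes off degree $2{\rm def}(\nu)$, the restriction $t^\dag_\nu$ is homogeneous of degree $-2{\rm def}(\nu)$.

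Finally, suppose $(I_f)_{\rm odd}=\emptyset$. Then no $C_k$ survives in the truncation. Indeed, $J_f^\dag$ contains exactly one preimage of each $i\in I_f$. When $i$ is even, $c$ swaps the two preimages, so $C_k e({\bf i})=e(c_k{\bf i})C_k$ and $e^\dag C_k e^\dag=0$. It follows that the truncation is purely even, with odd part zero, so the supersymmetry sign $(-1)^{{\rm p}(a){\rm p}(b)}$ is always $+1$ and $t^\dag_\nu$ is an honest graded symmetrizing form. One can also see this through Theorem \ref{supermorita}, which identifies the truncation with $R^{\Lambda}_\nu\otimes\mathcal{C}_0=R^\Lambda_\nu$.

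The only mildly delicate point is this parity check. Everything else is formal.
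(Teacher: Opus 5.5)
Your proposal is correct and follows the paper's proof: the first statement is Theorem \ref{graded-supersymmetrizing1} combined with Lemma \ref{truncation-nondegenerate} for the even, degree-zero idempotent $e^\dag e^J_\nu$, and the second is the observation that $e^\dag\mathcal{H}^f_{\mathbb{K}}(\nu)e^\dag$ has no odd part when $(I_f)_{\rm odd}=\emptyset$. For that parity point, checking $e^\dag C_k e^\dag=0$ one generator at a time is not quite enough on its own; the clean justification is either your identification $e^\dag\mathcal{H}^f_{\mathbb{K}}(\nu)e^\dag\cong R^{\Lambda_f}_\nu\otimes\mathcal{C}_0$ via Theorem \ref{supermorita}, or the spanning set $y^a c^\eta\sigma_w e({\bf i})$, since $e^\dag y^a c^\eta\sigma_w e^\dag=0$ whenever $\eta\neq 0$.
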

				
				\begin{proof}
				Let The first statement follows fromTheorem \ref{graded-supersymmetrizing1} and Lemma \ref{truncation-nondegenerate}. For the second part, we only need to note that there is no super part in the idempotent truncation $e^\dag\mathcal{H}^f_{\mathbb{K}}(\nu)e^\dag$. Hence the supersymmetrizing form $t^\dag_{\nu}$ is symmetric.
					\end{proof}

	For $\nu\in Q^+_n$, recall Definition \ref{m(nu)} and Theorem \ref{supermorita}.

\medskip

\noindent
{\bf{Proof of Corollary \ref{CellandSym}}:}
		The conditions on $p$ and $s$ enable us to use Theorem \ref{KKTiso} to identify the cyclotomic quiver Hecke-Clifford superalgebra $RC^{\Lambda}_n(I)$ with some $\mathcal{H}^f_{\mathbb{K}}$. Now the Corollary follows from Theorem \ref{supermorita} (1), Proposition \ref{example-unremovable}, Theorem \ref{Mainthm-truncation} and Proposition \ref{graded-supersymmetrizing2}.
		
		 \qed
		\medskip

	\subsection{Graded simple modules}
	{\bf In this section, we fix $\nu\in Q_n^+$ being $\undQ$-unremovable.} We shall use our main result to give the classification of graded simple-$\mathcal{H}^f_{\mathbb{K}}$ modules by applying the Theory we developed in Section \ref{Generalized graded cellular superalgebra}. Note that by Theorem \ref{supermorita} (2), it's enough to consider the representation of $e^\dag\mathcal{H}^f_{\mathbb{K}}(\nu)e^\dag$.
	
\label{pag:Graded simple modules}
	By Theorem \ref{Mainthm-truncation}, $e^\dag\mathcal{H}^f_{\mathbb{K}}(\nu)e^\dag$ has a generalized graded cellular basis $\Psi^{\lhd,\dag}_{\nu}$ \eqref{basis1} with the poset $\{\mathscr{P}^{\undQ}_{\nu},\lhd'\}.$  Since for any $(\undla,S)\in\mathscr{P}^{\undQ}_{\nu},$ $\mathscr{B}_{\undla,S}=\mathcal{C}_{\mt_{\undla}}$ is a simple superalgebra, there is only one simple supermodule up to isomorphism. Then following Definition \ref{cell}, we can define the Specht module $\Delta(\undla,S)$ for each $(\undla,S)\in\mathscr{P}^{\undQ}_{\nu}$, the bilinear form as in Definition \ref{pairing} and finally define the radical $\rad \Delta(\undla,S)$ as in Definition \ref{radical}.
	
	\begin{defn}
		Let $(\mathscr{P}^{\undQ}_{\nu})_0=\{(\undla,S)\in\mathscr{P}^{\undQ}_{\nu}~| \Delta(\undla,S)\neq \rad \Delta(\undla,S)\}$.
		\end{defn}
		
		\begin{thm}
			$\{D(\undla,S)=\Delta(\undla,S)/ \rad \Delta(\undla,S) \mid (\undla,S)\in(\mathscr{P}^{\undQ}_{\nu})_0\}$ forms a complete set of pairwise non-isomorphic simple graded $e^\dag\mathcal{H}^f_{\mathbb{K}}(\nu)e^\dag$-modules. Moreover, $D(\undla,S)$ is of type $\texttt{M}$ if and only if $m(\nu)$ is even and is of type $\texttt{Q}$ if and only if $m(\nu)$ is odd.
			\end{thm}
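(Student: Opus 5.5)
The classification part comes straight from the general theory of Section~\ref{Generalized graded cellular superalgebra}. The type statement reduces to computing the type of the simple $\mathscr{B}_{\undla,S}$-module.

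By Theorem~\ref{Mainthm-truncation}(1), $e^\dag\mathcal{H}^f_{\mathbb{K}}(\nu)e^\dag$ is a generalized graded cellular superalgebra with datum indexed by $(\mathscr{P}^{\undQ}_{\nu},\lhd')$ and cell superalgebras $\mathscr{B}_{\undla,S}=\mathcal{C}_{\mt_{\undla}}$. Each of these is a Clifford superalgebra, so it is simple and split over the algebraically closed field $\mathbb{K}$ (Example~\ref{lem:clifford rep}). Hence $m_{(\undla,S)}=1$, and $\Delta(\undla,S)$, $\rad\Delta(\undla,S)$ and $D(\undla,S)$ are well defined as in Definitions~\ref{cell} and~\ref{radical}. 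The index set $\mathscr{P}_0$ of Theorem~\ref{simple mdoule} is then exactly $(\mathscr{P}^{\undQ}_{\nu})_0$. Theorem~\ref{simple mdoule}(c) gives that the $D(\undla,S)$ with $(\undla,S)\in(\mathscr{P}^{\undQ}_{\nu})_0$ form a complete set of pairwise non-isomorphic simple graded modules, and part~(a) gives absolute irreducibility. The simple modules are meant up to grading shift and parity shift $\Pi^a(-)\langle l\rangle$, as in that theorem.

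For the type, Theorem~\ref{simple mdoule}(b) says $D(\undla,S)$ has the same type as the unique simple $\mathcal{C}_{\mt_{\undla}}$-module. By Example~\ref{lem:clifford rep}, this type is \texttt{M} if $\sharp O_{\mt_{\undla}}$ is even and \texttt{Q} if it is odd. So it remains to show $\sharp O_{\mt_{\undla}}=\sharp O_{\undla}=m(\nu)$.

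First, $O_{\mt_{\undla}}$ is in bijection with $O_{\undla}$ via $\mt_{\undla}$, directly from the definitions. Next, I would check that a box $A$ satisfies $\mathtt{b}_+(\res(A))|_{x=0}\in\{\pm1\}$ exactly when $\mathtt{q}(\res(A))|_{x=0}\in\{\pm2\}$. This holds because $\mathtt{b}_\pm$ are the roots of $z+z^{-1}=\mathtt{q}$, and $z=\pm1$ if and only if $\mathtt{q}=\pm2$; specialization at $x=0$ commutes with these algebraic relations. By Section~\ref{DynkinDiagrams}, $\pm2=\mathtt{q}(\pm q)$ are precisely the odd vertices of $I_f$. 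Since $\undla\in\mathscr{P}^m_\nu$ means $\sum_{A\in\undla}\nu_{\mathtt{q}(\res(A))|_{x=0}}=\nu$, the number of boxes with odd residue is $\sum_{i\in(I_f)_{\rm odd}}m_i=m(\nu)$ by Definition~\ref{m(nu)}. So the parity of $\sharp O_{\mt_{\undla}}$ depends only on $\nu$, and the type dichotomy follows.

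I expect the only delicate point to be this identification of $O_{\undla}$ with the odd-residue boxes. The care needed is in the residue convention: residues are taken for the deformed parameters $(q',\undQ')$ and then specialized. I would check that $\mathtt{q}(\res(A))|_{x=0}$ equals the undeformed $\mathtt{q}$-residue of $A$. This holds because $\mathtt{q}$ is a rational function whose denominator $q+q^{-1}$ is a unit, so specialization passes through it.
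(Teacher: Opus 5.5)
Your proposal is correct and follows the paper's own proof. The paper also takes the classification from Theorem~\ref{simple mdoule}(c) and the type statement from Theorem~\ref{simple mdoule}(a),(b), applied to the cell superalgebras $\mathcal{C}_{\mt_{\undla}}$. Your check that $\sharp O_{\mt_{\undla}}=m(\nu)$ works: $\mathtt{b}_+|_{x=0}\in\{\pm1\}$ exactly when $\mathtt{q}|_{x=0}\in\{\pm2\}$, and $\pm2$ are precisely the odd vertices of $I_f$. This simply fills in the parity fact that the paper states without proof.
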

			
			\begin{proof}
				The first statement follows from Theorem \ref{simple mdoule} (c). By applying Theorem \ref{simple mdoule} (a), (b) and the fact that the simple module of $\mathcal{C}_{\mt_{\undla}}$ is of type $\texttt{M}$ if and only if $m(\nu)$ is even and is of type $\texttt{Q}$ if and only if $m(\nu)$ is odd, we derive the second part of the Theorem.
				\end{proof}
\medskip
\begin{symbols}
\medskip
\symitem{$\N$}{The set of positive integers $\{1,2,\ldots\}$}{pag:N}
\symitem{$\mathbb{K}$}{An algebraically closed field of characteristic different from $2$}{pag:N}
\symitem{${\rm R}$}{An integral domain of characteristic different from $2$}{pag:integral domain R}
\symitem{${\rm p}(v)$}{The parity of vecter $v$ in some super vertor space}{pag:||}
\symitem{$\Pi V$}{The parity shift of supermodule $V$}{pag:parity shift}
\symitem{$\overrightarrow{\prod}$}{The ordered product}{pag:ordered product}
\symitem{$\mathcal{C}_n$}{Clifford algebra}{pag:Clifford algebra}
\symitem{$\lfloor x \rfloor$}{The greatest integer less than or equal to the real number $x$}{pag:round down}
\symitem{$V\circledast W$}{The irreducible component of $V\boxtimes W$ for irreducible modules $V,$ $W$}{pag:irrtensor}
\symitem{$\underline{M}$}{The module $M$ by forgetting $\mathbb{Z}\times\mathbb{Z}_2$-grading}{pag:degree shift}
\symitem{$M\langle l\rangle$}{The $\Z$-graded module $M$ with the grading shift by $l$}{pag:degree shift}
\symitem{$(\mathscr{P}, \mathscr{T},\mathscr{B},\mathscr{C},\deg,{\rm p})$}{The generalized graded super cell datum}{pag:GGCdatum}
\symitem{$\Delta(\lambda,k)$}{The cell module indexed by $\lambda\in\mathscr{P}$, $1\leq k\leq m_\lambda$}{pag:cell module}
\symitem{$\Delta(k,\lambda)$}{The dual version of $\Delta(\lambda,k)$}{pag:cell module}
\symitem{$D(k,\lambda)$}{The simple head of $\Delta(\lambda,k)$ or $0$}{pag:simple module}
\symitem{$\mathscr{P}_0$}{The index set of simple modules}{pag:simple module}
\symitem{$\mathbf{D}_\mathcal{A}(t,\pi)$}{The graded decomposition matrix of $\Z\times \Z_2$-graded algebra $\mathcal{A}$}{pag:graded decomposition matrix}
\symitem{$\mathbf{C}_\mathcal{A}(t,\pi)$}{The Cartan matrix of $\Z\times \Z_2$-graded algebra $\mathcal{A}$}{pag:Cartan matrix}
\symitem{$\bigl({\rm{A}}=(a_{ij})_{i,j\in I},P,\Pi,\Pi^\vee\bigr)$}{The Cartan superdatum, where $I=I_{\rm odd}\sqcup I_{\rm even}$}{pag:Cartan superdatum}
\symitem{$\nu_i$}{The simple root, $i\in I$}{pag:Cartan superdatum}
\symitem{$h_i$}{The simple coroot, $i\in I$}{pag:Cartan superdatum}
\symitem{$\rd_i$}{$(\nu_i|\nu_i)/2$, $i\in I$}{pag:Cartan superdatum}
\symitem{$Q^+$}{The positive root lattice $\oplus_{i\in I}\Z_{\geq 0}\nu_i$}{pag:Cartan superdatum}
\symitem{$P^+$}{The set of dominant integral weights}{pag:dominant integral weights}
\symitem{$\Lambda_i$}{The fundamental dominant integral weight, $i\in I$}{pag:dominant integral weights}
\symitem{${\rm p}(i)$}{The parity of $i\in I$}{pag:parity function on I}
\symitem{$\{Q_{i,i'}(u,v)\}_{i,i'\in I}$}{Some skew polynomials}{pag:Q-polys}
\symitem{$R_n$}{The quiver Hecke superalgebra}{pag:quiver Hecke superalgebras}
\symitem{$R^\Lambda_n$}{The cyclotomic quiver Hecke superalgebra, $\Lambda\in P^+$}{pag:cyclotomic quiver Hecke superalgebra}
\symitem{$I^\nu$}{The orbit $\{{\bf i}\in I^n \mid \nu=\nu_{{\bf i}_1}+\cdots + \nu_{{\bf i}_n}\}$ for $\nu\in Q^+$}{pag:cyclotomic quiver Hecke superalgebra}
\symitem{$R_\nu, R^\Lambda_\nu$}{Some blocks of $R_n$, $R^\Lambda_n$ respectively, for $\nu\in Q^+$}{pag:blocks}
\symitem{{$[n]$}}{The set of positive integers $\{1,2,\ldots,n\}$}{pag:[n]}
\symitem{$J$}{The set $(I_{\rm odd}\times\{0\}) \sqcup (I_{\rm even} \times\{\pm \})$}{pag:J}
\symitem{$c$}{An involution on $J$}{pag:J}
\symitem{$J^c$}{The set of fixed points $\{j\in J \mid c(j)=j\}$}{pag:J}
\symitem{$\pr$}{the canonical projection $J\to I$}{pag:J}
\symitem{$\{\widetilde{Q}_{j,j'}(u,v)\}_{j,j'\in J}$}{Some polynomials obtained from $\{Q_{i,i'}(u,v)\}_{i,i'\in I}$}{pag:widetildeQ polys}
\symitem{$RC_n$}{The quiver Hecke-Clifford superalgebra}{pag:quiver Hecke-Clifford superalgebra}
\symitem{$RC^\Lambda_n$}{The cyclotomic quiver Hecke superalgebra, $\Lambda\in P^+$}{pag:cyclotomic quiver Hecke-Clifford superalgebra}
\symitem{$J^\nu$}{The set $\{{\bf i}\in J^n \mid \Sigma_{s=1}^{n}\nu_{\pr({\bf i}_s)}=\nu \}$ for $\nu\in Q^+$}{pag:J nu}
\symitem{$RC_\nu, RC^\Lambda_\nu$}{Some blocks of $RC_n$, $RC^\Lambda_n$ respectively, for $\nu\in Q^+$}{pag:J blocks}
\symitem{$J^\dag$}{Some fixed subset of $J$}{pag:J blocks}
\symitem{$e^\dag$}{The idempotent $\Sigma_{{\bf i}\in {J^{\dag n}}}e({\bf i})$}{pag:J blocks}
\symitem{$m(\nu)$}{$\Sigma_{i\in I_{\rm odd}}m_i\in\Z_{\geq 0}$ for $\nu=\Sigma_{i\in I}m_i\nu_i\in Q^+$}{pag:m(nu)}
\symitem{$q$}{The Hecke parameter in ${\rm R^\times}\setminus \{\pm 1\}$ satisfying $q+q^{-1}\in{\rm R^\times}$}{pag:AHCA}
\symitem{$\epsilon$}{$q-q^{-1}$}{pag:AHCA}
\symitem{$\mathcal{H}_{\rm R}$}{The affine Hecke-Clifford superalgebra over ${\rm R}$}{pag:AHCA}
\symitem{${\rm supp}(\beta)$}{The supporting set $\{1 \leq k \leq n:\beta_{k}=\bar{1}\}$ for $\beta=(\beta_1,\ldots,\beta_n)\in\mathbb{Z}_2^n$}{pag:suppot and sum}
\symitem{$|\beta|$}{$\Sigma_{i=1}^{n}\beta_i$ for $\beta=(\beta_1,\ldots,\beta_n)\in\mathbb{Z}_2^n$}{pag:suppot and sum}
\symitem{$\mathcal{A}_n$}{A certain subalgebra of $\mathcal{H}_{\rm R}$}{pag:subalg An}
%\symitem{$\tilde{\Phi}_i$}{The intertwining element, $i\in[n-1]$}{pag:BK intertwining element}
%\symitem{$\Phi_i$}{Jone-Nazarov's intertwining element, $i\in[n-1]$}{pag:JN intertwining element}
\symitem{$\Phi_i(x,y)$}{An element in $\mathcal{H}_{\mathbb{K}}$}{pag:Phi function}
\symitem{$\mathtt{q}(x)$}{$2(x+x^{-1})/(q+q^{-1})$ for $x\in\mathbb{K}^*$}{pag:q-function and b-function}
\symitem{$\mathtt{b}_{\pm}(x)$}{The solutions of equation $z+z^{-1}=\mathtt{q}(x)$}{pag:q-function and b-function}
\symitem{$\mathcal{H}^f_{\rm R}$}{The cyclotomic Hecke-Clifford superalgebra over ${\rm R}$}{pag:CHCA}
\symitem{$\underline{Q}$}{The cyclotomic parameters $(Q_1,Q_2,\ldots,Q_m)\in(\mathbb{K}^*)^m$}{pag:Q-parameters}
\symitem{$r$}{The level of $\mathcal{H}^f_{\rm R}$}{pag:nondege level}
\symitem{$\tau^{\rm R}_{r,n}$}{The Frobenius from of $\mathcal{H}^f_{\rm R}$}{pag:frob from}
\symitem{$t^{\rm R}_{r,n}$}{The supersymmtrizing from of $\mathcal{H}^f_{\rm R}$, where $f=f^{(\mathsf{0})}$}{pag:supersym from}
\symitem{$\mathsf{0},\mathsf{s},\mathsf{ss}$}{The types of combinatorics}{pag:The types of combinatorics}
\symitem{$\mathscr{P}^m_n$}{The set of $m$-multipartitions of $n$ for $m\in \Z_{\geq 0}$}{pag:The types of combinatorics}
\symitem{$\mathscr{P}^\mathsf{s}_n$}{The set of strict partitions of $n$}{pag:The types of combinatorics}
\symitem{$\mathscr{P}^{\bullet,m}_{n}$}{The set of mixed ($\bullet+m$)-multipartitions of $n$ for $\bullet\in\{\mathsf{0},\mathsf{s},\mathsf{ss}\}$}{pag:The types of combinatorics}
\symitem{$\undla$}{An element in $\mathscr{P}^{\bullet,m}_{n}$}{pag:multipartition}
\symitem{$\alpha\in\undla$}{A box (or node) of $\undla$}{pag:multipartition}
\symitem{$\Std(\undla)$}{The set of standard tableaux of shape $\undla$}{pag:standard tableaux}
\symitem{$\mt$}{An element in $\Std(\undla)$}{pag:standard tableaux}
\symitem{$\mt^{\undla},$ $\mt_{\undla}$}{Initial row tableau of shape $\undla,$ Initial column tableau of shape $\undla$}{pag:standard tableaux}
\symitem{$\mathcal{D}_{\undla}$}{The set of boxes in the first diagonals of strict partition components of $\undla$}{pag:diag of undlam}
\symitem{$\mathcal{D}_{\mt}$}{The set of numbers in the first diagonals of strict partition components of $\mt$}{pag:diag of undlam}
\symitem{$Q_0,Q_{0_+},Q_{0_-}$}{$q$, $q$, $-q$ respectively}{pag:nondeg residue}
\symitem{$\res(\alpha)$}{The residue $Q_lq^{2(j-i)}$ of box $\alpha=(i,j,l)$}{pag:nondeg residue}
\symitem{$\res_{\mt}(k)$}{The residue of box $\mt^{-1}(k)$ for $\mt\in\Std(\undla)$}{pag:nondeg residue}
\symitem{$\res(\mt)$}{The residue sequence $(\res_{\mt}(1),\ldots,\res_{\mt}(n))$ of $\mt\in\Std(\undla)$}{pag:nondeg residue}
\symitem{$\mathtt{q}(\res(\mt))$}{The $\mathtt{q}$-sequence $(\mathtt{q}(\res_{\mt}(1)),\ldots,\mathtt{q}(\res_{\mt}(n)))$ of $\mt\in\Std(\undla)$}{pag:nondeg residue}
\symitem{$M_{\bf i}$}{The generalized eigenspace of $\mathcal{H}^f_{\mathbb{K}}$-module $M$ for ${\bf i}\in (\mathbb{K}^*)^n$}{pag:eigenspaces}
\symitem{$A_{\infty},B_{\infty},C_{\infty},A^{(1)}_{s-1},A^{(2)}_{2s},C^{(1)}_s,D^{(2)}_s$}{Lie types}{pag:lie types}
\symitem{$g$}{The map $x\mapsto x+x^{-1};\mathbb{K}^*\rightarrow \mathbb{K}$}{pag:map g}
\symitem{$I_f$}{The Cartan superdatum associated to $f=f^{(\bullet)}_{\underline{Q}}$ with $\bullet\in\{\mathsf{0},\mathsf{s},\mathsf{ss}\}$}{pag:If}
\symitem{$J_f$}{$g^{-1}\left(I_f\right)$}{pag:If}
\symitem{$J_f^\dag$}{$\{\mathtt{b}_{+}(x)\in \mathbb{K}^* \mid \mathtt{q}(x)\in I_f\}$}{pag:If}
\symitem{$\Lambda_f$}{The dominant integral weight associated to $f$}{pag:Lambdaf}
\symitem{$f_{k,{\bf i}},r_{a,{\bf i}},m_{a,{\bf i}}^{\bf j}$}{Some key elements appearing in KKT's isomorphism}{pag:KKTiso}
\symitem{$\mathcal{D}$}{The set of all boxes in the first diagnoals of strict partiton components}{pag:mathcalD}
\symitem{$\mathcal{A}_{\undla}(i)$}{The set of addable $i$-boxes of $\undla$, where $i\in I_f$}{pag:add and rem 1}
\symitem{$\mathcal{R}_{\undla}({i})$}{The set of removable $i$-boxes of $\undla$, where $i\in I_f$}{pag:add and rem 1}
\symitem{$d_{ i}(\undla)$}{$2^{\delta_{{\rm p}(i),\bar{1}}} {\rm d}_{{ i}}\left(\sharp \mathcal{A}_{\undla}(i)
-\sharp \left(\mathcal{R}_{\undla}(i)\setminus \mathcal{D} \right)\right)$, where $i\in I_f$}{pag:add and rem 1}
\symitem{$\nu_{\undla}$}{The $\undla$-positive root $\Sigma_{A\in \undla}\nu_{\mathtt{q}(\res (A))}\in Q_n^+$}{pag:add and rem 1}
\symitem{$d^f(\undla)$}{Some modified defect of $\nu_{\undla}$}{pag:modified defect}
\symitem{$\mathscr{A}_{\mt}^{\scriptstyle\triangle}(k)$}{Some special addable boxes of $\mt\downarrow_k$, where ${\scriptstyle\triangle}\in\{\lhd,\rhd\},$ $k\in[n]$}{add and rem 2}
\symitem{$\mathscr{R}_{\mt}^{\scriptstyle\triangle}(k)$}{Some special removable boxes of $\mt\downarrow_k$, where ${\scriptstyle\triangle}\in\{\lhd,\rhd\},$ $k\in[n]$}{add and rem 2}
\symitem{$\mathcal{A}_{\mt}^{{\scriptstyle\triangle,f}}(k)$}{Some special addable boxes of $\mt\downarrow_k$, where ${\scriptstyle\triangle}\in\{\lhd,\rhd\},$ $k\in[n]$}{pag:deg of std tableaux}
\symitem{$\mathcal{R}_{\mt}^{{\scriptstyle\triangle,f}}(k)$}{Some special removable boxes of $\mt\downarrow_k$, where ${\scriptstyle\triangle}\in\{\lhd,\rhd\},$ $k\in[n]$}{pag:deg of std tableaux}
\symitem{$\deg^{{\scriptstyle\triangle},f}({\mt})$}{the ${\scriptstyle\triangle}$-degree of standard tableau $\mt$, where ${\scriptstyle\triangle}\in\{\lhd,\rhd\}$}{pag:deg of std tableaux}
\symitem{$P_n^{(\bullet)}(q^2,\undQ)$}{The Poincar\'e polynomial of type $\bullet\in\{\mathsf{0},\mathsf{s},\mathsf{ss}\}$}{pag:Poincare Poly}
\symitem{$\mathbb{D}(\undla)$}{The simple module of $\mathcal{H}^f_{\mathbb{K}}$  indexed by $\undla$}{pag:nondege simple module}
\symitem{$B_{\undla}$}{The simple block of $\mathcal{H}^f_{\mathbb{K}}$ indexed by $\undla$}{pag:simple blocks}
\symitem{$d_{\undla}$}{It equals $1$ if $\sharp \mathcal{D}_{\undla}$ is odd, otherwise $0$}{pag:dundla}
\symitem{$\mathcal{OD}_{\mt}$}{Some key subset of $\mathcal{D}_{\mt}$}{pag:Dt,ODt,Z2ODt}
\symitem{$\mathbb{Z}_2(\mathcal{OD}_{\mt})$}{The subset of $\mathbb{Z}_2^n$ supported on $\mathcal{OD}_{\mt}$}{pag:Dt,ODt,Z2ODt}
\symitem{{$\mathbb{Z}_2([n]\setminus\mathcal{D}_{\mt})$}}{The subset of $\mathbb{Z}_2^n$ supported on $[n]\setminus\mathcal{D}_{\mt}$}{pag:Dt,ODt,Z2ODt}
\symitem{$\gamma_{\mt}$}{A certain idempotent of $\mathcal{C}_n$ related to $\mt$}{pag:Dt,ODt,Z2ODt}
\symitem{$\mathbb{Z}_2(\mathcal{OD}_{\mt})_{a}$}{There is a certain decomposition $\mathbb{Z}_2(\mathcal{OD}_{\mt})=\sqcup_{a\in \mathbb{Z}_2}\mathbb{Z}_2(\mathcal{OD}_{\mt})_{a}$}{pag:decomposition of OD}
\symitem{${\rm Tri}(\undla)$}{The set of triples associated with standard tableaux of shape $\undla$}{pag:Tri}
\symitem{${\rm Tri}_{a}(\undla)$}{Appearing in a certain decomposition ${\rm Tri}(\undla)=\sqcup_{a\in \mathbb{Z}_2}{\rm Tri}_{a}(\undla)$}{pag:Tri}
\symitem{${\rm sgn}_{\beta}(k)$}{It equals $-1$ if $\beta_k=\bar{1}$ and equals $1$ if $\beta_k=\bar{0},$ for $\beta\in\mathbb{Z}_2^n$}{pag:deltabetak}
\symitem{$\delta_{\beta}(k)$}{It equals $1$ if $\beta_k=\bar{1}$ and equals $0$ if $\beta_k=\bar{0},$ for $\beta\in\mathbb{Z}_2^n$}{pag:deltabetak}
\symitem{$F_{\rm T}$}{The primitive idempotent indexed by ${\rm T}\in{\rm Tri}_{\bar{0}}(\undla)$}{pag:primitive idempotents and blocks}
\symitem{$F_{\undla}$}{The primitive central idempotent indexed by $\undla$}{pag:primitive idempotents and blocks}
\symitem{$|\beta|_{<i},|\beta|_{\leq i},|\beta|_{>i},|\beta|_{\geq i}$}{$\Sigma_{k=1}^{i-1}\beta_k$ and so on, for $\beta\in\mathbb{Z}_2^n$}{pag:partial beta}
\symitem{$\mathtt{b}_{\mt,i}$}{$\mathtt{b}_{-}(\res_{\mt}(i))$}{pag:nondege coeffi cti}
\symitem{$\delta(s_i\mt)$}{It equals 1 if $s_i\mt\in\Std(\undla)$ for $\mt\in\Std(\undla),$ otherwise $0$}{pag:nondege coeffi cti}
\symitem{$\mathtt{c}_{\mt}(i)$}{Some structure coefficient appeared in module $\mathbb{D}(\undla)$}{pag:nondege coeffi cti}
\symitem{$\Phi_{\ms,\mt}$}{A key element in $\mathcal{H}^f_{\mathbb{K}}$ indexed by $\ms,\mt\in\Std(\undla)$}{pag:Phist and cst}
\symitem{$\mathtt{c}_{\ms,\mt}$}{A key coefficient indexed by $\ms,\mt\in\Std(\undla)$}{pag:Phist and cst}
\symitem{$f_{{\rm S},{\rm T}}^{\mathfrak{w}},$ $f_{{\rm S},{\rm T}_a}^{\mathfrak{w}}$}{The seminormal basis factoring though a fixed standard tableau $\mathfrak{w}$}{pag:nondege seminormal basis}
\symitem{$f_{{\rm S},{\rm T}},$ $f_{{\rm S},{\rm T}_a}$}{The (reduced) seminormal basis}{pag:nondege seminormal basis}
\symitem{$\mathtt{c}_{\rm T}^{\mathfrak{w}}$}{$(\mathtt{c}_{\mt,\mathfrak{w}})^2$ for ${\rm T}=(\mt, \alpha_{\mt}, \beta_{\mt})\in {\rm Tri}(\undla)$ and a fixed standard tableau $\mathfrak{w}$}{pag:nondege cT}
\symitem{$\hO$}{The ring of formal power series $\mathbb{K}[[x]]$}{pag:deformed CHCAs}
\symitem{$\hK$}{The fraction field of $\hO$}{pag:deformed CHCAs}
\symitem{$q',\underline{Q'}$}{The deformed parameters}{pag:deformed CHCAs}
\symitem{$\mathcal{H}^{f'}_{\hO},\mathcal{H}^{f'}_{\hK}$}{The deformed cyclotomic Hecke-Clifford superalgebras}{pag:deformed CHCAs}
\symitem{$\mathtt{b}_{\mt,\beta_{\mt}}$}{The deformed ${\rm T}$-sequence for ${\rm T}=(\mt,\alpha_\mt,\beta_\mt)\in{\rm Tri}_{\bar{0}}(\mathscr{P}^{\bullet,m}_{n})$}{pag:defored T seq}
\symitem{${\bf i}^{\rm T}$}{$\mathtt{b}_{\mt,\beta_{\mt}}|_{x=0}\in(\mathbb{K}^*)^n$ for ${\rm T}=(\mt,\alpha_\mt,\beta_\mt)\in{\rm Tri}_{\bar{0}}(\mathscr{P}^{\bullet,m}_{n})$}{pag:defored T seq}	
\symitem{${\rm Tri}({\bf i})$}{The set of all ${\rm T}\in{\rm Tri}(\mathscr{P}^{\bullet,m}_{n})$ with ${\bf i}^{\rm T}={\bf i}$ for fixed ${\bf i}\in (\mathbb{K}^*)^n$}{pag:deformed KLR idempotent}
\symitem{$e({\bf i})^\hO$}{The deformed KLR idempotent, ${\bf i}\in (\mathbb{K}^*)^n$}{pag:deformed KLR idempotent}
\symitem{$\mathscr{A}_{\mt}^{\scriptstyle\triangle,\undQ}(k)$}{Some key set related to $\mathcal{A}_{\mt}^{{\scriptstyle\triangle,f}}(k)$}{pag:Add and Rem}
\symitem{$\mathscr{R}_{\mt}^{\scriptstyle\triangle,\undQ}(k)$}{Some key set related to $\mathcal{R}_{\mt}^{{\scriptstyle\triangle,f}}(k)$}{pag:Add and Rem}
\symitem{$O_{\undla}$}{Some key set related to $\undla\in\mathscr{P}^{m}_{n}$}{pag:O set}
\symitem{$O_{\mt}$}{Some key set related to $\mt\in\Std(\undla)$}{pag:O set}
\symitem{$\mathcal{C}_\mt$}{Some Clifford algebra related to $\mt\in\Std(\undla)$}{pag:O set}
\symitem{$\mathscr{P}^{\undQ}_{n}$}{Some key index set related to $\mathscr{P}^m_{n}$ and $O_{\undla}$}{pag:O set}
\symitem{${\bf i}_{{\undla}}$}{The sequence ${\bf i}^{\rm T}\in(\mathbb{K}^*)^n$ for ${\rm T}=(\mt_{\undla},0,0)\in {\rm Tri}(\undla)$}{pag:middle 1}
\symitem{$y^{\lhd,\hO}_{\undla}(k)$}{Some key element in $\mathcal{H}^{f'}_{\hO}$, $k\in[n]$}{pag:middle 1}
\symitem{$y^{\lhd,\hO}_{\undla}$}{The element $\Pi_{k=1}^n y^{\lhd,\hO}_{\undla}(k)\in \mathcal{H}^{f'}_{\hO}$}{pag:middle 1}
\symitem{$y^\lhd_{\undla}$}{Some key element in $\mathcal{H}^{f}_{\mathbb{K}}$}{pag:middle 1}
\symitem{$\mathscr{T}(\undla,S)$}{Some key index set related to $(\undla,S)\in\mathscr{P}^{\undQ}_{n}$}{pag:index set of cellular bases}
\symitem{$\sgn(\alpha)$}{The sign of vector $\alpha\in \Z_2^n$}{pag:sgn of vector}
\symitem{$y^{\lhd,S,\hO}_{L_1,u,L_2}$}{Some key element in $\mathcal{H}^{f'}_{\hO}$}{pag:middle part 1}
\symitem{$y^{\lhd,S}_{L_1,u,L_2}$}{Some key element in $\mathcal{H}^{f}_{\mathbb{K}}$}{pag:middle part 1}
\symitem{$\psi^{\lhd,S,\hO}_{L'_1,u,L'_2}$}{An $\hO$-basis element of $\mathcal{H}^{f'}_{\hO}$}{pag:O-basis element 1}
\symitem{$\psi^{\lhd,S}_{L'_1,u,L'_2}$}{A $\mathbb{K}$-basis element of $\mathcal{H}^{f}_{\mathbb{K}}$}{pag:K-basis element 1}
\symitem{${\bf i}^{{\undla}}$}{The sequence ${\bf i}^{\rm T}\in(\mathbb{K}^*)^n$ for ${\rm T}=(\mt^{\undla},0,0)\in {\rm Tri}(\undla)$}{pag:middle 2}
\symitem{$y^{\rhd,\hO}_{\undla}(k)$}{Some key element in $\mathcal{H}^{f'}_{\hO}$, $k\in[n]$}{pag:middle 2}
\symitem{$y^{\rhd,\hO}_{\undla}$}{The element $\Pi_{k=1}^n y^{\rhd,\hO}_{\undla}(k)\in \mathcal{H}^{f'}_{\hO}$}{pag:middle 2}
\symitem{$y^\rhd_{\undla}$}{Some key element in $\mathcal{H}^{f}_{\mathbb{K}}$}{pag:middle 2}
\symitem{$y^{\rhd,S,\hO}_{L_1,u,L_2}$}{Some key element in $\mathcal{H}^{f'}_{\hO}$}{pag:middle part 2}
\symitem{$y^{\rhd,S}_{L_1,u,L_2}$}{Some key element in $\mathcal{H}^{f}_{\mathbb{K}}$}{pag:middle part 2}
\symitem{$\psi^{\rhd,S,\hO}_{L'_1,u,L'_2}$}{An $\hO$-basis element of $\mathcal{H}^{f'}_{\hO}$}{pag:O-basis element 2}
\symitem{$\psi^{\rhd,S}_{L'_1,u,L'_2}$}{A $\mathbb{K}$-basis element of $\mathcal{H}^{f}_{\mathbb{K}}$}{pag:K-basis element 2}
\symitem{$\mathscr{P}^{m}_{\nu}$}{The set of $\nu$-multipartitions}{pag:nu-multipartition}
\symitem{$\mathscr{P}^{\undQ}_{\nu}$}{The set of colored $\nu$-multipartition with respect to $(q,\undQ)$}{pag:nu-multipartition}
 \symitem{$e_\nu ^{\hO}$}{$\sum_{{\bf i}\in J^\nu}e({\bf i})^\hO$ }{pag:nu-Olift}
\symitem{$\mathcal{H}^f_{\mathbb{K}}(\nu),\mathcal{H}^{f'}_{\hO}(\nu),\mathcal{H}^{f'}_{\hK}(\nu)$}{ $e^J_\nu \mathcal{H}^f_{\mathbb{K}}$, $e_\nu ^{\hO}\mathcal{H}^{f'}_{\hO}$, $e_\nu ^{\hO}\mathcal{H}^{f'}_{\hK}$ respectively, where
$\nu\in Q_n^+$}{pag:nu-block}
\symitem{$\Psi^{\hO,\lhd}_{\nu},\Psi^{\hO,\rhd}_{\nu}$}{Two $\hO$-bases of $\mathcal{H}^{f'}_{\hO}(\nu)$}{pag:O-basis1 for block}
\symitem{$\Psi^{\lhd}_{\nu},\Psi^{\rhd}_{\nu}$}{Two $\mathbb{K}$-bases of $\mathcal{H}^f_{\mathbb{K}}(\nu)$}{pag:basis1'}
\symitem{$\deg(S)$}{The degree of $S\subset O_{\undla}$}{pag:more degrees}
\symitem{$\deg^{{\scriptstyle\triangle},S}(L)$}{The degree of $L\in \mathscr{T}(\undla,S),$ ${\scriptstyle\triangle}\in\{\lhd,\rhd\}$}{pag:more degrees}
\symitem{$\omega'_{\undla,S}$}{Some involution on $\mathcal{C}_{\mt_{\undla}}$ related to $(\undla,S)\in\mathscr{P}^{\undQ}_{\nu}$}{pag:w-involutions}
\symitem{$\omega_{\undla,S}$}{Some involution on $\mathcal{C}_{\mt^{\undla}}$ related to $(\undla,S)\in\mathscr{P}^{\undQ}_{\nu}$}{pag:w-involutions}
\symitem{$(\mathscr{P}^{\undQ}_{\nu},\lhd'),(\mathscr{P}^{\undQ}_{\nu},\rhd)$}{Equipping $\mathscr{P}^{\undQ}_{\nu}$ with two partial orders}{pag:new partial orders}
\symitem{${\rm def}(\nu)$}{The defect of $\nu\in Q_n^+$}{pag:defect}
\symitem{$t_{\nu}$}{The homogeneous supersymmetrizing form of $\mathcal{H}^f_{\mathbb{K}}(\nu)$}{pag:homogeneous supersym}
\symitem{$\mathscr{T}^\dag(\undla,S)$}{Some subset of $\mathscr{T}(\undla,S)$}{pag:dag index set}
\symitem{$\Psi^{\lhd,\dag}_{\nu},\Psi^{\rhd,\dag}_{\nu}$}{Two $\mathbb{K}$-bases of $e^\dag\mathcal{H}^f_{\mathbb{K}}(\nu)e^\dag$}{pag:dag bases}
\symitem{$t^\dag_{\nu}$}{The homogeneous supersymmetrizing form of $e^\dag\mathcal{H}^f_{\mathbb{K}}(\nu)e^\dag$}{pag:dag homogeneous supersym}
\symitem{$\Delta(\undla,S)$}{The Specht module of $e^\dag\mathcal{H}^f_{\mathbb{K}}(\nu)e^\dag$ for $(\undla,S)\in\mathscr{P}^{\undQ}_{\nu}$}{pag:Graded simple modules}
\symitem{$(\mathscr{P}^{\undQ}_{\nu})_0$}{The index set of simple $e^\dag\mathcal{H}^f_{\mathbb{K}}(\nu)e^\dag$-modules}{pag:Graded simple modules}
\symitem{$D(\undla,S)$}{The simple $e^\dag\mathcal{H}^f_{\mathbb{K}}(\nu)e^\dag$-module for $(\undla,S)\in(\mathscr{P}^{\undQ}_{\nu})_0$}{pag:Graded simple modules}

\end{symbols}

%\newpage


\medskip
\begin{thebibliography}{ABC}
\medskip
			%\bibitem[B]{B} A. Bjorner, Orderings of Coxeter groups, in ``Combinatorics and Algebra,''
			%pp. 175--195, Contemp. Math., Vol. 34, Amer. Math. Soc., Providence RI, 1984.

%\bibitem[A]{A3}
%{\sc S.~Ariki}, {\em Lectures on Cyclotomic Hecke Algebras}
%Quantum Groups and Lie Theory, 1--22, DOI: https://doi.org/10.1017/CBO9780511542848.002.

%\bibitem[AK]{AK}
%{\sc S.~Ariki and K.~Koike}, {\em  A Hecke algebra of $(\mathbb{Z}/r\mathbb{Z})\wr\mathfrak{S}_n$ and construction of its representations}, Adv. Math., {\bf 106} (1994), 216--243.


\bibitem[AP1]{AP14} {\sc S.~Ariki and E.~Park}, {\em Representation type of finite quiver Hecke algebras of type $A^{(2)}_{2\ell}$}, J. Algebra, {\bf 397} (2014), 457--488.

\bibitem[AP2]{AP16d} \leavevmode\vrule height 2pt depth -1.6pt width 23pt,  {\em Representation type of finite quiver Hecke algebras of type $D^{(2)}_{\ell+1}$}, Trans. Amer. Math. Soc., {\bf 368}(5) (2016), 3211--3242.

\bibitem[APS]{APS} {\sc S.~Ariki, E.~Park and L.~Speyer}, {\em Specht modules for quiver Hecke algebras of type $C$}, Publ. Res. Inst. Math. Sci., {\bf 55}(3) (2019), 565--626.			
			
\bibitem[Bow]{Bow}
			{\sc C.~Bowman}, {\em The many integral graded cellular bases of Hecke algebras of complex reflection groups}, Amer. J. Math., {\bf 144}(2) (2022), 437--504.
			
		\bibitem[BKM]{BKM}
		{\sc G.~Benkart, S.-J. Kang, D.~Melville}, {\em Quantized enveloping algebras for Borcherds superalgebras}, Trans. Amer. Math. Soc., {\bf 350} (1998), 3297--3319.
		
		
\bibitem[BK1]{BK:GradedKL}
{\sc J.~Brundan and A.~Kleshchev},  {\em Blocks of cyclotomic {H}ecke algebras and {K}hovanov-{L}auda algebras}, Invent. Math., {\bf 178} (2009), 451--484.
			
\bibitem[BK2]{BK}
\leavevmode\vrule height 2pt depth -1.6pt width 23pt, {\em Hecke-Clifford superalgebras, crystals of type
$A^{(2)}_{2l}$, and modular branching rules for $\widehat{S}_n$}, Repr. Theory, {\bf 5} (2001), 317--403.
			
%\bibitem[BK4]{BK:spin}
%\leavevmode\vrule height 2pt depth -1.6pt width 23pt, {\em Odd Grassmannian bimodules and derived equivalences for spin symmetric groups}, arXiv:2203.14149.

\bibitem[BK3]{BKgraded}
\leavevmode\vrule height 2pt depth -1.6pt width 23pt,  {\em Graded decomposition numbers for cyclotomic Hecke algebras}, Adv. Math., {\bf 222} (2009), 1883--1942.

\bibitem[BKW]{BKW}
{\sc J.~Brundan, A.~Kleshchev and W.~Wang}, {\em Graded Specht modules}, J. Reine Angew. Math., {\bf 655} (2011), 61–87.


		%	\bibitem[C1]{C1} I. Cherednik, {\em Special bases of irreducible
		%		representations of a degenerate affine Hecke algebra}, Funct. Anal.
		%	Appl. {\bf 20} (1986), no.1, 76--78.
			
		%	\bibitem[C2]{C2} I. Cherednik, {\em A new interpretation of Gel'fand-Tzetlin
		%		bases}, Duke. Math. J. {\bf 54} (1987), 563--577.
				
%\bibitem[DJM]{DJM} {\sc R.~Dipper, G.D.~James and A.~Mathas}, {\em Cyclotomic q-Schur algebras},  Math. Zeit., {\bf 229}(3)(1998), 385--416.
		
	\bibitem[DVV]{DVV}
	{\sc O.~Dudas, M.~Varagnolo and E.~Vasserot}, {\em Categorical actions on unipotent representations of finite unitary groups}, Publ. Math. Inst. Hautes \'Etudes Sci., {\bf 129} (2019), 129--197.
	
	\bibitem[Ev]{Ev} {\sc A.~Evseev}, {\em RoCK blocks, wreath products and KLR algebras}, Math. Ann., {\bf 369} (2017), 1383--1433.
	
	\bibitem[EK]{EK} {\sc A.~Evseev and A.~Kleshchev}, {\em Blocks of symmetric groups, semicuspidal KLR algebras and zigzag Schur-Weyl duality}, Ann. of Math., {\bf 188} (2018), 453--512.
			
		%	\bibitem[D]{Dr} V. Drinfeld, {\em Degenerate affine Hecke algebras and
		%		Yangians}, Funct. Anal. Appl. {\bf 20} (1986), no.1, 62--64.
				
%\bibitem[ELL]{ELL}	
%{\sc M.~Ebert, A.~Lauda and L.~Vera}, {\em Derived superequivalence for spin symmetric groups and odd sl2-categorifications},   arXiv:2203.14153.
				
\bibitem[EM]{EM}
{\sc A.~Evseev and A.~Mathas}, {\em Content systems and deformations of cyclotomic KLR algebras of type $A$ and $C$}, Ann. Represent., Volume 1, issue 2 (2024), 193--297.

\bibitem[FKM]{FKM}{\sc M.~Fayers, A.~Kleshchev, L.~Morotti}, {\em Decomposition numbers for abelian defect RoCK blocks of double covers of symmetric groups}, J. London Math. Soc., {\bf 109} (2) (2024),1--49.

\bibitem[GL]{GL}
{\sc J.J.~Graham, G.I.~Lehrer}, {\em Cellular algebras}, Invent. Math., {\bf 123} (1996), 1--34.

\bibitem[HW]{HW}
{\sc D.~Hill and W.~Wang}, {\em Categorification of quantum Kac-Moody superalgebras}, Trans. Amer. Math. Soc. , {\bf 367} (1995),
1183--1216.

%\bibitem[HLL]{HLL}
%{\sc J.~Hu, H.~Li and S.~Li}, {\em New formulae for the Schur elements of the cyclotomic Hecke algebra of type $G(\ell,1,n)$}, preprint, 2025.

\bibitem[HM1]{HM1}
{\sc J.~Hu and A.~Mathas}, {\em Graded cellular bases for the cyclotomic Khovanov-Lauda-Rouquier algebras of type $A$}, Adv. Math., {\bf 225}(2) (2010), 598--642.
		
\bibitem[HM2]{HM2}
\leavevmode\vrule height 2pt depth -1.6pt width 23pt, {\em Seminormal forms and cyclotomic quiver {H}ecke algebras of type {A}}, Math. Ann., \textbf{364} (2016), 1189--1254.
			

			
\bibitem[HS1]{HS}
{\sc J.~Hu and L.~Shi}, {\em Graded dimensions and monomial bases for the cyclotomic quiver Hecke algebras}, Commun. Contemp. Math., (2023), article in press, doi:10.1142/S021919972350044X.

\bibitem[HS2]{HS2}
{\sc J.~Hu, L.~Shi}, {\em Graded dimensions and monomial bases for the cyclotomic quiver Hecke superalgebras}, J. Algebra, {\bf 635} (2023),
642--670.

\bibitem[JN]{JN}
{\sc A.~Jones, M.~Nazarov}, {\em Affine Sergeev algebra and $q$-analogues of the Young symmetrizers for projective representations of the
symmetric group}, Proc. London Math. Soc. {\bf 78} (1999), 481--512.
		
\bibitem[Kac]{Kac}
		{\sc V.G. Kac},
		{{\em Infinite dimensional Lie algebras}, 3rd ed., Cambridge University Press, Cambridge, 1990.}
		
\bibitem[KK]{KK}
			{\sc S.~J. Kang and M.~Kashiwara}, {\em Categorification of highest weight modules via Khovanov-Lauda-Rouquier algebras}, Invent. Math., {\bf 190} (2012), 699--742.
			
			
\bibitem[KKT]{KKT}
{\sc S.~J. Kang, M.~Kashiwara and S.~Tsuchioka}, {\em Quiver Hecke Superalgebras}, J. Reine Angew. Math., {\bf 711} (2016), 1--54.
		
\bibitem[KKO1]{KKO1}
{\sc S.~J. Kang, M.~Kashiwara and S.~J. Oh}, {\em Supercategorification of quantum Kac-Moody algebras}, Adv. Math., {\bf 242} (2013), 116--162.
		
\bibitem[KKO2]{KKO2}
\leavevmode\vrule height 2pt depth -1.6pt width 23pt, {\em Supercategorification of quantum Kac-Moody algebras II}, Adv. Math., {\bf 265} (2014), 169--240.
		
		\bibitem[KL1]{KL1}
		{\sc M.~Khovanov and A.D.~Lauda}, {\em A diagrammatic approach to categorification of quantum groups, I}, Represent. Theory, {\bf 13} (2009), 309--347.
		
		\bibitem[KL2]{KL2}
		\leavevmode\vrule height 2pt depth -1.6pt width 23pt,  {\em A diagrammatic approach to categorification of quantum groups, II}, Trans. Amer. Math. Soc., {\bf 363} (2011), 2685--2700.
		
\bibitem[KMS]{KMS}
{\sc I.~Kashuba, A.~Molev and V.~Serganova},
{\em On the Jucys-Murphy method and fusion procedure for the Sergeev superalgebra}, J. Lond. Math. Soc. (2), {\bf 112}(3) (2025), Paper No. e70302.
		
\bibitem[K1]{K1}
{\sc A.~Kleshchev}, {\em Completely splittable representations of symmetric groups}, J. Algebra {\bf 181} (1996), 584--592.
			
%\bibitem[K3]{K3}
%\leavevmode\vrule height 2pt depth -1.6pt width 23pt, {\em Linear and Projective Representations of Symmetric Groups}, Cambridge University Press, 2005.

\bibitem[K2]{K2}
\leavevmode\vrule height 2pt depth -1.6pt width 23pt, {\em  Representation Theory of symmetric groups and related Hecke algebras},  Bulletin (New Series) of the American Mathematical Society, {\bf 47} (2010), 419--481.
			%Volume 47, Number 3, July 2010, Pages 419--481.

\bibitem[K3]{K3}
\leavevmode\vrule height 2pt depth -1.6pt width 23pt,
{\em RoCK blocks of double covers of symmetric groups and generalized Schur algebras}, preprint, arXiv:2411.03653.

\bibitem[KleL]{KleL}
{\sc A. Kleshchev and M. Livesey},{\em RoCK blocks for double covers of symmetric groups and quiver Hecke
superalgebras}, Mem. Amer. Math. Soc., to appear; arXiv:2201.06870.

\bibitem[LS1]{LS1}
{\sc S.~Li, L.~Shi}, {\em  On the supercocenter of cyclotomic Sergeev algebras},  J. Algebra, {\bf 682} (2025), 824--858.

\bibitem[LS2]{LS2}
\leavevmode\vrule height 2pt depth -1.6pt width 23pt, {\em Seminormal bases of cyclotomic Hecke-Clifford superalgebras},
Lett. Math. Phys., {\bf 115}(5) (2025), Paper No. 110.	

\bibitem[LS3]{LS3}
\leavevmode\vrule height 2pt depth -1.6pt width 23pt, {\em On (super)symmetrizing forms and Schur elements of cyclotomic Hecke-Clifford superalgebras}, arXiv:2511.18395v2.			
			
			%\bibitem[KR]{KR} A. Kleshchev, A. Ram, {\em  Homogeneous representations of Khovanov-Lauda
			%	algebras}, preprint, arXiv:0809.0557, 2008.
			
			%\bibitem[Le]{Le} B. Leclerc, {\em Dual canonical bases, quantum
			%	shuffles and q-characters}, Math. Z. {\bf 246} (2004), no. 4,
			%691--732.
			
		%	\bibitem[Lu]{Lus} G. Lusztig, {\em Affine Hecke algebras and their
		%		graded version}, J. Amer. Math. Soc. {\bf 2} (1989), 599--635.
			
%\bibitem[Ma1]{Ma1}
%{\sc A.~Mathas}, {\em The representation theory of the Ariki-Koike and cyclotomic $q$-Schur algebras},
%Representation theory of algebraic groups and quantum groups, Adv. Studies Pure Math., {\bf 40} (2004), 261--320.
			
%\bibitem[Ma2]{Ma2}
%\leavevmode\vrule height 2pt depth -1.6pt width 23pt, {\em Matrix units and generic degrees for the Ariki-Koike algebras}, {J. Algebra,} {\bf 281} (2004), 695--730.
			
			%\bibitem[Ma]{Ma} I. G. Macdonald, {\em Symmetric Functions and Hall Polynomials}, Clarendon Press, Oxford, 1979.
			
			%\bibitem[M]{M} O. Mathieu, {\em On the dimension of some modular irreducible representations of
			%	the symmetric group}, Lett. Math. Phys. {\bf 38} (1996), 23--32.
			
	\bibitem[MT]{MT1}
			{\sc A.~Mathas and D.~Tubbenhauer}, {\em Subdivision and cellularity for weighted KLRW algebras}, preprint, math.RT/2111.12949, 2022.
			
\bibitem[Mo]{Mo}
{\sc M.~Mori}, {\em A cellular approach to the Hecke-Clifford superalgebra}, preprint, arXiv:1401.1722v2..
			
%\bibitem[N1]{Na1}
%{\sc M.~Nazarov}, {\em Young's orthogonal form of irreducible projective representations of the symmetric group},
%J. London Math. Soc. (2){\bf 42} (1990), no. 3, 437--451.
			
%\bibitem[N2]{Na2}
%\leavevmode\vrule height 2pt depth -1.6pt width 23pt, {\em Young's symmetrizers for projective representations of the symmetric group}, Adv. Math. {\bf 127} (1997), no. 2,190--257.
			
\bibitem[Rou1]{Rou1}
{\sc R.~Rouquier}, {\em $2$-Kac--Moody algebras}, preprint, math.RT/0812.5023v1, 2008.

\bibitem[Rou2]{Rou2}
\leavevmode\vrule height 2pt depth -1.6pt width 23pt, {\em Quiver Hecke algebras and 2-Lie algebras}, Algebr. Colloq. {\bf 19} (2012), 359--410.




				
%\bibitem[Sch]{Sch}
%{\sc I.~Schur}, {\"Uber die Darstellung der symmetrischen und der alternierenden Gruppe durch gebrochene lineare Substitutionen},
%J. Reine Angew. Math., {\bf 139} (1911), 155--250.
			
\bibitem[SVV]{SVV}
{\sc P.~Shan, M.~Varagnolo and E.~Vasserot},
{\em On the center of quiver-Hecke algebras}, Duke Math. J., {\bf 166}(6) (2017), 1005--1101.

\bibitem[SW]{SW}
{\sc L.~Shi, J.~Wan}, {\em On representation theory of cyclotomic Hecke-Clifford superalgebras}, preprint, arXiv:2501.06763.
			
%\bibitem[T]{T}
%{\sc S.~Tsuchioka}, {\em Hecke-Clifford superalgebras and crystals of type $D_l^{(2)}$}, Publ. Res. Inst. Math. Sci. {\bf 46} (2010), 423--471.

\bibitem[VV]{VV}
{\sc M.~Varagnolo and E.~Vasserot}, {\em Canonical bases and KLR algebras}, J. reine angew. Math., {\bf 659} (2011), 67--100.


%\bibitem[Wa]{Wa}
%{\sc J.~Wan}, {\em Completely splittable representations of affine Hecke-Clifford superalgebras}, J. Algebraic Combin.
%{\bf 32} (2010), 15--58.
	
	
%\bibitem[WW1]{WW}
%{\sc J.~Wan, W.~Wang}, {\em Lectures on spin representation theory of symmetric groups}, Bull. Inst. Math. Acad. Sin. (N.S.), {\bf 7} (2012), 91--164.

\bibitem[WW]{WW2}
{\sc J.~Wan and W.~Wang }, {\em Frobenius character formula and spin generic degrees for Hecke-Clifford superalgebra}, Proc. London Math. Soc., {\bf 106}(3) (2013), 287--317.

\bibitem[Web]{Web}
{\sc B.~Webster}, {\em Knot invariants and higher representation theory},  Memoirs of the American Mathematical Society, {\bf 250}, (2017).
			%\bibitem[Ry]{Ry} A. Ryba, {\em Fibonacci representations of the symmetric
				%groups}, J. Algebra {\bf 170} (1994), 678--686.
			
			%\bibitem[W]{W} D. Wales, {\em Some projective representations of
				%$\mathfrak{S}_n$}, J. Algebra {\bf 61} (1979), 37--57.
		%	\bibitem[Wan]{Wan} Jinkui Wan, {\em Completely splittable representations of affine
		%		{H}ecke-{C}lifford algebras}, J. Algebraic Combin. {\bf
		%		32} (2010), 15--58.
			
		%	\bibitem[W]{W} W. Wang, {\em Double affine Hecke-Clifford superalgebras for
		%		the spin symmetric group}, preprint, math.RT/0608074, 2006.
			
		\end{thebibliography}
	\end{document}